\newcommand{\PreserveBackslash}[1]{\let\temp=\\#1\let\\=\temp}
\newcolumntype{C}[1]{>{\PreserveBackslash\centering}p{#1}}
\newcolumntype{R}[1]{>{\PreserveBackslash\raggedleft}p{#1}}
\newcolumntype{L}[1]{>{\PreserveBackslash\raggedright}p{#1}}
\renewcommand{\setminus}{{\smallsetminus}}
\newtheorem{theorem}{Theorem}[section]
\newtheorem{lemma}[theorem]{Lemma}
\newtheorem{proposition}[theorem]{Proposition}
\newtheorem{definition}[theorem]{Definition}
\newtheorem{corollary}[theorem]{Corollary}
\newtheorem{prop}[theorem]{Proposition}
\theoremstyle{remark}
\newtheorem{remark}[theorem]{Remark}
\newcommand{\R}{\mathbb{R}}
\theoremstyle{remark}
\numberwithin{equation}{section}
\begin{document}

\title{\bf Turaev-Viro invariant from the modular double of $\mathrm {U}_{q}\mathfrak{sl}(2;\mathbb R)$}
\date{}

\author{Tianyue Liu} 
\author{Shuang Ming} 
\author{Xin Sun} 
\author{Baojun Wu}
\author{Tian Yang}
\author{Tianyue Liu, Shuang Ming, Xin Sun, Baojun Wu, and Tian Yang}

\maketitle

\begin{abstract} 
We define  a family of Turaev-Viro type  invariants of  hyperbolic $3$-manifolds with totally geodesic boundary from the $6j$-symbols of the modular double of $\mathrm U_{q}\mathfrak{sl}(2;\mathbb R)$, and prove that these invariants decay exponentially  with the  rate the hyperbolic volume of the manifolds and with the $1$-loop term the adjoint twisted Reidemeister torsion of the double of the manifolds. 
\end{abstract}

\tableofcontents

\section{Introduction} 
There are two very different approaches to 3-dimensional topology. The first approach follows Thurston's Geometrization Program (as completed by Perelman), and uses rigid homogeneous geometries to address purely topological problems. Hyperbolic geometry plays a central role in this framework. The second approach arose from Jones' discovery of completely new knot invariants, the Jones polynomial and its colored versions. This purely combinatorial invariant was later provided with a physical interpretation by Witten\,\cite{Wit89} using the quantum SU(2)-Chern-Simons theory, and expanded to give invariants of 3-dimensional manifolds. These quantum invariants of knots and 3-manifolds were  placed  by Reshetikhin-Turaev\,\cite{RT90,RT91} and Turaev-Viro\,\cite{turaev1992hq} in a rigorous mathematical setting through the representation theory of the quantum group $\mathrm U_q\mathfrak{sl}(2;\mathbb C)$.

The conjectural connection between these very different points of view arises when one considers the asymptotic behavior of quantum invariants. The most famous example of such a connection is Kashaev's Volume Conjecture\,\cite{Kas97} which, as rephrased by Murakami-Murakami\,\cite{MM01}, asserts that the value of the colored Jones polynomials of a hyperbolic knot at a certain root of unity grows exponentially with the growth rate the hyperbolic volume of the complement of the knot.  Another appealing conjecture involving exponential growth of quantum invariants and hyperbolic volumes emerged through work of the Chen-Yang\,\cite{CY}, where they observed that, for many hyperbolic 3-manifolds, their Reshetikhin-Turaev invariants and the Turaev-Viro invariants at the suitable root of unity grow exponentially with the growth rate again the hyperbolic volume of the manifolds.  These conjectures, backed up by experimental data, heuristic arguments, and rigorous proofs in a number of families  of examples, have attracted much attention in the past thirty years.

The goal of  this paper  is to initiate a mathematical study  of topological invariants associated to 
a quantum group  of significant interest in mathematical physics \cite{Fad95, Fad16, ponsot1999liouville} and representation theory \cite{FI14}, the modular double of $\mathrm U_{q}\mathfrak{sl}(2;\mathbb R)$ which we denote by $\mathrm U_{q\tilde{q}}\mathfrak{sl}(2;\mathbb R)$.  As demonstrated by Witten~\cite{Wit89}, the SU(2)-Chern-Simons theory is closely related to a two-dimensional conformal field theory (CFT), namely the SU(2)-Wess-Zumino-Witten (WZW) model. The fusion kernel of conformal blocks for the SU(2)-WZW model is the so-called $6j$-symbol for $\mathrm U_q\mathfrak{sl}(2;\mathbb C)$, which can be used to define the Jones polynomial and other $\mathrm U_q\mathfrak{sl}(2;\mathbb C)$ quantum invariants. This is an example of the correspondence between  3D topological quantum field theory (TQFT) and 2D CFT \cite{Wit89}.  In analogy to the relation between $\mathrm U_q\mathfrak{sl}(2;\mathbb C)$ and the SU(2)-WZW model,  the quantum group U$_{q\tilde{q}}\mathfrak{sl}(2;\mathbb R)$ is closely related to an important 2D CFT called the Liouville theory.   
As conjectured by Teschner \cite{ponsot2002boundary} and proved in \cite{GRSS}, the $6j$-symbol of  $\mathrm U_{q\tilde{q}}\mathfrak{sl}(2;\mathbb R)$ gives the fusion kernel of the Liouville conformal blocks. Liouville CFT has been intensively studied in physics since 1980s~\cite{Pol81,DO94,ZZ96,Teschner95}   and more recently in mathematics~\cite{DKRV16, KRV19b, GKRV20, GKRV21}. Among its various facets, it is closely related to the physical origin of the Volume Conjecture, namely the quantization of Teichmuller theory, complex Chern-Simons theory, and 3D gravity \cite{Witten:1989ip,Dimofte:2011gm,CEZ}.

In this paper, we focus on the Turaev-Viro type invariants of hyperbolic $3$-manifolds with totally geodesic boundary. These invariants are given by state-integrals associated with an ideal triangulation of the manifold, whose integrand consists of a product of $\mathrm U_{q\tilde{q}}\mathfrak{sl}(2;\mathbb R)$ $6j$-symbols, and 
the integral is over the continuous and non-compact spectrum of the so-called positive representations of $\mathrm U_{q\tilde{q}}\mathfrak{sl}(2;\mathbb R)$.
We prove the convergence of the state-integral and the independence of the choice of the ideal triangulation, showing that the construction indeed gives  a family of topological invariants of the manifold. (See Theorem \ref{Converge},  Theorem \ref{WD3} and Definition \ref{VTV}.) In contrast with other previously defined state-integral $3$-manifold invariants  (e.g. \cite{AK,KLV}), our invariants do not rely on the choice or the existence of any additional data or structures.

More importantly, we studied the asymptotic behavior of these invariants and proved that,  as the quantum parameter   tends to $0$, these invariants decay exponentially with the decay rate the hyperbolic volume of the manifold; furthermore, the sub-leading term in the asymptotic expansion of these invariants equals the adjoint twisted Reidemeister torsion of the double of the manifold. (See Theorem~\ref{VC}.) This is the first family of quantum invariants that settles the above-mentioned connection between hyperbolic geometry and quantum topology, while for the other invariants, such a relationship has only been proved in examples \cite{ohtsuki2016asymptotic, ohtsuki2018asymptotic,belletti2022growth} 
or under additional constraints\cite{wong2023relative, BGP, aribi2024andersen}. The asymptotic expansion of our Turaev-Viro type invariants relies on the saddle point approximation of the state integral. The proof relies on a delicate asymptotic analysis of the integrand made of $\mathrm U_{q\tilde q}\mathfrak{sl}(2;\mathbb R)$ $6j$-symbols. 
In fact, our approach yields a thorough understanding of the asymptotics of a single $\mathrm U_{q\tilde q}\mathfrak{sl}(2;\mathbb R)$ $6j$-symbol, which is of independent interest. (See Theorems \ref{cov} and \ref{cov2}.) Other key ingredients of our proof include the theory of Luo-Yang\,\cite{L,LY} on hyperbolic polyhedral metrics, the Murakami-Yano volume formula~\cite{MY,U,MU,BY2}, angle structures and Kojima ideal triangulations of hyperbolic $3$-manifolds with totally geodesic boundary, and the computation of Wong-Yang\,\cite{WY3} of the adjoint twisted Reidemeister torsion of $3$-manifolds with hyperbolic polyhedral metrics. In particular, to overcome a major difficulty caused by the possible existence of flat tetrahedra in a Kojima ideal triangulation, we establish a fine estimate comparing the quantum  dilogarithm functions and the dilogarithm function. (See Proposition~\ref{EST}).

The rest of the paper is organized as follows. In Sections~\ref{subsec:1.1} and~\ref{subsec:1.2}, we state our main results. 
In Section~\ref{subsec:outlook}, we discuss the relevance of our invariants to subjects in mathematical physics and geometry, and outline some future directions.
In Section~\ref{sec:pre}, we provide preliminary background and results. 
In Section ~\ref{sec:tetrahedron}, we prove  Theorems \ref{cov} and \ref{cov2} on the asymptotics of a single $\mathrm U_{q\tilde q}\mathfrak{sl}(2;\mathbb R)$  $6j$-symbol. 
In Section ~\ref{sec:manifold}, we prove the convergence of our state integral (Theorem~\ref{Converge}) and establish its asymptotic behavior (Theorem~\ref{VC}). 
In Section ~\ref{sec:invariance}, we prove Theorem~\ref{WD3} that our state-integral indeed defines a topological invariant.

\subsection{\texorpdfstring{$\mathrm{U}_{q\tilde q}\mathfrak{sl}(2;\mathbb R)$ $6j$}{Uq(\mathfrak{sl}(2;R)) 6j}-symbol and its asymptotics}\label{subsec:1.1}

Let $S_b$ be the  \emph{double sine function} defined for $z\in \mathbb C$ with $0<\mathrm{Re}(z)<Q$ by
\begin{equation}\label{eq:def-S}
S_b(z)=\exp\Bigg(\int_\Omega\frac{\sinh\Big(\big(\frac{Q}{2}-z\big)t\Big)}{4t\sinh(\frac{bt}{2})\sinh(\frac{t}{2b})}dt\Bigg)
\end{equation}
where $b\in (0,1)$ and $Q=b+b^{-1}$, 
and  the contour $\Omega$ goes along the real line and passes above the pole at the origin. By the functional equation (see e.g \cite[A.15]{Teschner:2012em})
\begin{equation}\label{FE1}
S_b(z+b^{\pm 1})=2\sin (\pi b ^{\pm 1} z)S_b(z),
\end{equation}
$S_b$ extended to a meromorphic function on $\mathbb C$ with the set of poles $\{ -nb-mb^{-1} \ |\ m, n\in \mathbb Z_{\geqslant 0}\}$ and the set of  zeros  $\{ Q + nb + mb^{-1} \ |\ m, n\in \mathbb Z_{\geqslant 0}\}.$

For $(a_1,\dots,a_6)\in\mathbb C^6,$ let  
\begin{align*}
&t_1=a_1+a_2+a_3,  &&   t_2=a_1+a_5+a_6, \\
&t_3=a_2+a_4+a_6, &&t_4=a_3+a_4+a_5,\\
&q_1=a_1+a_2+a_4+a_5, &&q_2=a_1+a_3+a_4+a_6, \\
&q_3=a_2+a_3+a_5+a_6, &&q_4=2Q.
\end{align*}

\begin{figure}[htbp]
\centering
\includegraphics[scale=0.4]{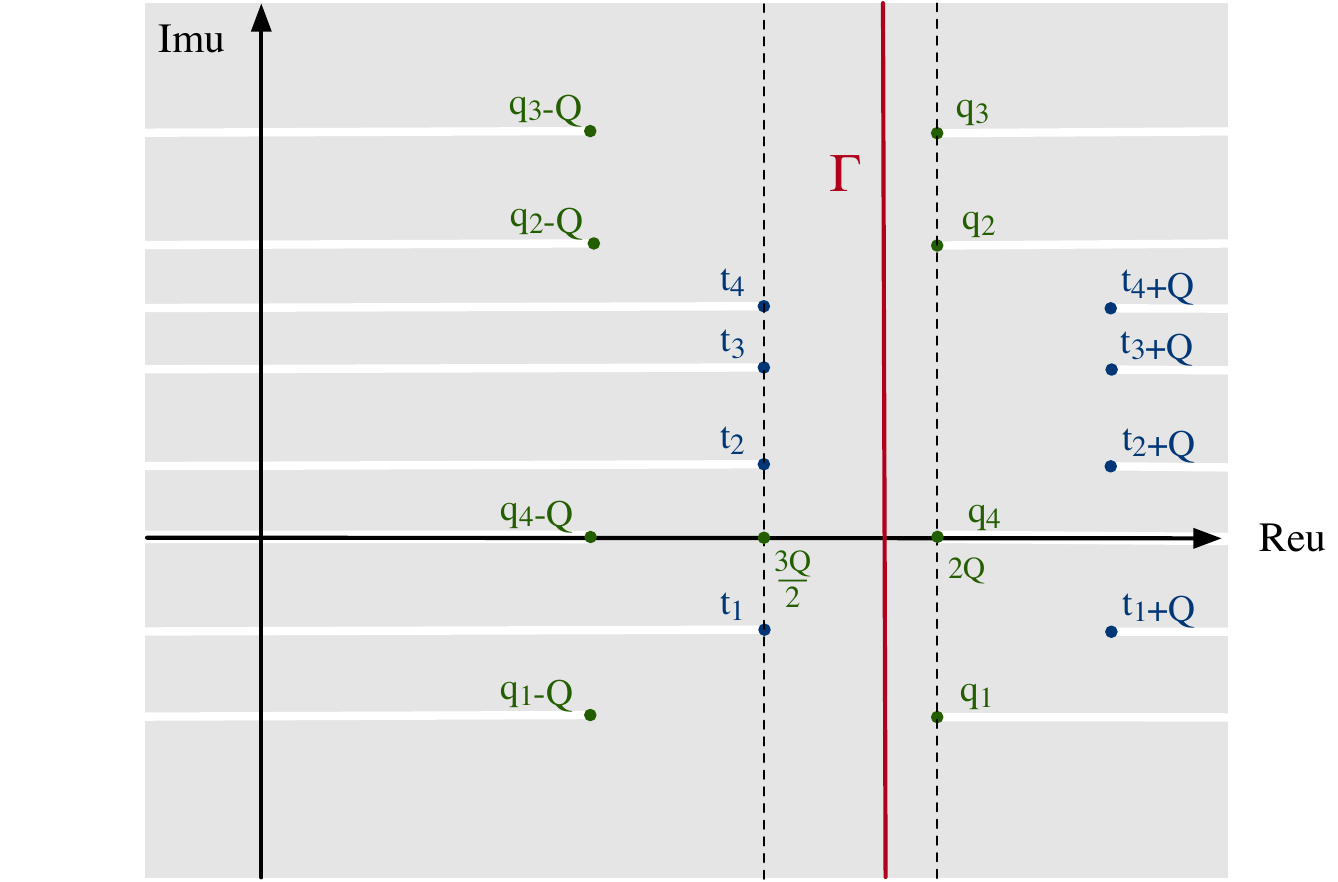}
\caption{Contour $\Gamma$ and possible zeros and poles (located in the white rays) of the integrand in~\eqref{b-6j}.}
\label{Da}
\end{figure}

\begin{definition}\label{def：6j}
Suppose $a_k \in \frac{Q}{2} + \mathbf i \mathbb R_{> 0}$ for all $k\in\{1,\dots,6\}$. 
The  $\mathrm U_{q\tilde q}\mathfrak{sl}(2;\mathbb R)$ $6j$-symbol, or the $b$-$6j$ symbol, with parameter $\boldsymbol a=(a_1,\dots,a_6)$ is given by
\begin{equation}\label{b-6j}
\bigg\{\begin{matrix} a_1 & a_2 & a_3 \\ a_4 & a_5 & a_6 \end{matrix} \bigg\}_b=\Bigg(\frac{1}{\prod_{i=1}^4\prod_{j=1}^4S_b(q_j-t_i)}\Bigg)^{\frac{1}{2}}\int_\Gamma \prod_{i=1}^4S_b(u-t_i)\prod_{j=1}^4S_b(q_j-u)d u,
\end{equation}
where the contour $\Gamma$ is any vertical line passing the interval $(\frac{3Q}{2},2Q)$. See Figure \ref{Da}. 
\end{definition}
We will explain in Section~\ref{subsec:contour} that the integral in~\eqref{b-6j} converges absolutely and does not depend on the choice of the contour $\Gamma$.

We recall the origin of the $b$-$6j$ symbols, although it is not needed for understanding our results. Let
\[
q=e^{\pi i b^2} \quad\textrm{and}\quad \tilde{q}=e^{\pi i b^{-2}}.
\]The quantum group $\mathrm U_q\mathfrak{sl}(2;\mathbb C)$, as introduced in \cite{drinfeld1985rx}, is a Hopf algebra that arises as a deformation of the universal enveloping algebra of $\mathfrak{sl}(2;\mathbb C)$. Equipping $\mathrm U_q\mathfrak{sl}(2;\mathbb C)$ with a $*$-structure using the split real form $\mathfrak{sl}(2; \mathbb{R})\subset \mathfrak{sl}(2; \mathbb{C})$, we obtain the $*$-Hopf algebra $\mathrm U_q\mathfrak{sl}(2;\mathbb R)$;
and there is a family of $*$-representations indexed by the spectrum $\frac{Q}{2}+\mathbf{i}\mathbb R_{> 0}$ which have been intensively studied in~\cite{Schmudgen2004OperatorRO,PT01}.
The vector space for these representations is the Hilbert space $L^2(\R)$, and elements in $\mathrm U_q\mathfrak{sl}(2;\mathbb R)$ act as self-adjoint unbounded operators.
It turns out that all these representations are simultaneously representations of $\mathrm U_{\tilde{q}}\mathfrak{sl}(2;\mathbb R)$, hence are representations of the modular double $\mathrm U_{q\tilde{q}}\mathfrak{sl}(2;\mathbb R)\doteq\mathrm U_{q}\mathfrak{sl}(2;\mathbb R)\otimes \mathrm U_{\tilde{q}}\mathfrak{sl}(2;\mathbb R)$. 
For these induced \emph{positive representations} of $\mathrm U_{q\tilde{q}}\mathfrak{sl}(2;\mathbb R)$, one can define the $\mathrm U_{q\tilde{q}}\mathfrak{sl}(2;\mathbb R)$ $6j$-symbols following a standard recipe in the representation theory of quantum groups, giving the $b$-$6j$ symbols. The formula~\eqref{b-6j} comes from \cite[(2.26)]{Teschner:2012em}. See Section~\ref{sec:5.2} for more background.

Our first result provides the asymptotics of the $b$-$6j$ symbols. The limiting expression is in terms of the co-volume function and the Gram matrix for truncated hyperideal tetrahedra, which are the building blocks of hyperbolic 3-manifolds with totally geodesic boundary.  
A truncated hyperideal tetrahedron $\Delta$
is completely determined by the lengths  of its six edges $(l_1,\dots,l_6)$, or its six dihedral angles $(\theta_1,\dots,\theta_6)$. 
The co-volume function ${\mathrm{Cov}}(l_1,\dots,l_6)$  is defined by $\mathrm{Vol}(\Delta)+\sum_{k=1}^6\frac{\theta_k l_k}{2}$, first considered by Luo in \cite{L}
as the Legendre transform of the volume function, which satisfies $\frac{\partial \mathrm{Cov}(l_1,\dots,l_6)}{\partial l_k}=\frac{\theta_k}{2}$. See Section~\ref{extcov} for more details on truncated hyperideal tetrahedra. The definition of the Gram matrix $\mathrm{Gram}(l_1,\cdots,l_6)$ of $\Delta$ can also be found in Section~\ref{extcov}.

\begin{theorem}\label{cov} Let $(l_1,\dots, l_6)\in{\mathbb R_{>0 }^6}$ be the lengths of the edges of a truncated hyperideal tetrahedron $\Delta.$ Then  as $b\to 0,$
$$\bigg\{\begin{matrix} \frac{Q}{2}+ \mathbf i\frac{l_1}{2\pi b} & \frac{Q}{2}{+} \mathbf i\frac{l_2}{2\pi b} & \frac{Q}{2}{+} \mathbf i\frac{l_3}{2\pi b} \\ \frac{Q}{2}{+} \mathbf i\frac{l_4}{2\pi b} & \frac{Q}{2}{+} \mathbf i\frac{l_5}{2\pi b} & \frac{Q}{2}{+} \mathbf i\frac{l_6}{2\pi b} \end{matrix} \bigg\}_b=\frac{e^{\frac{-\mathrm{Cov}(l_1,\dots,l_6)}{\pi b^2}}}{\sqrt[4]{-\det\mathrm{Gram}(l_1,\dots,l_6)}}\Big(1+O\big(b^2\big)\Big).$$
In particular, 
$$\lim_{b\to 0}\pi b^2\log\bigg\{\begin{matrix} \frac{Q}{2}{+} \mathbf i\frac{l_1}{2\pi b} & \frac{Q}{2}{+} \mathbf i\frac{l_2}{2\pi b} & \frac{Q}{2}{+} \mathbf i\frac{l_3}{2\pi b} \\ \frac{Q}{2}{+} \mathbf i\frac{l_4}{2\pi b} & \frac{Q}{2}{+} \mathbf i\frac{l_5}{2\pi b} & \frac{Q}{2}{+} \mathbf i\frac{l_6}{2\pi b} \end{matrix} \bigg\}_b=-\mathrm{Cov}(l_1,\dots,l_6).$$
\end{theorem}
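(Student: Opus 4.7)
The plan is to apply the one-dimensional saddle-point (steepest-descent) method to the contour integral in \eqref{b-6j}. First I rescale the integration variable by setting $u = Q + \frac{\mathbf i w}{2\pi b}$ so that it matches the scaling of the parameters $a_k = \frac{Q}{2} + \frac{\mathbf i l_k}{2\pi b}$. The key analytic input is the semiclassical asymptotic of the double sine function, of the schematic form
\[
\log S_b\!\left(\tfrac{Q}{2}+\tfrac{\mathbf i x}{2\pi b}\right) = \tfrac{1}{2\pi\mathbf i\,b^2}\,L(x) + O(1)\qquad \text{as } b\to 0,
\]
where $L$ is an explicit dilogarithm-type function; a uniform version of this asymptotic, valid in a region where poles of $S_b$ may cluster near the argument (corresponding to flat or near-flat tetrahedra), is exactly what Proposition~\ref{EST} will provide. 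Applying it to each of the eight $S_b$ factors in the integrand and to the sixteen factors inside the prefactor $\prod_{i,j}S_b(q_j-t_i)^{-1/2}$ converts \eqref{b-6j} into a Laplace-type integral
\[
\int_{\Gamma}\exp\!\Big(\tfrac{1}{2\pi\mathbf i b^2}\Phi(w;l_1,\dots,l_6) + O(1)\Big)\,dw,
\]
where $\Phi$ is an explicit sum of dilogarithms in $w$ and $l_1,\ldots,l_6$.

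Next I locate the unique saddle point $w^* = w^*(l_1,\dots,l_6)$ by solving $\partial_w\Phi(w^*)=0$. Exponentiating this equation produces a trigonometric identity whose solution, by the Luo-Yang theory of hyperbolic polyhedral metrics \cite{L,LY} together with the Murakami-Yano volume formula \cite{MY,U,MU,BY2}, identifies $w^*$ with the internal combinatorial data of the truncated hyperideal tetrahedron $\Delta$ with edge lengths $l_1,\dots,l_6$. I then check that the contour $\Gamma$ can be deformed to pass through $w^*$ along the steepest descent direction without crossing poles of $S_b$, so that the standard Laplace approximation applies. Evaluating $\Phi$ at $w^*$ and combining with the prefactor expansion, the Murakami-Yano formula reduces the net exponent to $-\mathrm{Cov}(l_1,\dots,l_6)/(\pi b^2)$ by the very definition of Luo's co-volume as the Legendre transform of the volume function.

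The one-loop term arises from the Gaussian factor $\big(2\pi\mathbf i b^2/(-\Phi''(w^*))\big)^{1/2}$ produced by the saddle-point evaluation, combined with the $O(1)$ correction terms from the expansion of the prefactor. Using the Wong-Yang torsion computation \cite{WY3}, which expresses the second variation $\partial\theta_k/\partial l_j$ as a matrix whose determinant is essentially $\det\mathrm{Gram}(l_1,\dots,l_6)$, the combined one-loop factor reduces algebraically to $1/\sqrt[4]{-\det\mathrm{Gram}(l_1,\dots,l_6)}$. I expect the hard part to be twofold: first, establishing and applying the uniform asymptotic in Proposition~\ref{EST}, which is subtle because poles of $S_b$ can sit arbitrarily close to the saddle in near-flat configurations, forcing a quantitative comparison between the quantum dilogarithm and the classical dilogarithm that survives this clustering; second, the algebraic identification of the combined one-loop factor with $(-\det\mathrm{Gram})^{-1/4}$, which requires a delicate Jacobian computation tying $\Phi''(w^*)$ and the prefactor to the second variation of volume. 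The ``in particular'' statement follows immediately from the main asymptotic by taking logarithms and multiplying by $\pi b^2$.
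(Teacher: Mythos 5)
Your overall strategy — rescale the integration variable, replace each $\log S_b$ by its dilogarithm approximation with a uniformly bounded error (Proposition~\ref{EST}), locate the saddle by exponentiating the critical-point equation into a quadratic, identify the critical value with $-\mathrm{Cov}$ via the Murakami--Yano formula, and apply the one-dimensional saddle-point approximation — is exactly the route the paper takes (Proposition~\ref{critical2} and Section~\ref{subsec: thm2}). Two points in your plan, however, are off target.

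First, your proposed mechanism for the one-loop factor would not work as stated. You invoke the Wong--Yang torsion formula and the Jacobian matrix $\big(\partial\theta_k/\partial l_j\big)$; but that matrix is the Hessian of the co-volume in the six edge lengths, which is relevant only for the multi-dimensional state integral of Theorem~\ref{VC}, where one also integrates over the $l_e$'s. Here the Gaussian factor is $1/\sqrt{-\Phi''(w^*)}$ with $\Phi''$ the second derivative in the single integration variable $w$, and there is no a priori link between this quantity and $\partial\theta_k/\partial l_j$. What is actually needed is the direct algebraic identity (Proposition~\ref{Hess}, adapted from Chen--Murakami \cite{CM}): writing the saddle equation as $Az^2+Bz+C=0$ with $z=e^{-2\mathbf i\xi}$, one computes $-U_{\boldsymbol\alpha}''(\xi^*)\exp\!\big(-\kappa_{\boldsymbol\alpha}(\xi^*)/(\pi\mathbf i)\big)=4A(z^*-z^{**})=4\sqrt{B^2-4AC}=16\sqrt{\det\mathrm{Gram}(\boldsymbol l)}$, where the subleading correction $\kappa_{\boldsymbol\alpha}$ in the expansion of the integrand (your ``$O(1)$'' term, which must be tracked exactly, not just bounded) combines with the Hessian to give precisely $(-\det\mathrm{Gram})^{-1/4}$ after taking the square root. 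Second, your worry about poles of $S_b$ clustering near the saddle in ``near-flat configurations'' is misplaced for this theorem: for $(l_1,\dots,l_6)\in\mathcal L$ the discriminant $B^2-4AC=16\det\mathrm{Gram}<0$ forces the saddle to lie strictly inside the strip $\frac{3\pi}{2}<\mathrm{Re}\,\xi<2\pi$, away from the singular points $\tau_i,\eta_j$; the clustering phenomenon is the difficulty of Theorem~\ref{cov2}, not of Theorem~\ref{cov}. Finally, to justify the tail estimates you still need the strict concavity of $\mathrm{Im}\,U_{\boldsymbol\alpha}$ along the vertical contour together with the asymptotic slopes $\mp4\pi$ at infinity (Propositions~\ref{concave} and~\ref{limder}); ``deform to the steepest descent path'' alone does not control the non-compact ends of $\Gamma$.
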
 
When $(l_1,\dots, l_6)\in{\mathbb R_{>0 }^6}$ are not the edge lengths of a truncated hyperideal tetrahedron,  they can be the edge lengths of a so-called flat tetrahedron, or even worse. 
Our next theorem considers both of the two cases, and relies on a natural extension $\widetilde{\mathrm{Cov}}$  of the co-volume function $\mathrm{Cov}$ introduced by Luo-Yang~\cite{LY}.
See Section~\ref{extcov} for the precise definitions.

\begin{theorem}\label{cov2} 
\begin{enumerate}[(a)]
    \item If $(l_1,\dots, l_6)\in{\mathbb R_{>0}^6}$ are the lengths of the edges of a flat tetrahedron, then
$$\lim_{b\to 0}\pi b^2\log\bigg\{\begin{matrix} \frac{Q}{2}{+} \mathbf i\frac{l_1}{2\pi b} & \frac{Q}{2}{+} \mathbf i\frac{l_2}{2\pi b} & \frac{Q}{2}{+} \mathbf i\frac{l_3}{2\pi b} \\ \frac{Q}{2}{+} \mathbf i\frac{l_4}{2\pi b} & \frac{Q}{2}{+} \mathbf i\frac{l_5}{2\pi b} & \frac{Q}{2}{+} \mathbf i\frac{l_6}{2\pi b} \end{matrix} \bigg\}_b=-\widetilde{\mathrm{Cov}}(l_1,\dots,l_6).$$

\item If $(l_1,\dots, l_6)\in{\mathbb R_{>0}^6}$ are neither the lengths of the edges of a truncated hyperideal tetrahedron nor the lengths of the edges a flat tetrahedron, then
$$\limsup_{b\to 0}\pi b^2\log\Bigg|\bigg\{\begin{matrix} \frac{Q}{2}{+} \mathbf i\frac{l_1}{2\pi b} & \frac{Q}{2}{+} \mathbf i\frac{l_2}{2\pi b} & \frac{Q}{2}{+} \mathbf i\frac{l_3}{2\pi b} \\ \frac{Q}{2}{+} \mathbf i\frac{l_4}{2\pi b} & \frac{Q}{2}{+} \mathbf i\frac{l_5}{2\pi b} & \frac{Q}{2}{+} \mathbf i\frac{l_6}{2\pi b} \end{matrix} \bigg\}_b\Bigg|= -\widetilde{\mathrm{Cov}}(l_1,\dots,l_6).$$
\end{enumerate}
\end{theorem}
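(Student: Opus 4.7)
The plan is to adapt the saddle-point analysis that, in the realizable case of Theorem~\ref{cov}, identifies $-\mathrm{Cov}(l_1,\dots,l_6)$ as the value at the unique real critical point of the leading-order potential governing the integrand of~\eqref{b-6j}, and to push the analysis into the degenerate (flat) and non-realizable regimes by combining the Luo-Yang extension $\widetilde{\mathrm{Cov}}$ with the fine comparison of quantum dilogarithms and dilogarithms supplied by Proposition~\ref{EST}. Concretely, with $a_k=\tfrac{Q}{2}+\mathbf i\tfrac{l_k}{2\pi b}$, one writes the integrand as $\exp\bigl(-V(u,\boldsymbol l)/(\pi b^2)\bigr)$ times a subleading factor, where $V$ is an explicit combination of Euler dilogarithms whose critical point equation in $u$ reproduces the gluing equations for a truncated hyperideal tetrahedron with edge lengths $\boldsymbol l$.

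For part~(a), I would combine a direct saddle-point argument with continuity from the hyperideal regime. As $(l_1,\dots,l_6)$ approaches the edge lengths of a flat tetrahedron from within the hyperideal locus, the real critical point $u_\ast$ of $V(\cdot,\boldsymbol l)$ persists, but its Hessian degenerates in parallel with the vanishing of $\det\mathrm{Gram}$. By construction the value $V(u_\ast,\boldsymbol l)$ extends continuously to the flat locus and equals $\widetilde{\mathrm{Cov}}(\boldsymbol l)$ in the Luo-Yang sense. The degeneracy affects only the subleading factor appearing in Theorem~\ref{cov} (the $1/\sqrt[4]{-\det\mathrm{Gram}}$ prefactor must be replaced by a slower-decaying contribution coming from a higher-order Laplace model), and these subleading contributions are washed out upon multiplying the logarithm by $\pi b^2$. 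The remaining technical point, that the standard asymptotic expansion of $S_b$ breaks down near its poles which cluster close to $u_\ast$ in the flat regime, is precisely handled by Proposition~\ref{EST}.

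For part~(b), the real critical point of $V(\cdot,\boldsymbol l)$ no longer exists. The strategy is to deform the contour $\Gamma$ (within the pole-free region indicated in Figure~\ref{Da}) to a contour $\Gamma'$ on which $\mathrm{Re}\,V$ attains its minimum value $\widetilde{\mathrm{Cov}}(\boldsymbol l)$; the existence of such a contour and the identification of the minimum with $\widetilde{\mathrm{Cov}}$ are exactly what the Luo-Yang extension encodes in the non-realizable regime. Applying Proposition~\ref{EST} along $\Gamma'$, together with the integrable decay of $S_b$ at infinity, yields the uniform bound $\pi b^2\log|\text{integral}|\le -\widetilde{\mathrm{Cov}}(\boldsymbol l)+o(1)$, which produces the $\limsup$ inequality. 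A matching lower bound is obtained from a local Laplace-type estimate at the point(s) on $\Gamma'$ where $\mathrm{Re}\,V$ is minimized, i.e.\ the deformed saddle.

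The main obstacle is uniform control of the integrand in two directions: along the non-compact contour, where contributions from $|u|\to\infty$ and from neighborhoods of poles clustering near the saddle must be ruled out; and as $(l_1,\dots,l_6)$ crosses the flat locus, where the saddle-point structure itself changes. Both are precisely the issues Proposition~\ref{EST} is designed to resolve, and its careful application is what allows the proof of Theorem~\ref{cov} to be extended uniformly across all parameter regimes considered here.
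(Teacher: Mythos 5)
Your outline for part (a) is essentially the paper's argument: the critical point survives on the boundary of the strip, becomes degenerate with vanishing second derivative but nonvanishing third derivative, and one replaces the Gaussian saddle model by a cubic (Airy-type) one whose $(2\pi b^2)^{1/3}$ prefactor is washed out after multiplying $\log$ by $\pi b^2$; Proposition~\ref{EST} controls the error between $\log S_b$ and the dilogarithm away from the branch points. You do gloss over one real technical point: on the natural vertical contour the critical point is \emph{not} a strict maximum of the relevant phase (by Proposition~\ref{PL} the imaginary part of the potential is constant on the whole segment between two of the $\eta_j$'s containing the saddle), so the contour must be further pushed off that segment, which the paper does by flowing along an explicit vector field before invoking the degenerate saddle-point lemma. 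But this is a repairable omission in a sketch.

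Part (b) contains a genuine gap. In the non-realizable regime the potential does not lose its critical points; it acquires \emph{two} non-degenerate critical points $\xi_1^*,\xi_2^*$ on the boundary line $\mathrm{Re}\,\xi=2\pi$, with equal imaginary parts of the critical values (both equal to $-2\widetilde{\mathrm{Cov}}(\boldsymbol l)$) and \emph{opposite} real parts. Consequently the two saddle contributions have the same modulus $e^{-\widetilde{\mathrm{Cov}}(\boldsymbol l)/(\pi b^2)}$ but oscillating phases, and they combine into a factor of the form $2\cos\bigl(\tfrac{\mathrm{Re}\,U(\xi_1^*)}{2\pi b^2}+\tfrac{\pi}{4}\bigr)$ multiplying the leading term. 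Your plan — a uniform upper bound along a deformed contour plus ``a local Laplace-type estimate at the point(s) where $\mathrm{Re}\,V$ is minimized'' for a matching lower bound — would, as written, establish that the \emph{limit} of $\pi b^2\log|\cdot|$ exists and equals $-\widetilde{\mathrm{Cov}}(\boldsymbol l)$. That is false in general: the cosine vanishes for a sequence of $b$'s, so the leading contributions cancel and only a $\limsup$ statement can hold (which is exactly why the theorem is phrased with $\limsup$). The correct lower bound requires identifying both saddles, computing that their contributions interfere rather than reinforce, and then extracting a subsequence $b_n\to 0$ along which the cosine is bounded away from zero. Without this two-saddle analysis the proposed lower-bound step fails.
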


\begin{remark}\label{rmk:ads}
In a subsequent paper \cite{ADS25}, as a refinement of Theorem \ref{cov2} (b), we will prove that if $(l_1,\dots, l_6)\in{\mathbb R_{>0}^6}$ are neither the edge lengths of a truncated hyperideal tetrahedron nor the edge lengths of a flat tetrahedron, then they are the edge lengths of a truncated hyperideal tetrahedron $\Delta$ in the anti-de Sitter space $\mathbb{A}\mathrm d\mathbb{S}^3$, and
\begin{equation}\label{AdS/CFT}
\bigg\{\begin{matrix} \frac{Q}{2}{+} \mathbf i\frac{l_1}{2\pi b} & \frac{Q}{2}{+} \mathbf i\frac{l_2}{2\pi b} & \frac{Q}{2}{+} \mathbf i\frac{l_3}{2\pi b} \\ \frac{Q}{2}{+} \mathbf i\frac{l_4}{2\pi b} & \frac{Q}{2}{+} \mathbf i\frac{l_5}{2\pi b} & \frac{Q}{2}{+} \mathbf i\frac{l_6}{2\pi b} \end{matrix} \bigg\}_b=\frac{e^{\frac{-\widetilde{\mathrm{Cov}}(\boldsymbol l)}{\pi b^2}} }{\sqrt[4]{\det\mathrm{Gram}(\boldsymbol l)}} \Bigg(2\cos\bigg(\frac{\mathrm{Cov}(\Delta)}{\pi b^2}+\frac{\pi}{4}\bigg) +O\big(b^2\big)\Bigg).
\end{equation}
Here, $\mathrm{Cov}(\Delta)$ is the co-volume of $\Delta$ defined by $\mathrm{Cov}(\Delta)=\mathrm{Vol}(\Delta)+\sum_{k=1}^6\frac{\theta_kl_k}{2}$ and satisfies 
$\frac{\partial\mathrm{Cov(\Delta)}}{\partial l_k}=\frac{\theta_k}{2},$
where $\mathrm{Vol}(\Delta)$ is the anti-de Sitter volume of $\Delta$ and $\theta_1,\dots,\theta_6$ are the anti-de Sitter dihedral angles of $\Delta$. {See also Remark~\ref{rmk:ads2}.}
\end{remark}

Our proof of Theorems~\ref{cov} and~\ref{cov2}, as carried out in Section ~\ref{sec:tetrahedron}, relies on a fine estimate  of the double sine function $S_b(x)$ by the dilogarithm function, and the saddle point approximation. The key ingredient is the concavity of  the leading term when we expand the integrand in the  $b$-$6j$ symbol 
in~$b$.

The relation between tetrahedral volume and the asymptotic of  $6j$ symbols were previously considered in several works. 
Inspired by the work of Wigner \cite{Wigner}~, Ponzano and Regge \cite{Ponzano} proposed a relation between the {Wigner} $6j$ symbol of $\mathfrak{sl}(2;\mathbb C)$ and Euclidean tetrahedron, which was proved in \cite{Roberts99}. The $6j$ symbols of  $\mathrm U_q\mathfrak{sl}(2;\mathbb C)$  was considered in~\cite{taylor_6j_2006,C,CM, BY2}.
Previously in physics,  the $6j$ symbols for $\mathrm U_{q\tilde q}\mathfrak{sl}(2;\mathbb R)$ were considered in \cite{Teschner:2012em} and \cite{CHJL} with inputs on the real line and on the spectrum $\frac{Q}{2}+ \mathbf i {\mathbb R_{>0}}$ respectively interpreted as the dihedral angles and the edge lengths of a hyperbolic tetrahedron, with the exponential decay rates respectively the volume and the co-volume of the tetrahedron.
Our Theorems~\ref{cov} and~\ref{cov2} are the first mathematical results on the asymptotic of the $6j$ symbol for $\mathrm U_{q\tilde q}\mathfrak{sl}(2;\mathbb R)$. 

{\begin{remark} It is obvious from (\ref{def：6j}) that the $b$-$6j$ symbols satisfy the following \emph{tetrahedral symmetry}:
\begin{align}
\begin{Bmatrix} 
      a_1 & a_2 & a_3 \\
      a_4 & a_5 & a_6
   \end{Bmatrix}_b=\begin{Bmatrix} 
      a_2 & a_1 & a_3 \\
      a_5 & a_4 & a_6
   \end{Bmatrix}_b=\begin{Bmatrix} 
      a_1 & a_3 & a_2 \\
      a_4 & a_6 & a_5
   \end{Bmatrix}_b=\begin{Bmatrix} 
      a_1 & a_5 & a_6 \\
      a_4 & a_2 & a_3
   \end{Bmatrix}_b.
\end{align}
In addition, the $b $-$6j$ symbols also satisfy a \emph{reflection symmetry} that if we replace any number of $a_i=\frac{Q}{2}+\mathbf i\frac{l_i}{2\pi b} $
by its complex conjugate $\frac{Q}{2}- \mathbf i\frac{l_i}{2\pi b} $, then the value of the $b$-$6j$ symbol does not change. 
This fact is not obvious from Definition~\ref{def：6j}, but can be derived from~\cite[Equations (2.17) and (2.24)]{Teschner:2012em}. 
The reflection symmetry implies that the $b$-$6j$ symbols are real-valued functions.
\end{remark}}


\subsection{\texorpdfstring{$\mathrm U_{q\tilde q}\mathfrak{sl}(2;\mathbb R)$}{Uq(\mathfrak{sl}(2,R))} Turaev-Viro invariants: state-integral convergence, topological invariance and asymptotics} \label{subsec:1.2}
We now define a Turaev-Viro type invariant for $\mathrm U_{q\tilde q}\mathfrak{sl}(2;\mathbb R)$  using the $b$-$6j$ symbols. 
Let  $M$ be a compact $3$-manifold with nonempty boundary, and let $\mathcal T$ be an ideal triangulation of $M,$ with the set of edges $E$ and the set of tetrahedra $T.$
A $b$-coloring of $(M,\mathcal T)$ is an assignment of a complex number $a_e$ of the form $\frac{Q}{2}+\mathbf i\frac{l_e}{2\pi b}$ with $l_e\in\mathbb R_{{>0}}$ to each edge $e$ of $\mathcal T.$ Let $\boldsymbol a=\big(a_e\big)_{e\in E}.$ For each $e\in E,$ define
\begin{equation}
  |e|_{\boldsymbol a}=\big|S_b\big(2a_e\big)\big|^2=4\sinh \big(l_e\big)\sinh\bigg(\frac{l_e}{b^2}\bigg),
\end{equation}
{where the second equality can be found in~\cite[Equation (2.19) and (2.21)]{Ivan2021}.} For each $\Delta\in T,$ define
$$|\Delta|_{\boldsymbol a}=\bigg\{\begin{matrix} a_{e_1} & a_{e_2} & a_{e_3} \\ a_{e_4} & a_{e_5} & a_{e_6} \end{matrix} \bigg\}_b=\bigg\{\begin{matrix} \frac{Q}{2}+\mathbf i\frac{l_{e_1}}{2\pi b} & \frac{Q}{2}+\mathbf i\frac{l_{e_2}}{2\pi b} & \frac{Q}{2}+\mathbf i\frac{l_{e_3}}{2\pi b} \\ \frac{Q}{2}+\mathbf i\frac{l_{e_4}}{2\pi b} & \frac{Q}{2}+\mathbf i\frac{l_{e_5}}{2\pi b} & \frac{Q}{2}+\mathbf i\frac{l_{e_6}}{2\pi b} \end{matrix} \bigg\}_b,$$
where $\{e_1,\dots,e_6\}$ are the edges of $\mathcal T$ adjacent to $\Delta$ so that $e_1,e_2,e_3$ are the edges of a face of $\Delta$ and, for $i\in\{1,2,3\},$  $e_i$ and $e_{i+3}$ are opposite to each other.
Consider the following integral 
\begin{equation}\label{int}
\mathrm{TV}_b(M,\mathcal T) \doteq \int_{\big(\frac{Q}{2}+\mathbf i\mathbb R_{{>0}}\big)^E} \prod_{e\in E}|e|_{\boldsymbol a}\prod_{\Delta\in T}|\Delta|_{\boldsymbol a}d\mathrm{Im}\boldsymbol a,
\end{equation}
where $d\mathrm{Im}\boldsymbol a= \prod_{e\in E} d\mathrm{Im}a_e.$

There are two issues for this state-integral \eqref{int}  to define a topological invariant of $M$.
The first one is the convergence of the integral. 
To resolve this issue, we restrict to the class of Kojima ideal triangulations, which always exist for 
 hyperbolic 3-manifold with totally geodesic boundaries. See Section ~\ref{sec:2.8} for the precise construction. We prove the following convergence result.

\begin{theorem}\label{Converge} Let $M$ be a hyperbolic $3$-manifold with totally geodesic boundary, and let $\mathcal T$ be a Kojima ideal triangulation of $M.$  Then there exists a $b_0\in (0,1)$ such that  for all $b<b_0,$ the integrand in $\mathrm{TV}_b(M,\mathcal T)$ is absolutely integrable.
\end{theorem}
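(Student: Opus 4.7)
The plan is to reduce the integrability of the state integral to the coercivity of a classical ``potential'' function on the edge-length space $\mathbb{R}_{>0}^{E}$, and then to establish that coercivity using the geometry of the Kojima decomposition.

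First, I would upgrade the $b\to 0$ asymptotics of Theorems~\ref{cov} and~\ref{cov2} to a \emph{pointwise} upper bound on $|\Delta|_{\boldsymbol a}$ that is uniform in the edge data $\boldsymbol l_\Delta$. The crucial tool is Proposition~\ref{EST}, the fine comparison between the quantum dilogarithm $S_b$ and the classical dilogarithm, applied inside the contour integral in \eqref{b-6j}. This should yield an inequality
$$\bigl|\{a_{e_1},\dots,a_{e_6}\}_b\bigr|\;\le\; C(b)\,\exp\!\Bigl(-\tfrac{1}{\pi b^2}\,\widetilde{\mathrm{Cov}}(\boldsymbol l_\Delta)\Bigr)$$
valid for every $\boldsymbol l_\Delta\in\mathbb{R}_{>0}^{6}$ (hyperideal, flat, or outside both loci), with $C(b)$ independent of $\boldsymbol l_\Delta$. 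The edge weights obey the exact bound $|e|_{\boldsymbol a}\leq\exp\!\bigl(l_e(1+b^{-2})\bigr)$ coming from $\sinh(x)\le \tfrac12 e^{x}$. Combining, the integrand is dominated by
$$C(b)^{|T|}\exp\!\Bigl(-\tfrac{1}{\pi b^2}\,F_b(\boldsymbol l)\Bigr),\qquad F_b(\boldsymbol l)=\sum_{\Delta\in T}\widetilde{\mathrm{Cov}}(\boldsymbol l_\Delta)-\pi(1+b^2)\sum_{e\in E}l_e.$$

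Second, I would prove that $F_b$ is coercive on $\mathbb{R}_{>0}^{E}$ for all sufficiently small $b$. Rewriting via the Luo-Yang identity $\sum_{\Delta}\widetilde{\mathrm{Cov}}(\boldsymbol l_\Delta)=\mathrm{Vol}(M)+\sum_{e}\tfrac{\omega_e(\boldsymbol l)}{2}\,l_e$, where $\omega_e(\boldsymbol l)$ is the total (extended) dihedral angle around $e$, the critical points of $F_0$ are characterised by the balancing condition $\omega_e(\boldsymbol l)=2\pi$ on every interior edge. This is precisely what the Kojima edge-length vector $\boldsymbol l^{*}\in\mathbb{R}_{>0}^{E}$ realises. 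The concavity of the volume function on the truncated hyperideal locus (Luo) and the controlled growth of $\widetilde{\mathrm{Cov}}$ at the boundary of the admissible region (Luo-Yang) make $F_0$ strictly convex with a unique minimum at $\boldsymbol l^{*}$, with $F_0(\boldsymbol l)\to +\infty$ as $\boldsymbol l$ leaves any compact subset of $\mathbb R_{>0}^{E}$. A perturbation argument in $b$ propagates coercivity to $F_b$ for sufficiently small $b$. Near the collapse locus $l_e\to 0$, the vanishing $|e|_{\boldsymbol a}\sim 4l_e^{2}/b^{2}$ absorbs any polynomial singularity of $|\Delta|_{\boldsymbol a}$.

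Coercivity of $F_b$ then yields integrability of $\exp(-F_b/(\pi b^{2}))$ on $\mathbb{R}_{>0}^{E}$ by a standard tail estimate, which in turn gives the absolute integrability of the full integrand in $\mathrm{TV}_b(M,\mathcal T)$. The main obstacle lies in the first step: Theorems~\ref{cov} and~\ref{cov2} state $b\to 0$ \emph{limits} at fixed $\boldsymbol l_\Delta$, whereas convergence requires a bound uniform over \emph{all} of $\mathbb{R}_{>0}^{6}$. Obtaining this demands careful use of Proposition~\ref{EST} along the contour $\Gamma$, with error terms absorbed uniformly across the three regimes (hyperideal, flat, non-admissible) of $\boldsymbol l_\Delta$. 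A secondary subtlety is that coercivity of $F_0$ is a genuinely Kojima-specific feature: for an arbitrary ideal triangulation the total covolume functional may fail to have a minimum in $\mathbb{R}_{>0}^{E}$, and the naive state integral would diverge, explaining why the Kojima hypothesis is imposed in the statement.
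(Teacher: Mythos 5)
Your architecture is viable and the end result is the right one, but it is organized quite differently from the paper. The paper proves Theorem~\ref{Converge} as a corollary of two other statements: Proposition~\ref{KA} (every Kojima ideal triangulation admits an angle structure) and the more general Theorem~\ref{WD4} (the state-integral converges absolutely for \emph{any} angled ideal triangulation). The proof of Theorem~\ref{WD4} never isolates a pointwise bound on a single $6j$-symbol; instead it keeps the contour variables $\xi_\Delta$ alongside the edge variables $\alpha_e$ and dominates the full exponent $\mathrm{Im}\mathcal V_b(\boldsymbol\alpha,\boldsymbol\xi)$ by a single function $\mathrm{Im}\mathcal U_{c_0}$ that is concave in all $|E|+|T|$ imaginary directions (Proposition~\ref{cd}) and has an interior maximum supplied by Corollary~\ref{Cor3}: a generalized polyhedral metric with cone angles $2\pi+t_0$, whose surplus $t_0$ is what absorbs the $2\pi b^2\sum_e l_e$ coming from $\sinh(l_e/b^2)$ once $b^2<t_0/(4\pi)$. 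Your coercivity of $F_b$ plays exactly the role of this $t_0$-margin, and your observation that coercivity fails for general triangulations is the correct explanation of why some hypothesis (Kojima, or more generally an angle structure) is needed; for the Kojima case your route through Proposition~\ref{Covconcave} (strict convexity of $\widetilde{\mathrm{Cov}}$ near $\boldsymbol l^*$, hence a unique minimizer, hence linear growth at infinity by convexity) is sound.

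The genuine gap is your first step, and it is larger than you acknowledge. The bound $|\{\cdot\}_b|\le C(b)\exp(-\widetilde{\mathrm{Cov}}(\boldsymbol l_\Delta)/\pi b^2)$ with $C(b)$ independent of $\boldsymbol l_\Delta$ does not follow from Theorems~\ref{cov}--\ref{cov2} together with Proposition~\ref{EST}: those theorems are asymptotic statements at \emph{fixed} $\boldsymbol l$, with error constants produced by compactness arguments (the choices of $\delta$, $L$, $\epsilon$, and the contour deformations around the singular points $\tau_i,\eta_j$ all depend on $\boldsymbol l$). Worse, after writing the integrand as $\exp\big((U_{\boldsymbol\alpha}+\kappa_{\boldsymbol\alpha}b^2+\nu_{\boldsymbol\alpha,b}b^4)/(2\pi\mathbf i b^2)\big)$, the subleading factor $\exp(\mathrm{Im}\kappa_{\boldsymbol\alpha}(\xi)/2\pi)$ is \emph{not} uniformly bounded: $\mathrm{Im}\kappa_{\boldsymbol\alpha}$ contains $7\pi\sum_k l_k-28\pi\,\mathrm{Im}\xi$ and can be of order $\sum_k l_k$ on the relevant portion of the contour, so a naive ``sup of the integrand times length of contour'' estimate only yields $C(b)\,e^{O(\sum_k l_k)}\,e^{-\widetilde{\mathrm{Cov}}(\boldsymbol l_\Delta)/\pi b^2}$; recovering the clean bound you state requires the second-order (Hessian) analysis in which $\exp(\kappa(\xi^*)/\pi\mathbf i)$ cancels against $U''(\xi^*)$ as in Proposition~\ref{Hess}. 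Either you must carry out that uniform Laplace-type estimate (which is essentially the technical content of the paper's Sections~\ref{subsec:U-property} and \ref{sec:4.2}), or you should weaken step~1 to allow the extra factor $e^{K\sum_k l_k}$ with an absolute constant $K$ and then check — which does work, but must be said — that the linear coercivity margin of $F_b$, of size $(c-\pi b^2\sqrt{|E|})\,|\boldsymbol l|/(\pi b^2)$, dominates $K\sum_k l_k$ for $b$ small. As stated, step~1 is the place where essentially all of the paper's work on Theorem~\ref{WD4} would have to be reproduced.
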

The second issue is in general the independence of the state-integral with respective to the triangulations. 
In our case, it is the independence of $\mathrm{TV}_b(M,\mathcal T) $ with respective to the Kojima ideal triangulation $\mathcal T$, which is resolved in the next theorem.

\begin{theorem}\label{WD3} Let $M$ be a hyperbolic $3$-manifold with totally geodesic boundary. Then    there exists a $b_0\in (0,1)$ such that  for all $b< b_0$ and  any  two Kojima ideal triangulations $\mathcal T_1$ and $\mathcal T_2$ of $M,$
$$\mathrm{TV}_b(M,\mathcal T_1)=\mathrm{TV}_b(M,\mathcal T_2).$$
\end{theorem}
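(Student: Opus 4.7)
The plan is to adapt the classical proof of Turaev-Viro invariance via Pachner moves and the pentagon identity to the continuous state-integral setting. Kojima's ideal polyhedral decomposition of $M$ is canonical, so any Kojima ideal triangulation refines the same ideal polyhedral decomposition $\mathcal P$. Comparing $\mathcal T_1$ and $\mathcal T_2$ thus reduces to comparing two triangulations of $\mathcal P$ sharing the same vertex set, and two such triangulations of a convex polyhedron are related by a finite sequence of diagonal flips, which combinatorially are 2-3 Pachner moves. Every intermediate triangulation in this sequence still refines $\mathcal P$ and is therefore itself a Kojima ideal triangulation, so Theorem~\ref{Converge} supplies absolute convergence throughout. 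It thus suffices to prove that $\mathrm{TV}_b(M,\mathcal T)$ is invariant under a single 2-3 move.

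The core algebraic ingredient is the pentagon (Biedenharn-Elliott) identity for the $b$-$6j$ symbols of $\mathrm U_{q\tilde q}\mathfrak{sl}(2;\mathbb R)$, which is the hallmark consistency relation for positive representations and appears in the work of Ponsot-Teschner (see also \cite{Teschner:2012em}). Schematically, it asserts that the product of the two $b$-$6j$ symbols assigned to a bipyramid split by a triangular face equals the integral over a new spectral variable $x\in\frac{Q}{2}+\mathbf i\mathbb R_{>0}$ of a product of three $b$-$6j$ symbols, weighted by $|S_b(2x)|^2$. That Plancherel-type weight is \emph{exactly} our edge weight $|e|_{\boldsymbol a}$. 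Applying Fubini to exchange this new integration with the integrations over the other edge colors and substituting the pentagon identity into the integrand of $\mathrm{TV}_b(M,\mathcal T_1)$ directly produces the integrand of $\mathrm{TV}_b(M,\mathcal T_2)$ when $\mathcal T_2$ differs from $\mathcal T_1$ by one 2-3 move. Iterating along the connecting sequence yields the claim.

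The expected main obstacle is absolute convergence control, which is needed both to justify Fubini and to assert that the pentagon identity (originally formulated as an equality of well-defined integrals on one specific contour) remains valid termwise after insertion into our state integral on $(\frac{Q}{2}+\mathbf i\mathbb R_{>0})^E$. The proof of Theorem~\ref{Converge} uses the Kojima geometric data (edge lengths, dihedral angles of truncated hyperideal, flat, or degenerate tetrahedra) together with the $b\to 0$ asymptotic estimates of Theorems~\ref{cov} and~\ref{cov2} to dominate the integrand. One therefore must verify a similar pointwise domination for the three-tetrahedra, one-extra-edge configuration produced after a 2-3 move, uniformly across the finitely many intermediate Kojima triangulations connecting $\mathcal T_1$ to $\mathcal T_2$. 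Since this sequence is finite, taking the minimum of triangulation-by-triangulation thresholds produces a common $b_0 \in (0,1)$ valid for all stages, provided each individual domination is established; this uniform estimate, rather than the algebraic identity itself, is the main technical point.
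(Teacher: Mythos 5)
There is a genuine gap, and it sits exactly where the paper has to work hardest. Your reduction assumes that (i) any two Kojima ideal triangulations are connected by a sequence of 2-3 flips supported inside the polyhedra of the canonical decomposition, and (ii) every intermediate triangulation in such a sequence is again a Kojima ideal triangulation, so that Theorem~\ref{Converge} applies throughout. Both assumptions fail. A Kojima ideal triangulation is not an arbitrary refinement of the Kojima decomposition: it is a cone from a chosen tip $p_i$ in each polyhedron, a cone from a chosen point $p_{ij}$ in each face, together with \emph{layered flat tetrahedra} inserted between identified faces whose induced triangulations disagree. Two Kojima triangulations differ precisely in these choices of tips and cone points, so the flat tetrahedra between \emph{different} polyhedra change as well; no sequence of flips confined to one polyhedron's interior can realize this, and the intermediate complexes are certainly not Kojima triangulations (so Theorem~\ref{Converge} does not cover them). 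Moreover, flip-connectivity of triangulations of a convex $3$-polytope on a fixed vertex set is not a theorem you can invoke. The paper replaces both assumptions by a long combinatorial construction (Propositions~\ref{prop:movepolygoncone} and~\ref{prop:movepolyhedracone}, via ``admissible separating paths'' and a partial order on them) producing an explicit sequence of moves whose intermediate triangulations merely \emph{support angle structures}, and then invokes the more general convergence result Theorem~\ref{WD4} (absolute convergence for any angled ideal triangulation) to justify Fubini at each step.

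The second missing idea is the move set itself. The pentagon identity handles 2-3 and 3-2 moves, but connecting two Kojima triangulations also forces configurations where classically one would use a 0-2 move; its algebraic counterpart, the orthogonality relation, produces a Dirac delta and an integrand that is not absolutely integrable, so it cannot be inserted into the state integral. The paper circumvents this by introducing a 4-4 move and proving a new ``4-4 Move Identity'' for the $b$-$6j$ symbols (Proposition~\ref{prop:32and44moves}(2), derived from the positive representations of $\mathrm U_{q\tilde q}\mathfrak{sl}(2;\mathbb R)$), together with continuity-in-$b$ and decay estimates (Proposition~\ref{prop:conv15}) needed to make sense of these identities as equalities of continuous functions for all $b\in(0,1)$. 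Your proposal correctly identifies the pentagon identity and the Plancherel weight $|S_b(2a)|^2$ as the algebraic engine, and correctly flags Fubini/convergence as the technical crux, but the route you sketch cannot be completed without (a) the angle-structure criterion for convergence of non-Kojima intermediate triangulations, (b) the explicit connecting sequence of moves, and (c) the 4-4 move identity; these are the actual content of the paper's proof.
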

\begin{definition}\label{VTV} Let $M$ be a hyperbolic $3$-manifold with totally geodesic boundary and  let $b_0 \in (0,1)$ be as in Theorem \ref{WD3}. For any $b<b_0,$ the $b$-th \emph{Virasoro-Turaev-Viro invariant} of $M$ is defined by 
$$\mathrm{TV}_b(M)=\mathrm{TV}_b(M,\mathcal T)$$
for any Kojima ideal triangulation $\mathcal T$ of $M.$ 
\end{definition}
We are now ready to state our main result on the asymptotic of $\mathrm{TV}_b(M)$.

\begin{theorem}\label{VC} Let $M$ be a hyperbolic $3$-manifold with totally geodesic boundary. Then as $b\to 0,$
\begin{equation}\label{eq:1.7}
\mathrm{TV}_b(M)=(2\mathbf i)^{\frac{\chi(M)}{2}}  \frac{e^{\frac{-\mathrm{Vol}(M)}{\pi b^2}}}{\sqrt{\pm \mathrm{Tor}(DM, \mathrm{Ad}_{\rho_{DM}})}}\Big(1+O\big(b^2\big)\Big),
\end{equation}
where $\mathrm{Vol}(M)$ is the hyperbolic volume of $M,$ $\mathrm{Tor}(DM, \mathrm{Ad}_{\rho_{DM}})$ is the Reidemeister torsion of the double $DM$ of $M$ twisted by the adjoint action of  the double $\rho_{DM}$ of the holonomy representation of the hyperbolic structure on $M$, and $\chi(M)$ is the Euler characteristic of $M.$  
As a consequence, 
$$\lim_{b\to 0}\pi b^2\log \mathrm{TV}_b(M)=-\mathrm{Vol}(M).$$
\end{theorem}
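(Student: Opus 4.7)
The plan is to prove Theorem \ref{VC} by Laplace's method applied to $\mathrm{TV}_b(M,\mathcal T)$ for a Kojima ideal triangulation $\mathcal T$ of $M$, identifying the saddle with the hyperbolic structure on $M$ and then matching the Gaussian prefactor with the adjoint twisted Reidemeister torsion of $DM$ via the Wong--Yang formula. Combining $|e|_{\boldsymbol a}=4\sinh(l_e)\sinh(l_e/b^2)$ with the tetrahedron asymptotics of Theorems \ref{cov}--\ref{cov2}, the pointwise leading exponent of the integrand is
$$F(\boldsymbol l)\;=\;\pi\sum_{e\in E}l_e\;-\;\sum_{\Delta\in T}\widetilde{\mathrm{Cov}}(\Delta),$$
which is smooth and strictly concave in $\boldsymbol l$ by Luo--Yang. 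Using $\partial\mathrm{Cov}(\Delta)/\partial l_e=\theta_e^\Delta/2$, the equation $\nabla F=0$ becomes $\sum_{\Delta\ni e}\theta_e^\Delta=2\pi$ at every edge, which is precisely the gluing condition for the truncated hyperideal tetrahedra of $\mathcal T$ to form the hyperbolic metric on $M$. Hence there is a unique critical point $\boldsymbol l^*$, the Kojima hyperbolic edge lengths, and expanding $\mathrm{Cov}(\Delta)=\mathrm{Vol}(\Delta)+\tfrac12\sum_k\theta_k^\Delta l_k$ and converting $\sum_\Delta\sum_k\theta_k^\Delta l_k^*$ into $2\pi\sum_e l_e^*$ via the gluing equation gives $F(\boldsymbol l^*)=-\mathrm{Vol}(M)$, the claimed exponential rate.

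\textbf{Laplace expansion.} Expanding $F$ to second order about $\boldsymbol l^*$, changing variables via $l_e=2\pi b\,\mathrm{Im}(a_e)$, and performing the resulting Gaussian integral against the Theorem \ref{cov} sub-leading prefactors $\prod_\Delta(-\det\mathrm{Gram}(\Delta))^{-1/4}$ and the $\prod_e 2\sinh(l_e^*)$ contributions from $|e|_{\boldsymbol a}$ yields an expansion
$$\mathrm{TV}_b(M,\mathcal T)\;=\;e^{-\mathrm{Vol}(M)/(\pi b^2)}\cdot\mathcal K(\boldsymbol l^*)\cdot\bigl(1+O(b^2)\bigr),$$
where $\mathcal K(\boldsymbol l^*)$ is an explicit product involving $(\det(-\mathrm{Hess}_{\boldsymbol l^*}F))^{-1/2}$, the edge and Gram factors above, and numerical constants from the Jacobian of the change of variables. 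Justifying Laplace's method requires (i) strict concavity of $F$, giving a non-degenerate global maximum at $\boldsymbol l^*$, and (ii) a $b$-uniform absolute-integrability tail bound confining the relevant integral to a shrinking neighborhood of $\boldsymbol l^*$.

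\textbf{Identification with Reidemeister torsion.} It remains to identify $\mathcal K(\boldsymbol l^*)$ with $(2\mathbf i)^{\chi(M)/2}/\sqrt{\pm\mathrm{Tor}(DM,\mathrm{Ad}_{\rho_{DM}})}$. This is accomplished by the Wong--Yang formula \cite{WY3}, which writes the adjoint twisted Reidemeister torsion of a closed 3-manifold endowed with a hyperbolic polyhedral metric as a signed ratio between a product of $\det\mathrm{Gram}(\Delta)$ over tetrahedra and the determinant of the Hessian of the Legendre-dual variational functional on edge lengths. Applied to the double $DM$ equipped with the doubled Kojima triangulation $\mathcal T\cup_{\partial M}\mathcal T$, these Gram and Hessian quantities appear as squares of the corresponding quantities on $M$, so taking the square root reproduces $\mathcal K(\boldsymbol l^*)$; the phase $(2\mathbf i)^{\chi(M)/2}$ is fixed by the Euler-characteristic count in the triangulation together with the signs of the various determinants, and compatibility with the reflection symmetry of the $b$-$6j$ symbol pins down the overall sign.

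\textbf{Main obstacle.} The principal technical difficulty is the $b$-uniform tail bound underlying Laplace's method. Theorem \ref{cov2} only provides $\limsup$-type asymptotics at configurations $\boldsymbol l$ containing flat or non-admissible tetrahedra, which is insufficient for a standard dominated-convergence argument; one must upgrade the pointwise asymptotics of the $b$-$6j$ symbols to a uniform-in-$\boldsymbol l$ majorant for the integrand. This is exactly the role of Proposition \ref{EST}, the sharp comparison between the double sine function $S_b$ and the classical dilogarithm, and it carries the bulk of the analytic work. A secondary structural difficulty is the appearance of the double $DM$ rather than $M$ itself: a Reidemeister torsion is naturally associated with a closed manifold, and the doubling in the previous step is what makes $\det(-\mathrm{Hess}_{\boldsymbol l^*}F)$ computed on $M$ match, up to squaring, the corresponding quantity on $DM$ with its natural doubled triangulation.
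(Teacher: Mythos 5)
Your outline shares the paper's key identifications (critical point $=$ hyperbolic gluing equations, critical value $=-\mathrm{Vol}(M)$ via the Legendre/Schl\"afli relation, Wong--Yang for the Hessian-to-torsion step), but the two-step structure — first replace each $b$-$6j$ symbol by its Theorem~\ref{cov} asymptotics, then run Laplace in $\boldsymbol l$ alone — has a genuine gap at the critical point itself. A Kojima ideal triangulation generically contains layered \emph{flat} tetrahedra, so at the saddle $\boldsymbol l^*$ some $\boldsymbol l^*_\Delta$ lie on $\partial\mathcal L$. There Theorem~\ref{cov} does not apply and its error term is not uniform as $\boldsymbol l_\Delta\to\partial\mathcal L$; the prefactor $(-\det\mathrm{Gram}(\Delta))^{-1/4}$ degenerates to $0^{-1/4}$ (the single-tetrahedron asymptotics become Airy-type, of order $b^{2/3}$ rather than $b$, by Theorem~\ref{cov2}(a) and Proposition~\ref{saddle2}); and $\widetilde{\mathrm{Cov}}$ is only $C^1$ across $\partial\mathcal L$ (Proposition~\ref{ccu}), so the object $\det(-\mathrm{Hess}_{\boldsymbol l^*}F)$ in your prefactor $\mathcal K(\boldsymbol l^*)$ is not defined. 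Your ``main obstacle'' paragraph locates the flat-tetrahedron issue only in the tails, but it sits at the center of the expansion.

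The paper circumvents this by \emph{not} collapsing the $6j$-symbols first: it performs a single saddle-point analysis in the joint variables $(\boldsymbol\alpha,\boldsymbol\xi)$, where the phase $\mathcal W$ is holomorphic and its full Hessian at $\boldsymbol z^*$ is non-degenerate even when individual blocks $\partial^2U/\partial\xi_\Delta^2$ vanish for flat $\Delta$ — the degeneracy in the $\xi_\Delta$ directions is compensated by the coupling to the $\alpha_e$ directions. Proposition~\ref{HessTor2} then identifies $\det(-\mathrm{Hess}\,\mathcal W(\boldsymbol z^*))$ with the torsion of $DM$ by a Schur-complement computation combined with Theorem~\ref{Torsion2} and the surgery formula, taking a limit through the non-flat metrics $\boldsymbol l^*_s$ of Proposition~\ref{KC}; this limiting device is essential and absent from your argument. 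Two further points: the $b$-uniform tail bound does rest on Proposition~\ref{EST}, but the decisive ingredient is the angle-structure deformation of Corollary~\ref{Cor3} (a generalized metric with cone angles $2\pi+t_0$), which supplies the linear decay $\sum_e(\widetilde\theta_{c_0,t_0,e}-2\pi)l_e$ dominating the integrand at infinity; and the contour in the $\xi_\Delta$ variables must be deformed off the singular points $\eta_j$ and then pushed along a vector field so that $\boldsymbol z^*$ is the strict maximum of $\mathrm{Im}\,\mathcal W$ on the contour — a step with no counterpart in a purely real $\boldsymbol l$-space Laplace argument.
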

We refer to Section~\ref{sec:2.9} for further background on the adjoint twisted Reidemeister torsion.
\bigskip

\begin{remark}
The verification of the convergence and the topological invariance presents novel difficulties when studying invariants defined by a state-integral. On the contrary, 
for the quantum invariants constructed from finite-dimensional irreducible representations of $\mathrm U_{q}\mathfrak{sl}(2;\mathbb C)$ such as the colored Jones polynomials, the Reshetikhin-Turaev invariants and the Turaev-Viro invariants which are defined by a finite sum, the convergence holds automatically  
and the topological invariance follows clearly from the properties of the building blocks such as the Jones-Wenzl projectors, the Kirby coloring and the 6j symbols. However, the study of their asymptotics and proving the corresponding Volume Conjectures appear much more challenging. 
\end{remark}

Our proof of Theorem \ref{Converge}  relies on the relationship between angle structures and hyperbolic polyhedral metrics 
established by  Luo-Yang~\cite{LY} (see Section \ref{anglestr} for a detailed background). In Section~\ref{sec:2.8}, we prove that Kojima ideal triangulations admit angle structures; 
and in Section~\ref{sec:4.2} we prove the more general Theorem~\ref{WD4} that the state-integral~\eqref{int} absolutely converges for all ideal triangulations that admit angle structures.

Theorem~\ref{VTV} is proved in Section \ref{sec:invariance}.
Recall  for the Turaev-Viro invariant for finite-dimensional irreducible representations of  $\mathrm U_{q}\mathfrak{sl}(2;\mathbb C)$  that the topological invariance follows directly from the orthogonality and  the pentagon equation of the $6j$ symbol, corresponding respectively to the invariance under  the $0$-$2$ and $2$-$3$ Pachner moves. 
In our case, although the orthogonality and  the pentagon equation still hold, the state-integral may not absolutely converge for intermediate ideal triangulations, ruining the invariance under the Pachner moves. 
We resolve this issue  by carefully choosing a sequence of Pachner moves between Kojima ideal triangulations such that each intermediate ideal triangulation supports angle structures, and the absolute convergence follows from Theorem~\ref{WD4}.
We tend to believe that for all triangulations where the state-integral converges absolutely, the value should agree with our invariant.    

The proof of Theorem \ref{VC}, as carried out in Section \ref{sec:4.3}, uses a saddle point approximation as in the proof of Theorems~\ref{cov} and~\ref{cov2}. To verify the conditions of the saddle point approximation, we use the results by Luo-Yang \cite{LY} to identify the critical point for the integrand with the hyperbolic structure, the Murakami-Yano volume formula~\cite{MY, U, MU, BY} to identify the critical value with the negative of the hyperbolic volume, 
and Wong-Yang formula~\cite{WY3} to identify the Hessian determinant for the integrand at the critical point with the adjoint twisted Reidemeister torsion. 
Finally, by carefully choosing an integral contour, we obtain Theorem \ref{VC}. It is worth noticing that two key ingredients of our proof, the Murakami-Yano and the Wong-Yang formulae originate from the asymptotics of the quantum invariants constructed from the finite-dimensional irreducible representations of $\mathrm U_{q}\mathfrak{sl}(2;\mathbb C)$.


\subsection{Outlook and perspectives}\label{subsec:outlook}

\noindent{\bf Towards the Teichm\"uller/Virasoro TQFT.}  Our invariant is closely related to several established $3$-manifold invariants, including those defined by Andersen-Kashaev~\cite{AK} and Kashaev-Luo-Vartanov~\cite{KLV} in the mathematics literature, Collier-Eberhardt-Zhang~\cite{CEZ} and Hartman~\cite{Hartman1,Hartman2} in  the physics literature. We believe that these invariants come from a single 3D topological quantum field theory, known as the Teichm\"uller TQFT~\cite{AK} or the Virasoro TQFT~\cite{CEZ}. The significance of this TQFT in studying 3D  quantum gravity, non-compact Chern-Simons theory, and the quantum Teichm\"uller theory is detailed in~\cite{AK,CEZ} and references therein. We plan to extend our invariants to cusped and closed hyperbolic $3$-manifolds, completing the construction of the TQFT. We also plan to clarify their relationship with the invariants defined in~\cite{AK,KLV,CEZ}, and to establish their connection with $3$-dimensional hyperbolic geometry in analogy to Theorem \ref{VC}.  

\bigskip

\noindent{\bf Connection to Liouville conformal field theory.} Under the 3D TQFT/2D CFT correspondence~\cite{Wit89},
the 2D conformal field theory corresponding to our invariant is the Liouville CFT~\cite{Verlinde:1989ua}, which originated in Polyakov's path integral formulation of bosonic string theory~\cite{Pol81}. 
A key datum in the conformal bootstrap formulation of Liouville CFT, the Virasoro conformal blocks,  is supposed to provide a canonical basis for the Teichmuller/Virasoro TQFT.
It was conjectured by Ponsot and Teschner~\cite{ponsot1999liouville} that our $b$-$6j$ symbol is the fusion kernel of the Virasoro conformal blocks.
As a recent major breakthrough in mathematical physics, Liouville CFT has been rigorously constructed via probabilistic methods and the conformal bootstrap is established in this framework~\cite{DKRV16,KRV19b,GKRV20,GKRV21}.
Furthermore, Ponsot-Teschner's conjecture on the $b$-$6j$ symbol is proved in~\cite{GRSS}. 
We plan to investigate the relation to the Teichmuller/Virasoro TQFT and the Liouville CFT, and use this connection and tools from Liouville CFT to better study the Teichmuller/Virasoro TQFT.

\bigskip

\noindent{\bf Volume Conjectures for state-sum invariants.} 
The $6j$-symbols for the finite-dimensional irreducible representations of $\mathrm U_{q}\mathfrak{sl}(2,\mathbb C)$~\cite{Kirillov:191317} arise as residues of the meromorphic extension of the $b$-$6j$ symbols~\cite{Pawelkiewicz:2013wga}. 
We plan to use this fact as a starting point to investigate the relationship between our invariants and those constructed from the finite-dimensional irreducible representations of $\mathrm U_{q}\mathfrak{sl}(2,\mathbb C)$ such as the colored Jones polynomials, the Reshetikhin-Turaev and the Turaev-Viro invariants, with the hope that our 
Theorem~\ref{VC} can shed light on the Volume Conjectures for those invariants.

\bigskip

\noindent{\bf Casson Conjecture for angle structures.} 
As proved in Theorem \ref{WD4}, the state-integral $\mathrm{TV}_b(M,\mathcal T)$ converges for any angled ideal triangulation $\mathcal T$ of $M$; and by the same argument as in the proof of Theorem \ref{VC}, we can prove that the exponential decay rate of $\mathrm{TV}_b(M,\mathcal T)$ is at least the volume of any angle structure on $(M,\mathcal T).$ We believe that for any two angled ideal triangulations $\mathcal T_1$ and $\mathcal T_2,$ the two state-integrals  $\mathrm{TV}_b(M,\mathcal T_1)$ and  $\mathrm{TV}_b(M,\mathcal T_2)$ should coincide, which is stronger than the current Theorem~\ref{WD3}. If this is the case, then it will confirm the long-standing Casson Conjecture that the volume of any angle structure on $(M,\mathcal T)$ is at most the hyperbolic volume of the $M.$ Such a relationship between invariants that exponentially decay and the Casson Conjecture was first observed by Ben Aribi-Wong\,\cite{aribi2024andersen} in their study of the asymptotics of the Andersen-Kashaev Teichm\"uller TQFT invariants. 
\bigskip

\noindent\textbf{Acknowledgments.} We thank Lorentz Eberhardt,  Igor Frenkel,  Ling-Yan Hung, Rinat Kashaev, Feng Luo, Nicolai Reshetikhin, Joerg Teschner, and Ka Ho Wong for helpful discussions. We also thank Thomas Hartman for bringing our attention to his work~\cite{Hartman1,Hartman2}. T.L., X.S., and B.W.\ are supported by National Key R\&D Program of China (No.\ 2023YFA1010700). S.M.\ is supported by National Natural Science Foundation of China (No.12371124). T.Y.\ is supported by NSF Grants DMS-2203334 and DMS-2505908.


\section{Preliminaries}\label{sec:pre}

In this section, we provide background on various special functions and on hyperbolic 3-manifolds that are needed for a precise understanding of our main results and their proofs. The results in the first three subsections provide a fine asymptotic estimate of the double sine functions, which will be used in Section \ref{sec:tetrahedron}. Section~\ref{subsec:contour}  clarifies the choice of the contour in the definition of the $b$-$6j$ symbol. Section \ref{extcov} lists relevant geometric properties of the truncated hyperbolic tetrahedra. Sections \ref{HPM} and \ref{sec:2.8} introduce generalized hyperbolic polyhedral metrics, which will be used in Section \ref{sec:4.3}. Section \ref{anglestr} contains necessary results about angle structures for proving the convergence theorem in Section \ref{sec:4.2}. Section \ref{sec:2.9} recalls the twisted adjoint Reidemeister torsions, and Section~\ref{subsec:saddle} recalls the saddle point approximations.


\subsection{Double sine function}
Let $S_b$ be the double sine function  defined in~\eqref{eq:def-S}. 
In the rest of this paper, we will intensively use the function $\log S_b\Big(\frac{x}{\pi b}+\frac{b}{2}\Big)$. 
For  $-\frac{\pi b^2}{2}<\mathrm{Re}x<\pi + \frac{\pi b^2}{2}$ so that $0<\mathrm{Re}\Big(\frac{x}{\pi b}+\frac{b}{2}\Big)<Q,$ we have 
$$\log S_b\Big(\frac{x}{\pi b}+\frac{b}{2}\Big)=\int_\Omega\frac{\sinh\Big(\big(\frac{1}{2b}-\frac{x}{\pi b}\big)t\Big)}{4t\sinh(\frac{bt}{2})\sinh(\frac{t}{2b})}dt.$$
The function $\log S_b\Big(\frac{x}{\pi b}+\frac{b}{2}\Big)$ can be holomorphically extended to the region $\mathbb C\setminus (-\infty,0]\cup[\pi,\infty)$. 
To achieve the extension, we use (\ref{FE1}) to get
$$S_b\Big(\frac{x+\pi }{\pi b} +\frac{b}{2}\Big)=2\cos\Big(\frac{x}{b^2}\Big)S_b\Big(\frac{x}{\pi b} +\frac{b}{2}\Big).$$
If $\mathrm{Re}\big(\frac{x}{b^2}\big) \in (-\pi, \pi)$ and $\mathrm{Im}x>0,$  then 
$$ \log\Big(2\cos \big(\frac{x}{b^2}\big)\Big)=-\frac{\mathbf ix}{b^2}+\log\Big(1+e^{\frac{2\mathbf ix}{b^2}}\Big),$$
which is holomorphic for all $x\in\mathbb C$ with $\mathrm{Im}x>0.$ Similarly, if $\mathrm{Re} \big(\frac{x}{b^2} \big)\in (-\pi, \pi)$ and $\mathrm{Im}x<0,$  then 
$$ \log\Big(2\cos \big(\frac{x}{b^2}\big)\Big)=\frac{\mathbf ix}{b^2}+\log\Big(1+e^{-\frac{2\mathbf ix}{b^2}}\Big),$$
which is holomorphic for all $x\in \mathbb C$ with $\mathrm{Im}x <0.$ Therefore, for $x\in\mathbb C\setminus (-\infty,0]\cup[\pi,\infty)$ with $\mathrm{Im}x>0,$ we can extend $\log S_b\Big(\frac{x}{\pi b}+\frac{b}{2}\Big)$ holomorphically  by the functional equation 
\begin{equation}\label{FE3}
\log S_b\Big(\frac{x+\pi}{\pi b}+\frac{b}{2}\Big)=\log S_b\Big(\frac{x}{\pi b}+\frac{b}{2}\Big) - \frac{\mathbf ix}{b^2}+\log\Big(1+e^{\frac{2\mathbf ix}{b^2}}\Big);
\end{equation}
and  for $x\in\mathbb C\setminus (-\infty,0]\cup[\pi,\infty)$ with $\mathrm{Im}x<0,$ we can extend $\log S_b\Big(\frac{x}{\pi b}+\frac{b}{2}\Big)$ holomorphically by the functional equation 
\begin{equation}\label{FE4}
\log S_b\Big(\frac{x+\pi }{\pi b}+\frac{b}{2}\Big)=\log S_b\Big(\frac{x}{\pi b}+\frac{b}{2}\Big) + \frac{\mathbf ix}{b^2}+\log\Big(1+e^{-\frac{2\mathbf ix}{b^2}}\Big).
\end{equation}


\subsection{Dilogarithm function}\label{dilog}

The double sine function $S_b$ is closely related to the dilogarithm function $\mathrm{Li}_2,$ which we will recall in this subsection.  Let $\log:\mathbb C\setminus (-\infty, 0]\to\mathbb C$ be the standard logarithm function defined by
$$\log z=\log|z|+\mathbf i\arg z$$
with $-\pi<\arg z<\pi.$ The \emph{dilogarithm function} $\mathrm{Li}_2: \mathbb C\setminus (1,\infty)\to\mathbb C$ is defined by
$$\mathrm{Li}_2(z)=-\int_0^z\frac{\log (1-u)}{u}du$$
where the integral is along any path in $\mathbb C\setminus (1,\infty)$ connecting $0$ and $z,$ which is holomorphic in $\mathbb C\setminus [1,\infty)$ and continuous in $\mathbb C\setminus (1,\infty).$ 
The dilogarithm function satisfies the follow properties (see eg. Zagier\,\cite{Z})
\begin{equation}\label{Li2}
\mathrm{Li}_2\Big(\frac{1}{z}\Big)=-\mathrm{Li}_2(z)-\frac{\pi^2}{6}-\frac{1}{2}\big(\log(-z)\big)^2.
\end{equation} 
In the unit disk $\big\{z\in\mathbb C\,\big|\,|z|<1\big\},$ 
$$\mathrm{Li}_2(z)=\sum_{n=1}^\infty\frac{z^n}{n^2}.$$

Next we recall some properties of the function $\mathrm{Li}_2\big(e^{2\mathbf ix}\big)$ which we will make an intensive use in the rest of the paper.  For $x\in \mathbb C$  either with $\mathrm{Re}x\in (0,\pi)$ or with $\mathrm{Re}x=0$ or $\pi$ and $\mathrm{Im}x>0$ so that $e^{2\mathbf ix}\in\mathbb C\setminus [1,+\infty),$ the value $\mathrm{Li}_2\big(e^{2\mathbf ix}\big)$ can be calculated by the following integral 
\begin{equation}\label{ID}
\mathrm{Li}_2\big(e^{2\mathbf ix}\big)=2\pi \mathbf i \int_\Omega \frac{e^{(2x-\pi)t}}{4\pi t^2\sinh(\pi t)}dt,
\end{equation}
where $\Omega$ is the same contour as before (see eg. \cite[Lemma 2.3]{WY2}). Since $\mathrm{Li}_2(z)$ is analytic for $z$ in  the unit disk, we have 
for $x\in\mathbb C$ with $\mathrm{Im}x>0$ so that $e^{2\mathbf i x}$ is in the unit disk that 
\begin{equation}\label{period1}
\mathrm{Li}_2\big(e^{2\mathbf i(x+\pi)}\big)=\mathrm{Li}_2\big(e^{2\mathbf ix}\big).
\end{equation}
By (\ref{Li2}), we have for $x\in\mathbb C$ with $\mathrm{Re}x\in (0,\pi) $ that 
$$\mathrm{Li}_2\big(e^{2\mathbf ix}\big)=\mathrm{Li}_2\big(e^{-2\mathbf ix}\big)+2x^2-2\pi x +\frac{\pi ^2}{3},    $$
and together with (\ref{period1}) we have for $x\in\mathbb C$ with $\mathrm{Re}x\in (0,\pi)$ and $\mathrm{Im}x<0$ that 
\begin{equation}\label{period2}
\mathrm{Li}_2\big(e^{2\mathbf i(x+\pi)}\big)=\mathrm{Li}_2\big(e^{2\mathbf ix}\big)+4\pi x.
\end{equation}
Using (\ref{period1}) and (\ref{period2}), we can extend $\mathrm{Li}_2\big(e^{2\mathbf ix}\big)$ to a continuous function on $\mathbb C\setminus (-\infty,0)\cup(\pi,\infty)$ which is holomorphic on $\mathbb C\setminus (-\infty,0]\cup[\pi,\infty).$  

The following closely related will be used intensively in the rest of the paper:
\begin{equation}\label{eq:Lx}
L(x)=x^2-\pi x +\frac{\pi^2}{6}-\mathrm{Li}_2\big(e^{2\mathbf ix}\big).
\end{equation}
This function is continuous on $\mathbb C\setminus (-\infty,0)\cup(\pi,\infty)$ and  holomorphic on $\mathbb C\setminus (-\infty,0]\cup[\pi,\infty),$ with the derivative \begin{equation}\label{derivative}
L'(x)= 2x-\pi+2\mathbf i\log(1-e^{2\mathbf i x}).
\end{equation}

By  (\ref{period1}) and (\ref{period2}), we have for $x\in\mathbb C\setminus (-\infty,0]\cup[\pi,\infty)$ with $\mathrm{Im}x>0$ that 
\begin{equation}\label{period3}
L(x+\pi)=L(x)+2\pi x,
\end{equation}
and for $x\in \mathbb C\setminus (-\infty,0]\cup[\pi,\infty)$ with $\mathrm{Im}x< 0$ that 
\begin{equation}\label{period4}
L(x+\pi)=L(x)-2\pi x.
\end{equation}


\subsection{Relationship between double sine function and dilogarithm function}\label{SbLi}
We  now approximate $S_b\Big(\frac{x}{\pi b} +\frac{b}{2}\Big)$ using $L(x)$. Let $\nu_b(x)$ be such that
$$2\pi \mathbf i b^2 \log S_b\Big(\frac{x}{\pi b} +\frac{b}{2}\Big)-L(x)=\nu_b(x)b^4.$$ For $\delta>0$ and $K>0,$  as depicted in Figure~\ref{DdK}, let 
$$H_{\delta,K}=\big\{ x\in \mathbb C\ |\ \text{either }\delta \leqslant \mathrm{Re}x\leqslant \pi-\delta, \text{ or } |\mathrm{Re}x|\leqslant K \text{ and } |\mathrm{Im}x| \geqslant \delta \big\}.$$

\begin{figure}[htbp]
\centering
\includegraphics[scale=0.4]{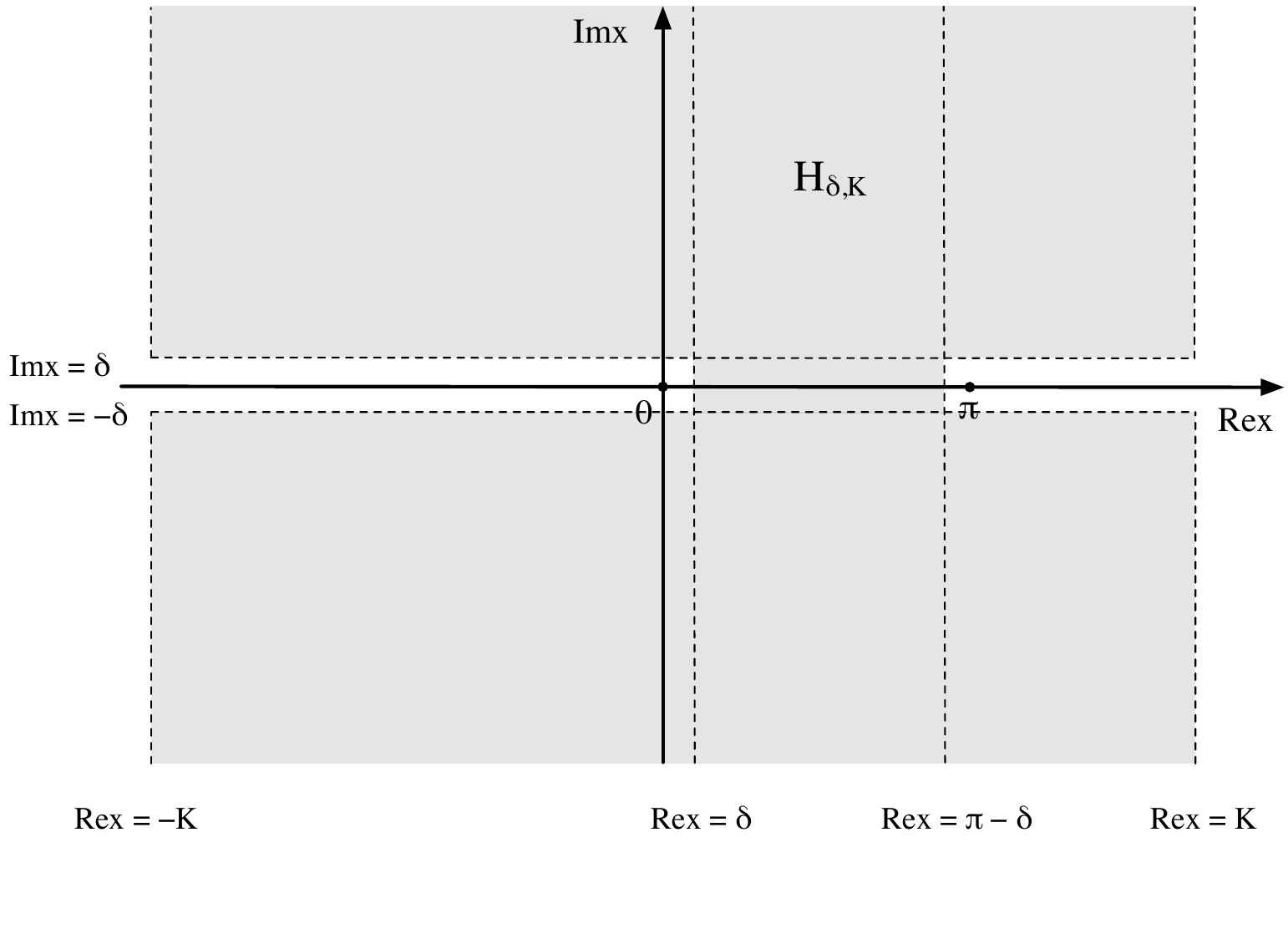}
\caption{Region $H_{\delta,K}$}
\label{DdK}
\end{figure}

\begin{proposition}\label{EST}  There exists a $B=B_{\delta,K}>0$  independent of $b$ such that for all $b\in(0,1)$ and for all  $x$ in $H_{\delta, K},$ 
$$|\nu_b(x)|\leqslant B,$$
i.e., 
$$\bigg|2\pi \mathbf i b^2 \log S_b\Big(\frac{x}{\pi b} +\frac{b}{2}\Big)-L(x)\bigg|<Bb^4.$$
\end{proposition}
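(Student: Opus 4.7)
The plan is to compare the integral representations of $2\pi\mathbf i b^2\log S_b\bigl(\tfrac{x}{\pi b}+\tfrac{b}{2}\bigr)$ and $L(x)$ on a common contour, extract the matching leading terms, and estimate the remainder uniformly in $b^4$.

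\textbf{Step 1.} First restrict to $x$ in the central strip $\Sigma_\delta=\{x:\delta\le\mathrm{Re}\,x\le\pi-\delta\}$, where both sides admit integral representations. Substitute $t=2bu$ in the defining integral of $\log S_b$ to rewrite
\[
2\pi\mathbf i b^2\log S_b\Big(\tfrac{x}{\pi b}+\tfrac{b}{2}\Big)=2\pi\mathbf i\int_\Omega\frac{\sinh((1-2x/\pi)u)}{4u^2\sinh u}\cdot\frac{b^2u}{\sinh(b^2u)}\,du,
\]
and Taylor expand $\tfrac{b^2u}{\sinh(b^2u)}=1-\tfrac{(b^2u)^2}{6}+O((b^2u)^4)$. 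For the $O(b^0)$ piece, split $\sinh((1-2x/\pi)u)$ into exponentials and reduce to~\eqref{ID}: one exponential matches $\mathrm{Li}_2(e^{2\mathbf i x})$ directly, while the other is treated by $u\mapsto -u$ and the contour-swap picks up $2\pi\mathbf i$ times the residue $\tfrac{(1-2x/\pi)^2}{8}-\tfrac{1}{24}$ at $u=0$. The polynomial terms then collect to exactly $x^2-\pi x+\tfrac{\pi^2}{6}$, so the $O(b^0)$ term equals $L(x)$ on the nose.

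\textbf{Step 2.} The leading $O(b^4)$ correction, from the $-\tfrac{(b^2u)^2}{6}$ term, is
\[
-\frac{\pi\mathbf i b^4}{12}\int_\Omega\frac{\sinh((1-2x/\pi)u)}{\sinh u}\,du=-\frac{\mathbf i\pi^2 b^4\cot x}{12},
\]
by the classical identity $\int_{\mathbb R}\sinh(\gamma u)/\sinh u\,du=\pi\tan(\pi\gamma/2)$, valid since $|\mathrm{Re}(1-2x/\pi)|<1$ on $\Sigma_\delta$. The factor $|\cot x|\le 1/\sin\delta$ is bounded uniformly on $\Sigma_\delta$ \emph{including} for unbounded $\mathrm{Im}\,x$ (indeed $|\cot x|\to 1$ as $|\mathrm{Im}\,x|\to\infty$). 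The higher-order terms $\sum_{k\ge 2}c_k b^{4k}\int_\Omega u^{2k-2}\sinh((1-2x/\pi)u)/\sinh u\,du$ are $\gamma$-derivatives of $\pi\tan(\pi\gamma/2)$, hence polynomials in $\cot x$ and $1/\sin x$ that remain bounded on $\Sigma_\delta$. The portion $|b^2u|\ge M$ of the contour contributes super-polynomially small corrections using the exponential decay of $1/\sinh(b^2u)$ combined with $|\sinh(\gamma u)/\sinh u|\le Ce^{-2\delta|u|/\pi}$; this last bound is uniform in $\mathrm{Im}\,x$ thanks to the identity $|\sinh((a+\mathbf i b)u)|^2=\sinh^2(au)+\sin^2(bu)$. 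Summing gives $|\nu_b(x)|\le B_\delta$ for all $x\in\Sigma_\delta$ and all $b\in(0,1)$.

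\textbf{Step 3.} To cover the remaining part of $H_{\delta,K}$, namely $|\mathrm{Re}\,x|\le K$ with $|\mathrm{Im}\,x|\ge\delta$, shift $x$ by integer multiples of $\pi$ into $\Sigma_\delta$ using~\eqref{FE3},~\eqref{FE4} for $\log S_b$ and~\eqref{period3},~\eqref{period4} for $L(x)$. The linear-in-$x$ shifts $\pm 2\pi x$ on the two sides cancel exactly, leaving a residual $\pm 2\pi\mathbf i b^2\log(1+e^{\pm 2\mathbf i x/b^2})$ which for $|\mathrm{Im}\,x|\ge\delta$ is bounded by $Cb^2e^{-2\delta/b^2}$, super-polynomially small and easily absorbed in the $O(b^4)$ bound. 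Since at most $\lceil K/\pi\rceil+1$ translations suffice to reach $\Sigma_\delta$, we obtain the final constant $B=B_{\delta,K}$.

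\textbf{Main obstacle.} The delicate point is uniformity in $\mathrm{Im}\,x$ on $\Sigma_\delta$, where $\mathrm{Im}\,x$ is unbounded. Pointwise bounds on the correction integrand near $u=0$ produce a factor $|1-2x/\pi|$ that blows up as $|\mathrm{Im}\,x|\to\infty$. The resolution is to evaluate the correction integrals in closed form via $\int\sinh(\gamma u)/\sinh u\,du=\pi\tan(\pi\gamma/2)$ and its $\gamma$-derivatives; these expressions are bounded on $\Sigma_\delta$ and encode the oscillatory cancellation in $u$ (coming from $\sinh(\gamma u)$ with complex $\gamma$) that is invisible to crude absolute-value estimates of the integrand.
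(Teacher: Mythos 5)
Your route is genuinely different from the paper's (which passes to Faddeev's quantum dilogarithm, quotes an estimate from [BGP] for the strip $\delta\leqslant\mathrm{Re}\,x\leqslant\pi-\delta$, and proves a separate integration-by-parts estimate, Lemma \ref{Wu}, for $0\leqslant\mathrm{Re}\,x\leqslant\pi$ with $|\mathrm{Im}\,x|\geqslant\delta$), and your Step 1 identification of the $O(b^0)$ term with $L(x)$ is correct. But there are two genuine gaps. The first is in Step 2: the claim that the higher-order terms can be summed. Over the full contour the $k$-th term is $c_kb^{4k}\partial_\gamma^{2k-2}\bigl[\pi\tan(\pi\gamma/2)\bigr]$ with $\gamma=1-2x/\pi$; since $\tan(\pi\gamma/2)$ has poles at $\gamma=\pm1$ at distance $2\delta/\pi$ from the strip $|\mathrm{Re}\,\gamma|\leqslant 1-2\delta/\pi$, Cauchy's estimates give $|\partial_\gamma^{2k-2}\tan(\pi\gamma/2)|\sim(2k-2)!\,(\pi/\delta)^{2k-2}$, while $|c_k|$ decays only geometrically (radius of convergence $\pi$). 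The series of closed-form evaluations therefore diverges for every fixed $b>0$, so "summing gives $|\nu_b(x)|\leqslant B_\delta$" is unjustified; and on the truncated contour $|b^2u|<M$ the term-by-term integrals are no longer the closed-form derivatives. This is repairable — write $t/\sinh t=1-t^2/6+t^4h(t)$ with $h$ bounded for $|t|\leqslant M<\pi$; the factor $u^4$ (even $u^2$) in the remainder kills the $\log(2+|\mathrm{Im}\,x|)$ growth of $\int_{|u|\leqslant1}|\sinh(\gamma u)/\sinh u|\,du$ that you correctly identify as the obstacle, yielding an $O(b^8)$ remainder — but the infinite-series argument as written does not close.

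The second gap is more structural: Step 3 does not cover all of $H_{\delta,K}$. Translation by multiples of $\pi$ preserves $\mathrm{Re}\,x\ (\mathrm{mod}\ \pi)$, so any point with $\mathrm{Re}\,x$ within $\delta$ of $\pi\mathbb Z$ — for instance $x=\mathbf i$, which lies in $H_{\delta,K}$ since $\mathrm{Re}\,x=0$ and $\mathrm{Im}\,x\geqslant\delta$ — can never be shifted into $\Sigma_\delta=\{\delta\leqslant\mathrm{Re}\,x\leqslant\pi-\delta\}$. At such points your central-strip analysis degenerates: $\mathrm{Re}\,\gamma=\pm1$, the factor $\sinh(\gamma u)/\sinh u$ has modulus of order $1$ along the real contour instead of decaying like $e^{-2\delta|u|/\pi}$, the identity $\int\sinh(\gamma u)/\sinh u\,du=\pi\tan(\pi\gamma/2)$ is no longer absolutely convergent, and the absolute-value bound on the tail $|u|\geqslant M/b^2$ gives only $O(\int_{|u|\geqslant M/b^2}u^{-2}du)=O(b^2)$ rather than the required $O(b^4)$. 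Here one must exploit the oscillation supplied by $|\mathrm{Im}\,x|\geqslant\delta$ (equivalently $|\mathrm{Im}\,\gamma|\geqslant2\delta/\pi$), e.g.\ by integrating by parts in $u$ as in the paper's Lemma \ref{Wu}. As it stands, your argument proves the bound only on the part of $H_{\delta,K}$ where $\mathrm{Re}\,x$ stays bounded away from $\pi\mathbb Z$, and a separate estimate is needed for the remaining vertical strips.
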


For the proof of Proposition \ref{EST}, we 
will need \emph{Faddeev's quantum dilogarithm function}
$$\Phi_b(z)\doteq\exp\bigg(\int_\Omega \frac{e^{-2\pi\mathbf ibzt}}{4t\sinh(\pi t)\sinh(\pi t b^2)}dt\bigg).$$
By \cite[A4, A5, A13]{Teschner:2012em}, we have $$\Phi_b(z)=S_b\Big(\mathbf iz+\frac{Q}{2}\Big)e^{\frac{\pi \mathbf i z^2}{2}+\frac{\pi \mathbf i}{24}(b^2+b^{-2})}.$$
We need the following Lemma \ref{BGP} and Lemma \ref{Wu}.

\begin{lemma}\label{BGP} For $\delta>0,$ there exists a $B_{\delta}>0$ independent of $b$ such that for all $b\in(0,1)$ and all $y\in\mathbb R+ \mathbf i[-\pi + \delta,\pi-\delta],$ 
\begin{equation}\label{norm}
\Big|\log \Phi_b\big(\frac{y}{2\pi b}\big)-\frac{1}{2\pi \mathbf i b^2}\mathrm{Li}_2\big(-e^y\big)\Big|<B_\delta b^2.
\end{equation}
\end{lemma}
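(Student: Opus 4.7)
The plan is to write both quantities as contour integrals along the same contour $\Omega$ and then bound their difference uniformly in $y$ and $b$.

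\textbf{Step 1: Parallel integral representations.} Setting $z = y/(2\pi b)$ in the definition of $\Phi_b$ yields
$$\log \Phi_b\!\left(\frac{y}{2\pi b}\right) = \int_\Omega \frac{e^{-\mathbf i y t}}{4 t \sinh(\pi t) \sinh(\pi t b^{2})}\, dt,$$
which converges absolutely for $|\mathrm{Im}\, y|<\pi(1+b^{2})$. To obtain a matching expression for $\mathrm{Li}_2(-e^{y})$, apply (\ref{ID}) with $x = \frac{\pi}{2} - \frac{\mathbf i y}{2}$ (so that $e^{2\mathbf i x}=-e^{y}$ and $2x-\pi = -\mathbf i y$), giving
$$\frac{1}{2\pi\mathbf i b^{2}}\,\mathrm{Li}_2(-e^{y}) = \int_\Omega \frac{e^{-\mathbf i y t}}{4\pi b^{2}\, t^{2}\, \sinh(\pi t)}\, dt.$$
Subtracting,
$$\log \Phi_b\!\left(\tfrac{y}{2\pi b}\right) - \frac{1}{2\pi\mathbf i b^{2}}\mathrm{Li}_2(-e^{y}) = \int_\Omega \frac{e^{-\mathbf i y t}}{4 t \sinh(\pi t)}\, F(\pi t b^{2})\, dt,$$
where $F(s) := \frac{1}{\sinh s} - \frac{1}{s}$ is odd, satisfies $F(s) = -s/6 + O(s^{3})$ near $s=0$, and decays to $0$ on the real axis.

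\textbf{Step 2: Elementary bounds on the integrand.} From the Taylor expansion at $0$ together with the exponential decay of $1/\sinh(s)$ on $\mathbb{R}$, one obtains an absolute constant $C_{0}$ with $|F(s)| \leq C_{0}\min(|s|,1)$ for real $s$. In particular $|F(\pi t b^{2})| \leq C_{0}\pi|t|b^{2}$ whenever $|t|\leq 1/b^{2}$. Writing $y = u + \mathbf i v$ with $v \in [-\pi+\delta,\pi-\delta]$ yields $|e^{-\mathbf i y t}| = e^{vt}$ for real $t$, hence $|e^{-\mathbf i y t}| \leq e^{(\pi-\delta)|t|}$ uniformly in $y$. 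Combining this with $|\sinh(\pi t)|\geq c\,e^{\pi|t|}$ for $|t|\geq 1$ gives $|e^{-\mathbf i y t}/\sinh(\pi t)| \leq C_\delta e^{-\delta|t|}$ on the tails; near $t=0$ the integrand is manifestly bounded uniformly in $b$, since $F(s)=O(s)$ removes the most singular order and the contour $\Omega$ detours above the remaining simple pole.

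\textbf{Step 3: Splitting and conclusion.} Decompose $\Omega$ into $\Omega_{\mathrm{in}} = \Omega\cap\{|t|\leq 1/b^{2}\}$ and $\Omega_{\mathrm{out}} = \Omega\setminus \Omega_{\mathrm{in}}$. On $\Omega_{\mathrm{in}}$, the bounds combine to majorize the integrand by $C_\delta b^{2}\,e^{-\delta|t|}$ (one factor of $|t|$ from $F$ cancels the $1/|t|$ in the denominator), so this piece contributes $O(b^{2})$ uniformly in $y$. On $\Omega_{\mathrm{out}}$, the crude bound $|F|\leq C_{0}$ together with $e^{-\delta|t|}\leq e^{-\delta/b^{2}}$ forces an exponentially small in $1/b^{2}$ contribution, which is dominated by $b^{2}$. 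Summing the two pieces yields the bound $B_{\delta}b^{2}$ claimed in the lemma.

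The main technical point is that the first-order vanishing of $F(s)$ at $s=0$ is exactly what extracts the prefactor $b^{2}$ from the integrand, while the strict strip condition on $\mathrm{Im}\, y$ is essential to guarantee the exponential $t$-decay and thus a finite implied constant depending only on $\delta$; relaxing $\delta$ to $0$ would make the tail contribution blow up logarithmically.
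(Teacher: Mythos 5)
Your strategy is sound and self-contained, which is genuinely different from the paper's proof (the paper simply cites \cite[Lemmas 7.13 and 7.14]{BGP} and observes that the argument there bounds the norm, not just the real part). In fact your identity
$\log\Phi_b(\tfrac{y}{2\pi b})-\tfrac{1}{2\pi\mathbf ib^2}\mathrm{Li}_2(-e^y)=\int_\Omega\frac{e^{-\mathbf iyt}}{4t\sinh(\pi t)}\bigl(\frac{1}{\sinh(\pi tb^2)}-\frac{1}{\pi tb^2}\bigr)dt$
is exactly the one the paper uses for the companion Lemma \ref{Wu}, and your treatment of the real-axis portions of $\Omega$ is correct: there $|e^{-\mathbf iyt}|=e^{t\,\mathrm{Im}y}$ depends only on $\mathrm{Im}\,y$, the first-order vanishing of $F(s)=\frac{1}{\sinh s}-\frac1s$ supplies the factor $b^2$, and the hypothesis $|\mathrm{Im}\,y|\leqslant\pi-\delta$ gives the decay $e^{-\delta|t|}$, yielding $O(b^2)$ with constant depending only on $\delta$.

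There is, however, a genuine gap on the semicircular detour of $\Omega$ above the origin. For $t$ with $\mathrm{Im}\,t>0$ one has $|e^{-\mathbf iyt}|=e^{\mathrm{Re}(y)\mathrm{Im}(t)+\mathrm{Im}(y)\mathrm{Re}(t)}$, which is \emph{not} bounded uniformly in $y$: it blows up as $\mathrm{Re}(y)\to+\infty$. Hence your majorization of the integrand by $C_\delta b^2e^{-\delta|t|}$ on $\Omega_{\mathrm{in}}$ fails on the detour; the honest pointwise bound there has the form $C\bigl(1+e^{r\,\mathrm{Re}(y)}\bigr)b^2$ for the detour radius $r$. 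This is precisely the factor $\bigl(1+e^{\mathrm{Re}y}\bigr)$ appearing when the paper invokes \cite[Proof of Lemma 3]{AH} inside the proof of Lemma \ref{Wu}, and it is harmless there only because that case assumes $\mathrm{Re}\,y\leqslant-\delta$. As written, your argument proves the lemma only for $\mathrm{Re}\,y\leqslant0$ (or for $\mathrm{Re}\,y$ in a compact set). Two standard repairs: (i) for $\mathrm{Re}\,y\geqslant0$ push the detour below the real axis; the cost is $2\pi\mathbf i$ times the residue of the integrand at $t=0$, which equals $-b^2/24$ independently of $y$ and hence contributes only $O(b^2)$, while on the lower detour $|e^{-\mathbf iyt}|$ is uniformly bounded; or (ii) reduce $\mathrm{Re}\,y\geqslant0$ to $\mathrm{Re}\,y\leqslant0$ via the conjugation identity
$\log\Phi_b(\tfrac{y}{2\pi b})-\tfrac{1}{2\pi\mathbf ib^2}\mathrm{Li}_2(-e^y)=\overline{\log\Phi_b(\tfrac{-\overline y}{2\pi b})-\tfrac{1}{2\pi\mathbf ib^2}\mathrm{Li}_2(-e^{-\overline y})-\tfrac{\mathbf i\pi}{12}b^2}$,
exactly as the paper does in the proof of Lemma \ref{Wu}. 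A minor further point: your bound $|F(s)|\leqslant C_0|s|$ is stated for real $s$ but is also needed for the small complex values $s=\pi tb^2$ on the detour; this follows from analyticity of $F$ near $0$ together with $F(0)=0$, and should be said explicitly.
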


\begin{proof}  The proof relies on the proof of \cite[Lemma 7.13 and Lemma 7.14]{BGP} that for $\delta>0,$ there exists a $B_\delta>0$  independent of $b$ such that for all $b\in(0,1)$ and  for all $y\in\mathbb R\pm \mathbf i[\delta,\pi-\delta],$ 
$$\Big|\mathrm{Re}\Big(\log \Phi_b\big(\frac{y}{2\pi b}\big)-\frac{1}{2\pi \mathbf i b^2}\mathrm{Li}_2\big(-e^y\big)\Big)\Big|<B_\delta b^2,$$
where one sees that this stronger statement in Lemma \ref{BGP} actually holds. Indeed, in the proof, the authors of \cite{BGP} bounded the real part of the difference by bounding its norm, and they only used the condition that $ |\mathrm{Im}y| \leqslant \pi-\delta.$
\end{proof}

\begin{lemma}\label{Wu} For $\delta>0,$ there exists a $B_{\delta}>0$ independent of $b$ such that for all $b\in(0,1)$ and all $y\in\mathbb R\setminus (-\delta,\delta)+\mathbf i[-\pi,\pi],$ 
\begin{equation}\label{norm2}
\Big|\log \Phi_b\big(\frac{y}{2\pi b}\big)-\frac{1}{2\pi \mathbf i b^2}\mathrm{Li}_2\big(-e^y\big)\Big|<B_\delta b^2.
\end{equation}\end{lemma}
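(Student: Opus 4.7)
The region in Lemma~\ref{Wu} differs from that of Lemma~\ref{BGP} by including the horizontal boundary lines $\mathrm{Im}(y)=\pm\pi$ while excluding the vertical slab $|\mathrm{Re}(y)|<\delta$ around the singularities of $\mathrm{Li}_2(-e^{y})$. My plan is to propagate the interior bound of Lemma~\ref{BGP} outward to the boundary by iterating a small-step recursion derived from the functional equation of $\Phi_b$ and comparing it with a short Taylor expansion of the classical dilogarithm. Roughly $\delta/(2\pi b^{2})$ iterations will suffice, and the accumulated error of $O(b^{4})$ per step yields the desired global error of $O(b^{2})$.

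\textbf{Functional equations.} I would first derive, from $S_b(x+b)=2\sin(\pi b x)S_b(x)$ and the identity $\Phi_b(z)=S_b(\mathbf{i}z+Q/2)\exp(\pi\mathbf{i}z^{2}/2+\pi\mathbf{i}(b^{2}+b^{-2})/24)$, the shift relation $\Phi_b(z-\mathbf{i}b/2)=(1+e^{2\pi b z})\Phi_b(z+\mathbf{i}b/2)$. With $z=y/(2\pi b)$ and a suitable branch of logarithm, this becomes
$$\log\Phi_b\!\left(\tfrac{y+\mathbf{i}\pi b^{2}}{2\pi b}\right)-\log\Phi_b\!\left(\tfrac{y-\mathbf{i}\pi b^{2}}{2\pi b}\right)=-\log(1+e^{y}).$$
On the classical side, $\frac{d}{dy}\mathrm{Li}_2(-e^{y})=-\log(1+e^{y})$ and $\frac{d^{3}}{dy^{3}}\mathrm{Li}_2(-e^{y})=-e^{y}/(1+e^{y})^{2}$ is uniformly bounded on $\{|\mathrm{Re}(y)|\geq\delta,\,|\mathrm{Im}(y)|\leq\pi\}$. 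Taylor expanding to order three about $y$ (the even term of the central difference vanishing by symmetry) gives
$$\tfrac{1}{2\pi\mathbf{i}b^{2}}\bigl[\mathrm{Li}_2(-e^{y+\mathbf{i}\pi b^{2}})-\mathrm{Li}_2(-e^{y-\mathbf{i}\pi b^{2}})\bigr]=-\log(1+e^{y})+O(b^{4}).$$
Subtracting the two identities, the difference function $F_b(y):=\log\Phi_b\bigl(y/(2\pi b)\bigr)-\tfrac{1}{2\pi\mathbf{i}b^{2}}\mathrm{Li}_2(-e^{y})$ obeys
$$F_b(y+\mathbf{i}\pi b^{2})-F_b(y-\mathbf{i}\pi b^{2})=O(b^{4}),$$
uniformly on the prescribed region.

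\textbf{Iteration and conclusion.} Given $y$ with $\mathrm{Im}(y)\in[\pi-\delta,\pi]$ and $|\mathrm{Re}(y)|\geq\delta$, set $y_{k}=y-2k\mathbf{i}\pi b^{2}$ and take $N=\lceil\delta/(2\pi b^{2})\rceil$, so that $\mathrm{Im}(y_{N})\leq\pi-\delta$ and Lemma~\ref{BGP} yields $|F_b(y_{N})|<B_{\delta}'b^{2}$. The horizontal coordinate $\mathrm{Re}(y_{k})=\mathrm{Re}(y)$ is preserved by the iteration, so the uniform $O(b^{4})$ estimate of the previous paragraph applies at every step. Telescoping,
$$|F_b(y)|\leq|F_b(y_{N})|+\sum_{k=0}^{N-1}|F_b(y_{k})-F_b(y_{k+1})|\leq B_{\delta}'b^{2}+N\cdot Cb^{4}=O(b^{2}).$$
The symmetric argument handles $\mathrm{Im}(y)\in[-\pi,-\pi+\delta]$, while for $|\mathrm{Im}(y)|\leq\pi-\delta$ the bound is already provided by Lemma~\ref{BGP}; together these produce a single constant $B_\delta$ valid on the full domain of Lemma~\ref{Wu}.

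\textbf{Main obstacle.} The delicate point will be uniformity in $\mathrm{Re}(y)\to\pm\infty$: although $\log(1+e^{y})$ grows linearly at $+\infty$, the third derivative $e^{y}/(1+e^{y})^{2}$ decays exponentially in both directions, so the Taylor remainder stays $O(b^{4})$ uniformly (for instance, on $\mathrm{Im}(y)=\pm\pi$ one checks that $e^{u}/|1-e^{u}|^{2}$ is uniformly bounded on $|u|\geq\delta$). I would also have to confirm that the logarithmic branches used in the functional equation of $\Phi_b$ remain analytic along the iteration path, which is ensured by the hypothesis $|\mathrm{Re}(y)|\geq\delta$ keeping us away from $y\in\pm\mathbf{i}\pi+2\pi\mathbf{i}\mathbb{Z}$ and from the poles and zeros of $S_b$.
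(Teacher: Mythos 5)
Your proposal takes a genuinely different route from the paper. The paper's proof is a direct integral estimate: it writes the difference $\log\Phi_b(\tfrac{y}{2\pi b})-\tfrac{1}{2\pi\mathbf i b^2}\mathrm{Li}_2(-e^y)$ as $\int_\Omega \frac{e^{-\mathbf i y t}}{4t\sinh(\pi t)}\bigl(\frac{1}{\sinh(\pi b^2 t)}-\frac{1}{\pi b^2 t}\bigr)dt$ for $\mathrm{Re}\,y\leqslant-\delta$, splits the contour into a semicircle plus two rays, bounds the semicircle by a cited lemma and the rays by integration by parts (using $|\mathrm{Im}\,y|\leqslant\pi$ and $|\mathrm{Re}\,y|\geqslant\delta$ to control $|e^{\lambda x}|$ and $|\lambda|$), and then handles $\mathrm{Re}\,y\geqslant\delta$ by a reflection identity. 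Your argument instead propagates the interior bound of Lemma~\ref{BGP} to the boundary strips via $\sim\delta/(2\pi b^2)$ iterations of the shift relation $\Phi_b(z-\mathbf i b/2)=(1+e^{2\pi bz})\Phi_b(z+\mathbf i b/2)$ matched against the central-difference Taylor expansion of $\mathrm{Li}_2(-e^y)$; the bookkeeping ($O(b^4)$ per step, $O(\delta/b^2)$ steps, third derivative bounded by $C/\delta^2$ on $|\mathrm{Re}\,y|\geqslant\delta$) is correct and yields $O(b^2)$. Your method is more self-contained in that it reuses Lemma~\ref{BGP} rather than a new contour estimate, at the cost of being intrinsically a small-$b$ argument.

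Two concrete points need repair. First, the step size $2\pi b^2$ is fixed by the functional equation, so for $b^2>1-\delta/\pi$ a single step starting from $\mathrm{Im}\,y$ slightly above $\pi-\delta$ overshoots below $-\pi+\delta$ and never lands in the region where Lemma~\ref{BGP} applies; as written your proof therefore does not cover all $b\in(0,1)$ as the lemma claims (this is harmless for the paper's asymptotic applications but is a gap against the stated result). Second, your closing remark that $|\mathrm{Re}\,y|\geqslant\delta$ keeps you away from $y\in\pm\mathbf i\pi+2\pi\mathbf i\mathbb Z$ addresses only the branch \emph{points}: the branch cut of $\mathrm{Li}_2$ pulled back by $y\mapsto-e^y$ runs along the entire segments $\mathrm{Im}\,y=\pm\pi$, $\mathrm{Re}\,y>0$, which are precisely part of the domain this lemma must cover. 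Your Taylor expansion along the vertical segment is valid only for the one-sided determination of $\mathrm{Li}_2(-e^y)$ continued from $|\mathrm{Im}\,y|<\pi$ (the third derivative $-e^y/(1+e^y)^2$ is single-valued, so the error bound survives, but the values of $\mathrm{Li}_2$ and of $\log(1+e^y)$ on the cut do not); you must fix this determination and check it is the one intended in the statement, which is what the paper implicitly does by defining the boundary values through the reflection/inversion identities.
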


\begin{proof} We first consider the case that $\mathrm{Re}y\leqslant -\delta$ and $|\mathrm{Im}y|\leqslant \pi.$ In this case $\mathrm{Im}\frac{y}{2\pi b}\in[-\frac{1}{2b},\frac{1}{2b}]\subset[-\frac{Q}{2},\frac{Q}{2}],$ and by the change of variable $w=\pi b t,$ 
$$\log\Phi_b\big(\frac{y}{2\pi b}\big)=\int_\Omega \frac{e^{-\mathbf i\frac{yw}{ \pi b}}}{4w\sinh(wb)\sinh(wb^{-1})}dw=\int_\Omega \frac{e^{-\mathbf iyt}}{4t\sinh(\pi t)\sinh(\pi t b^2)}dt.$$
If we let $x=\frac{\pi}{2}-\mathbf i\frac{y}{2},$ then $\mathrm{Re}x\in[0,\pi]$ and $\mathrm{Im}x >\frac{\delta}{2},$ and (\ref{ID})  implies that 
$$\frac{1}{2\pi \mathbf i b^2}\mathrm{Li}_2\big(-e^y\big)=\int_\Omega \frac{e^{-\mathbf iyt}}{4\pi b^2 t^2\sinh(\pi t)}dt,$$
and hence 
$$\log\Phi_b\big(\frac{y}{2\pi b}\big)-\frac{1}{2\pi \mathbf i b^2}\mathrm{Li}_2\big(-e^y\big)=\int_\Omega \frac{e^{-\mathbf iyt}}{4t\sinh(\pi t)}\Big(\frac{1}{\sinh(\pi b^2 t)}-\frac{1}{\pi b^2 t}\Big)dt.$$
Now we specify the integral contour $\Omega$ to  $(-\infty,-1]\cup C \cup [1,\infty),$ where $C=\{ e^{\mathbf i\theta} \ |\ \theta\in [0,\pi]\} $ is the upper semi-unitcircle centered at $0.$ 

We first estimate the integral on $C.$ By \cite[Proof of Lemma 3]{AH} with $\zeta = -\mathbf iy,$ $\gamma = \pi b^2$ and $R=1$ therein,  we have 
\begin{equation}\label{IC}
\bigg|\int_{C} \frac{e^{-\mathbf iyt}}{4t\sinh(\pi t)}\Big(\frac{1}{\sinh(\pi b^2 t)}-\frac{1}{\pi b^2 t}\Big)dt\bigg|\leqslant B_1 \Big(1+e^{\mathrm{Re}y }\Big)b^2\leqslant 2 B_1 b^2,
\end{equation}
where $B_1$ is the constant  $B_R$ in \cite{AH} with $R=1$ and the last inequality comes from the assumption that $\mathrm{Re}y\leqslant -\delta <0.$ 

Next we estimate the integral on $[1,\infty).$ Let 
$$\mathrm I^+ (y)= \int _ 1^\infty \frac{e^{-\mathbf iyt}}{4t\sinh(\pi t)}\Big(\frac{1}{\sinh(\pi b^2 t)}-\frac{1}{\pi b^2 t}\Big)dt.$$ 
Then using the identity 
$$\frac{e^{-\mathbf iyt}}{4\sinh(\pi t)}=\bigg(\frac{e^{-\pi t}}{4\sinh(\pi t)}+\frac{1}{2}\bigg)e^{(-\mathbf iy -\pi)t},$$
we can write $\mathrm I^ + (y) =\mathrm I_1^+ (y)+\mathrm I_2^+ (y) ,$
where
$$\mathrm I_1^+(y)=\int_1^\infty \frac{e^{-\pi t}}{4\sinh(\pi t)}e^{(-\mathbf iy -\pi)t}\Big(\frac{1}{\sinh(\pi b^2 t)}-\frac{1}{\pi b^2 t}\Big)\frac{dt}{t}$$
and
$$\mathrm I_2^+ (y)= \int _1^\infty \frac{1}{2}e^{(-\mathbf iy-\pi)t}\Big(\frac{1}{\sinh(\pi b^2 t)}-\frac{1}{\pi b^2 t}\Big)\frac{dt}{t}.$$
For $\mathrm I_1^+(y),$ we use the following facts: (1) $|\big(\frac{1}{\sinh x}-\frac{1}{x})\frac{1}{x}|\leqslant \frac{1}{6}$ for all $x\in \mathbb R,$ (2) under the assumption that 
$|\mathrm{Im}y|\leqslant \pi,$ $\big|e^{(-\mathbf iy -\pi)t}\big| = e^{(\mathrm{Im}y-\pi)t}\leqslant 1$ for $t>0,$ and (3) $\int_1^\infty \frac{e^{-\pi t}}{\sinh(\pi t)}dt \leqslant  \frac{1}{\pi}\int_1^\infty e^{-\pi t}dt =\frac{e^{-\pi}}{\pi^2 }.$ Then we have
\begin{equation}\label{I1}
|\mathrm I_1^+(y)|\leqslant \pi b^2\int_1^\infty \frac{e^{-\pi t}}{4\sinh(\pi t)}\Big|e^{(-\mathbf iy -\pi)t}\Big|\Big|\Big(\frac{1}{\sinh(\pi b^2 t)}-\frac{1}{\pi b^2 t}\Big)\frac{1}{\pi b^2 t}\Big|dt \leqslant  \frac{e^{-\pi}}{24 \pi} b^2.
\end{equation}
For $\mathrm I_2^+(y),$ using the change of variable $x=\pi b^2 t,$ we have 
$$\mathrm I_2^+(y) = \int _{\pi b^2}^\infty \frac{e^{(-\mathbf iy-\pi)\frac{x}{\pi b^2}}}{2}\Big(\frac{1}{\sinh x}-\frac{1}{x}\Big)\frac{dx}{x}.$$ To simplify the notations, we let $\lambda = \lambda(y)= (-\mathbf iy-\pi)\frac{1}{\pi b^2}$ and $f(x)=\big(\frac{1}{\sinh x}-\frac{1}{x})\frac{1}{x}.$ By the assumption that $\mathrm{Re}y\leqslant -\delta,$ we have $|\lambda|\geqslant |\mathrm{Im}\lambda| =\frac{|\mathrm{Re}y|}{\pi b^2}\geqslant \frac{\delta}{\pi b^2};$ and by the assumption that 
$|\mathrm{Im}y|\leqslant \pi,$ we have $\mathrm{Re}\lambda \leqslant 0.$ By the integration by parts, we have 
$$\mathrm I_2^+(y)= \int _{\pi b^2}^\infty \frac{e^{\lambda x}}{2}f(x)dx = \frac{e^{\lambda x}f(x)}{2\lambda} \bigg | _{\pi b^2}^\infty -  \frac{1}{2\lambda}\int _{\pi b^2}^\infty e^{\lambda x}f'(x)dx.$$
By a direct computation, we see that  $f'(x)>0$ for all $x >0.$ Also, as $\mathrm{Re}\lambda\leqslant 0$ and  $\lim _{x \to \infty} f(x)=0,$ we have $\big|e^{\lambda x}\big|\leqslant 1$  for $x>0$ and $\lim _{x\to\infty} e^{\lambda x}f(x)=0.$  As a consequence, we have
\begin{equation}\label{Il2}
|\mathrm I_2^+(y)|\leqslant \frac{|f(\pi b^2) |}{2|\lambda|}+\frac{1}{2|\lambda|}\bigg|\int_{\pi b^2}^\infty f'(x)dx\bigg|=\frac{|f(\pi b^2)|}{|\lambda|} \leqslant \frac{\pi}{6\delta} b^2,
\end{equation}
where the last inequality comes from the facts that $|f(x)|\leqslant \frac{1}{6}$ for all $x\in \mathbb R$ and $|\lambda|\geqslant \frac{\delta}{\pi b^2}.$  Putting (\ref{I1}) and (\ref{Il2}) together, we have 
\begin{equation}\label{I+}
\mathrm I^+ (y)\leqslant \bigg(\frac{e^{-\pi}}{24\pi} +  \frac{ \pi}{6\delta} \bigg)b^2.
\end{equation}

Finally, for the estimate of the integral on $(-\infty,-1],$ let 
$$\mathrm I^- (y)= \int _{-\infty}^{-1} \frac{e^{-\mathbf iyt}}{4t\sinh(\pi t)}\Big(\frac{1}{\sinh(\pi b^2 t)}-\frac{1}{\pi b^2 t}\Big)dt.$$ Then by the change of variable $t \mapsto -t$ we see that  $\mathrm I^- (y) = \mathrm I^+(-y).$ Since $|\mathrm{Im}y|\leqslant \pi$ and $|\mathrm {Re}y|\geqslant \delta,$ we have $\big| e^{\lambda(-y)x}\big|=e^{(-\mathrm {Im}y-\pi)x}\leqslant 1$ for $x>0$  and $|\lambda(-y)| \geqslant  \frac{|\mathrm{Re}y|}{\pi b^2}\geqslant \frac{\delta}{\pi b^2}.$ 
As a consequence, we still have $|\mathrm I_1^+(-y) | \leqslant  \frac{e^{-\pi}}{24 \pi} b^2$ and $|\mathrm I_2^+(-y)|\leqslant \frac{\pi}{6\delta} b^2,$ which implies that 
\begin{equation}\label{I-}
\mathrm I^- (y) = \mathrm I^+ (-y)  =   \mathrm I_1^+ (-y)  +  \mathrm I_2^+ (-y)   \leqslant \bigg(\frac{e^{-\pi}}{24 \pi} +  \frac{ \pi}{6\delta} \bigg)b^2.
\end{equation}
Putting (\ref{IC}), (\ref{I+}) and (\ref{I-}) together, we have
$$\Big|\log \Phi_b\big(\frac{y}{2\pi b}\big)-\frac{1}{2\pi \mathbf i b^2}\mathrm{Li}_2\big(-e^y\big)\Big|<\bigg(2B_1+\frac{e^{-\pi}}{12\pi}+\frac{\pi}{3\delta}\bigg) b^2.$$ 
\\

Next, we consider the case that $\mathrm{Re}y\geqslant \delta$ and $|\mathrm{Im}y|\leqslant \pi.$ By \cite[Proof of Lemma 7.12]{BGP}, for $y\in \mathbb C$ with $\mathrm{Re}y\geqslant \delta$ and $|\mathrm{Im}y|<\pi,$ one has 
$$\log \Phi_b\big(\frac{y}{2\pi b}\big)-\frac{1}{2\pi \mathbf i b^2}\mathrm{Li}_2\big(-e^y\big)=\overline{\log \Phi_b\big(\frac{-\overline y}{2\pi b}\big)-\frac{1}{2\pi \mathbf i b^2}\mathrm{Li}_2\big(-e^{-\overline y}\big)-\frac{\mathbf i\pi}{12}b^2},$$
which by continuity of $\Phi_b$ and $\mathrm{Li}_2$ also holes for $y$ with $\mathrm{Re}y\geqslant \delta$ and $|\mathrm{Im}y|\leqslant \pi.$ As $-\overline y $ is in the previous case, we have 
$$\Big|\log \Phi_b\big(\frac{y}{2\pi b}\big)-\frac{1}{2\pi \mathbf i b^2}\mathrm{Li}_2\big(-e^y\big)\Big|<\bigg(2B_1+\frac{e^{-\pi}}{12\pi}+\frac{\pi}{3\delta}+\frac{\pi}{12}\bigg) b^2.$$

Letting $B_\delta=2B_1+\frac{e^{-\pi}}{12\pi}+\frac{\pi}{3\delta}+\frac{\pi}{12},$ we have the result.
\end{proof}

\begin{proof}[Proof of Proposition \ref{EST}]
Let
$$ \varphi^b(z) = 2\pi \mathbf ib^2 \int_\Omega \frac{e^{(2z-\pi)t}}{4t\sinh(\pi t)\sinh(\pi tb^2)}dt$$
so that
\begin{equation}\label{Psiphi}
\Phi_b(z)=\exp\bigg(\frac{1}{2\pi \mathbf ib^2}\varphi^b(\frac{\pi}{2}-\mathbf{i}\pi bz)\bigg);
\end{equation}

By (\ref{norm}) and (\ref{norm2}),  
and letting $x=\frac{\pi}{2}-\mathbf i\frac{y}{2},$  we have  for all  $x$ with either  $\delta \leqslant \mathrm{Re}x\leqslant \pi-\delta,$ or $0\leqslant \mathrm{Re}x\leqslant \pi$ and $|\mathrm{Im}x| \geqslant \delta,$ that
$$\Big|\varphi^b(x)-\mathrm{Li}_2\big(e^{2\mathbf ix}\big)\Big|<2\pi B_{2\delta}b^4.$$
Then by 
(\ref{eq:Lx}),
\begin{equation}\label{C1}
\bigg|2\pi \mathbf i b^2 \log S_b\Big(\frac{x}{\pi b} +\frac{b}{2}\Big)-L(x)\bigg|<\bigg(2\pi B_{2\delta}+\frac{\pi^2}{12}\bigg)b^4.
\end{equation}

For the  case that $|\mathrm{Re}x| \leqslant K$ and $|\mathrm{Im}x|\geqslant \delta,$ by  the functional equations (\ref{FE3}), (\ref{FE4}), (\ref{period3}) and (\ref{period4}), we have 
\begin{equation*}
\bigg|2\pi \mathbf i b^2 \log S_b\Big(\frac{x+\pi }{\pi b} +\frac{b}{2}\Big)-L(x+\pi)\bigg|<\bigg|2\pi \mathbf i b^2 \log S_b\Big(\frac{x}{\pi b} +\frac{b}{2}\Big)-L(x)\bigg|+2\pi b^2\bigg|\log\Big(1+e^{\pm \frac{2\mathbf ix}{b^2}}\Big)\bigg|
\end{equation*}
where the exponent of the last term is $\frac{\mathbf ix}{b^2}$ if $\mathrm{Im}x>0$ and is $-\frac{\mathbf ix}{b^2}$ if $\mathrm{Im}x<0.$ Now for $\mathrm{Im}x\geqslant \delta,$ we have
\begin{equation*}
\bigg|\log\Big(1+e^{ \frac{2\mathbf ix}{b^2}}\Big)\bigg|< e^{-\frac{2\delta}{b^2}}<\frac{b^2}{2\delta};
\end{equation*}
and for $\mathrm{Im}x\leqslant -\delta,$ we have
\begin{equation*}
\bigg|\log\Big(1+e^{ -\frac{2 \mathbf i x}{b^2}}\Big)\bigg|<e^{-\frac{2\delta}{b^2}}<\frac{b^2}{2\delta}.
\end{equation*}
Together with  (\ref{C1}) and by letting $B=2\pi B_{2\delta}+\frac{\pi^2}{12}+\frac{N}{2\delta}$  with an $N$ such that $N\delta>K,$  we have 
\begin{equation*}
\bigg|2\pi \mathbf i b^2 \log S_b\Big(\frac{x}{\pi b} +\frac{b}{2}\Big)-L(x)\bigg|< Bb^4.
\end{equation*}
This completes the proof.
\end{proof}

\subsection{Contour of integration in the $b$-$6j$ symbol}\label{subsec:contour}
The following lemma gives us more flexibility in choosing the contour for computing the $b$-6$j$ symbols. 
\begin{lemma}\label{lem:contour}
Given $L>0$ and  $c\in [\frac{3Q}{2}, 2Q]$, let 
$\Gamma^*$ be a contour satisfying the following condition:  $\mathrm {Re} u=c$ for $u\in \Gamma^*$ with $|\mathrm {Im} u|\geqslant L$; and $\mathrm{Re} u \in (\frac{3Q}{2}, 2Q)$ for $u\in \Gamma^*$ with $|\mathrm {Im} u|< L$. Then for $(a_1,\dots, a_6)  \in \big(\frac{Q}{2} + \mathbf i {\mathbb R_{> 0}}\big)^6$ and for $L$ sufficiently large, we have 
\begin{equation*}
\bigg\{\begin{matrix} a_1 & a_2 & a_3 \\ a_4 & a_5 & a_6 \end{matrix} \bigg\}_b=\Bigg(\frac{1}{\prod_{i=1}^4\prod_{j=1}^4S_b(q_j-t_i)}\Bigg)^{\frac{1}{2}}\int_{\Gamma^*} \prod_{i=1}^4S_b(u-t_i)\prod_{j=1}^4S_b(q_j-u)du.
\end{equation*} 
In particular, the integral absolutely converges and does not depend on the choices of $\Gamma^*$.
\end{lemma}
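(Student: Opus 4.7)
The plan is to establish absolute convergence of $\int_{\Gamma^*}F\,du$ for $F(u)=\prod_{i=1}^4 S_b(u-t_i)\prod_{j=1}^4 S_b(q_j-u)$, and then to deform $\Gamma$ into $\Gamma^*$ by Cauchy's theorem, simultaneously obtaining both the equality of the integrals and the independence from the specific choice of $\Gamma^*$.

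\emph{Pole analysis.} From the pole set of $S_b$ recalled after~\eqref{FE1}, the poles of $F$ occur at $u=t_i-nb-mb^{-1}$ and at $u=q_j+nb+mb^{-1}$ with $m,n\in\mathbb Z_{\geqslant 0}$. Under the hypothesis $a_k\in\frac{Q}{2}+\mathbf i\mathbb R_{>0}$ we have $\mathrm{Re}(t_i)=\frac{3Q}{2}$, $\mathrm{Re}(q_j)=2Q$ and $\mathrm{Im}(t_i),\mathrm{Im}(q_j)\geqslant 0$. Hence every pole of $F$ satisfies $\mathrm{Re}(u)\in(-\infty,\frac{3Q}{2}]\cup[2Q,+\infty)$ and $\mathrm{Im}(u)\in[0,T]$, where $T:=\max\{\mathrm{Im}(t_i),\mathrm{Im}(q_j):1\leqslant i,j\leqslant 4\}$ depends only on $(a_1,\dots,a_6)$. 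I take $L>T$. Then $\Gamma^*$ is pole-free: its two vertical tails lie in $|\mathrm{Im}(u)|>T$, and its middle portion is contained in the pole-free open strip $\mathrm{Re}(u)\in(\frac{3Q}{2},2Q)$.

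\emph{Decay of the integrand.} I would then invoke the standard asymptotic behavior of $S_b$: as $|\mathrm{Im}(z)|\to\infty$ with $\mathrm{Re}(z)$ bounded, $\log S_b(z)$ has a leading quadratic in $\mathrm{Im}(z)$ whose sign depends on $\mathrm{sgn}(\mathrm{Im}(z))$ (conveniently read off from the identity $\Phi_b(z)=S_b(\mathbf i z+Q/2)e^{\pi\mathbf i z^2/2+\pi\mathbf i(b^2+b^{-2})/24}$ recalled before Lemma~\ref{BGP}). Summing over the four $S_b(u-t_i)$ and the four $S_b(q_j-u)$ factors, the eight leading quadratic exponents cancel pairwise; this is the defining feature of the $b$-$6j$ integrand introduced in~\cite{Teschner:2012em}. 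The surviving subleading linear-in-$\mathrm{Im}(u)$ term has the correct sign on each tail to yield genuine exponential decay of $|F(u)|$ as $|\mathrm{Im}(u)|\to\pm\infty$. Since $\Gamma^*$ is a vertical ray in this regime and its middle portion is bounded and pole-free, absolute convergence of $\int_{\Gamma^*}F\,du$ follows. The same argument applied to the original vertical line $\Gamma\subset\{\mathrm{Re}(u)\in(\frac{3Q}{2},2Q)\}$ justifies~\eqref{b-6j} as an absolutely convergent integral.

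\emph{Contour deformation.} Fix $\Gamma=\{\mathrm{Re}(u)=c_0\}$ with $c_0\in(\frac{3Q}{2},2Q)$. For $R>L$, form the closed contour $\mathcal{C}_R$ by traversing $\Gamma^*$ upward from $\mathrm{Im}(u)=-R$ to $\mathrm{Im}(u)=R$, moving horizontally at height $R$ to $\Gamma$, descending $\Gamma$ to $\mathrm{Im}(u)=-R$, and returning horizontally at height $-R$ to $\Gamma^*$. The bounded region enclosed lies in the strip $\mathrm{Re}(u)\in[\frac{3Q}{2},2Q]$ with $|\mathrm{Im}(u)|\leqslant R$, and by the pole analysis (together with $L,R>T$) is entirely pole-free. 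Cauchy's theorem yields $\oint_{\mathcal{C}_R}F\,du=0$; letting $R\to\infty$, the two horizontal contributions vanish by the exponential decay established above, giving $\int_\Gamma F\,du=\int_{\Gamma^*}F\,du$. Multiplying by the (contour-independent) normalization prefactor proves the displayed identity, and since $\Gamma^*$ was arbitrary within its class, the integral indeed does not depend on the choice of $\Gamma^*$.

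The main obstacle is the second step: making the asymptotic estimate on $|F(u)|$ sufficiently precise. Concretely, one must combine the integral representation~\eqref{eq:def-S} of $S_b$ with the functional equations~\eqref{FE1} (or equivalently pass through $\Phi_b$) to extract the leading quadratic phases of the eight factors, verify their cancellation, and then inspect the subleading linear-in-$\mathrm{Im}(u)$ term to confirm that it produces exponential decay on both tails. Once this analytic input is in place, the pole-counting in step one and the Cauchy deformation in step three are routine.
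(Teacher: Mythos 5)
Your proposal is correct and follows essentially the same route as the paper: locate the poles of the integrand (at $t_i-nb-mb^{-1}$ and $q_j+nb+mb^{-1}$, all with real part outside the open strip and imaginary part in $[0,T]$), establish exponential decay of the integrand as $|\mathrm{Im}\,u|\to\infty$ from the asymptotics of $S_b$, and then deform $\Gamma$ to $\Gamma^*$ by Cauchy's theorem with vanishing horizontal caps.

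The one place you make the "main obstacle" harder than it is: no cancellation of quadratic phases is needed. The paper simply uses the per-factor bound $|S_b(z)|\leqslant Ce^{-\pi|\mathrm{Im}z|(\frac{Q}{2}-\mathrm{Re}z)}$ (from the asymptotic $\lim e^{\operatorname{sgn}(\mathrm{Im}z)\frac{\pi\mathbf i}{2}z(z-Q)+\cdots}S_b(z)=1$ of \cite{Ivan2021}), which is a genuine decay bound for every one of the eight factors because each argument $u-t_i$ and $q_j-u$ has real part in $[0,\frac{Q}{2}]$ on the strip; multiplying the eight bounds gives a total rate $4(2Q-\mathrm{Re}u)+4(\mathrm{Re}u-\frac{3Q}{2})=2Q$ per unit of $|\mathrm{Im}u|$, uniformly in $\mathrm{Re}u\in[\frac{3Q}{2},2Q]$. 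The reason this works without any cancellation is that on a vertical line the quadratic term $\mp\frac{\pi\mathbf i}{2}z^2$ in $\log S_b$ contributes only $\pm\pi\,\mathrm{Re}(z)\,\mathrm{Im}(z)$ to $\log|S_b|$, i.e.\ linearly in $\mathrm{Im}(z)$; the quadratic-in-$\mathrm{Im}(z)$ piece sits entirely in the phase. Your cancellation computation does come out right (the $u^2$ coefficients cancel and the residual linear term is $2\pi\mathbf i Qu$, giving $|F|\sim e^{-2\pi Q|\mathrm{Im}u|}$, consistent with the above), so your route is valid, just less economical. The rest — taking $L$ larger than $\max\{\mathrm{Im}t_i,\mathrm{Im}q_j\}$ so the enclosed region is pole-free, and the limiting Cauchy argument — matches the paper.
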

\begin{proof}
By \cite[Eq. (2.22)]{Ivan2021}, for all $z$ with $\text{Re}z=\frac{Q}{2}$, we have $|S_b(z)|=1$. Since $\mathrm{Re}(q_j-t_i)=\frac{Q}{2}$ for all $i,j\in\{1,\dots,4\}$, we have
$$\bigg|\prod_{i=1}^4\prod_{j=1}^4S_b(q_j-t_i)^{-1}\bigg|=1.$$
On the other hand, since the integrand is meromorphic over $\mathbb C$ with  $t_i$'s and $q_j$'s the only poles in the region $\{u \in \mathbb C\ |\ \mathrm {Re}u\in[\frac{3Q}{2},2Q]\}$, which lie on the boundary of the region (see Figure \ref{Da}), it suffices to show that the integrand is bounded by a function which decays exponentially  as $|\mathrm{Im}u|\to \infty$. 
   
To this end, by \cite[Formula (B.53)]{eberhardt2023}, we have
$$\lim_{|\mathrm{Im}z|\to\infty}e^{\text{sgn}(\mathrm{Im}z)(\frac{\pi \mathbf{i} }{2}z(z-Q)+\frac{\pi\mathbf{i}}{12}(Q^2+1))}S_b(z)=1;$$
and by taking the norm, we have 
$$\lim_{|\mathrm{Im}z|\to\infty}e^{-\pi|\mathrm{Im}z|(\mathrm{Re}z-\frac{Q}{2})}|S_b(z)|=1.$$
In particular, by the continuity of the double sine function, for $z$ with $\mathrm{Re}z< \frac{Q}{2}$, there is a constant $C$ depending continuously on $b$ and $\mathrm{Re}z$, and being independent of $\mathrm{Im}z$, such that the double sine function decays exponentially in the sense that 
\begin{equation}\label{Sbestimate}
|S_b(z)|\leqslant Ce^{-\pi |\mathrm{Im}z|(\frac{Q}{2}-\mathrm{Re}z)};
\end{equation}
and for $z$ with $\mathrm{Re}z= \frac{Q}{2}$, there is a constant $C$ continuously depending on $b$ such that (\ref{Sbestimate}) holds for $z$ with $|\mathrm{Im}z|\geqslant \frac{1}{2}$.
Therefore, for $u$ with $\mathrm {Re}u\in[\frac{3Q}{2},2Q]$  and $|\mathrm {Im}u|\geqslant L\doteq\max\{\text{Im} t_i,\text{Im} q_j\}+\frac{1}{2},$ we have
 \begin{equation}\label{Integrandestimate}
 \bigg|\prod_{i=1}^4S_b(u-t_i)\prod_{j=1}^4S_b(q_j-u)\bigg|\leqslant C^8e^{-\pi\sum_{i=1}^4|\mathrm{Im}(u-t_i)|(2Q-\mathrm{Re}u)-\pi \sum_{j=1}^4|\mathrm{Im}(u-q_j)|(\mathrm{Re}u-\frac{3Q}{2})},
 \end{equation}
hence the integrand is dominated by a function which  decays exponentially as $|\mathrm{Im}u|\to\infty$. This completes the proof. 
\end{proof}
As an immediate consequence of Lemma  \ref{lem:contour}, we have the following proposition.
\begin{prop}\label{prop:6j-cont}
    The $b$-$6j$ symbol $\bigg\{\begin{matrix} a_1 & a_2 & a_3 \\ a_4 & a_5 & a_6 \end{matrix} \bigg\}_b$ is a continuous function in the variables $\{a_1,\dots,a_6\}$.
\end{prop}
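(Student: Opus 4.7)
The plan is to prove continuity separately for the prefactor and the integral factor in the definition~\eqref{b-6j}. For the prefactor, note that each argument $q_j-t_i$ has real part exactly $\frac{Q}{2}$, which lies strictly inside the domain of holomorphy of $S_b$; moreover, since the zeros of $S_b$ all lie in $\{Q+nb+mb^{-1}\}$, we have $S_b(q_j-t_i)\neq 0$. Hence $\prod_{i,j}S_b(q_j-t_i)$ is a continuous, non-vanishing function of $(a_1,\dots,a_6)$, and the same holds for any fixed branch of its square-root reciprocal.

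For the integral factor, I would apply the dominated convergence theorem with a contour that does not move with $(a_1,\dots,a_6)$. Fix $(a_1^{0},\dots,a_6^{0})\in\bigl(\tfrac{Q}{2}+\mathbf{i}\mathbb{R}_{>0}\bigr)^6$ and choose a compact neighborhood $U$ of it inside this set. Pick any $c\in(\tfrac{3Q}{2},2Q)$ and take $\Gamma$ to be the straight vertical line $\mathrm{Re}\,u=c$. The key point is that the poles of the integrand in $u$ lie in $\{\mathrm{Re}\,u\leqslant\tfrac{3Q}{2}\}\cup\{\mathrm{Re}\,u\geqslant 2Q\}$ for every $(a_1,\dots,a_6)\in U$, because $\mathrm{Re}\,t_i=\tfrac{3Q}{2}$ and $\mathrm{Re}\,q_j\geqslant 2Q$ regardless of the values of the imaginary parts of the $a_k$'s. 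Hence the same contour $\Gamma$ is admissible by Lemma~\ref{lem:contour} for every parameter in $U$, and for each fixed $u\in\Gamma$ the integrand is jointly continuous in $(a_1,\dots,a_6,u)$.

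To dominate the integrand uniformly on $U$, I would use the estimate~\eqref{Sbestimate} from the proof of Lemma~\ref{lem:contour}. For $u=c+\mathbf{i}y$ on $\Gamma$, $\mathrm{Re}(u-t_i)=c-\tfrac{3Q}{2}\in(0,\tfrac{Q}{2})$ and $\mathrm{Re}(q_j-u)=2Q-c\in(0,\tfrac{Q}{2})$ are independent of the parameters, so the constant in~\eqref{Sbestimate} may be chosen independent of $(a_1,\dots,a_6)\in U$. Since the imaginary parts $\mathrm{Im}\,t_i$ and $\mathrm{Im}\,q_j$ are bounded by some $M>0$ on the compact set $U$, for $|y|>M$ one has $|y-\mathrm{Im}\,t_i|\geqslant |y|-M$ and similarly for $q_j$, giving
\begin{equation*}
\Bigl|\prod_{i=1}^4 S_b(u-t_i)\prod_{j=1}^4 S_b(q_j-u)\Bigr|\leqslant C' e^{-2\pi Q|y|}
\end{equation*}
for a constant $C'$ depending only on $U$, $b$ and $c$. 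This function is integrable in $y$, so dominated convergence yields continuity of the integral in $(a_1,\dots,a_6)\in U$. Combined with the continuity of the prefactor, this gives continuity of the $b$-$6j$ symbol at $(a_1^0,\dots,a_6^0)$, and hence everywhere. The main (mild) obstacle is just ensuring uniformity of the dominating bound; this is essentially free here because $\mathrm{Re}(u-t_i)$ and $\mathrm{Re}(q_j-u)$ on $\Gamma$ do not depend on the parameters, reducing the problem to controlling the bounded shift of the peak in $y$.
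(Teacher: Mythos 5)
Your proposal is correct and follows essentially the same route as the paper, which deduces the proposition directly from Lemma~\ref{lem:contour}: a fixed vertical contour in the pole-free strip $\frac{3Q}{2}<\mathrm{Re}\,u<2Q$, the uniform exponential bound~\eqref{Sbestimate} (whose constants depend only on $b$ and the fixed real parts $c-\frac{3Q}{2}$ and $2Q-c$), and dominated convergence. You merely spell out the details the paper leaves implicit, including the harmless observations about the non-vanishing prefactor and the choice of square-root branch.
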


 \subsection{Truncated hyperideal tetrahedra and co-volume function}\label{extcov}

Following \cite{BB}, a truncated
\emph{hyperideal tetrahedron} $\Delta$ in $\mathbb{H}^3$ is a
compact convex polyhedron that is diffeomorphic to a truncated
tetrahedron in $\mathbb{E}^3$ with four hexagonal faces $\{H_1,H_2,H_3,H_4\}$ isometric to
right-angled hyperbolic
 hexagons and four triangular faces $\{T_1,T_2,T_3,T_4\}$ isometric to hyperbolic triangles. We will call $H_i$'s the \emph{faces}  and $T_i$'s the \emph{triangles of truncation} of $\Delta.$ 
An \emph{edge} in a hyperideal tetrahedron is the
 intersection of two hexagonal faces and the \emph{dihedral angle} at an edge is the angle between the two hexagonal faces
   adjacent to it. The angle between a hexagonal face and
   a triangular face is always $\frac{\pi}{2}.$ See (1) of  Figure \ref{hyperideal}. 
   
\begin{figure}[htbp]
\centering
\includegraphics[scale=0.3]{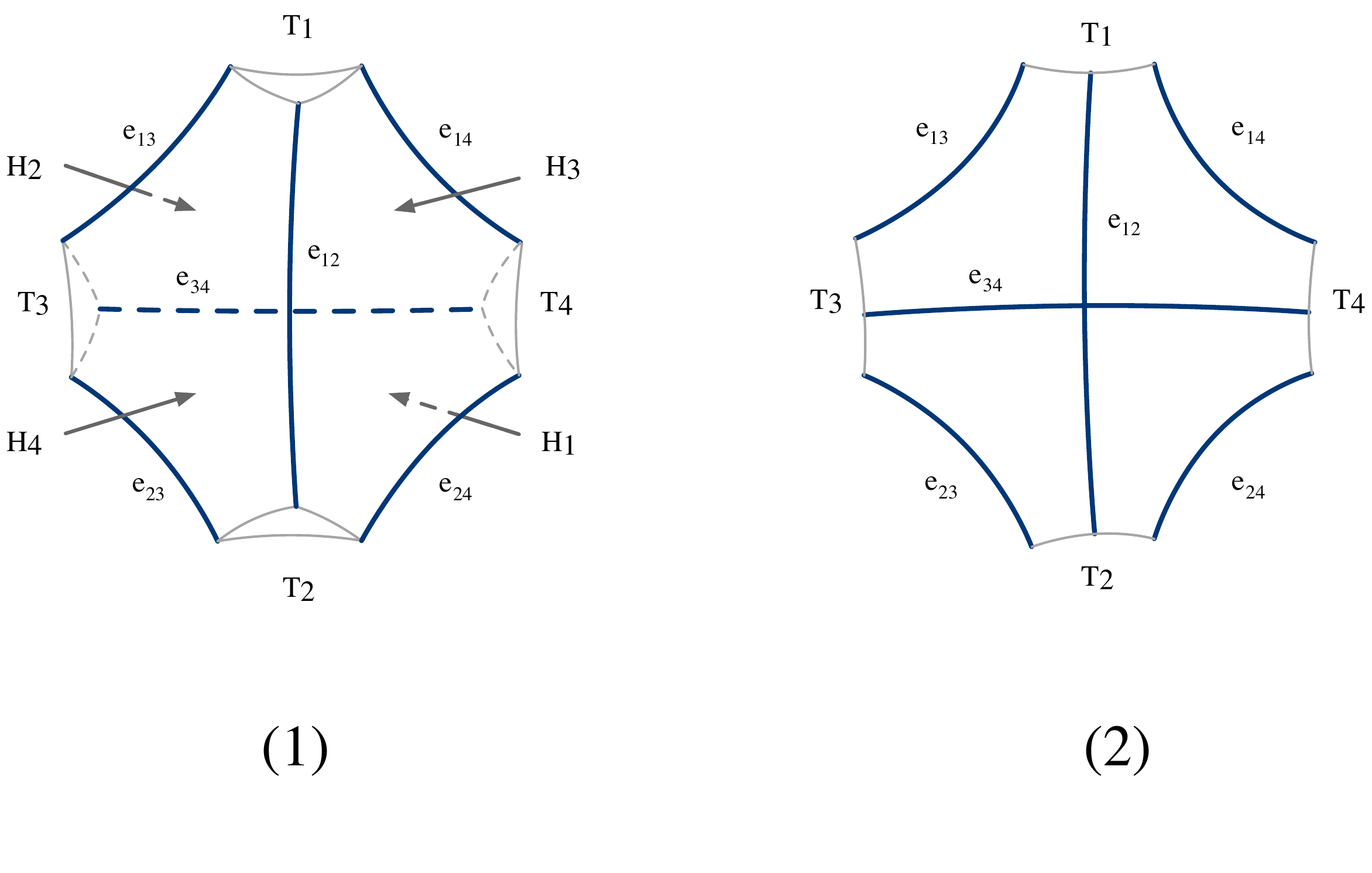}
\caption{(1) truncated hyperideal tetrahedron, (2) flat tetrahedron}
\label{hyperideal} 
\end{figure}

Let $e_{ij}$ be the edge connecting the triangular faces $T_i$ and $T_j,$ and let $\theta_{ij}$ and $l_{ij}$ respectively be the dihedral angle at and edge length of $e_{ij}.$  Then by the Cosine Law of hyperbolic triangles and right-angled hyperbolic hexagons, we have\begin{equation}\label{cos2}
\cos\theta_{kl}=\frac{ch_{kl}+ch_{ik}ch_{jk}+ch_{il}ch_{jl}+ch_{ik}ch_{jl}ch_{kl}+ch_{il}ch_{jk}ch_{kl}-ch_{ij}ch_{kl}^2}{\sqrt{-1+ch_{kl}^2+ch_{ik}^2+ch_{il}^2+2ch_{kl}ch_{ik}ch_{il}}\sqrt{-1+ch_{kl}^2+ch_{jk}^2+ch_{jl}^2+2ch_{kl}ch_{jk}ch_{jl}}},
\end{equation}
where  $\{i,j,k,l\}=\{1,2,3,4\}$ and $ch_{ij}=\cosh l_{ij}.$ By \cite{BB}, a truncated hyperideal tetrahedron is up to isometry determined by its six dihedral angles $\{\theta_1,\dots, \theta_6\},$ and by (\ref{cos2}) is determined by its six edge lengths $(l_1,l_2,l_6,l_3,l_5,l_4)=(l_{12},l_{13},l_{14},l_{23},l_{24},l_{34}).$

\begin{proposition}\label{degeneration}(\cite[Proposition 4.5]{LY})
Let $\mathcal L\subset \mathbb R^6_{\geqslant 0}$ be the space of  truncated hyperideal tetrahedra parametrized by the edge lengths, and let $\partial \mathcal L$ be the boundary of $\mathcal L$ in $\mathbb{R}_{{\geqslant } 0}^6.$ Then $\partial \mathcal L=X_1\sqcup X_2 \sqcup X_3,$ where each $X_i,$ $i=1,2,3,$ is a real analytic codimension-$1$ submanifold of $\mathbb{R}_{{\geqslant } 0}^6.$ The complement $\mathbb{R}_{{\geqslant } 0}^6\setminus (\mathcal L\cup \partial \mathcal L)$ is a disjoint union of three manifolds $\Omega_i$  so that $\overline \Omega_i\cap\partial\mathcal L=X_i,$ $i=1,2,3.$
\end{proposition}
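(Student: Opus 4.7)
The plan is to leverage the explicit cosine law (\ref{cos2}) to write down defining inequalities for $\mathcal{L}$ and polynomial equations for $\partial\mathcal{L}$, and then use real analyticity of the resulting expressions together with a sign-pattern bookkeeping of Gram matrix signatures to enumerate the three boundary strata and the three complementary regions.

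First I would observe that formula (\ref{cos2}) defines a real analytic function of $(l_1,\dots,l_6)\in\mathbb{R}_{>0}^6$ precisely on the open locus where the four ``vertex discriminants''
\[
D_v(l_1,\dots,l_6)=-1+ch_{kl}^2+ch_{ik}^2+ch_{il}^2+2\,ch_{kl}\,ch_{ik}\,ch_{il}
\]
(one for each vertex $v$, with $\{i,j,k,l\}=\{1,2,3,4\}$) are strictly positive; geometrically $D_v$ controls whether the triangle of truncation at $v$ is nondegenerate. The region $\mathcal{L}$ is then the set where all $D_v>0$ and all six values $\cos\theta_{kl}$ produced by (\ref{cos2}) lie in $(-1,1)$. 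Equivalently, $\mathcal{L}$ is the set where the length Gram matrix $\mathrm{Gram}(l_1,\dots,l_6)$ has signature $(3,1)$ and each $3\times 3$ vertex Gram submatrix is positive definite.

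Next I would identify the points of $\partial\mathcal{L}$ as those where one of these defining conditions just degenerates. There are exactly three qualitatively distinct ways this happens: $(X_1)$ some $\cos\theta_{kl}\to 1$, the ``thin'' flat-tetrahedron locus $(\det\mathrm{Gram}=0$ with a vanishing $2\times 2$ Gram minor corresponding to two coincident faces$)$; $(X_2)$ some $\cos\theta_{kl}\to -1$, the ``fat'' flat-tetrahedron locus; and $(X_3)$ some vertex discriminant $D_v\to 0$, where a triangle of truncation collapses to an ideal point. Each of these is cut out by a single polynomial equation in $ch_{12},\dots,ch_{34}$; differentiating with respect to any $l_i$ introduces a nonzero factor $\sinh l_i$, so the gradient of the defining polynomial is nonvanishing on $\mathbb{R}_{>0}^6$. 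The implicit function theorem then gives that $X_1,X_2,X_3$ are real analytic hypersurfaces. Their pairwise disjointness is generic: a simultaneous vertex collapse and angle degeneration occurs only on a lower-dimensional stratum, which one excludes by showing (by direct computation in the Gram matrix) that such a locus lies in the relative interior of $\mathcal{L}\cup\partial\mathcal{L}$ closures of at most one $X_i$ at a time.

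To pin down the three complementary regions $\Omega_1,\Omega_2,\Omega_3$, I would classify points outside $\mathcal{L}\cup\partial\mathcal{L}$ by the pair of signs $(\mathrm{sign}\,\det\mathrm{Gram}(\boldsymbol{l}),\,\mathrm{sign}\,\min_v D_v(\boldsymbol{l}))$. Inside $\mathcal{L}$ this pair is $(-,+)$; the three consistent sign patterns one can reach by crossing exactly one of $X_1,X_2,X_3$ yield three distinct open components. A connectedness check via real analytic path lifting shows each sign pattern defines a single connected $\Omega_i$, and by construction $\overline{\Omega_i}\cap\partial\mathcal{L}=X_i$, since crossing $X_i$ is the unique codimension-one way to pass from $\mathcal{L}$ into $\Omega_i$. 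The three components correspond geometrically to the flat, spherical, and anti-de Sitter regimes of the tetrahedron, matching Remark~\ref{rmk:ads}.

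The main obstacle will be the combinatorial bookkeeping in the last step: showing that the four a priori distinct vertex-discriminant loci $\{D_v=0\}$ glue along their lower-dimensional intersections into a single connected analytic hypersurface $X_3$, and similarly that the six ``$\cos\theta_{kl}=\pm 1$'' loci assemble into only two hypersurfaces $X_1$ and $X_2$. This requires a careful tracking of which boundary components of $\mathcal{L}$ can actually be reached from $\mathcal{L}$ (many algebraic branches of the defining polynomials do not bound $\mathcal{L}$), and is best handled by using the Gram matrix viewpoint where the signature change across each $X_i$ selects exactly one algebraic component.
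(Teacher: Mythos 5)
The paper does not actually prove Proposition~\ref{degeneration}; it is quoted verbatim from Luo--Yang \cite[Proposition 4.5]{LY}, so the only honest comparison is with the description of the strata that the paper records around Proposition~\ref{degeneration} and in the proof of Lemma~\ref{df}. Measured against that description, your identification of the three strata is wrong in a way that sinks the whole argument.

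First, your proposed $X_3$ (collapse of a triangle of truncation, $D_v\to 0$) is empty on $\mathbb{R}_{\geqslant 0}^6$: since every $ch_{ij}=\cosh l_{ij}\geqslant 1$, one has $D_v=-1+ch_{kl}^2+ch_{ik}^2+ch_{il}^2+2ch_{kl}ch_{ik}ch_{il}\geqslant 4>0$. This is exactly the observation ``$G_{ii}<0$ for all $i$'' used in the proof of Proposition~\ref{tri}. So no vertex ever degenerates for nonnegative edge lengths, and one of your three hypersurfaces does not exist. Second, your $X_1$ (``some $\cos\theta_{kl}\to 1$'') and $X_2$ (``some $\cos\theta_{kl}\to -1$'') are not disjoint --- they coincide. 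A point of $\partial\mathcal L$ is a flat tetrahedron, on which two opposite edges carry dihedral angle $\pi$ ($\cos\theta=-1$) and the other four carry angle $0$ ($\cos\theta=+1$) \emph{simultaneously}; indeed the paper states that on $\partial\mathcal L$ \emph{all six} ratios $G_{ij}/\sqrt{G_{ii}G_{jj}}$ lie in $\{-1,1\}$, and on $\mathcal L$ all six lie in $(-1,1)$. The degenerations do not occur one inequality at a time, which is the rigidity your ``one defining condition just degenerates'' framing misses. The correct indexing, recorded in the proof of Lemma~\ref{df}, is by the three pairings of opposite edges: $X_1=X_{12}=X_{34}$, $X_2=X_{32}=X_{24}$, $X_3=X_{14}=X_{23}$, i.e.\ by \emph{which} opposite pair becomes the $\pi$-edges, detected by which system of triangle equalities $x^i_{kl}=x^i_{jk}+x^i_{il}$ among the short edges of the truncation triangles holds.

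Consequently your classification of the complement by the sign pair $(\operatorname{sign}\det\mathrm{Gram},\operatorname{sign}\min_v D_v)$ cannot work: the second coordinate is identically $+$, and by Proposition~\ref{tri} the first is $+$ on all of $\mathbb{R}_{\geqslant 0}^6\setminus(\mathcal L\cup\partial\mathcal L)$, so this invariant sees only one region, not three. The three components $\Omega_1,\Omega_2,\Omega_3$ all have $\det\mathrm{Gram}>0$ and are separated instead by which opposite-edge pair has the reversed triangle inequalities $x^i_{kl}>x^i_{jk}+x^i_{il}$ (equivalently, which pair carries extended angle $\pi$), which is the content of \cite[Proposition 4.4 and Lemma 4.6]{LY}. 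If you want to prove the proposition rather than cite it, you need to work with the truncation-triangle inequalities (or equivalently the $2\times 2$ cofactor ratios $G_{ij}/\sqrt{G_{ii}G_{jj}}$ for all six pairs at once), prove the all-or-nothing rigidity of the degeneration, and only then apply the implicit function theorem to each of the three resulting loci.
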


The spaces in Proposition \ref{degeneration} are defined using the \emph{Gram matrix}. For $\boldsymbol l= (l_1,\dots,l_6)\in\mathbb R^6_{\geqslant 0},$ the Gram matrix of $\boldsymbol l$ is defined by
\begin{equation*}
\mathrm{Gram}(\boldsymbol l)=\begin{bmatrix}
1 & -\cosh l_1 & -\cosh l_2
    & -\cosh l_6 \\
-\cosh l_1 & 1 & -\cosh l_3
    & -\cosh l_5\\
    -\cosh l_2 & -\cosh l_3 & 1
    & -\cosh l_4 \\
 -\cosh l_6 & -\cosh l_5 & -\cosh l_4
    & 1
  \end{bmatrix}.
\end{equation*}
Note that if $(l_1,\dots,l_6)$ are the edges lengths of a hyperideal tetrahedron $\Delta,$ then $\mathrm{Gram}(\boldsymbol l)$ is the Gram matrix $\mathrm{Gram}(\Delta)$ of $\Delta$ in the edges lengths. For $i,j \in \{1,2,3,4\},$ let $G_{ij}$ be the $ij$-th cofactor of $\mathrm{Gram}(\boldsymbol l).$ 
Then the spaces in Proposition \ref{degeneration} can be described as follows.
\begin{enumerate}[(1)]
\item $\boldsymbol l\in\mathcal L$ if and only if 
$\frac{G_{ij}}{\sqrt{G_{ii}G_{jj}}}\in(-1,1)$ for any $\{i,j\}\subset\{1,2,3,4\}.$ We note that in this case, $\frac{G_{ij}}{\sqrt{G_{ii}G_{jj}}}$ coincides with the right hand side of (\ref{cos2}).

\item $\boldsymbol l\in\partial \mathcal L$ if and only if 
$\frac{G_{ij}}{\sqrt{G_{ii}G_{jj}}}\in\{-1,1\}$ for any $\{i,j\}\subset\{1,2,3,4\}.$ 
An $\boldsymbol l\in \partial \mathcal L$ can be considered as the edge lengths of a \emph{truncated hyperideal flat tetrahedron}, or a \emph{flat tetrahedron} for short, as depicted in (2) of  Figure \ref{hyperideal}. Namely, take a right-angled hyperbolic octagon $\Delta$ with eight edges
cyclically labelled as $T_1, $$e_{14},$$T_4,$$ e_{24},
$$T_2, $ $e_{23}, $ $T_3,$  $e_{13}.$  Let $e_{12}$
be the shortest geodesic arc in $\Delta$ joining
$T_1$ to $T_2$  and let $e_{34}$ be the shortest geodesic arc joining $T_3$ and $T_4.$ Then 
$\Delta$  is a  flat  tetrahedron with the six
edges $e_{ij}.$   The \emph{dihedral
angles}  at $e_{12}$ and $e_{34}$ are defined to be $\pi,$ and at all the other edges are defined to be  $0,$ corresponding to $\cos\theta_{kl}=\frac{G_{ij}}{\sqrt{G_{ii}G_{jj}}}=1$ or $-1.$

\item  $\boldsymbol l\in {\mathbb R_{\geqslant 0}}^6\setminus (\mathcal L\cup\partial \mathcal L)$ if and only if 
$\frac{G_{ij}}{\sqrt{G_{ii}G_{jj}}}\notin [-1,1]$  for any $\{i,j\}\subset\{1,2,3,4\}.$ 
\end{enumerate}

We call an $\boldsymbol l\in {\mathbb R_{\geqslant 0}}^6\setminus \mathcal L$  the edge lengths of a \emph{generalized hyperideal tetrahedra}.

As a consequence of Proposition \ref{degeneration}, we have the following trichotomy:

\begin{proposition}\label{tri}
\begin{equation*}
\det\mathrm{Gram}(\boldsymbol l) 
\left\{\begin{array}{ccl}
      <0 & \text{iff } & \boldsymbol l\in\mathcal L,\\
      =0 & \text{iff} & \boldsymbol l\in\partial \mathcal L,\\
       >0 & \text{iff} & \boldsymbol l\in {\mathbb R_{\geqslant  0}}^6\setminus (\mathcal L\cup\partial \mathcal L).\\
    \end{array} \right.
\end{equation*}
\end{proposition}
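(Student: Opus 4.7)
The plan is to reduce the proposition to Jacobi's complementary minor identity applied to the $4\times 4$ Gram matrix $G=\mathrm{Gram}(\boldsymbol l)$. For any pair $\{i,j\}\subset\{1,2,3,4\}$, this identity yields
\[
G_{ii}G_{jj}-G_{ij}^2\;=\;\det(G)\cdot\det\bigl(M^{(ij)}\bigr),
\]
where $M^{(ij)}$ denotes the $2\times 2$ principal submatrix of $G$ obtained by deleting the $i$-th and $j$-th rows and columns. I would open the proof by recording this identity, which is a standard consequence of Jacobi's theorem relating minors of the adjugate to complementary minors of the original matrix, and can also be verified by a direct expansion.

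Next, I would read off $\det\bigl(M^{(ij)}\bigr)$. Since $G$ has $1$'s on the diagonal and entries $-\cosh l_e$ off the diagonal, the submatrix $M^{(ij)}$ has diagonal entries $1,1$ and off-diagonal entries $-\cosh l_k$, where $l_k$ is the unique edge joining the two vertices complementary to $\{i,j\}$. Hence $\det\bigl(M^{(ij)}\bigr)=1-\cosh^2 l_k=-\sinh^2 l_k$, which is non-positive and strictly negative whenever $l_k>0$. Consequently, for $\boldsymbol l$ with all six edge lengths positive, the sign of $G_{ii}G_{jj}-G_{ij}^2$ is the opposite of the sign of $\det(G)$, and this sign comparison is uniform across all six pairs $\{i,j\}$.

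Finally, I would feed this uniform sign comparison into the three characterizations (1)--(3) stated immediately before the proposition. If $\det(G)<0$, then $G_{ii}G_{jj}>G_{ij}^2\geqslant 0$ for every $\{i,j\}$, so $G_{ii}G_{jj}>0$ and $\frac{G_{ij}}{\sqrt{G_{ii}G_{jj}}}\in(-1,1)$, placing $\boldsymbol l\in\mathcal L$; if $\det(G)=0$, then $G_{ii}G_{jj}=G_{ij}^2$, so the ratio lies in $\{-1,1\}$ whenever the denominator is non-zero, placing $\boldsymbol l\in\partial\mathcal L$; and if $\det(G)>0$, then $G_{ii}G_{jj}<G_{ij}^2$, so either both sides are positive and the ratio has absolute value exceeding $1$, or $G_{ii}G_{jj}\leqslant 0$ and the ratio fails to be a real number in $[-1,1]$, placing $\boldsymbol l\in\mathbb R_{\geqslant 0}^6\setminus(\mathcal L\cup\partial\mathcal L)$. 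The main (and only mild) obstacle is the bookkeeping for the degenerate configurations where some $l_k$ or some $G_{ii}$ vanishes; I would dispatch these either by continuity from the positive locus or by direct inspection using the codimension-$1$ stratification of $\partial\mathcal L$ recorded in Proposition~\ref{degeneration}.
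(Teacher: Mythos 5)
Your core identity and overall strategy coincide with the paper's: its proof of Proposition~\ref{tri} rests on exactly the Jacobi complementary-minor identity you state (recorded as equation~(\ref{J}), written there only for the pair $\{3,4\}$), combined with the characterizations (1)--(3). The genuine problem is your claim that ``this sign comparison is uniform across all six pairs,'' together with the way you propose to dispatch the degenerate configurations. If the edge $e_k$ complementary to the pair $\{i,j\}$ has $l_k=0$, then $\det\bigl(M^{(ij)}\bigr)=-\sinh^2 l_k=0$ and Jacobi degenerates to $G_{ii}G_{jj}-G_{ij}^2=0$ \emph{regardless} of the sign of $\det(G)$; the ratio $G_{ij}/\sqrt{G_{ii}G_{jj}}$ is then forced to equal $\pm1$ even when $\det(G)<0$. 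This is not a nuisance that continuity absorbs: at $\boldsymbol l=(0,\dots,0)$ one has $\det\mathrm{Gram}(\boldsymbol l)=-16<0$ while every ratio equals $1$, so your implication ``$\det(G)<0\Rightarrow$ all six ratios lie in $(-1,1)$'' fails outright there, and continuity from the positive locus can only ever deliver the non-strict inequalities $\det(G)\leqslant 0$, $\geqslant 0$, never the strict signs the proposition asserts. The stratification in Proposition~\ref{degeneration} does not help either, since the problematic locus is the boundary of the orthant $\mathbb R_{\geqslant 0}^6$, not $\partial\mathcal L$.

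The repair is exactly what the paper does, and it also explains why the paper argues in the direction opposite to yours. It treats $\boldsymbol l=(0,\dots,0)$ by the direct computation $\det\mathrm{Gram}(\boldsymbol l_0)=-16$; for $\boldsymbol l\neq 0$ it picks a single index with $l_k>0$ (after relabelling, $l_1>0$) and uses \emph{only} the one complementary pair $\{3,4\}$, for which the characterizations determine the sign of $G_{34}^2-G_{33}G_{44}$ (one also needs $G_{ii}<0$, which the paper records as an observation and which follows from the positivity of the expressions under the square roots in~(\ref{cos2}); this guarantees $G_{ii}G_{jj}>0$, making your ``$G_{ii}G_{jj}\leqslant 0$'' branch vacuous). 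This yields the three implications from the classification of $\boldsymbol l$ to the sign of $\det(G)$, and the ``iff''s follow because both trichotomies are mutually exclusive and exhaustive. Your reversed direction is logically equivalent once one invokes that exhaustiveness, but proving it directly forces you to control the ratio for all six pairs, which is precisely what breaks on the degenerate locus; you should restructure along the paper's lines rather than rely on the uniform pairwise comparison.
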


\begin{proof} We first consider the special case $\boldsymbol l_0=(0,\dots,0).$ In this case, $\boldsymbol l_0\in\mathcal L$ is the edge lengths of the regular ideal octahedron (considered as a truncated hyperideal tetrahedron) and by a direct computation $\det\mathrm{Gram}(\boldsymbol l_0)=-16<0.$ 

Next, suppose one of the components of $\boldsymbol l,$ say, $l_1\neq 0.$ Then by \cite[2.5.1. Theorem (Jacobi)]{P},
\begin{equation}\label{J}
\det\mathrm{Gram}(\boldsymbol l)=-\frac{G_{34}^2-G_{33}G_{44}}{1-\cosh^2l_1}=\frac{G_{34}^2-G_{33}G_{44}}{\sinh^2l_1}.
\end{equation}
We also observe that $G_{ii}<0$ for all $i.$ Then:

\begin{enumerate}[(i)]
\item If $\boldsymbol l\in\mathcal L,$ then $G_{34}^2-G_{33}G_{44}<0,$ and as a consequence of (\ref{J}),
$\det\mathrm{Gram}(\boldsymbol l)<0.$

\item If $\boldsymbol l\in\mathcal \partial L,$ then  $G_{34}^2-G_{33}G_{44}=0,$ and as a consequence of (\ref{J}), 
$\det\mathrm{Gram}(\boldsymbol l)=0.$

\item If $\boldsymbol l\in {\mathbb R_{\geqslant 0}}^6\setminus (\mathcal L\cup\partial \mathcal L),$ then  $G_{34}^2-G_{33}G_{44}>0,$ and as a consequence of (\ref{J}),
$\det\mathrm{Gram}(\boldsymbol l)>0.$
\end{enumerate}
\end{proof}

The following Lemma \ref{df} will be used in the proof of Proposition \ref{KC}, which will be needed in the proof of Theorem \ref{VC}.

\begin{lemma}\label{df} Let $\boldsymbol l=(l_1,\dots,l_6)\in \partial \mathcal L$ be the edge lengths of a flat hyperideal tetrahedron with dihedral angles  $\theta_1=\theta_4=\pi$ and $\theta_2=\theta_3=\theta_5=\theta_6=0.$ Let $K_0$ be a subset of $\{2,3,5,6\},$ and for $s>0$ let 
\begin{equation*}
l_{s,k}=
\left\{\begin{array}{lcl}
      l_k+s & \text{if} &k\in  K_0,\\
           l_k  & \text{if } & k\in \{1,\dots, 6\}\setminus   K_0.\\
    \end{array} \right.
\end{equation*}
Then there exists an $s_0>0$ such that  for all $s\in (0,s_0),$  $\boldsymbol l_s=(l_{s,1},\dots, l_{s,6})\in\mathcal L.$ 
\end{lemma}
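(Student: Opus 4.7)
The plan is to reduce the claim to a one-variable calculus problem via Proposition~\ref{tri}. Since $\mathcal L=\{\det\mathrm{Gram}(\boldsymbol l)<0\}$ and $\boldsymbol l\in\partial\mathcal L$ gives $\det\mathrm{Gram}(\boldsymbol l)=0$, it suffices to show that the smooth function $s\mapsto\det\mathrm{Gram}(\boldsymbol l_s)$ has strictly negative derivative at $s=0$ (tacitly assuming $K_0\neq\emptyset$; the lemma is false otherwise). Smoothness and $\det\mathrm{Gram}(\boldsymbol l)=0$ will then produce the desired $s_0>0$.

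Each edge length $l_k=l_{pq}$ enters the symmetric Gram matrix only through the two entries $-\cosh l_k$ at positions $(p,q)$ and $(q,p)$. By the standard determinant derivative rule together with $G_{pq}=G_{qp}$ (cofactors of a symmetric matrix),
\[
\frac{\partial\det\mathrm{Gram}(\boldsymbol l)}{\partial l_k} \;=\; -2\,G_{pq}\sinh(l_k),
\]
so by the chain rule,
\[
\left.\frac{d}{ds}\det\mathrm{Gram}(\boldsymbol l_s)\right|_{s=0} \;=\; -2\sum_{k\in K_0} G_{pq}(\boldsymbol l)\,\sinh(l_k).
\]
To see each summand is strictly negative, note that $\sinh(l_k)>0$ since the edges $l_2,l_3,l_5,l_6$ appear as sides of the right-angled hyperbolic octagon in the paper's construction of the flat tetrahedron and hence are positive. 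For the cofactor $G_{pq}$, I would invoke the continuous extension to $\partial\mathcal L$ of the cosine-cofactor identity in item~(2) of the paper's discussion: the hypothesis $\theta_k=0$ for $k\in\{2,3,5,6\}$ gives $G_{pq}/\sqrt{G_{pp}G_{qq}}=1$, and since $G_{pp},G_{qq}<0$ (as recorded in the proof of Proposition~\ref{tri}) their product is positive, forcing $G_{pq}=\sqrt{G_{pp}G_{qq}}>0$. Summing the negative contributions over the nonempty $K_0$ yields a strictly negative initial derivative, and Proposition~\ref{tri} delivers $\boldsymbol l_s\in\mathcal L$ on a sufficiently small interval $(0,s_0)$.

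The main obstacle is not analytic depth but combinatorial bookkeeping: correctly matching each $k\in\{2,3,5,6\}$ to its matrix position $(p,q)$ using the paper's enumeration $l_1=l_{12}$, $l_2=l_{13}$, $l_3=l_{23}$, $l_4=l_{34}$, $l_5=l_{24}$, $l_6=l_{14}$, and correctly extracting the signs from the boundary version of the cosine-cofactor identity. Once these four sign checks are done case by case, the conclusion is immediate from the first-order Taylor expansion.
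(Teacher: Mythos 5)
Your proof is correct in outline and reaches the right conclusion, but it takes a genuinely different route from the paper. The paper never touches the Gram determinant here: it works with the Luo--Yang characterization of $\mathcal L$ by strict triangle inequalities among the short-edge lengths $x_{ij}^k$ of the truncation triangles (computed from the cosine law for right-angled hexagons), shows that lengthening one $0$-edge strictly increases a quantity of the form $x^2_{13}+x^2_{14}-x^2_{34}$ that vanishes exactly in the flat case, and then handles a general $K_0$ by increasing the edges one at a time. Your argument instead reduces everything to the trichotomy of Proposition~\ref{tri} plus Jacobi's formula, treats all of $K_0$ simultaneously, and makes the mechanism transparent: the sign of the relevant cofactor is the sign of a cosine of a dihedral angle. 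This is shorter and more self-contained (it uses only facts already recorded around Proposition~\ref{degeneration} and in the proof of Proposition~\ref{tri}), at the cost of the cofactor bookkeeping you anticipate. Your remark that the statement silently requires $K_0\neq\emptyset$ is also apt; the paper's own proof has the same implicit assumption, and in the application (Proposition~\ref{KC}) nonemptiness is guaranteed by the observation that every flat tetrahedron meets a central edge.

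One bookkeeping item in your sketch does need to be fixed, though it does not affect the outcome. The identity in item (2) of the discussion after Proposition~\ref{degeneration} reads $\cos\theta_{kl}=G_{ij}/\sqrt{G_{ii}G_{jj}}$ with $\{i,j\}=\{1,2,3,4\}\setminus\{k,l\}$: the dihedral angle at the edge $e_{kl}$ is paired with the cofactor at the \emph{complementary} index pair, i.e.\ at the matrix position of the \emph{opposite} edge $e_{ij}$. So the cofactor $G_{pq}$ appearing in $\partial\det\mathrm{Gram}/\partial l_{pq}=-2G_{pq}\sinh l_{pq}$ is controlled by $\cos\theta_{ij}$ for the edge opposite to $e_{pq}$, not by $\theta_{pq}$ itself as you wrote. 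You are rescued by the fact that in this lemma the $0$-edges form two opposite pairs ($e_2=e_{13}$ opposite $e_5=e_{24}$, and $e_3=e_{23}$ opposite $e_6=e_{14}$), so for every $k\in K_0$ the opposite dihedral angle is also $0$ and the cofactor is $+\sqrt{G_{pp}G_{qq}}>0$ (using $G_{ii}\leqslant -4<0$, which holds for all $\boldsymbol l\in\mathbb R^6_{\geqslant 0}$). Had $K_0$ been allowed to contain a $\pi$-edge the sign would flip, which is exactly why the lemma restricts $K_0\subset\{2,3,5,6\}$. With this correction, and the (reasonable) positivity of the octagon side lengths giving $\sinh l_k>0$, your first-order expansion does the job.
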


\begin{proof} In a truncated tetrahedron $\Delta,$ we let $T_1,\dots,T_4$ be the triangles of truncation, let $e_{ij}$ be the edge of $\Delta$ connecting $T_i$ and $T_j,$ and let $e_{jk}^i$ be the edge of $T_i$ adjacent to the edges $e_{ij}$ and $e_{ik}.$ Then $e^i_{jk},e^i_{jl},e^i_{kl}$ are the edges of $T_i.$  For $(l_{12},\dots, l_{34})\in\mathbb R^6_{\geqslant 0},$ let 
$$x_{ij}^k=\cosh^{-1}\bigg(\frac{\cosh l_{ij}+\cosh l_{ik}\cosh l_{jk}}{\sinh l_{ik}\sinh l_{jk}}\bigg).$$
Then by the Law of Cosine of right-angled hyperbolic hexagons, if $l_{ij}$'s are the  lengths of the edges $e_{ij}$'s of  a  flat or truncated hyperideal tetrahedron, then $x_{ij}^k$'s are the lengths of the short edges $e_{ij}^k$'s. Recall from Proposition \ref{degeneration} that  $\partial \mathcal L = X_1\cup X_2\cup X_3$ and $\mathbb R^6_{\geqslant 0}\setminus (\mathcal L \cup \partial \mathcal L) =\Omega_1\cup\Omega_2\cup\Omega_3.$  In \cite[Proposition 4.4 and Lemma 4.6]{LY}, it shows that $\mathcal L$ consists of $(l_{ij})$'s such that the strict triangular inequalities  $x_{ij}^k< x_{jk}^i+x_{ik}^j$ hold for all $\{i,j,k\}\subset \{1,2,3,4\};$ $X_1=X_{12}=X_{34},$ $X_2=X_{32}=X_{24}$ and  $X_3=X_{14}=X_{23},$ where $X_{ij}$ consists of $(l_{ij})$'s such that $x^i_{kl}= x^i_{jk}+x^i_{il},$ $x^j_{kl}= x^j_{ik}+x^j_{il},$ $x^k_{ij}= x^k_{il}+x^k_{jl}$ and $x^l_{ij}= x^l_{ik}+x^l_{jk}$ for $\{i,j,k,l\}=\{1,2,3,4\};$ and $\Omega_1=\Omega_{12}=\Omega_{34},$ $\Omega_2=\Omega_{32}=\Omega_{24}$ and $\Omega_3=\Omega_{14}=\Omega_{23},$ where $\Omega_{ij}$ consists of $(l_{ij})$'s such that $x^i_{kl}> x^i_{jk}+x^i_{il},$ $x^j_{kl}> x^j_{ik}+x^j_{il},$ $x^k_{ij}>  x^k_{il}+x^k_{jl}$ and $x^l_{ij}>  x^l_{ik}+x^l_{jk}$ for $\{i,j,k,l\}=\{1,2,3,4\}.$ As a consequence, if for at least one $i\in\{1,2,3,4\},$ the quantities $x^i_{jk},$ $x^i_{jl},$ $x^i_{kl}$ satisfy the strict triangle inequalities, then $(l_{12},\dots, l_{34})\in \mathcal L.$

Now for a flat tetrahedron with dihedral angles  $\theta_{12}=\theta_{34}=\pi$ and $\theta_{13}=\theta_{14}=\theta_{23}=\theta_{24}=0$ or for
 a truncated hyperideal tetrahedron with  the dihedral angles $\theta_{12},\theta_{34}$ sufficiently close to $\pi$ and $\theta_{13},\theta_{14},\theta_{23},\theta_{24}$ sufficiently close to $0,$ we call $e_{12}, e_{34}$ the $\pi$-edges and $e_{13}, e_{14}, e_{23}, e_{24}$ the $0$-edges. We claim that: there is an $s_0>0$ such that if we increase the length of one $0$-edge by $s\in(0,s_0)$ and keep the lengths of all the other edges unchanged, then the tetrahedron is non-flat. Indeed, suppose that the length $l_{13}$ of  $e_{13}$ is increased by $t$ and all the other edge lengths remain the same. Then by the Law of Cosine of right-angled hyperbolic hexagons, $x^2_{13}$ increases and $x^2_{14}$ and $x^2_{34}$ remain the same. As a consequence, the quantity $x^2_{13}+ x^2_{14} - x^2_{34}$ (which is positive when the tetrahedron is non-flat and  equals $0$ when the tetrahedron is flat)  increases. Therefore, as long as $x^2_{13}$ does not exceed $x^2_{14} + x^2_{34},$ the quantities  $x^2_{13}, x^2_{14}, x^2_{34}$ satisfy the strict triangle inequalities, and the tetrahedron is non-flat. 
 
Finally,  by the claim, for any subset $ K_0$ of the $0$-edges, we can increase their lengths by $s\in (0,s_0)$ one edge at a time by another, while keeping the lengths of the other edges unchanged; and the result is a non-flat tetrahedron, and $(l_{s,12},\dots,l_{s,34})\in\mathcal L.$ 
\end{proof}

Following \cite{L}, the \emph{co-volume function} $\mathrm{Cov}:\mathcal L \to \mathbb R$ is defined by
\begin{equation}\label{covfun}
\mathrm{Cov}(\boldsymbol l)=\mathrm{Vol}(\Delta)+\frac{1}{2}\sum_{k=1}^6\theta_kl_k
\end{equation}
where $\Delta$ is the  truncated hyperideal tetrahedron with edge lengths $\{l_1,\dots,l_6\}$ and $\{\theta_1,\dots,\theta_6\}$ are the dihedral angles of $\Delta$ considered as functions of $\boldsymbol l,$  which is locally  strictly concave up on $\mathcal L,$ and satisfies
\begin{equation}\label{Sch}
\frac{\partial \mathrm{Cov}(\boldsymbol l)}{\partial l_k}=\frac{\theta_k}{2}
\end{equation}
for each $k\in\{1,\dots,6\}.$ It is shown in \cite[Lemma 4.7 and Corollary 4.9]{LY} that for each $k,$ the dihedral angle function $\theta_k:\mathcal L\to\mathbb R$ extends  continuously  to a $\widetilde \theta_k: \mathbb R^6_{>0}\to\mathbb R$ which is a constant function on each component of $\mathbb R^6_{\geqslant 0}\setminus \mathcal L$ by
\begin{equation*}
\widetilde\theta_k(\boldsymbol l)=\widetilde\theta_{k+3}(\boldsymbol l)=\left\{\begin{array}{ccl}
\pi & \text{if} & \boldsymbol l\in X_k\cup \Omega_k,\\
0 & \text{if} & \boldsymbol l\in \big(X_i\cup \Omega_i\big)\cup\big(X_j\cup \Omega_j\big),\\
\end{array}\right.
\end{equation*}
where $\{i,j,k\}=\{1,2,3\}.$ Define the following continuous $1$-form $\mu$ on $\mathbb R^6_{\geqslant 0}$ by
$$\mu(\boldsymbol l)=\frac{1}{2}\sum_{k=1}^6\widetilde\theta_k(\boldsymbol l)dl_k.$$
Then we have the following propositions.

\begin{proposition}\cite[Proposition 4.10]{LY} The $1$-form $\mu$ is closed in $\mathbb R^6_{\geqslant  0},$ that is, for any triangle $\Delta$ in $\mathbb R^6_{\geqslant  0},$ $\int_{\partial \Delta}\mu=0.$
\end{proposition}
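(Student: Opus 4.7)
The plan is to exploit the stratified structure of $\mathbb R^6_{\geqslant 0}$ provided by Proposition~\ref{degeneration}. On the open region $\mathcal L$, the Schl\"afli-type identity~\eqref{Sch} gives $\mu = d(\mathrm{Cov})$, so $\mu$ is exact (in particular, closed) on $\mathcal L$. On each component $\Omega_i$ of $\mathbb R^6_{\geqslant 0}\setminus(\mathcal L\cup\partial\mathcal L)$, every coefficient $\widetilde\theta_k$ is constant according to the explicit formula displayed just before the statement, so $\mu = d\bigl(\tfrac{1}{2}\sum_k \widetilde\theta_k l_k\bigr)$ is exact on $\Omega_i$ as well. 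In particular, for any triangle $\Delta$ whose image lies entirely in $\mathcal L$ or entirely in some $\Omega_i$, Stokes' theorem yields $\int_{\partial \Delta}\mu=0$ immediately.

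For a general triangle $\Delta\subset\mathbb R^6_{\geqslant 0}$, I would reduce to this case by a decomposition argument. Because $X_1,X_2,X_3$ are real-analytic submanifolds of codimension one, and because $\mu$ is $C^0$ across them by the continuity statement of \cite[Lemma~4.7 and Corollary~4.9]{LY} quoted in the excerpt, an arbitrarily small $C^1$-perturbation of $\Delta$ can be chosen so that the perturbed triangle meets $X_1\cup X_2\cup X_3$ transversally along a finite collection of smooth arcs, and its boundary meets this union in finitely many transverse points. The change in $\int_{\partial\Delta}\mu$ under such a perturbation is dominated by the $C^0$-norm of $\mu$ on a compact neighborhood times the area swept out, hence can be made arbitrarily small; so it suffices to establish the vanishing for the perturbed $\Delta$.

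Once in general position, the finite arc system $\Delta\cap(X_1\cup X_2\cup X_3)$ cuts $\Delta$ into finitely many closed $2$-cells $\Delta_1,\dots,\Delta_N$, each contained in the closure of exactly one of the open strata $\mathcal L,\Omega_1,\Omega_2,\Omega_3$. On each $\Delta_j$ the form $\mu$ is continuous on the cell and agrees with a smooth exact $1$-form on its interior, so $\int_{\partial \Delta_j}\mu=0$ by Stokes. Summing over $j$, each interior arc appears in exactly two of the $\partial\Delta_j$'s with opposite orientations, and because $\mu$ is \emph{continuous} across the strata the two contributions cancel pointwise. Thus the interior arcs contribute nothing to the sum, leaving $\sum_j\int_{\partial\Delta_j}\mu=\int_{\partial\Delta}\mu=0$, as desired.

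The main obstacle is the perturbation/transversality step: one must confirm that a triangle can be made transverse to the finite union of real-analytic strata $X_i$ by an arbitrarily small deformation (which follows from a standard Sard-type argument applied to the real-analytic loci), and that the induced error in $\int_{\partial\Delta}\mu$ tends to zero in the limit, which in turn rests on uniform continuity of $\mu$ on compact subsets of $\mathbb R^6_{\geqslant 0}$. The remaining steps are routine applications of Stokes' theorem together with pairwise cancellation along the common interior arcs.
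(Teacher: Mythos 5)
The paper does not actually prove this statement: it is imported verbatim from \cite[Proposition 4.10]{LY}, so there is no in-paper argument to compare against. Your proposal is the standard argument for showing that a continuous $1$-form which is exact on each open stratum of a stratification by codimension-one real-analytic walls is closed, and it is in substance the argument of Luo--Yang: $\mu=d\,\mathrm{Cov}$ on $\mathcal L$ by the Schl\"afli identity \eqref{Sch}, $\mu$ has constant coefficients (hence is exact) on each $\Omega_i$, and continuity of $\mu$ across $X_1\sqcup X_2\sqcup X_3$ makes the contributions of the interior cut arcs cancel. I consider the proof correct in outline.

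A few steps are written loosely, though all are routinely repairable. (i) The error incurred by perturbing $\Delta$ is a \emph{line}-integral error: for an affine perturbation it is bounded by the modulus of continuity of $\mu$ at the scale of the perturbation times the length of $\partial\Delta$, not by ``the area swept out'' (an area bound would require differentiating $\mu$, which is exactly what is unavailable). (ii) A generic translation may push part of $\Delta$ outside the orthant $\mathbb R^6_{\geqslant 0}$ when $\Delta$ meets a face $\{l_k=0\}$; you should either first contract toward the interior of the orthant, or observe that the Gram-matrix conditions defining the strata, hence $\widetilde\theta_k$ and $\mu$, extend continuously to a neighborhood. (iii) A transverse intersection of the $2$-disk $\Delta$ with a codimension-one wall may contain closed circles as well as arcs, so the complementary regions need not be cells; this is harmless because the potentials $\mathrm{Cov}$ and $\tfrac12\sum_k\widetilde\theta_k l_k$ are single-valued on their strata, so the boundary integral over a multiply connected region still vanishes. (iv) For a region $\Delta_j$ whose boundary partly lies in $\partial\mathcal L$, ``Stokes'' should be implemented by pushing $\partial\Delta_j$ slightly into the interior of the stratum, where exactness gives a vanishing integral, and passing to the limit using uniform continuity of $\mu$ on the compact cell; one cannot a priori differentiate the continuous extension of $\mathrm{Cov}$ along the wall without circular reasoning (that differentiability is the content of Proposition \ref{ccu}, which depends on the closedness being proved here).
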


\begin{proposition}\cite[Corollary 4.12]{LY} \label{ccu}
 The \emph{extended co-volume function}
\begin{equation}\label{extcovfun}
\widetilde{\mathrm{Cov}}(\boldsymbol l)=\int_{(0,\dots,0)}^{\boldsymbol l}\mu+\mathrm{Cov}(0,\dots,0)
\end{equation}
is a well-defined $C^1$-smooth concave up function, which satisfies 
$$\frac{\partial \widetilde{\mathrm{Cov}}(\boldsymbol l)}{\partial l_k}=\frac{\widetilde\theta_k(\boldsymbol l)}{2},$$
and coincides with $\mathrm{Cov}$ on $\mathcal L.$
\end{proposition}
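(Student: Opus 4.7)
The plan is to verify the four assertions of the proposition separately, treating the concavity-up (i.e.\ convexity) statement as the main obstacle.

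\textbf{Well-definedness and $C^1$-smoothness.} By the preceding proposition, $\mu$ is a closed continuous $1$-form on the star-shaped (hence simply connected) domain $\mathbb{R}^6_{\geqslant 0}$, so the line integral $\int_{(0,\dots,0)}^{\boldsymbol l} \mu$ is path-independent and defines a continuous function. Standard calculus for line integrals of closed $1$-forms shows that the partial derivatives of $\widetilde{\mathrm{Cov}}$ exist and equal the coefficients of $\mu$, namely $\partial \widetilde{\mathrm{Cov}}/\partial l_k = \widetilde{\theta}_k/2$. Since each $\widetilde{\theta}_k$ is continuous on $\mathbb{R}^6_{\geqslant 0}$ by \cite[Lemma 4.7 and Corollary 4.9]{LY} (recalled above), $\widetilde{\mathrm{Cov}}$ is $C^1$.

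\textbf{Agreement with $\mathrm{Cov}$ on $\mathcal L$.} On $\mathcal{L}$ the extension $\widetilde{\theta}_k$ equals the honest dihedral angle $\theta_k$, so by the Schl\"afli-type formula~\eqref{Sch} the functions $\widetilde{\mathrm{Cov}}$ and $\mathrm{Cov}$ have identical gradients on $\mathcal{L}$. The origin corresponds to the regular ideal octahedron and lies in $\mathcal{L}$, and by construction $\widetilde{\mathrm{Cov}}(0,\dots,0) = \mathrm{Cov}(0,\dots,0)$. Since $\mathcal{L}$ is connected (one can parametrize it by the open set of admissible dihedral angles $(0,\pi)^6 \cap \{\det\mathrm{Gram}<0\}$ and connect any two truncated hyperideal tetrahedra by a path in the angle space), the two functions coincide throughout $\mathcal{L}$.

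\textbf{Convexity.} This is the main obstacle. Luo's result \cite{L} gives strict convexity of $\mathrm{Cov}$ on $\mathcal{L}$, and on each component $\Omega_i$ of $\mathbb{R}^6_{\geqslant 0}\setminus(\mathcal{L}\cup\partial\mathcal{L})$ the angle function $\widetilde{\theta}_k$ is constant, so $\widetilde{\mathrm{Cov}}$ restricts to an affine function on $\Omega_i$. What needs to be shown is that these pieces glue convexly across $\partial \mathcal{L}=X_1\sqcup X_2\sqcup X_3$. The decisive observation is that $\widetilde{\mathrm{Cov}}$ is globally $C^1$, so for any boundary point $\boldsymbol l_0\in X_i$ the affine extension on $\Omega_i$ is precisely the tangent hyperplane $\boldsymbol l\mapsto \widetilde{\mathrm{Cov}}(\boldsymbol l_0)+\tfrac12\sum_k \widetilde{\theta}_k(\boldsymbol l_0)(l_k-l_{0,k})$ to $\mathrm{Cov}$ at $\boldsymbol l_0$. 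Since a $C^1$ convex function lies above each of its tangent hyperplanes, the affine extension does not exceed $\mathrm{Cov}$ on nearby points of $\mathcal{L}$, and the inequality defining convexity is preserved when one crosses $X_i$.

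To make this rigorous I would argue along an arbitrary segment: given $\boldsymbol p,\boldsymbol q\in\mathbb{R}^6_{\geqslant 0}$, the segment $[\boldsymbol p,\boldsymbol q]$ meets the closed set $\partial\mathcal{L}$ in a compact set and decomposes into finitely many subintervals, each contained in the closure of $\mathcal{L}$ or of some $\Omega_i$. The restriction of $\widetilde{\mathrm{Cov}}$ to the segment is $C^1$ and, by the analysis above, has nondecreasing derivative on each subinterval (strictly increasing on $\mathcal{L}$-pieces, constant on $\Omega_i$-pieces). The $C^1$ matching at the subdivision points ensures that the derivative is nondecreasing on the whole segment, whence $\widetilde{\mathrm{Cov}}|_{[\boldsymbol p,\boldsymbol q]}$ is convex, proving that $\widetilde{\mathrm{Cov}}$ is concave up on $\mathbb{R}^6_{\geqslant 0}$.
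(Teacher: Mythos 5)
The paper itself does not prove this statement: it is imported verbatim from \cite[Corollary 4.12]{LY}, so there is no in-paper argument to compare against. Your reconstruction follows what is essentially the only natural route (and, structurally, the one behind the cited result): path-independence of $\int\mu$ from the closedness of $\mu$ on the convex set $\mathbb R^6_{\geqslant 0}$, the gradient identity and $C^1$-regularity from continuity of the $\widetilde\theta_k$, agreement with $\mathrm{Cov}$ from matching gradients plus the normalization at the origin on the connected set $\mathcal L$, and convexity by reduction to monotonicity of the derivative along line segments. All of that is sound.

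The one step you assert without justification is the finiteness of the subdivision of $[\boldsymbol p,\boldsymbol q]$, and this is exactly where the gluing argument could fail. A segment can a priori meet the closed set $\partial \mathcal L$ in an infinite set (even a nondegenerate subinterval), in which case ``finitely many subintervals, each contained in $\overline{\mathcal L}$ or some $\overline{\Omega_i}$'' is false as stated, and a continuous function that is nondecreasing on each member of a cover by sets that are not intervals need not be nondecreasing. The repair uses analyticity already recorded in the paper: by Propositions \ref{degeneration} and \ref{tri}, $\partial\mathcal L$ is exactly $\{\det\mathrm{Gram}=0\}\cap\mathbb R^6_{\geqslant 0}$, and $t\mapsto\det\mathrm{Gram}\big(\boldsymbol p+t(\boldsymbol q-\boldsymbol p)\big)$ is real-analytic on $[0,1]$. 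Hence either it has finitely many zeros, which produces your finite subdivision so that continuity of $g'(t)=\tfrac12\sum_k\widetilde\theta_k(\gamma(t))(q_k-p_k)$ glues the piecewise monotonicity; or it vanishes identically, in which case the segment lies in a single $X_i$, where all $\widetilde\theta_k$ are constant and $\widetilde{\mathrm{Cov}}$ is affine, hence convex, along the segment. With that inserted the proof is complete. (The tangent-hyperplane paragraph is heuristic and not load-bearing; it is superseded by the segment argument and can be dropped.)
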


\subsection{Hyperbolic polyhedral metrics and co-volume function} \label{HPM}

Let $M$ be a $3$-manifold with non-empty boundary, and let $\mathcal T$ be an ideal triangulation of $M,$ i.e., a realization of $M$ as the quotient of a finite set $T$ of  truncated Euclidean tetrahedra along  homeomorphisms between pairs of the faces of the truncated Euclidean  tetrahedra. We call the quotients of the edges of the truncated Euclidean tetrahedra the \emph{edges} of $\mathcal T,$ and let $E$ be the set of edges of $\mathcal T.$

A \emph{hyperbolic polyhedral metric} of hyperideal type  on $(M,\mathcal T)$ assigns each edge $e\in E$ an edge lengths $l_e\in\mathbb R_{{\geqslant }  0},$ so that for each tetrahedron $\Delta\in T$ with edges $e_1,\dots,e_6,$ the six numbers $l_{e_1},\dots,l_{e_6}$ are the edges lengths of a truncated hyperideal tetrahedron.  Denote by $\mathcal L(M,\mathcal T)$  the space of hyperbolic polyhedral metrics on $(M,\mathcal T).$ Then by \cite[Proposition 4.4]{LY}, $\mathcal L(M,\mathcal T)$ is an open subset of ${\mathbb R_{\geqslant 0}}^E.$
The \emph{cone angle} $\theta_e$ of a hyperbolic polyhedral metric $\boldsymbol l$ at an edge $e$ is the sum of the dihedral angles adjacent to $e$ of the truncated hyperideal tetrahedra determined by $\boldsymbol l.$ The \emph{discrete curvature} of $\boldsymbol l$ at $e$ is $K_e=2\pi-\theta_e,$ and the \emph{discrete curvature} of $\boldsymbol l$ is the vector $K(\boldsymbol l)=(K_{e})_{e\in E}\in\mathbb R^E.$

 In \cite{L}, to study the rigidity property of hyperbolic polyhedral metrics, the \emph{co-volume function} $\mathrm{Cov}: \mathcal L(M,\mathcal T) \to \mathbb R$ is defined by 
$$\mathrm{Cov}(\boldsymbol l)=\sum_{\Delta\in T} \mathrm{Cov}(\boldsymbol l_\Delta) - \pi\sum_{e\in E}l_e,$$
which  is locally strictly concave up, as each summand $\mathrm{Cov}(\boldsymbol l_\Delta)$  is locally strictly concave up on $\mathcal L$ and $- \pi\sum_{e\in E}l_e$ is a linear function; and by (\ref{Sch}) for each $\boldsymbol l\in \mathcal L(M, \mathcal T)$  the gradient 
$$\nabla \mathrm{Cov}(\boldsymbol l) = - \frac{K(\boldsymbol l)}{2},$$
and a critical point $\boldsymbol l^*$ of $\mathrm{Cov},$ if exists, defines a hyperbolic metric on $M$ with totally geodesic boundary, with the critical value
$$\mathrm {Cov} (\boldsymbol l^*)= \mathrm{Vol}(M).$$
In \cite{LY},  an $\boldsymbol l\in {\mathbb R_{\geqslant 0}}^E$ is called  a \emph{generalized hyperbolic polyhedral metric},  and  the \emph{extended co-volume function}  $\widetilde{\mathrm{Cov}}: {\mathbb R_{\geqslant 0}}^E \to \mathbb R$ is defined by 
$$\widetilde{\mathrm{Cov}}(\boldsymbol l)=\sum_{\Delta\in T}\widetilde{\mathrm{Cov}}(\boldsymbol l_\Delta) - \pi\sum_{e\in E}l_e.$$
Then $\widetilde{\mathrm{Cov}}$ coincides with $\mathrm{Cov}$ on $\mathcal L(M,\mathcal T),$ which is locally strictly concave up; and as $\widetilde{\mathrm{Cov}}(\boldsymbol l_\Delta)$ is concave up on ${\mathbb R_{\geqslant 0}}^6$ for each tetrahedra $\Delta,$ $\widetilde{\mathrm{Cov}}$ is concave up on the convex set  $\mathbb R_{{\geqslant } 0 }^E.$ 
Putting these facts together, it is proved in \cite{LY} that hyperbolic polyhedral metrics are rigid in the sense that they are uniquely determined by their discrete curvatures.

\begin{theorem}\label{rigidity}\cite[Theorem 5.1]{LY} For an ideally triangulated $3$-manifold  $(M,\mathcal T),$ the \emph{discrete curvature map}  $K: \mathcal L(M,\mathcal T) \to \mathbb R^E$ defined by sending $\boldsymbol l$ to its discrete curvature $K(\boldsymbol l)$  is a continuous injection. 
\end{theorem}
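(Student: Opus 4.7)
The plan is to prove continuity directly from the defining formulas, and to establish injectivity by running the standard convexity argument on the one-variable restriction of $\widetilde{\mathrm{Cov}}$ to the segment joining two hypothetical preimages, exploiting the gradient identity $\nabla\mathrm{Cov}=-K/2$ together with the local strict concavity of $\mathrm{Cov}$ on $\mathcal L(M,\mathcal T)$ and the global concavity of $\widetilde{\mathrm{Cov}}$ on $\mathbb R_{\geqslant 0}^E$.

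For continuity, I would observe that for each truncated hyperideal tetrahedron $\Delta\in T$ every dihedral angle is a continuous function of the six edge lengths, by the explicit cosine law~\eqref{cos2}. Since the cone angle $\theta_e$ of $\boldsymbol l\in\mathcal L(M,\mathcal T)$ at an edge $e\in E$ is the finite sum of these dihedral angles over the tetrahedra adjacent to $e$, it depends continuously on $\boldsymbol l$, and hence so does $K(\boldsymbol l)=(2\pi-\theta_e)_{e\in E}$.

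For injectivity, suppose $\boldsymbol l_1,\boldsymbol l_2\in\mathcal L(M,\mathcal T)$ satisfy $K(\boldsymbol l_1)=K(\boldsymbol l_2)$, and consider
\[
g(t)=\widetilde{\mathrm{Cov}}\bigl((1-t)\boldsymbol l_1+t\boldsymbol l_2\bigr),\qquad t\in[0,1].
\]
Convexity of $\mathbb R_{\geqslant 0}^E$ keeps the segment in the domain, and since $\widetilde{\mathrm{Cov}}$ is $C^1$-smooth and concave on this set, $g$ is $C^1$ and concave. At the endpoints $\widetilde{\mathrm{Cov}}$ agrees with $\mathrm{Cov}$, so $\nabla\widetilde{\mathrm{Cov}}(\boldsymbol l_i)=-\tfrac{1}{2}K(\boldsymbol l_i)$ for $i=1,2$, and the chain rule combined with the hypothesis yields
\[
g'(0)=-\tfrac{1}{2}\bigl\langle K(\boldsymbol l_1),\boldsymbol l_2-\boldsymbol l_1\bigr\rangle=-\tfrac{1}{2}\bigl\langle K(\boldsymbol l_2),\boldsymbol l_2-\boldsymbol l_1\bigr\rangle=g'(1).
\]
Concavity of $g$ forces $g'$ to be non-increasing, so $g'\equiv g'(0)$ on $[0,1]$, i.e.\ $g$ is affine. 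Assuming $\boldsymbol l_1\neq\boldsymbol l_2$ for contradiction, openness of $\mathcal L(M,\mathcal T)$ in $\mathbb R_{\geqslant 0}^E$ ensures that $(1-t)\boldsymbol l_1+t\boldsymbol l_2\in\mathcal L(M,\mathcal T)$ for $t$ in some initial interval $[0,\varepsilon)$, where $g$ coincides with $t\mapsto\mathrm{Cov}((1-t)\boldsymbol l_1+t\boldsymbol l_2)$ and is strictly concave by the local strict concavity of $\mathrm{Cov}$ recorded in Section~\ref{HPM}, contradicting affineness of $g$. Hence $\boldsymbol l_1=\boldsymbol l_2$.

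The main subtlety is that strict concavity of $\mathrm{Cov}$ is only local on $\mathcal L(M,\mathcal T)$, while the straight segment between $\boldsymbol l_1$ and $\boldsymbol l_2$ may leave $\mathcal L(M,\mathcal T)$ through flat or generalized hyperbolic polyhedral metrics, so a naive one-step convexity argument within $\mathcal L(M,\mathcal T)$ does not close. Passing to the extended function $\widetilde{\mathrm{Cov}}$ on the convex set $\mathbb R_{\geqslant 0}^E$ removes precisely this obstacle: global concavity propagates the equality of directional derivatives along the entire segment, while the sliver of local strict concavity at one endpoint is enough to trigger the contradiction. Everything else in the proof is assembled from facts already established in Section~\ref{extcov} and Section~\ref{HPM}.
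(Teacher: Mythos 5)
Your proof is correct and takes exactly the route the paper indicates for this cited result of Luo--Yang: continuity of each dihedral angle from the cosine law~\eqref{cos2}, and injectivity from the gradient identity $\nabla\mathrm{Cov}=-\tfrac{1}{2}K$ combined with the convexity of $\widetilde{\mathrm{Cov}}$ on the convex set $\mathbb R_{\geqslant 0}^E$ and its local strict convexity on the open set $\mathcal L(M,\mathcal T)$, which is precisely what the paper summarizes with ``putting these facts together.'' The only quibble is terminological: the paper's ``concave up'' means convex, so along the segment $g'$ is non-decreasing rather than non-increasing, but this sign is immaterial since either monotonicity of $g'$ together with $g'(0)=g'(1)$ forces $g$ to be affine, and strict convexity on an initial subinterval yields the same contradiction.
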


 \subsection{Angle structures and generalized hyperbolic polyhedral metrics}\label{anglestr}

Let $(M,\mathcal{T})$ be an ideally triangulated 3-manifold with non-empty boundary, and let 
$E$ and $T$ respectively be the sets of edges and tetrahedra.

We call a pair $(\Delta,e),$ $\Delta\in T$ and $e\in E,$  a corner of $\mathcal T$ if $\Delta$ is adjacent to  $e.$ We also denote by $\Delta\sim e$ when $(\Delta,e)$ is a corner. According to \cite[Definition 6.1]{LY}, an \emph{angle assignment} on $(M, \mathcal T)$ assigns each corner a number $\theta_{(\Delta,e)}$ in $(0,\pi),$  called
the dihedral angle of $e$ in $\Delta,$ so that sum of
the dihedral angles at three edges in the same tetrahedron
$\Delta$ adjacent to each vertex is strictly less than $\pi.$ By \cite{BB}, this is exactly the conditions for six numbers in $(0,\pi)$ to be the dihedral angles of a truncated hyperideal tetrahedron. 
 The {\it cone angle} of
an angle assignment is the assignment  $ \Theta \in \mathbb R^E$ that assigns  each edge $e$ to the sum of
dihedral angles at $e.$ For any $\Theta\in\mathbb R ^E,$ we denote   by
$\mathcal A_{\Theta}(M,\mathcal T)$ the spaces of angle assignments with cone angles ${\Theta}.$ An angle assignment with the cone angle $\Theta=(2\pi,\dots,2\pi)$  is called an \emph{angle structure}; and we denote by  $\mathcal A(M,\mathcal T)$ the space of angle structures on $(M,\mathcal T).$ If $\mathcal A(M,\mathcal T)\neq\emptyset,$ then we call $\mathcal T$ an \emph{angled ideal triangulation} of $M.$
The volume function $\mathrm{Vol}: \mathcal A_{\Theta}(M,\mathcal T)\to \mathbb R$ is defined by
$$\mathrm{Vol}(\boldsymbol \theta) = \sum_{\Delta\in T} \mathrm{Vol} (\Delta_{\boldsymbol \theta _ \Delta}),$$
where $\Delta_{\boldsymbol \theta _ \Delta}$ is the truncated  hyperideal tetrahedron with dihedral angles $\boldsymbol \theta _ \Delta$ assigned by $\boldsymbol\theta$ to the corners of $\Delta.$ By \cite{R}, the volume function can be extended continuously to the closure $\overline{\mathcal A_{\Theta}(M,\mathcal T)}$ of $\mathcal A_{\Theta}(M,\mathcal T).$ 

For a generalized hyperbolic polyhedral metric  $\boldsymbol l \in \mathbb R_{>0}^E,$ we define the \emph{extended dihedral angle} $\theta(\boldsymbol l)_{(\Delta,e)}$ of $ \boldsymbol  l$ at the corner $(\Delta,e)$  to be the corresponding extended
dihedral angle of the generalized hyperideal tetrahedron with
edge lengths $\boldsymbol l_\Delta.$ Let $C$ be the set of corners of $\mathcal T,$ and let $\boldsymbol \theta(\boldsymbol l)=\big(\theta(\boldsymbol l)_{(\Delta,e)}\big)_{(\Delta,e)\in C}.$  We define the \emph{extended cone angle} of $\boldsymbol l$ at $e$ as the sum of the extended dihedral angles of $\boldsymbol l$ at the corners  around $e.$ 

\begin{theorem}\cite[Theorem 6.3]{LY}\label{rigidity2}
 If $\mathcal A_{\Theta}(M, \mathcal T)\neq\emptyset,$ then
 \begin{enumerate}[(a)]
 \item there exists a unique $\boldsymbol\theta \in  \overline{\mathcal A_{\Theta}(M,\mathcal T)}$ that achieves the maximum volume, and
 \item for each generalized polyhedral metric $\boldsymbol l \in\mathbb{R}_{{>} 0}^E$ with cone angles $\Theta,$ the extended dihedral angles $\boldsymbol\theta(\boldsymbol l)$ of $\boldsymbol l$ achieves the maximum volume on $\overline{\mathcal A_{\Theta}(M,\mathcal T)},$ 
 \item if $\boldsymbol\theta \in  \overline{\mathcal A_{\Theta}(M,\mathcal T)}$ achieves the maximum volume, then there exists a generalized hyperbolic polyhedral metric $\boldsymbol l\in\mathbb{R}_{{>} 0}^E$  whose  extended dihedral angles $\boldsymbol \theta(\boldsymbol l) =\boldsymbol\theta.$
  \end{enumerate}
\end{theorem}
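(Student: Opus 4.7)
The plan is to exploit the strict concavity of the total volume function on the compact convex polytope $\overline{\mathcal{A}_\Theta(M,\mathcal{T})}$, combined with the Schl\"afli identity $d\,\mathrm{Vol}(\Delta) = -\tfrac{1}{2}\sum_k l_k\, d\theta_k$ for truncated hyperideal tetrahedra and its continuous extension to degenerate configurations via the extended co-volume $\widetilde{\mathrm{Cov}}$ and extended dihedral angles $\widetilde\theta_k$ of Proposition~\ref{ccu}. With this dictionary in place, (a), (b), and (c) become facets of a single Lagrange multiplier principle relating lengths and dihedral angles under the cone-angle constraints.

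For part (a), the space $\overline{\mathcal{A}_\Theta(M,\mathcal{T})}$ is a compact convex polytope in $[0,\pi]^C$, cut out by the linear cone-angle equalities together with the tetrahedral inequalities from \cite{BB}. Rivin's strict concavity of the volume of a single truncated hyperideal tetrahedron as a function of its six dihedral angles, summed over $\Delta\in T$, gives strict concavity of $\mathrm{Vol}$ on $\mathcal{A}_\Theta(M,\mathcal{T})$, which extends continuously to the closure by \cite{R}; hence a unique maximizer exists.

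For part (b), let $\boldsymbol l \in \mathbb{R}_{>0}^E$ be a generalized hyperbolic polyhedral metric with cone angles $\Theta$. For any variation $\delta\boldsymbol\theta$ tangent to $\mathcal{A}_\Theta(M,\mathcal{T})$, the extended Schl\"afli formula yields
\begin{equation*}
d\,\mathrm{Vol}\bigl(\boldsymbol\theta(\boldsymbol l)\bigr)(\delta\boldsymbol\theta)=-\tfrac{1}{2}\sum_{(\Delta,e)\in C}l_e\,\delta\theta_{(\Delta,e)}=-\tfrac{1}{2}\sum_{e\in E}l_e\sum_{\Delta\sim e}\delta\theta_{(\Delta,e)}=0,
\end{equation*}
since the inner sum is the variation of the cone angle at $e$ and therefore vanishes. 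Hence $\boldsymbol\theta(\boldsymbol l)$ is a critical point of $\mathrm{Vol}\big|_{\mathcal{A}_\Theta(M,\mathcal{T})}$, and by strict concavity the unique maximizer, which identifies it with the $\boldsymbol\theta$ produced in (a).

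For part (c), I would apply Lagrange multipliers at the maximizer $\boldsymbol\theta$ from (a): since the cone-angle constraints are linear in the corner angles, there exist $\lambda_e\in\mathbb{R}$ with $\partial\mathrm{Vol}/\partial\theta_{(\Delta,e)}=\lambda_e$ at every corner $(\Delta,e)$, and the Schl\"afli identity $\partial\mathrm{Vol}/\partial\theta_{(\Delta,e)}=-\tfrac{1}{2}l_{(\Delta,e)}$ forces $l_{(\Delta,e)}=-2\lambda_e$ to depend only on the edge $e$, producing a consistent assignment $\boldsymbol l\in\mathbb{R}^E$ with $\boldsymbol\theta(\boldsymbol l)=\boldsymbol\theta$. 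The main obstacle is the boundary analysis, since the maximizer may lie on $\partial\mathcal{A}_\Theta(M,\mathcal{T})$, where some tetrahedra degenerate to flat configurations or leave $\mathcal{L}$ altogether. The extended framework of Sections~\ref{extcov} and~\ref{HPM}, in which $\partial\widetilde{\mathrm{Cov}}/\partial l_k=\widetilde\theta_k/2$ remains valid on all of $\mathbb{R}_{\geqslant 0}^6$ and $\widetilde{\mathrm{Cov}}$ is globally concave, is designed precisely so that the Schl\"afli variational identity and the Lagrange multiplier extraction of $\boldsymbol l$ persist across such degenerations; strict positivity $l_e>0$ would then be verified by contradiction, as a vanishing edge length would allow a volume-increasing perturbation within $\mathcal{A}_\Theta(M,\mathcal{T})$, contradicting the maximality of $\boldsymbol\theta$.
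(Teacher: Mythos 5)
First, a point of reference: the paper does not prove this statement --- it is quoted verbatim from Luo--Yang \cite[Theorem 6.3]{LY} --- so there is no in-paper proof to compare against. Your framework (compactness and convexity of $\overline{\mathcal A_{\Theta}(M,\mathcal T)}$, concavity of the volume, and the Schl\"afli/Legendre duality between $\mathrm{Vol}$ as a function of angles and the extended co-volume $\widetilde{\mathrm{Cov}}$ as a function of lengths) is indeed the Luo--Yang strategy. However, every one of your three arguments has a gap exactly at the boundary of the angle polytope, i.e.\ at degenerate tetrahedra, and that is precisely where the content of the theorem lives: by Proposition~\ref{KC} the maximizer relevant to this paper (a Kojima triangulation with flat tetrahedra) does sit on $\partial\overline{\mathcal A_{\Theta}(M,\mathcal T)}$.

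Concretely. For (a): strict concavity on the open set plus continuity on the closure does not yield uniqueness. If $\boldsymbol\theta_1\neq\boldsymbol\theta_2$ both maximize, the segment joining them consists of maximizers, and interior strict concavity only forces that segment into $\partial\overline{\mathcal A_{\Theta}(M,\mathcal T)}$; you must still exclude that the extended volume is affine along a boundary segment, which requires analyzing the degenerate faces of the Bao--Bonahon polytope tetrahedron by tetrahedron. For (b): the ``extended Schl\"afli formula'' is invoked exactly where it is unavailable. When $\boldsymbol l_\Delta\notin\mathcal L$ the extended angle map is locally constant (Section~\ref{extcov}), so $\mathrm{Vol}$ cannot be differentiated at $\boldsymbol\theta(\boldsymbol l)$ via a chain rule from the length side, and at a boundary point only one-sided variations are admissible in any case. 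The correct argument is zeroth-order: the Fenchel inequality $\mathrm{Vol}(\boldsymbol\theta'_\Delta)\leqslant\widetilde{\mathrm{Cov}}(\boldsymbol l_\Delta)-\frac{1}{2}\sum_k\theta'_kl_k$ for all $\boldsymbol\theta'\in\overline{\mathcal A}$, with equality at $\boldsymbol\theta'_\Delta=\widetilde{\boldsymbol\theta}(\boldsymbol l_\Delta)$; summing over $\Delta$ and using that $\boldsymbol\theta'$ and $\boldsymbol l$ both have cone angles $\Theta$ makes the right-hand side independent of $\boldsymbol\theta'$, which proves (b) with no differentiation. For (c): the Lagrange-multiplier extraction of $l_{(\Delta,e)}=-2\lambda_e$ is valid only at an interior maximizer. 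At a boundary maximizer the angles of a degenerate tetrahedron do not determine its edge lengths (the extended angle map collapses open sets of length vectors to a single angle vector), so there is no Schl\"afli identity to read the multipliers from; one must instead show that the convex dual functional $\boldsymbol l\mapsto\sum_{\Delta}\widetilde{\mathrm{Cov}}(\boldsymbol l_\Delta)-\frac{1}{2}\sum_e\Theta_el_e$ attains its infimum at a point of $\mathbb R^E_{>0}$, i.e.\ run a properness/compactness argument for a minimizing sequence of lengths. That is the substantive part of Luo--Yang's proof, and the sentence ``the extended framework \dots is designed precisely so that \dots persist'' defers rather than supplies it.
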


The following Corollary \ref{Cor3} will play a crucial role in the proof of Theorems \ref{WD3}.

\begin{corollary}\label{Cor3}
Let $M$ be a hyperbolic $3$-manifold with totally geodesic boundary and let $\mathcal T$ be an angled ideal triangulation of $M$ such that  the space of angle structures $\mathcal A(M,\mathcal T)\neq \emptyset.$   Then there exists a $t_0>0$ and a generalized hyperbolic polyhedral metric $\boldsymbol l_{t_0}\in {\mathbb R_{>0}}^E$ on $(M,\mathcal T)$ whose cone angle equals $2\pi+t_0$ at each edge of $\mathcal T.$
\end{corollary}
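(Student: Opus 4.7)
The plan is to construct an angle assignment on $(M, \mathcal T)$ with cone angle $2\pi + t_0$ at every edge by perturbing a given angle structure, and then invoke the rigidity result Theorem~\ref{rigidity2} to upgrade this angle data to an actual generalized hyperbolic polyhedral metric.

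First, I would fix any $\boldsymbol\theta^* \in \mathcal A(M,\mathcal T)$, which exists by hypothesis. Let $n_e$ denote the number of corners of $\mathcal T$ adjacent to an edge $e \in E$. For each $t>0$, I would define the perturbed angle assignment at every corner $(\Delta,e)$ by
$$\theta^{(t)}_{(\Delta,e)} = \theta^*_{(\Delta,e)} + \frac{t}{n_e}.$$
The cone angle at each edge $e$ then becomes $\sum_{\Delta \sim e}\theta^{(t)}_{(\Delta,e)} = 2\pi + t$. So if $\boldsymbol\theta^{(t)}$ is a valid angle assignment in the sense of Section~\ref{anglestr}, it automatically lies in $\mathcal A_{\Theta_t}(M,\mathcal T)$, where $\Theta_t = (2\pi + t)_{e\in E}$.

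The conditions defining an angle assignment --- that each dihedral angle lies in $(0,\pi)$ and that the three dihedral angles at each vertex of each tetrahedron sum to strictly less than $\pi$ --- are open strict inequalities satisfied by $\boldsymbol\theta^*$. Hence for all sufficiently small $t>0$ the perturbation $\boldsymbol\theta^{(t)}$ remains a valid angle assignment; pick such a $t_0 > 0$. Then $\mathcal A_{\Theta_{t_0}}(M,\mathcal T) \neq \emptyset$, so Theorem~\ref{rigidity2}(a) provides a maximum-volume element $\boldsymbol\theta \in \overline{\mathcal A_{\Theta_{t_0}}(M,\mathcal T)}$, and Theorem~\ref{rigidity2}(c) then produces a generalized hyperbolic polyhedral metric $\boldsymbol l_{t_0} \in \mathbb R_{>0}^E$ whose extended dihedral angles $\boldsymbol\theta(\boldsymbol l_{t_0})$ equal $\boldsymbol\theta$. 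Since cone angles depend continuously on dihedral angles, the cone-angle constraint $2\pi + t_0$ is preserved in the closure $\overline{\mathcal A_{\Theta_{t_0}}(M,\mathcal T)}$, and therefore $\boldsymbol l_{t_0}$ has cone angle $2\pi + t_0$ at every edge of $\mathcal T$.

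There is no serious obstacle in this approach: Theorem~\ref{rigidity2} does the bulk of the work, and the perturbation step is essentially routine because the defining inequalities of an angle structure are open. The only step I would monitor carefully is confirming that the uniform per-corner perturbation $t/n_e$ indeed preserves the vertex inequalities for small positive $t$; in the unlikely event that the straightforward uniform choice fails, one could instead distribute the extra cone angle $t$ among the corners at each edge non-uniformly, using the convexity of the space of angle assignments at $\boldsymbol\theta^*$.
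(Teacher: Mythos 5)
Your proposal is correct and follows essentially the same route as the paper: both arguments reduce to showing $\mathcal A_{\Theta_{t_0}}(M,\mathcal T)\neq\emptyset$ for the constant cone angle $2\pi+t_0$ with $t_0>0$ small, and then invoke Theorem~\ref{rigidity2} (a) and (c). The only (cosmetic) difference is that the paper deduces nonemptiness from the openness of the image of the linear cone angle map on the open convex polytope of admissible tetrahedral angles, whereas you exhibit an explicit perturbation $\theta^*_{(\Delta,e)}+t/n_e$, which is valid for small $t$ precisely because the defining inequalities of an angle assignment are strict and open.
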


\begin{proof} 
We first show that if $t_0$ is sufficiently small, then $\mathcal A_{\boldsymbol { 2\pi+ t_0}}(M, \mathcal T)\neq\emptyset.$ Let $\boldsymbol \Theta :\mathbb (0,\pi)^C \to \mathbb R^E$ be cone angle map defined for $\boldsymbol \theta=(\theta_{(\Delta,e)})\in \mathbb (0,\pi)^C$ and $e\in E$  by 
$$\boldsymbol\Theta (\boldsymbol \theta)(e)=\sum_{\Delta\sim e} \theta_{(\Delta,e)},$$
which computes the sum of the dihedral angles around $e.$ 
Let $\mathcal A$ be the space of all possible dihedral angles of a truncated hyperideal tetrahedron. Then by \cite{BB}, $\mathcal A$ is the set of $6$-tuples of numbers in $(0,\pi)$ such that the sum of the threes numbers around each vertex is strictly less than $\pi,$ which  is a convex open polytope in $\mathbb (0,\pi)^6.$ As a consequence $\mathcal A^T$ is a convex open polytope in $(0,\pi)^C;$ and as the cone angle map $\boldsymbol \Theta$ is linear, the image $\boldsymbol\Theta(\mathcal A^T)$ is a convex open polytope in $\mathbb R^E.$ We observe that for $\Theta\in\mathbb R^E,$ $\mathcal A_{\Theta}(M, \mathcal T)\neq\emptyset$ if and only if $\Theta \in \boldsymbol\Theta (\mathcal A ^T).$ Since $\mathcal A(M,\mathcal T)\neq\emptyset,$ we know that $(2\pi,\dots,2\pi)\in  \boldsymbol \Theta (\mathcal A ^T);$ and since $\boldsymbol \Theta(\mathcal A^T)$ is open in $\mathbb R^E,$ there exists a $t_0>0$ such that $(2\pi+t_0, \dots, 2\pi+t_0)\in \boldsymbol \Theta (\mathcal A ^T).$ As a consequence,  $\mathcal A_{\boldsymbol { 2\pi+ t_0}}(M, \mathcal T)\neq\emptyset.$ Guaranteed to exist by Theorem \ref{rigidity2} (1), we let  $\boldsymbol\theta_{t_0}\in \overline{\mathcal A_{\boldsymbol { 2\pi+ t_0}}(M, \mathcal T)}$ be the unique angle assignment  that achieves the  maximum volume; and by Theorem \ref{rigidity2} (3),  we let $\boldsymbol l^*_{t_0}$ be a generalized polyhedral metric with $\boldsymbol \theta(\boldsymbol l^*_{t_0})=\boldsymbol \theta_{t_0}.$ 
\end{proof}

\subsection{Kojima ideal triangulations}\label{sec:2.8}

By Kojima \cite{Ko}, every hyperbolic $3$-manifold $M$ with totally geodesic boundary can be  canonically decomposed into a union of  convex truncated hyperideal polyhedra (see \cite{BB} for the definition.) 
This is the analogue in our setting of the Epstein-Penner decomposition\,\cite{EP} of cusped hyperbolic $3$-manifolds. The uniqueness comes from the fact that for a hyperbolic $3$-manifold $M$ with totally geodesic boundary, we can start the Voronoi process  from the boundary of $M$ whereas in the cusped case one has to make extra choices of the horo-spheres, which creates non-uniqueness. The polyhedral decomposition can be further decomposed into  a triangulation of $M,$ called a \emph{Kojima ideal triangulation} of $M,$  in the following steps:
\begin{enumerate}[Step 1.]
    \item For each polyhedron $P_i$ of the decomposition, choose a hyperideal vertex $p_i$ of $P_i$ and connect it to all the other hyperideal vertices  of $P$ by geodesic arcs to obtain a set of cones with bases the faces of $P_i$ not adjacent to $p_i$ and the tip the hyperideal vertex $p_i.$
    
    \item For each face $F_{ik}$ of  $P_i$ that is not adjacent to $p_{i},$ choose a hyperideal vertex $p_{ik}$ of $F_{ik}$  and connect it to all the other hyperideal vertices  of $F_{ik}$ to obtain an ideal triangulation of $F_{ik}$ by geodesic arcs,  which in turn gives an ideal triangulation of the polyhedron $P_i$ by taking the cone over the  hyperideal triangles of the faces with the tip the hyperideal vertex $p_i.$
    
    \item By doing so, the triangulations on the two sides of a face $F$ of the Kojima polyhedral decomposition are both cones over a point, and may not agree (when the two tips  of the cones from the faces $F_{ik}$ and $F_{jl}$ in the two polyhedra $P_i$ and $P_j$ on the two sides of $F$ are distinct). When this happens, as shown in Figure \ref{Kojima}, we insert layered flat tetrahedra to connect the ideal triangulations of the faces on the two sides.
\end{enumerate}

\begin{figure}[htbp]
\centering
\includegraphics[scale=0.35]{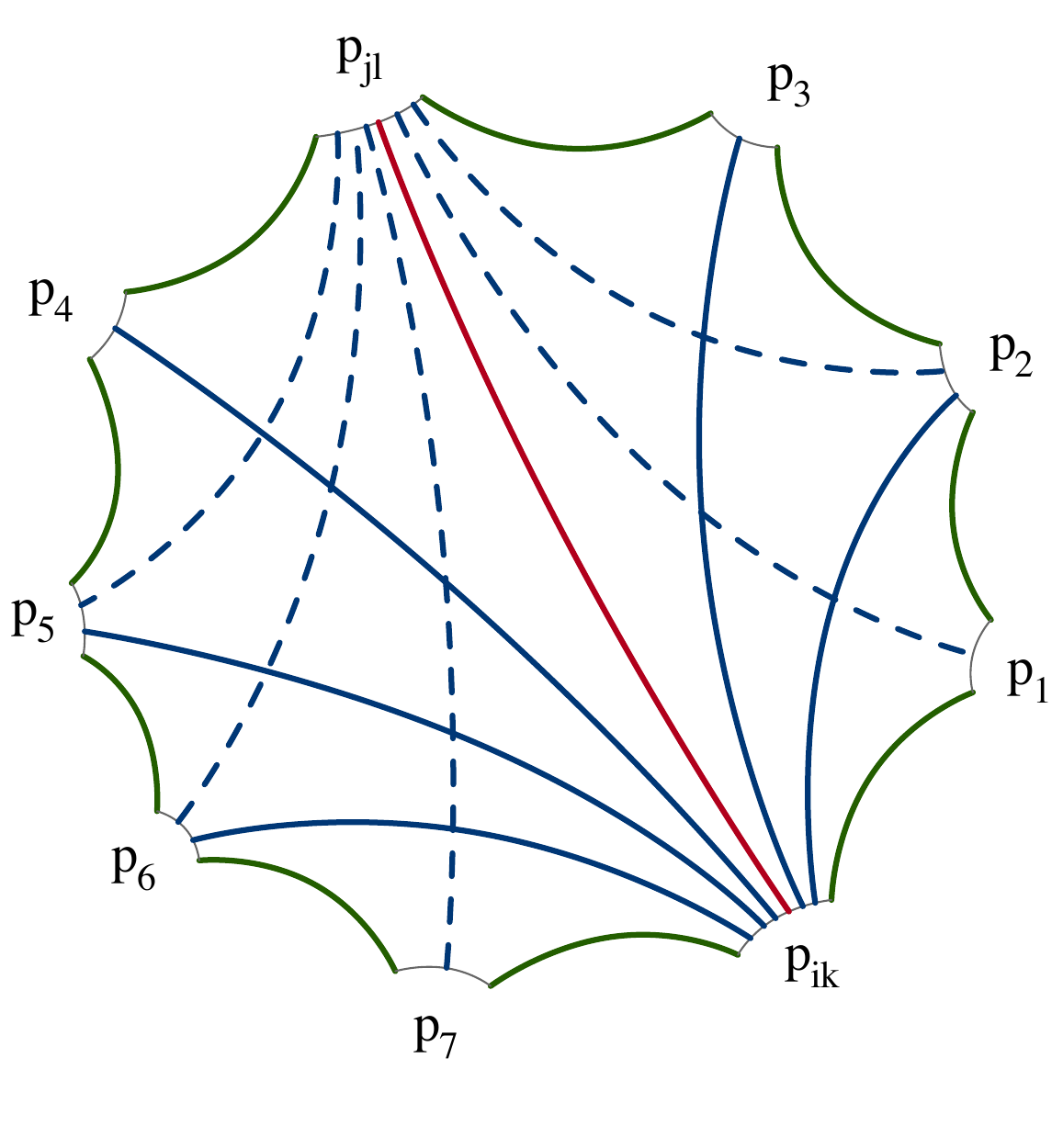}
\caption{Inserted layered flat tetrahedra: the flat tetrahedra respectively with hyperideal vertices $\{p_{ik},p_{jl}, p_1,p_2\},$   $\{p_{ik},p_{jl}, p_2,p_3\},$  $\{p_{ik},p_{jl}, p_4,p_5\},$  $\{p_{ik},p_{jl}, p_5,p_6\}$ and  $\{p_{ik},p_{jl}, p_6,p_7\}$ are inserted to connect the ideal triangulations of the hyperideal polygon on the top with hyperideal triangles of vertices $\{p_{ik},p_1,p_2\},$ $\{p_{ik},p_2,p_3\},$ $\{p_{ik},p_3,p_{jl}\},$ $\{p_{ik},p_{jl},p_4\},$ $\{p_{ik},p_4,p_5\},$$\{p_{ik},p_5,p_6\}$ and $\{p_{ik},p_6,p_7\}$ and on the bottom with hyperideal triangles of vertices  $\{p_{jl},p_3,p_2\},$ $\{p_{jl},p_2,p_1\},$ $\{p_{jl},p_1,p_{ik}\},$ $\{p_{jl},p_{ik},p_7\},$ $\{p_{jl},p_7,p_6\},$$\{p_{jl},p_6,p_5\}$ and $\{p_{jl},p_5,p_4\}.$  There are two flat tetrahedra on the right of the edge connecting $p_{ik}$ and $p_{jl}$ (colored in red) and three on the left of it.}
\label{Kojima} 
\end{figure}

A more detailed description of this construction can be found in \cite{HRS} for the cusped case, which is combinatorially equivalent to our setting; and here we highlight some of the key features of this construction. In a face $F$ of the Kojima polyhedral decomposition, we call the edge connecting the two tips of the cones from the two sides of $F$ \emph{central} (colored in red in Figure \ref{Kojima}), and call the edges of the polygon $F$ \emph{side} (colored in green in Figure \ref{Kojima}). Notice that  the central edge of a face can be a side edge when the two cone points are adjacent in $F.$ Then we have the following observations: 
\begin{enumerate}[(1)]
\item every flat tetrahedra is adjacent to a central edge;
\item both the central and the side edges are the $0$-edges in any flat tetrahedron adjacent to them; equivalently, none of the central edges and the side edges are the $\pi$-edges of any flat tetrahedron adjacent to them.
\end{enumerate}

\begin{proposition}\label{KC}
Let $\mathcal T$ be a Kojima ideal triangulation of a hyperbolic $3$-manifold $M$ with totally geodesic boundary and let    $\boldsymbol l ^* $ be its edge lengths. Then $\boldsymbol l ^* \in \partial \mathcal L(M,\mathcal T),$ i.e.,  there is  a one-parameter family of hyperbolic polyhedral metrics $\{\boldsymbol l^*_s\}$ on $(M,\mathcal T)$ that converges to $\boldsymbol l^*.$ 
\end{proposition}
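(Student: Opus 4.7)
The plan is to construct an explicit one-parameter perturbation $\boldsymbol{l}^*_s$ of $\boldsymbol{l}^*$ for which every tetrahedron of $\mathcal T$, including those that are flat at $s=0$, becomes a genuine truncated hyperideal tetrahedron for $s\in (0,s_0)$. Since $\mathcal L(M,\mathcal T)$ is cut out by imposing $\boldsymbol l_\Delta \in \mathcal L$ for each $\Delta \in T$ independently, this will exhibit $\boldsymbol l^*$ as a limit of metrics in $\mathcal L(M,\mathcal T)$, and therefore as a point of $\partial \mathcal L(M,\mathcal T)$.

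The construction uses the two structural features of Kojima ideal triangulations highlighted after the description of the layered flat tetrahedra: every flat tetrahedron of $\mathcal T$ is adjacent to at least one central edge, and neither the central edges nor the side edges of the Kojima polyhedral decomposition are ever $\pi$-edges of any flat tetrahedron adjacent to them. Let $K_0 \subset E$ denote the set of all central edges and all side edges of the Kojima decomposition, and for each $s > 0$ define
\begin{equation*}
l^*_{s,e} =
\begin{cases}
l^*_e + s & \text{if } e \in K_0,\\
l^*_e & \text{if } e \in E \setminus K_0.
\end{cases}
\end{equation*}

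For each flat tetrahedron $\Delta$ of $\mathcal T$, let $E(\Delta)$ be its set of edges. Observation $(1)$ ensures $K_0 \cap E(\Delta)$ is nonempty, and observation $(2)$ ensures every edge in $K_0 \cap E(\Delta)$ is a $0$-edge of $\Delta$. Lemma \ref{df} then furnishes an $s_\Delta > 0$ such that the restriction $\boldsymbol l^*_{s,\Delta}$ lies in $\mathcal L$ for all $s \in (0, s_\Delta)$. For each non-flat tetrahedron $\Delta'$, openness of $\mathcal L$ in $\mathbb R^6_{\geqslant 0}$ together with the continuity of $s \mapsto \boldsymbol l^*_{s,\Delta'}$ gives an $s_{\Delta'} > 0$ with $\boldsymbol l^*_{s,\Delta'} \in \mathcal L$ for all $s \in (0, s_{\Delta'})$. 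Taking $s_0 = \min_{\Delta \in T} s_\Delta$, which is positive since $T$ is finite, yields $\boldsymbol l^*_s \in \mathcal L(M,\mathcal T)$ for all $s \in (0, s_0)$, and $\boldsymbol l^*_s \to \boldsymbol l^*$ as $s \to 0^+$.

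The crux of the argument, and the reason the Kojima construction is set up with layered flat tetrahedra of the specific shape illustrated in Figure \ref{Kojima}, is observation $(2)$: it makes the single coherent choice of edges $K_0$ simultaneously perturb every flat tetrahedron of $\mathcal T$ in the direction required by Lemma \ref{df}. Were some central or side edge a $\pi$-edge of an adjacent flat tetrahedron instead, lengthening it could push that tetrahedron into the infeasible region $\mathbb R^6_{\geqslant 0} \setminus (\mathcal L \cup \partial \mathcal L)$ of Proposition \ref{tri}. I expect the main effort in a formal proof is not the perturbation argument above, but rather the careful combinatorial verification of $(1)$ and $(2)$ from the explicit insertion pattern of the layered flat tetrahedra, in particular that the $\pi$-edges of each such tetrahedron are always among the ``diagonal'' edges running from the cone points $p_{ik}, p_{jl}$ to the other hyperideal vertices, and never among the central or side edges.
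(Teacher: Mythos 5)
Your proposal is correct and follows essentially the same route as the paper: perturb the edge lengths on a fixed set of edges that, by the structural observations (1) and (2) about Kojima triangulations, are $0$-edges of every adjacent flat tetrahedron, then invoke Lemma \ref{df} for the flat tetrahedra and openness of $\mathcal L$ for the non-flat ones. The only (immaterial) difference is that the paper perturbs only the central edges, while you also perturb the side edges; observation (2) makes either choice valid.
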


\begin{proof}

For a flat tetrahedron $\Delta$ in $\mathcal T,$ let $s_{0,\Delta}$ be the constant in Lemma \ref{df} for $\Delta,$ and let $s_0$ be the minimum of $\{s_{0,\Delta}| \Delta
\in T\text{ flat}\}. $ Let $\boldsymbol l^*$ be the edge lengths of $\mathcal T;$ and for $s\in(0,s_0),$ let $\boldsymbol l^*_s\in{\mathbb R_{>0}}^E$ be defined by
\begin{equation*}
l^*_{s,e}=
\left\{\begin{array}{ll}
      l^*_e+s & \text{if }  e \text{ is central},\\
           l^*_e   & \text{if } e \text{ is not central}.\\
    \end{array} \right.
\end{equation*}
Then by the observations (1), (2) and (3) above, for each flat tetrahedron in $\boldsymbol l^*,$ the lengths of some of the $0$-edges increase and the lengths of the other edges remain the same, and by Lemma \ref{df} it becomes non-flat; and for each non-flat tetrahedron in $\boldsymbol l^*,$ when $s_0$ is small enough, it remains non-flat.  Therefore, $\boldsymbol l^*_s\in \mathcal L(M,\mathcal T)$ for all $s\in (0,s_0)$ and $\boldsymbol l^*_s$ converges to $\boldsymbol l^*$ as $s$ tends to $0.$ 
\end{proof}

Recall that the extended co-volume function $\widetilde{\mathrm{Cov}}$ is locally strictly concave up on $\mathcal L(M,\mathcal T)$ and is concave up on $\mathbb R_{> 0 }^E.$  In general, at an $\boldsymbol l\in \partial L(M,\mathcal L),$ there is no guarantee that  $\widetilde{\mathrm{Cov}}$ is locally strictly concave up, but for a Kojima ideal triangulation, we have the following

\begin{proposition} \label{Covconcave} Let $\mathcal T$ be a Kojima ideal triangulation of a hyperbolic $3$-manifold $M$ with totally geodesic boundary and let    $\boldsymbol l ^* \in \partial \mathcal L(M,\mathcal T)$ be its edge lengths.  Then the extended co-volume function $\widetilde{\mathrm{Cov }}$ is strictly concave up near $\boldsymbol l^*.$ As a consequence, $\boldsymbol l^*$ is the unique minimum  point of  $\widetilde{\mathrm{Cov}}.$
\end{proposition}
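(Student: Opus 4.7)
The strategy is to exploit a combinatorial feature specific to Kojima ideal triangulations, namely that every edge of $\mathcal T$ is adjacent to at least one tetrahedron whose edge lengths at $\boldsymbol l^*$ lie in the open set $\mathcal L$. This will allow us to promote the known local strict concavity up of $\mathrm{Cov}$ on $\mathcal L$ to strict concavity up of $\widetilde{\mathrm{Cov}}$ on a whole neighborhood of $\boldsymbol l^*$ in $\mathbb R^E_{>0}$. Uniqueness of the minimum will then follow by combining this local property with the global concavity up of $\widetilde{\mathrm{Cov}}$ on $\mathbb R^E_{\geqslant 0}$ already recorded after Proposition~\ref{ccu}.

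\textbf{Combinatorial fact.} The edges of the Kojima triangulation split into three kinds according to Section~\ref{sec:2.8}: internal edges $p_i$-$v$ from the Step~1 coning, face edges from the Step~2 triangulations of the faces, and central edges introduced in Step~3. Edges of the first two kinds are by construction edges of a coned tetrahedron of the form $\{p_i,p_{ik},v_s,v_{s+1}\}$, which is non-flat because its apex $p_i$ is a hyperideal vertex lying off the face $F_{ik}$ being coned. For a central edge $c=p_{ik}\text{-}p_{jl}$ on a shared face $F$ of two polyhedra $P_i,P_j$, the point $p_{jl}$ is itself a vertex of $F$ distinct from $p_{ik}$, and hence appears in the $P_i$-side triangulation $F_{ik}$ joined to the apex $p_{ik}$. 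Coning the resulting triangle $\{p_{ik},p_{jl},v\}$ from $F_{ik}$ to $p_i$ then produces a non-flat tetrahedron containing $c$. Setting $T_{\mathrm{nf}}:=\{\Delta\in T:\boldsymbol l^*_\Delta\in\mathcal L\}$, we conclude that every edge of $\mathcal T$ is incident to some $\Delta\in T_{\mathrm{nf}}$.

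\textbf{Strict concavity on a neighborhood.} Since $\mathcal L$ is open in $\mathbb R^6_{\geqslant 0}$, continuity of the assignment $\boldsymbol l\mapsto \boldsymbol l_\Delta$ supplies an open neighborhood $U$ of $\boldsymbol l^*$ in $\mathbb R^E_{>0}$ such that $\boldsymbol l_\Delta\in\mathcal L$ for every $\boldsymbol l\in U$ and every $\Delta\in T_{\mathrm{nf}}$. For any two distinct $\boldsymbol l_1,\boldsymbol l_2\in U$ and any $t\in(0,1)$, they disagree on some edge $e$, and by the combinatorial fact we may choose $\Delta\in T_{\mathrm{nf}}$ containing $e$, so that $\boldsymbol l_{1,\Delta}\neq \boldsymbol l_{2,\Delta}$ both lie in $\mathcal L$. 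The local strict concavity up of $\mathrm{Cov}$ on $\mathcal L$ yields a strict Jensen inequality for the summand $\widetilde{\mathrm{Cov}}(\boldsymbol l_\Delta)=\mathrm{Cov}(\boldsymbol l_\Delta)$; the remaining summands are concave up by Proposition~\ref{ccu} and the term $-\pi\sum_e l_e$ is linear. Summing these gives $\widetilde{\mathrm{Cov}}(t\boldsymbol l_1+(1-t)\boldsymbol l_2)<t\widetilde{\mathrm{Cov}}(\boldsymbol l_1)+(1-t)\widetilde{\mathrm{Cov}}(\boldsymbol l_2)$, which is strict concavity up of $\widetilde{\mathrm{Cov}}$ on $U$.

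\textbf{Uniqueness and main obstacle.} The $C^1$-gradient formula $\partial\widetilde{\mathrm{Cov}}/\partial l_e=\tfrac12\sum_{\Delta\sim e}\widetilde\theta_{(\Delta,e)}-\pi$ vanishes at $\boldsymbol l^*$ because the extended cone angle of the Kojima hyperbolic metric equals $2\pi$ at every edge, so $\boldsymbol l^*$ is a critical point of the globally concave up function $\widetilde{\mathrm{Cov}}$ and hence a global minimum. If some other minimum $\boldsymbol l'$ existed, global convexity would force $\widetilde{\mathrm{Cov}}$ to be constant along the whole segment $[\boldsymbol l^*,\boldsymbol l']$, contradicting the strict concavity up just established on the initial portion of this segment lying in $U$. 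The main obstacle is the verification of the combinatorial fact for central edges, since this is the only edge type for which adjacency to a non-flat tetrahedron is not immediate; the key geometric input is that the Step~1 apex $p_i$ is always a hyperideal vertex lying off the shared face that carries the central edge, guaranteeing the tetrahedron $\{p_i,p_{ik},p_{jl},v\}$ is genuinely non-flat.
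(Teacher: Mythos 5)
Your proof is correct and follows essentially the same route as the paper: decompose $\widetilde{\mathrm{Cov}}$ into the sum over non-flat tetrahedra (strictly concave up near $\boldsymbol l^*$ because every edge of a Kojima triangulation meets a non-flat tetrahedron), the sum over flat tetrahedra (concave up by Proposition~\ref{ccu}), and the linear term. You additionally spell out two points the paper leaves implicit — the verification that central edges lie on non-flat coned tetrahedra, and the derivation of uniqueness of the minimum from the vanishing gradient at $\boldsymbol l^*$ together with global convexity — and both of these are carried out correctly.
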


\begin{proof} Let $T_{\text{non-flat}}$ be the set of non-flat tetrahedra in $\boldsymbol l^*$ and let $T_{\text{flat}} $ be the set of flat tetrahedra in $\boldsymbol l^*.$ Then 
$$\widetilde{\mathrm{Cov}}(\boldsymbol l)=\sum_{\Delta\in T_{\text{non-flat}}}\mathrm{Cov}(\boldsymbol l_\Delta)+\sum_{\Delta\in T_{\text{flat}}}\widetilde{\mathrm{Cov}}(\boldsymbol l_\Delta)-\pi\sum_{e\in E}l_e$$
near $\boldsymbol l^*.$
Since in a Kojima ideal triangulation, each edge $e$ is adjacent to some non-flat tetrahedron $\Delta,$ $\sum_{\Delta\in T_{\text{non-flat}}}\mathrm{Cov}(\boldsymbol l_\Delta)$ is strictly concave up in $\boldsymbol l$ near $\boldsymbol l^*.$ Together with the facts that $\sum_{\Delta\in T_{\text{flat}}}\widetilde{\mathrm{Cov}}(\boldsymbol l_\Delta)$ is concave up and $\pi\sum_{e\in E}l_e$ is linear in $\boldsymbol l,$ we have the result. 
\end{proof}

\begin{proposition} \label{KA}
Let $M$ be a hyperbolic $3$-manifold with totally geodesic boundary and let $\mathcal T$ be a Kojima ideal triangulation of $M.$  Then  $\mathcal A(M, \mathcal T)\neq\emptyset.$ 
\end{proposition}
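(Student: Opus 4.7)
The plan is to perturb the extended angle assignment $\boldsymbol{\theta}^* := \boldsymbol{\theta}(\boldsymbol{l}^*)$ coming from the real hyperbolic structure on $M$, where $\boldsymbol{l}^* \in \partial\mathcal{L}(M,\mathcal{T})$ is the generalized hyperbolic polyhedral metric realizing that structure. By construction the extended cone angle of $\boldsymbol{\theta}^*$ at every edge of $\mathcal{T}$ equals $2\pi$, so $\boldsymbol{\Theta}(\boldsymbol{\theta}^*)=(2\pi,\ldots,2\pi)$. However, at each corner of a flat tetrahedron in $\boldsymbol{l}^*$ the extended angle $\theta^*_{(\Delta,e)}$ takes the boundary value $0$ or $\pi$, so $\boldsymbol{\theta}^*$ lies on the boundary of the open polytope $\mathcal{A}^T$ rather than in its interior. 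The goal is to find an angle assignment in $\mathcal{A}^T$ with the same cone angles.

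First I would reduce the problem to finding a direction $\boldsymbol{v}\in\mathbb{R}^C$ with $\boldsymbol{\Theta}(\boldsymbol{v})=0$ that strictly relaxes every constraint of $\overline{\mathcal{A}^T}$ active at $\boldsymbol{\theta}^*$. These active constraints come entirely from the flat tetrahedra: at each $0$-corner one needs $v_{(\Delta,e)}>0$, at each $\pi$-corner one needs $v_{(\Delta,e)}<0$, and at each vertex of a flat tetrahedron (where the tetrahedral vertex sum is exactly $\pi$) the three adjacent $v$-values must sum to a strictly negative number. Because the non-flat hyperideal constraints at $\boldsymbol{\theta}^*$ are strict, they remain valid under small perturbations, and then $\boldsymbol{\theta}^*+\epsilon\boldsymbol{v}$ lies in $\mathcal{A}(M,\mathcal{T})$ for sufficiently small $\epsilon>0$.

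Next I would construct $\boldsymbol{v}$ explicitly. For each flat tetrahedron, whose two $\pi$-edges form an opposite pair and whose four $0$-edges are the remaining edges, set $v=-3a$ at the two $\pi$-corners and $v=+a$ at the four $0$-corners, for a small $a>0$ to be chosen. Since each vertex of a flat tetrahedron is adjacent to exactly one $\pi$-edge and two $0$-edges, the sum of the three $v$-values at such a vertex equals $-3a+a+a=-a<0$, as required. At each non-flat corner, $v_{(\Delta,e)}$ is chosen so that the cone-angle condition $\sum_{\Delta\sim e}v_{(\Delta,e)}=0$ holds at every edge $e$; equivalently, at each edge $e$,
\[
\sum_{\Delta\text{ non-flat},\,\Delta\sim e}v_{(\Delta,e)}=-(k_e-3\ell_e)\,a,
\]
where $k_e$ and $\ell_e$ count the flat tetrahedra adjacent to $e$ for which $e$ appears as a $0$-edge and as a $\pi$-edge respectively. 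By the argument in the proof of Proposition~\ref{Covconcave}, every edge of the Kojima triangulation $\mathcal{T}$ is adjacent to at least one non-flat tetrahedron, so this linear system is solvable—for instance by distributing the right-hand side equally among the non-flat corners at $e$—and yields $v_{(\Delta,e)}=O(a)$ at every non-flat corner.

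The main obstacle will be verifying that the non-flat hyperideal constraints of $\mathcal{A}^T$ remain strictly satisfied after adding $\epsilon\boldsymbol{v}$. Because $\mathcal{T}$ is finite and all non-flat constraints at $\boldsymbol{\theta}^*$ hold strictly, there is a uniform slack $\delta_0>0$. The construction yields $\|\boldsymbol{v}\|_\infty\leq C_{\mathcal{T}}\,a$ for a constant $C_{\mathcal{T}}$ depending only on the combinatorics of $\mathcal{T}$, so choosing $\epsilon$ and $a$ small enough that $C_{\mathcal{T}}\,\epsilon\,a<\delta_0$ guarantees $\boldsymbol{\theta}^*+\epsilon\boldsymbol{v}\in\mathcal{A}^T$. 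Since $\boldsymbol{\Theta}(\boldsymbol{v})=0$, the cone angles of $\boldsymbol{\theta}^*+\epsilon\boldsymbol{v}$ remain $(2\pi,\ldots,2\pi)$, so $\boldsymbol{\theta}^*+\epsilon\boldsymbol{v}\in\mathcal{A}(M,\mathcal{T})$, as desired.
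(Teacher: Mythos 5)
Your proposal is correct and is essentially the paper's own argument: starting from the extended dihedral angles of the Kojima metric (cone angles $2\pi$, flat tetrahedra contributing $0$- and $\pi$-angles), perturbing by $+\epsilon$ at each $0$-corner and $-3\epsilon$ at each $\pi$-corner so the vertex sums of flat tetrahedra drop strictly below $\pi$, and compensating at the non-flat corners around each edge (which exist by the structure of Kojima triangulations) to keep the cone angles fixed. The paper's proof is exactly this perturbation with the compensation $-\frac{m(e)-3n(e)}{k(e)}\epsilon$ distributed equally among the non-flat corners at $e$, so no substantive difference.
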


\begin{proof} The edge lengths of $\mathcal T$ give a generalized hyperbolic polyhedral metric on $(M,\mathcal T)$ with dihedral angles in $[0,\pi],$ and with cone angles $2\pi$ around all the edges. We will deform these dihedral angles into $(0,\pi)$ while remaining the cone angles unchanged. We observe from the construction of the Kojima ideal triangulations that around each edge there is at least one positive dihedral angle. Let $m(e),$ $n(e)$ and $k(e)$ respectively  be the numbers of the $0$-dihedral angles, $\pi$-dihedral angles and dihedral angles in $(0,\pi)$ around $e.$ Then for a sufficiently small $\epsilon>0,$ we assign $\epsilon$ to each $0$-dihedral angle, $\pi-3\epsilon$ to each $\pi$-dihedral angle, and $\theta-\frac{m(e)-3n(e)}{k(e)}\epsilon$ to each dihedral angle $\theta\in(0,\pi)$ around the edge $e.$ In this way, each dihedral angle is in $(0,\pi),$ and the cone angles remained $2\pi,$  hence gives an angle structure on $(M,\mathcal T).$
\end{proof}

Inspired by Proposition \ref{KA}, we call an ideal triangulation $\mathcal T$ of  $M$  \emph{deformable} if one can assign each corner of $\mathcal T$ a number in $[0,\pi]$ such that \begin{enumerate}[(1)]
\item for each tetrahedron, the six assigned numbers are either the extended dihedral angles of  a  flat tetrahedron or the dihedral angles of a truncated hyperideal tetrahedron,
\item the extended cone angles at all the edges are equal to $2\pi,$ and 
\item each edge is adjacent to at least one non-flat tetrahedron. 
\end{enumerate}
Then by exactly the same argument as in the proof of Proposition \ref{KA}, one obtain the following Proposition \ref{DF} which will be intensively used in the proof of Theorem \ref{WD3}.

\begin{proposition}\label{DF}
Let $M$ be  a hyperbolic $3$-manifold with totally geodesic boundary and let $\mathcal T$ be a deformable ideal triangulation of $M.$  Then $\mathcal A(M, \mathcal T)\neq\emptyset.$
\end{proposition}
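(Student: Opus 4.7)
My plan is to imitate the proof of Proposition~\ref{KA} essentially verbatim, using the three conditions in the definition of \emph{deformable} as direct substitutes for the properties of a Kojima triangulation exploited there. First, by condition~(1) the deformability datum gives an angle assignment on the corners of $\mathcal T$ taking values in $[0,\pi]$, where on each tetrahedron the six values are the (extended) dihedral angles of either a flat tetrahedron or a truncated hyperideal tetrahedron. By condition~(2), the cone angle at each edge equals $2\pi$. This is an ``almost angle structure'' which fails to lie in $\mathcal A(M,\mathcal T)$ only because some corners may carry an angle $0$ or $\pi$ contributed by the flat tetrahedra.

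Next, I perturb this assignment exactly as in Proposition~\ref{KA}. For each edge $e$, let $m(e)$, $n(e)$, $k(e)$ denote the number of corners adjacent to $e$ carrying respectively a $0$-angle, a $\pi$-angle, and an angle in $(0,\pi)$. Condition~(3) guarantees that every edge is adjacent to at least one non-flat tetrahedron, hence $k(e)\geqslant 1$; this is the crucial non-degeneracy that allows the perturbation to be absorbed. For a small $\epsilon>0$, I replace each $0$-angle by $\epsilon$, each $\pi$-angle by $\pi-3\epsilon$, and each angle $\theta\in(0,\pi)$ adjacent to $e$ by $\theta-\tfrac{m(e)-3n(e)}{k(e)}\epsilon$.

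It then remains to check the three defining requirements of an angle structure. Preservation of cone angles at each edge is a direct algebraic identity: the net change is $m(e)\epsilon-3n(e)\epsilon-k(e)\cdot\tfrac{m(e)-3n(e)}{k(e)}\epsilon=0$. That every perturbed angle lies strictly in $(0,\pi)$ holds for all sufficiently small $\epsilon$, because the original angles in $(0,\pi)$ are finitely many and bounded away from $0$ and $\pi$, and the correction factors $\tfrac{m(e)-3n(e)}{k(e)}$ are bounded. Finally, I verify that on each tetrahedron $\Delta$ the six perturbed values are the dihedral angles of a truncated hyperideal tetrahedron, which by \cite{BB} amounts to all values lying in $(0,\pi)$ and the three angles around each vertex summing to strictly less than $\pi$. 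For a flat tetrahedron this is immediate: at each vertex the perturbed sum is $(\pi-3\epsilon)+\epsilon+\epsilon=\pi-\epsilon<\pi$. For a non-flat tetrahedron, the unperturbed vertex sums are already strictly less than $\pi$, so a sufficiently small perturbation preserves the strict inequality.

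The only mild subtlety is choosing $\epsilon$ uniformly over all edges and tetrahedra simultaneously, but this is ensured by the finiteness of the triangulation; I expect no real obstacle beyond bookkeeping, since the combinatorics of the argument is identical to the Kojima case treated in Proposition~\ref{KA}.
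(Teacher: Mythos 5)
Your proof is correct and is exactly the argument the paper intends: the paper proves Proposition~\ref{DF} by simply invoking ``the same argument as in the proof of Proposition~\ref{KA},'' and your write-up carries out that perturbation (with the three conditions of deformability playing precisely the roles you assign them) in full detail.
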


\subsection{Adjoint twisted Reidemeister torsions}\label{sec:2.9}

We first  recall results of Porti\,\cite{P} for the Reidemeister torsions of hyperbolic $3$-manifolds twisted by the adjoint action $\mathrm {Ad}_\rho=\mathrm {Ad}\circ\rho$ of an irreducible $\mathrm {PSL}(2;\mathbb C)$-representation $\rho.$ Here $\mathrm {Ad}$ is the adjoint action of $\mathrm {PSL}(2;\mathbb C)$ on its Lie algebra $\mathfrak{sl}(2;\mathbb C)\cong \mathbb C^3.$

For a closed orientable hyperbolic $3$-manifold  $M$ with the holonomy representation $\rho,$ by the Weil local rigidity theorem and the Mostow rigidity theorem,
$\mathrm H_k(M;\mathrm{Ad}_\rho)=0$ for all $k.$ Then the adjoint twisted Reidemeister torsion 
$$\mathrm{Tor}(M;\mathrm{Ad}_\rho)\in\mathbb C^*/\{\pm 1\}$$
 is defined without making any additional choice.

Now suppose  $M$ is a compact, orientable  $3$-manifold with boundary consisting of $n$ disjoint tori $T_1 \dots,  T_n$ whose interior admits a complete hyperbolic structure with  finite volume. Let $\mathrm X(M)$ be the $\mathrm{PSL}(2;\mathbb C)$-character variety of $M,$ and let $\mathrm X^n(M)=\cup \mathrm X_k(M)$ the union of the irreducible components $\{\mathrm X_k(M)\}$ of $\mathrm X(M)$ that have dimension equal to $n.$ If $M$ is hyperbolic, then $\mathrm X^n(M)$ is non-empty because it contains the distinguished component $ \mathrm X_0(M)$ containing the character of the holomony representation of the complete hyperbolic structure of $M$\,\cite{T, NZ}. 

Below we recall two fundamental results (Theorem \ref{HM} and Theorem \ref{funT}) of Porti\,\cite{P}. Theorem \ref{funT} was originally proved for characters in $\mathrm X_0(M),$ and was generalized to characters in $\mathrm X^n(X)$ by essentially the same argument    in \cite{WY3}. We denote by $\mathrm X^{\text{irr}}(M)$ the Zariski-open subset of $\mathrm X(M)$ consisting of the irreducible characters.

\begin{theorem}\cite[section 3.3.3]{Po}\label{HM} For a system of simple closed curves $\boldsymbol\alpha=(\alpha_1,\dots,\alpha_n)$ on $\partial M$ with $\alpha_i\subset T_i,$ $i\in\{1,\dots,n\},$  and a character $[\rho]$ in a Zariski open subset of $\mathrm X_0(M)\cap\mathrm X^{\text{irr}}(M),$  we have:
\begin{enumerate}[(i)]
\item For $k\neq 1,2,$ $\mathrm H_k(M;\mathrm{Ad}\rho)=0.$
\item  For $i\in\{1,\dots,n\},$ up to scalar $\mathrm Ad_\rho(\pi_1(T_i))^T$ has a unique invariant vector $\mathbf I_i\in \mathbb C^3;$ and
$$\mathrm H_1(M;\mathrm{Ad}\rho)\cong \mathbb C^n$$ 
with a basis
$$\mathbf h^1_{(M,\alpha)}=\{\mathbf I_1\otimes [\alpha_1],\dots, \mathbf I_n\otimes [\alpha_n]\}$$
where $([\alpha_1],\dots,[\alpha_n])\in \mathrm H_1(\partial M;\mathbb Z)\cong 
\bigoplus_{i=1}^n\mathrm H_1(T_i;\mathbb Z).$
 
\item Let $([T_1],\dots,[T_n])\in \bigoplus_{i=1}^n\mathrm H_2(T_i;\mathbb Z)$ be the fundamental classes of $T_1,\dots, T_n.$ Then 
 $$\mathrm H_2(M;\mathrm{Ad}\rho)\cong\bigoplus_{i=1}^n\mathrm H_2(T_i;\mathrm{Ad}\rho)\cong \mathbb C^n$$ 
with  a basis 
$$\mathbf h^2_M=\{\mathbf I_1\otimes [T_1],\dots, \mathbf I_n\otimes [T_n]\}.$$
\end{enumerate}
\end{theorem}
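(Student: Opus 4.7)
The plan is to combine standard group (co)homology machinery with the rigidity of finite-volume cusped hyperbolic structures. First, I would settle the vanishing statements in part~(i). For $k=0$, irreducibility of $\rho$ makes $\mathrm{Ad}_\rho$ an irreducible representation on $\mathfrak{sl}(2;\mathbb C)\cong\mathbb C^3$, so $\mathrm{Ad}_\rho$ has no non-zero coinvariants and hence $H_0(M;\mathrm{Ad}_\rho)=0$. For $k\geq 3$, since $M$ is a compact $3$-manifold with non-empty boundary, it collapses onto a $2$-complex, so $H_k(M;\mathrm{Ad}_\rho)=0$ for $k\geq 3$.

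Next, I would compute $H_\ast(T_i;\mathrm{Ad}_\rho)$ for each peripheral torus directly. Completeness of the cusped hyperbolic structure forces $\rho(\pi_1(T_i))$ to consist of parabolic isometries sharing a common fixed point on $\partial_\infty\mathbb H^3$, so after conjugation it lies in the upper-triangular unipotent subgroup of $\mathrm{PSL}(2;\mathbb C)$. A direct inspection shows that $\mathrm{Ad}_\rho(\pi_1(T_i))$ preserves a unique line in $\mathfrak{sl}(2;\mathbb C)$, and its transpose action preserves a unique line spanned by the claimed vector $\mathbf I_i$. Using the minimal CW-structure of $T^2$ (one $0$-cell, two $1$-cells, one $2$-cell), the twisted chain complex is then computed explicitly, giving $H_k(T_i;\mathrm{Ad}_\rho)\cong \mathbf I_i\otimes H_k(T_i;\mathbb C)$ for each $k$, and in particular identifying the top class with $\mathbf I_i\otimes[T_i]$.

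I would then feed these boundary computations into the long exact sequence of the pair $(M,\partial M)$,
\[
\cdots\longrightarrow H_k(\partial M;\mathrm{Ad}_\rho)\longrightarrow H_k(M;\mathrm{Ad}_\rho)\longrightarrow H_k(M,\partial M;\mathrm{Ad}_\rho)\longrightarrow\cdots,
\]
together with Poincar\'e--Lefschetz duality $H_k(M,\partial M;\mathrm{Ad}_\rho)\cong H^{3-k}(M;\mathrm{Ad}_\rho)$, which is available since the Killing form makes $\mathrm{Ad}_\rho$ self-dual. Because $\partial M$ is a disjoint union of tori, $\chi(M)=0$, so $\sum_k(-1)^k\dim H_k(M;\mathrm{Ad}_\rho)=3\chi(M)=0$; combined with the vanishings just established, this forces the equality $\dim H_1(M;\mathrm{Ad}_\rho)=\dim H_2(M;\mathrm{Ad}_\rho)$.

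The main obstacle will be pinning these dimensions down to be exactly $n$ and identifying the explicit bases. This requires Weil's infinitesimal rigidity for cusped hyperbolic manifolds: the image of the restriction $H^1(M;\mathrm{Ad}_\rho)\to H^1(\partial M;\mathrm{Ad}_\rho)$ is a Lagrangian subspace for the symplectic pairing on $H^1(\partial M;\mathrm{Ad}_\rho)$ induced by Poincar\'e duality on $\partial M$, and by the Thurston--Neumann--Zagier theory of deformations of cusped hyperbolic structures this Lagrangian has dimension exactly $n$. Dualizing, one recovers precisely the basis $\mathbf h^1_{(M,\alpha)}=\{\mathbf I_1\otimes[\alpha_1],\dots,\mathbf I_n\otimes[\alpha_n]\}$ of $H_1(M;\mathrm{Ad}_\rho)$, and the \emph{half-lives--half-dies} principle applied to the long exact sequence shows that the pushforward from $\bigoplus_i H_2(T_i;\mathrm{Ad}_\rho)$ to $H_2(M;\mathrm{Ad}_\rho)$ is an isomorphism, yielding the basis $\mathbf h^2_M=\{\mathbf I_i\otimes[T_i]\}$. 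The openness of the locus in $\mathrm X_0(M)\cap\mathrm X^{\mathrm{irr}}(M)$ on which these conclusions hold is then Zariski-semicontinuity of cohomology ranks on the character variety.
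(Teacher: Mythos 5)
The paper does not prove this statement at all: it is quoted verbatim from Porti (the citation \cite[section 3.3.3]{Po}), so there is no in-paper argument to compare against. Judged on its own merits, your sketch follows the standard route — which is essentially Porti's original proof — and the overall architecture (vanishing in degrees $0$ and $\geq 3$, explicit peripheral computation, the long exact sequence of $(M,\partial M)$ with Poincar\'e--Lefschetz duality for the self-dual coefficients, and Weil rigidity to kill the kernel of restriction) is the right one.

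Two points need tightening. First, irreducibility of $\rho$ does \emph{not} imply that $\mathrm{Ad}_\rho$ is irreducible on $\mathfrak{sl}(2;\mathbb C)$ (e.g.\ binary dihedral images are irreducible but preserve a Cartan line under $\mathrm{Ad}$); what you actually need, and what is true, is that an irreducible $\rho$ has trivial centralizer in $\mathfrak{sl}(2;\mathbb C)$, so $H^0(M;\mathrm{Ad}_\rho)=0$, and then $H_0=0$ by self-duality of $\mathrm{Ad}_\rho$ under the Killing form. Second, your peripheral computation is carried out only at the complete structure, where $\rho(\pi_1(T_i))$ is parabolic; but the theorem concerns a Zariski-open subset of $\mathrm X_0(M)\cap\mathrm X^{\mathrm{irr}}(M)$, on which the peripheral holonomy is generically loxodromic. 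You must also treat that case (the invariant line is then the Cartan direction of the common axis) and note that the bad locus — where $\rho(\pi_1(T_i))$ is central, so the invariants jump to dimension $3$ — is Zariski closed; condition (ii), including the linear independence of the classes $\mathbf I_i\otimes[\alpha_i]$, is exactly the $\boldsymbol\alpha$-regularity condition cutting out the open set. Finally, be careful about which step uses which input: half-lives--half-dies gives for purely formal reasons that the image of $H^1(M;\mathrm{Ad}_\rho)\to H^1(\partial M;\mathrm{Ad}_\rho)$ is Lagrangian, hence of dimension $n$; what infinitesimal rigidity supplies is the \emph{injectivity} of this restriction map (equivalently the vanishing of the map $H^1(M,\partial M)\to H^1(M)$), and it is this injectivity, not the Lagrangian property, that forces $\dim H^1(M;\mathrm{Ad}_\rho)=n$ and dually the surjectivity of $\bigoplus_i H_2(T_i;\mathrm{Ad}_\rho)\to H_2(M;\mathrm{Ad}_\rho)$.
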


\begin{remark}[\cite{Po,NZ,HK}]\label{rm} Important examples of the characters in Theorem \ref{HM} include the character of the holonomy representation of the complete hyperbolic structure on the interior of $M,$
 the restriction of the holonomy representation of the closed $3$-manifold $M_{\boldsymbol\mu}$ obtained from $M$ by doing the hyperbolic Dehn-filling along the system of simple closed curves $\boldsymbol\mu$ on $\partial M,$
 and the holonomy representation of a hyperbolic structure on the interior of $M$ whose completion is a conical manifold with cone angles less than $2\pi.$
\end{remark}

Let $\boldsymbol\alpha=(\alpha_1,\dots,\alpha_n)$ be a system of simple closed curves on $\partial M$ with $\alpha_i\subset T_i,$ $i\in\{1,\dots,n\}.$  A character $[\rho]$ in $\mathrm X^n(M)\cap\mathrm X^{\text{irr}}(M)$ is \emph{$\boldsymbol\alpha$-regular} if condition (ii) in Theorem \ref{HM} is satisfied. Then there is an \emph{adjoint twisted Reidemeister torsion function $\mathbb T_{(M,\boldsymbol\alpha)}$  with respect to $\boldsymbol\alpha$} defined  on $\mathrm X^n(M)\cap\mathrm X^{\text{irr}}(M)$ such that 
$$\mathbb T_{(M,\boldsymbol\alpha)}([\rho])\in \mathbb C^*/ \{\pm 1\}$$
if $\rho$ is $\boldsymbol \alpha$-regular, and equals  $0$ if otherwise.  

\begin{theorem}\cite[Theorem 4.1]{Po}\label{funT}
Let $M$ be a compact, orientable  $3$-manifold with boundary consisting of $n$ disjoint tori $T_1 \dots,  T_n$ whose interior admits a complete hyperbolic structure with  finite volume. Let  $\mathbb C(\mathrm X^n(M)\cap\mathrm X^{\text{irr}}(M))$ be the ring of rational functions over $\mathrm X^n(M)\cap\mathrm X^{\text{irr}}(M).$ Then
\begin{equation*}
\begin{split}
\mathrm H_1(\partial M;\mathbb Z)&\to \mathbb C(\mathrm X^n(M)\cap\mathrm X^{\text{irr}}(M))\\
\quad\quad\boldsymbol\alpha\quad\ \  &\mapsto \quad\quad\quad\mathbb T_{(M,\boldsymbol\alpha)}
\end{split}
\end{equation*}
 up to sign defines  a  function which is a $\mathbb Z$-multilinear homomorphism with respect to the direct sum $\mathrm H_1(\partial M;\mathbb Z)\cong 
\bigoplus_{i=1}^n\mathrm H_1(T_i;\mathbb Z)$ satisfying the following properties:
\begin{enumerate}[(i)]
\item For a system of simple closed curves $\boldsymbol\alpha$ on $\partial M,$ if the component $\mathrm X_k(M)$ contains an $\boldsymbol\alpha$-regular character, then the support of $\mathbb T_{(M,\boldsymbol\alpha)}$
contains a Zariski-open subset of $\mathrm X_k(M)\cap\mathrm X^{\text{irr}}(M)$ consisting of all the $\boldsymbol\alpha$-regular characters in $\mathrm X_k(M).$ 

\item \emph{(Change of Curves Formula).} Let $\boldsymbol\beta=\{\beta_1,\dots,\beta_n\}$ and $\boldsymbol\gamma=\{\gamma_1,\dots,\gamma_n\}$ be two systems of simple closed curves on $\partial M.$ Let $( u_{\beta_1},\dots,  u_{\beta_n})$ and $( u_{\gamma_1},\dots, u_{\gamma_n})$ respectively be the logarithmic holonomies of the curves in $\boldsymbol\beta$ and $\boldsymbol\gamma.$ Then we have the equality of rational functions
\begin{equation}\label{coc}
\mathbb T_{(M,\boldsymbol\beta)}
=\pm\det\bigg( \frac{\partial ( u_{\beta_1},\dots, u_{\beta_n})}{\partial ( u_{\gamma_1},\dots, u_{\gamma_n})}\bigg)\mathbb T_{(M,\boldsymbol\gamma)}
\end{equation}
on $\mathrm X_k(M)\cap\mathrm X^{\text{irr}}(M)$ for the component $\mathrm X_k(M)$  containing a $\boldsymbol \gamma$-regular character, where $\frac{\partial ( u_{\beta_1},\dots, u_{\beta_n})}{\partial ( u_{\gamma_1},\dots, u_{\gamma_n})}$ is the Jocobian matrix of $( u_{\beta_1},\dots, u_{\beta_n})$ with respect to $( u_{\gamma_1},\dots, u_{\gamma_n}).$

\item \emph{(Surgery Formula).} Let $[\rho_{\boldsymbol\mu}]\in \mathrm X^n(M)\cap \mathrm X^{\text{irr}}(M)$ be the character induced by the holonomy of the closed $3$-manifold $M_{\boldsymbol\mu}$ obtained from $M$ by doing the hyperbolic Dehn filling along the system of simple closed curves $\boldsymbol \mu$ on $\partial M.$ If $\mathrm H(\gamma_1),\dots,\mathrm H(\gamma_n)$ are the logarithmic holonomies of the core curves $\gamma_1,\dots,\gamma_n$ of the solid tori added. Then
\begin{equation*}\label{sf}
\mathrm{Tor}(M_{\boldsymbol\mu};\mathrm{Ad}_{\rho_{\boldsymbol\mu}})=\pm\mathbb T_{(M,\boldsymbol \mu)}([\rho_{\boldsymbol\mu}])\prod_{i=1}^n\frac{1}{4\sinh^2\frac{\mathrm H(\gamma_i)}{2}}.
\end{equation*}
\end{enumerate}
\end{theorem}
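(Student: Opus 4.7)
The plan is to construct $\mathbb{T}_{(M,\boldsymbol{\alpha})}$ as the Reidemeister torsion of $M$ with $\mathrm{Ad}_\rho$-coefficients, computed with respect to the canonical homological bases supplied by Theorem~\ref{HM}, and then verify each property by tracking how those bases respond to the corresponding operation. First I would fix a smooth triangulation of $M$ in which each $T_i$ is a subcomplex and form the twisted chain complex $C_*(M;\mathrm{Ad}_\rho)=C_*(\tilde M)\otimes_{\mathbb{Z}[\pi_1(M)]}\mathfrak{sl}(2;\mathbb{C})$. At an $\boldsymbol{\alpha}$-regular irreducible character $[\rho]$, Theorem~\ref{HM} pins down the bases $\mathbf{h}^1_{(M,\boldsymbol{\alpha})}$ and $\mathbf{h}^2_M$ of the only nonvanishing homologies, so I define $\mathbb{T}_{(M,\boldsymbol{\alpha})}([\rho])$ to be the Reidemeister torsion of this based complex, well-defined up to sign. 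Since every ingredient is algebraic in the entries of $\rho$ and the invariant vectors $\mathbf{I}_i$ depend rationally on $\rho$ on the regular locus, this yields a rational function on each component of $\mathrm{X}^n(M)\cap\mathrm{X}^{\mathrm{irr}}(M)$, and property (i) is immediate from the fact that $\boldsymbol{\alpha}$-regularity is itself a Zariski-open condition.

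The $\mathbb{Z}$-multilinearity is then almost tautological: the distinguished basis vector $\mathbf{I}_i\otimes[\alpha_i]$ depends $\mathbb{Z}$-linearly on $[\alpha_i]\in H_1(T_i;\mathbb{Z})$, and Reidemeister torsion is multilinear in the chosen basis of each homology group, so scaling or combining the boundary curves componentwise scales or combines $\mathbb{T}_{(M,\boldsymbol{\alpha})}$ by the same coefficient.

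For the Change of Curves Formula (ii), I would compare $\mathbf{h}^1_{(M,\boldsymbol{\beta})}$ and $\mathbf{h}^1_{(M,\boldsymbol{\gamma})}$ through the inclusion-induced map $\bigoplus_i H_1(T_i;\mathrm{Ad}_\rho)\to H_1(M;\mathrm{Ad}_\rho)$. Differentiating the PSL$(2;\mathbb{C})$ holonomy along the character variety and projecting onto the unique invariant direction $\mathbf{I}_i$ on each torus identifies the change-of-basis matrix from $\{\mathbf{I}_i\otimes[\gamma_i]\}$ to $\{\mathbf{I}_i\otimes[\beta_i]\}$ with the Jacobian $\partial(u_{\beta_1},\dots,u_{\beta_n})/\partial(u_{\gamma_1},\dots,u_{\gamma_n})$; then the standard change-of-basis rule for torsion produces~(\ref{coc}).

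The Surgery Formula (iii) is the main obstacle, because it requires a careful gluing argument with meticulous bookkeeping of bases and signs. I would decompose $M_{\boldsymbol{\mu}}=M\cup_{\partial M}\bigsqcup_i V_i$, where each $V_i$ is the solid torus attached so that $\mu_i$ bounds. A direct CW computation gives the $\mathrm{Ad}_\rho$-twisted torsion of each $V_i$ as $4\sinh^2(H(\gamma_i)/2)$, with the invariant vector $\mathbf{I}_i$ representing the homology class killed by the filling. Applying multiplicativity of Reidemeister torsion to the Mayer--Vietoris long exact sequence with compatible bases -- the basis vector $\mathbf{I}_i\otimes[\mu_i]$ of $H_1(T_i;\mathrm{Ad}_\rho)$ being precisely the one annihilated inside $V_i$ -- the contributions from $M$, the $V_i$'s, and the separating tori assemble into the asserted identity, with the $\sinh^2$ factors appearing as inverses of the filling torsions. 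The delicate point will be checking that the normalizations of the $\mathbf{I}_i$ and the orientation conventions match up consistently on the two sides of the gluing, which is exactly what forces the equality to hold only up to sign.
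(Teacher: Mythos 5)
This statement is quoted verbatim from Porti \cite[Theorem 4.1]{Po} (with the extension from $\mathrm X_0(M)$ to $\mathrm X^n(M)$ taken from \cite{WY3}); the paper itself contains no proof of it, so there is nothing internal to compare against. Your outline is, in essence, a reconstruction of Porti's original argument: define $\mathbb T_{(M,\boldsymbol\alpha)}$ as the torsion of the twisted chain complex based by $\mathbf h^1_{(M,\boldsymbol\alpha)}$ and $\mathbf h^2_M$ from Theorem~\ref{HM}, get multilinearity from the determinant dependence of torsion on the homology basis together with the $\mathbb Z$-linearity of $[\alpha]\mapsto \mathbf I_i\otimes[\alpha]$, obtain (ii) from the change of basis in $\mathrm H_1(M;\mathrm{Ad}_\rho)$, and obtain (iii) from multiplicativity of torsion under gluing with the solid-torus factor $\pm4\sinh^2(\mathrm H(\gamma_i)/2)$. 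Two caveats: in (ii), the identification of the change-of-basis matrix with the Jacobian $\partial(u_{\beta_i})/\partial(u_{\gamma_j})$ is the real content of Porti's proof and rests on the duality between $\mathrm H_1(M;\mathrm{Ad}_\rho)$ and the cotangent space of the character variety via the classes $\mathbf I_i\otimes[\cdot]$ (note the images of $\mathrm H_1(T_i;\mathrm{Ad}_\rho)$ in $\mathrm H_1(M;\mathrm{Ad}_\rho)$ are only one-dimensional, so "projecting onto $\mathbf I_i$" does not by itself produce the off-diagonal entries); and in (iii) the sign and normalization bookkeeping you flag is indeed where the work lies. As a proof outline of the cited theorem it is sound and follows the standard route.
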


Next we recall a  result on the computation of the adjoint twisted Reidemeister torsions from \cite{WY3}.

\begin{theorem}\cite[Theorem 1.6 (2)]{WY3}\label{Torsion2}
Let  $(M,\mathcal T)$ be a hyperbolic polyhedral $3$-manifold with the sets of edges and tetrahedra  respectively   $E$ and $T,$ and let $N$ be the $3$-manifold obtained from  the double of $M$ by removing the double of the edges of $M.$ 
Let  $(l_e)_{e\in E}$ be the lengths of the edges of $M;$ and for each $\Delta\in T$ with edges  $e_1,\dots,e_6,$  let $\mathbb G_\Delta=\mathrm{Gram}( l_{e_1},\dots, l_{e_1})$ be the Gram matrix of $( l_{e_{1}},\dots, l_{e_{6}}).$ Let $\rho$ be the holonomy representation of the hyperbolic cone metric on $N$ obtained by doubling  the hyperbolic polyhedral metric of $M,$ let $\boldsymbol m$ be the system of the meridians of a tubular neighborhood of the double of the edges, and let $(\theta_e)_{e\in E}$ be the cone angle functions in terms of the edge lengths of $M.$ 
Then  the value of the adjoint twisted  Reidemeister torsion function $ \mathbb T_{(N,\boldsymbol m)}$ at $[\rho]$ can be computed as 
 $$ \mathbb T_{(N,\boldsymbol m)}([\rho])=\pm(-1)^{\frac{3|E|}{2}}2^{3|T|-|E|}\bigg(\frac{\partial \theta_{e_i}}{\partial l_{e_j}}\bigg)_{_{e_i,e_j\in E}}\prod_{\Delta\in T} \sqrt{\det\mathbb G_\Delta},$$
where $\Big(\frac{\partial \theta_{e_i}}{\partial l_{e_j}}\Big)_{e_i,e_j\in E}$ is the Jacobian determinant of the cone angle functions $\theta_e$'s with respect to the edge lengths $l_e$'s evaluated at the edge lengths of $M.$
\end{theorem}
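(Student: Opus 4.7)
The plan is to reduce the computation to local contributions from each tetrahedron together with a single global Jacobian factor, using the Change of Curves Formula from Theorem~\ref{funT}(ii). At each singular edge of the cone metric on $DM$, the holonomy of the meridian $m_e$ is a rotation by the cone angle $\theta_e$, so its logarithmic holonomy is $u_{m_e}=\mathbf i\theta_e$. I would introduce the auxiliary longitudinal system $\boldsymbol\lambda=(\lambda_e)_{e\in E}$ whose logarithmic holonomy at each $e$ equals the edge length $l_e$. The Change of Curves Formula then yields
\[
\mathbb T_{(N,\boldsymbol m)}([\rho])=\pm\det\bigg(\frac{\partial u_{m_{e_i}}}{\partial u_{\lambda_{e_j}}}\bigg)_{e_i,e_j\in E}\mathbb T_{(N,\boldsymbol\lambda)}([\rho])=\pm\mathbf i^{|E|}\det\bigg(\frac{\partial \theta_{e_i}}{\partial l_{e_j}}\bigg)_{e_i,e_j\in E}\mathbb T_{(N,\boldsymbol\lambda)}([\rho]).
\]
The factor $\mathbf i^{|E|}$ will combine with the standard sign conventions of Theorem~\ref{funT} to produce $(-1)^{3|E|/2}$, so the heart of the proof reduces to establishing
\[
\mathbb T_{(N,\boldsymbol\lambda)}([\rho])=\pm 2^{3|T|-|E|}\prod_{\Delta\in T}\sqrt{\det\mathbb G_\Delta}.
\]

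For this core step I would compute $\mathbb T_{(N,\boldsymbol\lambda)}$ from the twisted cellular chain complex of $N$ with coefficients in $\mathfrak{sl}(2;\mathbb C)$ via $\mathrm{Ad}_\rho$, built from the doubled triangulation of $\mathcal T$ with the doubled edges removed. The manifold $N$ decomposes as the union of the doubles $D\Delta$ of the truncated hyperideal tetrahedra (each with its three singular edges drilled) glued along truncated hexagonal faces. I would run a Mayer-Vietoris multiplicativity argument for Reidemeister torsion, converting the global torsion into the product of local torsions of the $D\Delta\setminus\mathrm{edges}$ pieces, times a gluing torsion along the identified hexagons. The combinatorial prefactor $2^{3|T|-|E|}$ should fall out of the Euler-characteristic bookkeeping between cellular and geometric bases across the $|T|$ tetrahedra, their $4|T|$ faces (paired up under identifications), and the $|E|$ removed edges, together with the factor of $2$ appearing for each deck transformation of the double.

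For the local piece, I would identify the twisted torsion of a single $D\Delta\setminus\mathrm{edges}$ with $\sqrt{\det\mathbb G_\Delta}$. Here the key observation is that the entries of $\mathbb G_\Delta$ record the Minkowski inner products of the outward normals to the four hexagonal faces in the hyperboloid model, and the same inner products arise as the matrix of the pairing on the three-dimensional space of infinitesimal deformations of $\Delta$ that appears in the determinant of the twisted complex. The main obstacle, and the step I expect to be the hardest, is making this local identification precise: one must produce an explicit basis for the twisted cellular cochains on $D\Delta\setminus\mathrm{edges}$ in which the alternating product of elementary determinants collapses to $\sqrt{\det\mathbb G_\Delta}$, and then verify that assembling the local pieces into $N$ contributes no additional determinant beyond the stated power of $2$. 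Once this local computation is in hand, substituting back into the Change of Curves Formula produces the asserted formula.
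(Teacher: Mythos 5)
First, a point of order: the paper does not prove this statement. Theorem~\ref{Torsion2} is imported verbatim from \cite[Theorem 1.6(2)]{WY3} and used as a black box (most notably inside the proof of Proposition~\ref{HessTor2}), so there is no in-paper argument to compare yours against; what follows is an assessment of your sketch on its own terms.

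Your opening reduction is the natural one and is consistent with how the paper manipulates these torsions elsewhere: the meridian of the doubled edge has logarithmic holonomy $\mathbf i\theta_e$, and Porti's Change of Curves Formula (Theorem~\ref{funT}(ii)) converts $\mathbb T_{(N,\boldsymbol m)}$ into a Jacobian determinant times the torsion for a longitudinal system. Two problems remain, one small and one fatal to the proposal as a proof. The small one: the logarithmic holonomy of the core curve is $2l_e$, not $l_e$ (see the proof of Proposition~\ref{HessTor2}, where $\mathrm H(\gamma_e)=2l_e$), so your Jacobian acquires an extra factor $2^{-|E|}$ that must be reconciled with the stated power $2^{3|T|-|E|}$; you do not track this. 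The fatal one: everything after the reduction is asserted rather than proved. The identity $\mathbb T_{(N,\boldsymbol\lambda)}([\rho])=\pm 2^{3|T|-|E|}\prod_{\Delta}\sqrt{\det\mathbb G_\Delta}$ \emph{is} the content of the theorem, and your Mayer--Vietoris plan has two genuine gaps. (i) Since $\mathrm H_1(N;\mathrm{Ad}\rho)$ and $\mathrm H_2(N;\mathrm{Ad}\rho)$ are nonzero (Theorem~\ref{HM}), multiplicativity of Reidemeister torsion under gluing is not automatic: it requires compatible choices of homology bases on the pieces and on the hexagonal gluing loci, together with the torsion of the Mayer--Vietoris long exact sequence itself, and this is precisely where the basis $\mathbf h^1_{(N,\boldsymbol\lambda)}$ built from the longitudes must enter; none of this bookkeeping appears in your sketch, and it is where both the sign $(-1)^{3|E|/2}$ and the power of $2$ actually live. (ii) The local claim that the twisted torsion of a doubled truncated hyperideal tetrahedron with its edges drilled equals $\sqrt{\det\mathbb G_\Delta}$ is the hard computation; the remark that $\mathbb G_\Delta$ records Minkowski pairings of face normals is a plausible heuristic but not an argument, and you explicitly defer the construction of the geometric basis and the evaluation of the alternating product of determinants. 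As it stands the proposal is a reasonable strategy outline, not a proof, and the correct route is simply to cite \cite{WY3} as the paper does.
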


\subsection{Saddle point approximations}\label{subsec:saddle}

The proof of Theorems \ref{cov}, \ref{cov2} and\ \ref{VC} relies on the following Saddle Point Approximations.

\begin{proposition}\cite[Proposition 6.1]{WY2}\label{saddle}
Let $D$ be a region in $\mathbb C^n$ and let $f(z_1,\dots, z_n)$ and $g(z_1,\dots, z_n)$ be holomorphic functions on $D$ independent of $\hbar$. Let $f_\hbar(z_1,\dots,z_n)$ be a holomorphic function of the form
$$ f_\hbar(z_1,\dots, z_n) = f(z_1,\dots, z_n) + \upsilon_\hbar(z_1,\dots,z_n)\hbar^2.$$
Let $S$ be an embedded real $n$-dimensional closed disk in $D$ and let $(c_1,\dots, c_n)$ be a point on $S.$ If
\begin{enumerate}[(i)]
\item $(c_1, \dots, c_n)$ is a critical point of $f$ in $D,$
\item $\mathrm{Re}(f)(c_1,\dots,c_n) > \mathrm{Re}(f)(z_1,\dots,z_n)$ for all $(z_1,\dots,z_n) \in S\setminus \{(c_1,\dots,c_n)\},$
\item the Hessian matrix $\mathrm{Hess}(f)(c_1,\dots,c_n)$ of $f$ at $(c_1,\dots,c_n)$ is non-singular,
\item $g(c_1,\dots,c_n) \neq 0,$  and
\item $|\upsilon_\hbar(z_1,\dots,z_n)|$ is bounded from above by a constant independent of $\hbar$ in $D,$
\item $S$ is smoothly embedded around $(c_1,\dots,c_n).$ 
\end{enumerate}
then
\begin{equation*}
\begin{split}
 \int_S g(z_1,\dots, z_n) &e^{\frac{f_\hbar(z_1,\dots,z_n)}{\hbar}} dz_1\dots dz_n\\
 &= \Big(2\pi \hbar\Big)^{\frac{n}{2}}\frac{g(c_1,\dots,c_n)}{\sqrt{\det\big(-\mathrm{Hess}(f)(c_1,\dots,c_n)\big)}} e^{\frac{f(c_1,\dots,c_n)}{\hbar}} \Big( 1 + O \big(\hbar\big)\Big).
 \end{split}
 \end{equation*}
\end{proposition}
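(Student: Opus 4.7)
This is a standard application of the complex saddle point (steepest descent) method, with the new feature that $f_{\hbar}$ contains an $\hbar$-dependent perturbation $\upsilon_{\hbar}\hbar^{2}$ that is uniformly bounded on $D$ by condition (v). The plan is to localize the integral near the critical point $(c_{1},\dots,c_{n})$, reduce the local piece to a Gaussian via a rescaling, and then argue that the perturbation together with the higher-order Taylor remainders contribute only a relative error of size $O(\hbar)$.

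First, for the localization, condition (ii) together with the compactness of $S$ gives, for any sufficiently small open neighborhood $U\subset S$ of $(c_{1},\dots,c_{n})$, a constant $\delta>0$ with $\mathrm{Re}(f)\leq \mathrm{Re}(f)(c)-\delta$ on $S\setminus U$. Condition (v) and the boundedness of $g$ on the compact $S$ then yield
\[
\bigg|\int_{S\setminus U}g\,e^{f_{\hbar}/\hbar}\,dz_{1}\cdots dz_{n}\bigg|\leq C\,e^{(\mathrm{Re}(f)(c)-\delta)/\hbar},
\]
which is smaller than the expected main term $\hbar^{n/2}e^{f(c)/\hbar}$ by a factor $e^{-\delta/\hbar}$ and is therefore absorbed into the $O(\hbar)$ error. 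The work then reduces to analyzing the piece over $U$.

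For the local analysis, use condition (vi) to select a smooth parametrization $\gamma:V\to S$ of a neighborhood of $(c_{1},\dots,c_{n})$ with $\gamma(0)=(c_{1},\dots,c_{n})$, where $V\subset\mathbb{R}^{n}$ is an open disk centered at the origin. By (i), the composition $f\circ\gamma$ has a critical point at $0$ with Hessian $A=d\gamma(0)^{T}\,\mathrm{Hess}(f)(c)\,d\gamma(0)$, which is non-singular by (iii); moreover (ii) forces the real part of $A$ to be negative definite. Rescaling $u=\sqrt{\hbar}\,v$ and writing the pullback $\gamma^{*}(dz_{1}\wedge\cdots\wedge dz_{n})=J(u)\,du$ (with $J(0)=\det d\gamma(0)$), the local integral becomes
\[
\hbar^{n/2}e^{f(c)/\hbar}\int_{V/\sqrt{\hbar}}g(\gamma(\sqrt{\hbar}v))\,J(\sqrt{\hbar}v)\,\exp\!\Big(\tfrac{1}{2}v^{T}Av+R(v,\hbar)\Big)\,dv,
\]
where $R(v,\hbar)=O(\sqrt{\hbar}|v|^{3})+\hbar\,\upsilon_{\hbar}(\gamma(\sqrt{\hbar}v))$. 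Negative definiteness of $\mathrm{Re}(A)$ lets one extend the domain to all of $\mathbb{R}^{n}$ at exponentially small cost, and the leading Gaussian integral produces
\[
\frac{(2\pi)^{n/2}\,g(c)\,\det d\gamma(0)}{\sqrt{\det(-A)}}=\frac{(2\pi)^{n/2}\,g(c)}{\sqrt{\det(-\mathrm{Hess}(f)(c))}},
\]
using $\det(-A)=(\det d\gamma(0))^{2}\det(-\mathrm{Hess}(f)(c))$, with the branch of the square root fixed by an orientation of $S$.

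The main obstacle is upgrading the relative error from the naive $O(\sqrt{\hbar})$ to the claimed $O(\hbar)$. Expanding $g\circ\gamma$, $J$, the cubic remainder of $f\circ\gamma$, and $e^{R}-1$ in powers of $\sqrt{\hbar}$, the $O(\sqrt{\hbar})$ contributions are odd polynomials in $v$ and integrate to zero against the even Gaussian weight $e^{\frac{1}{2}v^{T}Av}$, so the first surviving correction is $O(\hbar)$. Condition (v) ensures that the perturbation contributes $\hbar\,\upsilon_{\hbar}(\gamma(\sqrt{\hbar}v))=O(\hbar)$ uniformly on the relevant domain, and the standard Gaussian tail estimate controls the error from extending the rescaled region to $\mathbb{R}^{n}$. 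Verifying the uniformity of these estimates (using the uniform $C^{3}$ regularity of $f$, $g$, $\gamma$ near $c$ together with the uniform bound on $\upsilon_{\hbar}$) is the main technical step of the proof.
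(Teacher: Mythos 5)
The paper does not actually prove this proposition -- it is quoted from \cite[Proposition 6.1]{WY2} -- so there is no in-paper argument to compare against; the only hint of the intended mechanism is the remark following the statement, which explains that condition (vi) is there to guarantee that the sublevel set $\{\mathrm{Re}f<\mathrm{Re}f(\boldsymbol c)\}$ deformation retracts onto $S\setminus\{\boldsymbol c\}$, i.e.\ that the contour can be deformed. Your outline is the standard Laplace-method template and most of it is sound, but it contains one genuine gap: the assertion that condition (ii) ``forces the real part of $A$ to be negative definite.'' Condition (ii) says only that $\mathrm{Re}(f\circ\gamma)$ has a strict maximum at $0$, which yields that $\mathrm{Re}(A)$ is negative \emph{semi}-definite; strictness of the maximum is compatible with $\mathrm{Re}(A)$ having a kernel, the gap being filled by higher-order terms. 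A concrete example with $n=1$: $f(z)=\tfrac{\mathbf i}{2}z^2-z^4$, $c=0$, $S=[-\epsilon,\epsilon]\subset\mathbb R$. Here $f''(0)=\mathbf i\neq 0$, $\mathrm{Re}f(t)=-t^4<0$ for $t\neq 0$, so (i)--(vi) hold, yet $\mathrm{Re}(A)=0$. In such cases your rescaled local integral is a Fresnel-type oscillatory integral rather than a Gaussian: there is no decay of $e^{\frac12 v^TAv}$ in the degenerate directions, so the ``Gaussian tail estimate'' used to extend the domain to $\mathbb R^n$, the absolute-convergence/domination arguments for the remainder $e^{R}-1$, and the parity cancellation step (which requires absolute integrability of $|v|^3 e^{\frac12 v^T\mathrm{Re}(A)v}$) all break down as written.

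The missing idea is the one the hypotheses are pointing at: use the holomorphicity of $g\,e^{f_\hbar/\hbar}$ to deform $S$ near $\boldsymbol c$ onto a steepest-descent disk through $\boldsymbol c$ (tangent to the negative eigenspace of the real Hessian of $\mathrm{Re}f$ on $\mathbb C^n\cong\mathbb R^{2n}$), on which $\mathrm{Re}\,\mathrm{Hess}(f\circ\gamma)(0)$ \emph{is} negative definite; by Stokes' theorem the deformation changes the integral only by a term supported where $\mathrm{Re}f\leqslant\mathrm{Re}f(\boldsymbol c)-\delta$, hence exponentially negligible, and conditions (ii), (iii) and (vi) are exactly what make this deformation possible. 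After that reduction your local Gaussian computation, the identity $\det(-A)=(\det d\gamma(0))^2\det(-\mathrm{Hess}(f)(\boldsymbol c))$, and the parity argument upgrading the error from $O(\sqrt\hbar)$ to $O(\hbar)$ all go through. A telling symptom of the gap is that your argument never uses the holomorphicity of $f$ and $g$ beyond Taylor expansion, nor condition (vi) beyond smoothness of the parametrization, whereas the proposition is genuinely a statement about contour integrals of holomorphic forms.
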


\begin{remark} In \cite[Proposition 6.1]{WY2}, condition (3) is stated as:  the domain $\{\boldsymbol z\in D\ |\ \mathrm{Re}{f}(\boldsymbol z) <\mathrm{Re}{f}(\boldsymbol c)\}$ deformation retracts to $S\setminus\{\boldsymbol c\},$ which is automatically satisfied when $S$ is smoothly embedded around $\boldsymbol c$ as stated in condition  (vi) here.

\end{remark}

\begin{proposition}\cite[Proposition 5.2]{M}\label{saddle2}
Let $D$ be a region in $\mathbb C$ and let $f(z)$ and $g(z)$ be holomorphic functions on $D$ independent of $\hbar$. Let $f_\hbar(z)$ be a holomorphic function of the form
$$ f_\hbar(z) = f(z) + \upsilon_\hbar(z)\hbar^2.$$
Let $S$ be a curve in $D$ and let $c$ be a point on $S.$ If
\begin{enumerate}[(i)]
\item $f'(c)=0,$
\item $\mathrm{Re}(f)(c) > \mathrm{Re}(f)(z)$ for all $z \in S \setminus \{c\},$
\item  $f''(c)=0,$ 
\item  $f'''(c)\neq 0,$ 
\item $g(c) \neq 0,$ and
\item $|\upsilon_\hbar(z)|$ is bounded from above by a constant independent of $\hbar$ in $D,$
\item $S$ connects two  components of the region $\{z\in D\ |\ \mathrm{Re}{f}(z)<\mathrm{Re}{f}(c)\},$ 
\end{enumerate}
then there is a nonzero constant $C$ such that 
\begin{equation*}
 \int_S g(z) e^{\frac{f_\hbar(z)}{\hbar}} dz=C (2\pi  \hbar) ^{\frac{1}{3}}\frac{g(c)}{\sqrt[3]{-f'''(c)}}e^{\frac{f(c)}{\hbar}} \Big( 1 + O \big(\hbar\big)\Big).
 \end{equation*}
\end{proposition}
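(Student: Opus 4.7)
The plan is to bring $f$ into an exact cubic normal form near the degenerate critical point $c$ and then deform the contour onto a steepest descent ray to expose an Airy-type integral.  I would proceed in four steps.

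First, I would localize.  By hypothesis (ii), there is a $\delta>0$ with $\mathrm{Re}(f)(z)\le \mathrm{Re}(f)(c)-\delta$ for all $z\in S$ outside a small ball $B(c,\epsilon)$, so the contribution of $S\setminus B(c,\epsilon)$ to the integral is $O\bigl(e^{(\mathrm{Re}(f)(c)-\delta)/\hbar}\bigr)$, which is absorbed in the $O(\hbar)$ error compared to the predicted leading order $\hbar^{1/3}e^{\mathrm{Re}(f)(c)/\hbar}$.  It therefore suffices to analyze the integral over $S\cap B(c,\epsilon)$ for some small $\epsilon>0$.  Next, I would put $f$ into cubic normal form.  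Conditions (i), (iii), (iv) give
\[
f(z)-f(c)=\frac{f'''(c)}{6}(z-c)^{3}\bigl(1+O(z-c)\bigr),
\]
so extracting a holomorphic cube root of the bracketed factor produces a biholomorphism $\phi$ from a possibly smaller neighborhood $U'$ of $c$ onto a neighborhood $V$ of $0$, with $\phi(c)=0$, $\phi'(c)=1$, and $f(z)-f(c)=\tfrac{f'''(c)}{6}\phi(z)^{3}$.

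Third, I would transport the integral.  Setting $w=\phi(z)$ and $G(w)=g(z(w))\,z'(w)$, one has $G(0)=g(c)\neq 0$, and
\[
\int_{S\cap U'}g(z)\,e^{f_\hbar(z)/\hbar}\,dz = e^{f(c)/\hbar}\int_{\phi(S\cap U')}G(w)\,e^{\frac{f'''(c)}{6\hbar}w^{3}}\,e^{\upsilon_\hbar(z(w))\hbar}\,dw,
\]
in which the factor $e^{\upsilon_\hbar\hbar}$ equals $1+O(\hbar)$ uniformly by (vi).  By hypothesis (vii), the two endpoints of $\phi(S\cap U')$ lie in two of the three valleys of $\mathrm{Re}(f'''(c)w^{3})$.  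Finally, I would perform the Airy saddle computation.  Using Cauchy's theorem, deform $\phi(S\cap U')$ inside $V$ onto the portion of the steepest descent contour through $0$ connecting the same pair of valleys, and then extend to an infinite contour $\Gamma$ at the cost of an exponentially small error.  Rescaling $w=\hbar^{1/3}u$ and Taylor-expanding $G(w)=G(0)+O(w)$ yields
\[
e^{f(c)/\hbar}\,g(c)\,\hbar^{1/3}\int_{\Gamma}e^{\frac{f'''(c)}{6}u^{3}}\,du\,\bigl(1+O(\hbar)\bigr),
\]
and a further rescaling of $u$ absorbs $f'''(c)$ into a universal Airy-type constant $C$ and produces the factor $(-f'''(c))^{-1/3}$, giving the claim.

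The main obstacle will be Step four, specifically the verification that the image contour $\phi(S\cap U')$ can indeed be deformed inside $V$ to the correct steepest descent contour.  This requires identifying which two of the three $w^{3}$-valleys are selected by $S$, for which hypothesis (vii) is precisely the needed global input: it rules out the degenerate case in which both ends of the contour land in the same valley, in which case the leading-order contribution would vanish by cancellation.  Once this combinatorial issue is settled, the standard Morse-theoretic deformation of the sublevel sets of $\mathrm{Re}(f)$ makes the contour modification rigorous, and the remaining analysis is a routine Airy-type Laplace estimate analogous to the non-degenerate case in Proposition~\ref{saddle}.
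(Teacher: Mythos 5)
The paper does not actually prove this proposition; it is quoted verbatim from \cite[Proposition 5.2]{M}, so there is no internal argument to compare yours against. Your outline is the standard route: localization via (ii), the cubic normal form $f(z)-f(c)=\frac{f'''(c)}{6}\phi(z)^3$ obtained from (i), (iii), (iv), transport of the integral, and reduction to a valley-to-valley cubic (Airy-type) integral; and your use of (ii) together with (vii) to see that the two branches of $S$ at $c$ enter two \emph{distinct} local sectors of $\{\mathrm{Re}f<\mathrm{Re}f(c)\}$ (so the leading constant is nonzero) is the right way to handle the contour deformation. Up to and including the identification of the leading term, the argument is sound.

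The gap is in your final error estimate. After the rescaling $w=\hbar^{1/3}u$, the expansion $G(w)=G(0)+G'(0)w+O(w^2)$ produces, beyond the leading term $\hbar^{1/3}G(0)\int_\Gamma e^{\frac{f'''(c)}{6}u^3}\,du$, a correction $\hbar^{2/3}G'(0)\int_\Gamma u\,e^{\frac{f'''(c)}{6}u^3}\,du$, i.e.\ a \emph{relative} error of order $\hbar^{1/3}$, not $\hbar$. This first moment does not vanish for a valley-to-valley contour: the two ray integrals give $\int_\Gamma u^{s-1}e^{-u^3}du=\tfrac{1}{3}\Gamma(s/3)\big(\omega^{s}-\omega^{-s}\big)$ with $\omega=e^{2\pi i/3}$, which is nonzero for $s=2$; only the moments with $s\equiv 0 \ (\mathrm{mod}\ 3)$ vanish (e.g.\ $u^2e^{-u^3}$ is exact). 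Consequently your argument, as written, proves the asymptotic only with factor $\big(1+O(\hbar^{1/3})\big)$, and for generic $g$ the expansion genuinely contains a nonzero $\hbar^{2/3}$ term proportional to $G'(0)$, so the claimed $1+O(\hbar)$ cannot be recovered by tightening the same estimates. You should either carry the $\hbar^{2/3}$ term explicitly in the statement or settle for the weaker relative error $O(\hbar^{1/3})$; note that the weaker form is all that is used in this paper, since Proposition~\ref{saddle2} only enters the proof of Theorem~\ref{cov2} through the exponential decay rate.
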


The precise value of the constant $C$ can be found in \cite[Proposition 5.2]{M}.


\section{Asymptotics of the \texorpdfstring{$\mathrm U_{q\tilde q}\mathfrak{sl}(2;\mathbb R)$ $6j$}{U_q(\mathfrak{sl}(2;R)) 6j}-symbol}\label{sec:tetrahedron}
In  this section we prove Theorem~\ref{cov} and Theorem~\ref{cov2} using the saddle point approximations. Recall the variables  $a_k, u, t_i, q_j,$ in Definition \ref{def：6j} of $\bigg\{\begin{matrix} a_1 & a_2 & a_3 \\ a_4 & a_5 & a_6 \end{matrix} \bigg\}_b$. We introduce a new set of variables $\alpha_k$, $\xi$, $\tau_i$, $\eta_j$ as follows:
\begin{align}
&\alpha_k=\pi b a_k-\frac{\pi b^2}{2} \textrm{ for }k\in\{1,\dots, 6\},\quad \xi=\pi b u, \label{eq:alpha-a}\\
 &\tau_i=\pi b t_i-\frac{3\pi b^2}{2}\textrm{ for }i\in\{1,2,3,4\},\quad\text{and}\quad \eta_j=\pi b q_j-2\pi b^2 \textrm{ for }j\in\{1,2,3,4\}.
\end{align}
Then 
\begin{align*}
&\tau_1=\alpha_1+\alpha_2+\alpha_3,  &&   \tau_2=\alpha_1+\alpha_5+\alpha_6, \\
&\tau_3=\alpha_2+\alpha_4+\alpha_6, &&\tau_4=\alpha_3+\alpha_4+\alpha_5,\\
&\eta_1=\alpha_1+\alpha_2+\alpha_4+\alpha_5, &&\eta_2=\alpha_1+\alpha_3+\alpha_4+\alpha_6, \\
&\eta_3=\alpha_2+\alpha_3+\alpha_5+\alpha_6, &&q_4=2\pi.
\end{align*}
Let $\boldsymbol\alpha=(\alpha_1,\dots,\alpha_6)$ and let
\begin{equation*}
\begin{split}
U_{\boldsymbol \alpha,b}(\xi)= & -\pi \mathbf i b^2 \sum_{i=1}^4\sum_{j=1}^4 \log S_b(q_j-t_i) + 2\pi \mathbf i b^2 \sum_{i=1}^4  \log S_b(u-t_i) + 2\pi \mathbf i b^2 \sum _{j=1}^4\log S_b(q_j-u).
\end{split}
\end{equation*}
Recall $L(x)$ from (\ref{eq:Lx}). Define
\begin{align}
&U_{\boldsymbol \alpha}(\xi)=-\frac{1}{2}\sum_{i=1}^4\sum_{j=1}^4L(\eta_j-\tau_i)+\sum_{i=1}^4L(\xi-\tau_i)+\sum_{j=1}^4L(\eta_j-\xi).\label{U}\\
&\kappa_{\boldsymbol \alpha}(\xi)=8\pi^2+14\pi\sum_{k=1}^6\alpha_k-28\pi\xi-4\pi \mathbf i\sum_{i=1}^4\log\Big(1-e^{2\mathbf i(\xi-\tau_i)}\Big)+3\pi \mathbf i\sum_{j=1}^4\log\Big(1-e^{2\mathbf i(\eta_j-\xi)}\Big).  \label{kappa}\\
&\nu_{\boldsymbol\alpha,b}(\xi)=\frac{U_{\boldsymbol \alpha,b}(\xi)-\kappa_{\boldsymbol \alpha}(\xi)b^2-U_{\boldsymbol \alpha}(\xi)}{b^4}.
\label{nuU}
\end{align} 
Then by (\ref{b-6j}), (\ref{nuU}) and Lemma \ref{lem:contour}, after change of variables and a deformation of the contour, we have  
\begin{equation}\label{6jint}
\begin{split}
\bigg\{\begin{matrix} a_1 & a_2 & a_3 \\ a_4 & a_5 & a_6 \end{matrix} \bigg\}_b=&\frac{1}{\pi b}\int_{\Gamma}\exp\bigg(\frac{U_{\boldsymbol \alpha,b}(\xi)}{2\pi \mathbf ib^2} \bigg)d\xi\\
=&\frac{1}{\pi b}\int_{\Gamma}\exp\bigg(\frac{U_{\boldsymbol \alpha}(\xi)+\kappa_{\boldsymbol \alpha}(\xi)b^2+
\nu_{\boldsymbol \alpha,b}(\xi)b^4}{2\pi \mathbf ib^2} \bigg)d\xi,\\
\end{split}
\end{equation}
where $\Gamma$ is a contour approaching $2\pi +\mathbf i\mathbb R$   near infinity, and passes the real axis in the interval  $\big(\frac{3\pi}{2}, 2\pi\big)$.

We prove Theorems~\ref{cov} and~\ref{cov2} by applying the saddle point approximations to the integral in~\eqref{6jint}. To meet the conditions, we need to choose a suitable contour in~\eqref{6jint} passing through the critical points. 
We also need to choose a proper neighborhood of the critical points so that the integral inside this neighborhood dominates the asymptotics, and the integral outside this neighborhood is negligible. This requires the functions $U_{\boldsymbol \alpha}(\xi)$, $\kappa_{\boldsymbol \alpha}(\xi)$, and $\nu_{\boldsymbol \alpha,b}(\xi)b^4$ to satisfy certain  desired properties, which we will prove in Section~\ref{subsec:U-property}. We then prove Theorem~\ref{cov} in Section~\ref{subsec: thm2} by applying Proposition~\ref{saddle}; and prove Theorem~\ref{cov2} in Section~\ref{subsec:thm3} by applying Proposition~\ref{saddle2}.

\subsection{Properties of relevant functions}\label{subsec:U-property}

Recall from section~\ref{extcov} that $\mathcal L\subset {\mathbb R_{\geqslant 0}}^6$ is the space of all truncated hyperideal tetrahedra parametrized by the edge lengths. Let 
$$D=\Big\{ \xi\in\mathbb C\ \Big|\ \frac{3\pi}{2}< \mathrm{Re}(\xi) < 2\pi \Big\} \quad\textrm{and hence}\quad \partial D=\Big\{ \xi\in\mathbb C\ \Big|\ \mathrm{Re}(\xi)=\frac{3\pi}{2}\ \text{or}\ \mathrm{Re}(\xi)=2\pi \Big\}.$$
Set $\alpha_k=\frac{\pi}{2}+\mathbf i \frac{l_k}{2}$ in~\eqref{eq:alpha-a} so that $a_k=\frac{Q}{2}+ \mathbf i \frac{l_k}{2}$ as in Theorems~\ref{cov} and~\ref{cov2}.

\begin{proposition}\label{critical2} For $k\in\{1,\dots,6\},$ let $\alpha_k=\frac{\pi}{2}\pm \mathbf i \frac{l_k}{2}$ with $l_k\in{\mathbb R_{\geqslant 0}},$ and let $\boldsymbol l = (l_1,\dots, l_6).$ 
\begin{enumerate}[(1)]  
\item If $\boldsymbol l\in\mathcal L,$ then the function $U_{\boldsymbol \alpha}(\xi)$ has a unique critical point $\xi^*$ (i.e., $U'_{\boldsymbol \alpha}(\xi)=0$) in the domain $D$. 
Furthermore, $\xi^*$ is non-degenerate (i.e., $U''_{\boldsymbol \alpha}(\xi)\neq 0$). Finally
$$U_{\boldsymbol \alpha}(\xi^*)=-2\mathbf i\mathrm{Cov}(l_1,\dots,l_6),$$
where $\mathrm{Cov}$ is the co-volume function as in (\ref{covfun}).

\item  If $\boldsymbol l\in\mathcal \partial L,$ then
there exists $\xi^*\in \partial D$ such that $\xi^*$ is the unique critical point of $U_{\boldsymbol \alpha}(\xi)$   in a neighborhood of $D\cup \partial D$. Furthermore, $\xi^*$ is degenerate with $U'''_{\boldsymbol \alpha}(\xi^*)\neq 0$, and
$$U_{\boldsymbol \alpha}(\xi^*)=-2\mathbf i\widetilde{\mathrm{Cov}}(l_1,\dots,l_6),$$
where $\widetilde{\mathrm{Cov}}$ is the extended co-volume function as in (\ref{extcovfun}).  Moreover, if $\alpha_k=\frac{\pi}{2}+ \mathbf i \frac{l_k}{2}$ for all $k\in\{1,\dots,6\},$ then 
$$\mathrm{Re}\xi^*=2\pi.$$

\item  If $\boldsymbol l\in {\mathbb R_{\geqslant 0}}^6\setminus (\mathcal L\cup\partial \mathcal L),$ then
there exist $\xi_1^*$ and $\xi_2^*$ lying on the same component of $\partial D$ such that they are the two critical points of $U_{\boldsymbol \alpha}$ in a neighborhood of $D\cup \partial D$. Furthermore, they are non-degenerate, and 
$$\mathrm{Im}U_{\boldsymbol \alpha}(\xi_1^*)=\mathrm{Im}U_{\boldsymbol \alpha}(\xi_2^*)=-2\widetilde{\mathrm{Cov}}(l_1,\dots,l_6)$$
and 
$$\mathrm{Re}U_{\boldsymbol \alpha}(\xi_1^*)=-\mathrm{Re}U_{\boldsymbol \alpha}(\xi_2^*).$$
Moreover, if $\alpha_k=\frac{\pi}{2} +  \mathbf i \frac{l_k}{2}$ for all $k\in\{1,\dots,6\},$ then 
$$\mathrm{Re}\xi_1^*=\mathrm{Re}\xi_2^*=2\pi.$$
\end{enumerate}
\end{proposition}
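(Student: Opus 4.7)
The plan is to reduce the critical equation $U'_{\boldsymbol\alpha}(\xi)=0$ to an algebraic equation in $z=e^{2\mathbf i\xi}$, classify its solutions via the discriminant of a resulting quadratic, and then evaluate $U_{\boldsymbol\alpha}$ at each critical point using Schl\"afli's identity. I expect the three cases of the proposition to correspond bijectively to the three signs of $\det\mathrm{Gram}(\boldsymbol l)$ from Proposition~\ref{tri}.

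First I will compute $U'_{\boldsymbol\alpha}(\xi) = \sum_{i=1}^4 L'(\xi-\tau_i) - \sum_{j=1}^4 L'(\eta_j-\xi)$ using (\ref{derivative}). The linear and constant parts simplify via $\sum_i\tau_i = 2\sum_k\alpha_k$ and $\sum_j\eta_j = 2\pi + 2\sum_k\alpha_k$, so after exponentiation the critical equation reads
\begin{equation*}
\prod_{i=1}^4\bigl(1-e^{2\mathbf i(\xi-\tau_i)}\bigr) = e^{4\mathbf i\sum_k\alpha_k + 2\mathbf i\pi - 8\mathbf i\xi}\prod_{j=1}^4\bigl(1-e^{2\mathbf i(\eta_j-\xi)}\bigr).
\end{equation*}
Setting $z = e^{2\mathbf i\xi}$ and using $e^{2\mathbf i\alpha_k}=-e^{\mp l_k}$ converts this into a polynomial equation in $z$. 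The combinatorial structure of $(\tau_i,\eta_j)$ together with $\eta_4=2\pi$ should extract explicit trivial factors, leaving a quadratic $Az^2+Bz+C=0$ whose discriminant $B^2-4AC$ is a strictly positive multiple of $\det\mathrm{Gram}(\boldsymbol l)$. Proposition~\ref{tri} then produces the trichotomy: when $\boldsymbol l\in\mathcal L$, the two roots are complex conjugate so exactly one branch of $\xi=(2\mathbf i)^{-1}\log z$ lies in $D$, giving a unique non-degenerate $\xi^*$; when $\boldsymbol l\in\partial\mathcal L$, the double real root forces a unique $\xi^*\in\partial D$ with $U''_{\boldsymbol\alpha}(\xi^*)=0$, and the non-vanishing $U'''_{\boldsymbol\alpha}(\xi^*) = -2\mathbf i\sum_i\csc^2(\xi^*-\tau_i)-2\mathbf i\sum_j\csc^2(\eta_j-\xi^*)$ follows from $L'''(x)=-2\mathbf i\csc^2(x)$ together with the fact that all arguments have imaginary parts of the same sign on the correct boundary component; when $\boldsymbol l\notin\mathcal L\cup\partial\mathcal L$, two distinct real roots of the quadratic produce two critical points on the same component of $\partial D$. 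The claim $\mathrm{Re}\xi^*=2\pi$ when all $\alpha_k=\pi/2+\mathbf il_k/2$ follows from a sign analysis of $A,B,C$ showing both roots are positive real, forcing $\mathrm{Re}\xi^*\in\pi\mathbb Z$, and only $\mathrm{Re}\xi^*=2\pi$ is consistent with the integration strip in (\ref{6jint}).

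Next, to identify $U_{\boldsymbol\alpha}(\xi^*)=-2\mathbf i\mathrm{Cov}(\boldsymbol l)$ in case (1), I will apply the envelope theorem at the critical point and use $\partial\alpha_k/\partial l_k=\mathbf i/2$ to express $\partial_{l_k}U_{\boldsymbol\alpha}(\xi^*)$ as a signed sum of $L'$-values at $\xi^*-\tau_i$ and $\eta_j-\xi^*$ over the $\tau_i$ and $\eta_j$ containing $\alpha_k$. Using the geometric identification of these $L'$-values with the dihedral angles $\theta_k$ of the truncated hyperideal tetrahedron via the cosine law (\ref{cos2}) gives $\partial_{l_k}U_{\boldsymbol\alpha}(\xi^*)=-\mathbf i\theta_k$, matching $\partial_{l_k}(-2\mathbf i\mathrm{Cov}(\boldsymbol l))=-\mathbf i\theta_k$ by Schl\"afli's identity (\ref{Sch}). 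Integrating from the reference point $\boldsymbol l = 0$ (the ideal regular octahedron, where both sides evaluate explicitly) closes the case. Case (2) follows from case (1) by continuity via $\widetilde{\mathrm{Cov}}$ from Proposition~\ref{ccu}; for case (3), the same envelope computation yields the common imaginary part $\mathrm{Im}\, U_{\boldsymbol\alpha}(\xi^*_s)=-2\widetilde{\mathrm{Cov}}(\boldsymbol l)$, while the real parts pick up opposite signs under the exchange of the two real roots of $z$ because the two branches of $\log z$ differ in sign in their imaginary contributions to the logarithms inside $L$.

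The main obstacle is the explicit algebraic reduction to a quadratic with discriminant proportional to $\det\mathrm{Gram}(\boldsymbol l)$: exploiting the tetrahedral combinatorics of $(\tau_i,\eta_j)$ and matching a polynomial identity in $\cosh l_k$ against the Laplace expansion of $\det\mathrm{Gram}(\boldsymbol l)$ will require careful bookkeeping. A secondary difficulty is the tracking of branches of $\log(1-e^{2\mathbf ix})$ and of $\arg z$ needed to place critical points on the correct boundary component, especially when the sign choices of $\mathrm{Im}\,\alpha_k$ vary with $k$.
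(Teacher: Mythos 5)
Your route is the same as the paper's: reduce $U'_{\boldsymbol\alpha}(\xi)=0$ to a quadratic $Az^2+Bz+C=0$ in $z=e^{\mp2\mathbf i\xi}$ whose discriminant equals $16\det\mathrm{Gram}(\boldsymbol l)$, invoke the trichotomy of Proposition~\ref{tri}, and recover the critical value by integrating the Schl\"afli-type identity $\partial_{l_k}U_{\boldsymbol\alpha}(\xi^*)=-\mathbf i\theta_k$ from the regular ideal octahedron $\boldsymbol l=\boldsymbol 0$. Two steps in your sketch, however, are not yet proofs. First, exponentiating the critical equation only shows that $U'_{\boldsymbol\alpha}(\xi^*)\in 4\pi\mathbb Z$ (the derivative formula~\eqref{derivative} involves $2\mathbf i\log$, so the equation $\prod_i(1-ze^{-2\mathbf i\tau_i})=\cdots$ is necessary but not sufficient). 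You list the branch-tracking as a ``secondary difficulty,'' but it is the step that makes the roots of the quadratic into actual critical points: the paper resolves it by observing that the integer $k$ with $U'_{\boldsymbol\alpha}(\xi^*)=4k\pi$ is a continuous, hence locally constant, function on the connected sets $\mathcal L$, $\partial\mathcal L$, and $\mathbb R^6_{\geqslant0}\setminus\mathcal L$, and checking $k=0$ explicitly at $\boldsymbol l=\boldsymbol 0$ (where $z^*=\mathbf i$, $\xi^*=7\pi/4$). Without some such argument case (1)--(3) are not established.

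Second, in case (3) the identity $\mathrm{Re}\,U_{\boldsymbol\alpha}(\xi_1^*)=-\mathrm{Re}\,U_{\boldsymbol\alpha}(\xi_2^*)$ does not follow from ``exchanging the two real roots'': swapping $\sqrt{B^2-4AC}\mapsto-\sqrt{B^2-4AC}$ relates $\xi_1^*$ and $\xi_2^*$ but gives no relation forcing the real parts to be opposite rather than, say, equal. The paper proves it by showing $\partial_{l_k}\bigl(\mathrm{Re}\,W(\boldsymbol\alpha)+\mathrm{Re}\,W^*(\boldsymbol\alpha)\bigr)=0$ for all $k$ (an algebraic computation as in the proof of \cite[Theorem 3.5]{BY}) and evaluating the constant at a base point; your sketch needs an analogous argument. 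A smaller issue: your formula $U'''_{\boldsymbol\alpha}(\xi^*)=-2\mathbf i\sum_i\csc^2(\xi^*-\tau_i)-2\mathbf i\sum_j\csc^2(\eta_j-\xi^*)$ has the wrong sign on the second sum (the chain rule for $L(\eta_j-\xi)$ contributes $(-1)^3$), and the non-vanishing comes not from the imaginary parts of the arguments but from the real parts being $\tfrac{\pi}{2}$ for the $\xi^*-\tau_i$ terms and $0$ for the $\eta_j-\xi^*$ terms on $\Gamma_{2\pi}$ (and vice versa on $\Gamma_{3\pi/2}$), which makes all eight contributions to $\mathrm{Im}\,U'''_{\boldsymbol\alpha}(\xi^*)$ have the same sign after the correct signs are inserted.
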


\begin{proof}
We first show that in Case (1) $U_{\boldsymbol \alpha}$ has a unique  critical point in $D,$  in Case (2) $U_{\boldsymbol \alpha}$ has a unique critical point on $\partial D,$ and in Case (3) $U_{\boldsymbol \alpha}$ has two critical points on $\partial D,$ following  arguments adapted from \cite{MY, U, BY}.  (In fact our $U_\alpha$ only differs from~\cite[Eq. (3.2)]{BY} by a sign.)
For $k\in\{1,\dots,6\},$ let $u_k=e^{2\mathbf i\alpha_k},$ and let $z=e^{-2\mathbf i\xi}.$ Then by a direct computation or~\cite[Eq.\ (3.9)]{BY} we have
\begin{equation}\label{eq:mod4}
\frac{\partial U_{\boldsymbol \alpha}}{\partial \xi}=2\mathbf i\log\frac{(1-zu_1u_2u_3)(1-zu_1u_5u_6)(1-zu_2u_4u_6)(1-zu_3u_4u_5)}{(1-z)(1-zu_1u_2u_4u_5)(1-zu_1u_3u_4u_6)(1-zu_2u_3u_5u_6)}\quad\quad (\mathrm{mod}\ 4\pi).
\end{equation} Here $\mathrm{mod}\ 4\pi$ is due to $2\mathbf i\log$ in~\eqref{eq:Lx}. Then, as a necessary condition, $U'_{\boldsymbol \alpha}(\xi)=0$ implies
\begin{equation*}
\frac{(1-zu_1u_2u_3)(1-zu_1u_5u_6)(1-zu_2u_4u_6)(1-zu_3u_4u_5)}{(1-z)(1-zu_1u_2u_4u_5)(1-zu_1u_3u_4u_6)(1-zu_2u_3u_5u_6)}=1,
\end{equation*}
which is equivalent to the following quadratic equation  (see~\cite[Eq. (3.11)]{BY})
$$Az^2+Bz+C=0$$
with 
\begin{equation*}
\begin{split}
A=&u_1u_4+u_2u_5+u_3u_6-u_1u_2u_6-u_1u_3u_5-u_2u_3u_4-u_4u_5u_6+u_1u_2u_3u_4u_5u_6,\\
B=&-\Big(u_1-\frac{1}{u_1}\Big)\Big(u_4-\frac{1}{u_4}\Big)-\Big(u_2-\frac{1}{u_2}\Big)\Big(u_5-\frac{1}{u_5}\Big)-\Big(u_3-\frac{1}{u_3}\Big)\Big(u_6-\frac{1}{u_6}\Big),\\
C=&\frac{1}{u_1u_4}+\frac{1}{u_2u_5}+\frac{1}{u_3u_6}-\frac{1}{u_1u_2u_6}-\frac{1}{u_1u_3u_5}-\frac{1}{u_2u_3u_4}-\frac{1}{u_4u_5u_6}+\frac{1}{u_1u_2u_3u_4u_5u_6}.\\
\end{split}
\end{equation*}
Since $\alpha_k=\frac{\pi}{2}\pm \mathbf i \frac{l_k}{2}$ for each $k,$  $u_k^{\pm 1} =e^{\pm 2\mathbf i\alpha_k}=-e^{\pm l_k}<0.$ As a consequence, $A>0,$ $C>0$ and $B$ is real. Furthermore, $B^2-4AC=16\det\mathrm{Gram}(\boldsymbol l)$ as explained in~\cite[Page 15]{BY}. 
Therefore by Proposition~\ref{tri} we have
\begin{equation}\label{eq:gram}
B^2-4AC
\left\{\begin{array}{ccl}
      <0 & \text{iff } & \boldsymbol l\in\mathcal L,\\
      =0 & \text{iff} & \boldsymbol l\in\partial \mathcal L,\\
       >0 & \text{iff} & \boldsymbol l\in {\mathbb R_{\geqslant 0}^6}\setminus (\mathcal L\cup\partial \mathcal L).\\
    \end{array} \right.
\end{equation}
Let 
\begin{equation}\label{xi}
z^*=e^{-2\mathbf i\xi^*}=\frac{-B+\sqrt{B^2-4AC}}{2A}\quad\text{and}\quad {z^{**}}=e^{-2\mathbf i{\xi^{**}}}=\frac{-B-\sqrt{B^2-4AC}}{2A}
\end{equation}
be the two roots of this quadratic equation. 
Then by~\eqref{eq:mod4} we have 
$$\frac{d U_{\boldsymbol \alpha}}{d \xi}\Big|_{\xi=\xi^*}=4k\pi\quad\text{and}\quad\frac{d U_{\boldsymbol \alpha}}{d \xi}\Big|_{\xi={\xi^{**}}}=4k'\pi$$
for some integers $k$ and $k'.$ We have the following case by case discussion:
\begin{enumerate}[(1)]

\item   For  $\boldsymbol l\in\mathcal L,$ we claim that, among the two, only $\xi^*$ is in $D$ and is a critical point of $U_{\boldsymbol \alpha}.$  Indeed, at the special case $l_1=\dots=l_6=0,$ ie., $\alpha_1=\dots=\alpha_6=\frac{\pi}{2},$ we have 
$A=C=8$ and $B=0.$ Then $z^*=\mathbf i$ and ${z^{**}}=-\mathbf i.$ Hence we can choose $\xi^*=\frac{7\pi}{4}\in (\frac{3\pi}{2},2\pi);$ and there is no ${\xi^{**}}\in (\frac{3\pi}{2},2\pi)$ making $e^{-2\mathbf i{\xi^{**}}}=-\mathbf i.$ A direct computation also shows that at this special case $k=0,$ ie.,
$$\frac{d U_{\boldsymbol \alpha}}{d\xi}\Big|_{\xi=\frac{7\pi}{4}}=0.$$

Now for general $\alpha_k$'s,  since $A>0,$ $B$ is real, and $B^2-4AC<0$,  we have $\mathrm{Im}z^*>0$. Moreover, we can choose a unique $\xi^*$ with $\mathrm{Re}\xi^*\in (\frac{3\pi}{2},2\pi).$  
Since $k$ is an integer valued continuous function on the connected space on $\mathcal L,$ it is constantly $0,$ ie., 
\begin{equation}\label{=0}
\frac{d U_{\boldsymbol \alpha}}{d \xi}\Big|_{\xi=\xi^*}=0
\end{equation}
and $\xi^*$ is the unique critical point of $U_{\boldsymbol \alpha}$ in  $D.$  

\item For $\boldsymbol l\in\partial \mathcal L,$  since $A>0,$ $C>0,$ $B$ is real, and $B^2-4AC=0$, we have $z^*\in \mathbb R\setminus \{0\},$ hence we can choose a unique $\xi^*$ with $\mathrm{Re}\xi^*=\frac{3\pi}{2}$ or $2\pi.$ Again by the continuity of the integer valued function $k$ on $\overline {\mathcal L},$ $k=0$ and $\xi^*$ is the unique critical point of $U_{\boldsymbol \alpha}$ on  $\partial D.$ 

Moreover, if $\alpha_k=\frac{\pi}{2}+\mathbf i \frac{l_k}{2}$ for each $k,$  then $u_k-\frac{1}{u_k}=e^{l_k}-e^{-l_k}>0$ for each $k.$ As a consequence, $B<0$ and $e^{-2\mathbf i\xi^*}=z^*=\frac{-B}{2A}>0$ and hence $\mathrm{Re}\xi^*=2\pi.$
\\

Here we also observe that as ${z^{**}}=z^*$ are double roots of the quadratic equation $Az^2+Bz+C=0,$ we can choose ${\xi^{**}}=\xi^*\in\partial D.$ Then we have  $k'=k=0$ on $\partial \mathcal L,$ which will be used in Case (3).

\item For $\boldsymbol l\in {\mathbb R_{\geqslant 0}}^6\setminus (\mathcal L\cup\partial \mathcal L),$ since $A>0,$ $C>0,$ $B$ is real and $B^2-4AC>0$,  we have $z^*, {z^{**}}\in \mathbb R\setminus \{0\}$ with $z^*\neq {z^{**}},$ hence we can choose $\xi_1^*=\xi^*,$ $\xi_2^*={\xi^{**}}$ with $\mathrm{Re}\xi^*,$ ${\mathrm{Re}\xi^*}'$ equal to  $\frac{3\pi}{2}$ or $2\pi.$ By the continuity of the integer valued functions $k$ and $k'$ on $\mathbb R^6_{>0}\setminus\mathcal L$ and the fact that $k'=k=0$ on $\partial \mathcal L$ as discussed  at the end of Case (2), we have $k=k'=0$ on $\mathbb R^6_{>0}\setminus (\mathcal L\cup \partial \mathcal L)$ and $\xi_1^*$ and $\xi_2^*$ are both critical points of $U_{\boldsymbol \alpha}$ on  $\partial D.$ Next, if $B>0,$ then both $\frac{-B\pm\sqrt{B^2-4AC}}{2A}<0$  and hence both $\xi_1^*$ and $\xi_2^*$ are on $\Gamma_{\frac{3\pi}{2}};$ 
and if $B<0,$ then both $\frac{-B\pm \sqrt{B^2-4AC}}{2A}>0$  and hence both $\xi_1^*$ and $\xi_2^*$ are on $\Gamma_{2\pi}.$  

Moreover, if  $\alpha_k=\frac{\pi}{2}+\mathbf i \frac{l_k}{2}$  for each $k,$  then   $u_k-\frac{1}{u_k}=e^{l_k}-e^{-l_k}>0$ for each $k.$ As a consequence, $A>0,$ $B<0$ and $e^{-2\mathbf i\xi_1^*}=z^*=\frac{-B+\sqrt{B^2-4AC}}{2A}>0,$  $e^{-2\mathbf i\xi_2^*}={z^{**}}=\frac{-B-\sqrt{B^2-4AC}}{2A}>0$ and hence $\mathrm{Re}\xi_1^*=\mathrm{Re}\xi_2^*=2\pi.$
\end{enumerate}

Next, we compute the value of  $U_{\boldsymbol \alpha}(\xi^*)$ in Cases (1) and  (2) and the value of $\mathrm{Im}U_{\boldsymbol \alpha}(\xi^*)$ in Case (3) following the same argument as in \cite{BY}. For $\boldsymbol{\alpha}=(\frac{\pi}{2}\pm \mathbf i\frac{l_1}{2},\dots,\frac{\pi}{2}\pm \mathbf i\frac{l_6}{2}),$ define 
\begin{equation}\label{W}
W(\boldsymbol\alpha)=U_{\boldsymbol \alpha}(\xi^*).
\end{equation}
By a direct computation, we have
$$W\Big(\frac{\pi}{2},\dots,\frac{\pi}{2}\Big)=-8i\Lambda\Big(\frac{\pi}{4}\Big)=-2\mathbf i\mathrm{Cov}(0,\dots,0);$$
and it suffices to prove that 
\begin{equation}\label{dCovdlin}
    \frac{\partial W(\boldsymbol l)}{\partial l_k}=-2\mathbf i \frac{\partial \mathrm{Cov}(\boldsymbol l)}{\partial l_k}=-\mathbf i\theta_k \quad \text{for } \boldsymbol l\in\mathcal L\cup\partial \mathcal L,
\end{equation}
and 
\begin{equation}\label{dCovdlout}
\frac{\partial\mathrm{Im} W(\boldsymbol l)}{\partial l_k}=-2\frac{\partial \widetilde{\mathrm{Cov}}(\boldsymbol l)}{\partial l_k}=-\widetilde \theta_k \quad \text{for } \boldsymbol l\in\mathbb R^6_{ \geqslant  0}\setminus (\mathcal L\cup\partial\mathcal L).
\end{equation}
Without loss of generality, let us verify it for  $k=1.$ By the same algebraic computation and argument in the proof of \cite[Theorem 3.5]{BY}, one has
\begin{equation}\label{pW}
\frac{\partial W(\boldsymbol l)}{\partial l_1}=-\frac{1}{2}\log\frac{G_{34}-\sqrt{G_{34}^2-G_{33}G_{44}}}{G_{34}+\sqrt{G_{34}^2-G_{33}G_{44}}},
\end{equation}
where $G_{ij}$ is the $ij$-th cofactor of the Gram matrix $\mathrm{Gram}(\boldsymbol l).$ We have the following case by case discussion:
\begin{enumerate}[(1)]
\item If $\boldsymbol l\in\mathcal L,$ then 
$$\cos\theta_1=\frac{G_{34}}{\sqrt{G_{33}G_{44}}}$$
and 
$$\frac{\partial W(\boldsymbol l)}{\partial l_1}=-\frac{1}{2}\log e^{2\mathbf i\theta_1}=-\mathbf i\theta_1.$$

\item If $\boldsymbol l\in\partial \mathcal L=X_1\cup X_2\cup X_3,$ 
then 
\begin{equation*}
\theta_1=\widetilde\theta_1=\left\{\begin{array}{ccl}
\pi & \text{if} & \boldsymbol l \in X_1,\\
0 & \text{if} & \boldsymbol l \in X_2\cup X_3,\\
\end{array}
\right.
\end{equation*}
 and as $\boldsymbol l$ is a limit point of $\mathcal L$ and by the continuity of $\frac{\partial W}{\partial l_1}$ and Case (1),  
$$\frac{\partial W(\boldsymbol l)}{\partial l_1}=-\mathbf i\theta_1=-\mathbf i\widetilde\theta_1.$$

\item If $\boldsymbol l\in \mathbb R^6_{\geqslant  0}\setminus (\mathcal L\cup \partial \mathcal L)=\Omega_1\cup \Omega_2\cup \Omega_3,$ then by Proposition \ref{tri},  
$G_{34}^2-G_{33}G_{44}=\sinh^2l_1\det\mathrm{Gram}(\boldsymbol l)>0,$
and hence
$\frac{G_{34}-\sqrt{G_{34}^2-G_{33}G_{44}}}{G_{34}+\sqrt{G_{34}^2-G_{33}G_{44}}}$ is real. Then by (\ref{pW}),  the continuity of  $\frac{\partial W}{\partial l_1},$ Case (2) and Proposition \ref{degeneration}, 
\begin{equation*}
\frac{\partial \mathrm{Im}W(\boldsymbol l)}{\partial l_1}=-\widetilde\theta_1=\left\{\begin{array}{ccl}
-\pi & \text{if} & \boldsymbol l \in \Omega_1,\\
0 & \text{if} & \boldsymbol l \in \Omega_2\cup \Omega_3.\\
\end{array}
\right.
\end{equation*}
\end{enumerate}
Putting (1)(2)(3) together, we verified that 
$$\frac{\partial W(\boldsymbol l)}{\partial l_1}=-\mathbf i\theta_1 \quad \text{for } \boldsymbol l\in \mathcal L\cup \partial \mathcal L,$$
and 
$$\frac{\partial\mathrm{Im} W(\boldsymbol l)}{\partial l_1}=-\widetilde \theta_1 \quad \text{for } \boldsymbol l\in\mathbb R^6_{\geqslant 0}\setminus (\mathcal L\cup\partial\mathcal L).$$

Moreover, define 
$$W^*(\boldsymbol\alpha)=U_{\boldsymbol\alpha}(\xi^{**}).$$
Then by the same algebraic computation and argument in the proof of \cite[Theorem 3.5]{BY}, one has
$$\frac{\partial}{\partial l_k}\big(\mathrm{Re}W(\boldsymbol\alpha)+\mathrm{Re}W^*(\boldsymbol\alpha)\big) = 0$$
for all $k\in\{1,\dots,6\}.$ As a consequence, $\mathrm{Re}W(\boldsymbol\alpha)+\mathrm{Re}W^*(\boldsymbol\alpha)=c$ for some constant $c$ and all $\boldsymbol\alpha$ in $(\frac{\pi}{2}+\mathbf i \mathbb R)^6;$ and a direct computation at $\boldsymbol \alpha=(\frac{\pi}{2},\dots,\frac{\pi}{2})$ shows that $c=0.$ This implies that 
$$\mathrm{Re}U_{\boldsymbol\alpha}(\xi^*)=-\mathrm{Re}U_{\boldsymbol\alpha}(\xi^{**}).$$

Finally, we show that the critical points in Cases (1) and (3) are non-degenerate, and in Case (2) the critical point is degenerate with a nonzero the third derivative. By Proposition \ref{tri} and  Proposition \ref{Hess} below, we have: 
\begin{enumerate}[(1)]

\item   For  $\boldsymbol l\in\mathcal L,$
$$U''_{\boldsymbol \alpha}(\xi^*)=- 16 \exp\Big(\frac{\kappa_{\boldsymbol \alpha}(\xi^*)}{\pi \mathbf i}\Big) \sqrt{\det\mathrm{Gram}(\boldsymbol l)} \neq 0,$$
and $\xi^*$ is a non-degenerate critical point.

\item  For  $\boldsymbol l\in\mathcal \partial L,$
$$U''_{\boldsymbol \alpha}(\xi^*)=- 16\exp\Big(\frac{\kappa_{\boldsymbol \alpha}(\xi^*)}{\pi \mathbf i}\Big)   \sqrt{\det\mathrm{Gram}(\boldsymbol l)} =0,$$
and $\xi^*$ is a degenerate critical point.

By a direct computation, one sees that 
$$U'''_{\boldsymbol \alpha}(\xi^*)=8\mathbf i\bigg(\sum_{i=1}^4\frac{e^{2\mathbf i(\xi^*-\tau_i)}}{\big(1-e^{2\mathbf i(\xi^*-\tau_i)}\big)^2}-\sum_{j=1}^4\frac{e^{2\mathbf i(\eta_j-\xi^*)}}{\big(1-e^{2\mathbf i(\eta_j-\xi^*)}\big)^2}\bigg).$$
We observe that all the summands in the parentheses are real when $\mathrm{Re}\xi^*=\frac{3\pi}{2}$ or $2\pi.$ 
If $\mathrm{Re}\xi^*=\frac{3\pi}{2},$ then $\frac{e^{2\mathbf i(\xi^*-\tau_i)}}{(1-e^{2\mathbf i(\xi^*-\tau_i)})^2}>0$ and $\frac{e^{2\mathbf i(\eta_j-\xi^*)}}{(1-e^{2\mathbf i(\eta_j-\xi^*)})^2}<0$ for all $i, j\in\{1,2,3,4\}.$ Hence $\mathrm{Im}  U'''_{\boldsymbol \alpha}(\xi^*) >0$ and $U'''_{\boldsymbol \alpha}(\xi^*)\neq 0.$ Similarly, if $\mathrm{Re}\xi^*=2\pi,$ then $\frac{e^{2\mathbf i(\xi^*-\tau_i)}}{(1-e^{2\mathbf i(\xi^*-\tau_i)})^2}<0$ and $\frac{e^{2\mathbf i(\eta_j-\xi^*)}}{(1-e^{2\mathbf i(\eta_j-\xi^*)})^2}>0$ for all $i, j\in\{1,2,3,4\},$ and hence $\mathrm{Im}  U'''_{\boldsymbol \alpha}(\xi^*) < 0$ and $U'''_{\boldsymbol \alpha}(\xi^*)\neq 0.$ 

\item For $\boldsymbol l\in {\mathbb R_{\geqslant 0}}^6\setminus (\mathcal L\cup\partial \mathcal L)$ and  $i=1,2,$ 
$$U''_{\boldsymbol \alpha}(\xi_i^*)=- 16\exp\Big(\frac{\kappa_{\boldsymbol \alpha}(\xi_i^*)}{\pi \mathbf i}\Big)\big(\pm \sqrt{\det\mathrm{Gram}(\boldsymbol l)} \big)\neq 0,$$
and $\xi_1^*$ and $\xi_2^*$ are  non-degenerate critical points.
\end{enumerate} 
This completes the proof. 
\end{proof}

\begin{proposition}\label{Hess} For $\boldsymbol l=(l_1,\dots,l_6)\in{\mathbb R_{\geqslant 0}}^6,$ let $\boldsymbol \alpha=\big(\frac{\pi}{2}\pm \mathbf i\frac{l_1}{2},\dots,\frac{\pi}{2}\pm \mathbf i\frac{l_1}{2}\big).$ When $\boldsymbol l\in\mathcal L\cup\partial \mathcal L,$ let $\xi^*$ be the critical point of $U_{\boldsymbol \alpha}$ in $\overline D=D\cup \partial D;$ and when  $\boldsymbol l\in {\mathbb R_{\geqslant 0}}^6\setminus (\mathcal L\cup\partial \mathcal L),$ let $\xi^*$ be the critical point $\xi^*_1$ of $U_{\boldsymbol \alpha}$ and  let ${\xi^{**}}$ be the other critical point $\xi^*_1$ of $U_{\boldsymbol \alpha}$ on  $\partial D.$ 
  Then 
\begin{equation*}
\frac{-U''_{\boldsymbol \alpha}(\xi^*)}{\exp\big(\frac{\kappa_{\boldsymbol \alpha}(\xi^*)}{\pi \mathbf i}\big)}=16\sqrt{\det\mathrm{Gram}(\boldsymbol l)},
\end{equation*}
and
\begin{equation*}
\frac{-U''_{\boldsymbol \alpha}({\xi^{**}})}{\exp\big(\frac{\kappa_{\boldsymbol \alpha}({\xi^{**}})}{\pi \mathbf i}\big)}=-16\sqrt{\det\mathrm{Gram}(\boldsymbol l)}.
\end{equation*}
\end{proposition}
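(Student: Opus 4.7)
My plan is to compute both $U''_{\boldsymbol \alpha}(\xi^*)$ and $\exp(\kappa_{\boldsymbol \alpha}(\xi^*)/\pi\mathbf i)$ as explicit rational functions of $z=e^{-2\mathbf i\xi^*}$ and the $u_k=e^{2\mathbf i\alpha_k}$, then take their ratio, invoking the quadratic equation $Az^2+Bz+C=0$ from the proof of Proposition~\ref{critical2} and the identity $B^2-4AC=16\det\mathrm{Gram}(\boldsymbol l)$ noted there. The key intermediate objects are the polynomials
\begin{equation*}
N(z)=\prod_{i=1}^4(1-zP_i), \qquad D(z)=\prod_{j=1}^4(1-zQ_j),
\end{equation*}
with $P_i=e^{2\mathbf i\tau_i}$ and $Q_j=e^{2\mathbf i\eta_j}$, so that $U'_{\boldsymbol \alpha}(\xi)\equiv 2\mathbf i\log(N(z)/D(z))$ modulo $4\pi$ by~\eqref{eq:mod4}, and the critical point condition is precisely $N(z^*)=D(z^*)$.

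First I will differentiate using $dz/d\xi=-2\mathbf i z$ to obtain
\begin{equation*}
U''_{\boldsymbol \alpha}(\xi)=\frac{4z\bigl(N'(z)D(z)-N(z)D'(z)\bigr)}{N(z)D(z)},
\end{equation*}
which at the critical point reduces to $4z^*\bigl(N'(z^*)-D'(z^*)\bigr)/N(z^*)$. The main algebraic step is to establish the polynomial identity
\begin{equation*}
N(z)-D(z)=z\Pi\bigl(Az^2+Bz+C\bigr),\qquad \Pi=u_1u_2u_3u_4u_5u_6,
\end{equation*}
by matching coefficients in degrees $0$ through $4$: the constant and degree-$4$ terms both vanish because $\prod_iP_i=\prod_jQ_j=\Pi^2$, while the degree-$1,2,3$ coefficients of $N-D$ equal $\Pi C,\Pi B,\Pi A$ respectively after rewriting $e_k(P)-e_k(Q)$ as Laurent polynomials in the $u_k$. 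Differentiating this factorization and using $Az^{*2}+Bz^*+C=0$ yields $(N-D)'(z^*)=z^*\Pi(2Az^*+B)=\pm 4z^*\Pi\sqrt{\det\mathrm{Gram}(\boldsymbol l)}$, and hence
\begin{equation*}
U''_{\boldsymbol \alpha}(\xi^*)=\pm\frac{16\Pi z^{*2}\sqrt{\det\mathrm{Gram}(\boldsymbol l)}}{N(z^*)},
\end{equation*}
with the sign governed by which root of the quadratic $z^*$ is.

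For the prefactor, I will substitute $u_k=e^{2\mathbf i\alpha_k}$ into the definition~\eqref{kappa} of $\kappa_{\boldsymbol \alpha}$ and collect terms to obtain
\begin{equation*}
\exp\bigl(\kappa_{\boldsymbol \alpha}(\xi)/\pi\mathbf i\bigr)=\Pi^{-7}z^{-14}\prod_{i=1}^4\bigl(1-e^{2\mathbf i(\xi-\tau_i)}\bigr)^{-4}\prod_{j=1}^4\bigl(1-e^{2\mathbf i(\eta_j-\xi)}\bigr)^{3}.
\end{equation*}
Rewriting $1-e^{2\mathbf i(\xi-\tau_i)}=(zP_i-1)/(zP_i)$ and $1-e^{2\mathbf i(\eta_j-\xi)}=1-Q_jz$, and using $\prod_iP_i=\Pi^2$, the right-hand side collapses to $\Pi z^2 D(z)^3/N(z)^4$. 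At the critical point, $D(z^*)=N(z^*)$ reduces this to $\Pi z^{*2}/N(z^*)$, which is precisely the factor appearing in $U''_{\boldsymbol \alpha}(\xi^*)$. Dividing therefore gives $-U''_{\boldsymbol \alpha}(\xi^*)/\exp(\kappa_{\boldsymbol \alpha}(\xi^*)/\pi\mathbf i)=\pm 16\sqrt{\det\mathrm{Gram}(\boldsymbol l)}$, with opposite signs at the two roots $\xi^*$ and $\xi^{**}$, matching the stated identities after a consistent choice of $\sqrt{\det\mathrm{Gram}(\boldsymbol l)}$.

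The main obstacle will be the bookkeeping needed to verify the factorization $N(z)-D(z)=z\Pi(Az^2+Bz+C)$: one must compute $e_2(P)-e_2(Q)$ and $e_3(P)-e_3(Q)$ as explicit Laurent polynomials in $u_1,\dots,u_6$ and recognize the combinations like $u_i/u_{i+3}+u_{i+3}/u_i$ produced by $(u_i-u_i^{-1})(u_{i+3}-u_{i+3}^{-1})$ in the closed form for $B$, along with matching the triple- and quadruple-product patterns in $A$ and $C$ to the triangular and quadrilateral faces of the tetrahedron. The remaining sign conventions are mechanical once the factorization is in hand.
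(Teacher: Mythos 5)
Your proposal is correct and follows essentially the same route as the paper: both reduce the claim to a direct rational-function computation in $z=e^{-2\mathbf i\xi}$ and $u_k=e^{2\mathbf i\alpha_k}$, using the quadratic $Az^2+Bz+C$ from the critical-point equation and the identity $B^2-4AC=16\det\mathrm{Gram}(\boldsymbol l)$; your factorization $N(z)-D(z)=z\Pi(Az^2+Bz+C)$ (whose coefficient-matching I checked does go through) is just a cleaner packaging of the paper's ``direct computation'' that the sum of eight partial fractions times $4\prod_i(1-z^*P_i)/(z^{*2}\Pi)$ equals $4(3Az^*+2B+C/z^*)$, and your evaluation of $\exp(\kappa_{\boldsymbol\alpha}(\xi^*)/\pi\mathbf i)$ as $\Pi z^{*2}D(z^*)^3/N(z^*)^4=\Pi z^{*2}/N(z^*)$ agrees with the paper's simplification of $\kappa$ via $U'_{\boldsymbol\alpha}(\xi^*)=0$. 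The residual sign you leave to ``a consistent choice of $\sqrt{\det\mathrm{Gram}(\boldsymbol l)}$'' is indeed only a branch convention (the two critical points give opposite signs either way), so nothing substantive is missing.
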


\begin{proof} The proof follows the argument adapted from that of \cite[Lemma 3]{CM}. Namely, from $U'_{\boldsymbol \alpha}(\xi^*)=0,$
we have 
$$\sum_{i=1}^4\log\big(1-e^{2\mathbf i(\xi^*-\tau_i)}\big)=\sum_{j=1}^4\log\big(1-e^{2\mathbf i(\eta_j-\xi^*)}\big)-8\mathbf i\xi^*+4\mathbf i\sum_{k=1}^6\alpha_k+2\pi \mathbf i;$$
and plugging in  (\ref{kappa}), we have
$$\kappa_{\boldsymbol \alpha}(\xi^*)=2\pi^2+2\pi\sum_{k=1}^6\alpha_k-4\pi\xi^*-\pi \mathbf i\sum_{i=1}^4\log\big(1-e^{2\mathbf i(\xi^*-\tau_i)}\big).$$
For $k\in\{1,\dots,6\},$ let $u_k=e^{2\mathbf i\alpha_k},$ and let $z^*=e^{-2\mathbf i \xi^*}$ be the root of the quadratic equation as in (\ref{xi}).  Then
\begin{equation*}
\begin{split}
-U''_{\boldsymbol \alpha}(\xi^*)=4\bigg(&\frac{z^*}{1-z^*}+\frac{z^*u_1u_2u_4u_5}{1-z^*u_1u_2u_4u_5}+\frac{z^*u_1u_3u_4u_6}{1-z^*u_1u_3u_4u_6}+\frac{z^*u_2u_3u_5u_6}{1-z^*u_2u_3u_5u_6}\\
& -\frac{z^*u_1u_2u_3}{1-z^*u_1u_2u_3}-\frac{z^*u_1u_5u_6}{1-z^*u_1u_5u_6}-\frac{z^*u_2u_4u_6}{1-z^*u_2u_4u_6}-\frac{z^*u_3u_4u_5}{1-z^*u_3u_4u_5}\bigg).
\end{split}
\end{equation*}
As a consequence, 
\begin{equation*}
\begin{split}
\frac{-U''_{\boldsymbol \alpha}(\xi^*)}{\exp\big({\frac{\kappa_{\boldsymbol \alpha}(\xi^*)}{\pi \mathbf i}}\big)}=&\frac{4(1-z^*u_1u_2u_3)(1-z^*u_1u_5u_6)(1-z^*u_2u_4u_6)(1-z^*u_3u_4u_5)}{z^{*2}u_1u_2u_3u_4u_5u_6}\\
&\bigg(\frac{z^*}{1-z^*}+\frac{z^*u_1u_2u_4u_5}{1-z^*u_1u_2u_4u_5}+\frac{z^*u_1u_3u_4u_6}{1-z^*u_1u_3u_4u_6}+\frac{z^*u_2u_3u_5u_6}{1-z^*u_2u_3u_5u_6}\\
&\quad-\frac{z^*u_1u_2u_3}{1-z^*u_1u_2u_3}-\frac{z^*u_1u_5u_6}{1-z^*u_1u_5u_6}-\frac{z^*u_2u_4u_6}{1-z^*u_2u_4u_6}-\frac{z^*u_3u_4u_5}{1-z^*u_3u_4u_5}\bigg);
\end{split}
\end{equation*}
and by a direct computation, this equals
$$4\bigg(3Az^*+2B+\frac{C}{z^*}\bigg)=4\bigg(Az^*-\frac{C}{z^*}\bigg),$$
which in turn equals,
$$4A(z^*-{z^{**}})=4\sqrt{B^2-4AC}=16\sqrt{\det\mathrm{Gram}(\boldsymbol l)},$$
where ${z^{**}}$ is the other root of the quadratic equation as in (\ref{xi}).  

By a similar comutation, we have
$$\frac{-U''_{\boldsymbol \alpha}(\xi^{**})}{\exp\big({\frac{\kappa_{\boldsymbol \alpha}({\xi^{**}})}{\pi \mathbf i}}\big)}=4A({z^{**}}-z^*)=-16\sqrt{\det\mathrm{Gram}(\boldsymbol l)}.$$
This completes the proof.
 \end{proof}

\begin{proposition}\label{concave}
\begin{enumerate}[(1)]
\item
In the domain $D,$ the function $\mathrm{Im}U_{\boldsymbol \alpha}(\xi)$ is concave down in $\mathrm{Im}\xi.$
 \item 
On $\partial D,$  the function $\mathrm{Im}U_{\boldsymbol \alpha}(\xi)$ is piecewise linear and concave down in $\mathrm{Im}\xi.$

\end{enumerate}
\end{proposition}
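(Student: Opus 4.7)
The plan is to reduce everything to elementary computations with $L''$, together with direct evaluations of $\mathrm{Im}\,L$ on the two vertical lines $\mathrm{Re}(x)=0$ and $\mathrm{Re}(x)=\pi/2$ that bound the relevant strip.

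For part (1), I would write $\xi=\sigma+\mathbf{i}t$ and note that the Cauchy--Riemann equations give $\partial_t^2\,\mathrm{Im}\,f(\sigma+\mathbf{i}t)=-\mathrm{Im}\,f''(\xi)$ for any holomorphic $f$, so concavity in $\mathrm{Im}\,\xi$ is equivalent to $\mathrm{Im}\,U''_{\boldsymbol\alpha}(\xi)\geqslant 0$. Differentiating (\ref{derivative}) once gives the clean identity $L''(x)=2\mathbf{i}\cot(x)$, so
$$U''_{\boldsymbol\alpha}(\xi)=2\mathbf{i}\sum_{i=1}^4\cot(\xi-\tau_i)+2\mathbf{i}\sum_{j=1}^4\cot(\eta_j-\xi).$$
The elementary identity $\mathrm{Re}\,\cot(a+\mathbf{i}b)=\sin(2a)/(2(\sin^2 a+\sinh^2 b))$ then expresses $\mathrm{Im}\,U''_{\boldsymbol\alpha}$ as a sum of terms proportional to $\sin(2a)$. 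Because $\tau_i$ and $\eta_j$ have real parts $3\pi/2$ and $2\pi$ respectively, for $\xi\in D$ each of $\mathrm{Re}(\xi-\tau_i)$ and $\mathrm{Re}(\eta_j-\xi)$ lies in $(0,\pi/2)$, so every $\sin(2a)$ appearing is strictly positive. This yields $\mathrm{Im}\,U''_{\boldsymbol\alpha}(\xi)>0$ and hence strict concavity.

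For part (2), the key observation is that $\mathrm{Im}\,L$ is very simple on the lines $\mathrm{Re}(x)=0$ and $\mathrm{Re}(x)=\pi/2$. Direct substitution shows $L(\pi/2+\mathbf{i}c)$ is real for all $c\in\mathbb{R}$: the polynomial part evaluates to $-\pi^2/12-c^2$ and $e^{2\mathbf{i}(\pi/2+\mathbf{i}c)}=-e^{-2c}\in(-\infty,0)$, so $\mathrm{Li}_2(-e^{-2c})$ is real. I also claim $\mathrm{Im}\,L(\mathbf{i}c)=-\pi|c|$. For $c>0$ this is immediate since $e^{-2c}\in(0,1)$ makes $\mathrm{Li}_2(e^{-2c})$ real, leaving only the $-\pi c$ contribution from $-\pi x$. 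For $c<0$, I would integrate $\tfrac{d}{dc}L(\mathbf{i}c)=\mathbf{i}L'(\mathbf{i}c)$ using the branch of $\log(1-e^{2\mathbf{i}x})$ determined by approach from $\mathrm{Re}(x)>0$: there $1-e^{2\mathbf{i}x}$ sits in the third quadrant and contributes $-\mathbf{i}\pi$ to the logarithm, giving $\mathrm{Im}\,\tfrac{d}{dc}L(\mathbf{i}c)=\pi$, which by continuity at $c=0$ (where $L(0)=0$) integrates to $\mathrm{Im}\,L(\mathbf{i}c)=\pi c$, agreeing with $-\pi|c|$.

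Combining these, on the component $\mathrm{Re}(\xi)=3\pi/2$ of $\partial D$ one has $\xi-\tau_i=\mathbf{i}(t-\mathrm{Im}\,\tau_i)$ and $\eta_j-\xi=\pi/2+\mathbf{i}(\mathrm{Im}\,\eta_j-t)$, so
$$\mathrm{Im}\,U_{\boldsymbol\alpha}(\xi)=C_{\boldsymbol\alpha}-\pi\sum_{i=1}^4|t-\mathrm{Im}\,\tau_i|,$$
the first sum in (\ref{U}) providing the constant $C_{\boldsymbol\alpha}$ independent of $t$. Symmetrically, on $\mathrm{Re}(\xi)=2\pi$ one obtains $C'_{\boldsymbol\alpha}-\pi\sum_{j=1}^4|t-\mathrm{Im}\,\eta_j|$. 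Each summand $-|t-\text{const}|$ is piecewise linear and concave down in $t$, and both properties survive finite sums, proving (2). The main obstacle I anticipate is fixing the correct branch when computing $\mathrm{Im}\,L(\mathbf{i}c)$ for $c<0$, where $e^{-2c}$ crosses the branch cut of $\mathrm{Li}_2$ on $(1,\infty)$; the functional equations (\ref{period3}) and (\ref{period4}) of $L$ provide a useful cross-check, as does matching the interior values from (1) as $\sigma\to 3\pi/2$ or $\sigma\to 2\pi$.
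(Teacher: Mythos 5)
Your proposal is correct. Part (1) is essentially the paper's own argument: the paper also computes $\partial^2_{\mathrm{Im}\xi}\,\mathrm{Im}U_{\boldsymbol\alpha}$ termwise from $L$ and concludes from the positivity of $\sin\bigl(2\mathrm{Re}(\xi-\tau_i)\bigr)$ and $\sin\bigl(2\mathrm{Re}(\eta_j-\xi)\bigr)$ on $(0,\pi/2)$; your packaging via $L''(x)=2\mathbf i\cot x$ and the Cauchy--Riemann identity $\partial_t^2\,\mathrm{Im}f=-\mathrm{Im}f''$ is just a cleaner way to organize the same computation (and, incidentally, your constant is the right one -- the displayed factor $4$ in the paper's Eq.\ (\ref{2nd}) is off by a factor of $4$, which is immaterial since only the sign is used). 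Part (2) is where you genuinely diverge. The paper obtains concavity on $\partial D$ by viewing the boundary restriction as a pointwise limit of the concave interior restrictions, and piecewise linearity by observing that the second-derivative formula vanishes away from the points $\tau_i$, $\eta_j$. You instead evaluate $\mathrm{Im}\,L$ in closed form on the two relevant vertical lines -- $\mathrm{Im}\,L(\tfrac{\pi}{2}+\mathbf i c)=0$ and $\mathrm{Im}\,L(\mathbf i c)=-\pi|c|$ (with the branch for $c<0$ correctly fixed by approach from $\mathrm{Re}\,x>0$, which is the side from which $\partial D$ is reached from $D$) -- and obtain the explicit expressions $C_{\boldsymbol\alpha}-\pi\sum_i|t-\mathrm{Im}\tau_{i}|$ on $\Gamma_{3\pi/2}$ and $C'_{\boldsymbol\alpha}-\pi\sum_j|t-\mathrm{Im}\eta_{j}|$ on $\Gamma_{2\pi}$. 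This buys more than the proposition asks for: it gives piecewise linearity and concavity simultaneously with explicit kink locations and slopes, and it recovers Proposition \ref{PL} (the slope values $4\pi-2k\pi$) as an immediate corollary rather than a separate computation. The only technical point to be careful about, which you already flag, is that integrating $\tfrac{d}{dc}L(\mathbf i c)$ for $c<0$ requires the boundary restriction of $L$ to be $C^1$ there; this holds because $\log(1-e^{2\mathbf i x})$ extends continuously to $\{\mathbf i c: c<0\}$ from the right half of the strip.
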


\begin{proof}  By a direct computation, we have
\begin{equation}\label{2nd}
\begin{split}
\frac{\partial ^2 \mathrm{Im}U_{\boldsymbol \alpha}(\xi)}{\partial \mathrm{Im}\xi^2}&\\
=-\sum_{i=1}^4&\frac{4\sin\big(2\mathrm{Re}(\xi-\tau_i)\big)}{\cosh^2\mathrm{Im}(\xi-\tau_i)-\cos^2\mathrm{Re}(\xi-\tau_i)}-\sum_{j=1}^4\frac{4\sin\big(2\mathrm{Re}(\eta_j-\xi)\big)}{\cosh^2\mathrm{Im}(\eta_j-\xi)-\cos^2\mathrm{Re}(\eta_j-\xi)}.
\end{split}
\end{equation}

To see (1), for  $\xi\in D,$ we have $\mathrm{Re}(\xi-\tau_i) \in (0,\frac{\pi}{2})$ for $i\in\{1,\dots,4\},$ and $\mathrm{Re}(\eta_j-\xi) \in (0,\frac{\pi}{2})$ for $j\in\{1,\dots,4\}.$ As a consequence, $\sin\big(2\mathrm{Re}(\xi-\tau_i)\big)>0$ for each $i,$ and $\sin\big(2\mathrm{Re}(\eta_j-\xi)\big)>0$  for each $j.$ Then by (\ref{2nd}),  
\begin{equation*}
\frac{\partial ^2 \mathrm{Im}U_{\boldsymbol \alpha}(\xi)}{\partial \mathrm{Im}\xi^2}<0,\\
\end{equation*}
and $\mathrm{Im}U_{\boldsymbol \alpha}$ is concave down in $\mathrm{Im}\xi.$
\\

To see (2), for the concavity, we observe that $U_{\boldsymbol \alpha}(\xi)$  is a pointwise limit of a sequence  of concave down functions, each of which is the restriction of $U_{\boldsymbol \alpha}$ on the vertical lines $\{x_n+\mathbf iy\ |\ y\in \mathbb R\}$ with $x_n\in (\frac{3\pi}{2},2\pi)$ approaching $\frac{3\pi}{2}$ or $2\pi.$ For the piecewise linearity, we have $\mathrm{Re}(\xi-\tau_i) \in \{0,\frac{\pi}{2}\}$ for $i\in\{1,\dots,4\},$ and $\mathrm{Re}(\eta_j-\xi) \in \{0,\frac{\pi}{2}\}$ for $j\in\{1,\dots,4\}.$ As a consequence, $\sin\big(2\mathrm{Re}(\xi-\tau_i)\big)=0$ for each $i,$ and $\sin\big(2\mathrm{Re}(\eta_j-\xi)\big)=0$  for each $j.$ Then by (\ref{2nd}),
\begin{equation*}
\frac{\partial ^2 \mathrm{Im}U_{\boldsymbol \alpha}(\xi)}{\partial \mathrm{Im}\xi^2}=0
\end{equation*}
unless $\xi=\tau_i$ for some $i\in\{1,2,3,4\}$ or $\xi=\eta_j$ for some $j\in\{1,2,3,4\},$ which are the singular points. 
\end{proof}

To understand  the function $\mathrm{Im}U_{\boldsymbol \alpha},$ in the following Propositions \ref{limder}, \ref{PL}, \ref{limder2} and Corollaries \ref{cor3}, \ref{cor4} we study various behaviors of its partial derivatives. For $c\in[\frac{3\pi}{2},2\pi],$ we consider the contour 
$$\Gamma_c=\Big\{\xi\in \overline D\ \Big|\ \mathrm{Re}\xi =c  \Big\}.$$

\begin{proposition} \label{limder}
On $\Gamma_c,$ we have
\item $$\lim_{\mathrm{Im}\xi\to +\infty}\frac{\partial \mathrm{Im}U_{\boldsymbol \alpha}(\xi)}{\partial \mathrm{Im}\xi} = -4\pi\quad\text{and}\quad \lim_{\mathrm{Im}\xi\to -\infty}\frac{\partial \mathrm{Im}U_{\boldsymbol \alpha}(\xi)}{\partial \mathrm{Im}\xi} = 4\pi.$$
\end{proposition}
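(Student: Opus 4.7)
The plan is to compute $U'_{\boldsymbol\alpha}(\xi)$ directly from~\eqref{U} and~\eqref{derivative}, take its asymptotic along $\Gamma_c$, and relate it to $\partial \mathrm{Im}\,U_{\boldsymbol\alpha}/\partial\mathrm{Im}\,\xi$ via the Cauchy--Riemann equations. Writing $\xi = c + \mathbf i y$, once $|y|$ exceeds $\max_{i,j}(|\mathrm{Im}\,\tau_i|,|\mathrm{Im}\,\eta_j|)$, the arguments $\xi-\tau_i$ and $\eta_j-\xi$ have real parts in $[0,\pi/2]$ and imaginary parts bounded away from zero, so $U_{\boldsymbol\alpha}$ is holomorphic in a neighborhood of the relevant portion of $\Gamma_c$, and
\begin{equation*}
\frac{\partial \mathrm{Im}\,U_{\boldsymbol\alpha}(\xi)}{\partial y}=\mathrm{Re}\,U'_{\boldsymbol\alpha}(\xi).
\end{equation*}
Thus it suffices to show that $U'_{\boldsymbol\alpha}(\xi)\to -4\pi$ as $y\to+\infty$ and $U'_{\boldsymbol\alpha}(\xi)\to +4\pi$ as $y\to-\infty$.

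From~\eqref{U} and~\eqref{derivative} one has $U'_{\boldsymbol\alpha}(\xi)=\sum_{i=1}^{4}L'(\xi-\tau_i)-\sum_{j=1}^{4}L'(\eta_j-\xi)$ with $L'(x)=2x-\pi+2\mathbf i\log(1-e^{2\mathbf ix})$. For $y\to+\infty$: when $x=\xi-\tau_i$, $\mathrm{Im}\,x\to+\infty$ forces $e^{2\mathbf ix}\to 0$, hence $L'(x)=2x-\pi+o(1)$. When $x=\eta_j-\xi$, $\mathrm{Im}\,x\to-\infty$; factoring $1-e^{2\mathbf ix}=-e^{2\mathbf ix}(1-e^{-2\mathbf ix})$ and following the single-valued analytic continuation of $\log(1-e^{2\mathbf ix})$ from the upper half plane down through the strip $0<\mathrm{Re}\,x<\pi$ (where $1-e^{2\mathbf ix}$ never vanishes, so no branch obstruction is encountered), one obtains $\log(1-e^{2\mathbf ix})=2\mathbf ix-\mathbf i\pi+o(1)$ and hence $L'(x)=-2x+\pi+o(1)$. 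Substituting, the $\xi$-linear terms cancel and one is left with
\begin{equation*}
U'_{\boldsymbol\alpha}(\xi)=-2\sum_{i=1}^{4}\tau_i+2\sum_{j=1}^{4}\eta_j-8\pi+o(1).
\end{equation*}
The combinatorial identities $\sum_i\tau_i=2\sum_k\alpha_k$ (each $\alpha_k$ appears in exactly two of the $\tau_i$) and $\sum_j\eta_j=2\sum_k\alpha_k+2\pi$ (each $\alpha_k$ appears in exactly two of $\eta_1,\eta_2,\eta_3$, and $\eta_4=2\pi$) then give $U'_{\boldsymbol\alpha}(\xi)\to -4\pi$. The case $y\to-\infty$ is entirely symmetric: the asymptotic roles of $L'(\xi-\tau_i)$ and $L'(\eta_j-\xi)$ are interchanged and the same bookkeeping yields $+4\pi$. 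Since each limit is real, taking real parts proves the claim.

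The main technical point is tracking the branch of $\log(1-e^{2\mathbf ix})$ in the lower half plane. This is controlled because on $\Gamma_c$ the relevant arguments satisfy $\mathrm{Re}(\eta_j-\xi)\in[0,\pi/2]$ (respectively $\mathrm{Re}(\xi-\tau_i)\in[0,\pi/2]$), so $e^{2\mathbf ix}$ stays in the closed upper half plane and $1-e^{2\mathbf ix}$ remains nonzero along the continuation path, leaving the asymptotic $\log(1-e^{2\mathbf ix})=2\mathbf ix-\mathbf i\pi+o(1)$ unambiguous. The boundary cases $c=\frac{3\pi}{2}$ and $c=2\pi$ (where $\mathrm{Re}\,x$ hits $0$ or $\pi/2$) follow by continuity of $L'$ off the branch cuts of $L$.
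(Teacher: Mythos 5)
Your proof is correct and follows essentially the same route as the paper: both arguments reduce to the termwise asymptotics of the derivative of $L$ as $\mathrm{Im}\,x\to\pm\infty$ (the paper computes $\partial\,\mathrm{Im}\,L(\alpha\pm\mathbf i l)/\partial l$ directly via the argument of $1-e^{2\mathbf i x}$, whereas you compute the holomorphic derivative $L'$ and pass through Cauchy--Riemann, which is a cosmetic difference). Your explicit bookkeeping of the cancellation via $\sum_i\tau_i=2\sum_k\alpha_k$ and $\sum_j\eta_j=2\sum_k\alpha_k+2\pi$ matches the paper's final display, and your handling of the branch on the boundary contours agrees with the paper's convention of taking limits from the interior of $D$.
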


\begin{proof} By a direct computation, we have
$$ \frac{\partial \mathrm{Im}L(\alpha+\mathbf il)}{\partial l} =-2\arg\Big(1-e^{-2l+2\mathbf i\alpha}\Big)+2\alpha,$$
and
$$ \frac{\partial  \mathrm{Im}L(\alpha-\mathbf il)}{\partial l}=2\arg\Big(1-e^{2l+2\mathbf i\alpha}\Big)-2\alpha.$$
As a consequence, for a fixed $\alpha\in [0,\frac{\pi}{2}],$ we have
$$\lim_{l\to +\infty} \frac{\partial \mathrm{Im}L(\alpha+\mathbf il)}{\partial l} =-2\arg(1)+2\alpha=2\alpha,$$
$$\lim_{l\to -\infty} \frac{\partial \mathrm{Im}L(\alpha+\mathbf il)}{\partial l} =-2\arg(-e^{2\mathbf i\alpha})+2\alpha=2\pi-2\alpha;$$
and 
$$\lim_{l\to +\infty} \frac{\partial  \mathrm{Im}L(\alpha-\mathbf il)}{\partial l} =2\arg(-e^{2\mathbf i\alpha})-2\alpha=2\alpha-2\pi,$$
$$\lim_{l\to -\infty} \frac{\partial \mathrm{Im}L(\alpha-\mathbf il)}{\partial l} =2\arg(1)-2\alpha=-2\alpha.$$
Then 
$$\frac{\partial \mathrm{Im}U_{\boldsymbol \alpha}(\xi)}{\partial \mathrm{Im}\xi} =\sum_{i=1}^4\frac{\partial L(\xi-\tau_i)}{\partial \mathrm{Im}\xi}+\sum_{j=1}^4\frac{\partial L(\eta_j-\xi)}{\partial \mathrm{Im}\xi},$$
with
$$\lim_{\mathrm{Im}\xi\to +\infty}\frac{\partial \mathrm{Im}U_{\boldsymbol \alpha}(\xi)}{\partial \mathrm{Im}\xi}=\sum_{i=1}^42(\xi-\tau_i)+\sum_{j=1}^4\big(2(\eta_j-\xi)-2\pi\big)=-4\pi,$$
and
$$\lim_{\mathrm{Im}\xi\to -\infty}\frac{\partial \mathrm{Im}U_{\boldsymbol \alpha}(\xi)}{\partial \mathrm{Im}\xi}=\sum_{i=1}^4\big(2\pi-2(\xi-\tau_i))+\sum_{j=1}^4\big(-2(\eta_j-\xi)\big)=4\pi.\qedhere$$
\end{proof}

An immediate consequence of Propositions \ref{concave} and \ref{limder} is the following 

\begin{corollary}\label{cor3} For each $c\in(\frac{3\pi}{2},2\pi),$  $\mathrm{Im}U_{\boldsymbol \alpha}(\xi)$ has a unique critical point $\xi^*_c$ on $\Gamma_c,$ which achieves the maximum.
\end{corollary}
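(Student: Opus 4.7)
The plan is to reduce the statement to a standard one-variable convexity argument along the contour $\Gamma_c$. Fix $c\in(\tfrac{3\pi}{2},2\pi)$ and define the real-valued function $f\co \mathbb R\to\mathbb R$ by $f(y)=\mathrm{Im}\,U_{\boldsymbol\alpha}(c+\mathbf i y)$, so that the critical points of $\mathrm{Im}\,U_{\boldsymbol\alpha}$ on $\Gamma_c$ correspond to the zeros of $f'(y)=\tfrac{\partial \mathrm{Im}\,U_{\boldsymbol\alpha}}{\partial \mathrm{Im}\,\xi}(c+\mathbf i y)$.

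First I would invoke Proposition \ref{concave}(1): since $c+\mathbf i y\in D$ for all $y\in\mathbb R$, the function $f$ is (smoothly) concave down on $\mathbb R$, and in particular $f'$ is continuous and monotonically decreasing in $y$. Next, by Proposition \ref{limder} we have the boundary behavior
\begin{equation*}
\lim_{y\to -\infty}f'(y)=4\pi>0\qquad\text{and}\qquad \lim_{y\to +\infty}f'(y)=-4\pi<0.
\end{equation*}
Combining the monotonicity of $f'$ with the intermediate value theorem, there exists a unique $y^*\in\mathbb R$ with $f'(y^*)=0$, so $\xi_c^*\doteq c+\mathbf i y^*$ is the unique critical point of $\mathrm{Im}\,U_{\boldsymbol\alpha}$ on $\Gamma_c$.

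Finally, since $f$ is concave down on $\mathbb R$, any critical point is a global maximum, so $\xi_c^*$ achieves the maximum of $\mathrm{Im}\,U_{\boldsymbol\alpha}$ on $\Gamma_c$. There is essentially no obstacle here: the whole argument is a one-line consequence of the concavity supplied by Proposition \ref{concave}(1) and the asymptotic sign change of the derivative supplied by Proposition \ref{limder}. The only small care to exercise is the observation that the asymptotic values of $f'$ are computed along $\Gamma_c$ for $c$ strictly in $(\tfrac{3\pi}{2},2\pi)$, which keeps $\xi$ in the interior region $D$ where concavity applies; this is exactly the hypothesis we are given.
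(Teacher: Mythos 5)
Your proof is correct and is exactly the argument the paper intends: the corollary is stated there as an immediate consequence of Propositions \ref{concave} and \ref{limder} with no written proof, and your one-variable concavity-plus-sign-change argument supplies the omitted details. The only point worth making explicit is that uniqueness requires $f'$ to be \emph{strictly} decreasing, which follows not from the statement of Proposition \ref{concave}(1) (which only asserts concavity) but from its proof, where $\frac{\partial^2\mathrm{Im}\,U_{\boldsymbol\alpha}}{\partial\mathrm{Im}\,\xi^2}<0$ is shown to hold strictly throughout $D$.
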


In the rest of this section, we will re-index $\tau_i$'s so  that $\mathrm{Im}\tau_{i_1}\leqslant \mathrm{Im}\tau_{i_2}\leqslant \mathrm{Im}\tau_{i_3}\leqslant \mathrm{Im}\tau_{i_4},$ and re-index $\eta_j$'s so that $\mathrm{Im}\eta_{j_1}\leqslant \mathrm{Im}\eta_{j_2}\leqslant \mathrm{Im}\eta_{j_3}\leqslant \mathrm{Im}\eta_{j_4},$ and as a notion let $\mathrm{Im}\tau_{i_0}=\mathrm{Im}\eta_{j_0}=-\infty$ and $\mathrm{Im}\tau_{i_5}=\mathrm{Im}\eta_{j_5}=+\infty.$

\begin{proposition}\label{PL} For $k\in\{0,\dots, 4\}$ and  $\xi\in \Gamma_{\frac{3\pi}{2}}$ with $\mathrm{Im}\xi\in (\mathrm{Im} {\tau_{i_k}} ,\mathrm{Im}\tau_{i_{k+1}})$ or $\xi\in \Gamma_{2\pi}$ with $\mathrm{Im}\xi\in (\mathrm{Im} {\eta_{j_k}}, \mathrm{Im}\eta_{j_{k+1}}),$  
$$\frac{\partial \mathrm{Im}U_{\boldsymbol \alpha}(\xi)}{\partial \mathrm{Im}\xi}=4\pi - 2k\pi. $$
In particular, for $k=2,$ 
$$\frac{\partial \mathrm{Im}U_{\boldsymbol \alpha}(\xi)}{\partial \mathrm{Im}\xi}=0.$$
\end{proposition}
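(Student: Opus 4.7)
The plan is to compute the constant value of $\partial\mathrm{Im}U_{\boldsymbol\alpha}/\partial\mathrm{Im}\xi$ on each linear piece of $\partial D$ using the decomposition from~\eqref{U}. Proposition~\ref{concave}(2) already establishes that $\mathrm{Im}U_{\boldsymbol\alpha}$ is piecewise linear in $\mathrm{Im}\xi$ on $\partial D$ with breaks only at the singular points $\mathrm{Im}\tau_i$ (on $\Gamma_{3\pi/2}$) and $\mathrm{Im}\eta_j$ (on $\Gamma_{2\pi}$), so the derivative is piecewise constant, and the task reduces to identifying the constants.

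Consider first $\xi\in\Gamma_{3\pi/2}$. Each $\eta_j-\xi$ has real part $\pi/2$, and a direct substitution into~\eqref{eq:Lx} gives
$$L(\pi/2+\mathbf i w)=-w^2-\pi^2/12-\mathrm{Li}_2(-e^{-2w}),$$
which is real-valued since $-e^{-2w}<0$ and $\mathrm{Li}_2$ is real on the negative real axis. Hence the $\sum_j L(\eta_j-\xi)$ terms contribute nothing to $\partial\mathrm{Im}U_{\boldsymbol\alpha}/\partial\mathrm{Im}\xi$, and only the $\sum_i L(\xi-\tau_i)$ terms survive. Writing $l=\mathrm{Im}\xi$, $y_i=\mathrm{Im}\tau_i$, and $h(w):=\mathrm{Im}L(\mathbf i w)$, I would establish the formula $h(w)=-\pi|w|$ in two cases. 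For $w>0$ this follows immediately from the definition since $\mathrm{Li}_2(e^{-2w})\in\mathbb R$. For $w<0$ one approaches from within $D$, so that the argument of $\mathrm{Li}_2$ lands just above the branch cut $(1,\infty)$; applying Zagier's identity~\eqref{Li2} with the branch $\log(-e^{-2w}-\mathbf i0^+)=-2w-\mathbf i\pi$ produces the imaginary correction $-2\pi w$, from which $h(w)=\pi w$. Thus $h'(w)=-\pi$ for $w>0$ and $h'(w)=+\pi$ for $w<0$.

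Summing signs, for $l\in(\mathrm{Im}\tau_{i_k},\mathrm{Im}\tau_{i_{k+1}})$ exactly $k$ of the $y_i$'s are below $l$ (each contributing $-\pi$) and $4-k$ are above (each contributing $+\pi$), yielding $\sum_{i=1}^4 h'(l-y_i)=-k\pi+(4-k)\pi=4\pi-2k\pi$. The case $\xi\in\Gamma_{2\pi}$ is symmetric: the $L(\xi-\tau_i)$ terms now vanish since $\mathrm{Re}(\xi-\tau_i)=\pi/2$, while each $\eta_j-\xi=\mathbf i(\mathrm{Im}\eta_j-l)$ is purely imaginary, approached from $\mathrm{Re}>0$; the chain-rule sign reversal exactly compensates the reversed roles of upper/lower thresholds, producing the same formula $4\pi-2k\pi$.

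The main subtlety is the branch determination of $h(w)$ for $w<0$: one must select the boundary value of $\mathrm{Li}_2$ that is consistent with approaching $\partial D$ from the interior $D$, which is the same side of the branch cut on which the integration contour of the $b$-$6j$ symbol in~\eqref{b-6j} naturally lives. Once this choice is fixed, everything else is elementary bookkeeping.
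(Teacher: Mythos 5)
Your proposal is correct, and it reaches the constant $4\pi-2k\pi$ by a route that is organized differently from the paper's. The paper differentiates each of the eight $\xi$-dependent summands of $U_{\boldsymbol\alpha}$ using the general formula $\partial\,\mathrm{Im}L/\partial\,\mathrm{Im}\xi=\pm2\arg\big(1-e^{2\mathbf i(\cdot)}\big)\mp2\mathrm{Re}(\cdot)$ and then pins down each argument ($0$ or $-\pi$, the latter by the same "approach $\partial D$ from inside $D$" limit you use) case by case; in that bookkeeping all eight terms contribute (on $\Gamma_{3\pi/2}$ the $\tau_i$-terms give $0$ or $2\pi$ and each $\eta_j$-term gives $-\pi$). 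You instead evaluate the boundary restriction of $\mathrm{Im}L$ in closed form: it vanishes identically on the lines where the real part of the argument is $\pi/2$ (so four of the eight summands drop out entirely), and equals $-\pi|w|$ on the imaginary axis, with the $w<0$ branch fixed by \eqref{Li2} and the approach from within $D$ (which indeed lands just above the cut of $\mathrm{Li}_2$, giving the $-2\pi w$ correction). This reduces the proposition to differentiating $-\pi|\cdot|$ and counting thresholds, and the $\Gamma_{2\pi}$ case is genuinely symmetric as you say. It is worth noting that your per-term contributions ($\mp\pi$ from the four purely imaginary summands, $0$ from the other four) differ from the paper's per-term values by a constant $\pi$ each, with the discrepancies cancelling in the total of $4+4$ terms; your values are the directly verifiable ones, since a summand whose boundary restriction is manifestly real cannot have a nonzero $\mathrm{Im}$-derivative along the contour. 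Both arguments rely on the same two inputs — the explicit real parts $0$ and $\pi/2$ of $\xi-\tau_i$ and $\eta_j-\xi$ on the two components of $\partial D$, and the correct side of the $\mathrm{Li}_2$ cut — so neither buys more generality; yours is slightly more economical in that half the terms are dispatched at once.
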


\begin{proof}  We have 
$$\frac{\partial L(\xi-\tau_i)}{\partial \mathrm{Im}\xi}=-2\arg\Big(1-e^{-2\mathrm{Im}(\xi-\tau_i)+2\mathbf i\mathrm{Re}(\xi-\tau_i)}\Big)+2\mathrm{Re}(\xi-\tau_i)$$
for $i\in\{1,2,3,4\},$ and 
$$\frac{\partial L(\eta_j-\xi)}{\partial \mathrm{Im}\xi}=2\arg\Big(1-e^{-2\mathrm{Im}(\eta_j-\xi)+2\mathbf i\mathrm{Re}(\eta_j-\xi)}\Big)-2\mathrm{Re}(\eta_j-\xi)$$
for $j\in\{1,2,3,4\}.$

If $\xi\in \Gamma_{\frac{3\pi}{2}},$ then $\mathrm{Re}(\xi-\tau_i)=0.$ In addition,  if $\mathrm{Im}\xi>\mathrm{Im}\tau_i,$ then
$$1-e^{-2\mathrm{Im}(\xi-\tau_i)+2\mathbf i\mathrm{Re}(\xi-\tau_i)}= 1-e^{-2\mathrm{Im}(\xi-\tau_i)}>0;$$ and if $\mathrm{Im}\xi<\mathrm{Im}\tau_i,$ then
$$1-e^{-2\mathrm{Im}(\xi-\tau_i)+2\mathbf i\mathrm{Re}(\xi-\tau_i)}= 1-e^{-2\mathrm{Im}(\xi-\tau_i)}<0.$$
 As a consequence, for  $\xi\in \Gamma_{\frac{3\pi}{2}}$  with $\mathrm{Im} {\tau_{i_k}}< \mathrm{Im}\xi < \mathrm{Im} {\tau_{i_{k+1}}},$ we have
\begin{equation}\label{e1}
\frac{\partial L(\xi-\tau_i)}{\partial \mathrm{Im}\xi}=0
\end{equation}
if $\mathrm{Im}\tau_i\leqslant \mathrm{Im}\tau_{i_k},$ 
and 
\begin{equation}\label{e2}
\frac{\partial L(\xi-\tau_i)}{\partial \mathrm{Im}\xi}=2\pi
\end{equation}
if $\mathrm{Im}\tau_i\geqslant \mathrm{Im}\tau_{i_{k+1}}.$ In the latter case the argument  of $1-e^{-2\mathrm{Im}(\xi-\tau_i)+2\mathbf i\mathrm{Re}(\xi-\tau_i)}$ equals $-\pi$ because as $\xi\in D$ approaches $\Gamma_{\frac{3\pi}{2}},$ $\mathrm{Im}\xi$ approaches $\frac{3\pi}{2}$ from above. As a consequence, $1-e^{-2\mathrm{Im}(\xi-\tau_i)+2\mathbf i\mathrm{Re}(\xi-\tau_i)}$ approaches the negative real axis from below and the argument approaches $-\pi.$ On the other hand, we  have $\mathrm{Re}(\eta_j-\xi)=\frac{\pi}{2}$ and $1-e^{-2\mathrm{Im}(\eta_j-\xi)+2\mathbf i\mathrm{Re}(\eta_j-\xi)}>0,$ which implies that
\begin{equation}\label{e3}
\frac{\partial L(\eta_j-\xi)}{\partial \mathrm{Im}\xi}=-\pi
\end{equation}
for each $j\in\{1,2,3,4\}.$  Putting (\ref{e1}), (\ref{e2}) and (\ref{e3}) together, we have 
$$\frac{\partial \mathrm{Im}U_{\boldsymbol \alpha}(\xi)}{\partial \mathrm{Im}\xi}=(4-k)2\pi-4\pi=4\pi - 2k\pi.$$

The case that $\xi\in \Gamma_{2\pi}$ and $\mathrm{Im}\xi\in (\mathrm{Im} {\eta_{j_k}} ,\mathrm{Im}\eta_{j_{k+1}})$ is very similar. Namely, we have  $\mathrm{Re}(\xi-\tau_i)=\frac{\pi}{2}$ and $1-e^{-2\mathrm{Im}(\xi-\tau_i)+2\mathbf i\mathrm{Re}(\xi-\tau_i)}>0,$ which implies
\begin{equation}\label{e4}
\frac{\partial L(\xi-\tau_i)}{\partial \mathrm{Im}\xi}=\pi.
\end{equation}
One the other hand, we have $\mathrm{Re}(\eta_j-\xi)=0,$ $1-e^{-2\mathrm{Im}(\eta_j-\xi)+2\mathbf i\mathrm{Re}(\eta_j-\xi)}<0$
when $\mathrm{Im}\xi > \mathrm{Im}\eta_j,$ 
and
$1-e^{-2\mathrm{Im}(\eta_j-\xi)+2\mathbf i\mathrm{Re}(\eta_j-\xi)}>0$
when $\mathrm{Im}\xi > \mathrm{Im}\eta_j.$ As a consequence, we have
\begin{equation}\label{e5}
\frac{\partial L(\eta_j-\xi)}{\partial \mathrm{Im}\xi}=-2\pi
\end{equation}
if $\mathrm{Im}\eta_j\leqslant \mathrm{Im}\eta_{j_{k}},$ where the argument of $1-e^{-2\mathrm{Im}(\eta_j-\xi)+2\mathbf i\mathrm{Re}(\eta_j-\xi)}$ equals $-\pi$ by a similar reason as in the previous case;
and
\begin{equation}\label{e6}
\frac{\partial L(\eta_j-\xi)}{\partial \mathrm{Im}\xi}=0
\end{equation}
if $\mathrm{Im}\eta_j\geqslant \mathrm{Im}\eta_{j_{k+1}}.$ 
Putting (\ref{e4}), (\ref{e5}) and (\ref{e6}) together, we have
$$\frac{\partial \mathrm{Im}U_{\boldsymbol \alpha}(\xi)}{\partial \mathrm{Im}\xi}=4\pi - 2k\pi. \qedhere$$
\end{proof}

\begin{proposition} \label{limder2} For all $i,j\in\{1,2,3,4\},$ we have
\begin{equation*}
\lim_{\xi\to\tau_i}\frac{\partial \mathrm{Im}U_{\boldsymbol \alpha}(\xi)}{\partial \mathrm{Re}\xi}= -\infty \quad\text{and}\quad \lim_{\xi\to\eta_j}\frac{\partial \mathrm{Im}U_{\boldsymbol \alpha}(\xi)}{\partial \mathrm{Re}\xi}= +\infty.
\end{equation*}
\end{proposition}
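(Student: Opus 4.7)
The plan is to rewrite the real partial derivative of $\mathrm{Im}\,U_{\boldsymbol\alpha}$ as the imaginary part of the holomorphic derivative $U'_{\boldsymbol\alpha}$, and then to localize the divergence to the single summand whose argument tends to $0$. At any $\xi$ at which the summands $L(\xi-\tau_i)$ and $L(\eta_j-\xi)$ are all holomorphic, the Cauchy--Riemann equations yield
$$\frac{\partial \mathrm{Im}\,U_{\boldsymbol\alpha}(\xi)}{\partial \mathrm{Re}\,\xi} \;=\; \mathrm{Im}\bigl(U'_{\boldsymbol\alpha}(\xi)\bigr),$$
while from~\eqref{U} and~\eqref{derivative} one has
$$U'_{\boldsymbol\alpha}(\xi) \;=\; \sum_{i=1}^{4} L'(\xi-\tau_i) \;-\; \sum_{j=1}^{4} L'(\eta_j-\xi), \qquad L'(w) \;=\; 2w - \pi + 2\mathbf{i}\log\bigl(1-e^{2\mathbf{i}w}\bigr).$$

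The next step is to isolate the singularity of $L'$ at $w=0$. Using $1-e^{2\mathbf{i}w} = -2\mathbf{i}w\,\bigl(1+O(w)\bigr)$, I would expand
$$L'(w) \;=\; 2\mathbf{i}\log|2w| \;-\; 2\arg(-2\mathbf{i}w) \;+\; O(1),$$
so that $\mathrm{Im}\bigl(L'(w)\bigr) = 2\log|2w| + O(1) \to -\infty$ as $w\to 0$. The $O(1)$ correction is bounded uniformly in the direction of approach, since the principal argument is bounded and $\log\bigl(1+\mathbf{i}w+O(w^2)\bigr) = O(w)$.

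Finally, I would verify that this singularity is isolated among the eight summands. A direct inspection of the change of variables~\eqref{eq:alpha-a} gives $\mathrm{Re}\,\tau_i = \frac{3\pi}{2}$ and $\mathrm{Re}\,\eta_j = 2\pi$, so that the sets $\{\tau_i\}$ and $\{\eta_j\}$ are disjoint. Hence, as $\xi \to \tau_{i^*}$, only those indices $i$ with $\tau_i=\tau_{i^*}$ contribute a $-\infty$ to $\mathrm{Im}(U'_{\boldsymbol\alpha})$, while every remaining summand stays bounded; this yields the first limit. The case $\xi \to \eta_{j^*}$ is identical, except that the minus sign in front of $L'(\eta_j-\xi)$ flips the divergence to $+\infty$. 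No serious obstacle is anticipated; the only point worth care is confirming uniformity of the $O(1)$ bound in every direction of approach to the logarithmic branch point, which is routine.
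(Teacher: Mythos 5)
Your proposal is correct and follows essentially the same route as the paper: the paper computes $\frac{\partial \mathrm{Im}L(\alpha+\mathbf i l)}{\partial \alpha}=2\log\big|1-e^{2\mathbf i(\alpha+\mathbf i l)}\big|+2l$ directly (which is exactly your $\mathrm{Im}(L'(w))=2\mathrm{Im}\,w+2\log|1-e^{2\mathbf i w}|$ obtained via Cauchy--Riemann), observes that this diverges to $\mp\infty$ as the argument tends to $0$ and stays finite away from $\pi\mathbb Z$, and sums over the eight terms with the same sign bookkeeping. Your extra remarks — the local factorization $1-e^{2\mathbf i w}=-2\mathbf i w(1+O(w))$ and the explicit verification that $\mathrm{Re}\,\tau_i=\tfrac{3\pi}{2}\neq 2\pi=\mathrm{Re}\,\eta_j$ keeps the singularities separated — only make explicit what the paper leaves implicit, so no gap.
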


\begin{proof} By a direct computation, we have
$$ \frac{\partial \mathrm{Im}L(\alpha+\mathbf il)}{\partial \alpha} =2\log\Big|1-e^{2\mathbf i(\alpha+\mathbf i l)}\Big|+2l,$$
$$ \frac{\partial \mathrm{Im}L(-\alpha+\mathbf il)}{\partial \alpha} =-2\log\Big|1-e^{2\mathbf i(-\alpha+\mathbf i l)}\Big|-2l,$$
and $ \frac{\partial \mathrm{Im}L(\pm \alpha+\mathbf il)}{\partial \alpha} $ converges to a finite value as $\pm \alpha + \mathbf i l$ tend to any points other than $k\pi$ for any integer $k.$ 
As a consequence, 
$$\lim_{\alpha +\mathbf i  l\to 0} \frac{\partial \mathrm{Im}L(\alpha+\mathbf il)}{\partial \alpha} =-\infty\quad\text{and}\quad \lim_{-\alpha+\mathbf i l\to 0} \frac{\partial \mathrm{Im}L(-\alpha+\mathbf il)}{\partial \alpha} = + \infty.$$
Then 
$$\frac{\partial \mathrm{Im}U_{\boldsymbol \alpha}(\xi)}{\partial \mathrm{Re}\xi} =\sum_{i=1}^4\frac{\partial L(\xi-\tau_i)}{\partial \mathrm{Re}\xi}+\sum_{j=1}^4\frac{\partial L(\eta_j-\xi)}{\partial \mathrm{Re}\xi},$$
which implies 
$$\lim_{\xi\to\tau_i}\frac{\partial \mathrm{Im}U_{\boldsymbol \alpha}(\xi)}{\partial \mathrm{Re}\xi}= -\infty \quad\text{and}\quad \lim_{\xi\to\eta_j}\frac{\partial \mathrm{Im}U_{\boldsymbol \alpha}(\xi)}{\partial \mathrm{Re}\xi}= +\infty$$
for all $i$ and $j$ in $\{1,2,3,4\}.$
\end{proof}

By Proposition \ref{limder2} and the continuity of $\frac{\partial \mathrm{Im}U_{\boldsymbol \alpha}(\xi)}{\partial \mathrm{Re}\xi},$ we have the following

\begin{corollary} \label{cor4}
There exists an $\delta>0$ such that  for each $i\in\{1,2,3,4\},$ if $\xi\in \overline D$ lying  in the $\delta$-square neighborhood of $\tau_i,$ then 
$$\frac{\partial \mathrm{Im}U_{\boldsymbol \alpha}(\xi)}{\partial \mathrm{Re}\xi}<0;$$
and for each $j\in\{1,2,3,4\},$ if $\xi\in \overline D$ lying in the $\delta$-square neighborhood of $\eta_j,$ then
$$\frac{\partial \mathrm{Im}U_{\boldsymbol \alpha}(\xi)}{\partial \mathrm{Re}\xi}>0.$$ I.e., in the intersection of the $\overline D$ with the $\delta$-square neighborhood of $\tau_i$'s and $\eta_j$'s the vector field 
$$\mathbf v(\xi)= \bigg(-\frac{\partial \mathrm{Im}U_{\boldsymbol\alpha}(\xi)}{\partial \mathrm{Re}\xi},0\bigg)$$
points inside the region $D.$
\end{corollary}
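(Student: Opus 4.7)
The plan is to deduce the statement directly from Proposition~\ref{limder2} together with the continuity of $\frac{\partial \mathrm{Im} U_{\boldsymbol\alpha}(\xi)}{\partial \mathrm{Re}\xi}$ on $\overline D\setminus\{\tau_1,\dots,\tau_4,\eta_1,\dots,\eta_4\}$, which is clear from the closed-form expression~\eqref{derivative} for $L'$. By the definition of an infinite limit, $\lim_{\xi\to \tau_i}\frac{\partial \mathrm{Im} U_{\boldsymbol\alpha}(\xi)}{\partial \mathrm{Re}\xi}=-\infty$ yields, for each $i\in\{1,2,3,4\}$, a radius $\delta_i>0$ such that $\frac{\partial \mathrm{Im} U_{\boldsymbol\alpha}(\xi)}{\partial \mathrm{Re}\xi}<-1<0$ for every $\xi\in\overline D$ with $0<|\xi-\tau_i|<\delta_i$. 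Symmetrically, $\lim_{\xi\to \eta_j}\frac{\partial \mathrm{Im} U_{\boldsymbol\alpha}(\xi)}{\partial \mathrm{Re}\xi}=+\infty$ produces $\delta_j'>0$ with $\frac{\partial \mathrm{Im} U_{\boldsymbol\alpha}(\xi)}{\partial \mathrm{Re}\xi}>1>0$ on the corresponding punctured disks. Taking $\delta=\frac{1}{\sqrt{2}}\min\{\delta_1,\dots,\delta_4,\delta_1',\dots,\delta_4'\}$ ensures that every $\delta$-square neighborhood is contained in the appropriate disk neighborhood, so the desired sign inequalities hold in the intersection of each $\delta$-square neighborhood with $\overline D$.

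For the second assertion about the vector field $\mathbf v$, I would observe that the explicit formulas $\tau_i=\pi b t_i-\frac{3\pi b^2}{2}$ and $\eta_j=\pi b q_j-2\pi b^2$ combined with $a_k=\frac{Q}{2}+\mathbf i\frac{l_k}{2}$ give $\mathrm{Re}\,\tau_i=\frac{3\pi}{2}$ and $\mathrm{Re}\,\eta_j=2\pi$, placing the $\tau_i$'s on $\Gamma_{\frac{3\pi}{2}}$ and the $\eta_j$'s on $\Gamma_{2\pi}$. Near $\tau_i$, the first component of $\mathbf v(\xi)$ equals $-\frac{\partial \mathrm{Im} U_{\boldsymbol\alpha}}{\partial \mathrm{Re}\xi}>0$, a rightward-pointing vector that enters $D$ across its left boundary; near $\eta_j$ the first component is negative, giving a leftward-pointing vector that enters $D$ across its right boundary. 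There is no substantial obstacle here: the content is entirely extracted from Proposition~\ref{limder2}, with the remaining work just unpacking the definition of an infinite limit and recording which boundary component of $D$ each distinguished point lies on.
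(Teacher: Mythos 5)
Your proposal is correct and follows exactly the route the paper takes: the paper derives Corollary~\ref{cor4} in one line from Proposition~\ref{limder2} together with the continuity of $\frac{\partial \mathrm{Im}U_{\boldsymbol\alpha}(\xi)}{\partial \mathrm{Re}\xi}$ away from the singular points, and your argument simply unpacks the definition of the infinite limits and records that $\mathrm{Re}\,\tau_i=\frac{3\pi}{2}$ and $\mathrm{Re}\,\eta_j=2\pi$, which is the correct bookkeeping for the "points inside $D$" conclusion.
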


For $z_1,z_2\in \mathbb C,$ we denote by $\mathrm I_{z_1z_2}$ the line segment connecting $z_1$ and $z_2.$ By Proposition \ref{PL}, if  $\boldsymbol l\in \partial \mathcal L$ and $\xi^*$ is the unique critical point of $U_{\boldsymbol\alpha},$ then either  $\xi^*\in \mathrm I_{\tau_{i_2}\tau_{i_3}}$ or  $\xi^*\in \mathrm I_{\eta_{j_2}\eta_{j_3}};$ and  if  $\boldsymbol l\in {\mathbb R_{>0}}^6\setminus (\mathcal L\cup\partial \mathcal L)$ and $\xi^*_1$ and $\xi^*_2$  are  the  critical points of $U_{\boldsymbol\alpha},$ then either  $\{\xi^*_1,\xi^*_2\}\subset  \mathrm I_{\tau_{i_2}\tau_{i_3}}$ or  $\{\xi^*_1,\xi^*_2\}\subset  \mathrm I_{\eta_{j_2}\eta_{j_3}}.$ Among $\mathrm I_{\tau_{i_2}\tau_{i_3}}$ and $\mathrm I_{\eta_{j_2}\eta_{j_3}},$ we denote by $\mathrm I$ the one  that contains the critical points; and by Proposition \ref{limder2}, the critical points cannot be the end points of $\mathrm I.$

\begin{proposition}\label{nonvanish} 
 Consider the vector field 
 $$\mathbf v(\xi)= \bigg(-\frac{\partial \mathrm{Im}U_{\boldsymbol\alpha}(\xi)}{\partial\mathrm{Re} \xi},0\bigg).$$
\begin{enumerate}[(1)]
\item If  $\boldsymbol l\in \partial \mathcal L,$  then $\mathbf v$ is nowhere vanishing and pointing  inside $D$ on $\mathrm I\setminus \{\xi^*\}.$

\item  If $\boldsymbol l\in {\mathbb R_{>0}}^6\setminus (\mathcal L\cup\partial \mathcal L),$ then  $\mathbf v$ is nowhere vanishing on $\mathrm I\setminus \{\xi^*_1,\xi^*_2\},$ pointing inside $D$ on $\mathrm I\setminus \mathrm I_{\xi^*_1\xi^*_2}$  and pointing outside $D$ on $\mathrm I_{\xi^*_1\xi^*_2}.$ 
\end{enumerate}
\end{proposition}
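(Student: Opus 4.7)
The plan is to reduce the claim to a one-variable sign analysis of the real-valued function $\mathrm{Im}\,U'_{\boldsymbol\alpha}$ restricted to the line segment $\mathrm I$, and then to combine the critical-point information from Proposition~\ref{critical2} with the inward-pointing behaviour near the endpoints of $\mathrm I$ supplied by Corollary~\ref{cor4}.

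First I would verify that $U_{\boldsymbol\alpha}$ is holomorphic on an open neighborhood of the interior of $\mathrm I$. Indeed, along $\mathrm I\subset \Gamma_{\frac{3\pi}{2}}\cup\Gamma_{2\pi}$, every $\xi-\tau_i$ and $\eta_j-\xi$ has real part equal to $0$ or $\pi/2$; in the latter case the argument is not on the branch cut of $L$, and in the former its imaginary part is nonzero, because the relevant $\tau_i$ or $\eta_j$ is either an endpoint of $\mathrm I$ or lies outside the vertical interval cut out by the two endpoints. Writing $U_{\boldsymbol\alpha}=u+\mathbf i v$ and applying the Cauchy--Riemann equations gives $\mathrm{Re}\,U'_{\boldsymbol\alpha}=\partial_y v$ and $\mathrm{Im}\,U'_{\boldsymbol\alpha}=\partial_x v$. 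By Proposition~\ref{PL} applied with $k=2$, $\partial_y v \equiv 0$ on the interior of $\mathrm I$, so $U'_{\boldsymbol\alpha}$ is purely imaginary there. Consequently $\mathbf v(\xi)=(-\mathrm{Im}\,U'_{\boldsymbol\alpha}(\xi),\,0)$ along $\mathrm I$, and its zeros on $\mathrm I$ coincide exactly with the critical points of $U_{\boldsymbol\alpha}$ on $\mathrm I$, which by Proposition~\ref{critical2} are $\{\xi^*\}$ in Case~(1) and $\{\xi_1^*,\xi_2^*\}$ in Case~(2).

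Next I would pin down the sign of the first component of $\mathbf v$ near the endpoints of $\mathrm I$. Corollary~\ref{cor4} asserts that, in $\overline D$ near any $\tau_i$ or $\eta_j$, $\mathbf v$ is nonzero and points into $D$; applying this at the two endpoints of $\mathrm I$ fixes the sign of $\mathrm{Im}\,U'_{\boldsymbol\alpha}$ there. For Case~(1), Proposition~\ref{critical2}(2) supplies a double zero of $U'_{\boldsymbol\alpha}$ at $\xi^*$ with $U'''_{\boldsymbol\alpha}(\xi^*)\neq 0$. Parametrizing $\mathrm I$ by $\xi=\xi^*+\mathbf i s$ and Taylor expanding gives $U'_{\boldsymbol\alpha}(\xi)=-\tfrac12 U'''_{\boldsymbol\alpha}(\xi^*)\,s^2+O(s^3)$, whose imaginary part has the same sign on both sides of $\xi^*$; combined with the fact that $\xi^*$ is the unique critical point in a neighborhood of $\overline D$, this yields that $\mathrm{Im}\,U'_{\boldsymbol\alpha}$ has constant sign on $\mathrm I\setminus\{\xi^*\}$. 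Matching with the endpoint sign from Corollary~\ref{cor4} then gives that $\mathbf v$ points into $D$ throughout $\mathrm I\setminus\{\xi^*\}$.

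For Case~(2), Proposition~\ref{critical2}(3) provides two distinct simple zeros $\xi_1^*,\xi_2^*$ of $U'_{\boldsymbol\alpha}$ on $\mathrm I$, so the real-valued function $s\mapsto\mathrm{Im}\,U'_{\boldsymbol\alpha}(c+\mathbf i s)$ changes sign at each of $\mathrm{Im}\,\xi_1^*$ and $\mathrm{Im}\,\xi_2^*$. It is therefore of constant and alternating sign on the three open subintervals these points determine; matching with the endpoint signs forces $\mathbf v$ to point into $D$ on the two outer subintervals, hence on $\mathrm I\setminus \mathrm I_{\xi_1^*\xi_2^*}$, and out of $D$ on the interior of $\mathrm I_{\xi_1^*\xi_2^*}$. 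The subtle step that I expect to require care is the Taylor argument in Case~(1) establishing that $\mathrm{Im}\,U'_{\boldsymbol\alpha}$ does not change sign across the degenerate critical point; this rests crucially on both $U''_{\boldsymbol\alpha}(\xi^*)=0$ and $U'''_{\boldsymbol\alpha}(\xi^*)\neq 0$ from Proposition~\ref{critical2}(2), together with the fact that $\mathrm I$ is vertical so that $(\xi-\xi^*)^2$ is a nonpositive real along $\mathrm I$.
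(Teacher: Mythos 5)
Your proof is correct and follows essentially the same route as the paper's: use Proposition~\ref{PL} to kill $\partial\,\mathrm{Im}U_{\boldsymbol\alpha}/\partial\,\mathrm{Im}\xi$ on $\mathrm I$, identify the zeros of $\mathbf v$ with the critical points of $U_{\boldsymbol\alpha}$, fix the signs near the endpoints via Corollary~\ref{cor4}, and use the (non)degeneracy data from Proposition~\ref{critical2} to control sign changes at the critical points. The only cosmetic difference is that in Case (1) you establish constancy of sign across the degenerate critical point by a Taylor expansion using $U''_{\boldsymbol\alpha}(\xi^*)=0$ and $U'''_{\boldsymbol\alpha}(\xi^*)\neq0$, whereas the paper simply reads off the (identical) sign from both endpoints of $\mathrm I$; both are valid.
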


\begin{proof}  For (1), by Proposition \ref{PL}, $\mathrm{Im}U_{\boldsymbol\alpha}$ is a constant on $\mathrm I,$ hence $\frac{\partial \mathrm{Im}U_{\boldsymbol\alpha}}{\partial \mathrm{Im}\xi} \equiv 0$ on $\mathrm I;$ and since $\xi^*$ is the only critical point of $\mathrm U_{\boldsymbol\alpha},$ we have $\frac{\partial \mathrm{Im}U_{\boldsymbol\alpha}}{\partial \mathrm{Re}\xi}\neq 0$ on $\mathrm I\setminus \{\xi^*\}.$ By Proposition \ref{cor4}, $\frac{\partial \mathrm{Im}U_{\boldsymbol\alpha}}{\partial \mathrm{Re}\xi} < 0$ near the end points of $\mathrm I$ when  $\xi^*\in\Gamma_{\frac{3\pi}{2}},$ and $\frac{\partial \mathrm{Im}U_{\boldsymbol\alpha}}{\partial \mathrm{Re}\xi} > 0$ near the end points of $\mathrm I$ when  $\xi^*\in\Gamma_{2\pi}.$ Therefore, $\frac{\partial \mathrm{Im}U_{\boldsymbol\alpha}}{\partial \mathrm{Re}\xi} < 0$ on $\mathrm I\setminus \{\xi^*\}$ when  $\xi^*\in\Gamma_{\frac{3\pi}{2}},$ and $\frac{\partial \mathrm{Im}U_{\boldsymbol\alpha}}{\partial \mathrm{Re}\xi} > 0$ on $\mathrm I\setminus \{\xi^*\}$  when  $\xi^*\in\Gamma_{2\pi}.$

For (2), by  Proposition \ref{PL},  $\mathrm{Im}U_{\boldsymbol\alpha}$ is a constant on $\mathrm I,$ hence $\frac{\partial \mathrm{Im}U_{\boldsymbol\alpha}}{\partial \mathrm{Im}\xi} \equiv 0$ on $\mathrm I;$ and since $\xi_1^*$ and $\xi_2^*$ are  the only critical points of $\mathrm U_{\boldsymbol\alpha},$ we have $\frac{\partial \mathrm{Im}U_{\boldsymbol\alpha}}{\partial \mathrm{Re}\xi}\neq 0$ on $\mathrm I\setminus \{\xi_1^*,\xi_2^*\}.$ By Proposition \ref{cor4}, $\frac{\partial \mathrm{Im}U_{\boldsymbol\alpha}}{\partial \mathrm{Re}\xi} < 0$ near the end points of $\mathrm I$ when  $\xi^*\in\Gamma_{\frac{3\pi}{2}},$ and $\frac{\partial \mathrm{Im}U_{\boldsymbol\alpha}}{\partial \mathrm{Re}\xi} > 0$ near the end points of $\mathrm I$ when  $\xi^*\in\Gamma_{2\pi}.$ Therefore, $\frac{\partial \mathrm{Im}U_{\boldsymbol\alpha}}{\partial \mathrm{Re}\xi} < 0$ on $\mathrm I\setminus \mathrm I_{\xi^*_1\xi^*_2}$ when  $\xi^*\in\Gamma_{\frac{3\pi}{2}},$ and $\frac{\partial \mathrm{Im}U_{\boldsymbol\alpha}}{\partial \mathrm{Re}\xi} > 0$ on $\mathrm I\setminus \mathrm I_{\xi^*_1\xi^*_2}$  when  $\xi^*\in\Gamma_{2\pi}.$  On the other hand, since  $\mathrm I$ is a level set of $\mathrm{Im}U_{\boldsymbol\alpha}$ passing through non-degenerate critical points,  the monotonicity of the function along the direction orthogonal to the level set (which is the $\frac{\partial}{\partial \mathrm{Re}\xi}$ direction in our case) changes sign on the two sides of each of the critical points. As a consequence, we have $\frac{\partial \mathrm{Im}U_{\boldsymbol\alpha}}{\partial \mathrm{Re}\xi} > 0$ on  the interior of $\mathrm I_{\xi^*_1\xi^*_2}$ when  $\xi^*\in\Gamma_{\frac{3\pi}{2}},$ and $\frac{\partial \mathrm{Im}U_{\boldsymbol\alpha}}{\partial \mathrm{Re}\xi} < 0$ on the interior of $\mathrm I_{\xi^*_1\xi^*_2}$  when  $\xi^*\in\Gamma_{2\pi}.$ 
\end{proof}

For the proof of Theorem \ref{cov2}, we need to consider the following bigger  region $D_{\delta,c}=D^{\boldsymbol \alpha}_{\delta,c},$ where $\boldsymbol\alpha\in \big(\frac{\pi}{2}+\mathbf i\mathbb R\big)^6,$  $\delta >0$ is sufficiently small and $c \in (\delta, \frac{\pi}{2}-\delta),$ consisting of $\xi\in \mathbb C$ either with $\mathrm {Re}\xi\in[\frac{3\pi}{2}+\delta, 2\pi-\delta],$ or with $|\mathrm{Im}(\xi-\tau_i)|\geqslant \delta,$ $|\mathrm{Im}(\eta_j-\xi)|\geqslant \delta$ for all $i, j\in\{1,2,3,4\}$ and $\mathrm {Re}\xi\in[\frac{3\pi}{2}-c, 2\pi+c].$  See Figure \ref{Ddc}.  The significance of this region is that for all $\xi \in D_{\delta,c},$ all $\xi-\tau_i$  and $\eta_j - \xi,$ $i,j\in\{1,2,3,4\},$  are in the region $H_{\delta,K}$ in Proposition \ref{EST} for any  $K>c.$

\begin{figure}[htbp]
\centering
\includegraphics[scale=0.5]{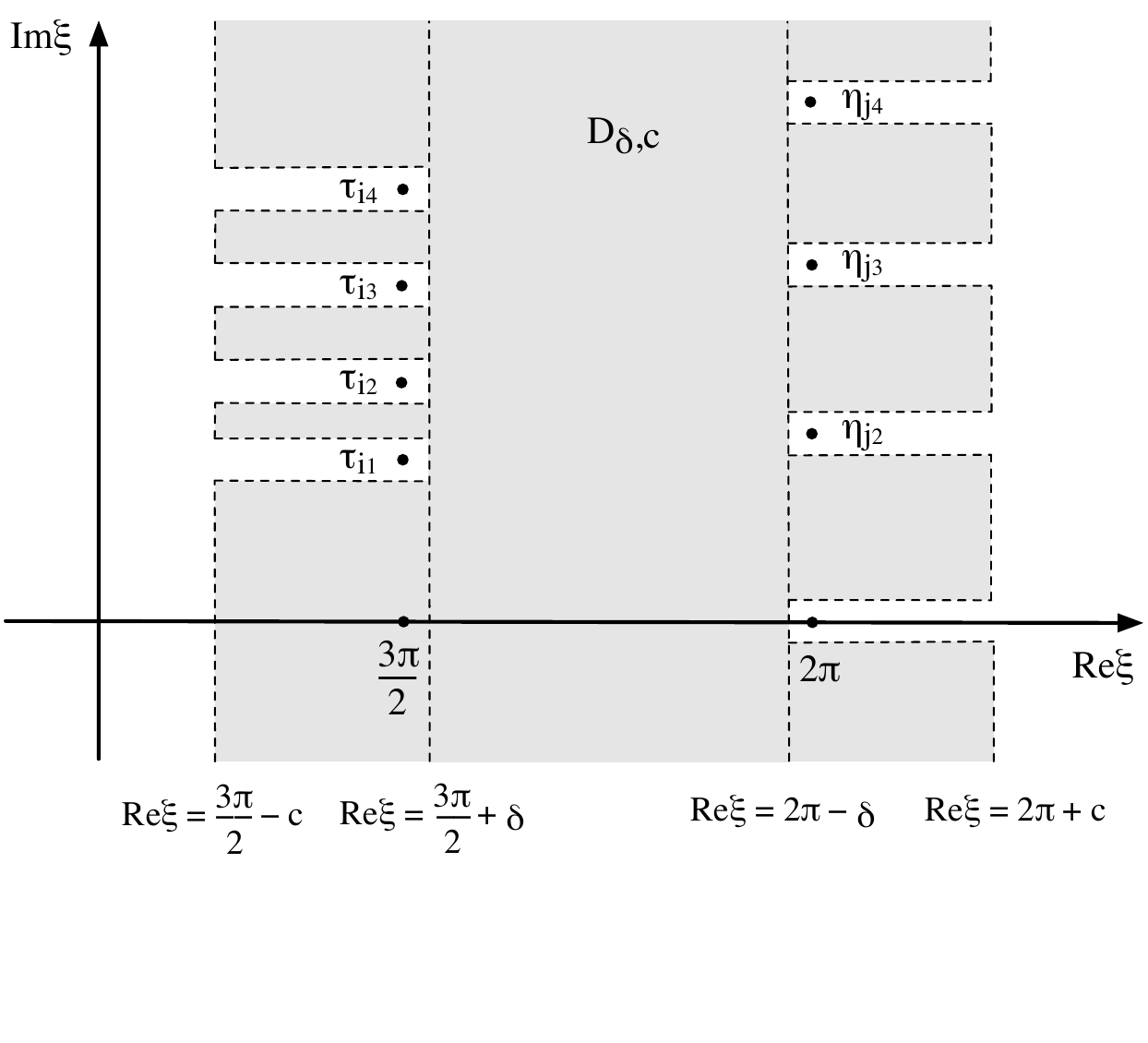}
\caption{Region $D_{\delta,c}$}
\label{Ddc}
\end{figure}

\begin{proposition}\label{bound3} 
 For $\delta> 0$  sufficiently small and $c\in (\delta, \frac{\pi}{2}-\delta),$ there exists a constant $K=K_{\delta,c}>0$ such that 
$$\Bigg|\frac{\partial \mathrm{Im}\kappa_{\boldsymbol\alpha}(\xi)}{\partial\mathrm{Im}\xi}\Bigg|<K$$
for all $\xi\in D_{\delta,c}.$
\end{proposition}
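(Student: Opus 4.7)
The plan is to differentiate $\kappa_{\boldsymbol\alpha}$ explicitly and bound each resulting term uniformly on $D_{\delta,c}$. Since $\kappa_{\boldsymbol\alpha}(\xi)$ is holomorphic in $\xi$, the Cauchy--Riemann equations yield
$$\frac{\partial \mathrm{Im}\kappa_{\boldsymbol\alpha}(\xi)}{\partial \mathrm{Im}\xi}=\mathrm{Re}\bigg(\frac{d\kappa_{\boldsymbol\alpha}(\xi)}{d\xi}\bigg),$$
and differentiating (\ref{kappa}) term-by-term gives
$$\frac{d\kappa_{\boldsymbol\alpha}(\xi)}{d\xi}=-28\pi-8\pi\sum_{i=1}^4\frac{e^{2\mathbf i(\xi-\tau_i)}}{1-e^{2\mathbf i(\xi-\tau_i)}}-6\pi\sum_{j=1}^4\frac{e^{2\mathbf i(\eta_j-\xi)}}{1-e^{2\mathbf i(\eta_j-\xi)}}.$$
So it suffices to bound $\big|\tfrac{e^{2\mathbf i z}}{1-e^{2\mathbf i z}}\big|$ uniformly for $z=\xi-\tau_i$ and $z=\eta_j-\xi$ as $\xi$ ranges over $D_{\delta,c}$ and $\boldsymbol\alpha$ varies.

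Since $\mathrm{Re}\tau_i=\tfrac{3\pi}{2}$ and $\mathrm{Re}\eta_j=2\pi$ are fixed independently of $\boldsymbol\alpha$, the defining conditions of $D_{\delta,c}$ translate, for either substitution, into: either (i) $\mathrm{Re}(z)\in[\delta,\tfrac{\pi}{2}-\delta]$ with $\mathrm{Im}(z)\in\mathbb R$ arbitrary, or (ii) $\mathrm{Re}(z)\in[-c,\tfrac{\pi}{2}+c]$ and $|\mathrm{Im}(z)|\geqslant \delta$. Using $c<\tfrac{\pi}{2}-\delta$, $\mathrm{Re}(z)$ lies in $(-\pi,\pi)$, so $z=0$ is the only potential zero of $1-e^{2\mathbf i z}$ that $z$ could approach; this zero is excluded in both (i) and (ii). This is the geometric heart of the argument.

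To turn this into a uniform bound, I would split on $\mathrm{Im}(z)$. If $\mathrm{Im}(z)\geqslant \tfrac{\log 2}{2}$, then $|e^{2\mathbf i z}|\leqslant \tfrac12$, so $|1-e^{2\mathbf i z}|\geqslant \tfrac12$ and the ratio is bounded by $1$. If $\mathrm{Im}(z)\leqslant -\tfrac{\log 2}{2}$, the identity $\tfrac{e^{2\mathbf i z}}{1-e^{2\mathbf i z}}=-\tfrac{1}{1-e^{-2\mathbf i z}}$ reduces to the previous estimate applied to $-z$, yielding a bound of $2$. In the remaining strip $|\mathrm{Im}(z)|\leqslant \tfrac{\log 2}{2}$, the conditions from $D_{\delta,c}$ (after taking $\delta<\tfrac{\log 2}{2}$) confine $z$ to a compact set avoiding $z=0$: in Case (i), $\{\mathrm{Re}(z)\in[\delta,\tfrac{\pi}{2}-\delta],\,|\mathrm{Im}(z)|\leqslant \tfrac{\log 2}{2}\}$; in Case (ii), $\{\mathrm{Re}(z)\in[-c,\tfrac{\pi}{2}+c],\,\delta\leqslant|\mathrm{Im}(z)|\leqslant \tfrac{\log 2}{2}\}$. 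On each such compact set, $|1-e^{2\mathbf i z}|$ attains a positive minimum depending only on $\delta$ and $c$, so the ratio is uniformly bounded. Summing the contributions from all eight rational terms, together with the constant $-28\pi$, yields the desired bound $K=K_{\delta,c}$, independent of $\boldsymbol\alpha$ and of $\xi\in D_{\delta,c}$.

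There is no substantive analytic obstacle; the one thing to verify carefully is that the piecewise definition of $D_{\delta,c}$ was designed precisely to keep $z=\xi-\tau_i$ and $z=\eta_j-\xi$ away from the singularities of the rational expressions $\tfrac{e^{2\mathbf i z}}{1-e^{2\mathbf i z}}$. Once this geometric observation is in place, the estimate is elementary.
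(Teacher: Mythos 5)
Your proposal is correct and follows essentially the same route as the paper: both reduce the claim to a uniform bound on $\bigl|e^{2\mathbf i z}/(1-e^{2\mathbf i z})\bigr|$ for $z=\xi-\tau_i$ and $z=\eta_j-\xi$, using exactly the observation that the piecewise definition of $D_{\delta,c}$ keeps these arguments away from the zeros of $1-e^{2\mathbf i z}$. The only cosmetic difference is that the paper bounds this quantity via the explicit modulus $e^{-\mathrm{Im}x}/\sqrt{\cosh^2\mathrm{Im}x-\cos^2\mathrm{Re}x}$ together with its limits as $\mathrm{Im}x\to\pm\infty$, whereas you split on $|\mathrm{Im}z|$ versus $\tfrac{\log 2}{2}$ and invoke compactness in the middle strip; both are valid.
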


\begin{proof}  We first observe that for $x\in H_{\delta,K},$  there exists a constant $C_{\delta,K}>0$ depending on $\delta$ and $K$ such that
$$\bigg|\frac{\partial \log \big(1-e^{2\mathbf ix}\big)}{\partial \mathrm{Im} x}\bigg|<C_{\delta,K}.$$
 Indeed, by a direct computation, 
$$\bigg|\frac{\partial \log \big(1-e^{2\mathbf ix}\big)}{\partial \mathrm{Im} x}\bigg|=\frac{e^{-\mathrm{Im}x}}{\sqrt{\cosh^2\mathrm{Im}x-\cos^2\mathrm{Re}x}}$$
which converges to $0$ as $\mathrm{Im}x$ tends to $+\infty,$ and converges to $1$ as $\mathrm{Im}x$ tends to $-\infty.$ Also, when $\delta \leqslant \mathrm{Re}x\leqslant \pi-\delta$ the denominator $\sqrt{\cosh^2\mathrm{Im}x-\cos^2\mathrm{Re}x}\geqslant \sqrt{1-\cos^2\delta},$ which is bounded from below; and when $|\mathrm{Im}x| \geqslant \delta$  the denominator $\sqrt{\cosh^2\mathrm{Im}x-\cos^2\mathrm{Re}x}\geqslant \sqrt{\cosh^2\delta-1},$ which is also bounded from below.
As a consequence, the function is bounded from both sides by a constant $C_{\delta,K}$ on $H_{\delta,K}.$

Then by a direct computation, 
\begin{equation*}
\begin{split}
\Bigg|\frac{\partial \mathrm{Im}\kappa_{\boldsymbol\alpha}(\xi)}{\partial \mathrm{Im}\xi}\Bigg|=&\Bigg|-28\pi-4\pi\sum_{i=1}^4\mathrm{Re}\bigg(\frac{\partial \log \big(1-e^{2\mathbf i(\xi-\tau_i)}\big)}{\partial \mathrm{Im}\xi}\bigg)+3\pi\sum_{j=1}^4\mathrm{Re}\bigg(\frac{\partial \log \big(1-e^{2\mathbf i(\eta_j-\xi}\big)}{\partial \mathrm{Im}\xi}\bigg)\Bigg|\\
\leqslant & \,28\pi+4\pi\sum_{i=1}^4\bigg|\frac{\partial \log \big(1-e^{2\mathbf i(\xi-\tau_i)}\big)}{\partial \mathrm{Im}\xi}\bigg|+3\pi\sum_{j=1}^4\bigg|\frac{\partial \log \big(1-e^{2\mathbf i(\eta_j-\xi}\big)}{\partial \mathrm{Im}\xi}\bigg| < K
\end{split}
\end{equation*}
for all  $\xi\in D_{\delta,c},$ where $K=28\pi + 4\pi\sum_{i=1}^4C_{\delta,c}+3\pi\sum_{j=1}^4 C_{\delta,c}=28\pi (C_{\delta,c}-1).$
\end{proof}

More generally, for  $\boldsymbol \alpha\in \mathbb C^6,$ we let $D^{\boldsymbol \alpha}_{\delta,c}$ be the region consisting of $\xi\in \mathbb C$  with $\mathrm {Re}(\xi-\tau_i)\geqslant \delta,$  $|\mathrm{Im}(\xi-\tau_i)|\geqslant \delta$ for all $i\in\{1,2,3,4\},$  $\mathrm{Re}(\eta_j-\xi)\geqslant \delta,$ $|\mathrm{Im}(\eta_j-\xi)|\geqslant \delta$ for all $j\in\{1,2,3,4\}$ and $\mathrm {Re}\xi\in[\frac{3\pi}{2}-c, 2\pi+c].$ In the proof of Theorem \ref{VC},  we need to  consider the following  region 
$$\mathbf D_{\delta,c}=\Big\{ (\boldsymbol \alpha,\boldsymbol \xi)\in  \mathbb C^6\times \mathbb C\ \Big|\ \frac{\pi}{2}-\delta \leqslant \mathrm{Re}\alpha_k \leqslant \frac{\pi}{2}+\delta \text{ for all } k \text{ in } \{1,\dots, 6\}, \text{ and }\xi\in D^{\boldsymbol \alpha}_{\delta,c}\Big\}.$$
  The significance of this region is that for all $(\boldsymbol \alpha,\xi)\in \mathbf D_{\delta,c},$ all $\xi-\tau_i$  and $\eta_j - \xi,$ $i,j\in\{1,2,3,4\},$  are in the region $H_{\delta,K}$ in Proposition \ref{EST} for any  $K>c.$

\begin{proposition}\label{bound} 
For $b>0$ and $\delta> 0$ both sufficiently small and $c\in (\delta, \frac{\pi}{2}-\delta),$ there exists a constant $N=N_{\delta,c}>0$ independent of $b$ such that 
$$\mathrm{Im}\nu_{\boldsymbol\alpha,b}(\xi) \leqslant \big|\nu_{\boldsymbol\alpha,b}(\xi)\big|<N$$
for all $(\boldsymbol \alpha, \xi)$ in the region $\mathbf D_{\delta,c}.$
\end{proposition}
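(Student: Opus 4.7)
The plan is to apply Proposition~\ref{EST} to each of the three groups of double sine terms appearing in $U_{\boldsymbol\alpha,b}(\xi)$, and then use a second-order Taylor expansion of $L$ at these base points to absorb the remaining $O(b^2)$ discrepancy into $\kappa_{\boldsymbol\alpha}(\xi) b^2$. The substitutions $\alpha_k = \pi b a_k - \frac{\pi b^2}{2}$, $\tau_i = \pi b t_i - \frac{3\pi b^2}{2}$, $\eta_j = \pi b q_j - 2\pi b^2$ are engineered so that
\begin{equation*}
q_j - t_i = \frac{\eta_j - \tau_i}{\pi b} + \frac{b}{2}, \quad u - t_i = \frac{(\xi - \tau_i) - 2\pi b^2}{\pi b} + \frac{b}{2}, \quad q_j - u = \frac{(\eta_j - \xi) + \tfrac{3\pi b^2}{2}}{\pi b} + \frac{b}{2}.
\end{equation*}
Since $\tau_i = \alpha_{i_1}+\alpha_{i_2}+\alpha_{i_3}$ and $\eta_j = \alpha_{j_1}+\alpha_{j_2}+\alpha_{j_3}+\alpha_{j_4}$ (with $\eta_4 = 2\pi$) depend linearly on $\boldsymbol\alpha$ with no $b$-dependent shifts, for $\delta$ small enough $\mathrm{Re}(\eta_j - \tau_i)$ lies in $[\delta, \pi - \delta]$, so $\eta_j - \tau_i \in H_{\delta, K}$; and the conditions defining $\mathbf D_{\delta, c}$ force $\xi - \tau_i$ and $\eta_j - \xi$ to also lie in $H_{\delta, K}$ for $K = \tfrac{\pi}{2} + c + 3\delta$. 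Shrinking $\delta$ and taking $b_0$ small enough, the shifted base points and the horizontal segments joining them to the unshifted base points remain in $H_{\delta/2, K+1}$, where $L$ is holomorphic.

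Applying Proposition~\ref{EST} at each base point yields, uniformly on $\mathbf D_{\delta, c}$,
\begin{equation*}
2\pi \mathbf i b^2 \log S_b(q_j - t_i) = L(\eta_j - \tau_i) + O(b^4),
\end{equation*}
\begin{equation*}
2\pi \mathbf i b^2 \log S_b(u - t_i) = L\bigl(\xi - \tau_i - 2\pi b^2\bigr) + O(b^4), \quad 2\pi \mathbf i b^2 \log S_b(q_j - u) = L\bigl(\eta_j - \xi + \tfrac{3\pi b^2}{2}\bigr) + O(b^4).
\end{equation*}
Taylor expanding $L(x + \epsilon) = L(x) + \epsilon L'(x) + \tfrac{\epsilon^2}{2} L''(x + \theta\epsilon)$ with $\epsilon$ of size $O(b^2)$, using $L'(x) = 2x - \pi + 2\mathbf i \log(1 - e^{2\mathbf i x})$, and combining with the identities $\sum_i \tau_i = 2\sum_k \alpha_k$ and $\sum_j \eta_j = 2\sum_k \alpha_k + 2\pi$, a direct computation shows that the sum of the first-order Taylor corrections is precisely
\begin{equation*}
-2\pi b^2 \sum_i L'(\xi - \tau_i) + \tfrac{3\pi b^2}{2} \sum_j L'(\eta_j - \xi) = \kappa_{\boldsymbol\alpha}(\xi) b^2.
\end{equation*}
Thus $U_{\boldsymbol\alpha,b}(\xi) - U_{\boldsymbol\alpha}(\xi) - \kappa_{\boldsymbol\alpha}(\xi) b^2$ is the sum of a fixed (combinatorial) number of second-order Taylor remainders together with the Proposition~\ref{EST} errors, each bounded by $O(b^4)$ uniformly; dividing by $b^4$ gives the desired bound $|\nu_{\boldsymbol\alpha, b}(\xi)| < N$.

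The main obstacle is making the second-order Taylor remainder uniformly $O(b^4)$ on the \emph{unbounded} region $\mathbf D_{\delta, c}$, which reduces to establishing a uniform bound on $L''(x) = 2 + \frac{4 e^{2\mathbf i x}}{1 - e^{2\mathbf i x}}$ throughout $H_{\delta/2, K+1}$. As $\mathrm{Im} x \to \pm\infty$ one has $L''(x) \to \pm 2$; for $|\mathrm{Im} x| \geqslant \delta/2$ the estimate $|1 - e^{2\mathbf i x}| \geqslant 1 - e^{-\delta}$ holds; and on the central strip $\mathrm{Re} x \in [\delta/2, \pi - \delta/2]$, the identity $|1 - e^{2\mathbf i x}|^2 = (1 - e^{-2\mathrm{Im} x}\cos(2\mathrm{Re} x))^2 + e^{-4\mathrm{Im} x} \sin^2(2\mathrm{Re} x)$ is bounded below by a positive constant depending only on $\delta$. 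Hence $L''$ admits a uniform bound on $H_{\delta/2, K+1}$ depending only on $\delta$ and $c$, which together with the uniform $O(b^4)$ bound from Proposition~\ref{EST} produces the constant $N_{\delta, c}$. The trivial inequality $\mathrm{Im}\nu_{\boldsymbol\alpha, b}(\xi) \leqslant |\nu_{\boldsymbol\alpha, b}(\xi)|$ then finishes the proof.
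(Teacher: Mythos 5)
Your proposal is correct and follows essentially the same route as the paper's proof: apply Proposition~\ref{EST} to the three groups of double sine factors at the shifted base points, identify the first-order correction with $\kappa_{\boldsymbol\alpha}(\xi)b^2$ via the identities $\sum_i\tau_i=2\sum_k\alpha_k$ and $\sum_j\eta_j=2\sum_k\alpha_k+2\pi$, and absorb the remainder into $O(b^4)$ using a uniform bound on $L''$ over the region. The only cosmetic difference is that the paper organizes the expansion as two applications of the Mean Value Theorem rather than a single second-order Taylor expansion, which is an equivalent computation.
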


\begin{proof} Recall~\eqref{nuU} that \(\nu_{\boldsymbol\alpha,b}(\xi)=\frac{U_{\boldsymbol \alpha,b}(\xi)-\kappa_{\boldsymbol \alpha}(\xi)b^2-U_{\boldsymbol \alpha}(\xi)}{b^4}.
\)
By (\ref{b-6j}) and (\ref{6jint}), 
\begin{equation*}
\begin{split}
U_{\boldsymbol \alpha,b}(\xi)= & -\pi \mathbf i b^2 \sum_{i=1}^4\sum_{j=1}^4 \log S_b(q_j-t_i) + 2\pi \mathbf i b^2 \sum_{i=1}^4  \log S_b(u-t_i) + 2\pi \mathbf i b^2 \sum _{j=1}^4\log S_b(q_j-u).
\end{split}
\end{equation*}
First, for $(\boldsymbol \alpha, \xi)\in \mathbf D_{\delta,c}$ with $\delta$ small enough,  $\mathrm{Re}(\eta_j-\tau_i)\in  [\epsilon, \pi-\epsilon]$  for all  $i ,j\in\{1,2,3,4\}$ and for some $\epsilon>0.$ Hence all $\eta_j-\tau_i$'s are in $H_{\delta,K}$ for any $K>c,$ and Proposition \ref{EST} with the choice of $B$ therein tells us
$$\Big| 2\pi \mathbf i b^2 \log S_b(q_j-t_i) - L(\eta_j-\tau_i)\Big|<Bb^4.$$
 Next, since $(\boldsymbol \alpha,\xi)\in \mathbf D_{\delta,c},$ all $\xi-\tau_i$'s  and $\eta_j - \xi$'s  are in the region $H_{\delta,K}$ and $K>c.$ Then for $b$ small enough, all  $\xi-\tau_i-2\pi b^2$'s and  $\eta_j-\xi+\frac{3\pi b^2}{2}$'s are in $H_{\delta,K},$ and Proposition \ref{EST} with the choice of $B$ therein gives 
 $$\Big|2\pi \mathbf i b^2  \log  S_b(u-t_i) -L\big(\xi-\tau_i-2\pi b^2\big)\Big|<Bb^4$$
and
$$\bigg|2\pi \mathbf i b^2  \log S_b(q_j-u) - L\Big(\eta_j-\xi+\frac{3\pi b^2}{2}\Big)\bigg|<Bb^4.$$
As a consequence,  we have
\begin{equation}\label{1}
\Big| U_{\boldsymbol \alpha,b}(\xi)- V_{\boldsymbol \alpha,b}(\xi)\Big|<16Bb^4,
\end{equation}
where
\begin{equation*}
V_{\boldsymbol \alpha,b}(\xi)=-\frac{1}{2}\sum_{i=1}^4\sum_{j=1}^4L(\eta_j-\tau_i)+\sum_{i=1}^4L\big(\xi-\tau_i-2\pi b^2\big)+\sum_{j=1}^4L\Big(\eta_j-\xi+\frac{3\pi b^2}{2}\Big).
\end{equation*}

We are left to  estimate the difference between $V_{\boldsymbol \alpha,b}$ and $U_{\boldsymbol \alpha}.$ To this end, we use the Mean  Value Theorem, and we have
\begin{equation*}
\begin{split}
V_{\boldsymbol \alpha,b}-U_{\boldsymbol \alpha}=& \sum_{i=1}^4\Big(L\big(\xi-\tau_i-2\pi b^2\big)-L\big(\xi-\tau_i\big)\Big)+\sum_{j=1}^4\bigg(L\Big(\eta_j-\xi+\frac{3\pi b^2}{2}\Big)-L\Big(\eta_j-\xi\Big)\bigg)\\
=& -2\pi b^2\sum_{i=1}^4 L'\big(\xi-\delta_1-\tau_i\big)+ \frac{3\pi b^2}{2}\sum_{j=1}^4 L'\big(\eta_j-\xi+\delta_2\big)
\end{split}
\end{equation*}
for some  $\delta_1\in [0,2\pi b^2]$ and $\delta_2\in \big[0, \frac{3\pi b^2}{2}\big];$ and by the Mean Value Theorem again, we have
\begin{equation*}
\begin{split}
V_{\boldsymbol \alpha,b}-U_{\boldsymbol \alpha}=& - 2\pi b^2\sum_{i=1}^4 L'\big(\xi-\tau_i\big)+ \frac{3\pi b^2}{2}\sum_{j=1}^4 L'\big(\eta_j-\xi\big)\\
&+ 2\pi b^2\delta_1'\sum_{i=1}^4 L''\big(\xi-\delta_1'-\tau_i\big) + \frac{3\pi b^2}{2}\delta_2'\sum_{j=1}^4 L''\big(\eta_j-\xi+\delta_2'\big)\\
=&\kappa_{\boldsymbol \alpha}(\xi)b^2  + 2\pi b^2\delta_1\sum_{i=1}^4 L''\big(\xi-\delta_1'-\tau_i\big) +  \frac{3\pi b^2}{2}\delta_2\sum_{j=1}^4 L''\big(\eta_j-\xi+\delta_2'\big)
\end{split}
\end{equation*}
 for some $\delta_1'\in [0,\delta_1]$ and $\delta_2'\in [0,\delta_2],$ where $\kappa_{\boldsymbol \alpha}$ is as defined in (\ref{kappa}). By a direct computation, 
 $$L''(x)=2+\frac{4e^{2\mathbf ix}}{1-e^{2\mathbf ix}},$$
 whose norm  is bounded from above  for any $\delta >0 $ and $K>0$ on the region $H_{\delta,K}.$ As a consequence, 
 $\big|L''\big(\xi-\tau_i\big)\big|$ and $\big|L''\big(\eta_j-\xi\big)\big|$ for all $i,j \in \{1,2,3,4\}$ are bounded from above on the region $\mathbf D_{\delta,c}$ for any sufficiently small $\delta >0.$   For $b$ sufficiently small so that $\xi-2\pi b^2$ and $\xi+\frac{3\pi b^2}{2}$ are in $D^{\boldsymbol \alpha}_{\delta, c},$  we let $C$ be the upper bound of $|L''|$ on $\mathbf D_{\delta,c}.$ Then
 $$\Big|2\pi b^2\delta_1'\sum_{i=1}^4 L''\big(\xi-\delta_1'-\tau_i\big)+ \frac{3\pi b^2}{2}\delta_2'\sum_{j=1}^4 L''\big(\eta_j-\xi+\delta_2'\big)\Big|< 8\pi b^2\delta_1'C+6\pi b^2\delta_2'C\leqslant 25\pi^2 Cb^4,$$
 where the last inequality come from that $\delta_1'\leqslant \delta_1\leqslant 2\pi b^2$ and $\delta_2'\leqslant \delta_2\leqslant \frac{3\pi b^2}{2}.$
 As a consequence, 
 \begin{equation}\label{3}
 \Big|V_{\alpha}(\xi)-\kappa_{\boldsymbol \alpha}(\xi)b^2-U_{\boldsymbol \alpha}(\xi)\Big|<25\pi^2Cb^4.
 \end{equation}

Putting (\ref{1}) and (\ref{3}) together and letting $N =16B+25\pi^2C,$ we have
\begin{equation}\label{2}
\Big| U_{\boldsymbol \alpha,b}(\xi)- \kappa_{\boldsymbol \alpha}(\xi)b^2- U_{\boldsymbol \alpha}(\xi)\Big|<N b^4
\end{equation}
for all $(\boldsymbol \alpha, \xi)$ in $\mathbf D_{\delta,c},$ and the result follows from (\ref{nuU}).
\end{proof}

\subsection{Proof of Theorem \ref{cov}}\label{subsec: thm2}

\begin{proof}[Proof of Theorem \ref{cov}] 
Recall the domain $D=\Big\{ \xi\in\mathbb C\ \Big|\ \frac{3\pi}{2}< \mathrm{Re}(\xi) < 2\pi \Big\}$. Let  $d>0$ be sufficiently small so that the region 
 $$B_{d}=\Big\{\xi\in \mathbb C \ \Big|\ |\mathrm{Re}\xi - \mathrm{Re}\xi^*| \leqslant d \text{ and } |\mathrm{Im}\xi-\mathrm{Im}\xi^*|\leqslant d \Big\}$$
 lies in $D.$ By Proposition \ref{concave} (1), $\frac{\partial \mathrm{Im}U_{\boldsymbol\alpha}(\xi)}{\partial \mathrm{Im}\xi}$ is strictly decreasing as $\xi$ moves along $\Gamma^*$ from $\xi^*$ with $\mathrm{Im}\xi$ approaching $+\infty,$ and is strictly increasing as $\xi$ moves along $\Gamma^*$ from $\xi^*$ with $\mathrm{Im}\xi$ approaching $-\infty.$ 
Therefore, there is an $\epsilon>0$ such that  
 \begin{equation}\label{ImU}
\mathrm{Im} U_{\boldsymbol\alpha}(\xi^*\pm \mathbf il )< -2\mathrm{Col}(\boldsymbol l)-3\epsilon
\end{equation}
and for $l>d;$ and by Propositions \ref{concave} (1) and \ref{limder},  there is an $L>d$ such that 
\begin{equation}\label{Dv1}
 \frac{\partial \mathrm{Im} U_{\boldsymbol\alpha}}{\partial\mathrm{Im}\xi} (\xi^*+  \mathbf i l)<-2\pi\quad\text{and}\quad  \frac{\partial \mathrm{Im} U_{\boldsymbol\alpha}}{\partial\mathrm{Im}\xi} (\xi^* -  \mathbf i l)>2\pi
\end{equation}
for $l>L.$  
Let
$$\Gamma^*_d=\Gamma^*\cap B_d=\Big\{ \xi \in \Gamma^*\ \Big|\ |\mathrm{Im}\xi-\mathrm{Im}\xi^*|<d\Big\}$$
and let 
$$\Gamma^*_L=\Big\{ \xi \in \Gamma^*\ \Big|\ |\mathrm{Im}\xi-\mathrm{Im}\xi^*|\leqslant L\Big\}.$$
We will show that, as $b\to 0,$
\begin{enumerate}[(I)]
\item 
$$\frac{1}{\pi b}\int_{\Gamma^*_d}\exp\bigg(\frac{U_{\boldsymbol \alpha}(\xi)+ \kappa_{\boldsymbol \alpha}(\xi)b^2+ \nu_{\boldsymbol \alpha,b}(\xi)b^4}{2\pi \mathbf i b^2} \bigg)d\xi =  \frac{e^{\frac{-\mathrm{Cov}(\boldsymbol l)}{\pi b^2}}}{\sqrt[4]{-\det\mathrm{Gram}(\boldsymbol l)}}\Big(1+O\big(b^2\big)\Big),
$$

\item $$\bigg|\frac{1}{\pi b}\int_{\Gamma^*_L\setminus \Gamma^*_d}\exp\bigg(\frac{U_{\boldsymbol \alpha}(\xi)+ \kappa_{\boldsymbol \alpha}(\xi)b^2+ \nu_{\boldsymbol \alpha,b}(\xi)b^4}{2\pi \mathbf i b^2} \bigg)d\xi\bigg|< O\Big(e^{\frac{-\mathrm{Cov}(\boldsymbol l)-\epsilon_1}{\pi b^2}}\Big),$$
and 

\item $$\bigg|\frac{1}{\pi b}\int_{\Gamma^*\setminus \Gamma^*_L}\exp\bigg(\frac{U_{\boldsymbol \alpha}(\xi)+ \kappa_{\boldsymbol \alpha}(\xi)b^2+ \nu_{\boldsymbol \alpha,b}(\xi)b^4}{2\pi \mathbf i b^2} \bigg)d\xi\bigg|<O\Big(e^{\frac{-\mathrm{Cov}(\boldsymbol l)-\epsilon_1}{\pi b^2}}\Big)$$
\end{enumerate}
for some $\epsilon_1>0,$ from which the result follows.
\\

For (I), we claim that  all the conditions of Proposition \ref{saddle} are satisfied by letting $\hbar=b^2,$ $D=B_d,$ $f=\frac{U_{\boldsymbol \alpha}}{2\pi \mathbf i},$ $g=\exp\big(\frac{\kappa_{\boldsymbol \alpha}}{2\pi \mathbf i }\big),$ $f_\hbar=\frac{U_{\boldsymbol \alpha}+\nu_{\boldsymbol \alpha,b}b^4}{2\pi \mathbf i},$ $\upsilon_h=\frac{\nu_{\boldsymbol \alpha,b}}{2\pi \mathbf i},$ $S=\Gamma^*_d$ and $c=\xi^*.$ 

Indeed, by Proposition \ref{critical2} (1), $\xi^*$ is a critical point of $f=\frac{U_{\boldsymbol \alpha}}{2\pi \mathbf i}$ in $B_{d},$ hence condition (i) is satisfied. 

By Propositions \ref{critical2} (1) and \ref{concave} (1), $\xi^*$ is the unique maximum point of $\mathrm{Re}f=\frac{\mathrm{Im}U_{\boldsymbol\alpha}}{2\pi}$ on $\Gamma^*_d,$ hence condition (ii) is satisfied.

For conditions (iii) and (iv), since $\xi^*\neq \tau_i$ and $\xi^*\neq\eta_j$ for any $i\in\{1,2,3,4\}$ and $j\in\{1,2,3\},$ $\kappa_{\boldsymbol \alpha}(\xi^*)$ is a finite value. As a consequence, $g(\xi^*)=\exp\big(\frac{\kappa_{\boldsymbol \alpha}(\xi^*)}{2\pi \mathbf i }\big)\neq 0,$ and condition (iv) is satisfied. Also by this,  Proposition \ref{tri} and Proposition \ref{Hess}, $U''_{\boldsymbol \alpha}(\xi^*)=-16{\exp\big(\frac{\kappa_{\boldsymbol \alpha}(\xi^*)}{\pi \mathbf i}\big)}\sqrt{\det\mathrm{Gram}(\boldsymbol l)}\neq 0,$ and condition (iii) is satisfied.

For condition (v), by Proposition \ref{bound},  $|\upsilon_{\hbar}(\xi)|=\big|\frac{\nu_{\boldsymbol \alpha,b}(\xi)}{2\pi \mathbf i}\big|<\frac{N}{2\pi}$ on $B_{d}.$ 

For condition (vi), since $\Gamma^*$ is a straight line, it is a smooth embedding near $\xi^*.$

Finally, by Proposition \ref{saddle}, Proposition \ref{critical2} (1)  and Proposition \ref{Hess}, we have as $b\to 0,$
\begin{equation*}
\begin{split}
\frac{1}{\pi b}\int_{\Gamma^*_d}\exp\bigg(\frac{U_{\boldsymbol \alpha,b}(\xi)}{2\pi \mathbf ib^2}\bigg) d\xi=& \frac{(2\pi b^2)^\frac{1}{2}}{\pi b} \frac{\exp\big(\frac{\kappa_{\boldsymbol \alpha}(\xi^*)}{2\pi \mathbf i}\big)}{\sqrt{-\mathrm{Hess}\frac{U_{\boldsymbol \alpha}(\xi^*)}{2\pi \mathbf i}}}e^{\frac{U_{\boldsymbol\alpha}(\xi^*)}{2\pi \mathbf i b^2}}\Big(1+O\big(b^2\big)\Big)\\
=&\frac{e^{\frac{-\mathrm{Cov}(\boldsymbol l)}{\pi b^2}}}{\sqrt[4]{-\det\mathrm{Gram}(\boldsymbol l)}}\Big(1+O\big(b^2\big)\Big).
\end{split}
\end{equation*}
This completes the proof of (I). 
\\

For (II) and (III), we have
\begin{equation*}
\begin{split}
& \bigg|\frac{1}{\pi b}\int_{\Gamma^*\setminus \Gamma^*_L}\exp\bigg(\frac{U_{\boldsymbol \alpha}(\xi)+ \kappa_{\boldsymbol \alpha}(\xi)b^2+ \nu_{\boldsymbol \alpha,b}(\xi)b^4}{2\pi \mathbf i b^2} \bigg)d\xi\bigg|\\
\leqslant & \frac{1}{\pi b}\int_{\Gamma^*\setminus \Gamma^*_L}\exp\bigg(\frac{\mathrm{Im}U_{\boldsymbol \alpha}(\xi)+\mathrm{Im}\kappa_{\boldsymbol \alpha}(\xi)b^2+\mathrm{Im}\nu_{\boldsymbol \alpha,b}(\xi)b^4}{2\pi b^2} \bigg)|d\xi|.
\end{split}
\end{equation*}

For  (II), by Proposition \ref{bound}, there is a  $b_1>0$ such that 
\begin{equation}\label{last2}
\mathrm{Im}\nu_{\boldsymbol\alpha,b}(\xi)b^4<Nb^4<\epsilon 
\end{equation}
for all $b<b_1$ and for all $\xi\in\Gamma^*;$  and together with  (\ref{ImU}), we have
\begin{equation}\label{last3}
\mathrm{Im}U_{\boldsymbol \alpha}(\xi)+\mathrm{Im}  \nu_{\boldsymbol \alpha,b}(\xi)b^4< -2\mathrm{Cov}(\boldsymbol l) - 2\epsilon
\end{equation}
for all $b<b_1$ and  $\xi\in\Gamma^*\setminus \Gamma^*_d.$ By the compactness of ${\Gamma^*_L}\setminus \Gamma^*_d,$ there exists an $M>0$ such that 
\begin{equation}\label{Imk}
\mathrm{Im}\kappa_{\boldsymbol\alpha}(\xi)<M
\end{equation}
for all $\xi\in\Gamma^*_L\setminus \Gamma^*_d.$ As a consequence of (\ref{last3}) and (\ref{Imk}), we have
\begin{equation*}
\begin{split}
&\frac{1}{\pi b}\int_{\Gamma^*\setminus \Gamma^*_L}\exp\bigg(\frac{\mathrm{Im}U_{\boldsymbol \alpha}(\xi)+\mathrm{Im}\kappa_{\boldsymbol \alpha}(\xi)b^2+\mathrm{Im}\nu_{\boldsymbol \alpha,b}(\xi)b^4}{2\pi b^2} \bigg)|d\xi|\\
<  & \frac{2(L-d) e^{\frac{M}{2\pi}}}{\pi b} \exp\bigg(\frac{-\mathrm{Cov}(\boldsymbol l)-\epsilon }{\pi b^2}   \bigg )< O\Big(e^{\frac{-\mathrm{Cov}(\boldsymbol l)-\epsilon_1}{\pi b^2}}\Big)
\end{split}
\end{equation*}
for any $\epsilon_1<\epsilon.$ This completes the proof of (II).

For (III), there  is a $b_0\in (0, b_1)$ such that for all $b<b_0,$
\begin{equation}\label{ImK}
\mathrm{Im}\kappa_{\boldsymbol\alpha}(\xi^*\pm \mathbf i L) b^2<\epsilon,
\end{equation}
$Kb^2<\epsilon$ and $Nb^2<\epsilon,$ where $K$ and $N$ are respectively the constants in Propositions \ref{bound3} and \ref{bound}. We claim that, for $\xi\in\Gamma^*\setminus \Gamma^*_L$ and $b<b_0,$ 
\begin{equation}\label{cl}
\mathrm{Im}U_{\boldsymbol \alpha}(\xi)+\mathrm{Im}\kappa_{\boldsymbol \alpha}(\xi)b^2+\mathrm{Im}\nu_{\boldsymbol \alpha,b}(\xi)b^4<-2\big(\mathrm{Cov}(\boldsymbol l)+\epsilon\big)-(2\pi-\epsilon)\big(|\xi-\xi^*|-L\big),
\end{equation}
as a consequence of which, we have,
\begin{equation}\label{CI}
\begin{split}
& \frac{1}{\pi b}\int_{\Gamma^*\setminus \Gamma^*_L}\exp\bigg(\frac{\mathrm{Im}U_{\boldsymbol \alpha}(\xi)+\mathrm{Im}\kappa_{\boldsymbol \alpha}(\xi)b^2+\mathrm{Im}\nu_{\boldsymbol \alpha,b}(\xi)b^4}{2\pi b^2} \bigg)|d\xi| \\
 < & \frac{1}{\pi b} \exp\bigg(\frac{-\mathrm{Cov}(\boldsymbol l)-\epsilon}{\pi b^2}\bigg)\int_{\Gamma^*\setminus \Gamma^*_L}\exp\bigg(\frac{-(2\pi-\epsilon)\big(|\xi-\xi^*|-L\big)}{2\pi}\bigg) |d\xi|\\
< & O\Big(e^{\frac{-\mathrm{Cov}(\boldsymbol l)-\epsilon_1}{\pi b^2}}\Big)
\end{split}
\end{equation}
for any $\epsilon_1<\epsilon.$ 

For the proof of the claim, by (\ref{Dv1}) and the choice of $b_0,$ for $l>L,$ we have 
$$\frac{\partial}{\partial \mathrm{Im}\xi} \Big(\mathrm{Im}U_{\boldsymbol\alpha}(\xi^*+\mathbf il)+\mathrm{Im}\kappa_{\boldsymbol\alpha}(\xi^*+\mathbf il)b^2\Big)<-2\pi+\epsilon,$$
and 
$$\frac{\partial}{\partial \mathrm{Im}\xi} \Big(\mathrm{Im}U_{\boldsymbol\alpha}(\xi^*-\mathbf il)+\mathrm{Im}\kappa_{\boldsymbol\alpha}(\xi^*-\mathbf il)b^2\Big)>2\pi-\epsilon.$$
Together with the Mean Value Theorem, (\ref{ImU}) and (\ref{ImK}), we have 
\begin{equation}\label{Bou}
\begin{split}
\mathrm{Im}U_{\boldsymbol\alpha}(\xi)+ \mathrm{Im}\kappa_{\boldsymbol\alpha}(\xi)b^2 < & \mathrm{Im}U_{\boldsymbol\alpha}(\xi^*\pm \mathbf iL) + \mathrm{Im}
\kappa_{\boldsymbol\alpha}(\xi^*\pm \mathbf iL)b^2 - (2\pi-\epsilon) \big |  \xi - (\xi^*\pm \mathbf iL ) \big|\\
< & -2\mathrm{Cov}(\boldsymbol l)-2\epsilon  -(2\pi-\epsilon)\big(|\xi-\xi^*|-L \big)
\end{split}
\end{equation}
for all $\xi \in \Gamma^*\setminus \Gamma^*_L.$  Finally, putting (\ref{Bou}) and (\ref{last2}) together, we have  (\ref{cl}) and the first  inequality in (\ref{CI}); and since 
$$|\xi-\xi^*|-L\to+\infty$$
as $\xi \in \Gamma^*\setminus \Gamma^*_L$ approaches $\infty,$ we have the second inequality in (\ref{CI}). This completes the proof of (III).
\\

Putting (I), (II)  and (III) together, we have as $b\to 0,$ 
$$\bigg\{\begin{matrix} a_1 & a_2 & a_3 \\ a_4 & a_5 & a_6 \end{matrix} \bigg\}_b=\frac{e^{\frac{-\mathrm{Cov}(\boldsymbol l)}{\pi b^2}}}{\sqrt[4]{-\det\mathrm{Gram}(\boldsymbol l)}}\Big(1+O\big(b^2\big)\Big). \qedhere$$ 
\end{proof}

\subsection{Proof of Theorem \ref{cov2}} \label{subsec:thm3}

\begin{proof}[Proof of Theorem \ref{cov2}] The proof is similar to the proof of Theorem \ref{cov}, namely, we will carefully choose an integral contour  and use the Saddle Point Approximation. A seemingly natural choice of the contour is $\Gamma_{2\pi}$ that passes through the unique critical point $\xi^*$ of $U_{\boldsymbol\alpha}$ in Case (a) and critical points $\xi^*_1$ and $\xi^*_2$ of $U_{\boldsymbol\alpha}$ in Case (b) respectively given by Proposition \ref{critical2} (2) and (3).  However, there are  issues with this choice as:
\begin{enumerate}[(a)]
\item  $\Gamma_{2\pi}$ pass through the points  $\eta_j$'s where $U_{\boldsymbol \alpha}$ is only continuous but not holomorphic, and 
\item  the critical points are not the unique maximum of $\mathrm{Im}U_{\boldsymbol\alpha}$ on the contour, hence condition (ii) of Proposition \ref{saddle} is not satisfied.
\end{enumerate} 
We will resolve these issues as follows.

By Proposition \ref{critical2} (2) and (3), $U_{\boldsymbol\alpha}$ either has a unique critical point $\xi^*$ or two critical points $\xi^*_1$ and $\xi^*_2$ lying on $\Gamma_{2\pi},$  which pass through the non-holomorphic points $\eta_j$'s of $U_{\boldsymbol \alpha}.$ To avoid  these non-holomorphic points, we let $\delta>0$ be sufficiently small so that Corollary \ref{cor4} holds for all $\xi$ in the  $\delta$-square neighborhood of each  $\eta_j,$ and consider the region $D_{\delta,c}$ as depicted in Figure \ref{Ddc} and defined there-around. We observe that $\Gamma_{2\pi}$ intersects the complement of $D_{\delta,c},$ and we deform $\Gamma_{2\pi}$ to the new contour $\Gamma'$ by pushing the parts out of $D_{\delta,c}$ into the boundary of $D_{\delta,c}.$ See Figure \ref{Ddc2}.

\begin{figure}[htbp]
\centering
\includegraphics[scale=0.25]{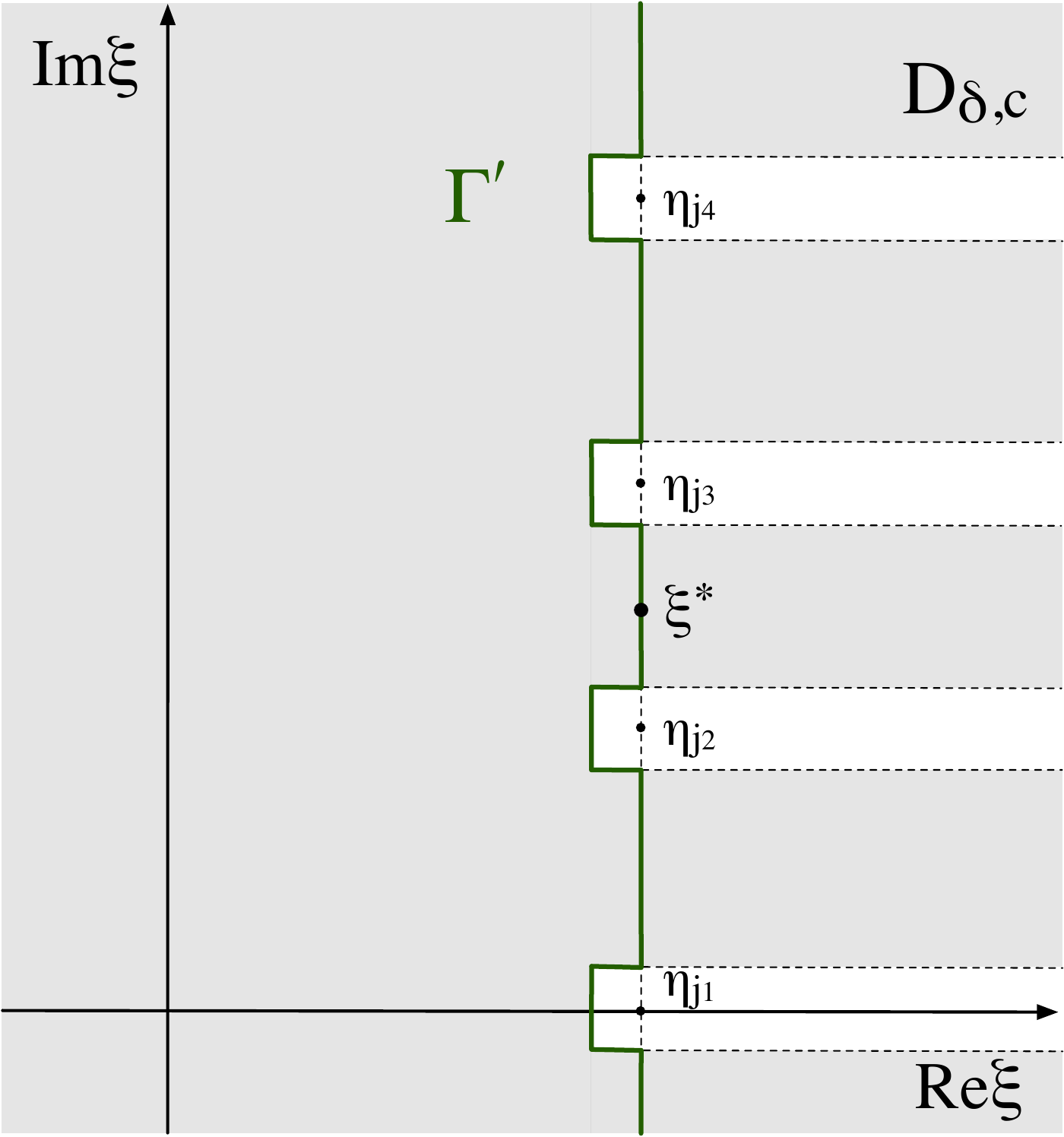}
\caption{Contour $\Gamma'$}
\label{Ddc2}
\end{figure}

By abuse of notation let $\xi^*$ denote the unique critical point in Case (a) or either of the two critical points in Case (b). 
By Propositions \ref{PL}, we see that $\mathrm{Im}\xi^* \in [\mathrm{Im}\eta_{j_2},\mathrm{Im}\eta_{j_3}];$ and by Proposition \ref{limder2}, $\xi^*$ cannot be the end points  of the intervals. Therefore,  $\mathrm{Im}\xi^* \in (\mathrm{Im}\eta_{j_2},\mathrm{Im}\eta_{j_3}).$ 
 By Proposition \ref{PL}, $\mathrm{Im}U_{\boldsymbol\alpha}$ is a constant function on the line segment connecting $\eta_{j_2}$ and $\eta_{j_3},$  hence $\xi^*$ is not the unique maximum point of $\mathrm{Im}U_{\boldsymbol\alpha}$ on $\Gamma'$ and  we need to further deform it. We will do it case by case as follows.
 \\

For  Case (a), we choose an $L>0$ large enough so that 
$$[\mathrm{Im}\eta_{i_1}-\delta,\mathrm{Im}\eta_{i_4}+\delta ]\subset (\mathrm{Im}\xi^*-L+\delta, \mathrm{Im}\xi^*+L-\delta),$$
where $\delta>0$ is as above, and consider the following smooth vector filed  $\psi\mathbf v$ on  
$ D_{\delta,c},$
 where 
 $\psi$ is a 
$C^\infty$-smooth bump function on $D_{\delta,c}$ satisfying 
  \begin{equation*}
\left\{
    \begin{array}{rcl}
 \psi(\xi ) = 1  & \text{if} & |\mathrm{Im}\xi  -\mathrm{Im}\xi^*| \leqslant  L-\delta  \\
    0 <   \psi(\xi ) <1 & \text{if}  & L-\delta <  |\mathrm{Im}\xi -\mathrm{Im}\xi^*| < L \\
 \psi(\xi )  = 0  & \text{if} & |\mathrm{Im}\xi  -\mathrm{Im}\xi^*| \geqslant  L
    \end{array}\right.
\end{equation*}
with $\delta>0$ as above, and $\mathbf v$ is the vector field on $D_{\delta,c}$ defined by  
$$\mathbf v =\bigg(-\frac{\partial\mathrm{Im} U_{\boldsymbol\alpha}}{\partial \mathrm{Re}\xi},0 \bigg).$$
Now let $\Gamma^*$ be the contour obtained from $\Gamma'$ by following the flow lines of  $\psi \mathbf v$ for a short time $t>0$.  
See Figure \ref{Ddc3}. 
Then by Corollary \ref{cor4} and the choice of $\delta,$ 
$\Gamma^*$ stays inside $D_{\delta,c},$ and by Proposition \ref{nonvanish} (1), 
\begin{equation}\label{um2}
\mathrm{Im}U_{\boldsymbol\alpha}(\xi) < \mathrm{Im}U_{\boldsymbol\alpha}(\xi^*)
\end{equation}
for all $\xi  \in \Gamma^*  \setminus \{\xi ^*\}.$

\begin{figure}[htbp]
\centering
\includegraphics[scale=0.25]{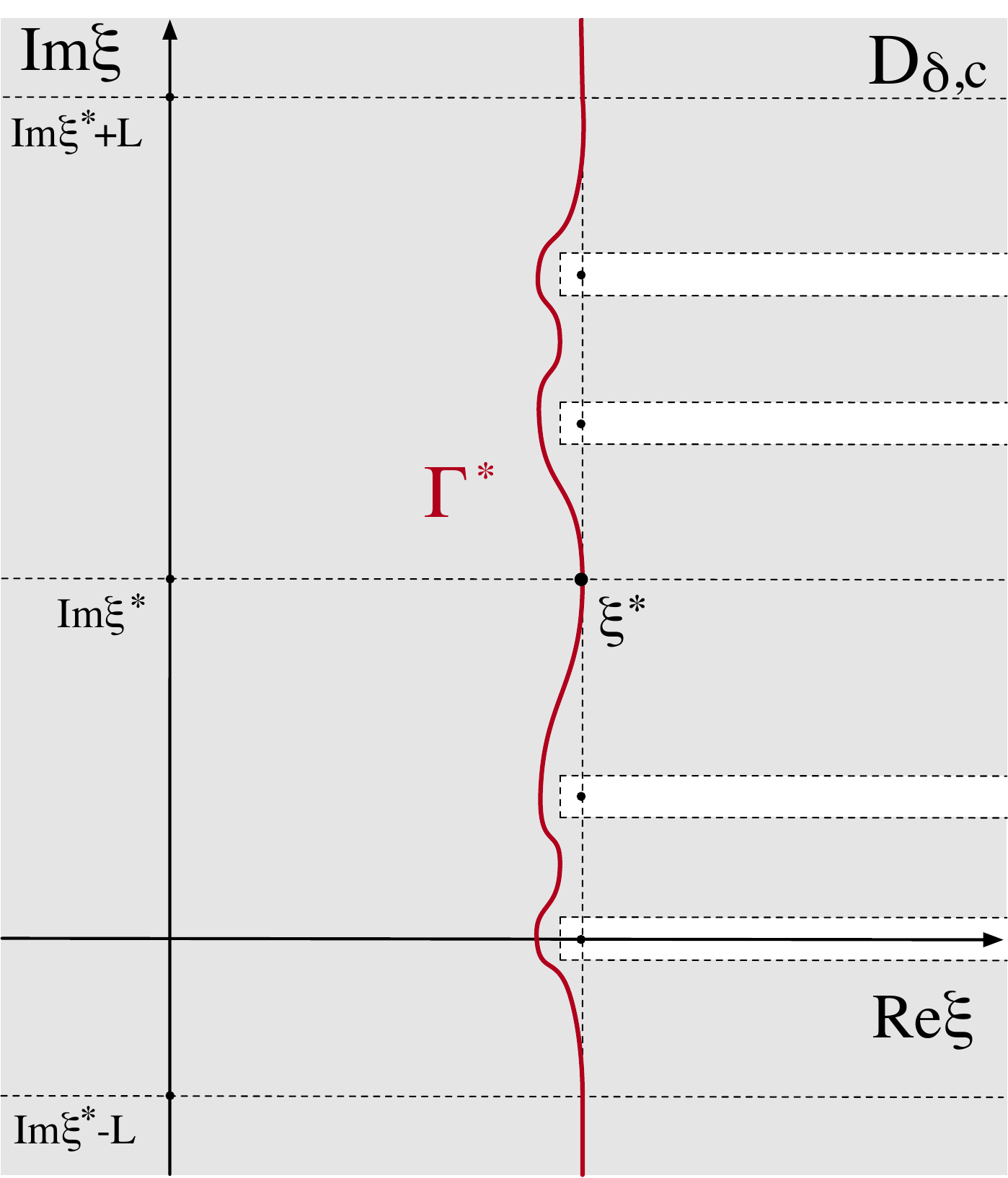}
\caption{Contour $\Gamma^*$ in Case (2)}
\label{Ddc3}
\end{figure}

Now in the computation of the $b$-$6j$ symbol (\ref{6jint}), we choose $\Gamma=\Gamma^*$ as the contour of integral.  Then we have
$$\bigg\{\begin{matrix} a_1 & a_2 & a_3 \\ a_4 & a_5 & a_6 \end{matrix} \bigg\}_b=\frac{1}{\pi b}\int_{\Gamma^*}\exp\bigg(\frac{U_{\boldsymbol \alpha}(\xi)+\kappa_{\boldsymbol \alpha}(\xi)b^2+\nu_{\boldsymbol \alpha,b}(\xi)b^4}{2\pi \mathbf i b^2} \bigg)d\xi.$$
Let $d>0$ be sufficiently small so that the region
$$B_d=\Big\{\xi\in \mathbb C\ \Big|\ |\mathrm{Re} \xi-\mathrm{Re} \xi^*|<d, |\mathrm{Im}\xi -\mathrm{Im} \xi^*|<d \Big\}$$
is  in $D_{\delta,c}.$ 
Let
$$\Gamma^*_d=\Gamma^*\cap B_d=\Big\{ \xi \in \Gamma^*\ \Big|\ |\mathrm{Im}\xi-\mathrm{Im}\xi^*|<d\Big\}$$
and let
$$\Gamma^*_L=\Big\{ \xi \in \Gamma^*\ \Big|\ |\mathrm{Im}\xi-\mathrm{Im}\xi^*|\leqslant L\Big\}.$$
We will show that, as $b\to 0,$ there exists an $\epsilon_2>0$ such that 
\begin{enumerate}[(I)]
\item 
$$\frac{1}{\pi b}\int_{\Gamma^*_d}\exp\bigg(\frac{U_{\boldsymbol \alpha}(\xi)+ \kappa_{\boldsymbol \alpha}(\xi)b^2+ \nu_{\boldsymbol \alpha,b}(\xi)b^4}{2\pi \mathbf i b^2} \bigg)d\xi =  C (2\pi b^2)^\frac{1}{3 }\frac{\exp\big(\frac{\kappa_{\boldsymbol \alpha}(\xi^*)}{2\pi \mathbf i}\big)}{\sqrt[3]{-\frac{U'''_{\boldsymbol \alpha}(\xi^*)}{2\pi \mathbf i}}} e^{\frac{-\widetilde{\mathrm{Cov}}(\boldsymbol l)}{\pi b^2}}\Big(1+O\big(b^2\big)\Big)
$$
for some  nonzero constant $C,$ 
\item $$\bigg|\frac{1}{\pi b}\int_{\Gamma^*_L\setminus \Gamma^*_d}\exp\bigg(\frac{U_{\boldsymbol \alpha}(\xi)+ \kappa_{\boldsymbol \alpha}(\xi)b^2+ \nu_{\boldsymbol \alpha,b}(\xi)b^4}{2\pi \mathbf i b^2} \bigg)d\xi\bigg|< O\Big(e^{\frac{-\widetilde{\mathrm{Cov}}(\boldsymbol l)-\epsilon_2}{\pi b^2}}\Big),$$
and 

\item $$\bigg|\frac{1}{\pi b}\int_{\Gamma^*\setminus \Gamma^*_L}\exp\bigg(\frac{U_{\boldsymbol \alpha}(\xi)+ \kappa_{\boldsymbol \alpha}(\xi)b^2+ \nu_{\boldsymbol \alpha,b}(\xi)b^4}{2\pi \mathbf i b^2} \bigg)d\xi\bigg|<O\Big(e^{\frac{-\widetilde{\mathrm{Cov}}(\boldsymbol l)-\epsilon_2}{\pi b^2}}\Big),$$
\end{enumerate}
 from which the result follows.
\\

For (I),  we claim that by  letting $\hbar=b^2,$ $D=B_d,$  $f=\frac{U_{\boldsymbol \alpha}}{2\pi \mathbf i},$ $g=\exp\big(\frac{\kappa_{\boldsymbol \alpha}}{2\pi \mathbf i }\big),$ $f_\hbar=\frac{U_{\boldsymbol \alpha}+\nu_{\boldsymbol \alpha,b}b^4}{2\pi \mathbf i},$ $\upsilon_{\hbar}=\frac{\nu_{\boldsymbol \alpha,b}}{2\pi \mathbf i},$ $S=\Gamma^*_d$ and $c=\xi^*,$  all the conditions  in Proposition \ref{saddle2}  are satisfied.

Indeed,  by Proposition \ref{critical2} (2), $\xi^*$ is a critical point of $f=\frac{U_{\boldsymbol \alpha}}{2\pi \mathbf i}$ in $B_d,$ hence condition (i) is satisfied. 

By (\ref{um2}), $\xi^*$ is the unique maximum point of $\mathrm{Re}f=\frac{\mathrm{Im}U_{\boldsymbol\alpha}}{2\pi}$ on $\Gamma^*,$ hence condition (ii) is satisfied.

For conditions (iii), (iv) and (v), since  $\xi^*_i\in D_{\epsilon, c},$ $\kappa_{\boldsymbol \alpha}(\xi^*)$ is a finite value. As a consequence, $g(\xi^*)=\exp\big(\frac{\kappa_{\boldsymbol \alpha}(\xi^*)}{2\pi \mathbf i }\big)\neq 0,$ and condition (v) is satisfied;  and by Proposition \ref{critical2} (3),  conditions (iii) and (iv) are satisfied.

For condition (vi), by Proposition \ref{bound},  $|\upsilon_{\hbar}(\xi)|=\big|\frac{\nu_{\boldsymbol \alpha,b}(\xi)}{2\pi \mathbf i}\big|<\frac{N}{2\pi}$ on $B_d.$

For condition (vii), since near $\xi^*,$ $\Gamma^*$ is obtained from a straight line segment $\Gamma'$ by moving along the flow lines of a smooth vector field $\mathbf v,$ $\Gamma_d$ is a smooth embedding around $\xi^*.$

Finally, by Proposition \ref{saddle2} and Proposition \ref{critical2} (2),  we have as $b\to 0,$
$$\frac{1}{\pi b}\int_{\Gamma^*_d}\exp\bigg(\frac{U_{\boldsymbol \alpha}(\xi)+ \kappa_{\boldsymbol \alpha}(\xi)b^2+ \nu_{\boldsymbol \alpha,b}(\xi)b^4}{2\pi \mathbf i b^2} \bigg)d\xi =  C (2\pi b^2)^\frac{1}{3 }\frac{\exp\big(\frac{\kappa_{\boldsymbol \alpha}(\xi^*)}{2\pi \mathbf i}\big)}{\sqrt[3]{-\frac{U'''_{\boldsymbol \alpha}(\xi^*)}{2\pi \mathbf i}}} e^{\frac{-\widetilde{\mathrm{Cov}}(\boldsymbol l)}{\pi b^2}}\Big(1+O\big(b^2\big)\Big)
$$for a nonzero constant $C.$ This completes the proof of (I).
\\

For (II) and (III), we have
\begin{equation*}
\begin{split}
& \bigg|\frac{1}{\pi b}\int_{\Gamma^*\setminus \Gamma^*_L}\exp\bigg(\frac{U_{\boldsymbol \alpha}(\xi)+ \kappa_{\boldsymbol \alpha}(\xi)b^2+ \nu_{\boldsymbol \alpha,b}(\xi)b^4}{2\pi \mathbf i b^2} \bigg)d\xi\bigg|\\
\leqslant & \frac{1}{\pi b}\int_{\Gamma^*\setminus \Gamma^*_L}\exp\bigg(\frac{\mathrm{Im}U_{\boldsymbol \alpha}(\xi)+\mathrm{Im}\kappa_{\boldsymbol \alpha}(\xi)b^2+\mathrm{Im}\nu_{\boldsymbol \alpha,b}(\xi)b^4}{2\pi b^2} \bigg)|d\xi|.
\end{split}
\end{equation*}

For  (II),  by (\ref{um2}) and Proposition \ref{critical2} (2), $\mathrm{Im}U_{\boldsymbol\alpha}(\xi)<-2\mathrm{Cov}(\boldsymbol l)$ for all $\xi\in\Gamma^*\setminus\{\xi^*\};$ and by the compactness of ${\Gamma^*_L}\setminus \Gamma^*_d,$ there exists an $\epsilon>0$ such that 
 \begin{equation}\label{ImU2}
\mathrm{Im} U_{\boldsymbol\alpha}(\xi)< -2\mathrm{Col}(\boldsymbol l)-3\epsilon
\end{equation}
for all $\xi\in {\Gamma^*_L}\setminus \Gamma^*_d.$ Also by the compactness, there is an $M>0$ such that 
\begin{equation}\label{Imk2}
\mathrm{Im}\kappa_{\boldsymbol\alpha}(\xi)<M
\end{equation}
for all $\xi\in\Gamma^*_L\setminus \Gamma^*_d.$ By Proposition \ref{bound}, there is a  $b_1>0$ such that 
\begin{equation}\label{last5}
\mathrm{Im}\nu_{\boldsymbol\alpha,b}(\xi)b^4<Nb^4<\epsilon 
\end{equation}
for all $b<b_1$ and $\xi\in\Gamma^*;$ and together with  (\ref{ImU2}), we have
\begin{equation}\label{last4}
\mathrm{Im}U_{\boldsymbol \alpha}(\xi)+\mathrm{Im}  \nu_{\boldsymbol \alpha,b}(\xi)b^4< -2\mathrm{Cov}(\boldsymbol l) - 2\epsilon
\end{equation}
for all $b<b_1$ and  $\xi\in\Gamma^*_L\setminus \Gamma^*_d.$  Putting (\ref{Imk2}) and  (\ref{last4}) together, we have
\begin{equation*}
\begin{split}
&\bigg|\frac{1}{\pi b}\int_{\Gamma^*_L\setminus \Gamma^*_d}\exp\bigg(\frac{U_{\boldsymbol \alpha}(\xi)+ \kappa_{\boldsymbol \alpha}(\xi)b^2+ \nu_{\boldsymbol \alpha,b}(\xi)b^4}{2\pi \mathbf i b^2} \bigg)d\xi\bigg|\\
 <  & \frac{2(L-d) e^{\frac{M}{2\pi}}}{\pi b} \exp\bigg(\frac{-\mathrm{Cov}(\boldsymbol l)-\epsilon }{\pi b^2}   \bigg )\\
<&  O\Big(e^{\frac{-\mathrm{Cov}(\boldsymbol l)-\epsilon_2}{\pi b^2}}\Big)
\end{split}
\end{equation*}
for any $\epsilon_2<\epsilon.$ This completes the proof of (II). 
\\

For (III), there  is a $b_0\in (0, b_1)$ such that for all $b<b_0,$
\begin{equation}\label{ImK2}
\mathrm{Im}\kappa_{\boldsymbol\alpha}(\xi^*\pm \mathbf i L) b^2<\epsilon,
\end{equation}
$Kb^2<\epsilon$ and $Nb^2<\epsilon,$ where $K$ and $N$ are respectively the constants in Propositions \ref{bound3} and \ref{bound}. We claim that, for $\xi\in\Gamma^*\setminus \Gamma^*_L$ and $b<b_0,$ 
\begin{equation}\label{cl2}
\mathrm{Im}U_{\boldsymbol \alpha}(\xi)+\mathrm{Im}\kappa_{\boldsymbol \alpha}(\xi)b^2+\mathrm{Im}\nu_{\boldsymbol \alpha,b}(\xi)b^4<-2\big(\mathrm{Cov}(\boldsymbol l)+\epsilon\big)-(4\pi-\epsilon)\big(|\xi-\xi^*|-L\big),
\end{equation}
as a consequence of which, we have,
\begin{equation}\label{CI2}
\begin{split}
& \frac{1}{\pi b}\int_{\Gamma^*\setminus \Gamma^*_L}\exp\bigg(\frac{\mathrm{Im}U_{\boldsymbol \alpha}(\xi)+\mathrm{Im}\kappa_{\boldsymbol \alpha}(\xi)b^2+\mathrm{Im}\nu_{\boldsymbol \alpha,b}(\xi)b^4}{2\pi b^2} \bigg)|d\xi| \\
 < & \frac{1}{\pi b} \exp\bigg(\frac{-\mathrm{Cov}(\boldsymbol l)-\epsilon}{\pi b^2}\bigg)\int_{\Gamma^*\setminus \Gamma^*_L}\exp\bigg(\frac{-(4\pi-\epsilon)\big(|\xi-\xi^*|-L\big)}{2\pi}\bigg) |d\xi|\\
 < & O\Big(e^{\frac{-\mathrm{Cov}(\boldsymbol l)-\epsilon_2}{\pi b^2}}\Big)
\end{split}
\end{equation}
for any $\epsilon_2<\epsilon.$ 

For the proof of the claim, by  Proposition \ref{PL} and (\ref{ImU2}),  for $l>L,$ i.e., $\xi^*\pm \mathbf il \in \Gamma^*\setminus\Gamma^*_L,$ we have
\begin{equation}\label{ImU2'}
\mathrm{Im}U_{\boldsymbol \alpha}(\xi^*\pm \mathbf il )\leqslant \mathrm{Im}U_{\boldsymbol\alpha}(\xi^*\pm \mathbf i L) < -2\mathrm{Cov}(\boldsymbol l)-4\epsilon; 
\end{equation} 
and by Proposition \ref{PL}  again and the choice of $L$ and $b_0,$ we have 
$$\frac{\partial}{\partial \mathrm{Im}\xi} \Big(\mathrm{Im}U_{\boldsymbol\alpha}(\xi^*+\mathbf il)+\mathrm{Im}\kappa_{\boldsymbol\alpha}(\xi^*+\mathbf il)b^2\Big)<-4\pi+\epsilon,$$
and 
$$\frac{\partial}{\partial \mathrm{Im}\xi} \Big(\mathrm{Im}U_{\boldsymbol\alpha}(\xi^*-\mathbf il)+\mathrm{Im}\kappa_{\boldsymbol\alpha}(\xi^*-\mathbf il)b^2\Big)>4\pi-\epsilon.$$
Together with the Mean Value Theorem, (\ref{ImU2'}) and (\ref{ImK2}), we have 
\begin{equation}\label{Bou2}
\begin{split}
\mathrm{Im}U_{\boldsymbol\alpha}(\xi)+ \mathrm{Im}\kappa_{\boldsymbol\alpha}(\xi)b^2 < & \mathrm{Im}U_{\boldsymbol\alpha}(\xi^*\pm \mathbf iL) + \mathrm{Im}
\kappa_{\boldsymbol\alpha}(\xi^*\pm \mathbf iL)b^2 - (4\pi-\epsilon) \big |  \xi - (\xi^*\pm \mathbf iL ) \big|\\
< & -2\mathrm{Cov}(\boldsymbol l)-2\epsilon  -(4\pi-\epsilon)\big(|\xi-\xi^*|-L \big)
\end{split}
\end{equation}
for all $\xi \in \Gamma^*\setminus \Gamma^*_L.$  Finally, putting (\ref{Bou2}) and (\ref{last5}) together, we have (\ref{cl2}) and  the first  inequality in  (\ref{CI2}); and since 
$$|\xi-\xi^*|-L\to+\infty$$
as $\xi\in \Gamma^*\setminus \Gamma^*_L$ approaches $\infty,$ we have the second inequality in (\ref{CI2}). This completes the proof of (III).
\\

Putting (I), (II)  and (III) together, we have 
\begin{equation*}
\lim_{b\to 0} \pi b^2\log \bigg\{\begin{matrix} a_1 & a_2 & a_3 \\ a_4 & a_5 & a_6 \end{matrix} \bigg\}_b =-\widetilde{\mathrm{Cov}}(\boldsymbol l).
\end{equation*}
in Case (a).
\\

For Case (b), let $\xi_{\text{mid}}=\frac{1}{2}(\xi^*_1+\xi^*_2)$ be the mid-point of $\xi^*_1$ and $\xi^*_2,$ and we choose an $L>0$ large enough so that 
$$[\mathrm{Im}\eta_{i_1}-\delta,\mathrm{Im}\eta_{i_4}+\delta]\subset (\mathrm{Im}\xi_{\text{mid}}-L+\delta, \mathrm{Im}\xi_{\text{mid}}+L-\delta),$$
where $\delta>0$ is as above. Similar to Case (2), we consider the following smooth vector filed  $\psi\mathbf v$ on  
$ D_{\delta,c},$
 where 
 $\psi$ is a 
$C^\infty$-smooth bump function on $D_{\delta,c}$ satisfying 
  \begin{equation*}
\left\{
    \begin{array}{rcl}
 \psi(\xi ) = 1  & \text{if} & |\mathrm{Im}\xi  -\mathrm{Im}\xi_{\text{mid}}| \leqslant  L-\delta  \\
    0 <   \psi(\xi ) <1 & \text{if}  & L-\delta <  |\mathrm{Im}\xi -\mathrm{Im}\xi_{\text{mid}}| < L \\
 \psi(\xi )  = 0  & \text{if} & |\mathrm{Im}\xi  -\mathrm{Im}\xi_{\text{mid}}| \geqslant  L
    \end{array}\right.
\end{equation*}
with $\delta>0$ as above, and $\mathbf v$ is the vector field on $D_{\delta,c}$ defined by  
$$\mathbf v =\bigg(-\frac{\partial\mathrm{Im} U_{\boldsymbol\alpha}}{\partial \mathrm{Re}\xi},0 \bigg).$$
Now let $\Gamma^*$ be the contour obtained from $\Gamma'$ by following the flow lines of  $\psi \mathbf v$ for a short time $t>0.$ See Figure \ref{Ddc4}. 
Then by Corollary \ref{cor4} and the choice of $\delta,$ 
$\Gamma^*$ stays inside $D_{\delta,c},$ and by Proposition \ref{nonvanish} (2),
\begin{equation}\label{um3}
\mathrm{Im}U_{\boldsymbol\alpha}(\xi) < \mathrm{Im}U_{\boldsymbol\alpha}(\xi^*_1)= \mathrm{Im}U_{\boldsymbol\alpha}(\xi^*_2)
\end{equation}
for all $\xi  \in \Gamma^*  \setminus \{\xi ^*_1,\xi^*_2\}.$

\begin{figure}[htbp]
\centering
\includegraphics[scale=0.25]{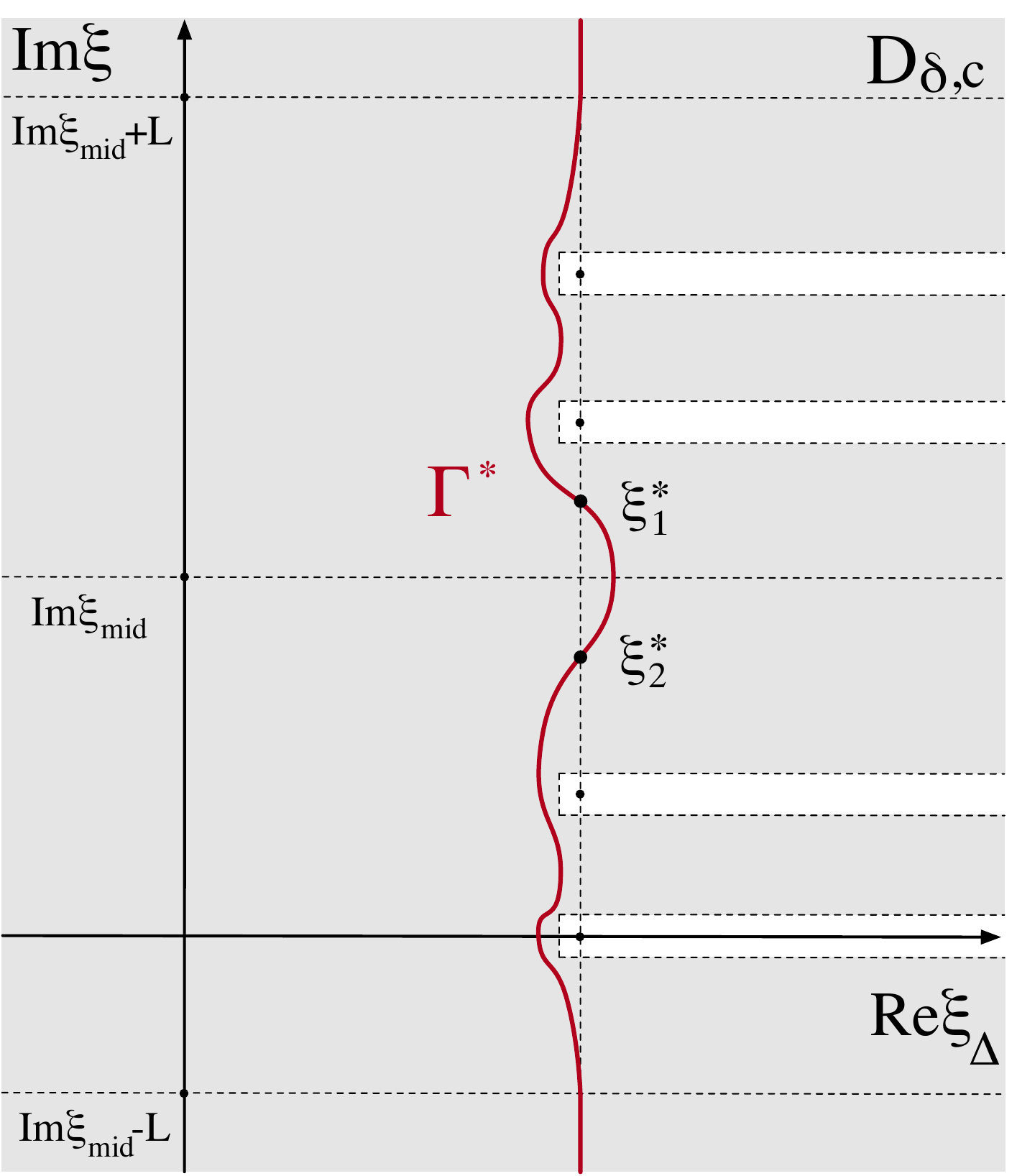}
\caption{Contour $\Gamma^*$ in Case (3)}
\label{Ddc4}
\end{figure}

Now in the computation of the $b$-$6j$ symbol (\ref{6jint}), we choose $\Gamma=\Gamma^*$ as the contour of integral. Then we have
$$\bigg\{\begin{matrix} a_1 & a_2 & a_3 \\ a_4 & a_5 & a_6 \end{matrix} \bigg\}_b=\frac{1}{\pi b}\int_{\Gamma^*}\exp\bigg(\frac{U_{\boldsymbol \alpha}(\xi)+\kappa_{\boldsymbol \alpha}(\xi)b^2+\nu_{\boldsymbol \alpha,b}(\xi)b^4}{2\pi \mathbf i b^2} \bigg)d\xi.$$
Let $d>0$ be sufficiently small so that both of  the regions 
$$B_{i,d}=\Big\{\xi\in \mathbb C\ \Big|\ |\mathrm{Re}\xi -\mathrm{Re} \xi^*_i|<d, |\mathrm{Im}\xi -\mathrm{Im} \xi^*_i|<d \Big\},$$
$i=1,2,$ are in $D_{\delta,c}.$  
For $i=1,2$ let 
$$\Gamma^*_{i,d}=\Gamma^*\cap B_{i,d}=\Big\{ \xi \in \Gamma^*\ \Big|\  |\mathrm{Im}\xi-\mathrm{Im}\xi_i ^*|<d\Big\},$$
let 
$$\Gamma^*_d=\Gamma^*_{1,d}\cup\Gamma^*_{2,d}$$
and let 
$$\Gamma^*_L=\Big\{ \xi \in \Gamma^*\ \Big|\  |\mathrm{Im}\xi-\mathrm{Im}\xi_{\text{mid}}|\leqslant L \Big\}.$$
We will show that, as $b\to 0,$ there exists an $\epsilon_3>0$ such that:
\begin{enumerate}[(I)]
\item For $i=1,2,$
\begin{equation*}
\frac{1}{\pi b}\int_{\Gamma^*_{1,d}}\exp\bigg(\frac{U_{\boldsymbol \alpha}(\xi)+ \kappa_{\boldsymbol \alpha}(\xi)b^2+ \nu_{\boldsymbol \alpha,b}(\xi)b^4}{2\pi \mathbf i b^2} \bigg)d\xi = \frac{e^{-\big(\frac{\mathrm{Re}U_{\boldsymbol\alpha}(\xi_1^*)}{2\pi b^2}+\frac{\pi}{4}\big)\mathbf i }}{\sqrt[4]{\det\mathrm{Gram}(\boldsymbol l)}} e^{\frac{-\widetilde{\mathrm{Cov}}(\boldsymbol l)}{\pi b^2}} \Big(1+O\big(b^2\big)\Big)
\end{equation*}
and 
\begin{equation*}
 \frac{1}{\pi b}\int_{\Gamma^*_{2,d}}\exp\bigg(\frac{U_{\boldsymbol \alpha}(\xi)+ \kappa_{\boldsymbol \alpha}(\xi)b^2+ \nu_{\boldsymbol \alpha,b}(\xi)b^4}{2\pi \mathbf i b^2} \bigg)d\xi = \frac{e^{\big(\frac{\mathrm{Re}U_{\boldsymbol\alpha}(\xi_1^*)}{2\pi\boldsymbol b^2}+\frac{\pi}{4}\big)\mathbf i } }{\sqrt[4]{\det\mathrm{Gram}(\boldsymbol l)}} e^{\frac{-\widetilde{\mathrm{Cov}}(\boldsymbol l)}{\pi b^2}} \Big(1+O\big(b^2\big)\Big),
\end{equation*}

\item $$\bigg|\frac{1}{\pi b}\int_{\Gamma^*_L\setminus \Gamma^*_d}\exp\bigg(\frac{U_{\boldsymbol \alpha}(\xi)+ \kappa_{\boldsymbol \alpha}(\xi)b^2+ \nu_{\boldsymbol \alpha,b}(\xi)b^4}{2\pi \mathbf i b^2} \bigg)d\xi\bigg|< O\Big(e^{\frac{-\widetilde{\mathrm{Cov}}(\boldsymbol l)-\epsilon_3}{\pi b^2}}\Big),$$
and 

\item $$\bigg|\frac{1}{\pi b}\int_{\Gamma^*\setminus \Gamma^*_L}\exp\bigg(\frac{U_{\boldsymbol \alpha}(\xi)+ \kappa_{\boldsymbol \alpha}(\xi)b^2+ \nu_{\boldsymbol \alpha,b}(\xi)b^4}{2\pi \mathbf i b^2} \bigg)d\xi\bigg|<O\Big(e^{\frac{-\widetilde{\mathrm{Cov}}(\boldsymbol l)-\epsilon_3}{\pi b^2}}\Big),$$
\end{enumerate}
 from which the result follows.
\\

For (I), we claim that by letting $\hbar=b^2,$ $D_i=B_{i,d}$ for $i=1,2,$ $f=\frac{U_{\boldsymbol \alpha}}{2\pi \mathbf i},$ $g=\exp\big(\frac{\kappa_{\boldsymbol \alpha}}{2\pi \mathbf i }\big),$ $f_\hbar=\frac{U_{\boldsymbol \alpha}+\nu_{\boldsymbol \alpha,b}b^4}{2\pi \mathbf i},$ $\upsilon_{\hbar}=\frac{\nu_{\boldsymbol \alpha,b}}{2\pi \mathbf i},$ $S_i=\Gamma^*_{i,d}$ and $c_i=\xi_i^*$  for $i=1$ and $2,$ all the conditions  in Proposition \ref{saddle}  are satisfied respectively.

Indeed,  by Proposition \ref{critical2} (3), $\xi^*_i$ is a critical point of $f=\frac{U_{\boldsymbol \alpha}}{2\pi \mathbf i}$ in $B_{i,d},$ hence condition (i) is satisfied. 

By (\ref{um3}), $\xi_i^*$ is the unique maximum point of $\mathrm{Re}f=\frac{\mathrm{Im}U_{\boldsymbol\alpha}}{2\pi}$ on $\Gamma^*_i,$ hence condition (ii) is satisfied.

For conditions (iii) and (iv), since  $\xi^*_i\in D_{\epsilon, c},$ $\kappa_{\boldsymbol \alpha}(\xi_i^*)$ is a finite value. As a consequence, $g(\xi_i^*)=\exp\big(\frac{\kappa_{\boldsymbol \alpha}(\xi_i^*)}{2\pi \mathbf i }\big)\neq 0,$ and condition (iv) is satisfied;  and by Proposition \ref{critical2} (3),  condition (iii) is satisfied.

For condition (v), by Proposition \ref{bound},  $|\upsilon_{\hbar}(\xi)|=\big|\frac{\nu_{\boldsymbol \alpha,b}(\xi)}{2\pi \mathbf i}\big|<\frac{N}{2\pi}$ on $B_{i,d}.$

For condition (vi), since near each $\xi_i,$ $\Gamma^*$ is obtained from a straight line segment $\Gamma'$ by moving along the flow lines of a smooth vector field $\mathbf v,$ $\Gamma^*_{i,d}$ is a smooth embedding around $\xi^*_i.$

Finally, by Proposition \ref{saddle}, Proposition \ref{critical2} (3)  and Proposition \ref{Hess}, we have as $b\to 0,$
\begin{equation*}
\begin{split}
\frac{1}{\pi b}\int_{\Gamma^*_{1,d}}\exp\bigg(\frac{U_{\boldsymbol \alpha}(\xi)+ \kappa_{\boldsymbol \alpha}(\xi)b^2+ \nu_{\boldsymbol \alpha,b}(\xi)b^4}{2\pi \mathbf i b^2} \bigg)d\xi = & \frac{e^{\frac{U_{\boldsymbol \alpha}(\xi_1^*)}{2\pi\mathbf i b^2}}}{\sqrt{\sqrt{-\det\mathrm{Gram}(\boldsymbol l)}}}\Big(1+O\big(b^2\big)\Big)\\
= &   \frac{e^{-\big(\frac{\mathrm{Re}U_{\boldsymbol\alpha}(\xi_1^*)}{2\pi b^2}+\frac{\pi}{4}\big)\mathbf i }}{\sqrt[4]{\det\mathrm{Gram}(\boldsymbol l)}} e^{\frac{-\widetilde{\mathrm{Cov}}(\boldsymbol l)}{\pi b^2}} \Big(1+O\big(b^2\big)\Big),
\end{split}
\end{equation*}
and 
\begin{equation*}
\begin{split}
 \frac{1}{\pi b}\int_{\Gamma^*_{2,d}}\exp\bigg(\frac{U_{\boldsymbol \alpha}(\xi)+ \kappa_{\boldsymbol \alpha}(\xi)b^2+ \nu_{\boldsymbol \alpha,b}(\xi)b^4}{2\pi \mathbf i b^2} \bigg)d\xi = & \frac{e^{\frac{U_{\boldsymbol \alpha}(\xi_2^*)}{2 \pi \mathbf i b^2}}}{\sqrt{-\sqrt{-\det\mathrm{Gram}(\boldsymbol l)}}}\Big(1+O\big(b^2\big)\Big)\\
=&  \frac{e^{\big(\frac{\mathrm{Re}U_{\boldsymbol\alpha}(\xi_1^*)}{2\pi b^2}+\frac{\pi}{4}\big)\mathbf i }}{\sqrt[4]{\det\mathrm{Gram}(\boldsymbol l)}} e^{\frac{-\widetilde{\mathrm{Cov}}(\boldsymbol l)}{\pi b^2}} \Big(1+O\big(b^2\big)\Big).
\end{split}
\end{equation*}
 This completes the proof of (I). 
\\

The proof of  (II) and (III) is quite similar to that of Case (a). Namely,  we have
\begin{equation*}
\begin{split}
& \bigg|\frac{1}{\pi b}\int_{\Gamma^*\setminus \Gamma^*_L}\exp\bigg(\frac{U_{\boldsymbol \alpha}(\xi)+ \kappa_{\boldsymbol \alpha}(\xi)b^2+ \nu_{\boldsymbol \alpha,b}(\xi)b^4}{2\pi \mathbf i b^2} \bigg)d\xi\bigg|\\
\leqslant & \frac{1}{\pi b}\int_{\Gamma^*\setminus \Gamma^*_L}\exp\bigg(\frac{\mathrm{Im}U_{\boldsymbol \alpha}(\xi)+\mathrm{Im}\kappa_{\boldsymbol \alpha}(\xi)b^2+\mathrm{Im}\nu_{\boldsymbol \alpha,b}(\xi)b^4}{2\pi b^2} \bigg)|d\xi|.
\end{split}
\end{equation*}

For  (II),  by (\ref{um3}) and Proposition \ref{critical2} (3), $\mathrm{Im}U_{\boldsymbol\alpha}(\xi)<-2\mathrm{Cov}(\boldsymbol l)$ for all $\xi\in\Gamma^*\setminus\{\xi^*_1,\xi^*_2\};$ and by the compactness of ${\Gamma^*_L}\setminus \Gamma^*_d,$ there exists an $\epsilon>0$ such that 
 \begin{equation}\label{ImU3}
\mathrm{Im} U_{\boldsymbol\alpha}(\xi)< -2\mathrm{Col}(\boldsymbol l)-3\epsilon
\end{equation}
for all $\xi\in {\Gamma^*_L}\setminus \Gamma^*_d.$ Also by the compactness, there is an $M>0$ such that 
\begin{equation}\label{Imk3}
\mathrm{Im}\kappa_{\boldsymbol\alpha}(\xi)<M
\end{equation}
for all $\xi\in\Gamma^*_L\setminus \Gamma^*_d.$ By Proposition \ref{bound},  there is a $b_1>0$ such that 
\begin{equation}\label{last7}
\mathrm{Im}\nu_{\boldsymbol\alpha,b}(\xi)b^4<Nb^4<\epsilon 
\end{equation}
for all $b<b_1$ and $\xi\in\Gamma^*;$ and together with  (\ref{ImU3}), we have
\begin{equation}\label{last6}
\mathrm{Im}U_{\boldsymbol \alpha}(\xi)+\mathrm{Im}  \nu_{\boldsymbol \alpha,b}(\xi)b^4< -2\mathrm{Cov}(\boldsymbol l) - 2\epsilon
\end{equation}
for all $b<b_1$ and  $\xi\in\Gamma^*_L\setminus \Gamma^*_d.$  Putting (\ref{Imk3}) and  (\ref{last6}) together, we have
\begin{equation*}
\begin{split}
&\bigg|\frac{1}{\pi b}\int_{\Gamma^*_L\setminus \Gamma^*_d}\exp\bigg(\frac{U_{\boldsymbol \alpha}(\xi)+ \kappa_{\boldsymbol \alpha}(\xi)b^2+ \nu_{\boldsymbol \alpha,b}(\xi)b^4}{2\pi \mathbf i b^2} \bigg)d\xi\bigg| \\
<  & \frac{2(L-d) e^{\frac{M}{2\pi}}}{\pi b} \exp\bigg(\frac{-\mathrm{Cov}(\boldsymbol l)-\epsilon }{\pi b^2}   \bigg )\\
< &  O\Big(e^{\frac{-\mathrm{Cov}(\boldsymbol l)-\epsilon_3}{\pi b^2}}\Big)
\end{split}
\end{equation*}
for any $\epsilon_3<\epsilon.$ This completes the proof of (II). 
\\

For (III),  there  is a $b_0\in (0,b_1)$ such that for all $b<b_0,$
\begin{equation}\label{ImK3}
\mathrm{Im}\kappa_{\boldsymbol\alpha}(\xi_{\text{mid}}\pm \mathbf i L) b^2<\epsilon,
\end{equation}
$Kb^2<\epsilon$ and $Nb^2<\epsilon,$ where $K$ and $N$ are respectively the constants in Propositions \ref{bound3} and \ref{bound}. We claim that, for $\xi\in\Gamma^*\setminus \Gamma^*_L$ and $b<b_0,$ 
\begin{equation}\label{cl3}
\mathrm{Im}U_{\boldsymbol \alpha}(\xi)+\mathrm{Im}\kappa_{\boldsymbol \alpha}(\xi)b^2+\mathrm{Im}\nu_{\boldsymbol \alpha,b}(\xi)b^4<-2\big(\mathrm{Cov}(\boldsymbol l)+\epsilon\big)-(4\pi-\epsilon)\big(|\xi-\xi_{\text{mid}}|-L\big),
\end{equation}
as a consequence of which, we have,
\begin{equation}\label{CI3}
\begin{split}
& \frac{1}{\pi b}\int_{\Gamma^*\setminus \Gamma^*_L}\exp\bigg(\frac{\mathrm{Im}U_{\boldsymbol \alpha}(\xi)+\mathrm{Im}\kappa_{\boldsymbol \alpha}(\xi)b^2+\mathrm{Im}\nu_{\boldsymbol \alpha,b}(\xi)b^4}{2\pi b^2} \bigg)|d\xi| \\
 < & \frac{1}{\pi b} \exp\bigg(\frac{-\mathrm{Cov}(\boldsymbol l)-\epsilon}{\pi b^2}\bigg)\int_{\Gamma^*\setminus \Gamma^*_L}\exp\bigg(\frac{-(4\pi-\epsilon)\big(|\xi-\xi_{\text{mid}}|-L\big)}{2\pi}\bigg) |d\xi|\\
< & O\Big(e^{\frac{-\mathrm{Cov}(\boldsymbol l)-\epsilon_3}{\pi b^2}}\Big)
\end{split}
\end{equation}
for any $\epsilon_3<\epsilon.$ For the proof of the claim, by  Proposition \ref{PL} and (\ref{ImU3}),  for $l>L,$ i.e., $\xi_{\text{mid}}\pm \mathbf il \in \Gamma^*\setminus\Gamma^*_L,$ we have
\begin{equation}\label{ImU3'}
\mathrm{Im}U_{\boldsymbol \alpha}(\xi_{\text{mid}}\pm \mathbf il )\leqslant \mathrm{Im}U_{\boldsymbol\alpha}(\xi_{\text{mid}}\pm \mathbf i L) < -2\mathrm{Cov}(\boldsymbol l)-4\epsilon; 
\end{equation} 
and by   Proposition \ref{PL}  again and the choice of $L$ and $b_0,$ we have 
$$\frac{\partial}{\partial \mathrm{Im}\xi} \Big(\mathrm{Im}U_{\boldsymbol\alpha}(\xi^*+\mathbf il)+\mathrm{Im}\kappa_{\boldsymbol\alpha}(\xi^*+\mathbf il)b^2\Big)<-4\pi+\epsilon,$$
and 
$$\frac{\partial}{\partial \mathrm{Im}\xi} \Big(\mathrm{Im}U_{\boldsymbol\alpha}(\xi^*-\mathbf il)+\mathrm{Im}\kappa_{\boldsymbol\alpha}(\xi^*-\mathbf il)b^2\Big)>4\pi-\epsilon.$$
Together with the Mean Value Theorem, (\ref{ImU3'}) and (\ref{ImK3}), we have 
\begin{equation}\label{Bou3}
\begin{split}
\mathrm{Im}U_{\boldsymbol\alpha}(\xi)+ \mathrm{Im}\kappa_{\boldsymbol\alpha}(\xi)b^2 < & \mathrm{Im}U_{\boldsymbol\alpha}(\xi^*\pm \mathbf iL) + \mathrm{Im}
\kappa_{\boldsymbol\alpha}(\xi_{\text{mid}}\pm \mathbf iL)b^2 - (4\pi-\epsilon) \big |  \xi - (\xi_{\text{mid}}\pm \mathbf iL ) \big|\\
< & -2\mathrm{Cov}(\boldsymbol l)-2\epsilon  -(4\pi-\epsilon)\big(|\xi-\xi_{\text{mid}}|-L \big)
\end{split}
\end{equation}
for all $\xi \in \Gamma^*\setminus \Gamma^*_L.$  Finally, putting (\ref{Bou3}) and (\ref{last7}) together, we have (\ref{cl3}) and  the first  inequality  in  (\ref{CI3}); and since 
$$|\xi-\xi_{\text{mid}}|-L\to+\infty$$
as $\xi\in \Gamma^*\setminus \Gamma^*_L$ approaches $\infty,$ we have the second  inequality in (\ref{CI3}). This completes the proof of (III).
\\

Putting (I), (II)  and (III) together, {we have
\begin{equation}\label{eq:ads}
\bigg\{\begin{matrix} a_1 & a_2 & a_3 \\ a_4 & a_5 & a_6 \end{matrix} \bigg\}_b=\frac{e^{\frac{-\widetilde{\mathrm{Cov}}(\boldsymbol l)}{\pi b^2}} }{\sqrt[4]{\det\mathrm{Gram}(\boldsymbol l)}} \Bigg(2\cos\bigg(\frac{\mathrm{Re}U_{\boldsymbol\alpha}(\xi_1^*)}{2\pi b^2}+\frac{\pi}{4}\bigg) +O\big(b^2\big)\Bigg).
\end{equation}
If $\mathrm{Re}U_{\boldsymbol\alpha}(\xi_1^*)=0$, then we have 
\begin{equation*}
\limsup_{b\to 0} \pi b^2\log\Bigg|\bigg\{\begin{matrix} a_1 & a_2 & a_3 \\ a_4 & a_5 & a_6 \end{matrix} \bigg\}_b\Bigg| = \lim_{b\to 0} \pi b^2\log\Bigg|\bigg\{\begin{matrix} a_1 & a_2 & a_3 \\ a_4 & a_5 & a_6 \end{matrix} \bigg\}_b\Bigg| = -\widetilde{\mathrm{Cov}}(\boldsymbol l);
\end{equation*}
and if $\mathrm{Re}U_{\boldsymbol\alpha}(\xi_1^*)\neq 0$, then we take a sequence $b_n\to0$ such that $\frac{\mathrm{Re}U_{\boldsymbol\alpha}(\xi_1^*)}{2\pi b_n^2}+ \frac{\pi}{4}=0$ {(mod $\pi$)} for each $n$, and have 
\begin{equation*}
\lim_{n\to \infty} \pi b_n^2\log \bigg\{\begin{matrix} a_1 & a_2 & a_3 \\ a_4 & a_5 & a_6 \end{matrix} \bigg\}_{b_n} = -\widetilde{\mathrm{Cov}}(\boldsymbol l),
\end{equation*}
and
\begin{equation*}
\limsup_{b\to 0} \pi b^2\log\Bigg|\bigg\{\begin{matrix} a_1 & a_2 & a_3 \\ a_4 & a_5 & a_6 \end{matrix} \bigg\}_b\Bigg| = -\widetilde{\mathrm{Cov}}(\boldsymbol l)
\end{equation*}
in Case (b).}
\end{proof}

\begin{remark}\label{rmk:ads2}
In \cite{ADS25}, we will prove Eq. (\ref{AdS/CFT}) by showing that $\mathrm{Re}U_{\boldsymbol\alpha}(\xi_1^*)$ in~\eqref{eq:ads} equals $2\mathrm{Cov(\Delta)}$, where $\Delta$ is the truncated hyperideal tetrhadron in the anti-de Sitter space $\mathbb A\mathrm d\mathbb S^3$ with  $(l_1,\dots,l_6)$ the edge lengths, and  $\mathrm{Cov(\Delta)}$ is the co-volume of $\Delta$ as defined in Remark~\ref{rmk:ads}.
\end{remark}


\section{State-integral convergence and asymptotics}\label{sec:manifold}

Let $M$ be a hyperbolic $3$-manifold with a totally geodesic boundary with an ideal triangulation $\mathcal T.$ Let $E$ and $T$ respectively be the sets of edges and tetrahedra of $\mathcal T.$ For $e\in E,$ let $\alpha_e=\pi b a_e-\frac{\pi b^2}{2}.$ For $\Delta\in T$ with edges $e_1,\dots,e_6,$ let $\boldsymbol \alpha_\Delta=(\alpha_{e_1},\dots,\alpha_{e_6})$ and let $\xi_\Delta=\pi b u_\Delta.$ Let $U_b\big(\boldsymbol \alpha_\Delta,\xi_\Delta\big)=U_{\boldsymbol \alpha_\Delta,b}(\xi_\Delta)$
be as defined in (\ref{6jint}) and let $U\big(\boldsymbol \alpha_\Delta,\xi_\Delta\big)=U_{\boldsymbol \alpha_\Delta}(\xi_\Delta)$
be as defined in (\ref{U}).

Then by (\ref{6jint}), writing $\sinh(\frac{l_e}{b^2})=\frac{1}{2}\big(\exp(\frac{l_e}{b^2})-\exp(-\frac{l_e}{b^2})\big)$, we have
\begin{equation}\label{TVint}
\begin{split}
\mathrm{TV}_b(M,\mathcal T)= \frac{(-2\mathbf i)^{|E|}}{(\pi b )^{|E|+|T|}}\sum_{\boldsymbol \lambda\in\{-1,1\}^E}
\int_{\Gamma}\bigg(\prod_{e\in E}\lambda_e\sinh l_e\bigg)\exp\Bigg(\frac{\mathcal W_b^{\boldsymbol \lambda}(\boldsymbol \alpha, \boldsymbol \xi)}{2\pi \mathbf i b^2}\Bigg)d \boldsymbol \alpha  d \boldsymbol \xi,
\end{split}
\end{equation}
where the contour 
$$\Gamma=  \bigg\{ (\boldsymbol \alpha,\boldsymbol \xi)\in  \Big(\frac{\pi}{2}+\mathbf i{\mathbb R_{>0}}\Big)^E \times \mathbb C^T   \ \bigg|\ \xi_\Delta\in\Gamma_{\boldsymbol\alpha_\Delta} \text{ for all }\Delta\text{ in }T\bigg\}$$
and $\Gamma_{\boldsymbol\alpha_\Delta}$ is the contour of integral in (\ref{6jint}) that 
 will be specified later,  $d\boldsymbol \alpha=\prod _{e\in E}d\alpha_e,$ $d\boldsymbol \xi=\prod _{\Delta\in T}d\xi_\Delta,$ and
\begin{equation}\label{Wnu}
\begin{split}
\mathcal W_b^{\boldsymbol \lambda}(\boldsymbol \alpha, \boldsymbol \xi)=&2\pi \sum_{e\in E}\lambda_e(2\alpha_e-\pi)+\sum_{\Delta\in T}U_b\big(\boldsymbol \alpha_\Delta,\xi_\Delta\big)\\
=&\mathcal W^{\boldsymbol \lambda}(\boldsymbol \alpha, \boldsymbol \xi)+\kappa(\boldsymbol \alpha,\boldsymbol \xi)b^2+\nu_b(\boldsymbol \alpha,\boldsymbol \xi)b^4
\end{split}
\end{equation}
with
$$\mathcal W^{\boldsymbol \lambda}(\boldsymbol \alpha, \boldsymbol \xi)=2\pi \sum_{e\in E}\lambda_e(2\alpha_e-\pi)+\sum_{\Delta\in T}U\big(\boldsymbol \alpha_\Delta,\xi_\Delta\big),$$

 $$\kappa(\boldsymbol\alpha,\boldsymbol\xi)=\sum_{\Delta\in T}\kappa_{\boldsymbol \alpha_\Delta}(\xi_\Delta)$$
 with $\kappa_{\boldsymbol\alpha}$ computed in (\ref{kappa}),  and $$\nu_b(\boldsymbol\alpha,\boldsymbol\xi)=\sum_{\Delta\in T}\nu_{\boldsymbol \alpha_\Delta,b}(\xi_\Delta)$$  with  $\nu_{\boldsymbol\alpha,b}$  as in (\ref{6jint}). 
To simplify the notations, we write $\mathcal W_b\doteq\mathcal W_b^{\boldsymbol \lambda}$ and  $\mathcal W\doteq \mathcal W^{\boldsymbol \lambda}$ when $\boldsymbol \lambda=(1,\dots,1).$

By (\ref{6jint}) again, we can also write 
\begin{equation}\label{TVint2}
\begin{split}
\mathrm{TV}_b(M,\mathcal T)= \frac{(-\mathbf i)^{|E|}}{(\pi b )^{|E|+|T|}}\sum_{\boldsymbol \mu\in\{-1,1\}^E}\sum_{\boldsymbol \lambda\in\{-1,1\}^E}\bigg(\prod_{e\in E}\mu_e\lambda_e\bigg)
\int_{\Gamma}\exp\Bigg(\frac{\mathcal V_b^{\boldsymbol\mu\boldsymbol \lambda}(\boldsymbol \alpha, \boldsymbol \xi)}{2\pi \mathbf i b^2}\Bigg)d \boldsymbol \alpha  d \boldsymbol \xi,
\end{split}
\end{equation}
where 
\begin{equation}\label{Vnu}
\begin{split}
\mathcal V_b^{\boldsymbol \mu\boldsymbol \lambda}(\boldsymbol \alpha, \boldsymbol \xi)=&2\pi \sum_{e\in E}(\mu_eb^2+\lambda_e)(2\alpha_e-\pi)+\sum_{\Delta\in T}U_b\big(\boldsymbol \alpha_\Delta,\xi_\Delta\big)\\
=&\mathcal V^{\boldsymbol\mu\boldsymbol \lambda}(\boldsymbol \alpha, \boldsymbol \xi)+\kappa(\boldsymbol \alpha,\boldsymbol \xi)b^2+\nu_b(\boldsymbol \alpha,\boldsymbol \xi)b^4
\end{split}
\end{equation}
with
$$\mathcal V^{\boldsymbol\mu\boldsymbol \lambda}(\boldsymbol \alpha, \boldsymbol \xi)=2\pi \sum_{e\in E}(\mu_eb^2+\lambda_e)(2\alpha_e-\pi)+\sum_{\Delta\in T}U\big(\boldsymbol \alpha_\Delta,\xi_\Delta\big).$$ 
 To simplify the notations, we write $\mathcal V_b\doteq\mathcal V_b^{\boldsymbol\mu\boldsymbol \lambda}$ and  $\mathcal V\doteq\mathcal V^{\boldsymbol\mu\boldsymbol \lambda}$ when $\boldsymbol\mu=\boldsymbol \lambda=(1,\dots,1).$

We prove Theorem~\ref{Converge}  on the convergence of the integral~\eqref{int} in Section \ref{sec:4.2}, and  Theorem ~\ref{VC} on the asymptotics of $\mathrm{TV}_b(M)$ in Section~\ref{sec:4.3}. All the preliminary results are established in Section~\ref{subsec:property4}. 
To prove Theorem~\ref{Converge}, we bound $\mathrm{Im} \mathcal V_b^{\boldsymbol \mu\boldsymbol \lambda}(\boldsymbol \alpha, \boldsymbol \xi)$ by an auxiliary function, whose existence is ensured by Luo-Yang's result~\cite{LY} on the relationship between angle structures and hyperbolic polyhedral metrics. In fact, in Section~\ref{sec:4.2}, we prove a stronger result that the convergence holds for all the ideal triangulations that admit angle structures (see Theorem~\ref{WD4}).
To establish the asymptotics in Theorem~\ref{VC}, we apply the  saddle point approximations to the integral in~\eqref{TVint2}; and the style of the argument is similar to those in Section~\ref{sec:tetrahedron}. A major difference is that the integral under consideration here is with multi-variables $\xi_\Delta$'s and the $l_e$'s. To establish the desired properties of the relevant functions, we further employ geometric tools including the Murakami-Yano volume formula in the form of~\cite{BY} and the Wong-Yang formula~\cite{WY3} of the adjoint twisted Reidemeister torsions.

\subsection{Properties of relevant functions}\label{subsec:property4}

Let $\mathcal T$ be a Kojima ideal triangulation of $M$  with edge lengths $\boldsymbol l^*=(l_e^*),$ which is a generalized hyperbolic polyhedral metric with all the cone angles $2\pi.$ (See subsection \ref{HPM}.) 

For $\Delta\in T$ with edges $e_1,\dots,e_6,$ let 
 $\boldsymbol \alpha^*_{\Delta}=\Big(\frac{\pi}{2}+ \mathbf i\frac{l^*_{e_1}}{2},\dots,\frac{\pi}{2}+ \mathbf i\frac{l^*_{e_6}}{2}\Big).$ 
Guaranteed by Proposition \ref{critical2}, 
\begin{enumerate}[(1)]
\item if $(l^*_{e_1},\dots, l^*_{e_6})\in\mathcal L,$ then we let $\xi^*_{\Delta}$ be the unique critical point of $U_{\boldsymbol \alpha^*_{\Delta}}$ in $D;$ and 
\item
 if  $(l^*_{e_1},\dots, l^*_{e_6})\in \partial \mathcal L,$ then we let $\xi^*_{\Delta}$ be the unique critical point of $U_{\boldsymbol \alpha^*_{\Delta}}$ on $\partial D.$ 
 \end{enumerate}  
By Proposition \ref{critical2} (2), $\mathrm{Re}\xi^*_{\Delta}=2\pi$ in case (2). 
\\

 Let  $\overline D=\{ \xi\in\mathbb C\ |\ \frac{3\pi}{2}\leqslant \mathrm{Re}\xi\leqslant 2\pi\}.$ Then we have the following

\begin{proposition}\label{cpcv5} 
The function 
$$\mathcal W(\boldsymbol \alpha, \boldsymbol \xi)=2\pi\sum_{e\in E} (2\alpha_e-\pi)+\sum_{\Delta\in T}U\big(\boldsymbol \alpha_\Delta,\xi_\Delta\big)$$  has a critical point 
$$\boldsymbol z^*=\bigg(\Big(\frac{\pi}{2}+\mathbf i\frac{l^*_{e}}{2}\Big)_{e\in E},\big(\xi^*_{\Delta}\big)_{\Delta\in T}\bigg)$$
in $\big(\frac{\pi}{2}+\mathbf i{\mathbb R_{>0}}\big)^E\times \overline D^T$
with the critical value
$$\mathcal W(\boldsymbol z^*)=-2\mathbf i\mathrm{Vol}(M).$$ 
\end{proposition}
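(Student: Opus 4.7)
The plan is to verify the critical point equation for $\mathcal W$ at $\boldsymbol z^*$ in the $\xi_\Delta$ and $\alpha_e$ directions separately, and then to evaluate $\mathcal W(\boldsymbol z^*)$ directly. For the $\xi_\Delta$ derivatives, only the single summand $U(\boldsymbol \alpha_\Delta,\xi_\Delta)$ depends on $\xi_\Delta$, so $\tfrac{\partial \mathcal W}{\partial \xi_\Delta}\big|_{\boldsymbol z^*}=U'_{\boldsymbol \alpha^*_\Delta}(\xi^*_\Delta)$, which vanishes by the very definition of $\xi^*_\Delta$ as a critical point of $U_{\boldsymbol \alpha^*_\Delta}$ from Proposition~\ref{critical2}(1)(2).

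For the $\alpha_e$ derivatives I would apply an envelope argument. Define $W(\boldsymbol \alpha_\Delta)=U(\boldsymbol \alpha_\Delta,\xi^*_\Delta(\boldsymbol \alpha_\Delta))$ near $\boldsymbol \alpha^*_\Delta$; since $\xi^*_\Delta$ is a critical point of $U$ in the $\xi_\Delta$ variable, one has $\partial W/\partial \alpha_e=\partial U/\partial \alpha_e$ evaluated at $\xi_\Delta=\xi^*_\Delta$. Proposition~\ref{critical2}(1)(2) identifies $W(\boldsymbol \alpha_\Delta)=-2\mathbf i\,\widetilde{\mathrm{Cov}}(\boldsymbol l_\Delta)$ along the slice $\alpha_{e_k}=\tfrac{\pi}{2}+\mathbf i l_{e_k}/2$, and Proposition~\ref{ccu} gives $\partial \widetilde{\mathrm{Cov}}/\partial l_k=\widetilde\theta_k/2$. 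Since $dl_k/d\alpha_k=-2\mathbf i$ on this slice, the chain rule produces $\tfrac{\partial U}{\partial \alpha_e}\big|_{\boldsymbol z^*_\Delta}=-2\,\widetilde\theta_{(\Delta,e)}$. Summing over tetrahedra adjacent to $e$, and combining with the $4\pi$ contribution from the linear term $2\pi\sum_e(2\alpha_e-\pi)$, I get $\tfrac{\partial \mathcal W}{\partial \alpha_e}\big|_{\boldsymbol z^*}=4\pi-2\sum_{\Delta\sim e}\widetilde\theta_{(\Delta,e)}=4\pi-4\pi=0$, where the second equality uses that the extended cone angle of the Kojima metric $\boldsymbol l^*$ equals $2\pi$ at every edge.

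For the critical value, using $2\alpha^*_e-\pi=\mathbf i l^*_e$ and $U(\boldsymbol \alpha^*_\Delta,\xi^*_\Delta)=-2\mathbf i\,\widetilde{\mathrm{Cov}}(\boldsymbol l^*_\Delta)$, I compute
$$\mathcal W(\boldsymbol z^*)=2\pi\mathbf i\sum_{e\in E}l^*_e-2\mathbf i\sum_{\Delta\in T}\widetilde{\mathrm{Cov}}(\boldsymbol l^*_\Delta).$$
Applying the identity $\widetilde{\mathrm{Cov}}(\boldsymbol l^*)=\sum_\Delta\widetilde{\mathrm{Cov}}(\boldsymbol l^*_\Delta)-\pi\sum_e l^*_e$ from Section~\ref{HPM}, the boundary terms cancel, leaving $\mathcal W(\boldsymbol z^*)=-2\mathbf i\,\widetilde{\mathrm{Cov}}(\boldsymbol l^*)$. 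Finally, since the Kojima edge lengths realize the hyperbolic structure on $M$, the identity $\mathrm{Cov}(\boldsymbol l^*)=\mathrm{Vol}(M)$ from Section~\ref{HPM} extends by continuity of $\widetilde{\mathrm{Cov}}$ (and the fact that the flat tetrahedra contribute zero hyperbolic volume) to give $\widetilde{\mathrm{Cov}}(\boldsymbol l^*)=\mathrm{Vol}(M)$, yielding $\mathcal W(\boldsymbol z^*)=-2\mathbf i\,\mathrm{Vol}(M)$.

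The main subtlety I expect is the envelope step at flat tetrahedra, where $\xi^*_\Delta\in\partial D$ and the critical point of $U_{\boldsymbol \alpha^*_\Delta}$ in $\xi_\Delta$ is degenerate ($U''_{\boldsymbol \alpha^*_\Delta}(\xi^*_\Delta)=0$ by Propositions~\ref{tri} and~\ref{Hess}), so the implicit function $\xi^*_\Delta(\boldsymbol\alpha_\Delta)$ is not automatically smooth there. My workaround would be either to invoke the $C^1$-regularity of $\widetilde{\mathrm{Cov}}$ up to and across $\partial\mathcal L$ supplied by Proposition~\ref{ccu}, so that the identity $\partial U/\partial\alpha_e=-2\widetilde\theta_{(\Delta,e)}$ persists at flat tetrahedra by continuity from $\mathcal L$, or to verify the cancellation directly from the explicit formula for $U$ combined with the equation $U'_{\boldsymbol \alpha^*_\Delta}(\xi^*_\Delta)=0$.
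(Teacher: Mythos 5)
Your proposal is correct and follows essentially the same route as the paper's proof: the $\xi_\Delta$-derivatives vanish by the defining property of $\xi^*_\Delta$, the $\alpha_e$-derivatives are handled by the same envelope identity $\partial U/\partial\alpha_e=\partial W/\partial\alpha_e=-2\widetilde\theta_{(\Delta,e)}$ combined with the cone-angle condition, and the critical value follows from the same telescoping of $\pi\sum_e l^*_e$ against the $\widetilde\theta$-weighted edge terms. The subtlety you flag at flat tetrahedra is resolved exactly as you propose: the paper's identity \eqref{dCovdlin} is established on $\mathcal L\cup\partial\mathcal L$ via the explicit cofactor formula \eqref{pW} and continuity, rather than via smoothness of $\xi^*_\Delta(\boldsymbol\alpha_\Delta)$ at the degenerate critical point.
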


\begin{proof}  
Let $\boldsymbol z^*=(\boldsymbol \alpha^* ,\boldsymbol \xi^* ),$ where $\boldsymbol \alpha^* =\Big(\frac{\pi}{2}+\mathbf i\frac{l^*_{ e}}{2}\Big)_{e\in E}$ and $\boldsymbol \xi^* =\big(\xi^*_{ \Delta}\big)_{\Delta\in T}.$ For $\Delta\in T$ with edges $e_1,\dots,e_6,$ let $l^*_{ \Delta}=(l^*_{ e_1},\dots,l^*_{ e_6}).$ 

We first have
\begin{equation}\label{xi=0b}
\frac{\partial  \mathcal W}{\partial \xi_\Delta}\Big|_{(\boldsymbol \alpha^* ,\boldsymbol \xi^* )}=\frac{\partial U_{\boldsymbol \alpha^*_{ \Delta}}(\xi_\Delta)}{\partial \xi_\Delta}\Big|_{\xi^*_{ \Delta}}=0.
\end{equation}
Next,  let $W(\boldsymbol \alpha_\Delta)=U(\boldsymbol \alpha_\Delta,\xi^*_\Delta)=U_{\boldsymbol \alpha_\Delta}(\xi^*_\Delta)$ be the function defined in (\ref{W}). Then for any edge $e$ of $\Delta,$ 
\begin{equation*}
\begin{split}
\frac{\partial W(\boldsymbol \alpha_\Delta)}{\partial \alpha_e}\Big|_{\boldsymbol \alpha^*_{ \Delta}}=&\frac{\partial U(\boldsymbol \alpha_\Delta,\xi_\Delta)}{\partial \alpha_e}\Big|_{(\boldsymbol \alpha^*_{ \Delta},\xi^*_{ \Delta})}+\frac{\partial U(\boldsymbol \alpha_\Delta,\xi_\Delta)}{\partial \xi_\Delta}\Big|_{(\boldsymbol \alpha^*_{ \Delta},\xi^*_{ \Delta})}\cdot\frac{\partial \xi^*_\Delta}{\partial \alpha_e}\Big|_{\boldsymbol \alpha^*_{ \Delta}}\\
=&\frac{\partial U(\boldsymbol \alpha_\Delta,\xi_\Delta)}{\partial \alpha_e}\Big|_{(\boldsymbol \alpha^*_{ \Delta},\xi^*_{ \Delta})}.
\end{split}
\end{equation*}
As a consequence, by \eqref{dCovdlin} and \eqref{dCovdlout} we have
\begin{equation*}
\frac{\partial U(\boldsymbol \alpha_\Delta,\xi_\Delta)}{\partial \alpha_e}\Big|_{(\boldsymbol \alpha^*_{ \Delta},\xi^*_{ \Delta})}=\frac{\partial W(\boldsymbol \alpha_\Delta)}{\partial \alpha_e}\Big|_{\boldsymbol \alpha^*_{ \Delta}}=-4\frac{\partial \mathrm{Cov}(\boldsymbol l_\Delta)}{\partial l_e}\Big|_{\boldsymbol l^*_{ \Delta}}=-2\widetilde \theta_{(\Delta,e)},
\end{equation*}
where
$\widetilde \theta_{(\Delta,e)}$ is the extended dihedral angle of the $\boldsymbol l^*_{ \Delta}$ at the edge $e.$ Then for each $e\in E,$ 
\begin{equation}\label{alpha=0b}
\frac{\partial  \mathcal W}{\partial \alpha_e}\Big|_{(\boldsymbol \alpha^* ,\boldsymbol \xi^* )}=4\pi+ \sum_{\Delta\in T}\frac{\partial U(\boldsymbol \alpha_\Delta,\xi_\Delta)}{\partial \alpha_e}\Big|_{(\boldsymbol \alpha^*_{ \Delta},\xi^*_{ \Delta})}=2\Big(2\pi-\sum_{\Delta\sim e}\widetilde\theta_{(\Delta,e)}\Big)=0,
\end{equation}
where the last summation is over all the corners around $e,$ and the last equality comes from the fact that the sum of the dihedral angles around an edge equals is the cone angle at this edge, which equals $2\pi.$ By (\ref{xi=0b}) and (\ref{alpha=0b}), $(\boldsymbol \alpha^* ,\boldsymbol \xi^* )$ is a critical point of $ \mathcal W.$

 Finally, for the critical value, by Proposition \ref{critical2}, we have
\begin{equation*}
\begin{split}
\mathcal W(\boldsymbol \alpha^*,\boldsymbol \xi^*)=&2\pi \mathbf i \sum_{e\in E} l^*_e-2\mathbf i\sum_{\Delta\in T}\Big(\mathrm{Vol}(\Delta)+\sum_{e\sim \Delta}\frac{\widetilde\theta_{(\Delta, e)}l^*_e}{2}\Big)\\
=& -2\mathbf i\sum_{\Delta\in T}\mathrm{Vol}(\Delta)+\mathbf i\sum_{e\in E} \Big(2\pi -\sum_{\Delta\sim e}\widetilde\theta_{(\Delta,e)}\Big)l^*_e\\
=& -2\mathbf i\mathrm{Vol}(M),
\end{split}
\end{equation*}
where the last summation in the first row is over all the edges $e$ in $\Delta,$ and the last summation in the second row is over all the corners around $e.$ 
\end{proof}

By Proposition \ref{KC}, there is a one-parameter family of hyperbolic polyhedral metrics $\boldsymbol l^*_s$ converging to $\boldsymbol l^*.$  For $e\in E,$ let $\theta_{s,e}$ be the cone angle of $\boldsymbol l^*_s$ at $e.$ For $\Delta\in T$ with edges $e_1,\dots,e_6,$ let 
 $\boldsymbol \alpha^*_{s,\Delta}=\Big(\frac{\pi}{2}+ \mathbf i\frac{l^*_{s, e_1}}{2},\dots,\frac{\pi}{2}+ \mathbf i\frac{l^*_{s, e_6}}{2}\Big).$ 
Guaranteed by Proposition \ref{critical2} (1), we let $\xi^*_{s,\Delta}=\xi^*(\boldsymbol \alpha^*_{s,\Delta})$ be the unique critical point of $U_{\boldsymbol \alpha^*_{s,\Delta}}$  in $D.$

\begin{proposition}\label{cpcv6} 
The function 
$$\mathcal W_s(\boldsymbol \alpha, \boldsymbol \xi)=\sum_{e\in E} \theta_{s,e}(2\alpha_e-\pi)+\sum_{\Delta\in T}U\big(\boldsymbol \alpha_\Delta,\xi_\Delta\big)$$  has a critical point 
$$\boldsymbol z_s^*=\bigg(\Big(\frac{\pi}{2}+\mathbf i\frac{l^*_{s,e}}{2}\Big)_{e\in E},\big(\xi^*_{s,\Delta}\big)_{\Delta\in T}\bigg)$$
in $\big(\frac{\pi}{2}+\mathbf i{\mathbb R_{>0}}\big)^E\times D^T.$
\end{proposition}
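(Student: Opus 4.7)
The plan is to follow the proof of Proposition~\ref{cpcv5} almost verbatim, exploiting the fact that $\boldsymbol l_s^* \in \mathcal L(M,\mathcal T)$ so that every tetrahedron in $\boldsymbol l_s^*$ is a genuine truncated hyperideal tetrahedron (non-flat), and the corresponding $\xi^*_{s,\Delta}$ lies in the interior $D$ by Proposition~\ref{critical2}(1). The only change in the computation is that the edge coefficient in $\mathcal W_s$ is the cone angle $\theta_{s,e}$ of $\boldsymbol l_s^*$ instead of $2\pi$, which will match perfectly with what comes out of the tetrahedral derivatives.

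First I would differentiate in $\xi_\Delta$. Since $\xi^*_{s,\Delta}$ is by definition a critical point of $U_{\boldsymbol\alpha^*_{s,\Delta}}(\xi_\Delta)$, one has
$$\frac{\partial \mathcal W_s}{\partial \xi_\Delta}\bigg|_{\boldsymbol z_s^*} = \frac{\partial U_{\boldsymbol\alpha^*_{s,\Delta}}(\xi_\Delta)}{\partial \xi_\Delta}\bigg|_{\xi^*_{s,\Delta}} = 0$$
for every $\Delta \in T$.

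Next I would differentiate in $\alpha_e$ for $e \in E$. As in the proof of Proposition~\ref{cpcv5}, set $W(\boldsymbol\alpha_\Delta) = U(\boldsymbol\alpha_\Delta, \xi^*_\Delta)$, so that the chain rule combined with the vanishing of $\partial U/\partial \xi_\Delta$ at $\xi^*_{s,\Delta}$ gives
$$\frac{\partial U(\boldsymbol\alpha_\Delta, \xi_\Delta)}{\partial \alpha_e}\bigg|_{(\boldsymbol\alpha^*_{s,\Delta}, \xi^*_{s,\Delta})} = \frac{\partial W(\boldsymbol\alpha_\Delta)}{\partial \alpha_e}\bigg|_{\boldsymbol\alpha^*_{s,\Delta}}.$$
Since $\boldsymbol l_s^* \in \mathcal L(M,\mathcal T)$, every $\boldsymbol l_{s,\Delta}^*$ lies in $\mathcal L$, so the first case of~\eqref{dCovdlin} applies and yields
$$\frac{\partial W(\boldsymbol\alpha_\Delta)}{\partial \alpha_e}\bigg|_{\boldsymbol\alpha^*_{s,\Delta}} = -2\theta_{(\Delta,e)},$$
where $\theta_{(\Delta,e)}$ is the (ordinary, non-extended) dihedral angle of the truncated hyperideal tetrahedron $\boldsymbol l_{s,\Delta}^*$ at $e$.

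Summing over all tetrahedra $\Delta$ adjacent to a fixed edge $e$ and using that the cone angle $\theta_{s,e}$ is exactly $\sum_{\Delta \sim e}\theta_{(\Delta,e)}$, I would then conclude
$$\frac{\partial \mathcal W_s}{\partial \alpha_e}\bigg|_{\boldsymbol z_s^*} = 2\theta_{s,e} + \sum_{\Delta\sim e}\big({-}2\theta_{(\Delta,e)}\big) = 0.$$
Together with the vanishing in the $\xi_\Delta$ directions, this shows $\boldsymbol z_s^*$ is a critical point of $\mathcal W_s$. There is no serious obstacle here; the statement is a direct transcription of Proposition~\ref{cpcv5}'s argument to the deformed, genuinely hyperideal setting, and in particular no extended co-volume considerations are needed because $\boldsymbol l_s^*$ avoids the boundary of $\mathcal L(M,\mathcal T)$.
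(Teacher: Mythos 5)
Your proposal is correct and follows essentially the same route as the paper: the paper's proof of Proposition~\ref{cpcv6} is an explicit verbatim transcription of the argument for Proposition~\ref{cpcv5}, computing $\partial\mathcal W_s/\partial\xi_\Delta=0$ from the definition of $\xi^*_{s,\Delta}$ and $\partial\mathcal W_s/\partial\alpha_e = 2\big(\theta_{s,e}-\sum_{\Delta\sim e}\theta_{s,(\Delta,e)}\big)=0$ via the chain rule through $W(\boldsymbol\alpha_\Delta)$ and the co-volume derivative identity. Your observation that no extended co-volume considerations are needed because $\boldsymbol l^*_s\in\mathcal L(M,\mathcal T)$ is exactly the point the paper exploits.
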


\begin{proof}  The proof follows verbatim the proof of Proposition \ref{cpcv5}. Namely, let $\boldsymbol z_s^*=(\boldsymbol \alpha_s^* ,\boldsymbol \xi_s^* ),$ where $\boldsymbol \alpha_s^* =\Big(\frac{\pi}{2}+\mathbf i\frac{l^*_{s, e}}{2}\Big)_{e\in E}$ and $\boldsymbol \xi_s^* =\big(\xi^*_{s, \Delta}\big)_{\Delta\in T}.$ For $\Delta\in T$ with edges $e_1,\dots,e_6,$ let $l^*_{s, \Delta}=(l^*_{s, e_1},\dots,l^*_{s, e_6}).$ 

We first have
\begin{equation}\label{xi=0b3}
\frac{\partial  \mathcal W_s}{\partial \xi_\Delta}\Big|_{(\boldsymbol \alpha_s^* ,\boldsymbol \xi_s^* )}=\frac{\partial U_{\boldsymbol \alpha^*_{s,\Delta}}(\xi_\Delta)}{\partial \xi_\Delta}\Big|_{\xi^*_{s, \Delta}}=0.
\end{equation}

Next, let $W(\boldsymbol \alpha_\Delta)=U(\boldsymbol \alpha_\Delta,\xi^*_\Delta)=U_{\boldsymbol \alpha_\Delta}(\xi^*_\Delta)$ be the function defined in (\ref{W}). Then for any edge $e$ of $\Delta,$ 
\begin{equation*}
\begin{split}
\frac{\partial W(\boldsymbol \alpha_\Delta)}{\partial \alpha_e}\Big|_{\boldsymbol \alpha^*_{s,\Delta}}=&\frac{\partial U(\boldsymbol \alpha_\Delta,\xi_\Delta)}{\partial \alpha_e}\Big|_{(\boldsymbol \alpha^*_{s,\Delta},\xi^*_{s,\Delta})}+\frac{\partial U(\boldsymbol \alpha_\Delta,\xi_\Delta)}{\partial \xi_\Delta}\Big|_{(\boldsymbol \alpha^*_{s,\Delta},\xi^*_{s,\Delta})}\cdot\frac{\partial \xi^*_\Delta}{\partial \alpha_e}\Big|_{\boldsymbol \alpha^*_{s, \Delta}}\\
=&\frac{\partial U(\boldsymbol \alpha_\Delta,\xi_\Delta)}{\partial \alpha_e}\Big|_{(\boldsymbol \alpha^*_{s,\Delta},\xi^*_{s, \Delta})}.
\end{split}
\end{equation*}
As a consequence, and by Proposition \ref{critical2} (1),
\begin{equation}\label{pUpa2}
\frac{\partial U(\boldsymbol \alpha_\Delta,\xi_\Delta)}{\partial \alpha_e}\Big|_{(\boldsymbol \alpha^*_{s, \Delta},\xi^*_{s, \Delta})}=\frac{\partial W(\boldsymbol \alpha_\Delta)}{\partial \alpha_e}\Big|_{\boldsymbol \alpha^*_{s,\Delta}}=-4\frac{\partial \mathrm{Cov}(\boldsymbol l_\Delta)}{\partial l_e}\Big|_{\boldsymbol l^*_{s, \Delta}}=-2 \theta_{s, (\Delta,e)},
\end{equation}
where
$\theta_{s, (\Delta,e)}$ is the dihedral angle of the $\boldsymbol l^*_{s, \Delta}$ at the edge $e.$ Then for each $e\in E,$ 
\begin{equation}\label{alpha=0b3}
\frac{\partial  \mathcal W_s}{\partial \alpha_e}\Big|_{(\boldsymbol \alpha_s^* ,\boldsymbol \xi_s^* )}=2\theta_{s,e} + \sum_{\Delta\in T}\frac{\partial U(\boldsymbol \alpha_\Delta,\xi_\Delta)}{\partial \alpha_e}\Big|_{(\boldsymbol \alpha^*_{s, \Delta},\xi^*_{s, \Delta})}=2\Big(\theta_{s,e}-\sum_{\Delta\sim e}\theta_{s, (\Delta,e)}\Big)=0,
\end{equation}
where the last summation is over all the corners around $e,$ and the last equality comes from the fact that the sum of the dihedral angles around an edge equals is the cone angle at this edge, which equals $\theta_{s,e}.$ By (\ref{xi=0b3}) and (\ref{alpha=0b3}), $(\boldsymbol \alpha_s^* ,\boldsymbol \xi_s^* )$ is a critical point of $ \mathcal W_s.$  
\end{proof}

\begin{proposition}\label{HessTor2} 
\begin{equation}\label{tor}
\frac{\det\big(-\mathrm{Hess}\mathcal  W(\boldsymbol z^*)\big)}{\exp\big(\frac{\kappa(\boldsymbol z^*)}{\pi \mathbf i }\big) \prod_{e\in E} \sinh^2 l_e}=\pm 2^{5|E|+|T|} \mathrm{Tor}(DM, \mathrm{Ad}_{\rho_{DM}}),
\end{equation}
where $\mathrm{Tor}(DM, \mathrm{Ad}_{\rho_{DM}})$ is the Reidemeister torsion of the double $DM$ of $M$ twisted by the adjoint action of  the double $\rho_{DM}$ of the holonomy representation of the hyperbolic structure on $M.$ In particular, 
$$\det\big(-\mathrm{Hess}\mathcal  W(\boldsymbol z^*)\big)\neq 0.$$
\end{proposition}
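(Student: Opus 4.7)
The plan is to reduce the computation of $\det(-\mathrm{Hess}\mathcal W(\boldsymbol z^*))$ to a Jacobian determinant of cone angles in edge lengths via a Schur complement, identify the resulting expression with the Reidemeister torsion of $N\doteq DM\setminus\{\text{doubled edges}\}$ by applying the Wong--Yang formula (Theorem \ref{Torsion2}), and then pass to $\mathrm{Tor}(DM,\mathrm{Ad}_{\rho_{DM}})$ via the surgery formula (Theorem \ref{funT}(iii)).

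Splitting variables as $(\boldsymbol\alpha,\boldsymbol\xi)$ I would write
$$
\mathrm{Hess}\mathcal W(\boldsymbol z^*)=\begin{pmatrix} A & B \\ B^T & C\end{pmatrix}.
$$
Because each $\xi_\Delta$ appears only in the summand $U(\boldsymbol\alpha_\Delta,\xi_\Delta)$, the block $C$ is diagonal, with entries $C_{\Delta\Delta}=U_{\boldsymbol\alpha^*_\Delta}''(\xi^*_\Delta)=-16\exp(\kappa_{\boldsymbol\alpha^*_\Delta}(\xi^*_\Delta)/\pi\mathbf i)\sqrt{\det\mathrm{Gram}(\boldsymbol l^*_\Delta)}$ by Proposition \ref{Hess}, nonzero on non-flat tetrahedra. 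Since $\partial U/\partial\xi_\Delta=0$ at $\boldsymbol z^*$, the implicit function theorem locally determines $\xi^*_\Delta$ as a function of $\boldsymbol\alpha_\Delta$, and the Schur complement identity yields
$$
A-BC^{-1}B^T=\mathrm{Hess}_{\boldsymbol\alpha}\widetilde W,\qquad \widetilde W(\boldsymbol\alpha)\doteq 2\pi\sum_{e\in E}(2\alpha_e-\pi)+\sum_{\Delta\in T}W(\boldsymbol\alpha_\Delta),
$$
with $W$ as in \eqref{W}. From \eqref{dCovdlin} in the proof of Proposition \ref{critical2}, one has $\partial W/\partial\alpha_e=-2\theta_{(\Delta,e)}$; together with the change of variable $\partial/\partial\alpha_e=-2\mathbf i\,\partial/\partial l_e$ from $\alpha_e=\pi/2+\mathbf i l_e/2$, this gives $\partial\widetilde W/\partial\alpha_e=4\pi-2\theta_e$ and $\partial^2\widetilde W/\partial\alpha_{e_1}\partial\alpha_{e_2}=4\mathbf i\,\partial\theta_{e_1}/\partial l_{e_2}$. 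Combining,
$$
\det\mathrm{Hess}\mathcal W(\boldsymbol z^*)=(-16)^{|T|}(4\mathbf i)^{|E|}\exp\!\Big(\tfrac{\kappa(\boldsymbol z^*)}{\pi\mathbf i}\Big)\prod_{\Delta\in T}\sqrt{\det\mathrm{Gram}(\boldsymbol l^*_\Delta)}\,\det\!\Big(\tfrac{\partial\theta_e}{\partial l_{e'}}\Big).
$$

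Theorem \ref{Torsion2} rewrites $\prod_\Delta\sqrt{\det\mathrm{Gram}(\boldsymbol l^*_\Delta)}\,\det(\partial\theta_e/\partial l_{e'})$ as $\pm\mathbf i^{|E|}2^{|E|-3|T|}\mathbb T_{(N,\boldsymbol m)}([\rho])$, where $\boldsymbol m$ is the system of meridians of tubular neighborhoods of the doubled edges. Since truncated hyperideal tetrahedra meet their truncation triangles at right angles, each edge of $M$ is perpendicular to $\partial M$, so under doubling along the totally geodesic $\partial M$ the edge and its mirror image join smoothly to a closed geodesic of complex length $2l_e$ in $DM$. Dehn filling $N$ along $\boldsymbol m$ therefore recovers $DM$ with core-curve holonomies $H(\gamma_e)=2l_e$, and Theorem \ref{funT}(iii) yields $\mathbb T_{(N,\boldsymbol m)}([\rho])=\pm 4^{|E|}\prod_e\sinh^2 l_e\cdot \mathrm{Tor}(DM,\mathrm{Ad}_{\rho_{DM}})$. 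Substituting, using $\det(-\mathrm{Hess}\mathcal W)=(-1)^{|E|+|T|}\det\mathrm{Hess}\mathcal W$, and gathering powers of $2$, $\mathbf i$, and $-1$ (with $\mathbf i^{2|E|}(-1)^{|T|+|E|}(-1)^{|E|+|T|}=1$) produces the claimed expression $\pm 2^{5|E|+|T|}\mathrm{Tor}(DM,\mathrm{Ad}_{\rho_{DM}})$. The non-vanishing of $\det(-\mathrm{Hess}\mathcal W(\boldsymbol z^*))$ then follows since $\mathrm{Tor}(DM,\mathrm{Ad}_{\rho_{DM}})\neq 0$ for a closed hyperbolic $3$-manifold.

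The principal obstacles will be (i) the geometric identification $H(\gamma_e)=2l_e$ via orthogonality of edges to $\partial M$ in truncated hyperideal tetrahedra, and (ii) the careful bookkeeping of signs and powers of $\mathbf i$ across the three formulas. When the Kojima triangulation contains flat tetrahedra, so that $\boldsymbol l^*\in\partial\mathcal L(M,\mathcal T)$, the diagonal entries of $C$ vanish on those tetrahedra and the Schur complement is not directly applicable; I would resolve this by running the same argument at the deformed metrics $\boldsymbol l^*_s\in\mathcal L(M,\mathcal T)$ from Proposition \ref{KC}. Since $\mathcal W$ and the perturbed potential $\mathcal W_s$ from Proposition \ref{cpcv6} differ by a term linear in $\boldsymbol\alpha$, they have identical Hessians, and since $\partial U/\partial\xi_\Delta=0$ still holds at each $\boldsymbol z^*_s$ by Proposition \ref{cpcv6}, the Schur-complement identity above remains valid there; passing to the limit $s\to 0$ using continuity of both sides in $\boldsymbol l$ then delivers the formula at $\boldsymbol z^*$.
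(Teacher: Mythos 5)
Your proposal is correct and follows essentially the same route as the paper: a block (Schur-complement) factorization of the Hessian reducing to $\det(\partial\theta_e/\partial l_{e'})$ times the diagonal $\xi$-block, identification with $\mathbb T_{(N,\boldsymbol m)}$ via the Wong--Yang formula, passage to $\mathrm{Tor}(DM,\mathrm{Ad}_{\rho_{DM}})$ via the surgery formula with $\mathrm H(\gamma_e)=2l_e$, and treatment of flat tetrahedra by deforming to the metrics $\boldsymbol l^*_s$ and taking the limit $s\to 0$. The paper writes the factorization explicitly as a congruence $S\cdot D\cdot S^T$ rather than invoking the Schur complement of the reduced potential, but this is the same computation.
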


\begin{proof} Guaranteed by Proposition \ref{KC}, let $\{\boldsymbol l^*_s\}_{s\in (0,s_0)}$ be a one-parameter family of  hyperbolic polyhedral metrics on $M$ that converges to $\boldsymbol l^*.$ Let $N$ be the $3$-manifold obtained from  the double $DM$  of $M$ by removing the double of the edges of $\mathcal T,$ and for $s\in (0,s_0)$ let  $\rho_s$ be the holonomy representation of the hyperbolic cone metric on $N$ obtained by doubling $\boldsymbol l^*_s.$ Let $\boldsymbol m$ be the system of the meridians of a tubular neighborhood of the double of the edges, and let   $ \mathbb T_{(N,\boldsymbol m)}$ be the twisted adjoint Reidemeister torsion function of  the $\mathrm{PSL}(2,\mathbb C)$-characters of $\pi_1(N).$ 
Let $\boldsymbol z_s^*=\Big(\big(\frac{\pi}{2}+\mathbf i\frac{l^*_{s,e}}{2}\big)_{e\in E},\big(\xi^*_{s,\Delta}\big)_{\Delta\in T}\Big)$ be the critical point of the function $\mathcal W_s$ in Proposition \ref{cpcv6}. We will prove that 
\begin{equation}\label{tor2}
\frac{\det\big(-\mathrm{Hess}\mathcal  W(\boldsymbol z^*_s)\big)}{\exp\big(\frac{\kappa(\boldsymbol z^*_s)}{\pi \mathbf i }\big)}=\pm 2^{3|E|+|T|} \mathbb{T}_{(N,\boldsymbol m)}([\rho_s])
\end{equation}
for all $s\in (0,s_0).$ 
Assuming this, then as $\det(-\mathrm{Hess}\mathcal W)$ is continuous in $(\boldsymbol\alpha,\boldsymbol\xi),$ $\mathbb T_{(N,\boldsymbol m)}$ by Theorem \ref{funT} (i) and Remark \ref{rm} is analytic and hence continuous in a neighborhood of  the character of the holonomy representation $\rho$ of the hyperbolic cone metric on $N$ (with all the cone angles $2\pi$) obtained by doubling  the generalizaed hyperbolic polyhedral metric $\boldsymbol l^*$ of $M,$ and $\boldsymbol z^*_s$ approaches $\boldsymbol z^*$ as $s$ tends to $0,$ we have 
\begin{equation*}
\frac{\det\big(-\mathrm{Hess}\mathcal  W(\boldsymbol z^*)\big)}{\exp\big(\frac{\kappa(\boldsymbol z^*)}{\pi \mathbf i }\big)}=\pm 2^{3|E|+|T|} \mathbb{T}_{(N,\boldsymbol m)}([\rho]).
\end{equation*}
Then by Theorem \ref{funT} (iii), as $DM$ can be considered as obtained from $N$ by doing a hyperbolic Dehn filling along the meridians $\boldsymbol m,$ and the double of the edges form the system of curves $\boldsymbol \gamma$ with the logarithmic holonomy $\mathrm H(\gamma_e)= 2l_e$ for each $e\in E,$  we have
\begin{equation*}
\begin{split}
\frac{\det\big(-\mathrm{Hess}\mathcal  W(\boldsymbol z^*)\big)}{\exp\big(\frac{\kappa(\boldsymbol z^*)}{\pi \mathbf i }\big) \prod_{e\in E} \sinh^2 l_e}=&\pm 2^{3|E|+|T|} \mathbb{T}_{(N,\boldsymbol m)}([\rho]) \prod_{e\in E}\frac{1}{\sinh^2l_e}\\
=&\pm 2^{5|E|+|T|} \mathrm{Tor}(DM, \mathrm{Ad}_{\rho_{DM}}).
\end{split}
\end{equation*}
Since $\pm \mathrm{Tor}(DM, \mathrm{Ad}_{\rho_{DM}})\neq 0,$ we have $\det\big(-\mathrm{Hess}\mathcal  W(\boldsymbol z^*)\big)\neq 0,$ which completes the proof. 
\\

We are left to prove (\ref{tor2}). We use the argument adapted from that of \cite[Lemma 4.3]{WY}. Namely, for $\Delta\in T$ and $e\in E$ we denote by $\Delta\sim e$ and $e\sim \Delta$ if the tetrahedron $\Delta$ intersects the edge $e;$  and for $\{e_i,e_j\}\subset E$ we denote by $\Delta \sim e_i,e_j$ if $\Delta$ intersects both $e_i$ and $e_j.$ Let $(\theta_e)_{e\in E}$ be the cone angle functions in terms of the edge lengths of $M.$  Then we claim:
\begin{enumerate}[(1)]
\item For $\Delta\in T,$
$$\frac{\partial^2 \mathcal W(\boldsymbol\alpha,\boldsymbol\xi)}{\partial \xi_\Delta^2}\bigg|_{\boldsymbol z^*_s}=\frac{\partial^2 U(\boldsymbol\alpha_\Delta,\xi_\Delta)}{\partial \xi_\Delta^2}\bigg|_{(\boldsymbol\alpha^*_{s,\Delta},\xi^*_{s,\Delta})}.$$

\item For $\{\Delta_1,\Delta_2\}\subset T,$
$$\frac{\partial^2 \mathcal W(\boldsymbol\alpha,\boldsymbol\xi)}{\partial \xi_{\Delta_1}\partial \xi_{\Delta_2}}\bigg|_{\boldsymbol z^*_s}=0.$$

\item For $e\in E$ and $\Delta\in T,$
$$\frac{\partial^2 \mathcal W(\boldsymbol\alpha,\boldsymbol\xi)}{\partial \alpha_e\partial \xi_\Delta}\bigg|_{\boldsymbol z^*_s}=-\frac{\partial^2 U(\boldsymbol\alpha_\Delta,\xi_\Delta)}{\partial \xi_\Delta^2}\bigg|_{(\boldsymbol\alpha^*_{s,\Delta},\xi^*_{s,\Delta})}\frac{\partial \xi^*(\boldsymbol\alpha_\Delta)}{\partial \alpha_e}\bigg|_{\boldsymbol\alpha^*_{s,\Delta}}.$$

\item For $e\in E,$ 
$$\frac{\partial^2 \mathcal W(\boldsymbol\alpha,\boldsymbol\xi)}{\partial \alpha_e^2}\bigg|_{\boldsymbol z^*_s}=  4\mathbf i \frac{\partial \theta_e}{\partial l_e}\bigg|_{\boldsymbol l^*_s}+\sum_{\Delta\sim e}\frac{\partial^2 U(\boldsymbol\alpha_\Delta,\xi_\Delta)}{\partial \xi_\Delta^2}\bigg|_{(\boldsymbol\alpha^*_{s,\Delta},\xi^*_{s,\Delta})} \bigg(\frac{\partial \xi^*(\boldsymbol\alpha_\Delta)}{\partial \alpha_e}\bigg|_{\boldsymbol\alpha^*_{s,\Delta}}\bigg)^2.$$

\item For $\{e_i,e_j\}\subset E,$ 
$$\frac{\partial^2 \mathcal W(\boldsymbol\alpha,\boldsymbol\xi)}{\partial \alpha_{e_i}\partial \alpha_{e_j}}\bigg|_{\boldsymbol z^*_s}= 4\mathbf i \frac{\partial \theta_{e_i}}{\partial l_{e_j}}\bigg|_{\boldsymbol l^*_s}+\sum_{\Delta\sim e_i,e_j}\frac{\partial^2 U(\boldsymbol\alpha_\Delta,\xi_\Delta)}{\partial \xi_\Delta^2}\bigg|_{(\boldsymbol\alpha^*_{s,\Delta},\xi^*_{s,\Delta})} \frac{\partial \xi^*(\boldsymbol\alpha_\Delta)}{\partial \alpha_{e_i}}\bigg|_{\boldsymbol\alpha^*_{s,\Delta}}\frac{\partial \xi^*(\boldsymbol\alpha_\Delta)}{\partial \alpha_{e_i}}\bigg|_{\boldsymbol\alpha^*_{s,\Delta}}.$$
\end{enumerate}
We will prove these claims (1) -- (5) at the end.  Assuming them, we have
\begin{equation}\label{congurent}
\mathrm{Hess}\mathcal W(\boldsymbol z_s^*)=S \cdot D\cdot S ^T,
\end{equation}
where $D$ is a block matrix with the left-top block the $|E|\times|E|$ matrix 
$$\bigg(4\mathbf i  \frac{\partial \theta_{e_i}}{\partial l_{e_j}}\bigg|_{\boldsymbol l^*_s}\bigg)_{e_i,e_j\in E},$$
the right-top and the left-bottom blocks consisting of $0$'s, and the right-bottom block the $|T|\times|T|$ diagonal matrix with the diagonal entries
$\frac{\partial^2 U}{\partial \xi_\Delta^2}\Big|_{(\boldsymbol\alpha^*_{s,\Delta},\xi^*_{s,\Delta})};$ and  $S $ is a block matrix with the left-top and the right-bottom blocks respectively the $|E|\times|E|$ and $|T|\times|T|$ identity matrices, the left-bottom block consisting of $0$'s and the right-top block the $|E|\times |T|$ matrix with entries $s_{e,\Delta},$ $e\in E$ and $\Delta\in T,$ given by
$$s_{e,\Delta}=-\frac{\partial \xi^*(\boldsymbol\alpha_\Delta)}{\partial \alpha_{e}}\bigg|_{\boldsymbol\alpha^*_{s,\Delta}}$$
if $\Delta\sim e,$ and $s_{e,\Delta}=0$
if otherwise.
Then 
\begin{equation}\label{detD}
\begin{split}
\det D=&\det\bigg( 4\mathbf i  \frac{\partial \theta_{e_i}}{\partial l_{e_j}}\bigg|_{\boldsymbol l^*_s}\bigg)_{e_i,e_j\in E} \prod _{\Delta\in T} \frac{\partial ^2U}{\partial \xi_\Delta^2}\bigg|_{(\boldsymbol\alpha^*_{s,\Delta},\xi^*_{s,\Delta})}\\
=&(-1)^{\frac{|E|}{2}}2^{2|E|}\det\bigg(\frac{\partial \theta_i}{\partial l_j}\bigg|_{\boldsymbol l^*_s}\bigg)_{e_i,e_j\in E} \prod _{\Delta\in T} \frac{\partial ^2U}{\partial \xi_\Delta^2}\bigg|_{(\boldsymbol\alpha^*_{s,\Delta},\xi^*_{s,\Delta})};
\end{split}
\end{equation}
and since $S$ is upper-triangular with all diagonal entries equal to $1,$ 
\begin{equation}\label{detA}
\det S=1.
\end{equation}
Putting (\ref{congurent}), (\ref{detD}) and (\ref{detA}) together,  we have
$$\det\big(-\mathrm{Hess}\mathcal  W(\boldsymbol z^*_s)\big)= (-1)^{\frac{|E|}{2}}2^{2|E|}\mathrm{det}\bigg(\frac{\partial \theta_{e_i}}{\partial l_{e_j}}\bigg|_{\boldsymbol l_s^*}\bigg)_{e_i,e_j\in E}\prod_{\Delta\in T}\bigg(-\frac{\partial^2 U_{\boldsymbol \alpha_\Delta}}{\partial \xi_\Delta^2}\bigg|_{\xi^*_{s,\Delta}}\bigg).$$
Then by Proposition \ref{Hess} and  Theorem \ref{Torsion2},
\begin{equation*}
\begin{split}
\frac{\det\big(-\mathrm{Hess}\mathcal  W(\boldsymbol z_s^*)\big)}{\exp\big(\frac{\kappa(\boldsymbol z_s^*)}{\pi \mathbf i }\big)}=&(-1)^{\frac{|E|}{2}}2^{2|E|+4|T|}\mathrm{det}\bigg(\frac{\partial \theta_{e_i}}{\partial l_{e_j}}\bigg|_{\boldsymbol l_s^*}\bigg)\prod_{\Delta\in T}\sqrt{\det\mathrm{Gram}(\boldsymbol l^*_{s, \Delta})}\\
=&\pm 2^{3|E|+|T|}\mathbb{T}_{(N,\boldsymbol m)}([\rho_s]).
\end{split}
\end{equation*}
This proves (\ref{tor2}) under the assumption that claims (1) -- (5) hold. 
\\

Now we prove the claims (1) -- (5). Claims (1) and (2) follow  straightforwardly   from the definition of $\mathcal W.$ 
For (3), we have
\begin{equation}\label{WU}
\frac{\partial \mathcal W}{\partial \xi_\Delta}\bigg|_{(\boldsymbol\alpha,\boldsymbol\xi)}=\frac{\partial U}{\partial \xi_\Delta}\bigg|_{(\boldsymbol\alpha_\Delta,\xi_\Delta)}.
\end{equation}
Let $f(\boldsymbol\alpha_\Delta,\xi_\Delta)\doteq \frac{\partial U}{\partial \xi_\Delta}\Big|_{(\boldsymbol\alpha_\Delta,\xi_\Delta)}$
and $g(\boldsymbol\alpha_\Delta)\doteq f(\boldsymbol\alpha_\Delta,\xi^*(\boldsymbol\alpha_\Delta)).$
Then 
$$g(\boldsymbol\alpha_\Delta)=\frac{\partial U}{\partial \xi_\Delta}\bigg|_{(\boldsymbol\alpha_\Delta,\xi^*(\boldsymbol\alpha_\Delta))}=\frac{dU_{\boldsymbol\alpha_\Delta}}{d \xi_\Delta}\bigg|_{\xi^*(\boldsymbol\alpha_\Delta)}\equiv 0,$$
and hence
\begin{equation}\label{g=0}
\frac{\partial g}{\partial \alpha_{e}}\bigg|_{\boldsymbol\alpha_\Delta}=0
\end{equation}
for all $e\sim \Delta.$
On the other hand, for  $e \sim \Delta$ we have
\begin{equation}\label{+}
\begin{split}
\frac{\partial g}{\partial \alpha_{e}}\bigg|_{\boldsymbol\alpha_\Delta}=&\frac{\partial f}{\partial \alpha_{e}}\bigg|_{(\boldsymbol\alpha_\Delta,\xi^*(\boldsymbol\alpha_\Delta))}+\frac{\partial f}{\partial \xi_\Delta}\bigg|_{(\boldsymbol\alpha_\Delta,\xi^*(\boldsymbol\alpha_\Delta))}  \frac{\partial \xi^*(\boldsymbol\alpha_\Delta)}{\partial \alpha_e}\bigg|_{\boldsymbol\alpha_\Delta}\\
=&\frac{\partial^2 U}{\partial \alpha_e\partial \xi_\Delta}\bigg|_{(\boldsymbol\alpha_\Delta,\xi^*(\boldsymbol\alpha_\Delta))}+\frac{\partial^2 U}{\partial \xi_\Delta^2}\bigg|_{(\boldsymbol\alpha_\Delta,\xi^*(\boldsymbol\alpha_\Delta))} \frac{\partial \xi^*(\boldsymbol\alpha_\Delta)}{\partial \alpha_e}\bigg|_{\boldsymbol\alpha_\Delta}.
\end{split}
\end{equation}
Putting (\ref{g=0}) and (\ref{+}) together, we have
\begin{equation}\label{alphaxi}
\frac{\partial^2 U}{\partial \alpha_e\partial \xi_\Delta}\bigg|_{(\boldsymbol\alpha_\Delta,\xi^*(\boldsymbol\alpha_\Delta))}=-\frac{\partial^2 U}{\partial \xi_\Delta^2}\bigg|_{(\boldsymbol\alpha_\Delta,\xi^*(\boldsymbol\alpha_\Delta))} \frac{\partial \xi^*(\boldsymbol\alpha_\Delta)}{\partial \alpha_e}\bigg|_{\boldsymbol\alpha_\Delta},
\end{equation}
and (3) follows from (\ref{WU}) and (\ref{alphaxi}).

For (4) and (5), we have for $e\in E$ 
\begin{equation}\label{4.10}
\frac{\partial ^2\mathcal W}{\partial \alpha_{e}^2}\bigg|_{\boldsymbol z^*_s}=\sum_{\Delta\sim e}\frac{\partial ^2U}{\partial \alpha_{e}^2}\bigg|_{(\boldsymbol\alpha^*_{s,\Delta},\xi^*_{s,\Delta})},
\end{equation}
and for $\{e_i,e_j\}\subset T$ 
\begin{equation}\label{4.11}
\frac{\partial ^2\mathcal W}{\partial \alpha_{e_i}\partial \alpha_{e_j}}\bigg|_{\boldsymbol z^*_s}=\sum_{\Delta\sim e_i, e_j}\frac{\partial ^2U}{\partial \alpha_{e_i}\partial \alpha_{e_j}}\bigg|_{(\boldsymbol\alpha^*_{s,\Delta},\xi^*_{s,\Delta)}}.
\end{equation}
Let $W$ be the function defined in (\ref{W}). By the Chain Rule and (\ref{=0}), we have
$$\frac{\partial U}{\partial \xi_\Delta}\bigg|_{(\boldsymbol\alpha_\Delta,\xi^*(\boldsymbol\alpha_\Delta))}=\frac{d U_{\boldsymbol\alpha_\Delta}}{d \xi_\Delta}\bigg|_{\xi^*(\boldsymbol\alpha_\Delta)} =0,$$
and hence for $e\sim \Delta$ we have 
$$\frac{\partial W}{\partial \alpha_{e}}\bigg|_{\boldsymbol\alpha_\Delta}=\frac{\partial U}{\partial \alpha_{e}}\bigg|_{\boldsymbol\alpha_\Delta}+\frac{\partial U}{\partial \xi_\Delta}\bigg|_{(\boldsymbol\alpha_\Delta,\xi^*(\boldsymbol\alpha_\Delta))}\frac{\partial \xi^*(\boldsymbol\alpha_\Delta)}{\partial \alpha_{e}}\bigg|_{\boldsymbol\alpha_\Delta}=\frac{\partial U}{\partial \alpha_{e}}\bigg|_{\boldsymbol\alpha_\Delta}.$$
Then using the Chain Rule again, for  $e_i,e_j\sim \Delta$ we have
\begin{equation*}\label{3.12}
\begin{split}
\frac{\partial^2 W}{\partial \alpha_{e_i}\partial \alpha_{e_j}}\bigg|_{\boldsymbol\alpha_\Delta}
=\frac{\partial ^2U}{\partial \alpha_{e_i}\partial \alpha_{e_j}}\bigg|_{(\boldsymbol\alpha_\Delta,\xi^*(\boldsymbol\alpha_\Delta))}+\frac{\partial^2 U}{\partial \alpha_{e_j}\partial \xi_\Delta}\bigg|_{(\boldsymbol\alpha_\Delta,\xi^*(\boldsymbol\alpha_\Delta))}  \frac{\partial \xi^*(\boldsymbol\alpha_\Delta)}{\partial \alpha_{e_i}}\bigg|_{\boldsymbol\alpha_\Delta}.
\end{split}
\end{equation*}
Together with (\ref{alphaxi}), for $e_i,e_j\sim \Delta$ we have
\begin{equation}\label{4.13}
\begin{split}
\frac{\partial ^2U}{\partial \alpha_{e_i}\partial \alpha_{e_j}}\bigg|_{(\boldsymbol\alpha_\Delta,\xi^*(\boldsymbol\alpha_\Delta))}=&\frac{\partial^2 W}{\partial \alpha_{e_i}\partial \alpha_{e_j}}\bigg|_{\boldsymbol\alpha_\Delta}-\frac{\partial^2 U}{\partial \alpha_{e_j}\partial \xi_\Delta}\bigg|_{(\boldsymbol\alpha_\Delta,\xi^*(\boldsymbol\alpha_\Delta))}  \frac{\partial \xi^*(\boldsymbol\alpha_\Delta)}{\partial \alpha_{e_i}}\bigg|_{\boldsymbol\alpha_\Delta}\\
=&\frac{\partial^2 W}{\partial \alpha_{e_i}\partial \alpha_{e_j}}\bigg|_{\boldsymbol\alpha_\Delta}+\frac{\partial^2 U}{\partial \xi_\Delta^2}\bigg|_{(\boldsymbol\alpha_\Delta,\xi^*(\boldsymbol\alpha_\Delta))} \frac{\partial \xi^*(\boldsymbol\alpha_\Delta)}{\partial \alpha_{e_i}}\bigg|_{\boldsymbol\alpha_\Delta}  \frac{\partial \xi^*(\boldsymbol\alpha_\Delta)}{\partial \alpha_{e_j}}\bigg|_{\boldsymbol\alpha_\Delta}.
\end{split}
\end{equation}
By Proposition \ref{critical2}, we have
\begin{equation*}
\frac{\partial^2 W}{\partial \alpha_{e_i}\partial \alpha_{e_j}}\bigg|_{\boldsymbol\alpha^*_{s,\Delta}}=4\mathbf i \frac{\partial \theta_{(\Delta,e_i)}}{\partial l_{e_j}}\bigg|_{\boldsymbol l^*_{s,\Delta}},
\end{equation*}
where $\theta_{(\Delta,e_i)}$ is the dihedral angle of  $\Delta$ at $e_i$ as a function of $\boldsymbol l_\Delta,$ 
Then (4) and (5) follow from (\ref{4.10}),  (\ref{4.11}), (\ref{4.13}) and the fact that
$$\theta_e=\sum_{\Delta\sim e}\theta_{(\Delta,e)},$$
where the sum is over all the corners around $e.$
This completes the proof of the claims (1) -- (5). 
\end{proof}

\begin{proposition}\label{cd}
The functions $\mathrm{Im}\mathcal W$ is  strictly concave down  in $(\mathrm{Im}\boldsymbol \alpha, \mathrm{Im}\boldsymbol \xi)$ on $\big(\frac{\pi}{2}+\mathbf i{\mathbb R_{>0}}\big)^E\times D^T.$ 
  \end{proposition}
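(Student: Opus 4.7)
The plan is to show that the Hessian of $\mathrm{Im}\,\mathcal W$ with respect to $(\mathrm{Im}\boldsymbol\alpha,\mathrm{Im}\boldsymbol\xi)$ is strictly negative definite at every point of $(\tfrac{\pi}{2}+\mathbf i\mathbb R_{>0})^E\times D^T$, by decomposing the computation tetrahedron by tetrahedron. The linear summand $2\pi\sum_{e\in E}(2\alpha_e-\pi)$ contributes nothing to the Hessian, so only $\sum_{\Delta\in T}\mathrm{Im}\,U(\boldsymbol\alpha_\Delta,\xi_\Delta)$ needs to be analyzed. I will first show that, for each tetrahedron, the restriction of the Hessian to that tetrahedron's seven real variables $(y_1,\dots,y_6,\zeta)=(\mathrm{Im}\alpha_{e_1},\dots,\mathrm{Im}\alpha_{e_6},\mathrm{Im}\xi_\Delta)$ is strictly negative definite. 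Then I will assemble the global Hessian as a sum of extensions of the per-tetrahedron ones and conclude.

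For the per-tetrahedron computation, I will observe that each argument of $L$ appearing in $U$ has the form $x_0+\mathbf i\,c^{\top}(y_1,\dots,y_6,\zeta)$ with $x_0\in\mathbb R$ constant and $c\in\mathbb R^7$, and then use the one-variable identity $\partial_b^2\,\mathrm{Im}\,L(a+\mathbf i b)=-\mathrm{Im}\,L''(a+\mathbf i b)= -\sin(2a)/(\cosh^2 b-\cos^2 a)$ underlying formula \eqref{2nd} to deduce
\begin{equation*}
\mathrm{Hess}\bigl(\mathrm{Im}\,L(x_0+\mathbf i\,c^{\top}(y_1,\dots,y_6,\zeta))\bigr)=-\mathrm{Im}\,L''\bigl(x_0+\mathbf i\,c^{\top}(\cdots)\bigr)\,cc^{\top}.
\end{equation*}
For the sixteen constant-argument terms $L(\eta_j-\tau_i)$ we have $\mathrm{Re}(\eta_j-\tau_i)=\pi/2$, so $\sin 2x_0=0$ and they contribute nothing. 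For the four terms $L(\xi_\Delta-\tau_i)$ and the four terms $L(\eta_j-\xi_\Delta)$, their real parts lie in $(0,\pi/2)$ when $\xi_\Delta\in D$, so $\sin 2x_0>0$ and each contributes a negative semi-definite rank-one matrix $-p_i d_id_i^{\top}$ or $-q_j f_jf_j^{\top}$ with $p_i,q_j>0$ and explicit $d_i,f_j\in\mathbb R^7$.

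The per-tetrahedron Hessian $H_\Delta$ is thus a sum of eight rank-one negative semi-definite matrices, and will be strictly negative definite on $\mathbb R^7$ as soon as the vectors $\{d_1,\dots,d_4,f_1,\dots,f_4\}$ span $\mathbb R^7$. Here $f_4=(0,\dots,0,-1)$ comes from $L(\eta_4-\xi_\Delta)$ with $\eta_4=2\pi$ constant, and takes care of the $\zeta$-direction. For the $y$-directions I will show that if $v\in\mathbb R^7$ satisfies $d_i^{\top}v=f_j^{\top}v=0$ for all $i,j$, then $v=0$: the condition $f_4^{\top}v=0$ forces the $\zeta$-component of $v$ to vanish; the three equations $f_j^{\top}v=0$ for $j=1,2,3$, whose index sets $q_1,q_2,q_3$ are the complements of the three pairs of opposite edges, then imply $v_k+v_{k+3}=0$ for $k=1,2,3$; substituting into the four face-triple equations $d_i^{\top}v=0$ and simplifying yields $v_1=v_2=v_3=0$, hence $v=0$.

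Finally, I will write $\mathrm{Hess}(\mathrm{Im}\,\mathcal W)=\sum_{\Delta\in T} P_\Delta^{\top}H_\Delta P_\Delta$, where $P_\Delta$ is the projection from $\mathbb R^E\times\mathbb R^T$ onto the seven variables of $\Delta$. Each summand is negative semi-definite, so $v^{\top}\mathrm{Hess}(\mathrm{Im}\,\mathcal W)v=0$ forces $P_\Delta v=0$ for every $\Delta$ by strict negative definiteness of $H_\Delta$. Since every edge is adjacent to some tetrahedron and every $\mathrm{Im}\,\xi_\Delta$ belongs to its own tetrahedron, this gives $v=0$, so the global Hessian is strictly negative definite and $\mathrm{Im}\,\mathcal W$ is strictly concave down. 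The main obstacle is the rank computation on a single tetrahedron: a purely combinatorial statement about the face-triples $t_i$ and the $q$-quadruples $q_j$ that I expect to dispatch by the short direct substitution sketched above.
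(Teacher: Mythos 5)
Your proposal is correct, and its skeleton coincides with the paper's: drop the linear term, work tetrahedron by tetrahedron, observe that the sixteen terms $L(\eta_j-\tau_i)$ contribute nothing to the Hessian because $\mathrm{Re}(\eta_j-\tau_i)=\pi/2$, and then prove strict concavity of the remaining eight terms in the seven variables $(\mathrm{Im}\boldsymbol\alpha_\Delta,\mathrm{Im}\xi_\Delta)$. Where you diverge is in how that last step is carried out. The paper changes variables to $x_i=\xi-\tau_i$ ($i=1,\dots,4$) and $x_{j+4}=\eta_j-\xi$ ($j=1,2,3$), notes that $\eta_4-\xi=2\pi-\sum_{k=1}^7x_k$, and concludes from the strict concavity of each $\mathrm{Im}L(x_k)$ in $\mathrm{Im}x_k$ together with the (semi-definite) concavity of the eighth term. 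You instead write the Hessian as a sum of eight rank-one negative semi-definite matrices $-p_id_id_i^{\top}$, $-q_jf_jf_j^{\top}$ and verify that the direction vectors span $\mathbb R^7$. These are dual formulations of the same linear-algebra fact: the invertibility of the paper's change of variables is exactly the statement that $\{d_1,\dots,d_4,f_1,f_2,f_3\}$ already spans $\mathbb R^7$, a point the paper uses implicitly but never checks; your explicit spanning computation (which is correct --- the $f_j$ equations force opposite-edge components to cancel and then the face-triple equations force everything to vanish) supplies that missing verification. Your global assembly via $\sum_\Delta P_\Delta^{\top}H_\Delta P_\Delta$ and the covering of all coordinates by tetrahedra is likewise just an explicit version of the paper's ``sum of strictly concave summands'' remark. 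So the route is essentially the same, but your write-up is the more airtight of the two. (Minor note: the paper's formula \eqref{2nd} carries a factor of $4$ that a direct computation of $\mathrm{Im}\,L''$ does not produce; this is immaterial since only the sign is used.)
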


\begin{proof}

 Since $2\pi\sum_{e\in E}\mathrm{Im}\alpha_e$ is a  linear function,  it suffice to prove that for each $\Delta\in T,$ $\mathrm{Im}U$ is strictly concave down in $(\mathrm{Im}\boldsymbol\alpha_\Delta,\mathrm{Im}\xi_\Delta)$ on $\big(\frac{\pi}{2}+\mathbf i{\mathbb R_{>0}}\big)^6\times D,$ so that their  sum is a strictly concave down function.

To this end, we will show that:
\begin{enumerate}[(a)]
\item $-\frac{1}{2}\sum_{i=1}^4\sum_{j=1}^4\mathrm{Im}L(\eta_j-\tau_i)$ is linear in  $\mathrm{Im}\alpha_1,\dots, \mathrm{Im}\alpha_6,$ and
\item $\sum_{i=1}^4\mathrm{Im}L(\xi-\tau_i)+\sum_{j=1}^4\mathrm{Im}L(\eta_j-\xi)$ is strictly concave down in $\mathrm{Im}\alpha_1,\dots, \mathrm{Im}\alpha_6$ and $\mathrm{Im}\xi.$
\end{enumerate}
Then by (\ref{U}), $\mathrm{Im}U$ is strictly concave down in $\mathrm{Im}\alpha_1,\dots, \mathrm{Im}\alpha_6$ and $\mathrm{Im}\xi.$
\\

To see (a), for $i\in\{1,2,3,4\}$ and $j\in\{1,2,3\},$ suppose $\eta_j-\tau_i=\alpha_l+\alpha_m-\alpha_n$ for some $\{l,m,n\}\subset \{1,\dots, 6\},$ then by a direct computation, for $s,t\in\{l,m,n\},$
\begin{equation*}
-\frac{\partial^2 \mathrm{Im}L(\eta_j-\tau_i)}{\partial \mathrm{Im}\alpha_s\partial \mathrm{Im}\alpha_t}=\pm\frac{4\sin\big(2\mathrm{Re}(\eta_j-\tau_i)\big)}{\big|1-e^{2\mathbf i(\eta_j-\tau_i)}\big|^2}=0,
\end{equation*}
where the last equality come from $\mathrm{Re}(\eta_j-\tau_i)=\frac{\pi}{2}.$  For $i\in\{1,2,3,4\},$ suppose $\tau_i=\alpha_l+\alpha_m+\alpha_n$ for some $\{l,m,n\}\subset \{1,\dots, 6\},$ then by a direct computation, for $s,t\in\{l,m,n\},$
\begin{equation*}
-\frac{\partial^2 \mathrm{Im}L(2\pi-\tau_i)}{\partial \mathrm{Im}\alpha_s\partial \mathrm{Im}\alpha_t}=-\frac{4\sin\big(2\mathrm{Re}(2\pi-\tau_i)\big)}{\big|1-e^{2\mathbf i(2\pi-\tau_i)}\big|^2}=0,
\end{equation*}
where the last equality comes from $\mathrm{Re}(2\pi -\tau_i)=\frac{\pi}{2}.$ 

As a consequence, $-\mathrm{Im}L(\eta_j-\tau_i)$'s are linear in $\mathrm{Im}\alpha_1,\dots, \mathrm{Im}\alpha_6,$ from which (a) follows. 
\\

To see (b), we make the following change of variables: For $i\in\{1,2,3,4\}$ let $x_i=\xi-\tau_i,$ and for $j\in\{1,2,3\}$ let $x_{j+4}=\eta_j-\xi.$ Then 
we have $2\pi-\xi=2\pi-\sum_{k=1}^7x_k,$ and 
$$\sum_{i=1}^4\mathrm{Im}L(\xi-\tau_i)+\sum_{j=1}^4\mathrm{Im}L(\eta_j-\xi)=\sum_{k=1}^7\mathrm{Im}L(x_k)+L\Big(2\pi-\sum_{k=1}^7x_k\Big).$$
We will show that for each  $k\in\{1,\dots,7\},$ $\mathrm{Im}L(x_k)$ is strictly concave down in $\mathrm{Im} x_k;$ and $\mathrm{Im}L\big(2\pi-\sum_{k=1}^7x_k\big)$ is concave down in $\mathrm{Im}x_1,\dots,\mathrm{Im}x_7.$ Then from this, (2) follows.

Indeed, for each $k\in\{1,\dots,7\},$ by a direct computation we have
$$\frac{\partial^2\mathrm{Im}L(x_k)}{\partial \mathrm{Im} x_k ^2}=-\frac{4\sin\big(2\mathrm{Re}x_k\big)}{\big|1-e^{2\mathbf ix_k}\big|^2}<0,$$
where the inequality comes from that $\mathrm{Re}x_k$ equals either $\mathrm{Re}(\xi-\tau_i)$ or $\mathrm{Re}(\eta_j-\xi),$ in either case is in $(0,\pi).$ As a consequence, $\mathrm{Im}L(x_k)$ is strictly concave down in $\mathrm{Im} x_k.$ For all $k,l\in\{1,\dots,7\},$ by a direct computation we have
$$\frac{\partial^2\mathrm{Im}L\big(2\pi-\sum_{k=1}^7x_k\big)}{\partial \mathrm{Im} x_k \partial \mathrm{Im} x_l}=-\frac{4\sin\big(2\mathrm{Re}\big(2\pi-\sum_{k=1}^7x_k\big)\big)}{\big|1-e^{2\mathbf i(2\pi-\sum_{k=1}^7x_k)}\big|^2}<0,$$
where the inequality comes from that $\mathrm{Re}\big(2\pi-\sum_{k=1}^7x_k\big)=\mathrm{Re}(2\pi-\xi)\in(0,\pi).$ Therefore, the Hessian matrix of $\mathrm{Im}L\big(2\pi-\sum_{k=1}^7x_k\big)$ with respect to $\mathrm{Im}x_1,\dots,\mathrm{Im}x_7$ is a $7\times 7$ matrix with all of its entries the same negative constant, hence is negative semi-definite. As a consequence, $\mathrm{Im}L\big(2\pi-\sum_{k=1}^7x_k\big)$ is concave down in $\mathrm{Im}x_1,\dots,\mathrm{Im}x_7.$
 \end{proof}

For any ideal triangulation $\mathcal T$ of $M$ with the space of angle structures $\mathcal A(M,\mathcal T)\neq \emptyset,$ by Corollary \ref{Cor3} with the constant $t_0>0$ therein,  there is a generalized hyperbolic polyhedral metric $\boldsymbol l_{t_0} ^*=\big(l^*_{t_0,e}\big)$ whose cone angles are $2\pi+t_0$ at all the edges.

For $\Delta\in T$ with edges $e_1,\dots,e_6,$ let 
 $\boldsymbol \alpha^*_{t_0 ,\Delta}=\Big(\frac{\pi}{2}+ \mathbf i\frac{l^*_{t_0,e_1}}{2},\dots,\frac{\pi}{2}+ \mathbf i\frac{l^*_{t_0,e_6}}{2}\Big).$ 
Guaranteed by Proposition \ref{critical2}, we have
\begin{enumerate}[(1)]
\item if $(l^*_{t_0,e_1},\dots, l^*_{t_0,e_6})\in\mathcal L,$ then we let $\xi^*_{t_0,\Delta}$ be the unique critical point of $U_{\boldsymbol \alpha^*_{t_0,\Delta}}$ in $D;$ 
\item
 if  $(l^*_{t_0,e_1},\dots, l^*_{t_0,e_6})\in \partial \mathcal L,$ then we let $\xi^*_{t_0,\Delta}$ be the unique critical point of $U_{\boldsymbol \alpha^*_{t_0,\Delta}}$ on $\partial D;$ and 
  \item
 if $(l^*_{t_0,e_1},\dots, l^*_{t_0,e_6})\in {\mathbb R_{>0}}^6\setminus (\mathcal L\cup\partial \mathcal L),$ then we can choose from the two a critical point  $\xi^*_{t_0,\Delta}$
of $U_{\boldsymbol \alpha^*_{t_0,\Delta}}$ on $\partial D.$ For consistency, we choose the one corresponding to $\xi_1^*$ in Proposition \ref{critical2} (3).
\end{enumerate}  
By Proposition \ref{critical2} (2) and (3), $\mathrm{Re}\xi^*_{t_0,\Delta}=2\pi$ in the last two cases. 
\\

 Let  $\overline D=\{ \xi\in\mathbb C\ |\ \frac{3\pi}{2}\leqslant \mathrm{Re}\xi\leqslant 2\pi\}.$   Then we have the following

\begin{proposition}\label{cpcv33}  Let $$\mathcal U(\boldsymbol \alpha, \boldsymbol \xi)=(2\pi+{t_0} ) \sum_{e\in E} (2\alpha_e-\pi)+\sum_{\Delta\in T}U\big(\boldsymbol \alpha_\Delta,\xi_\Delta\big).$$ Then $\mathrm{Im}\mathcal U$ has a critical point 
$$\boldsymbol z^*_{t_0}=\bigg(\Big(\frac{\pi}{2}+\mathbf i\frac{l^*_{t_0,e}}{2}\Big)_{e\in E},\big(\xi^*_{t_0,\Delta}\big)_{\Delta\in T}\bigg)$$
in $ \big(\frac{\pi}{2}+\mathbf i{\mathbb R_{>0}}\big)^E\times \overline D^T.$
\end{proposition}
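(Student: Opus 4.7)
The plan is to follow the same two-step strategy as in the proofs of Propositions \ref{cpcv5} and \ref{cpcv6}, by checking that $\boldsymbol z^*_{t_0}$ is a critical point of the complex-analytic function $\mathcal U$ in the complex variables $\alpha_e$ and $\xi_\Delta$, and then observing that this forces $\boldsymbol z^*_{t_0}$ to be a critical point of the restriction $\mathrm{Im}\mathcal U$ to $\big(\frac{\pi}{2}+\mathbf i\mathbb{R}_{>0}\big)^E\times\overline D^T$. The only modifications compared to \ref{cpcv5} are that the cone-angle condition at each edge now reads $2\pi+t_0$ instead of $2\pi$, and that the individual tetrahedra of $\boldsymbol l^*_{t_0}$ may lie in any of the three regimes of Proposition \ref{critical2}; both are absorbed uniformly by the extended co-volume function $\widetilde{\mathrm{Cov}}$ of Proposition \ref{ccu}.

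First I will verify the $\xi_\Delta$-direction. For each $\Delta\in T$, only the summand $U(\boldsymbol\alpha_\Delta,\xi_\Delta)$ of $\mathcal U$ depends on $\xi_\Delta$, so
$$\frac{\partial\mathcal U}{\partial\xi_\Delta}\bigg|_{\boldsymbol z^*_{t_0}}=\frac{dU_{\boldsymbol\alpha^*_{t_0,\Delta}}}{d\xi_\Delta}\bigg|_{\xi^*_{t_0,\Delta}}=0,$$
since by construction $\xi^*_{t_0,\Delta}$ is a holomorphic critical point of $U_{\boldsymbol\alpha^*_{t_0,\Delta}}$ --- interior to $D$ in case (1), on $\partial D$ in case (2), and the chosen one of the two $\partial D$-critical points in case (3). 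Because $U$ is holomorphic in $\xi_\Delta$ around $\xi^*_{t_0,\Delta}$, both $\partial\mathrm{Im}\mathcal U/\partial\mathrm{Re}\xi_\Delta$ and $\partial\mathrm{Im}\mathcal U/\partial\mathrm{Im}\xi_\Delta$ vanish at $\boldsymbol z^*_{t_0}$.

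Next I will handle the $\alpha_e$-direction, mimicking the chain-rule computation from \ref{cpcv5}. For each $\Delta$ write $W(\boldsymbol\alpha_\Delta)=U_{\boldsymbol\alpha_\Delta}(\xi^*(\boldsymbol\alpha_\Delta))$, with $\xi^*(\boldsymbol\alpha_\Delta)$ chosen by the same branch of the quadratic root formula \eqref{xi} that selects $\xi^*_{t_0,\Delta}$. Since $\partial U/\partial\xi_\Delta$ vanishes at $(\boldsymbol\alpha^*_{t_0,\Delta},\xi^*_{t_0,\Delta})$, the chain rule gives
$$\frac{\partial U}{\partial\alpha_e}\bigg|_{(\boldsymbol\alpha^*_{t_0,\Delta},\xi^*_{t_0,\Delta})}=\frac{\partial W}{\partial\alpha_e}\bigg|_{\boldsymbol\alpha^*_{t_0,\Delta}}=-2\widetilde\theta_{(\Delta,e)},$$
where the last equality follows from \eqref{dCovdlin}--\eqref{dCovdlout} applied uniformly over the three regimes via the Luo--Yang extension recorded in Proposition \ref{ccu}. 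Summing over all $\Delta$ adjacent to $e$ and adding the derivative of the linear term in $\mathcal U$ yields
$$\frac{\partial\mathcal U}{\partial\alpha_e}\bigg|_{\boldsymbol z^*_{t_0}}=2(2\pi+t_0)-2\sum_{\Delta\sim e}\widetilde\theta_{(\Delta,e)}=0,$$
the last equality being the content of Corollary \ref{Cor3}: the extended cone angle of $\boldsymbol l^*_{t_0}$ at every edge is $2\pi+t_0$. Holomorphicity of $\mathcal U$ in $\alpha_e$ then gives $\partial\mathrm{Im}\mathcal U/\partial\mathrm{Im}\alpha_e=0$ at $\boldsymbol z^*_{t_0}$.

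The only delicate point to watch out for is the validity of the chain-rule step when $\boldsymbol l^*_{t_0,\Delta}$ lies in $\partial\mathcal L$ or in $\mathbb{R}^6_{>0}\setminus(\mathcal L\cup\partial\mathcal L)$, where the extended dihedral angles degenerate to $0$ or $\pi$ and the quadratic in \eqref{xi} may have a double root on $\partial\mathcal L$. This is safe because within each stratum $\xi^*(\boldsymbol\alpha_\Delta)$ is given holomorphically by the root formula (the leading coefficient $A>0$ does not vanish), and the identity $\partial\widetilde{\mathrm{Cov}}/\partial l_e=\widetilde\theta_{(\Delta,e)}/2$ from Proposition \ref{ccu} holds uniformly across all three regimes. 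Hence the argument reduces to already-established results and presents no new obstruction.
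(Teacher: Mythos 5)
Your $\xi_\Delta$-step is fine, but the $\alpha_e$-step contains a genuine gap, and it is precisely the point where the paper deliberately deviates from the template of Propositions \ref{cpcv5} and \ref{cpcv6}. Your strategy is to show that the \emph{holomorphic} derivative $\partial\mathcal U/\partial\alpha_e$ vanishes at $\boldsymbol z^*_{t_0}$ and then deduce criticality of $\mathrm{Im}\,\mathcal U$. The key identity you invoke, $\partial W/\partial\alpha_e=-2\widetilde\theta_{(\Delta,e)}$, is only established in the paper for $\boldsymbol l_\Delta\in\mathcal L\cup\partial\mathcal L$ (equation \eqref{dCovdlin}). For a degenerate tetrahedron, i.e.\ $\boldsymbol l^*_{t_0,\Delta}\in\mathbb R^6_{>0}\setminus(\mathcal L\cup\partial\mathcal L)$, only the weaker statement \eqref{dCovdlout} holds: $\partial\,\mathrm{Im}\,W/\partial l_e=-\widetilde\theta_{(\Delta,e)}$, while by \eqref{pW} the quantity $\partial W/\partial l_e=-\tfrac12\log\frac{G_{34}-\sqrt{G_{34}^2-G_{33}G_{44}}}{G_{34}+\sqrt{G_{34}^2-G_{33}G_{44}}}$ has a generically nonzero \emph{real} part there (the argument of the logarithm is a real number different from $\pm1$), exactly as reflected in Proposition \ref{critical2}(3) by $\mathrm{Re}\,U_{\boldsymbol\alpha}(\xi_1^*)=-\mathrm{Re}\,U_{\boldsymbol\alpha}(\xi_2^*)\neq0$ in general. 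Hence $\partial\mathcal U/\partial\alpha_e$ need not vanish at $\boldsymbol z^*_{t_0}$, and your deduction of $\partial\,\mathrm{Im}\,\mathcal U/\partial\,\mathrm{Im}\,\alpha_e=0$ from holomorphicity collapses. Degenerate tetrahedra do occur here: Corollary \ref{Cor3} only supplies a \emph{generalized} polyhedral metric with cone angles $2\pi+t_0$, and the paper explicitly introduces the notion of degenerate tetrahedra for $\boldsymbol l^*_{t_0}$ right after this proposition.

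The fix is the one the paper uses: do not pass through the holomorphic derivative at all. Since on the contour $\big(\frac{\pi}{2}+\mathbf i\mathbb R_{>0}\big)^E$ the real parts of the $\alpha_e$ are frozen, criticality of $\mathrm{Im}\,\mathcal U$ only requires $\partial\,\mathrm{Im}\,\mathcal U/\partial\,\mathrm{Im}\,\alpha_e=0$, which follows directly from the chain rule applied to $\mathrm{Im}\,W$ together with \eqref{dCovdlout} and the cone-angle condition $\sum_{\Delta\sim e}\widetilde\theta_{t_0,(\Delta,e)}=2\pi+t_0$; no claim about $\mathrm{Re}\,W$ is needed. Your final paragraph gestures at this issue but dismisses it by citing $\partial\widetilde{\mathrm{Cov}}/\partial l_e=\widetilde\theta_{(\Delta,e)}/2$, which is a statement about the real-valued extended co-volume and controls only $\mathrm{Im}\,W$, not the full complex derivative; so the dismissal does not close the gap.
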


\begin{proof} Let $\boldsymbol z^*_{t_0}=(\boldsymbol \alpha^*_{t_0},\boldsymbol \xi^*_{t_0}),$ where $\boldsymbol \alpha^*_{t_0}=\Big(\frac{\pi}{2}+\frac{l^*_{t_0,e}}{2}\Big)_{e\in E}$ and $\boldsymbol \xi^*_{t_0}=\big(\xi^*_{t_0,\Delta}\big)_{\Delta\in T}.$ For $\Delta\in T$ with edges $e_1,\dots,e_6,$ let $l^*_{t_0,\Delta}=(l^*_{t_0,e_1},\dots,l^*_{t_0,e_6}).$ 

We first have
\begin{equation}\label{xi=0b2}
\frac{\partial \mathrm{Im}\mathcal U}{\partial \xi_\Delta}\Big|_{(\boldsymbol \alpha^*_{t_0},\boldsymbol \xi^*_{t_0})}=\frac{\partial \mathrm{Im}U_{\boldsymbol \alpha_{t_0,\Delta}}(\xi_\Delta)}{\partial  \xi_\Delta}\Big|_{\xi^*_{t_0,\Delta}}=0.
\end{equation}

Now let $W(\boldsymbol \alpha_\Delta)=U(\boldsymbol \alpha_\Delta,\xi^*_\Delta)=U_{\boldsymbol \alpha_\Delta}(\xi^*_\Delta)$ be the function defined in (\ref{W}). Then for any edge $e$ of $\Delta,$ 
\begin{equation*}
\begin{split}
\frac{\partial \mathrm{Im} W(\boldsymbol \alpha_\Delta)}{\partial \mathrm{Im} \alpha_e}\Big|_{\boldsymbol \alpha^*_{t_0,\Delta}}=&\frac{\partial \mathrm{Im} U(\boldsymbol \alpha_\Delta,\xi_\Delta)}{\partial \mathrm{Im} \alpha_e}\Big|_{(\boldsymbol \alpha^*_{t_0,\Delta},\xi^*_{t_0,\Delta})}+\frac{\partial \mathrm{Im} U(\boldsymbol \alpha_\Delta,\xi_\Delta)}{\partial \mathrm{Im} \xi_\Delta}\Big|_{(\boldsymbol \alpha^*_{t_0,\Delta},\xi^*_{t_0,\Delta})}\cdot\frac{\partial \mathrm{Im} \xi^*}{\partial \mathrm{Im} \alpha_e}\Big|_{\boldsymbol \alpha^*_{t_0,\Delta}}\\
=&\frac{\partial \mathrm{Im} U(\boldsymbol \alpha_\Delta,\xi_\Delta)}{\partial \mathrm{Im} \alpha_e}\Big|_{(\boldsymbol \alpha^*_{t_0,\Delta},\xi^*_{t_0,\Delta})}.
\end{split}
\end{equation*}
As a consequence, and by Proposition \ref{critical2} (1) (2) and (3) 
\begin{equation}\label{pUpa}
\frac{\partial \mathrm{Im} U(\boldsymbol \alpha_\Delta,\xi_\Delta)}{\partial \mathrm{Im} \alpha_e}\Big|_{(\boldsymbol \alpha^*_{t_0,\Delta},\xi^*_{t_0,\Delta})}=\frac{\partial \mathrm{Im} W(\boldsymbol \alpha_\Delta)}{\partial \mathrm{Im} \alpha_e}\Big|_{\boldsymbol \alpha^*_{t_0,\Delta}}=-4\frac{\partial  \mathrm{Cov}(\boldsymbol l_\Delta)}{\partial  l_e}\Big|_{\boldsymbol l^*_{t_0,\Delta}}=-2\widetilde \theta_{t_0, (\Delta,e)},
\end{equation}
where
$\widetilde \theta_{t_0,(\Delta,e)}$ is the extended dihedral angle of the $\boldsymbol l^*_{t_0,\Delta}$ at the edge $e.$ Then for each $e\in E,$ 
\begin{equation}\label{alpha=0b2}
\frac{\partial \mathrm{Im} \mathcal U}{\partial \mathrm{Im} \alpha_e}\Big|_{(\boldsymbol \alpha^*_{t_0},\boldsymbol \xi^*_{t_0})}=2(2\pi+{t_0} )+ \sum_{\Delta\in T}\frac{\partial \mathrm{Im} U(\boldsymbol \alpha_\Delta,\xi_\Delta)}{\partial \mathrm{Im} \alpha_e}\Big|_{(\boldsymbol \alpha^*_{t_0,\Delta},\xi^*_{t_0,\Delta})}=2\Big((2\pi+{t_0} )-\sum_{((\Delta,e))\in C}\widetilde\theta_{t_0,(\Delta,e)}\Big)=0,
\end{equation}
where the last summation is over all the corners around $e,$ and the last equality comes from the fact that the sum of the dihedral angles around an edge equals is the cone angle at this edge, which equals $2\pi(1+{t_0} ).$ By (\ref{xi=0b2}) and (\ref{alpha=0b2}), $(\boldsymbol \alpha^*_{t_0},\boldsymbol \xi^*_{t_0})$ is a critical point of $\mathrm{Im}\mathcal U.$   
\end{proof}

In the generalized hyperbolic cone metric $\boldsymbol l^*_{t_0},$ we call a tetrahedron $\Delta$ \emph{degenerate} if  $\boldsymbol l^*_{t_0,\Delta}\in {\mathbb R_{>0}}\setminus \mathcal L.$ For $c\in (\frac{3\pi}{2},2\pi),$ let $\xi^*_{c,t_0,\Delta}$  the unique  maximum point  of $\mathrm{Im}U_{\boldsymbol \alpha^*_{t_0,\Delta}}$ on $\Gamma_{c}$ guaranteed by Corollary \ref{cor3}. For a corner $(\Delta, e),$ let $\widetilde{\theta}_{c,t_0,(\Delta, e)}=\widetilde \theta_{t_0, (\Delta, e)}$ be the original dihedral angle of $\boldsymbol l^*_{t_0}$ if $\Delta$ is non-degenerate,  and let   
 $$\widetilde{\theta}_{c,t_0,(\Delta, e)}=-\frac{1}{2}\frac{\partial \mathrm{Im}U(\boldsymbol \alpha_\Delta,\xi_\Delta)}{\partial \mathrm{Im}\alpha_e}\Big|_{(\boldsymbol \alpha^*_{t_0,\Delta},\xi^*_{c,t_0,\Delta})}$$
  if $\Delta$ is degenerate. For $e\in E,$ let 
  \begin{equation}\label{cac}
  \widetilde\theta_{c,t_0, e}=\sum_{\Delta\sim e}\widetilde\theta_{c,t_0, (\Delta,e)},
  \end{equation}
 where the sum is over all the corners around $e.$  Let $\boldsymbol \xi^*_{c,t_0}\in D^T$ be obtained from $\boldsymbol \xi^*_{t_0}\in\overline D^T$ by replacing the component $\xi^*_{t_0,\Delta}$ by $\xi^*_{c,t_0,\Delta}$ for each degenerate $\Delta$ in $T.$ We note that, as $c$ tends to $2\pi,$  $\widetilde{\theta}_{c,t_0,(\Delta, e)}$ converges to  $\widetilde{\theta}_{t_0,(\Delta, e)}$ for each corner $(\Delta,e),$ 
 and therefore we have \begin{equation}
     \widetilde\theta_{c,t_0, e}\to \widetilde\theta_{t_0,e} =2\pi+t_0
 \end{equation}  for each edge $e.$

 \begin{proposition}\label{critical55} Let
$$\mathcal U_c(\boldsymbol \alpha, \boldsymbol \xi)= \sum_{e\in E} \widetilde{\theta}_{c,t_0,  e}(2\alpha_e-\pi)+\sum_{\Delta\in T}U\big(\boldsymbol \alpha_\Delta,\xi_\Delta\big).$$
Then $\mathrm{Im}\mathcal U_c$ has a unique maximum point $\boldsymbol z^*_{c,t_0}=(\boldsymbol \alpha^*_{t_0}, \boldsymbol \xi^*_{c,t_0})$ on the contour 
$$\Gamma^*_{c,t_0}= \bigg\{ (\boldsymbol \alpha,\boldsymbol \xi)\in \Big(\frac{\pi}{2}+\mathbf i{\mathbb R_{>0}}\Big)^E\times D^T   \ \bigg|\ \mathrm{Re}\boldsymbol \xi=\mathrm{Re}\boldsymbol \xi^*_{c,t_0}\bigg\}.$$
 \end{proposition}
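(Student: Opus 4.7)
The plan is to mimic the proof of Proposition \ref{cpcv33}: first verify that $\boldsymbol z^*_{c,t_0}$ is a critical point of $\mathrm{Im}\mathcal U_c$ restricted to the contour $\Gamma^*_{c,t_0}$, and then upgrade this to ``unique global maximum'' by invoking the strict concavity from Proposition \ref{cd}. Observe that on $\Gamma^*_{c,t_0}$ the real parts $\mathrm{Re}\alpha_e=\pi/2$ and $\mathrm{Re}\xi_\Delta=\mathrm{Re}\xi^*_{c,t_0,\Delta}$ are all fixed, so the independent variables are $(\mathrm{Im}\boldsymbol\alpha,\mathrm{Im}\boldsymbol\xi)\in\mathbb R_{>0}^E\times\mathbb R^T$, parametrizing a convex open set.

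For the $\mathrm{Im}\xi_\Delta$ derivatives at $\boldsymbol z^*_{c,t_0}$: if $\Delta$ is non-degenerate then $\xi^*_{c,t_0,\Delta}=\xi^*_{t_0,\Delta}$ is the critical point of the holomorphic function $U_{\boldsymbol\alpha^*_{t_0,\Delta}}$ in $D$ by Proposition \ref{critical2}(1), so $\partial \mathrm{Im}U/\partial\mathrm{Im}\xi_\Delta=0$ there; if $\Delta$ is degenerate then $\xi^*_{c,t_0,\Delta}$ is by Corollary \ref{cor3} the unique maximum of $\mathrm{Im}U_{\boldsymbol\alpha^*_{t_0,\Delta}}$ on $\Gamma_c$, and since $\mathrm{Re}\xi$ is fixed along $\Gamma_c$, this forces $\partial\mathrm{Im}U/\partial\mathrm{Im}\xi_\Delta=0$ as well. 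For the $\mathrm{Im}\alpha_e$ derivatives, I plan to show that for every corner $(\Delta,e)$,
\begin{equation*}
\frac{\partial \mathrm{Im}U(\boldsymbol\alpha_\Delta,\xi_\Delta)}{\partial \mathrm{Im}\alpha_e}\bigg|_{(\boldsymbol\alpha^*_{t_0,\Delta},\xi^*_{c,t_0,\Delta})}=-2\widetilde\theta_{c,t_0,(\Delta,e)}.
\end{equation*}
For degenerate $\Delta$ this is the defining formula of $\widetilde\theta_{c,t_0,(\Delta,e)}$; for non-degenerate $\Delta$ it is the computation (\ref{pUpa}) already carried out in the proof of Proposition \ref{cpcv33}, together with $\widetilde\theta_{t_0,(\Delta,e)}=\widetilde\theta_{c,t_0,(\Delta,e)}$ in that case. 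Summing over all corners at $e$ and applying (\ref{cac}) gives
\begin{equation*}
\frac{\partial \mathrm{Im}\mathcal U_c}{\partial \mathrm{Im}\alpha_e}\bigg|_{\boldsymbol z^*_{c,t_0}}=2\widetilde\theta_{c,t_0,e}-2\sum_{\Delta\sim e}\widetilde\theta_{c,t_0,(\Delta,e)}=0.
\end{equation*}

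For uniqueness and maximality, note that $\mathcal U_c-\mathcal W=\sum_{e\in E}(\widetilde\theta_{c,t_0,e}-2\pi)(2\alpha_e-\pi)$, whose imaginary part is a linear function of $\mathrm{Im}\boldsymbol\alpha$ since each coefficient $\widetilde\theta_{c,t_0,e}-2\pi$ is real. Hence $\mathrm{Im}\mathcal U_c$ inherits from $\mathrm{Im}\mathcal W$ the strict concavity in $(\mathrm{Im}\boldsymbol\alpha,\mathrm{Im}\boldsymbol\xi)$ on $(\tfrac\pi2+\mathbf i\mathbb R_{>0})^E\times D^T$ established in Proposition \ref{cd}; restricting to the affine slice $\Gamma^*_{c,t_0}$ preserves strict concavity. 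A strictly concave function on a convex open set has at most one critical point, and if one exists it is the unique global maximum. The critical point we just produced is therefore the unique maximum on $\Gamma^*_{c,t_0}$. The main obstacle I anticipate is only bookkeeping: carefully separating the non-degenerate and degenerate cases in the $\mathrm{Im}\alpha_e$ calculation so that the two definitions of $\widetilde\theta_{c,t_0,(\Delta,e)}$ can be applied uniformly; the rest is a direct application of previously established results.
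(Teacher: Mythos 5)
Your proposal is correct and follows essentially the same route as the paper's own proof: vanishing of the $\mathrm{Im}\xi_\Delta$-derivatives via Proposition \ref{critical2}/Corollary \ref{cor3} (splitting into non-degenerate and degenerate tetrahedra), vanishing of the $\mathrm{Im}\alpha_e$-derivatives via \eqref{pUpa} together with the definition of $\widetilde\theta_{c,t_0,(\Delta,e)}$ and \eqref{cac}, and then uniqueness and maximality from the fact that $\mathrm{Im}\mathcal U_c$ differs from $\mathrm{Im}\mathcal W$ by a linear function so that Proposition \ref{cd} applies. No gaps.
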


\begin{proof} First, by Corollary \ref{cor3}, if $\Delta$ is non-degenerate,  we have
\begin{equation*}
\frac{\partial\mathrm{Im} \mathcal U_c}{\partial \mathrm {Im}\xi_\Delta}\Big|_{(\boldsymbol \alpha^*_{t_0},\boldsymbol \xi^*_{c,t_0})}=\frac{\partial \mathrm{Im}U_{\boldsymbol \alpha_{t_0,\Delta}}(\xi_\Delta)}{\partial \mathrm{Im}\xi_\Delta}\Big|_{\xi^*_{t_0,\Delta}}=0;
\end{equation*}
and  if $\Delta$ is degenerate, by the choice of $\xi^*_{c,t_0,\Delta},$  we have
\begin{equation*}
\frac{\partial\mathrm{Im} \mathcal U_c}{\partial \mathrm {Im}\xi_\Delta}\Big|_{(\boldsymbol \alpha^*_{t_0},\boldsymbol \xi^*_{c,t_0})}=\frac{\partial \mathrm{Im}U_{\boldsymbol \alpha_{t_0,\Delta}}(\xi_\Delta)}{\partial \mathrm{Im}\xi_\Delta}\Big|_{\xi^*_{c,t_0,\Delta}}=0.
\end{equation*}

Next, if $\Delta$ is non-degenerate, then by (\ref{pUpa}), we have
\begin{equation*}
\frac{\partial\mathrm{Im} U(\boldsymbol \alpha_\Delta,\xi_\Delta)}{\partial\mathrm{Im} \alpha_e}\Big|_{(\boldsymbol \alpha^*_{t_0,\Delta},\xi^*_{t_0,\Delta})}=-2\widetilde \theta_{t_0,(\Delta,e)}=-2\widetilde{\theta}_{c,t_0,(\Delta, e)};
\end{equation*}
and if $\Delta$ is degenerate, then by definition 
$$\frac{\partial\mathrm{Im} U(\boldsymbol \alpha_\Delta,\xi_\Delta)}{\partial\mathrm{Im} \alpha_e}\Big|_{(\boldsymbol \alpha^*_{t_0,\Delta},\xi^*_{c,t_0,\Delta})}=-2\widetilde{\theta}_{c,t_0,(\Delta, e)}.$$

Now for each $e\in E,$ 
\begin{equation*}
\begin{split}
&\frac{\partial \mathcal U_c}{\partial \alpha_e}\Big|_{(\boldsymbol \alpha^*_{t_0},\boldsymbol \xi^*_{c,t_0})}\\
=&2 \widetilde{\theta}_{c,t_0, e}+ \sum_{\Delta \text{ non-degenerate}}\frac{\partial \mathrm{Im}U(\boldsymbol \alpha_\Delta,\xi_\Delta)}{\partial \mathrm{Im}\alpha_e}\Big|_{(\boldsymbol \alpha^*_{t_0,\Delta},\xi^*_{t_0,\Delta})}+ \sum_{\Delta \text{ degenerate}}\frac{\partial \mathrm{Im}U(\boldsymbol \alpha_\Delta,\xi_\Delta)}{\partial \mathrm{Im}\alpha_e}\Big|_{(\boldsymbol \alpha^*_{t_0,\Delta},\xi^*_{c,t_0, \Delta})}\\
=&2\widetilde {\theta}_{c,t_0, e}-2\sum_{\Delta\sim e }\widetilde{\theta}_{c,t_0, (\Delta, e)}=0.
\end{split}
\end{equation*}
Therefore, $\boldsymbol z^*_{c,t_0}$ is a critical point of $\mathrm{Im}\mathcal U_c$ on $\Gamma^*_c.$ 

Finally, since $\mathrm{Im}\mathcal U_c$ and $\mathrm{Im}\mathcal W$ differ by a linear function $\sum_{e\in E}(\widetilde{\theta}_{c,t_0,e}-2\pi)l_e,$ they share the same concavity; and by Proposition \ref{cd},  $\mathrm{Im}\mathcal U_c$  is strictly concave down  in $(\mathrm{Im}\boldsymbol \alpha, \mathrm{Im}\boldsymbol \xi)$ on $\big(\frac{\pi}{2}+\mathbf i\mathbb R\big)^E\times D^T.$ As a consequence, $\boldsymbol z^*_{c,t_0}$ is the unique critical point of $\mathrm{Im}\mathcal U_c$ on $\Gamma^*_{c,t_0},$ and achieves the maximum value.  
\end{proof}

For $\boldsymbol \alpha\in  \big(\frac{\pi}{2}+\mathbf i{\mathbb R_{>0}}\big)^6,$ $\delta >0$ sufficiently small and $c \in (\delta, \frac{\pi}{2}-\delta),$ recall that $D^{\boldsymbol \alpha}_{\delta,c}$ is the region depicted in Figure \ref{Ddc} and defined there-around consisting of $\xi\in \mathbb C$ either with $\mathrm {Re}\xi\in[\frac{3\pi}{2}+\delta, 2\pi-\delta]$ or with $|\mathrm{Im}(\xi-\tau_i)|\geqslant \delta,$ $|\mathrm{Im}(\eta_j-\xi)|\geqslant \delta$ for all $i, j\in\{1,2,3,4\}$ and $\mathrm {Re}\xi\in[\frac{3\pi}{2}-c, 2\pi+c].$ We consider the following region 
$$\mathcal D_{\delta,c} =\bigg\{ (\boldsymbol\alpha, \boldsymbol \xi)\in \Big(\frac{\pi}{2}+\mathbf i{\mathbb R_{>0}}\Big)^E\times \mathbb C^T\ \bigg|\ \xi_\Delta\in D^{\boldsymbol \alpha_\Delta}_{\delta,c}\text{ for all }\Delta\in T \bigg\}.$$
 We notice that if $(\boldsymbol \alpha,\boldsymbol \xi)\in \mathcal D_{\delta,c},$ then for all $\Delta\in T,$ all $\xi_\Delta-\tau_i(\boldsymbol\alpha_\Delta)$  and $\eta_j(\boldsymbol\alpha_\Delta) - \xi_\Delta,$ $i,j\in\{1,2,3,4\},$  are in the region $H_{\delta,K}=\big\{ x\in \mathbb C\ |\ \delta \leqslant \mathrm{Re}x\leqslant \pi-\delta, \text{ or } |\mathrm{Re}x|\leqslant K \text{ and } |\mathrm{Im}x| \geqslant \delta \big\}$ depicted in Figure \ref{DdK}  for any  $K>c.$

\begin{proposition}\label{bound4}
There exists a constant $K>0$ such that, for all unit vector $\mathbf u$ of the form
$$\mathbf u=\bigg(\bigg(\frac{\partial}{\partial \mathrm{Im}\alpha_e}\bigg)_{e\in E}, \bigg(\Big(0, \frac{\partial }{\partial \mathrm{Im}\xi_\Delta}\Big)\bigg)_{\Delta\in T}\bigg)$$ 
in $\mathbb R^{|E|+2|T|}$ identified with the tangent space of $\mathcal D_{\delta,c}$ at each of its points, 
 the directional derivative 
$$D_{\mathbf u}\mathrm{Im}\kappa(\boldsymbol \alpha,\boldsymbol \xi)<K$$
for all for $(\boldsymbol \alpha,\boldsymbol \xi)$ in $\mathcal D_{\delta,c} .$
\end{proposition}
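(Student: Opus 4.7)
The plan is to bound the directional derivative $D_{\mathbf u}\mathrm{Im}\kappa$ by first bounding each individual partial derivative $\frac{\partial \mathrm{Im}\kappa}{\partial \mathrm{Im}\alpha_e}$ and $\frac{\partial \mathrm{Im}\kappa}{\partial \mathrm{Im}\xi_\Delta}$ uniformly on $\mathcal D_{\delta,c}$, and then to apply the triangle inequality or Cauchy–Schwarz using $\|\mathbf u\|=1$. Since $\kappa(\boldsymbol\alpha,\boldsymbol\xi)=\sum_{\Delta\in T}\kappa_{\boldsymbol\alpha_\Delta}(\xi_\Delta)$ and each summand involves only the six $\alpha_{e}$'s for $e\sim\Delta$ together with $\xi_\Delta$, the partial $\frac{\partial \mathrm{Im}\kappa}{\partial\mathrm{Im}\alpha_e}$ is a finite sum indexed by $\{\Delta\in T\mid\Delta\sim e\}$ of the corresponding partials of $\mathrm{Im}\kappa_{\boldsymbol\alpha_\Delta}(\xi_\Delta)$, and $\frac{\partial\mathrm{Im}\kappa}{\partial\mathrm{Im}\xi_\Delta}$ involves only the summand for that $\Delta$.

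Writing out $\kappa_{\boldsymbol\alpha}(\xi)$ from \eqref{kappa}, a direct computation shows that each of these partials is the sum of a constant (coming from the polynomial terms $14\pi\sum\alpha_k-28\pi\xi$) together with finitely many terms of the shape
\[
\pm\,c\,\mathrm{Re}\bigg(\frac{\partial \log(1-e^{2\mathbf i(\xi-\tau_i)})}{\partial \mathrm{Im}\alpha_e}\bigg)\quad\text{or}\quad \pm\,c\,\mathrm{Re}\bigg(\frac{\partial \log(1-e^{2\mathbf i(\eta_j-\xi)})}{\partial \mathrm{Im}\xi_\Delta}\bigg),
\]
where the absolute values of the constants $c$ are bounded by $4\pi$. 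The first observation in the proof of Proposition~\ref{bound3} already supplies the key estimate: for $x$ in the region $H_{\delta,K}$ of Figure~\ref{DdK}, there is a constant $C_{\delta,K}>0$ independent of $b$ (and independent of the imaginary part of $x$) with
\[
\bigg|\frac{\partial \log\bigl(1-e^{2\mathbf i x}\bigr)}{\partial \mathrm{Im} x}\bigg|=\frac{e^{-\mathrm{Im}x}}{\sqrt{\cosh^2\mathrm{Im}x-\cos^2\mathrm{Re}x}}\leqslant C_{\delta,K}.
\]
Since $(\boldsymbol\alpha,\boldsymbol\xi)\in\mathcal D_{\delta,c}$ is exactly the condition that every $\xi_\Delta-\tau_i(\boldsymbol\alpha_\Delta)$ and $\eta_j(\boldsymbol\alpha_\Delta)-\xi_\Delta$ lies in $H_{\delta,K}$ for some $K>c$, this estimate applies, and each such term in the expressions for $\frac{\partial\mathrm{Im}\kappa}{\partial\mathrm{Im}\alpha_e}$ and $\frac{\partial\mathrm{Im}\kappa}{\partial\mathrm{Im}\xi_\Delta}$ is bounded in absolute value by a constant depending only on $\delta$ and $c$.

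Combining these bounds with the combinatorics of $\mathcal T$ (each edge $e$ is adjacent to at most $|T|$ tetrahedra), there exists a constant $K_0=K_0(\delta,c,\mathcal T)$ such that
\[
\bigg|\frac{\partial \mathrm{Im}\kappa}{\partial \mathrm{Im}\alpha_e}\bigg|\leqslant K_0 \quad\text{for all }e\in E,\quad\text{and}\quad \bigg|\frac{\partial \mathrm{Im}\kappa}{\partial \mathrm{Im}\xi_\Delta}\bigg|\leqslant K_0\quad\text{for all }\Delta\in T,
\]
uniformly on $\mathcal D_{\delta,c}$. Since $\mathbf u$ is a unit vector with nonzero components only in the $\mathrm{Im}\alpha_e$ and $\mathrm{Im}\xi_\Delta$ directions, Cauchy--Schwarz yields
\[
\bigl|D_{\mathbf u}\mathrm{Im}\kappa(\boldsymbol\alpha,\boldsymbol\xi)\bigr|\leqslant K_0\sqrt{|E|+|T|},
\]
so the conclusion holds with $K=K_0\sqrt{|E|+|T|}$. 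The only technical point that deserves care is the verification that the partials of the logarithmic terms remain uniformly bounded near the boundary of $\mathcal D_{\delta,c}$, but this is precisely what the region $H_{\delta,K}$ in Proposition~\ref{EST} is designed to handle, so no essential obstacle arises.
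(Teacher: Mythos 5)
Your proposal is correct and follows essentially the same route as the paper: both reduce to the uniform bound $\bigl|\frac{\partial \log(1-e^{2\mathbf i x})}{\partial \mathrm{Im}x}\bigr|\leqslant C_{\delta,K}$ on $H_{\delta,K}$ established in the proof of Proposition~\ref{bound3}, bound each partial of $\mathrm{Im}\kappa$ by a constant multiple of $C_{\delta,c}$ plus the polynomial contribution, and then combine the partials against the unit vector $\mathbf u$ (the paper sums the per-tetrahedron directional derivatives, you apply Cauchy--Schwarz globally; the resulting constants differ only by combinatorial factors, which is immaterial).
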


\begin{proof} As argued in the Proof of Proposition \ref{bound3},  for $x\in H_{\delta,K},$  there exists a constant $C_{\delta,K}>0$ depending on $\delta$ and $K$ such that
$$\bigg|\frac{\partial \log \big(1-e^{2\mathbf ix}\big)}{\partial \mathrm{Im} x}\bigg|<C_{\delta,K}.$$

Then by a direction computation, we have  for $k\in\{1,\dots,6\},$
\begin{equation*}\label{ka}
\begin{split}
\frac{\partial \mathrm{Im}\kappa(\boldsymbol\alpha,\xi)}{\partial \mathrm{Im}\alpha_k}=&14\pi-4\pi\sum_{\tau_i\sim \alpha_k}\mathrm{Re}\bigg(\frac{\partial \log \big(1-e^{2\mathbf i(\xi-\tau_i)}\big)}{\partial \mathrm{Im}\alpha_k}\bigg)+3\pi\sum_{\eta_j\sim \alpha_k}\mathrm{Re}\bigg(\frac{\partial \log \big(1-e^{2\mathbf i(\eta_j-\xi}\big)}{\partial \mathrm{Im}\alpha_k}\bigg)\\
\leqslant &14\pi+4\pi\sum_{\tau_i\sim \alpha_k}\bigg|\frac{\partial \log \big(1-e^{2\mathbf i(\xi-\tau_i)}\big)}{\partial \mathrm{Im}\alpha_k}\bigg|+3\pi\sum_{\eta_j\sim \alpha_k}\bigg|\frac{\partial \log \big(1-e^{2\mathbf i(\eta_j-\xi}\big)}{\partial \mathrm{Im}\alpha_k}\bigg| <K_1
\end{split}
\end{equation*}
for all $\boldsymbol \alpha \in \big(\frac{\pi}{2}+\mathbf i{\mathbb R_{>0}}\big)^6$ and $\xi\in D^{\boldsymbol \alpha}_{\delta,c},$ 
where the sums are respectively over the two $\tau_i$'s and the two $\eta_j$'s containing $\alpha_k,$ and
 $$K_1=14\pi + 4\pi\sum_{\tau_i\sim\alpha_k}C_{\delta,c}+3\pi\sum_{\eta_j\sim\alpha_k}C_{\delta,c}=14\pi+14\pi C_{\delta,c}.$$
Also by a direct computation, 
\begin{equation*}
\begin{split}
\frac{\partial \mathrm{Im}\kappa(\boldsymbol\alpha,\xi)}{\partial \mathrm{Im}\xi}=&-28\pi-4\pi\sum_{i=1}^4\mathrm{Re}\bigg(\frac{\partial \log \big(1-e^{2\mathbf i(\xi-\tau_i)}\big)}{\partial \mathrm{Im}\xi}\bigg)+3\pi\sum_{j=1}^4\mathrm{Re}\bigg(\frac{\partial \log \big(1-e^{2\mathbf i(\eta_j-\xi}\big)}{\partial \mathrm{Im}\xi}\bigg)\\
\leqslant &- 28\pi+4\pi\sum_{i=1}^4\bigg|\frac{\partial \log \big(1-e^{2\mathbf i(\xi-\tau_i)}\big)}{\partial \mathrm{Im}\xi}\bigg|+3\pi\sum_{j=1}^4\bigg|\frac{\partial \log \big(1-e^{2\mathbf i(\eta_j-\xi}\big)}{\partial \mathrm{Im}\xi}\bigg| < K_2
\end{split}
\end{equation*}
for all $\boldsymbol \alpha \in \big(\frac{\pi}{2}+\mathbf i{\mathbb R_{>0}}\big)^6$ and $\xi\in D^{\boldsymbol \alpha}_{\delta,c},$ where  
$$K_2=-28\pi + 4\pi\sum_{i=1}^4C_{\delta,c}+3\pi\sum_{j=1}^4 C_{\delta,c}=-28\pi+28\pi C_{\delta,c}.$$
As a consequence, for any vector $\mathbf u\in \mathbb R^7$ with norm $|\mathbf u|\leqslant 1,$ the directional derivative 
 \begin{equation}\label{H}
 D_{\mathbf u}\mathrm{Im}\kappa (\boldsymbol \alpha, \xi)=\Big\langle \mathbf u, \Big(\frac{\partial \mathrm{Im}\kappa (\boldsymbol \alpha, \xi)}{\partial \mathrm{Im}\alpha_1},\dots,  \frac{\partial \mathrm{Im}\kappa  (\boldsymbol \alpha, \xi)}{\partial \mathrm{Im}\alpha_6}, \frac{\partial \mathrm{Im}\kappa (\boldsymbol \alpha, \xi)}{\partial \mathrm{Im}\xi}\Big)\Big\rangle < \max\{K_1, K_2\}
 \end{equation}
for all $\boldsymbol \alpha \in \big(\frac{\pi}{2}+\mathbf i{\mathbb R_{>0}}\big)^6$ and $\xi\in D^{\boldsymbol \alpha}_{\delta,c}.$

 Now for a unit vector  $\mathbf u=\big((u_e)_{e\in E}, (u_\Delta)_{\Delta\in T}\big)\in \mathbb R^{|E|+2|T|}$ of the form in the statement of the  proposition,  let $\mathbf u_\Delta=(u_{e_1},\dots,u_{e_6},u_\Delta)$ for a $\Delta\in T$ with edges $e_1,\dots,e_6.$  
 Then by (\ref{H}), we have 
\begin{equation*}
D_{\mathbf u}\mathrm{Im}\kappa(\boldsymbol \alpha,\boldsymbol \xi)=\sum_{\Delta\in T}D_{\mathbf u_\Delta}\mathrm{Im}\kappa(\boldsymbol \alpha_\Delta,\xi_\Delta)
<K
\end{equation*}
for all $(\boldsymbol\alpha,\boldsymbol \xi)\in  \mathcal D_{\delta,c},$ where  $K=|T|\max\{K_1,K_2\}.$   
  \end{proof}

More generally, for  $\boldsymbol \alpha\in \mathbb C^6,$ $\delta >0$ sufficiently small and $c \in (\delta, \frac{\pi}{2}-\delta),$ recall that  $D^{\boldsymbol \alpha}_{\delta,c}$ is the region  defined before Proposition \ref{bound} consisting of $\xi\in \mathbb C$  with $\mathrm {Re}(\xi-\tau_i)\geqslant \delta,$  $|\mathrm{Im}(\xi-\tau_i)|\geqslant \delta$ for all $i\in\{1,2,3,4\},$  $\mathrm{Re}(\eta_j-\xi)\geqslant \delta,$ $|\mathrm{Im}(\eta_j-\xi)|\geqslant \delta$ for all $j\in\{1,2,3,4\}$ and $\mathrm {Re}\xi\in[\frac{3\pi}{2}-c, 2\pi+c].$ We consider the region 
 $$\mathbf D_{\delta,c}=\bigg\{ (\boldsymbol \alpha,\boldsymbol \xi)\in  \Big(\Big[\frac{\pi}{2}-\delta,\frac{\pi}{2}+\delta \Big]+\mathbf i{\mathbb R_{>0}}\Big)^{E}\times\mathbb C^{T}\ \bigg|\ \xi_\Delta\in D^{\boldsymbol \alpha_\Delta}_{\delta,c}\text{ for all }\Delta\in T \bigg\}.$$
 We notice that if $(\boldsymbol \alpha,\boldsymbol \xi)\in \mathbf D_{\delta,c},$ then for all $\Delta\in T,$ all $\xi_\Delta-\tau_i(\boldsymbol\alpha_\Delta)$  and $\eta_j(\boldsymbol\alpha_\Delta)- \xi_\Delta,$ $i,j\in\{1,2,3,4\},$  are in the region $H_{\delta,K}$ depicted in Figure \ref{DdK} and defined there around for any  $K>c.$

\begin{proposition}\label{bound2}  For $b>0$ and $\delta >0$ sufficiently small and $c \in (\delta, \frac{\pi}{2}-\delta),$ there exists a constant $N>0$ independent of $b$ such that 
$$\mathrm{Im}\mathcal \nu_b(\boldsymbol \alpha, \boldsymbol \xi)\leqslant \big|\nu_b(\boldsymbol \alpha, \boldsymbol \xi)\big|<N$$
for all $(\boldsymbol\alpha,\boldsymbol \xi)\in  \mathbf D_{\delta,c}.$  
\end{proposition}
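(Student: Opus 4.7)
The plan is to reduce the global estimate on $\mathbf D_{\delta,c}$ to the single-tetrahedron estimate already established in Proposition \ref{bound}. Recall that by definition
\[
\nu_b(\boldsymbol\alpha,\boldsymbol\xi)=\sum_{\Delta\in T}\nu_{\boldsymbol\alpha_\Delta,b}(\xi_\Delta),
\]
so it suffices to control each summand uniformly.

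First I would check the compatibility of the two regions bearing the same name $\mathbf D_{\delta,c}$. Namely, if $(\boldsymbol\alpha,\boldsymbol\xi)$ lies in the global region $\mathbf D_{\delta,c}$ defined just before the statement, then for every tetrahedron $\Delta\in T$ with edges $e_1,\dots,e_6$, the pair $(\boldsymbol\alpha_\Delta,\xi_\Delta)$ satisfies $\mathrm{Re}\alpha_{e_k}\in[\tfrac{\pi}{2}-\delta,\tfrac{\pi}{2}+\delta]$ for each $k$, and $\xi_\Delta\in D^{\boldsymbol\alpha_\Delta}_{\delta,c}$; hence $(\boldsymbol\alpha_\Delta,\xi_\Delta)$ lies in the single-tetrahedron region $\mathbf D_{\delta,c}$ appearing in the hypothesis of Proposition \ref{bound}. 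This is essentially a tautology from the definitions, but it is the step that makes the reduction legitimate.

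Next, for $b$ and $\delta$ sufficiently small and $c\in(\delta,\tfrac{\pi}{2}-\delta)$, Proposition \ref{bound} supplies a constant $N_0=N_{\delta,c}>0$, independent of $b$, such that
\[
|\nu_{\boldsymbol\alpha_\Delta,b}(\xi_\Delta)|<N_0
\]
for every $\Delta\in T$ and every $(\boldsymbol\alpha,\boldsymbol\xi)\in\mathbf D_{\delta,c}$. Summing over the finite set $T$ and using $\mathrm{Im} z\leqslant|z|$ gives
\[
\mathrm{Im}\,\nu_b(\boldsymbol\alpha,\boldsymbol\xi)\leqslant|\nu_b(\boldsymbol\alpha,\boldsymbol\xi)|\leqslant\sum_{\Delta\in T}|\nu_{\boldsymbol\alpha_\Delta,b}(\xi_\Delta)|<|T|\,N_0,
\]
so setting $N=|T|\,N_0$ completes the argument.

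The only nontrivial point is the first step: one must verify that the thresholds on $b$ and $\delta$ coming from Proposition \ref{bound} (which were tied to the single-tetrahedron geometry so that $\xi_\Delta\pm 2\pi b^2$ and $\eta_j-\xi_\Delta\pm\tfrac{3\pi b^2}{2}$ stay in the region where Proposition \ref{EST} applies) remain uniform when we simultaneously range $\Delta$ over the finite collection $T$. This is automatic because $|T|$ is finite, so one may take the worst (smallest) such threshold across all $\Delta$, yielding a common $b_0$ and a common $N_0$. No new analytic content is needed beyond what Propositions \ref{EST} and \ref{bound} already provide.
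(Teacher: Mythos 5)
Your proposal is correct and follows essentially the same route as the paper: both reduce the bound to the single-tetrahedron estimate of Proposition \ref{bound} (noting that for $(\boldsymbol\alpha,\boldsymbol\xi)\in\mathbf D_{\delta,c}$ each pair $(\boldsymbol\alpha_\Delta,\xi_\Delta)$ lies in the corresponding single-tetrahedron region), apply the triangle inequality to the decomposition $\nu_b=\sum_{\Delta\in T}\nu_{\boldsymbol\alpha_\Delta,b}$, and take $N=|T|\,N_{\delta,c}$.
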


\begin{proof}  Let $N_{\delta,c}>0$ be the constant as in Proposition \ref{bound} for the chosen $\delta$  and $c.$ 
Then by (\ref{2}) and (\ref{Wnu}),
\begin{equation*}
\begin{split}
\big|\nu_b(\boldsymbol \alpha, \boldsymbol \xi)\big|=&\Big|\frac{\mathcal W_b(\boldsymbol \alpha, \boldsymbol \xi)-\kappa(\boldsymbol \alpha,\boldsymbol \xi)b^2-\mathcal W(\boldsymbol \alpha, \boldsymbol \xi)}{b^4}\Big|\\
\leqslant &\sum_{\Delta\in T}\Big|\frac{U_b(\boldsymbol \alpha_\Delta,\xi_\Delta)-\kappa_{\boldsymbol {\alpha}_\Delta}(\xi_\Delta)b^2-U(\boldsymbol \alpha_\Delta,\xi_\Delta)}{b^4}\Big|<N
\end{split}
\end{equation*}
for all $(\boldsymbol\alpha,\boldsymbol \xi)\in  \mathbf D_{\delta,c},$ where $N=|T|N_{\delta,c}.$  
\end{proof}

\subsection{State-integral convergence with angled triangulations and proof of Theorem \ref{Converge}}\label{sec:4.2}
Theorem~\ref{Converge} is an immediate consequence of  the following Theorem~\ref{WD4} and Proposition~\ref{KA}. 
Theorem~\ref{WD4} will also be needed in the proof of Theorem~\ref{WD3}.

\begin{theorem}\label{WD4} Let $M$ be a hyperbolic $3$-manifold with totally geodesic boundary and let $\mathcal T$ be an ideal triangulation of $M$ that supports an angle structure (see section \ref{anglestr}). Then there exists a $b_0\in (0,1)$ such that  for all $b<b_0,$ the integral 
$$\mathrm{TV}_b(M,\mathcal T)=\int_{\big(\frac{Q}{2}+\mathbf i{\mathbb R_{>0}}\big)^E} \prod_{e\in E}|e|_{\boldsymbol a}\prod_{\Delta\in T}|\Delta|_{\boldsymbol a}d\mathrm{Im}\boldsymbol a$$
converges absolutely.
\end{theorem}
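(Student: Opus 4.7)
My plan is to use equation \eqref{TVint} to reduce the absolute-convergence statement to showing, for each $\boldsymbol{\lambda}\in\{-1,1\}^E$, that the modulus of the integrand
$\prod_{e\in E}\sinh l_e \cdot \exp\bigl(\mathrm{Im}\,\mathcal W_b^{\boldsymbol{\lambda}}(\boldsymbol\alpha,\boldsymbol\xi)/(2\pi b^2)\bigr)$
is $L^1$ on the integration contour. Since $\sinh l_e\le e^{l_e}$, what I need is a pointwise upper bound on $\mathrm{Im}\,\mathcal W_b^{\boldsymbol{\lambda}}/(2\pi b^2)$ that decays in each $l_e$ at rate strictly greater than $1$ and decays at infinity in each $|\mathrm{Im}\,\xi_\Delta|$; the angle-structure hypothesis will furnish the $l_e$-decay with rate proportional to $1/b^2$, while Proposition \ref{limder}, summed tetrahedron by tetrahedron, provides the $\xi_\Delta$-decay.

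First I would invoke Corollary \ref{Cor3} to obtain a small $t_0>0$ and a generalized hyperbolic polyhedral metric $\boldsymbol l^*_{t_0}$ on $(M,\mathcal T)$ with cone angle $2\pi+t_0$ at every edge. Then I fix $c\in(3\pi/2,2\pi)$ sufficiently close to $2\pi$ and, by Lemma \ref{lem:contour}, deform each $\xi_\Delta$-contour to a vertical line through $\mathrm{Re}\,\xi^*_{c,t_0,\Delta}$ (with a small inward perturbation for degenerate tetrahedra whose critical real part sits on $\partial D$) so that the full contour matches the $\Gamma^*_{c,t_0}$ of Proposition \ref{critical55}. Proposition \ref{critical55} combined with the strict concavity from Proposition \ref{cd} gives the global upper bound $\mathrm{Im}\,\mathcal U_c(\boldsymbol\alpha,\boldsymbol\xi)\le \mathrm{Im}\,\mathcal U_c(\boldsymbol z^*_{c,t_0})$ on this contour. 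Since
$\mathrm{Im}\,\mathcal W^{\boldsymbol{\lambda}}(\boldsymbol\alpha,\boldsymbol\xi) = \mathrm{Im}\,\mathcal U_c(\boldsymbol\alpha,\boldsymbol\xi) + \sum_{e\in E}(2\pi\lambda_e-\widetilde\theta_{c,t_0,e})\,l_e$
and $\widetilde\theta_{c,t_0,e}\to 2\pi+t_0$ as $c\to 2\pi$ by \eqref{cac}, for $c$ close enough to $2\pi$ each coefficient $2\pi\lambda_e-\widetilde\theta_{c,t_0,e}$ is bounded above by $-t_0/2$; this yields $\mathrm{Im}\,\mathcal W^{\boldsymbol{\lambda}}(\boldsymbol\alpha,\boldsymbol\xi)\le C_0 - (t_0/2)\sum_{e\in E} l_e$ uniformly on the contour, while Proposition \ref{limder} applied summand by summand sharpens this with an additional linear decay in each $|\mathrm{Im}\,\xi_\Delta|$ at infinity.

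Finally I would incorporate the $b$-expansion $\mathcal W_b^{\boldsymbol{\lambda}}=\mathcal W^{\boldsymbol{\lambda}}+\kappa b^2+\nu_b b^4$ from \eqref{Wnu}. By Proposition \ref{bound2}, $|\mathrm{Im}\,\nu_b|\le N$ uniformly on $\mathbf D_{\delta,c}$, and by Proposition \ref{bound4} (together with its analog for $l_e$-directional derivatives) $\mathrm{Im}\,\kappa$ grows at most linearly in the $l_e$ and $|\mathrm{Im}\,\xi_\Delta|$ directions with slopes bounded by a constant independent of $b$. Dividing by $2\pi b^2$ and choosing $b_0>0$ so small that $t_0/(4\pi b^2)$ dominates those slopes by more than $1+\epsilon$, I obtain for all $b<b_0$ an estimate of the shape $\mathrm{Im}\,\mathcal W_b^{\boldsymbol{\lambda}}/(2\pi b^2)\le \widetilde C-(1+\epsilon)\sum_e l_e-\epsilon\sum_\Delta|\mathrm{Im}\,\xi_\Delta|$, which when multiplied by $\prod_e\sinh l_e$ gives an absolutely integrable function on $\mathbb R_{>0}^E\times\mathbb R^T$. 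The main obstacle I anticipate is coordinating the choice of contour so that all three required properties hold simultaneously: it must be an admissible contour for the $b$-$6j$ integrand in the sense of Lemma \ref{lem:contour}, it must lie inside the region $\mathbf D_{\delta,c}$ where the dilogarithm comparisons (Propositions \ref{EST}, \ref{bound4}, and \ref{bound2}) are valid, and it must realize the unique maximum locus of $\mathrm{Im}\,\mathcal U_c$ given by Proposition \ref{critical55}. In particular, when $\boldsymbol l^*_{t_0}$ has degenerate tetrahedra the required critical real parts lie on $\partial D$, and the inward perturbation that restores admissibility must be chosen delicately enough to preserve both the maximum bound and the dilogarithm approximation uniformly in $b$.
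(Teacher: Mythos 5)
Your proposal is correct and follows essentially the same route as the paper's proof: both rest on Corollary \ref{Cor3}, the contour $\Gamma^*_{c,t_0}$ and its unique maximum point from Proposition \ref{critical55}, the choice of $c$ close to $2\pi$ so that $\widetilde{\theta}_{c,t_0,e}>2\pi+\tfrac{t_0}{2}$, and Propositions \ref{bound4} and \ref{bound2} to control the $\kappa b^2$ and $\nu_b b^4$ corrections. The only differences are organizational — the paper works from \eqref{TVint2}, absorbing the $\sinh$ factors entirely into exponentials, and obtains decay at infinity from the radial strict concavity of $\mathrm{Im}\,\mathcal U_{c_0}$ on the whole contour, whereas you keep the $\prod_e\sinh l_e$ prefactor from \eqref{TVint} and split the decay between the $l_e$-directions (angle structure) and the $\mathrm{Im}\,\xi_\Delta$-directions (Proposition \ref{limder}); for the latter you should note that the scale beyond which the linear $\xi_\Delta$-decay kicks in grows like $\sum_e l_e$, which is harmless only because the $l_e$-decay rate you extract is of order $b^{-2}$.
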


 \begin{proof} By Corollary \ref{Cor3} and with the constant $t_0>0$ therein, there exists $\boldsymbol l^*_{t_0}$ a  generalized hyperbolic polyhedral metric on $(M,\mathcal T)$ with cone angles $2\pi+t_0$ at all the edges. Let 
$$\mathcal U(\boldsymbol \alpha, \boldsymbol \xi)=(2\pi+t_0)\sum_{e\in E}(2\alpha_e-\pi)+\sum_{\Delta\in T}U\big(\boldsymbol \alpha_\Delta,\xi_\Delta\big)$$  be the function in Proposition \ref{cpcv33} whose imaginary part has a critical point  $\boldsymbol z_{t_0} ^*=\big(\boldsymbol \alpha_{t_0}^*,\boldsymbol\xi_{t_0}^*\big)$
in $\big(\frac{\pi}{2}+\mathbf i{\mathbb R_{>0}}\big)^E\times \overline D^T.$
For such $t_0$ and $c\in (\frac{3\pi}{2},2\pi),$ we let 
$$\mathcal U_c(\boldsymbol \alpha, \boldsymbol \xi)= \sum_{e\in E} \widetilde{\theta}_{c,t_0, e}(2\alpha_e-\pi)+\sum_{\Delta\in T}U\big(\boldsymbol \alpha_\Delta,\xi_\Delta\big)$$
be the function defined in Proposition \ref{critical55} whose imaginary part has the unique maximum point $\boldsymbol z^*_{c,t_0}=(\boldsymbol \alpha^*_{t_0}, \boldsymbol \xi^*_{c,t_0})$ on the contour 
$$\Gamma^*_{c,t_0}= \bigg\{ (\boldsymbol \alpha,\boldsymbol \xi)\in \Big(\frac{\pi}{2}+\mathbf i{\mathbb R_{>0}}\Big)^E\times D^T   \ \bigg|\ \mathrm{Re}\boldsymbol \xi=\mathrm{Re}\boldsymbol \xi^*_{c,t_0}\bigg\}.$$
Since $\widetilde\theta_{c_0,t_0,e}$ converges to $\widetilde\theta_{t_0,e}=2\pi + t_0$ as $c$ tends to $2\pi,$ we can choose a $c_0\in  (\frac{3\pi}{2},2\pi)$ sufficiently close to $2\pi$ such that 
\begin{equation}\label{429}
\widetilde\theta_{c_0,t_0,e}>   2\pi + \frac{t_0}{2}
\end{equation}
for each $e\in E.$ 
\\

For each $\Delta\in T,$ we consider the contour 
$$\Gamma^*_{c_0, t_0,\Delta}=\Big\{\xi_\Delta\in D \ \Big|\ \mathrm{Re}\xi_\Delta = \mathrm{Re}\xi^*_{c_0, t_0,\Delta} \Big\}$$
and 
consider
\begin{equation*}
\begin{split}
\Gamma^*_{c_0,t_0}= &   \bigg\{ (\boldsymbol \alpha,\boldsymbol \xi)\in  \Big(\frac{\pi}{2}+\mathbf i{\mathbb R_{>0}}\Big)^E \times D^T   \ \bigg|\ \mathrm{Re}\boldsymbol \xi=\mathrm{Re}\boldsymbol \xi^*_{c_0,t_0}\bigg\}\\
= & \Big(\frac{\pi}{2}+\mathbf i{\mathbb R_{>0}}\Big)^E\times \prod_{\Delta\in T} \Gamma^*_{c_0, t_0, \Delta}.
\end{split}
\end{equation*}
By (\ref{TVint2}),  it suffices to show that for each $\boldsymbol\mu$ and  $\boldsymbol \lambda$ in $\{-1,1\}^E,$ there is a $b_0>0$ such that for any $b<b_0$ the integral 
$$\int_{\Gamma^*_{c_0, t_0}}\bigg(\prod_{e\in E}\mu_e\lambda_e\bigg)\exp\Bigg(\frac{\mathcal V_b^{\boldsymbol\mu\boldsymbol \lambda}(\boldsymbol \alpha, \boldsymbol \xi)}{2\pi \mathbf i b^2}\Bigg)d \boldsymbol \alpha  d \boldsymbol \xi$$ 
converges.

For the case that  $\boldsymbol\mu=\boldsymbol \lambda=(1,\dots, 1),$ we have
$$\mathcal V(\boldsymbol\alpha,\boldsymbol \xi)= \mathcal U_{c_0} (\boldsymbol\alpha,\boldsymbol \xi)+\sum_{e\in E}\big(2\pi(1+b^2)-\widetilde\theta_{c_0,t_0,e}\big)(2\alpha_e-\pi);$$
and as a consequence of (\ref{429}), 
 we have
\begin{equation}\label{VUc5}
\begin{split}
\mathrm{Im} \mathcal V(\boldsymbol\alpha,\boldsymbol \xi) = &  \mathrm{Im}  \mathcal U_{c_0} (\boldsymbol\alpha,\boldsymbol \xi) + \sum_{e\in E}\big(2\pi(1+b^2)-\widetilde\theta_{c_0,t_0,e}\big)l_e \\
< &  \mathrm{Im}  \mathcal U_{c_0} (\boldsymbol\alpha,\boldsymbol \xi) + \Big(2\pi b^2-\frac{t_0}{2}\Big)\sum_{e\in E}l_e\\
<  &\mathrm{Im}  \mathcal U_{c_0} (\boldsymbol\alpha,\boldsymbol \xi)
\end{split}
\end{equation}
for all $(\boldsymbol\alpha,\boldsymbol \xi)\in \Gamma^*_{c_0,t_0}$ and $b<\sqrt{\frac{t_0}{4\pi}}.$

Next, for $L>0,$ let 
$$B(\boldsymbol z^*_{c_0,t_0},L)= \Big\{ (\boldsymbol \alpha,\boldsymbol \xi)\in \Gamma^*_{c_0,t_0}   \ \Big|\ \big| (\boldsymbol \alpha,\boldsymbol \xi) -   (\boldsymbol \alpha^*_{t_0}, \boldsymbol \xi^*_{c_0,t_0})  \big| \leqslant  L\Big\}$$
and let 
  $$\partial B(\boldsymbol z^*_{c_0,t_0},L)= \big \{ (\boldsymbol \alpha,\boldsymbol \xi)\in \Gamma^*_{c_0,t_0}   \ \big|\ | (\boldsymbol \alpha,\boldsymbol \xi) -   (\boldsymbol \alpha^*_{t_0}, \boldsymbol \xi^*_{c_0,t_0}) | =L \big\}.$$
Then as a consequence of Proposition \ref{cd}, the directional derivative of $\mathrm{Im}\mathcal U_{c_0}$  is strictly decreasing along each ray in $ \Gamma^*_{c_0,t_0}\cong {\mathbb R_{>0}}^E\times \mathbb R^T$ starting from $\boldsymbol z^*_{c_0,t_0};$ and by the compactness of the spheres in $\mathbb R^{|E|+|T|},$ there is an $L_0>0$ such that
\begin{equation}\label{Mpartial}
\max \big \{  \mathrm{Im}\mathcal U_{c_0} (\boldsymbol\alpha,\boldsymbol\xi)\ \big |\ \partial B(\boldsymbol z^*_{c_0,t_0},L_0)\big \} < -2\mathrm{Vol}(M)-4\epsilon_0
\end{equation}
for a given $\epsilon_0>0,$ and  for any unit vector $\mathbf u\in \mathbb R^{|E|+|T|}$ 
 the directional derivative
\begin{equation}\label{Dv5}
D_{\mathbf u}\mathrm{Im}\mathcal U_{c_0}(\boldsymbol z^*_{c_0,t_0}+l \mathbf u)<-2\epsilon
\end{equation}
for all $l >L_0$ and for some sufficiently small $\epsilon>0.$ 
 As $B(\boldsymbol z^*_{c_0,t_0},L_0)$ is bounded,  we have 
 \begin{equation}\label{IBL}
 \Bigg|\int_{B(\boldsymbol z^*_{c_0,t_0},L_0)}\exp\bigg(\frac{\mathcal V_b(\boldsymbol \alpha, \boldsymbol \xi)}{2\pi \mathbf i b^2}\bigg)d \boldsymbol \alpha  d \boldsymbol \xi\Bigg|<\infty.
\end{equation}
On the other hand, by (\ref{Vnu}), we have 
\begin{equation*}
\begin{split}
&\Bigg|\int_{\Gamma^*_{c_0,t_0}\setminus B(\boldsymbol z^*_{c_0,t_0},L_0)}\exp\bigg(\frac{\mathcal V_b(\boldsymbol \alpha, \boldsymbol \xi)}{2\pi \mathbf i b^2}\bigg)d \boldsymbol \alpha  d \boldsymbol \xi\Bigg|\\
\leqslant &\int_{\Gamma^*_{c_0,t_0}\setminus B(\boldsymbol z^*_{c_0,t_0},L_0)}\exp\bigg(\frac{\mathrm{Im}\mathcal V(\boldsymbol \alpha, \boldsymbol \xi)+\mathrm{Im}\kappa(\boldsymbol\alpha,\boldsymbol\xi)b^2+\mathrm{Im}\nu_b(\boldsymbol\alpha,\boldsymbol\xi)b^4}{2\pi   b^2}\bigg)|d \boldsymbol \alpha  d \boldsymbol \xi|.
\end{split}
\end{equation*}
By the compactness of $\partial B(\boldsymbol z^*_{c_0,t_0},L_0),$ there is a $b_0<\sqrt{\frac{t_0}{4\pi}}$ such that for all $b<b_0,$ 
\begin{equation}\label{mpartial}
\max \big \{  \mathrm{Im}\kappa (\boldsymbol\alpha,\boldsymbol\xi) \cdot b^2 \ \big |\ (\boldsymbol\alpha,\boldsymbol\xi) \in \partial B(\boldsymbol z^*_{c_0,t_0},L_0)\big \} < \epsilon_0, 
\end{equation}
$Kb^2<\epsilon$ and $Nb^4<\epsilon_0,$ where $K$ and $N$ are respectively the constants in Proposition \ref{bound4} and Proposition \ref{bound2}. We claim that, on $\Gamma^*_{c_0,t_0}\setminus B(\boldsymbol z^*_{c_0,t_0},L_0)$ and for $b<b_0,$ 
$$\mathrm{Im}\mathcal V(\boldsymbol \alpha, \boldsymbol \xi)+\mathrm{Im}\kappa(\boldsymbol\alpha,\boldsymbol\xi)b^2+\mathrm{Im}\nu_b(\boldsymbol\alpha,\boldsymbol\xi)b^4< -2\Big(\mathrm{Vol}(M)+\epsilon_0\Big) -\epsilon \Big(\big|(\boldsymbol\alpha,\boldsymbol\xi)-  (\boldsymbol \alpha^*_{t_0}, \boldsymbol \xi^*_{c_0,t_0})  \big|-L_0\Big),$$
and as a consequence we have
\begin{equation}\label{I2}
\begin{split}
&\int_{\Gamma^*_{c_0,t_0}\setminus B(\boldsymbol z^*_{c_0,t_0},L_0)}\exp\bigg(\frac{\mathrm{Im}\mathcal V(\boldsymbol \alpha, \boldsymbol \xi)+\mathrm{Im}\kappa(\boldsymbol\alpha,\boldsymbol\xi)b^2+\mathrm{Im}\nu_b(\boldsymbol\alpha,\boldsymbol\xi)b^4}{2\pi   b^2}\bigg)|d \boldsymbol \alpha  d \boldsymbol \xi|\\
< & \exp\bigg(\frac{-\mathrm{Vol}(M)-\epsilon_0}{\pi b^2}\bigg) \int_{\Gamma^*_{c_0,t_0}\setminus B(\boldsymbol z^*_{c_0,t_0},L_0)}\exp\Bigg(\frac{-\epsilon\big(|(\boldsymbol\alpha,\boldsymbol\xi)-  (\boldsymbol \alpha^*_{t_0}, \boldsymbol \xi^*_{c_0,t_0}) |-L_0\big)}{2\pi   b^2}\Bigg)|d \boldsymbol \alpha  d \boldsymbol \xi|\\
= &\ O\bigg(e^{\frac{-\mathrm{Vol}(M)-\epsilon_0}{\pi b^2}}\bigg).
\end{split}
\end{equation}
In particular, the integral is finite. 

To prove the  claim,   for $(\boldsymbol\alpha,\boldsymbol\xi)\in \Gamma^*_{c_0,t_0}\setminus B(\boldsymbol z^*_{c_0,t_0},L_0),$ we  let $(\boldsymbol \alpha',\boldsymbol\xi')$ be the intersection of 
  $\partial B(\boldsymbol z^*_{c_0,t_0},L_0)$ 
   with the line segment connecting $(\boldsymbol\alpha, \boldsymbol\xi)$ and $ (\boldsymbol \alpha_{t_0}^*, \boldsymbol \xi^*_{c_0,t_0}).$  Then 
  \begin{equation}\label{g}
\big|(\boldsymbol\alpha,\boldsymbol\xi)-  (\boldsymbol \alpha_{t_0}^*, \boldsymbol \xi^*_{c_0,t_0})  \big|-L_0=  \big|(\boldsymbol\alpha,\boldsymbol\xi)-  (\boldsymbol \alpha', \boldsymbol \xi')  \big|\to +\infty
  \end{equation}
 as $(\boldsymbol\alpha,\boldsymbol\xi)\to \infty.$ Let $\mathbf u=\frac{(\boldsymbol\alpha,\boldsymbol\xi)-  (\boldsymbol \alpha_{t_0}^*, \boldsymbol \xi^*_{c_0,t_0}) }{|(\boldsymbol\alpha,\boldsymbol\xi)-  (\boldsymbol \alpha_{t_0}^*, \boldsymbol \xi^*_{c_0,t_0}) |}.$   Then by (\ref{Dv5}) and  the choice of $b_0,$ for $l >L_0,$ i.e., $\boldsymbol z^*_{c_0,t_0}+l \mathbf u$ in  $\Gamma^*_{c_0,t_0}\setminus B(\boldsymbol z^*_{c_0,t_0},L_0),$ we have 
$$D_{\mathbf u}\Big(\mathrm{Im}\mathcal U_{c_0,t_0}(\boldsymbol z^*_{c_0,t_0}+l \mathbf u)+\mathrm{Im}\kappa(\boldsymbol z^*_{c_0,t_0}+l \mathbf u)b^2\Big)<-2\epsilon+\epsilon=-\epsilon$$
 for all $b<b_0.$ Together with the Mean Value Theorem,  (\ref{Mpartial}) and (\ref{mpartial}), we have
\begin{equation}\label{lM}
\begin{split}
\mathrm{Im}\mathcal U_{c_0}(\boldsymbol \alpha, \boldsymbol \xi)+\mathrm{Im}\kappa(\boldsymbol\alpha,\boldsymbol\xi)b^2 
<  & \mathrm{Im}\mathcal U_{c_0}(\boldsymbol \alpha', \boldsymbol \xi')+\mathrm{Im}\kappa(\boldsymbol\alpha',\boldsymbol\xi')b^2-\epsilon\big|(\boldsymbol\alpha,\boldsymbol\xi)-  (\boldsymbol \alpha', \boldsymbol \xi')  \big|\\
<  &  -2\mathrm{Vol}(M)-3\epsilon_0  - \epsilon\Big( \big|(\boldsymbol\alpha,\boldsymbol\xi)-  (\boldsymbol \alpha^*_{t_0}, \boldsymbol \xi^*_{c_0,t_0}) \big|- L_0 \Big) 
\end{split}
\end{equation}
for all for all $(\boldsymbol\alpha,\boldsymbol \xi)\in \Gamma^*_{c_0,t_0}\setminus B(\boldsymbol z^*_{c_0,t_0},L_0).$  Finally, by Proposition \ref{bound2} and the choice of $b_0$, we have
\begin{equation}\label{last}
\mathrm{Im}\nu_b(\boldsymbol\alpha,\boldsymbol\xi)b^4<Nb^4<\epsilon_0 
\end{equation}
for all $b<b_0$ and for all $(\boldsymbol\alpha,\boldsymbol \xi)\in \Gamma^*_{c_0,t_0}.$ 
 Putting (\ref{VUc5}), (\ref{lM}) and (\ref{last}) together, we have the  inequality in (\ref{I2}); and by (\ref{g}) we have the  equality in (\ref{I2}). 

Finally, putting (\ref{IBL}) and (\ref{I2}) together ,we have
  \begin{equation}\label{IGamma}
 \Bigg|\int_{\Gamma^*_{c_0,t_0}}\exp\bigg(\frac{\mathcal V_b(\boldsymbol \alpha, \boldsymbol \xi)}{2\pi \mathbf i b^2}\bigg)d \boldsymbol \alpha  d \boldsymbol \xi\Bigg|<\infty.
\end{equation}

For the case  $\boldsymbol \mu\neq (1,\dots,1)$ or  $\boldsymbol \lambda\neq (1,\dots,1),$ we observe that 
$$\mathrm{Im}\mathcal V_b^{\boldsymbol\mu\boldsymbol \lambda}(\boldsymbol\alpha,\boldsymbol\xi)-\mathrm{Im}\mathcal V_b (\boldsymbol\alpha,\boldsymbol\xi) = -2\pi\sum_{e\in E}\big((1-\mu_e)b^2+(1-\lambda_e)\big)l_e\leqslant 0$$
for all for $(\boldsymbol \alpha,\boldsymbol \xi)$ in $\Gamma^*_{c_0,t_0}.$ Then 
\begin{equation*}
\begin{split}
\Bigg|\int_{\Gamma^*_{c_0,t_0}}\bigg(\prod_{e\in E}\mu_e\lambda_e\bigg)\exp\bigg(\frac{\mathcal V_b^{\boldsymbol\mu\boldsymbol \lambda}(\boldsymbol \alpha, \boldsymbol \xi)}{2\pi \mathbf i b^2}\bigg)d \boldsymbol \alpha  d \boldsymbol \xi\Bigg|\leqslant &\int_{\Gamma^*_{c_0,t_0}}\exp\bigg(\frac{\mathrm{Im}\mathcal V^{\boldsymbol\mu\boldsymbol \lambda}_b(\boldsymbol \alpha, \boldsymbol \xi)}{2\pi b^2}\bigg)|d \boldsymbol \alpha  d \boldsymbol \xi|\\
\leqslant & \int_{\Gamma^*_{c_0,t_0}}\exp\bigg(\frac{\mathrm{Im}\mathcal V_b(\boldsymbol \alpha, \boldsymbol \xi)}{2\pi b^2}\bigg)|d \boldsymbol \alpha  d \boldsymbol \xi|,
\end{split}
\end{equation*}
which is finite by (\ref{IGamma}). This, together with (\ref{TVint2}),  completes the proof.
\end{proof}

\begin{proof}[Proof of Theorem \ref{Converge}]
 Combining   Proposition \ref{KA} and Theorem \ref{WD4} gives  Theorem \ref{Converge}. 
\end{proof}

\subsection{Proof of Theorem \ref{VC}}\label{sec:4.3}
In this subsection we will show that for any Kojima ideal triangulation $\mathcal T$, $\mathrm{TV}_b(M,\mathcal T)$ has the desired asymptotics in (\ref{eq:1.7}); and in the next section we will show that   $\mathrm{TV}_b(M,\mathcal T)$ does not depend on $\mathcal T$ with a separate argument. We present this proof before that of Theorem \ref{WD3} because the style of the argument here is close to the ones in the previous section. 

 \begin{proof}[Proof of Theorem \ref{VC}] 
 Let $\mathcal T$ be a Kojima ideal triangulation of $M$ with the set of edges $E$ and the set of tetrahedra $T.$ Then by (\ref{int}) and  (\ref{6jint}),  we have 
\begin{equation}
\begin{split}
&\mathrm{TV}_b(M,\mathcal T)=\frac{(-\mathbf i)^{|E|}}{(\pi b )^{|E|+|T|}} \int_{\big(\frac{\pi}{2}+\mathbf i{\mathbb R_{>0}} \big)^E}\bigg(\prod_{e\in E}4\sinh l_e\sinh \frac{l_e}{b^2}\bigg)\Bigg(\int_{\Gamma^*_{\boldsymbol\alpha}}\exp\bigg(\frac{\sum_{\Delta\in T}U_b(\boldsymbol \alpha_\Delta, \xi_\Delta)}{2\pi \mathbf i b^2}\bigg) d\boldsymbol \xi \Bigg) d\boldsymbol \alpha,
\end{split}
\end{equation}
where for each $\boldsymbol\alpha\in \big(\frac{Q}{2}+\mathbf i{\mathbb R_{>0}}\big)^E,$ $\Gamma^*_{\boldsymbol\alpha}=\prod_{\Delta\in T}\Gamma^*_{\boldsymbol\alpha_\Delta},$  and each $\Gamma^*_{\boldsymbol\alpha_\Delta}$ is a contour of integral for $\xi_\Delta$ that is to be specified. 
Let $L_0$ be as in the proof of Theorem \ref{WD4},
and let 
$$B(\boldsymbol \alpha^*_{t_0}, L_0) = \Big\{ \boldsymbol\alpha \in \Big(\frac{\pi}{2}+\mathbf i{\mathbb R_{>0}} \Big)^E \ \Big|\  |\boldsymbol\alpha - \boldsymbol\alpha_{t_0}^*|\leqslant L_0\Big\}.$$
Then  
$$  \mathrm{TV}_b(M,\mathcal T) = \mathrm I_{\text{out}}+  \mathrm I_{\text{in}},$$
where
\begin{equation}\label{Iout}
 \mathrm I_{\text{out}} = \frac{(-\mathbf i)^{|E|}}{(\pi b )^{|E|+|T|}} \int_{\big(\frac{\pi}{2}+\mathbf i{\mathbb R_{>0}} \big)^E\setminus B(\boldsymbol \alpha^*_{t_0}, L_0) }\bigg(\prod_{e\in E}4\sinh l_e\sinh \frac{l_e}{b^2}\bigg)\Bigg(\int_{\Gamma^*_{\boldsymbol\alpha}}\exp\bigg(\frac{\sum_{\Delta\in T}U_b(\boldsymbol \alpha_\Delta, \xi_\Delta)}{2\pi \mathbf i b^2}\bigg) d\boldsymbol \xi \Bigg) d\boldsymbol \alpha,
 \end{equation}
and 
\begin{equation}\label{Iin}
\mathrm I_{\text{in}}= \frac{(-\mathbf i)^{|E|}}{(\pi b )^{|E|+|T|}} \int_{B(\boldsymbol \alpha^*_{t_0}, L_0) }\bigg(\prod_{e\in E}4\sinh l_e\sinh \frac{l_e}{b^2}\bigg)\Bigg(\int_{\Gamma^*_{\boldsymbol\alpha}}\exp\bigg(\frac{\sum_{\Delta\in T}U_b(\boldsymbol \alpha_\Delta, \xi_\Delta)}{2\pi \mathbf i b^2}\bigg) d\boldsymbol \xi \Bigg) d\boldsymbol \alpha.
\end{equation}
We will show that 
\begin{enumerate}[(I)]
\item 
\begin{equation*}\label{eout}
\mathrm I_{\text{out}} <  O\bigg(e^{\frac{-\mathrm{Vol}(M)-\epsilon_1}{\pi b^2}}\bigg)
\end{equation*}
for some $\epsilon_1>0,$ and 
\item  \begin{equation*}\label{ein}
\mathrm I_{\text{in}} = (2\mathbf i)^{\frac{\chi(M)}{2}}\frac{e^{\frac{-\mathrm{Vol}(M)}{\pi b^2}}}{\sqrt{\pm \mathrm{Tor}(DM, \mathrm{Ad}_{\rho_{DM}})}}\Big(1+O\big(b^2\big)\Big).
\end{equation*}
\end{enumerate}
Then putting (I) and (II) together, we have the result. 
\\

To prove (I), for all $\boldsymbol\alpha \in \big(\frac{\pi}{2}+\mathbf i{\mathbb R_{>0}} \big)^E\setminus B(\boldsymbol \alpha^*_{t_0}, L_0)$ and $\Delta\in T,$   we identically choose $\Gamma^*_{\boldsymbol\alpha_\Delta}$ to be 
$$\Gamma^*_{t_0,\Delta} = \Big\{ \xi_\Delta \in D\ \Big|\ \mathrm{Re}\xi_\Delta = \mathrm{Re}\xi^*_{t_0,\Delta}\Big\}$$
as in the proof of Theorem \ref{Converge}, and let 
$$\Gamma^*_{\text{out}} = \bigg(\Big(\frac{\pi}{2}+\mathbf i{\mathbb R_{>0}} \Big)^E\setminus B(\boldsymbol \alpha^*_{t_0}, L_0) \bigg)\times \prod_{\Delta\in T}\Gamma^*_{t_0,\Delta}.$$
Then by (\ref{Iout}), 
\begin{equation}\label{sout}
\begin{split}
\mathrm I_{\text{out}} = &\frac{(-\mathbf i)^{|E|}}{(\pi b )^{|E|+|T|}} \int_{\Gamma^*_{\text{out}}}\bigg(\prod_{e\in E}4\sinh l_e\sinh \frac{l_e}{b^2} \bigg)\exp\Bigg(\frac{\sum_{\Delta\in T}U_b(\boldsymbol \alpha_\Delta, \xi_\Delta)}{2\pi \mathbf i b^2}\Bigg)d \boldsymbol \alpha  d \boldsymbol \xi\\
=&\frac{(-\mathbf i)^{|E|}}{(\pi b )^{|E|+|T|}}\sum_{\boldsymbol \mu\in\{-1,1\}^E}\sum_{\boldsymbol \lambda\in\{-1,1\}^E}
 \int_{\Gamma^*_{\text{out}}} \bigg(\prod_{e\in E}\mu_e\lambda_e\bigg) \exp\Bigg(\frac{ \mathcal V_b^{\boldsymbol\mu\boldsymbol \lambda}(\boldsymbol \alpha, \boldsymbol \xi)}{2\pi \mathbf i b^2}\Bigg)d \boldsymbol \alpha  d \boldsymbol \xi.
\end{split}
\end{equation} 
For $\boldsymbol\mu=\boldsymbol \lambda=(1,\dots, 1),$ since $\Gamma^*_{\text{out}}\subset \Gamma^*_{c_0,t_0}\setminus B(\boldsymbol z^*_{c_0,t_0},L_0),$ as a consequence of  (\ref{I2}), we have 
\begin{equation}\label{Int1}
 \Bigg|  \frac{(-2\mathbf i)^{|E|}}{(\pi b )^{|E|+|T|}}  \int_{\Gamma^*_{\text{out}}}\exp\bigg(\frac{\mathcal V_b(\boldsymbol \alpha, \boldsymbol \xi)}{2\pi \mathbf i b^2}\bigg)d \boldsymbol \alpha  d \boldsymbol \xi\Bigg| < \ O\bigg(e^{\frac{-\mathrm{Vol}(M)-\epsilon_1}{\pi b^2}}\bigg)
 \end{equation}
 for any $\epsilon_1<\epsilon_0,$ where $\epsilon_0$ is as in the proof of (1).  
 For $\boldsymbol \mu\neq (1,\dots,1)$ or  $\boldsymbol \lambda\neq (1,\dots,1),$ we have
$$\mathrm{Im}\mathcal V_b^{\boldsymbol\mu\boldsymbol \lambda}(\boldsymbol\alpha,\boldsymbol\xi)-\mathrm{Im}\mathcal V_b (\boldsymbol\alpha,\boldsymbol\xi) = -2\pi\sum_{e\in E}\big((1-\mu_e)b^2+(1-\lambda_e)\big)l_e\leqslant 0$$
for all for $(\boldsymbol \alpha,\boldsymbol \xi)$ in $\Gamma^*_{\text{out}}.$ Then by (\ref{Int1}), for any $\boldsymbol\mu$  and $\boldsymbol \lambda$ in $\{-1,1\}^E,$
 we have
\begin{equation}\label{Int4}
\begin{split}
&\Bigg| \frac{(-\mathbf i)^{|E|}}{(\pi b )^{|E|+|T|}} \int_{\Gamma^*_{\text{out}}} \bigg(\prod_{e\in E}\mu_e\lambda_e\bigg)\exp\bigg(\frac{\mathcal V_b^{\boldsymbol\mu\boldsymbol \lambda}(\boldsymbol \alpha, \boldsymbol \xi)}{2\pi \mathbf i b^2}\bigg)d \boldsymbol \alpha  d \boldsymbol \xi\Bigg|\\
\leqslant &  \frac{1}{(\pi b )^{|E|+|T|}} \int_{\Gamma^*_{\text{out}}}\exp\bigg(\frac{\mathrm{Im}\mathcal V^{\boldsymbol\mu\boldsymbol \lambda}_b(\boldsymbol \alpha, \boldsymbol \xi)}{2\pi b^2}\bigg)|d \boldsymbol \alpha  d \boldsymbol \xi|\\
\leqslant &  \frac{1}{(\pi b )^{|E|+|T|}} \int_{\Gamma^*_{\text{out}}}\exp\bigg(\frac{\mathrm{Im}\mathcal V_b(\boldsymbol \alpha, \boldsymbol \xi)}{2\pi b^2}\bigg)|d \boldsymbol \alpha  d \boldsymbol \xi| \\
< & \ O\bigg(e^{\frac{-\mathrm{Vol}(M)-\epsilon_1}{\pi b^2}}\bigg);
\end{split}
\end{equation}
and by (\ref{sout}), we have (I).
\\

To prove (II), for each  $\boldsymbol\alpha\in B(\boldsymbol \alpha^*_{t_0}, L_0)$ and $\Delta\in T,$ we will choose the  contour $\Gamma^*_{\boldsymbol\alpha_\Delta}$ 
 as follows. Let $\boldsymbol l _{\boldsymbol\alpha_\Delta}=2\mathrm{Im}(\boldsymbol \alpha_\Delta).$ If $\boldsymbol l_{\boldsymbol\alpha_\Delta}\in \mathcal L \cup \partial \mathcal L,$ then we let
 $$\xi^*_\Delta(\boldsymbol\alpha_\Delta)=\xi^* (\boldsymbol\alpha_\Delta)$$
 be the unique critical point of $U_{\boldsymbol\alpha_\Delta}$ in $\overline D$ guaranteed by Proposition \ref{critical2} (1) (2);  and if $\boldsymbol l_{\boldsymbol\alpha_\Delta} \in {\mathbb R_{>0}}^6\setminus ( \mathcal L \cup \partial \mathcal L),$ 
 then we let 
 $$\xi^*_\Delta(\boldsymbol\alpha_\Delta)=\xi^*_1(\boldsymbol\alpha_\Delta)\cup \xi^*_2(\boldsymbol\alpha_\Delta)$$
be the union of the two critical points 
 $\xi^*_1(\boldsymbol\alpha_\Delta)$ and $\xi^*_2(\boldsymbol\alpha_\Delta)$ of $U_{\boldsymbol\alpha_\Delta}$ on $\partial\Delta$ guaranteed by Proposition \ref{critical2} (3).  
 In the latter case,  by Proposition \ref{critical2} (3) again, we have 
 $$\mathrm {Re} \xi^*_1(\boldsymbol\alpha_\Delta) = \mathrm{Re} \xi^*_2(\boldsymbol\alpha_\Delta) = 2\pi$$
 and 
 $$\mathrm{Im}U_{\boldsymbol\alpha_\Delta}\big(\xi^*_1(\boldsymbol\alpha_\Delta)\big)= \mathrm{Im}U_{\boldsymbol\alpha_\Delta}\big(\xi^*_2(\boldsymbol\alpha_\Delta) \big)= -2\widetilde{\mathrm{Cov}}(\boldsymbol l_{\boldsymbol\alpha_\Delta}).$$ 
 As such, we define 
 $$\mathrm{Re} \xi^*_\Delta(\boldsymbol\alpha_\Delta) \doteq 2\pi$$
 and 
 $$\mathrm{Im}U_{\boldsymbol\alpha_\Delta}\big(\xi^*_\Delta(\boldsymbol\alpha_\Delta)\big) \doteq -2\widetilde{\mathrm{Cov}}(\boldsymbol l_{\boldsymbol\alpha_\Delta})$$
 for each $\Delta\in T$ with $\boldsymbol l_{\boldsymbol\alpha_\Delta} \in {\mathbb R_{>0}}^6\setminus (\mathcal L\cup\partial \mathcal L).$

Then we let
$$\Gamma_{\boldsymbol\alpha_\Delta}=\Big\{\xi_\Delta \in \overline D\ \Big|\ \mathrm{Re}\xi_\Delta=\mathrm{Re}\xi^*_\Delta(\boldsymbol\alpha_\Delta)\Big \},$$
and let 
$$ \Gamma =  \bigg\{ (\boldsymbol \alpha,\boldsymbol \xi)\in B(\boldsymbol \alpha^*_{t_0}, L_0) \times \overline D^T   \ \bigg|\ \xi_\Delta\in\Gamma_{\boldsymbol\alpha_\Delta}\text{ for all }\Delta\text{ in }T\bigg\}.
$$
We notice that, for a $\Delta\in T,$  if $\boldsymbol l_{\boldsymbol\alpha_\Delta}\notin \mathcal L,$ then $\Gamma_{\boldsymbol\alpha_\Delta}$ passes through $\eta_j(\boldsymbol\alpha_\Delta)$'s which are the singular points of $U_{\boldsymbol\alpha_\Delta}.$  As a consequence,  if  there are flat tetrahedra in $T,$ then $\Gamma$ will pass through the singular points of $\mathcal W.$  Hence we need to deform $\Gamma$ to a new contour  $\Gamma'$ to avoid these singular points.
By  the compactness of $B(\boldsymbol \alpha^*_{t_0}, L_0),$ there is a $\delta>0$ such that Corollary \ref{limder2} holds for any $\boldsymbol\alpha\in B(\boldsymbol \alpha^*_{t_0}, L_0)$ and for any $\xi_\Delta$ in the $\delta$-square neighborhood of $\eta_{j}(\boldsymbol\alpha_\Delta)$'s.  Now for $\boldsymbol\alpha\in B(\boldsymbol \alpha^*_{t_0}, L_0)$ and $\Delta\in T,$  we consider a deformation $\Gamma'_{\boldsymbol\alpha_\Delta}$ of $\Gamma_{\boldsymbol\alpha_\Delta}$ defined as follows. If $\Gamma_{\boldsymbol\alpha_\Delta}$ does not intersect the complement of $D^{\boldsymbol\alpha_\Delta}_{\delta,c},$ then we let $\Gamma'_{\boldsymbol\alpha_\Delta}=\Gamma_{\boldsymbol\alpha_\Delta}$ (see (1) of Figure \ref{GA}); and if $\Gamma_{\boldsymbol\alpha_\Delta}$  intersects the complement of $D^{\boldsymbol\alpha_\Delta}_{\delta,c},$ then we let  $\Gamma'_{\boldsymbol\alpha_\Delta}$ be the contour obtained from $\Gamma_\Delta$ by pushing the parts out of $D^{\boldsymbol\alpha_\Delta}_{\delta,c}$ into the boundary of $D^{\boldsymbol\alpha_\Delta}_{\delta,c}$ (see (2), (3) of Figure \ref{GA}). 
 
\begin{figure}[htbp]
\centering
\includegraphics[scale=0.175]{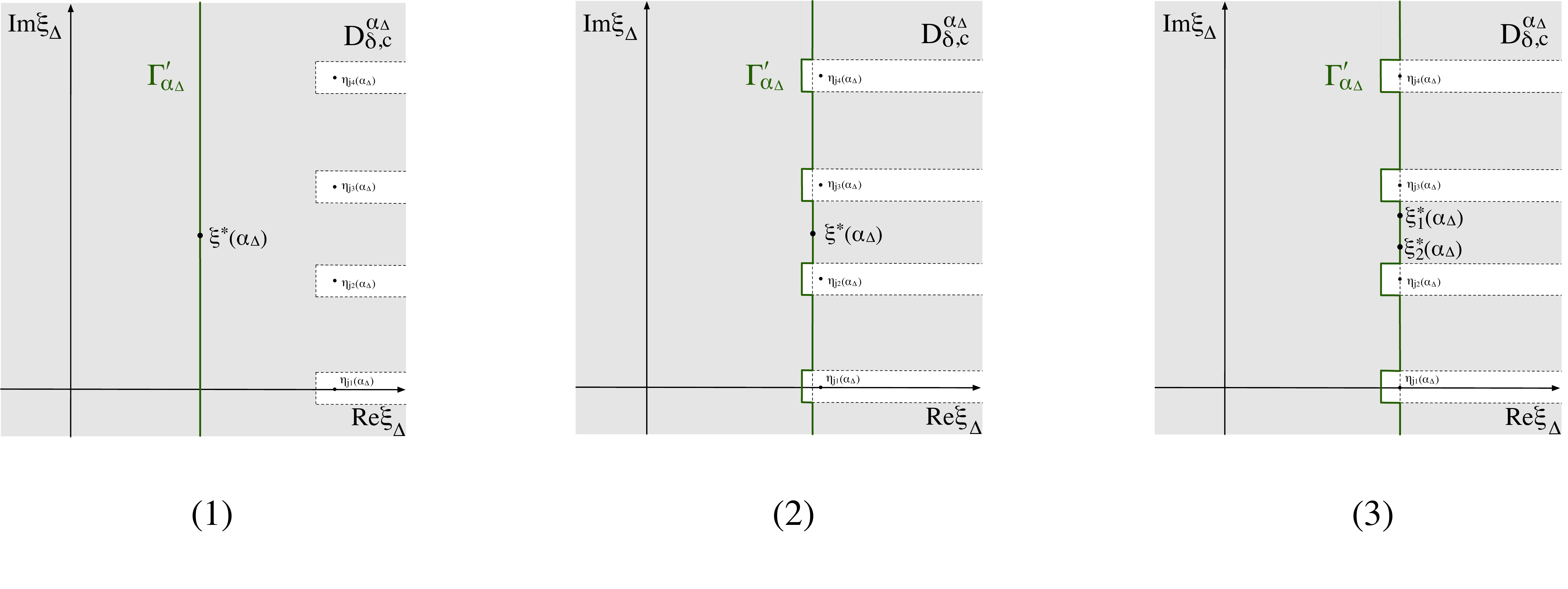}
\caption{Contour $\Gamma'_{\boldsymbol\alpha_\Delta}$ when: (1) $\boldsymbol l_{\boldsymbol\alpha_\Delta}\in \mathcal L$ and $\Gamma_{\boldsymbol\alpha_\Delta}$ does not intersect the complement of  $D^{\boldsymbol\alpha_\Delta}_{\delta,c},$ (2)  $\boldsymbol l_{\boldsymbol\alpha_\Delta} \in \mathcal L$ and $\Gamma_{\boldsymbol\alpha_\Delta}$ intersects the complement of $D^{\boldsymbol\alpha_\Delta}_{\delta,c},$ and (3)  $\boldsymbol l_{\boldsymbol\alpha_\Delta} \notin \mathcal L.$ }
\label{GA}
\end{figure}

Let 
$$\Gamma'= \bigg\{ (\boldsymbol \alpha,\boldsymbol \xi)\in B(\boldsymbol \alpha^*_{t_0}, L_0) \times \overline D^T   \ \bigg|\ \xi_\Delta\in\Gamma'_{\boldsymbol\alpha_\Delta}\text{ for all }\Delta\text{ in }T\bigg\}.$$ 
Then $\Gamma' $ passes through the critical point $(\boldsymbol\alpha^*,\boldsymbol\xi^*)$ of $\mathcal W,$ and by Proposition \ref{critical2} (1) (2) (3), Proposition \ref{Covconcave} and Proposition \ref{cpcv5}, we have 
\begin{equation}\label{nuiq}
\mathrm{Im}\mathcal W (\boldsymbol\alpha,\boldsymbol\xi) \leqslant   \sum_{e\in E} 2\pi l_e + \sum_{\Delta\in T}U\big(\boldsymbol\alpha_\Delta,\xi^*_\Delta(\boldsymbol\alpha_\Delta)\big)= -2  \widetilde{\mathrm{Cov}}(\boldsymbol l) \leqslant -2\mathrm{Vol}(M) = \mathrm{Im}\mathcal W(\boldsymbol\alpha^*,\boldsymbol\xi^*)
\end{equation}
for all $ (\boldsymbol \alpha,\boldsymbol \xi)\in \Gamma'\setminus \{(\boldsymbol\alpha^*,\boldsymbol\xi^*)\},$ with the equality held when $\boldsymbol\alpha=\boldsymbol\alpha^*,$ $\xi_\Delta=\xi^*_\Delta$ for all  non-flat $\Delta$ in $T,$ and $\xi_\Delta$ lies on the line segment connecting $\eta^*_{j_2,\Delta}$ and $\eta^*_{j_3,\Delta}$ for all flat $\Delta$ in $T.$  From this we see that, when there are flat tetrahedra in $T,$  the critical point $(\boldsymbol\alpha^*,\boldsymbol\xi^*)$ is not  the unique maximum point of $\mathrm{Im}\mathcal W$ on  $\Gamma'.$ Therefore, to be able to  apply Proposition \ref{saddle}, the Saddle Point Approximation,  we need to further deform $\Gamma'$  to decrease the value of  $\mathrm{Im}\mathcal W.$ To do this, by Proposition \ref{critical2} (1) (2), Proposition \ref{concave} (1), Proposition \ref{limder} and  the compactness of $B(\boldsymbol \alpha^*_{t_0}, L_0)$ we can choose an $L_1>L_0$ large  enough so that: For a given $\epsilon>0,$ and for  any $\boldsymbol\alpha\in B(\boldsymbol \alpha^*_{t_0}, L_0),$ $\Delta\in T$  and $\xi_\Delta\in \Gamma'_{\alpha_\Delta}$ with $|\mathrm {Im}\xi_\Delta - \mathrm {Im} \xi^*_{\Delta}| > L_1,$
\begin{enumerate}[(a)]
\item  \begin{equation}\label{447}
\mathrm{Im}U_{\boldsymbol\alpha_\Delta}(\xi_\Delta) < \mathrm{Im}U_{\boldsymbol\alpha_\Delta}\big(\xi^*_\Delta(\boldsymbol\alpha_\Delta) \big)-4\epsilon;
\end{equation}
\item if $\mathrm {Im}\xi_\Delta >  \mathrm {Im} \xi^*_{\Delta} + L_1,$ then 
\begin{equation}\label{48}
\frac{\partial \mathrm{Im}U_{\boldsymbol\alpha_\Delta}(\xi_\Delta)}{\partial \mathrm{Im}\xi_\Delta} < -2\pi,
\end{equation}
and if $\mathrm {Im}\xi_\Delta <   \mathrm {Im} \xi^*_{\Delta}  - L_1,$ then 
\begin{equation}\label{49}
\frac{\partial \mathrm{Im}U_{\boldsymbol\alpha_\Delta}(\xi_\Delta)}{\partial \mathrm{Im}\xi_\Delta} > 2\pi;
\end{equation}
\item and for  $\delta>0$ as above, 
\begin{equation}\label{ray}
[\mathrm{Im}\eta_{j_1}(\boldsymbol\alpha_\Delta)-\delta, \mathrm{Im}\eta_{j_4}(\boldsymbol\alpha_\Delta) + \delta ] \subset  \big(\mathrm{Im}\xi^*_{t_0, \Delta} - L_1 + \delta,  \mathrm{Im}\xi^*_{t_0, \Delta}+ L_1-\delta\big).
\end{equation}
\end{enumerate}

 Now for each $\boldsymbol\alpha \in  B(\boldsymbol \alpha^*_{t_0}, L_0)$ and $\Delta\in T,$ we consider the following smooth vector filed  $\psi_\Delta\mathbf v_\Delta $ on  
$ D_{\delta,c}^{\boldsymbol\alpha_\Delta}, $
 where 
 $\psi_\Delta$ is a 
$C^\infty$-smooth bump function on $D_{\delta,c}^{\boldsymbol\alpha_\Delta}$ satisfying 
  \begin{equation*}
\left\{
    \begin{array}{rcl}
 \psi_\Delta(\xi_\Delta) = 1  & \text{if} & |\mathrm{Im}\xi_\Delta -\mathrm{Im}\xi^*_{t_0,\Delta}| \leqslant  L_1-\delta  \\
    0 <   \psi_\Delta(\xi_\Delta) <1 & \text{if}  & L_1-\delta <  |\mathrm{Im}\xi_\Delta -\mathrm{Im}\xi^*_{t_0,\Delta}| < L_1 \\
 \psi_\Delta(\xi_\Delta)  = 0  & \text{if} & |\mathrm{Im}\xi_\Delta -\mathrm{Im}\xi^*_{t_0,\Delta}| \geqslant  L_1
    \end{array}\right.
\end{equation*}
with $\delta>0$ as above, and $\mathbf v_\Delta$ is the vector field on $ D_{\delta,c}^{\boldsymbol\alpha_\Delta}$ defined by  
$$\mathbf v_\Delta=\bigg(-\frac{\partial\mathrm{Im} U_{\boldsymbol\alpha_\Delta}}{\partial \mathrm{Re}\xi_\Delta}\bigg|_{\xi_\Delta},0 \bigg)=\bigg(-\frac{\partial\mathrm{Im}\mathcal W}{\partial \mathrm{Re}\xi_\Delta}\bigg|_{(\boldsymbol\alpha,\boldsymbol\xi)},0 \bigg).$$
Now let $\Gamma^*_{\boldsymbol\alpha_\Delta}$ be the contour obtained from $\Gamma'_{\boldsymbol\alpha_\Delta}$ by following the flow lines of  $\psi_\Delta \mathbf v_\Delta$ for a short time $t>0.$ See Figure \ref{GA*}. 
Then by Corollary \ref{cor4} and the choice of $\delta,$ 
$\Gamma^*_{\boldsymbol\alpha_\Delta}$ stays inside $D_{\delta,c}^{\boldsymbol\alpha_\Delta},$ and by Proposition \ref{nonvanish},
\begin{equation}\label{449}
\mathrm{Im}U_{\boldsymbol\alpha_\Delta}(\xi_\Delta) < \mathrm{Im}U_{\boldsymbol\alpha_\Delta}\big(\xi^*_\Delta(\boldsymbol\alpha_\Delta)\big)
\end{equation}
for all $\xi_\Delta \in \Gamma^*_{\boldsymbol\alpha_\Delta} \setminus \{\xi_\Delta^*(\boldsymbol\alpha_\Delta)\}.$

\begin{figure}[htbp]
\centering
\includegraphics[scale=0.175]{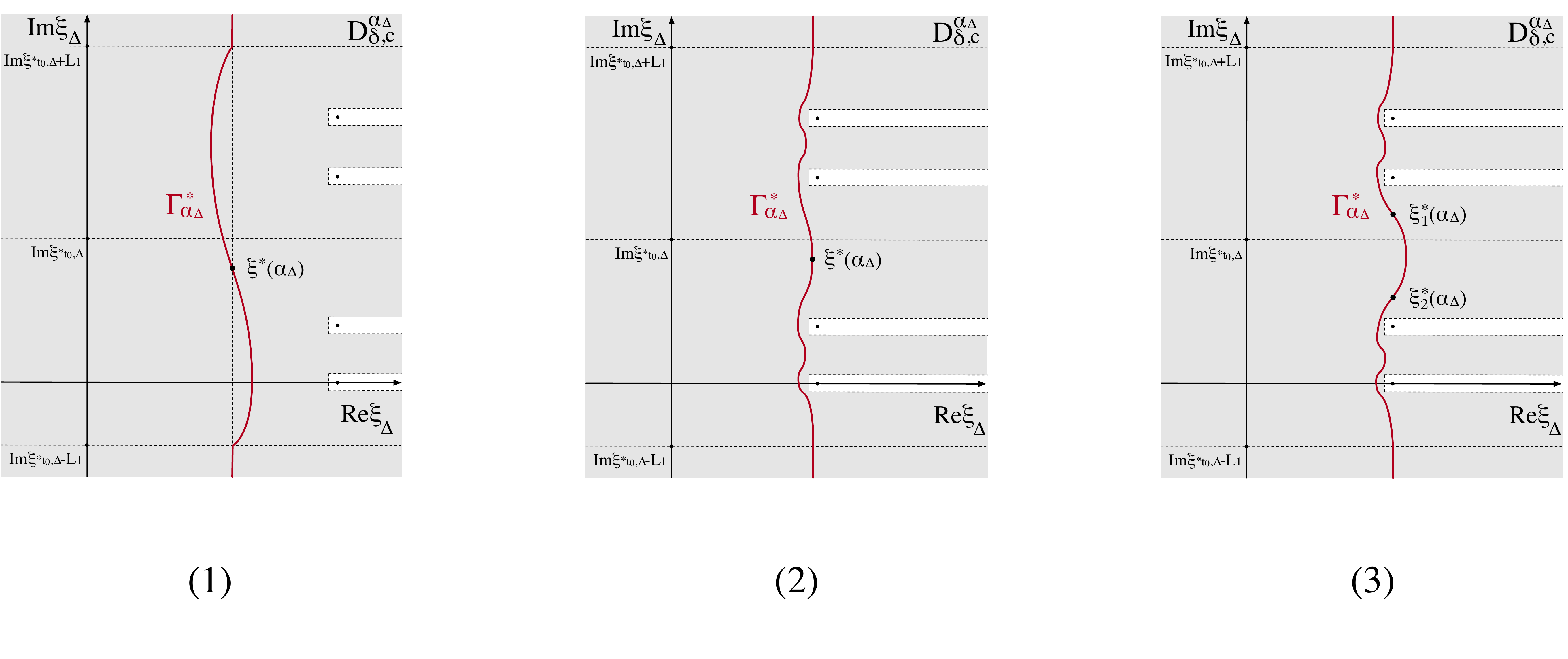}
\caption{Contour $\Gamma^*_{\boldsymbol\alpha_\Delta}$ when: (1) $\boldsymbol l_{\boldsymbol\alpha_\Delta}\in \mathcal L$ and $\Gamma_{\boldsymbol\alpha_\Delta}$ does not intersect the complement of  $D^{\boldsymbol\alpha_\Delta}_{\delta,c},$ (2)  $\boldsymbol l_{\boldsymbol\alpha_\Delta} \in \mathcal L$ and $\Gamma_{\boldsymbol\alpha_\Delta}$ intersects the complement of $D^{\boldsymbol\alpha_\Delta}_{\delta,c}$ or  $\boldsymbol l_{\boldsymbol\alpha_\Delta} \in \partial \mathcal L,$ and (3)  $\boldsymbol l_{\boldsymbol\alpha_\Delta} \in {\mathbb R_{>0}}^6\setminus (\mathcal L\cup \partial \mathcal L).$ }
\label{GA*}
\end{figure}

Let 
$$\Gamma^*_{\text{in}}=\bigg\{  (\boldsymbol \alpha,\boldsymbol \xi)\in B(\boldsymbol \alpha^*_{t_0}, L_0) \times \mathbb C ^T\ \bigg|\  \xi_\Delta \in \Gamma^*_{\boldsymbol\alpha_\Delta} \text { for all } \Delta \text{ in } T \bigg\}.$$
Then  $\Gamma^*_{\text{in}}$ is obtained from   $\Gamma'$ by following the flow lines of the smooth vector field 
$$\mathbf v = \bigg((0)_{e\in E}, \big(\psi_\Delta\mathbf v_\Delta\big)_{\Delta\in T} \bigg)$$
 on the region $\mathcal D_{\delta,c}=\big\{ (\boldsymbol\alpha, \boldsymbol \xi)\in \big(\frac{\pi}{2}+\mathbf i{\mathbb R_{>0}}\big)^E\times \mathbb C^T\ \big|\ \xi_\Delta\in D^{\boldsymbol \alpha_\Delta}_{\delta,c}\text{ for all }\Delta\in T \big\}$ defined before Proposition \ref{bound4} for a short time $t>0,$ hence is a connected domain, and is  smoothly embedded in  $\mathcal D_{\delta,c}$ near the critical point $(\boldsymbol\alpha^*,\boldsymbol\xi^*)$ of $\mathcal W;$  and by (\ref{449}), Proposition \ref{critical2} (1) (2) (3) and Proposition \ref{Covconcave}, we have 
\begin{equation}\label{450}
\mathrm{Im}\mathcal W (\boldsymbol\alpha,\boldsymbol\xi) \leqslant   \sum_{e\in E} 2\pi l_e + \sum_{\Delta\in T}U\big(\boldsymbol\alpha_\Delta,\xi^*_\Delta(\boldsymbol\alpha_\Delta)\big)= -2  \widetilde{\mathrm{Cov}}(\boldsymbol l)< -2\mathrm{Vol}(M)
\end{equation}
for all $(\boldsymbol\alpha,\boldsymbol\xi)\in \Gamma^*_{\text{in}}\setminus \{(\boldsymbol\alpha^*,\boldsymbol\xi^*)\}.$
\\

By (\ref{Iin}), we have 
\begin{equation}\label{Gin}
\mathrm I_{\text{in}} = \frac{(-\mathbf i)^{|E|}}{(\pi b )^{|E|+|T|}} \int_{\Gamma^*_{\text{in}}}\bigg(\prod_{e\in E}4\sinh l_e\sinh \frac{l_e}{b^2} \bigg)\exp\Bigg(\frac{\sum_{\Delta\in T}U_b(\boldsymbol \alpha_\Delta, \xi_\Delta)}{2\pi \mathbf i b^2}\Bigg)d \boldsymbol \alpha  d \boldsymbol \xi.
\end{equation}
To estimate this integral, we let $\delta>0$ be as above, and let $d_0>0$ be sufficiently small so that for any $\boldsymbol\alpha\in \Big(\Big[\frac{\pi}{2}-\delta, \frac{\pi}{2}+\delta\Big]+\mathbf i{\mathbb R_{>0}}\Big)^E$ with $|\boldsymbol \alpha - \boldsymbol \alpha^*|<d_0,$ 
$$\big[\mathrm{Im}\xi^*_\Delta - d_0, \mathrm{Im}\xi^*_\Delta+ d_0\big]\subset \big(\mathrm{Im}\eta_{j_2}(\boldsymbol\alpha_\Delta)+\delta, \mathrm{Im}\eta_{j_3}(\boldsymbol\alpha_\Delta)-\delta\big)$$
for all $\Delta$ in $T.$ Let $\boldsymbol D_{\delta,c}=\big\{ (\boldsymbol \alpha,\boldsymbol \xi)\in  \big([\frac{\pi}{2}-\delta,\frac{\pi}{2}+\delta ]+\mathbf i{\mathbb R_{>0}}\big)^E\times D^T\ \big|\ \xi_\Delta\in D^{\boldsymbol \alpha_\Delta}_{\delta,c}\text{ for all }\Delta\in T \big\}$ be the region defined before Proposition \ref{bound2}, let 
$$\boldsymbol B_{d_0}=\bigg\{ (\boldsymbol\alpha,\boldsymbol\xi)\in  \boldsymbol D_{\delta,c} \ \bigg|\  |\boldsymbol \alpha - \boldsymbol \alpha^*|<d_0, |\mathrm{Re}\xi_\Delta- \mathrm{Re}\xi^*_\Delta|< d_0 \text{ and }   |\mathrm{Im}\xi_\Delta- \mathrm{Im}\xi^*_\Delta| < d_0  \text{ for all }\Delta \text{ in } T  \bigg\},$$
and let 
$$\Gamma^*_{d_0}=\Gamma^*_{\text{in}}\cap \boldsymbol B_{d_0}.$$
We claim that 
\begin{enumerate}[(i)]
\item \begin{equation} \label{Int2}
\begin{split}
& \frac{(-\mathbf i)^{|E|}}{(\pi b )^{|E|+|T|}} \int_{\Gamma^*_{d_0}}\bigg(\prod_{e\in E}4\sinh l_e\sinh \frac{l_e}{b^2} \bigg)\exp\Bigg(\frac{\sum_{\Delta\in T}U_b(\boldsymbol \alpha_\Delta, \xi_\Delta)}{2\pi \mathbf i b^2}\Bigg)d \boldsymbol \alpha  d \boldsymbol \xi\\
  = & (2\mathbf i)^{\frac{\chi(M)}{2}}\frac{e^{\frac{-\mathrm{Vol}(M)}{\pi b^2}}}{\sqrt{\pm \mathrm{Tor}(DM, \mathrm{Ad}_{\rho_{DM}})}}\Big(1+O\big(b^2\big)\Big),
 \end{split}
\end{equation}
and 
\item \begin{equation} \label{Int3}
\begin{split}
 \frac{(-\mathbf i)^{|E|}}{(\pi b )^{|E|+|T|}} & \int_{\Gamma^*_{\text{in}}\setminus \Gamma^*_{d_0}}\bigg(\prod_{e\in E}4\sinh l_e\sinh \frac{l_e}{b^2} \bigg)\exp\Bigg(\frac{\sum_{\Delta\in T}U_b(\boldsymbol \alpha_\Delta, \xi_\Delta)}{2\pi \mathbf i b^2}\Bigg)d \boldsymbol \alpha  d \boldsymbol \xi \\
 < & O\bigg(e^{\frac{-\mathrm{Vol}(M)-\epsilon_2}{\pi b^2}}\bigg)
\end{split}
\end{equation}
for some $\epsilon_2>0.$
\end{enumerate}
Then putting (\ref{Gin}), (i) and (ii) together, we have (II).
\\

To see (i), we have
\begin{equation}\label{Isum}
\begin{split}
& \frac{(-\mathbf i)^{|E|}}{(\pi b )^{|E|+|T|}} \int_{\Gamma^*_{d_0}}\bigg(\prod_{e\in E}4\sinh l_e\sinh \frac{l_e}{b^2} \bigg)\exp\Bigg(\frac{\sum_{\Delta\in T}U_b(\boldsymbol \alpha_\Delta, \xi_\Delta)}{2\pi \mathbf i b^2}\Bigg)d \boldsymbol \alpha  d \boldsymbol \xi\\
 =&\sum_{\boldsymbol \lambda\in\{-1,1\}^E}\frac{(-2\mathbf i)^{|E|}}{(\pi b )^{|E|+|T|}} \int_{\Gamma^*_{d_0}}\bigg(\prod_{e\in E}\lambda_e\sinh l_e\bigg)\exp\Bigg(\frac{\mathcal W^{\boldsymbol \lambda}_b(\boldsymbol \alpha, \boldsymbol \xi)}{2\pi \mathbf i b^2}\Bigg)d \boldsymbol \alpha  d \boldsymbol \xi.
\end{split}
\end{equation}

For $\boldsymbol \lambda = (1,\dots, 1),$ we claim that all the conditions of Proposition \ref{saddle} are satisfied by letting $\hbar=b^2,$ $D= \boldsymbol  B_{d_0},$ $f=\frac{\mathcal W}{2\pi \mathbf i},$ $g$ be defined by $g(\boldsymbol z)=\prod_{e\in E}\sinh l_e\exp\big(\frac{\kappa(\boldsymbol z)}{2\pi \mathbf i }\big),$ $f_\hbar=\frac{\mathcal W+\nu_b b^4}{2\pi \mathbf i},$ $\upsilon_{\hbar}=\frac{\nu_b}{2\pi \mathbf i},$ $S=\Gamma^*_{d_0}$ and $c=\boldsymbol z^*= (\boldsymbol \alpha^*,\boldsymbol \xi^*).$ 

Indeed, by Proposition \ref{cpcv5}, $\boldsymbol z^*$ is a critical point of $f=\frac{\mathcal W}{2\pi \mathbf i}$ in $ \boldsymbol  B_{d_0},$ hence condition (i) is satisfied. 

By Proposition \ref{cpcv5} and (\ref{450}),  $\boldsymbol z^*$ is the unique maximum point of $\mathrm{Re}f=\frac{\mathrm{Im}\mathcal W}{2\pi}$ on $\Gamma^*_{d_0},$ hence condition (ii) is satisfied. 

By Proposition \ref{HessTor2}, $\det(-\mathrm{Hess}\mathcal W(\boldsymbol z^*))\neq 0,$ and condition (iii) is satisfied. 

Since $\xi_\Delta^*\in D_{\delta,c}^{\boldsymbol\alpha^*_\Delta}$ for any $\Delta\in T,$  $\kappa(\boldsymbol z^*)$ is a finite value; and since $l^*_e>0$ for each $e\in E,$  $g(\boldsymbol z^*)=\prod_{e\in E}\sinh l^*_e\exp\big(\frac{\kappa(\boldsymbol z^*)}{2\pi \mathbf i }\big)\neq 0,$ and condition (iv) is satisfied.

For condition (v), by Proposition \ref{bound2},  $|\upsilon_{\hbar}(\boldsymbol \alpha,\boldsymbol \xi)|=\big|\frac{\nu_{b}(\boldsymbol \alpha,\boldsymbol \xi)}{2\pi \mathbf i}\big|<\frac{N}{2\pi}$ on $ \boldsymbol  B_{d_0}.$

For condition (vi),  since $\Gamma^*_{\text{in}}$  is smoothly embedded around $\boldsymbol z^*,$ $\Gamma^*_{d_0}=\Gamma^*_{\text{in}}\cap \boldsymbol B_{d_0}$ is smoothly embedded around $\boldsymbol z^*.$

Finally, by Proposition \ref{saddle}, Proposition \ref{cpcv5}, Proposition \ref{HessTor2} and the fact that $\chi(M)=-|E|+|T|,$ we have as $b\to 0,$
\begin{equation}\label{eps1}
\begin{split}
& \frac{(-2\mathbf i)^{|E|}}{(\pi b )^{|E|+|T|}} \int_{\Gamma^*_{d_0}}\bigg(\prod_{e\in E}\sinh l_e\bigg)\exp\Bigg(\frac{\mathcal W_b(\boldsymbol \alpha, \boldsymbol \xi)}{2\pi \mathbf i b^2}\Bigg)d \boldsymbol \alpha  d \boldsymbol \xi\\
= &\frac{(-2\mathbf i)^{|E|}}{(\pi b )^{|E|+|T|}}
\int_{\Gamma^*_{d_0}}\bigg(\prod_{e\in E}\sinh l_e \bigg)\exp\bigg(\frac{\kappa(\boldsymbol\alpha,\boldsymbol\xi)}{2\pi \mathbf i}\bigg)\exp\bigg(\frac{\mathcal W(\boldsymbol \alpha, \boldsymbol \xi)+\nu_b(\boldsymbol\alpha,\boldsymbol\xi)b^4}{2\pi \mathbf i b^2}\bigg)d \boldsymbol \alpha  d \boldsymbol \xi\\
=& \frac{(-2\mathbf i)^{|E|}   (2\pi b^2)^\frac{|E|+|T|}{2}}{(\pi b )^{|E|+|T|}}
\frac{\exp\big(\frac{\kappa(\boldsymbol z^*)}{2\pi \mathbf i}\big)\prod_{e\in E} \sinh l_e}{\sqrt{\det\Big(-\mathrm{Hess}\frac{\mathcal W(\boldsymbol z^*)}{2\pi \mathbf i}\Big)}}e^{\frac{\mathcal W(\boldsymbol z^*)}{2\pi \mathbf i b^2}}\Big(1+O\big(b^2\big)\Big)\\
=& (2\mathbf i)^{\frac{\chi(M)}{2}}\frac{e^{\frac{-\mathrm{Vol}(M)}{\pi b^2}}}{\sqrt{\pm \mathrm{Tor}(DM, \mathrm{Ad}_{\rho_{DM}})}}\Big(1+O\big(b^2\big)\Big).
\end{split}
\end{equation}

For any $\boldsymbol \lambda\neq (1,\dots,1),$ we have
$$\mathrm{Im}\mathcal W_b^{\boldsymbol \lambda}(\boldsymbol\alpha,\boldsymbol\xi)-\mathrm{Im}\mathcal W_b (\boldsymbol\alpha,\boldsymbol\xi) = -2\pi\sum_{e\in E}\big(1-\lambda_e\big)l_e< 0$$
for all for $(\boldsymbol \alpha,\boldsymbol \xi)$ in $\Gamma^*_{d_0};$ and by the compactness of $\Gamma^*_{d_0},$ there is an $\epsilon>0$ such that 
$$\mathrm{Im}\mathcal W_b^{\boldsymbol \lambda}(\boldsymbol\alpha,\boldsymbol\xi) < \mathrm{Im}\mathcal W_b (\boldsymbol\alpha,\boldsymbol\xi)  -2\epsilon$$
for all for $(\boldsymbol \alpha,\boldsymbol \xi)$ in $\Gamma^*_{d_0}.$ Then  by (\ref{eps1}), for any $\boldsymbol \lambda\neq (1,\dots,1),$
we have
\begin{equation}\label{Int8}
\begin{split}
&\Bigg| \frac{(-2\mathbf i)^{|E|}}{(\pi b )^{|E|+|T|}} \int_{\Gamma^*_{d_0}}\bigg(\prod_{e\in E}\lambda_e\sinh l_e\bigg)\exp\Bigg(\frac{\mathcal W^{\boldsymbol \lambda}_b(\boldsymbol \alpha, \boldsymbol \xi)}{2\pi \mathbf i b^2}\Bigg)d \boldsymbol \alpha  d \boldsymbol \xi\Bigg|\\
\leqslant  & \frac{2^{|E|}}{(\pi b)^{|E|+|T|}}
\int_{\Gamma^*_{d_0}}\bigg(\prod_{e\in E}\sinh l_e \bigg)\exp\bigg(\frac{ \mathrm{Im}\mathcal W^{\boldsymbol \lambda}_b(\boldsymbol \alpha, \boldsymbol \xi)}{2\pi  b^2}\bigg)\big|d \boldsymbol \alpha  d \boldsymbol \xi\big|\\
<  & \frac{2^{|E|}}{(\pi b)^{|E|+|T|}}
\int_{\Gamma^*_{d_0}}\bigg(\prod_{e\in E}\sinh l_e \bigg)\exp\bigg(\frac{ \mathrm{Im}\mathcal W_b(\boldsymbol \alpha, \boldsymbol \xi)-2\epsilon}{2\pi  b^2}\bigg)\big|d \boldsymbol \alpha  d \boldsymbol \xi\big|\\
<& O\bigg(e^{\frac{-\mathrm{Vol}(M)-\epsilon}{\pi b^2}}\bigg).
\end{split}
\end{equation}

Putting (\ref{Isum}), (\ref{eps1}) and (\ref{Int8}) together, we have (\ref{Int2}), completing the proof of (i). 
\\

To see (ii), for each $\boldsymbol \alpha\in B(\boldsymbol \alpha^*_{t_0}, L_0)$ and   $\Delta\in T,$ we let 
$$\Gamma^*_{\boldsymbol\alpha_\Delta,L_1}=\Big\{ \xi_\Delta \in \Gamma^*_{\boldsymbol\alpha_\Delta} \ \Big|\ |\mathrm{Im}\xi_\Delta- \mathrm{Im}\xi^*_{t_0,\Delta}|\leqslant L_1 \Big\}.$$
Then $\Gamma^*_{\boldsymbol\alpha_\Delta}\setminus \Gamma^*_{\boldsymbol\alpha_\Delta,L_1}$ is the disjoint union of two  rays. 
Let 
\begin{equation*}
\begin{split}
\Gamma^*_{L_1,T} = & \bigg\{ (\boldsymbol\alpha,\boldsymbol\xi)\in \Gamma^*_{\text{in}}\ \bigg|\ \boldsymbol\xi \in \prod_{\Delta\in T}\Gamma^*_{\boldsymbol\alpha_\Delta,L_1}\bigg\}\\
=&\bigg\{ (\boldsymbol\alpha,\boldsymbol\xi)\in \Gamma^*_{\text{in}}\ \bigg|\ \xi_\Delta\in \Gamma^*_{\boldsymbol\alpha_\Delta,L_1} \text{ for all } \Delta \text{ in } T\bigg\},
\end{split}
\end{equation*}
and for any proper  subset $T_0$ of $T,$ we let 
\begin{equation*}
\begin{split}
\Gamma^*_{L_1,T_0} = & \bigg\{ (\boldsymbol\alpha,\boldsymbol\xi)\in \Gamma^*_{\text{in}}\ \bigg|\ \boldsymbol\xi \in \prod_{\Delta\in T_0}\Gamma^*_{\boldsymbol\alpha_\Delta,L_1} \times \prod_{\Delta\notin T_0}\big(\Gamma^*_{\boldsymbol\alpha_\Delta}\setminus \Gamma^*_{\boldsymbol\alpha_\Delta,L_1}\big)\bigg\}\\
=&\bigg\{ (\boldsymbol\alpha,\boldsymbol\xi)\in \Gamma^*_{\text{in}}\ \bigg|\ \xi_\Delta\in \Gamma^*_{\boldsymbol\alpha_\Delta,L_1} \text{ for all } \Delta \text{ in } T_0, \text{ and } \xi_\Delta\notin \Gamma^*_{\boldsymbol\alpha_\Delta,L_1} \text{ for all } \Delta \text{ not in } T_0\bigg\}.
\end{split}
\end{equation*}
Then we have the following partition 
$$\Gamma^*_{\text{in}} = \Gamma^*_{L_1,T}\cup \bigg( \bigcup _{T_0\subset T} \Gamma^*_{L_1,T_0}\bigg)$$
of $\Gamma^*_{\text{in}}.$ 
We will show that:
\begin{enumerate}[(\text{ii}.1)]
\item \begin{equation} \label{Int9}
\begin{split}
 \frac{(-\mathbf i)^{|E|}}{(\pi b )^{|E|+|T|}} & \int_{\Gamma^*_{L_1,T}\setminus \Gamma^*_{d_0}}\bigg(\prod_{e\in E}4\sinh l_e\sinh \frac{l_e}{b^2} \bigg)\exp\Bigg(\frac{\sum_{\Delta\in T}U_b(\boldsymbol \alpha_\Delta, \xi_\Delta)}{2\pi \mathbf i b^2}\Bigg)d \boldsymbol \alpha  d \boldsymbol \xi\\
 < & O\bigg(e^{\frac{-\mathrm{Vol}(M)-\epsilon_3}{\pi b^2}}\bigg)
\end{split}
\end{equation}
for some $\epsilon_3>0.$
\item  For any proper subset $T_0$ of $T,$ 
\begin{equation}\label{Int10}
\begin{split}
 \frac{(-\mathbf i)^{|E|}}{(\pi b )^{|E|+|T|}} & \int_{\Gamma^*_{L_1, T_0}}\bigg(\prod_{e\in E}4\sinh l_e\sinh \frac{l_e}{b^2} \bigg)\exp\Bigg(\frac{\sum_{\Delta\in T}U_b(\boldsymbol \alpha_\Delta, \xi_\Delta)}{2\pi \mathbf i b^2}\Bigg)d \boldsymbol \alpha  d \boldsymbol \xi\\
 < & O\bigg(e^{\frac{-\mathrm{Vol}(M)-\epsilon_4}{\pi b^2}}\bigg)
\end{split}
\end{equation}
for some $\epsilon_4>0.$
\end{enumerate}
Then (ii) follows from (ii.1) and (ii.2) with $\epsilon_2= \min\{ \epsilon_3, \epsilon_4\}.$
\\

To see (ii,1), we have
\begin{equation}\label{II1sum}
\begin{split}
& \frac{(-\mathbf i)^{|E|}}{(\pi b )^{|E|+|T|}} \int_{\Gamma^*_{L_1,T}\setminus \Gamma^*_{d_0}}\bigg(\prod_{e\in E}4\sinh l_e\sinh \frac{l_e}{b^2} \bigg)\exp\Bigg(\frac{\sum_{\Delta\in T}U_b(\boldsymbol \alpha_\Delta, \xi_\Delta)}{2\pi \mathbf i b^2}\Bigg)d \boldsymbol \alpha  d \boldsymbol \xi\\
 =&\sum_{\boldsymbol \lambda\in\{-1,1\}^E}\frac{(-2\mathbf i)^{|E|}}{(\pi b )^{|E|+|T|}} \int_{\Gamma^*_{L_1,T}\setminus\Gamma^*_{d_0}}\bigg(\prod_{e\in E}\lambda_e\sinh l_e\bigg)\exp\Bigg(\frac{\mathcal W^{\boldsymbol \lambda}_b(\boldsymbol \alpha, \boldsymbol \xi)}{2\pi \mathbf i b^2}\Bigg)d \boldsymbol \alpha  d \boldsymbol \xi.
\end{split}
\end{equation}
For $\boldsymbol \lambda = (1,\dots, 1),$ by (\ref{450}) and the compactness of  $\Gamma^*_{L_1,T}\setminus \Gamma^*_{d_0},$ there is an $\epsilon_3>0,$ an $M>0$ and an $S>0$ such that 
\begin{equation}\label{3e}
\mathrm{Im}\mathcal W (\boldsymbol\alpha,\boldsymbol\xi) < -2\mathrm{Vol}(M) - 5\epsilon_3
\end{equation}
and 
\begin{equation}\label{M}
\mathrm{Im}\mathcal \kappa (\boldsymbol\alpha,\boldsymbol\xi) < M\quad\text{and}\quad \prod_{e\in E}\sinh l_e<S
\end{equation}
for all  $(\boldsymbol\alpha,\boldsymbol\xi)\in\Gamma^*_{L_1,T}\setminus \Gamma^*_{d_0};$ and by Proposition \ref{bound2},  there is a $b_0>0$ such that 
\begin{equation}\label{e}
\mathrm{Im}\mathcal \nu_b (\boldsymbol\alpha,\boldsymbol\xi) b^4 < Nb^4<\epsilon_3
\end{equation}
for all $b<b_0$ and for all  $(\boldsymbol\alpha,\boldsymbol\xi)\in \Gamma^*_{L_1,T}\setminus \Gamma^*_{d_0}.$ As a consequence of (\ref{3e}), (\ref{M}) and (\ref{e}),
 we have 
 $$\mathrm{Im}\mathcal W_b(\boldsymbol \alpha, \boldsymbol \xi)=\mathrm{Im}\mathcal W(\boldsymbol \alpha, \boldsymbol \xi)+\mathrm{Im}\kappa(\boldsymbol\alpha,\boldsymbol\xi) b^2+ \mathrm{Im}\nu_b(\boldsymbol\alpha,\boldsymbol\xi)b^4 <  -2\mathrm{Vol}(M) - 4\epsilon_3 + Mb^2;$$
 and letting $\mathrm V$ be the volume of $\Gamma^*_{L_1,T}\setminus \Gamma^*_{d_0},$ we have
\begin{equation}\label{eps3}
\begin{split}
&\Bigg| \frac{(-2\mathbf i)^{|E|}}{(\pi b )^{|E|+|T|}} \int_{\Gamma^*_{L_1,T}\setminus \Gamma^*_{d_0}}\bigg(\prod_{e\in E}\sinh l_e\bigg)\exp\Bigg(\frac{\mathcal W_b(\boldsymbol \alpha, \boldsymbol \xi)}{2\pi \mathbf i b^2}\Bigg)d \boldsymbol \alpha  d \boldsymbol \xi\Bigg|\\
\leqslant  & \frac{2^{|E|}}{(\pi b)^{|E|+|T|}}
\int_{\Gamma^*_{L_1,T}\setminus \Gamma^*_{d_0}}\bigg(\prod_{e\in E}\sinh l_e \bigg)\exp\bigg(\frac{ \mathrm{Im}\mathcal W_b(\boldsymbol \alpha, \boldsymbol \xi)}{2\pi  b^2}\bigg)\big|d \boldsymbol \alpha  d \boldsymbol \xi\big|\\
< &\frac{2^{|E|} S\mathrm Ve^{\frac{M}{2\pi}}}{(\pi b)^{|E|+|T|}}O\bigg(e^{\frac{-\mathrm{Vol}(M)-2\epsilon_3}{\pi b^2}}\bigg)\\
<& O\bigg(e^{\frac{-\mathrm{Vol}(M)-\epsilon_3}{\pi b^2}}\bigg).
\end{split}
\end{equation}
For any $\boldsymbol \lambda\neq (1,\dots,1),$ we have
$$\mathrm{Im}\mathcal W_b^{\boldsymbol \lambda}(\boldsymbol\alpha,\boldsymbol\xi)-\mathrm{Im}\mathcal W_b (\boldsymbol\alpha,\boldsymbol\xi) = -2\pi\sum_{e\in E}\big(1-\lambda_e\big)l_e\leqslant  0$$
for all for $(\boldsymbol \alpha,\boldsymbol \xi)$ in $\Gamma^*_{L_1,T}\setminus \Gamma^*_{d_0}.$ Then  by (\ref{eps3}), 
we have
\begin{equation}\label{Int11}
\begin{split}
&\Bigg| \frac{(-2\mathbf i)^{|E|}}{(\pi b )^{|E|+|T|}} \int_{\Gamma^*_{L_1,T}\setminus \Gamma^*_{d_0}}\bigg(\prod_{e\in E}\lambda_e\sinh l_e\bigg)\exp\Bigg(\frac{\mathcal W^{\boldsymbol \lambda}_b(\boldsymbol \alpha, \boldsymbol \xi)}{2\pi \mathbf i b^2}\Bigg)d \boldsymbol \alpha  d \boldsymbol \xi\Bigg|\\
\leqslant  & \frac{2^{|E|}}{(\pi b)^{|E|+|T|}}
\int_{\Gamma^*_{L_1,T}\setminus \Gamma^*_{d_0}}\bigg(\prod_{e\in E}\sinh l_e \bigg)\exp\bigg(\frac{ \mathrm{Im}\mathcal W^{\boldsymbol \lambda}_b(\boldsymbol \alpha, \boldsymbol \xi)}{2\pi  b^2}\bigg)\big|d \boldsymbol \alpha  d \boldsymbol \xi\big|\\
\leqslant  & \frac{2^{|E|}}{(\pi b)^{|E|+|T|}}
\int_{\Gamma^*_{L_1,T}\setminus \Gamma^*_{d_0}}\bigg(\prod_{e\in E}\sinh l_e \bigg)\exp\bigg(\frac{ \mathrm{Im}\mathcal W_b(\boldsymbol \alpha, \boldsymbol \xi)}{2\pi  b^2}\bigg)\big|d \boldsymbol \alpha  d \boldsymbol \xi\big|\\
<& O\bigg(e^{\frac{-\mathrm{Vol}(M)-\epsilon_3}{\pi b^2}}\bigg).
\end{split}
\end{equation}
Putting (\ref{II1sum}), (\ref{eps3}) and (\ref{Int11}) together, we have (\ref{Int9}), completing the proof of (II.1). 
\\

To see (ii.2),  for each  proper subset $T_0$ of $T,$  we have
\begin{equation}\label{II2sum}
\begin{split}
& \frac{(-\mathbf i)^{|E|}}{(\pi b )^{|E|+|T|}} \int_{\Gamma^*_{L_1,T_0}}\bigg(\prod_{e\in E}4\sinh l_e\sinh \frac{l_e}{b^2} \bigg)\exp\Bigg(\frac{\sum_{\Delta\in T}U_b(\boldsymbol \alpha_\Delta, \xi_\Delta)}{2\pi \mathbf i b^2}\Bigg)d \boldsymbol \alpha  d \boldsymbol \xi\\
 =& \frac{(-\mathbf i)^{|E|}}{(\pi b )^{|E|+|T|}}\sum_{\boldsymbol \mu\in\{-1,1\}^E}\sum_{\boldsymbol \lambda\in\{-1,1\}^E}\bigg(\prod_{e\in E}\mu_e\lambda_e\bigg)
\int_{\Gamma^*_{L_1,T_0}}\exp\Bigg(\frac{\mathcal V_b^{\boldsymbol\mu\boldsymbol \lambda}(\boldsymbol \alpha, \boldsymbol \xi)}{2\pi \mathbf i b^2}\Bigg)d \boldsymbol \alpha  d \boldsymbol \xi.
\end{split}
\end{equation}
For $\boldsymbol\mu=\boldsymbol \lambda=(1,\dots, 1),$ we have
$$\mathcal V(\boldsymbol\alpha,\boldsymbol \xi)= \mathcal W (\boldsymbol\alpha,\boldsymbol \xi)+2\pi b^2\sum_{e\in E}(2\alpha_e-\pi),$$
and hence 
\begin{equation}\label{vuc7}
\mathrm{Im} \mathcal V(\boldsymbol\alpha,\boldsymbol \xi)=\mathrm{Im}  \mathcal W(\boldsymbol\alpha,\boldsymbol \xi)+2\pi b^2\sum_{e\in E}l_e
\end{equation}
for all $(\boldsymbol\alpha,\boldsymbol \xi)\in \Gamma^*_{L_1,T_0}.$

Next, by (\ref{447}), (\ref{449}) and Proposition \ref{Covconcave} and letting $\epsilon$ be as in (\ref{447}), we have 
\begin{equation}\label{469}
\begin{split}
\mathrm{Im}\mathcal W (\boldsymbol\alpha,\boldsymbol\xi) \leqslant  & \sum_{e\in E} 2\pi l_e + \sum_{\Delta\in T}U\big(\boldsymbol\alpha_\Delta,\xi^*_\Delta(\boldsymbol\alpha_\Delta)\big)\\
< & -2  \widetilde{\mathrm{Cov}}(\boldsymbol l) - 4\big(|T|-|T_0|\big) \epsilon<   -2\mathrm{Vol}(M) - 4\epsilon
\end{split}
\end{equation} 
for all $(\boldsymbol\alpha,\boldsymbol \xi)\in \Gamma^*_{L_1,T_0},$ where the last inequality comes from that $T_0$ is a proper subset of $T$ hence $|T|-|T_0| \geqslant 1.$ By (\ref{48}) and (\ref{49}), for any $(\boldsymbol\alpha,\boldsymbol \xi)\in \Gamma^*_{L_1,T_0}$ and any unit vector $\mathbf u=\big( (u_e)_{e\in E}, (u_\Delta)_{\Delta\in T}\big)\in \mathbb R^{|E|+|T|}$ with $u_e=0$ for all $e\in E$ and $u_\Delta=0$ for all $\Delta \in T_0$ and  so that the ray  $\{ (\boldsymbol\alpha,\boldsymbol \xi) + l\mathbf u \ |\ l\in (0, +\infty)\}$  stays in $\Gamma^*_{L_1,T_0},$ we have that the directional derivative 
\begin{equation}\label{dv5}
D_{\mathbf u}\mathrm{Im}\mathcal W((\boldsymbol\alpha,\boldsymbol \xi)+l \mathbf u)<-2\pi.
\end{equation}
As a notation, let 
$$\partial  \Gamma^*_{L_1,T_0} \doteq \bigg\{ (\boldsymbol\alpha,\boldsymbol\xi)\in \Gamma^*_{\text{in}}\ \bigg|\ \boldsymbol\xi \in \prod_{\Delta\in T_0}\Gamma^*_{\boldsymbol\alpha_\Delta,L_1} \times \prod_{\Delta\notin T_0}\partial  \Gamma^*_{\boldsymbol\alpha_\Delta,L_1}\bigg\},$$
which is a compact set. Then by the compactness of $\partial  \Gamma^*_{L_1,T_0}$ and $B(\boldsymbol \alpha^*_{t_0}, L_0) ,$  there is a $b_0>0$ such that for all $b<b_0,$ 
\begin{equation}\label{ke}
\mathrm{Im}\kappa (\boldsymbol\alpha,\boldsymbol\xi) b^2 < \epsilon
\end{equation}
for all $(\boldsymbol\alpha,\boldsymbol \xi)\in \partial \Gamma^*_{L_1,T_0},$
\begin{equation}\label{ke2}
2\pi b^2\sum_{e\in E}l_e<\epsilon
\end{equation}
for all $\boldsymbol\alpha \in  B(\boldsymbol \alpha^*_{t_0}, L_0),$ $K|T|b^2<\epsilon$ and $Nb^4<\epsilon,$ where $K$ and $N$ are respectively the constants in Proposition \ref{bound3} and Proposition \ref{bound2}. For $\boldsymbol\xi \in \prod_{\Delta\in T_0}\Gamma^*_{\boldsymbol\alpha_\Delta,L_1} \times \prod_{\Delta\notin T_0}\big(\Gamma^*_{\boldsymbol\alpha_\Delta}\setminus \Gamma^*_{\boldsymbol\alpha_\Delta,L_1}\big),$ we let $\boldsymbol\xi_{\text{proj}}=(\xi_{\text{proj},\Delta})_{\Delta\in T} \in  \prod_{\Delta\in T_0}\Gamma^*_{\boldsymbol\alpha_\Delta,L_1} \times  \prod_{\Delta\notin T_0}\partial \big(\Gamma^*_{\boldsymbol\alpha_\Delta}\setminus \Gamma^*_{\boldsymbol\alpha_\Delta,L_1}\big)$  be the projection of $\boldsymbol \xi,$ i.e.,  for $\Delta\in T_0,$ $\xi_{\text{proj},\Delta}=\xi_\Delta,$ and for $\Delta\notin T_0,$  $\xi_{\text{proj},\Delta}\in \partial \big(\Gamma^*_{\boldsymbol\alpha_\Delta}\setminus \Gamma^*_{\boldsymbol\alpha_\Delta,L_1}\big)$ is the starting point of the ray  containing $\xi_\Delta.$ 
 Then we  claim that, for all $(\boldsymbol\alpha,\boldsymbol \xi)\in \Gamma^*_{L_1,T_0}$ and $b<b_0,$ 
\begin{equation}\label{claim}
\begin{split}
\mathrm{Im}\mathcal V(\boldsymbol \alpha, \boldsymbol \xi)+& \mathrm{Im}\kappa(\boldsymbol\alpha,\boldsymbol\xi)b^2+\mathrm{Im}\nu_b(\boldsymbol\alpha,\boldsymbol\xi)b^4\\
<& -2\Big(\mathrm{Vol}(M)+\epsilon\Big) -(2\pi -\epsilon ) \big| \boldsymbol\xi -  \boldsymbol \xi^*_{\text{proj}}  \big|.
\end{split}
\end{equation}
As a consequence of the claim and by letting 
$$\mathrm I_{T_0} =2^{|T|-|T_0|}\int_{{\mathbb R_{>0}}^{|T|-|T_0|}}e^{-(2\pi -\epsilon)|\boldsymbol x|}d\boldsymbol x<+\infty$$
and $\mathrm V_{T_0}$ be the volume of the compact set $\Big\{ (\boldsymbol \alpha,\boldsymbol \xi)\in B(\boldsymbol \alpha^*_{t_0}, L_0) \times \mathbb C^{T_0}   \ \Big|\  \boldsymbol\xi \in \prod_{\Delta\in T_0} \Gamma^*_{\boldsymbol\alpha_\Delta, L_1} \Big\},$
we have
\begin{equation}\label{474}
\begin{split}
&\Bigg|\frac{(-\mathbf i)^{|E|}}{(\pi b )^{|E|+|T|}} \int_{\Gamma^*_{L_1,T_0}}\exp\bigg(\frac{\mathcal V_b(\boldsymbol \alpha, \boldsymbol \xi)}{2\pi \mathbf i b^2}\bigg)d \boldsymbol \alpha  d \boldsymbol \xi\Bigg|\\
< &\frac{1}{(\pi b )^{|E|+|T|}}  \exp\bigg(\frac{-\mathrm{Vol}(M)-\epsilon}{\pi b^2}\bigg) \int_{\Gamma^*_{L_1,T_0}}\exp\Bigg(\frac{-(2\pi-\epsilon) \big| \boldsymbol\xi -  \boldsymbol \xi^*_{\text{proj}}  \big|}{2\pi   b^2}\Bigg)|d \boldsymbol \alpha  d \boldsymbol \xi|\\
=&  \frac{\mathrm I_{T_0}\mathrm V_{T_0}}{(\pi b )^{|E|+|T|}}  \exp\bigg(\frac{-\mathrm{Vol}(M)-\epsilon}{\pi b^2}\bigg)\\
< &\ O\bigg(e^{\frac{-\mathrm{Vol}(M)-\epsilon_4}{\pi b^2}}\bigg)
\end{split}
\end{equation}
for any $\epsilon_4 < \epsilon.$

To prove the  claim (\ref{claim}),   by (\ref{vuc7}) and (\ref{ke2}), we first  have 
 \begin{equation}\label{vuc6}
\mathrm{Im}\mathcal V(\boldsymbol\alpha,\boldsymbol \xi)<  \mathrm{Im}\mathcal W(\boldsymbol\alpha,\boldsymbol \xi)+ \epsilon
\end{equation}
  for all $b<b_0$ and for all $(\boldsymbol\alpha,\boldsymbol \xi)\in \Gamma^*_{L_1,T_0}.$  
 Let $\mathbf u=\frac{(\boldsymbol\alpha,\boldsymbol\xi)- (\boldsymbol \alpha, \boldsymbol\xi_{\text{proj}} )}{|(\boldsymbol\alpha,\boldsymbol\xi)- (\boldsymbol \alpha, \boldsymbol\xi_{\text{proj}} ) |}.$   Then by (\ref{dv5}) and  the choice of $b_0,$ for $l >0,$ we have 
$$D_{\mathbf u}\Big(\mathrm{Im}\mathcal W\big((\boldsymbol \alpha, \boldsymbol\xi_{\text{proj}})+l \mathbf u\big)+\mathrm{Im}\kappa\big((\boldsymbol \alpha, \boldsymbol\xi_{\text{proj}} )+l \mathbf u\big)b^2\Big)< -2\pi + K(|T|-|T_0|)b^2 < -2\pi +\epsilon$$
 for all $b<b_0.$ Together with the Mean Value Theorem,  (\ref{469}),  (\ref{ke}), we have
\begin{equation}\label{im}
\begin{split}
 \mathrm{Im}\mathcal W(\boldsymbol \alpha, \boldsymbol \xi)+\mathrm{Im}\kappa(\boldsymbol\alpha,\boldsymbol\xi)b^2 <  & \mathrm{Im}\mathcal W(\boldsymbol \alpha, \boldsymbol\xi_{\text{proj}})+\mathrm{Im}\kappa(\boldsymbol \alpha, \boldsymbol\xi_{\text{proj}})b^2-(2\pi-\epsilon)  \big|\boldsymbol\xi-  \boldsymbol \xi^*_{\text{proj}} \big|\\
<  &  -2\mathrm{Vol}(M)-3\epsilon  - (2\pi-\epsilon) \big| \boldsymbol\xi -  \boldsymbol \xi^*_{\text{proj}}  \big| 
\end{split}
\end{equation}
for all for all $(\boldsymbol\alpha,\boldsymbol \xi)\in \Gamma^*_{L_1,T_0}.$  Finally, by Proposition \ref{bound2} and the choice of $b_0$, we have
\begin{equation}\label{Last}
\mathrm{Im}\nu_b(\boldsymbol\alpha,\boldsymbol\xi)b^4<Nb^4<\epsilon 
\end{equation}
for all $b<b_0$ and for all $(\boldsymbol\alpha,\boldsymbol \xi)\in \Gamma^*_{L_1,T_0}.$ 
 Putting (\ref{vuc6}), (\ref{im}) and (\ref{Last}) together, we have the first inequality in (\ref{474}).

For  $\boldsymbol \mu\neq (1,\dots,1)$ or  $\boldsymbol \lambda\neq (1,\dots,1),$ we observe that 
$$\mathrm{Im}\mathcal V_b^{\boldsymbol\mu\boldsymbol \lambda}(\boldsymbol\alpha,\boldsymbol\xi)-\mathrm{Im}\mathcal V_b (\boldsymbol\alpha,\boldsymbol\xi) = -2\pi\sum_{e\in E}\big((1-\mu_e)b^2+(1-\lambda_e)\big)l_e\leqslant 0$$
for all for $(\boldsymbol \alpha,\boldsymbol \xi)$ in $\Gamma^*_{L_1,T_0}.$ Then by (\ref{474}), 
\begin{equation*}
\begin{split}
&\Bigg| \frac{(-\mathbf i)^{|E|}}{(\pi b )^{|E|+|T|}}  \int_{\Gamma^*_{L_1,T_0}}\bigg(\prod_{e\in E}\mu_e\lambda_e\bigg)\exp\bigg(\frac{\mathcal V_b^{\boldsymbol\mu\boldsymbol \lambda}(\boldsymbol \alpha, \boldsymbol \xi)}{2\pi \mathbf i b^2}\bigg)d \boldsymbol \alpha  d \boldsymbol \xi\Bigg|\\
\leqslant & \frac{1}{(\pi b )^{|E|+|T|}}  \int_{\Gamma^*_{L_1,T_0}}\exp\bigg(\frac{\mathrm{Im}\mathcal V^{\boldsymbol\mu\boldsymbol \lambda}_b(\boldsymbol \alpha, \boldsymbol \xi)}{2\pi b^2}\bigg)|d \boldsymbol \alpha  d \boldsymbol \xi|\\
\leqslant & \frac{1}{(\pi b )^{|E|+|T|}}   \int_{\Gamma^*_{L_1,T_0}}\exp\bigg(\frac{\mathrm{Im}\mathcal V_b(\boldsymbol \alpha, \boldsymbol \xi)}{2\pi b^2}\bigg)|d \boldsymbol \alpha  d \boldsymbol \xi|\\
< &\ O\bigg(e^{\frac{-\mathrm{Vol}(M)-\epsilon_4}{\pi b^2}}\bigg).
\end{split}
\end{equation*}
Together with (\ref{474}) and  (\ref{II2sum}), this completes the proof of (ii.2).
\end{proof}

\section{Topological Invariance}\label{sec:invariance}

The goal of this section is to prove Theorem \ref{WD3}. The idea is similar to that of the original Turaev-Viro invariants\,\cite{TV13}, namely, the Orthogonality and the Biedenharn-Elliot Identity (Pentagon Identity) satisfied by the quantum $6j$-symbols there correspond to the invariance under the $0$-$2$ and $2$-$3$ Pachner Moves that connect the different ideal triangulations. In our setting, the corresponding identities still hold as distributions. However, as how our invariants are defined, what we really need are the identities between the integrals of certain functions, which do not always hold as the integrand may not be absolutely integrable and Fubini's Theorem does not apply. For instance,  due to the appearance of the Dirac delta function, the integrand fails to be absolutely integrable when a $0$-$2$ Pachner Move is performed. To resolve the issue, we restrict ourselves to the class of the Kojima ideal triangulations and introduce a $4$-$4$ move illustrated in Figure \ref{fig:4-4move} to avoid the appearance of Dirac delta functions. Then we show that two Kojima ideal triangulations can be connected by a sequence of $2$-$3$, $3$-$2$ and $4$-$4$ moves such that each intermediate ideal triangulation supports angle structures so that the corresponding integral absolutely converges (see Theorem \ref{WD4}) and Fubini's Theorem applies to prove the desired identity.

\begin{figure}
    \centering
    \includegraphics[width=0.8\linewidth]{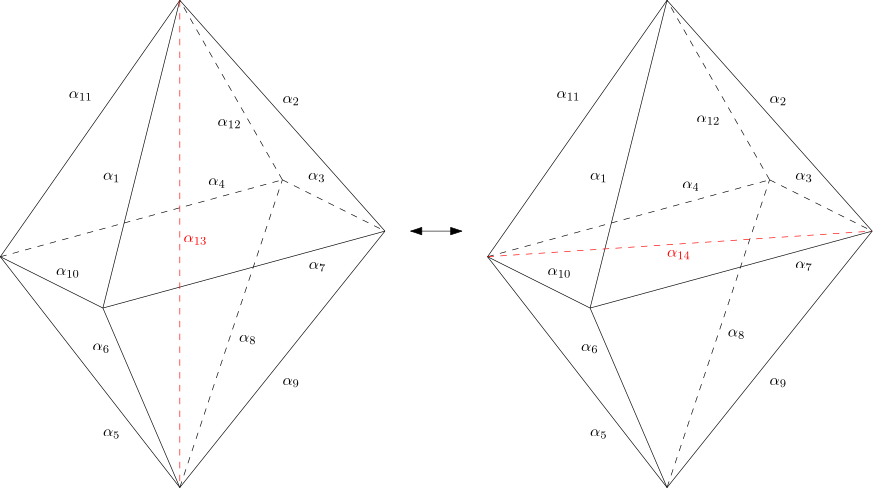}
    \caption{The 4-4 move}
    \label{fig:4-4move}
\end{figure}

 For the invariance under the $2$-$3$, $3$-$2$ and $4$-$4$ moves, we need the following identities for the  $b$-$6j$ symbols, where the Pentagon Identity was proved in~\cite{PT01}, and the $4$-$4$ Move Identity is new and whose proof is deferred to Section~\ref{sec:5.2}.

\begin{prop}\label{prop:32and44moves}
Let $\mathbb{S}\doteq \frac{Q}{2}+\mathbf{i}\mathbb R_{>0}$.
\begin{enumerate}[(1)]
    \item The following \emph{Pentagon Identity} 
\begin{equation}\label{pentagon}
    \resizebox{0.8\textwidth}{!}{$ \begin{split}
&\begin{Bmatrix} 
    a_5 & a_6 & a_{10}\\
      a_3 & a_4 & a_7 
   \end{Bmatrix}_b
   \begin{Bmatrix} 
   a_{10} & a_5 & a_6\\
      a_1 & a_2 & a_9 
   \end{Bmatrix}_b\\
=&\int_\mathbb{S}|S_b(2a_8)|^2\begin{Bmatrix} 
    a_3 & a_7 & a_6\\
      a_1 & a_2 & a_8 
   \end{Bmatrix}_b
   \begin{Bmatrix} 
    a_4 & a_5 & a_7\\
      a_1 & a_8 & a_9 
   \end{Bmatrix}_b
   \begin{Bmatrix} 
    a_4 & a_9 & a_8\\
      a_2 & a_3 & a_{10} 
   \end{Bmatrix}_b
   d\mathrm{Im}a_{8}
   \end{split}$}
    \end{equation}
   holds as  continuous functions in $(a_1,\dots,a_{10})\in {\mathbb{S}}^{10}$.

   \item  The following \emph{4-4 Move Identity} 
\begin{equation}\label{44moveidentity}
    \resizebox{0.9\textwidth}{!}{$ \begin{split}
     &\int_\mathbb{S} |S_b(2a_{13})|^2\begin{Bmatrix} 
    a_{13} & a_6 & a_1\\
      a_{10} & a_{11} & a_5 
   \end{Bmatrix}_b
\begin{Bmatrix} 
    a_{13} & a_{12} & a_8\\
      a_4 & a_5 & a_{11} 
   \end{Bmatrix}_b
\begin{Bmatrix} 
    a_{8} & a_{13} & a_{12}\\
      a_2 & a_3 & a_9 
   \end{Bmatrix}_b
\begin{Bmatrix} 
    a_9 & a_6 & a_{7}\\
      a_1 & a_2 & a_{13} 
   \end{Bmatrix}_bd\mathrm{Im}  a_{13}
\\
   =&\int_\mathbb{{\mathbb{S}}} |S_b(2a_{14})|^2
   \begin{Bmatrix} 
   a_{2} & a_{11} & a_{14}\\
      a_{4} & a_{3} & a_{12} 
   \end{Bmatrix}_b
\begin{Bmatrix} 
a_{7} & a_{10} & a_{14}\\
      a_{11} & a_2 & a_{1} 
   \end{Bmatrix}_b
\begin{Bmatrix} 
a_{7} & a_{9} & a_{6}\\
      a_5 & a_{10} & a_{14} 
   \end{Bmatrix}_b
\begin{Bmatrix} 
      a_5 & a_9 & a_{14}\\
      a_3 & a_4 & a_{8} 
   \end{Bmatrix}_b
   d\mathrm{Im}a_{14}
    \end{split}$}
    \end{equation}
\end{enumerate}
 holds as continuous functions in $(a_1,\dots,a_{14})\in \mathbb{S}^{14}$.
 \end{prop}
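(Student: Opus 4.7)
Part (1), the Pentagon Identity, is due to Ponsot--Teschner~\cite{PT01} and is not reproven here. I focus on the proof of the 4-4 Move Identity (\ref{44moveidentity}), which is the new content of the proposition.

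The geometric picture behind~(\ref{44moveidentity}) is that both sides represent the state-integral assigned to an ideal octahedron (a bipyramid over a square) equipped with two distinct ideal triangulations into four tetrahedra, differing in the choice of which diagonal of the equatorial quadrilateral is used as the common internal edge---the one integrated out as $a_{13}$ on the LHS versus $a_{14}$ on the RHS. It therefore suffices to exhibit a sequence of Pachner-type moves between these two triangulations, each corresponding to an application of the Pentagon Identity~(\ref{pentagon}).

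Concretely, my plan is the following. First, I apply the Pentagon Identity to a well-chosen adjacent pair of $6j$-symbols on the LHS of~(\ref{44moveidentity})---namely the two tetrahedra sharing a triangular face which, after the move, will no longer appear in the triangulation---to rewrite their product as an integral of three $6j$-symbols over a new variable, say $a_{15}\in\mathbb S$. This yields an iterated double integral in $(a_{13},a_{15})$ over a product of five $6j$-symbols, representing an intermediate triangulation of the octahedron into five tetrahedra. Second, I interchange the order of integration by Fubini's theorem. Third, I apply the Pentagon Identity in the reverse direction to a different triple of $6j$-symbols in the resulting integrand, collapsing them into a product of two symbols and producing exactly the RHS of~(\ref{44moveidentity}). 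The tetrahedral and reflection symmetries of the $b$-$6j$ symbol are then used to match the labellings of the variables on the two sides.

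The main technical obstacle is justifying the use of Fubini's theorem in the second step, which requires absolute integrability of the five-fold product of $b$-$6j$ symbols over the product contour in $(a_{13},a_{15})\in\mathbb S^2$. This will be established via the exponential decay estimate~(\ref{Sbestimate}) for $|S_b(z)|$ away from the critical line $\mathrm{Re}z=\frac{Q}{2}$, together with the derived integrand estimate~(\ref{Integrandestimate}) showing that each individual $6j$-symbol is represented by an absolutely convergent contour integral (cf.\ Lemma~\ref{lem:contour}). These decay properties suffice to dominate the product integrand by an integrable function of $(\mathrm{Im}a_{13},\mathrm{Im}a_{15})$. Finally, the continuity of the $b$-$6j$ symbol in its six arguments (Proposition~\ref{prop:6j-cont}) ensures that both sides of~(\ref{44moveidentity}) are continuous in $(a_1,\dots,a_{14})\in\mathbb S^{14}$, so the identity, once established for generic parameters where the successive applications of~(\ref{pentagon}) are unambiguous, extends to the entire parameter space by continuity.
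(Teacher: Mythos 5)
Your route to the 4-4 Move Identity is genuinely different from the paper's. The paper never chains pentagon identities at the level of the integral formula; instead it works in the representation theory of $\mathrm U_{q\tilde q}\mathfrak{sl}(2;\mathbb R)$: it realizes each side of~\eqref{44moveidentity} as the kernel of a unitary operator ($W_1$, resp.\ $W_2$) built from fusion transformations between Clebsch--Gordan decompositions of a fivefold tensor product of positive representations, observes that each fusion move is by definition $U_{\mathrm{target}}\circ U_{\mathrm{source}}^{-1}$ so that both compositions telescope to the \emph{same} operator, and then upgrades the operator identity to an identity of continuous kernels via the convergence and continuity estimates of Proposition~\ref{prop:conv15}. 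This sidesteps any interchange of integrals. Your approach — factor the 4-4 move as a 2-3 move followed by a 3-2 move, i.e.\ apply~\eqref{pentagon} forward to introduce $a_{14}$, use Fubini, then apply~\eqref{pentagon} backward to remove $a_{13}$ — is sound in principle (the octahedron picture does factor this way, with exactly one of the three new tetrahedra from the 2-3 move still containing the old internal edge), and it has the merit of using part (1) purely as a black box. What it buys over the paper is elementarity; what it costs is two verifications you have only sketched: (i) the combinatorial matching of the ten labels in each of the two pentagon applications against the specific labelling of~\eqref{44moveidentity}, modulo the tetrahedral and reflection symmetries — this is exactly where such arguments usually break, and it is not done; and (ii) Fubini requires \emph{joint} absolute integrability in $(\mathrm{Im}a_{13},\mathrm{Im}a_{14})$ of a product of five $6j$-symbols times two Plancherel densities. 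The single-variable decay bound proved in Proposition~\ref{prop:conv15} has a constant depending on the remaining arguments, one of which is the other integration variable; one needs the (true, but not literally stated) two-variable refinement coming from the simultaneous decay of the integrand of the $6j$-symbol in all of $|\mathrm{Im}u-\mathrm{Im}q_j|$. Both points are fixable, so I would call this a viable alternative proof rather than a gap, provided you carry them out.

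One caveat on part (1): deferring entirely to Ponsot--Teschner is not quite enough for the statement as formulated. Their result is an identity of unitary intertwiners, in a different normalization of the Clebsch--Gordan coefficients, and only for $b$ with $q=e^{\pi\mathbf i b^2}$ not a root of unity. The paper's proof of part (1) still has to (a) renormalize to the $b$-$6j$ symbols of Definition~\ref{def：6j}, (b) show via Proposition~\ref{prop:conv15} that both sides are continuous functions of $(a_1,\dots,a_{10})$ so that equality of kernels in $L^2$ implies pointwise equality, and (c) extend to all $b\in(0,1)$ by continuity in $b$. Since your proof of part (2) consumes part (1) as an identity of continuous functions for every $b$, you inherit this obligation; your closing continuity remark addresses the $a_i$-dependence but not the extension in $b$.
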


On the topological side, let $\{P_i\}_{i\in \mathrm I}$ be the set of hyperideal polyhedra in the Kojima polyhedral decomposition of $M$. Then as described in Section~\ref{sec:2.8}, a Kojima ideal triangulation depends on the choice of a tip $p_{i}$ for each convex polyhedron $P_{i}$ and the choice of a cone point $p_{ij}$ for each face $F_{ij}$ of the polyhedron $P_{i}$ not adjacent to $p_{i}$; and we will denote by $\mathcal{T}(\{p_{i}\}, \{p_{ij}\})$ the Kojima ideal triangulation determined by this choice. The following Propositions \ref{prop:movepolygoncone} and \ref{prop:movepolyhedracone}, which will be proved in Section~\ref{proofofpolyhedracone}, imply that any two different Kojima ideal triangulations can be connected by a sequence of $2$-$3$, $3$-$2$ and $4$-$4$ moves such that each of  the intermediate triangulations supports angle structures.

\begin{proposition}\label{prop:movepolygoncone}
Let $P_{u}$ be a hyperideal polyhedron in the Kojima polyhedral decomoposition $M$ and let $F_{uv}$ be a face of $P_{u}$ that is not adjacent to $p_u$. Suppose $\mathcal{T}(\{p_{i}\}, \{p_{ij}\})$ and $\mathcal{T}(\{p_{i}'\}, \{p_{ij}'\})$ are two Kojima ideal triangulation of $M$ such that
\begin{enumerate}[(1)]
\item the tips $p_{i}=p_{i}'$ for all $i\in\mathrm I$, and
\item the cone points $p_{ij}=p_{ij}'$ for all $(i, j)\ne (u, v)$.
\end{enumerate}
Then $\mathcal{T}(\{p_{i}\}, \{p_{ij}\})$ and $\mathcal{T}(\{p_{i}'\}, \{p_{ij}'\})$ can be connected by a sequence of $2$-$3$ and $3$-$2$ moves such that each of the intermediate triangulations supports angle structures.
\end{proposition}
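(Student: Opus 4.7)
The strategy is to connect the two triangulations $\mathcal T \doteq \mathcal T(\{p_i\}, \{p_{ij}\})$ and $\mathcal T' \doteq \mathcal T(\{p_i'\}, \{p_{ij}'\})$ by a sequence of Pachner moves supported in the region around the face $F_{uv}$. First, I would reduce to the case where $p_{uv}$ and $p_{uv}'$ are adjacent vertices of $F_{uv}$, since the general case follows by transitivity along a cyclic path of adjacent vertices. Label the vertices of $F_{uv}$ cyclically as $w_0,w_1,\ldots,w_{n-1}$ and assume $p_{uv} = w_0$ and $p_{uv}' = w_1$.

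The cone triangulations of the hyperideal polygon $F_{uv}$ from $w_0$ and from $w_1$ differ by exactly $n-3$ diagonal flips. For each flip of an interior diagonal $e$ shared by triangles $T_1$ and $T_2$ of the current triangulation of $F_{uv}$, I would realize the corresponding three-dimensional change by a 2-3 Pachner move on the $P_u$-side cone tetrahedra $\{p_u\}\ast T_1$ and $\{p_u\}\ast T_2$ (which share the triangular face $\{p_u\}\ast e$), followed by a 3-2 Pachner move that consolidates the newly produced flat tetrahedron (lying entirely in the totally geodesic plane of $F_{uv}$) with two old flat tetrahedra of the Kojima flat layer incident to $e$. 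The net effect of this 2-3/3-2 pair is a single diagonal flip in $F_{uv}$ together with the corresponding local update of the flat layer, and iterating $n-3$ such pairs transforms $\mathcal T$ into $\mathcal T'$.

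To ensure each intermediate triangulation in this sequence admits an angle structure, I would invoke the deformability criterion of Proposition~\ref{DF}. The hyperbolic metric on $M$ induces a generalized hyperbolic polyhedral metric on each intermediate triangulation: every tetrahedron becomes either a truncated hyperideal tetrahedron or a flat tetrahedron according as its four vertices are non-coplanar or coplanar in the totally geodesic plane containing $F_{uv}$, and the extended cone angles are automatically $2\pi$ by hyperbolic consistency. It only remains to verify that every edge is adjacent to at least one non-flat tetrahedron.

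The main technical difficulty lies precisely in this last verification. Immediately after a single 2-3 move in the pair described above, the old diagonal $e$ (scheduled for removal by the subsequent 3-2 move) is adjacent only to flat tetrahedra, so condition~(3) of deformability fails and we cannot directly apply Proposition~\ref{DF} to the intermediate state. To overcome this, I would interleave the flip pairs with auxiliary 2-3 moves that lift certain flat tetrahedra off $F_{uv}$ by introducing temporary edges of the form $p_u p_{vu}$, yielding non-flat tetrahedra that witness deformability along every edge of $F_{uv}$ in the affected quadrilateral. The bulk of the proof is the combinatorial bookkeeping required to show that this augmented sequence of 2-3 and 3-2 moves preserves deformability at every single intermediate step while ultimately delivering the target triangulation $\mathcal T'$.
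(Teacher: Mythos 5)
Your overall strategy---flip the fan of $F_{uv}$ from $p_{uv}$ directly to $p_{uv}'$, realizing each two-dimensional flip by a $2$-$3$ move on the $P_u$-side cone tetrahedra and then ``consolidating'' the resulting flat tetrahedron into the existing flat layer by a $3$-$2$ move---is genuinely different from the paper's, and it has two concrete gaps. First, the $3$-$2$ move you invoke is in general not available. After the $2$-$3$ move, the diagonal $e$ is surrounded only by flat tetrahedra: the new one $T_1\cup_e T_2$ plus either one or two flat tetrahedra of the Kojima layer (one if $e$ is the first diagonal flipped in the layer, two otherwise). With only two tetrahedra around $e$ no $3$-$2$ move exists; with three, they must form the join of $e$ with the boundary of a triangle, i.e.\ the three edges opposite to $e$ must themselves span a triangle, and this fails in general because the new flat tetrahedron flips $e$ toward the $p_{uv}'$-fan while the layer flips it toward the $p_{vu}$-fan. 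Second, as you yourself note, the triangulation obtained immediately after the $2$-$3$ move is not deformable (the edge $e$ has no hyperideal neighbor), so Proposition~\ref{DF} does not apply; the ``auxiliary $2$-$3$ moves introducing temporary edges $p_up_{vu}$'' are not specified (note $p_up_{vu}$ is already an edge of the triangulation, since $p_{vu}$ is identified with a vertex of $F_{uv}$), and the ``combinatorial bookkeeping'' that you declare to be the bulk of the proof is exactly the part that is missing.

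The paper avoids both problems by routing through the fan of the \emph{opposite} cone point $B_t=p_{vu}$ rather than going directly from the $p_{uv}$-fan to the $p_{uv}'$-fan. It performs only $3$-$2$ moves in the first phase, removing the diagonals $B_1B_j$ in the order dictated by the layering (outward from $B_t$): at the moment each diagonal is removed it is adjacent to exactly two hyperideal cone tetrahedra and one flat tetrahedron of the layer, so each move is a clean ``$2$ hyperideal $+$ $1$ flat $\to$ $2$ hyperideal'' exchange. This unwinds the entire flat layer, leaving the face triangulated compatibly from $B_t$ on both sides with no flat tetrahedra; the second phase is the exact reverse sequence of $2$-$3$ moves with $p_{uv}'$ in place of $p_{uv}$. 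Every intermediate triangulation keeps a hyperideal tetrahedron adjacent to every edge, so deformability---and hence the existence of an angle structure---never fails. If you want to salvage your direct-flip approach you would need to redesign the elementary step so that the flat layer is updated by moves that actually exist and so that no intermediate edge is surrounded entirely by flat tetrahedra; as written, the argument does not go through.
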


\begin{figure}
\centering
$
\vcenter{\hbox{\begin{tikzpicture}[scale=0.8]
    \foreach \x in {0,1,...,11} \draw (30*\x:3)--(30*\x+30:3);
    \foreach \x in {0,1,...,11} \coordinate (a\x) at (-30*\x:3);
    \draw (-1, 0) to (1, 0);
    \draw (-1, 0) to (a4);
    \draw (-1, 0) to (a8);
    \draw (1, 0) to (a2);
    \draw (1, 0) to (a10);
    \foreach \x in {2, 3, 5, 6, 7, 9, 10} \draw[blue, line width=1pt] (-1, 0) to (a\x);
    \foreach \x in {11, 0, 1} \draw[blue, line width=1pt] (1, 0) to (a\x);
    \draw[fill=white] (-1, 0) circle (0.3);
    \draw[fill=white] (1, 0) circle (0.3);
    \draw (-1, 0) node{$p_{u}$};
    \draw (1, 0) node{$p_{u}'$};
    \foreach \x in {0,1,...,11} \draw[fill=white] (a\x) circle (0.2);
\end{tikzpicture}}}
\longrightarrow 
\vcenter{\hbox{\begin{tikzpicture}[scale=0.8]
    \foreach \x in {0,1,...,11} \draw (30*\x:3)--(30*\x+30:3);
    \foreach \x in {0,1,...,11} \coordinate (a\x) at (-30*\x:3);
    \draw (-1, 0) to (1, 0);
    \draw (-1, 0) to (a4);
    \draw (-1, 0) to (a8);
    \draw (1, 0) to (a2);
    \draw (1, 0) to (a10);
    \foreach \x in {11, 0, 1, 3, 4, 9, 8} \draw[blue, line width=1pt] (1, 0) to (a\x);
    \foreach \x in {5, 6, 7} \draw[blue, line width=1pt] (-1, 0) to (a\x);
    \draw[fill=white] (-1, 0) circle (0.3);
    \draw[fill=white] (1, 0) circle (0.3);
    \draw (-1, 0) node{$p_{u}$};
    \draw (1, 0) node{$p_{u}'$};
    \foreach \x in {0,1,...,11} \draw[fill=white] (a\x) circle (0.2);
\end{tikzpicture}}}
$
    \caption{The figure above illustrates the difference between the two triangulations $\mathcal{T}(\{p_{i}\}, \{p_{ij}\})$ on the left and $\mathcal{T}(\{p_{i}'\}, \{p_{ij}'\})$ on the right described in Proposition \ref{prop:movepolyhedracone} over the polygonal faces in the neighbourhood of $p_u$ and $p_u'$; and all the other faces are triangulated identically. The edges of the polygonal faces  are colored in black and the edges coming from the subdivision are colored in blue.}
    \label{fig:neiborhoodofpu}
\end{figure}
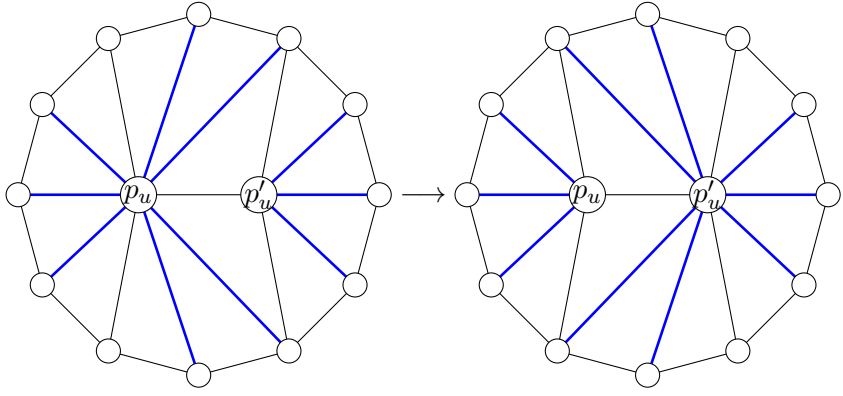

\begin{proposition}\label{prop:movepolyhedracone}
Let $P_{u}$ be a hyperideal polyhedron in the Kojima polyhedral decomposition. Suppose $\mathcal{T}(\{p_{i}\}, \{p_{ij}\})$ and $\mathcal{T}(\{p_{i}'\}, \{p_{ij}'\})$ are two Kojima ideal triangulation of $M$ such that
\begin{enumerate}[(1)]
    \item the tips $p_{i}=p_{i}'$ for all $i\ne u$, and $p_{u}$ and $p_{u}'$ are adjacent in $P_{u}$,
    
    \item the cone points $p_{ij}=p_{ij}'$ for all faces $F_{ij}$ not in $P_{u},$ and the cone points $p_{uj}=p_{uj}'$ for all faces $F_{uj}$ of $P_{u}$ that is adjacent to neither $p_{u}$ nor $p_{u}'$, and 
    
    \item the cone point $p_{u j}=p_{u}'$ for all faces $F_{uj}$ adjacent to $p_{u}'$ but not adjacent to $p_u$, and the cone point $p_{uj}'=p_{u}$ for all faces $F_{uj}$ adjacent to $p_{u}$. (See Figure \ref{fig:neiborhoodofpu})
\end{enumerate}
Then $\mathcal{T}(\{p_{i}\}, \{p_{ij}\})$ and $\mathcal{T}(\{p_{i}'\}, \{p_{ij}'\})$ can be  connected by a sequence of $2$-$3$, $3$-$2$ and $4$-$4$ moves such that each of the intermediate triangulations supports angle structures.
\end{proposition}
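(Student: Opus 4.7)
The plan is to reduce the proposition to a finite, local combinatorial problem inside the single polyhedron $P_u$, and then to connect the two coning structures around the shared edge $e=\overline{p_u p_u'}$ by an explicit sequence of Pachner moves, verifying deformability at each intermediate stage via Proposition \ref{DF}.

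First, I would localize the difference and then construct the sequence. By conditions (1) and (2), the two triangulations $\mathcal{T}$ and $\mathcal{T}'$ agree on every polyhedron other than $P_u$, and even inside $P_u$ they agree on the subdivisions of faces adjacent to both of $p_u, p_u'$ or to neither. The only discrepancies occur on the faces of $P_u$ adjacent to exactly one of $p_u, p_u'$: such a face is triangulated as a fan from the opposite vertex in one triangulation, and appears as a boundary face of cones from the tip in the other. Consequently the only tetrahedra that change between $\mathcal{T}$ and $\mathcal{T}'$ are those containing $e$, grouped according to these transition faces. For each transition face $F$ with $k\geqslant 3$ edges adjacent to $p_u'$ but not $p_u$, the star of $e$ over $F$ in $\mathcal{T}$ consists of a fan of $k-2$ tetrahedra coning $p_u$ over the fan triangulation of $F$ from $p_u'$. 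To convert this into the configuration appearing in $\mathcal{T}'$, I would apply a single $4$-$4$ move when $k=4$, a short inductive sequence of $2$-$3$ and $3$-$2$ moves reducing to the $k=4$ case when $k\geqslant 5$, and no move when $k=3$. Processing every transition face on both sides of $e$ then connects $\mathcal{T}$ to $\mathcal{T}'$.

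To verify deformability of each intermediate triangulation, I observe that each intermediate is still a subdivision of the same Kojima polyhedral decomposition $M=\sqcup_{i\in \mathrm I} P_i$, so each of its tetrahedra inherits a canonical geometric realization as either a truncated hyperideal tetrahedron or a flat tetrahedron in $\mathbb{H}^3$, with total cone angle $2\pi$ at every edge. Conditions (1) and (2) of the definition of a deformable triangulation are then automatic from this geometric realization; only condition (3), that every edge be adjacent to at least one non-flat tetrahedron, requires real verification. For edges in the interior of $P_u$ this is immediate, since the tip in each adjacent tetrahedron lies off the opposite face plane; for edges on a face subdivision, at least one adjacent tetrahedron is non-degenerate because the coning vertex does not lie in the supporting plane of the face.

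The main obstacle is precisely the verification of condition (3), and in particular choosing the sequence of moves so that no intermediate triangulation has an edge surrounded only by flat tetrahedra. This is the technical reason the $4$-$4$ move is indispensable: a naive decomposition of the flip over a quadrilateral transition face into a $2$-$3$ followed by a $3$-$2$ move passes through an intermediate in which the newly created interior edge is adjacent only to flat tetrahedra arising from the coplanar subdivision of the quadrilateral face, violating condition (3). The $4$-$4$ move leaps over this bad intermediate, flipping the four tetrahedra on one side of $e$ directly to the four tetrahedra on the opposite diagonal in a single step, so that Proposition \ref{DF} applied to each resulting deformable intermediate completes the argument.
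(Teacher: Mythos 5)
There is a genuine gap, and it lies in the very first reduction. You claim that ``the only tetrahedra that change between $\mathcal{T}$ and $\mathcal{T}'$ are those containing $e=\overline{p_up_u'}$,'' so that the whole problem localizes to the star of that edge. This is false. Changing the tip of $P_u$ from $p_u$ to $p_u'$ changes \emph{every} tetrahedron in the subdivision of $P_u$: a face $F$ of $P_u$ adjacent to neither tip is triangulated identically in both (its cone point is unchanged by condition (2)), but the tetrahedra lying over it are the cones $p_u\cdot T$ in $\mathcal{T}$ and the cones $p_u'\cdot T$ in $\mathcal{T}'$, and these do not contain the edge $\overline{p_up_u'}$ at all. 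Consequently your proposed sequence of moves — one $4$-$4$ move or a short $2$-$3$/$3$-$2$ cascade per ``transition face'' adjacent to exactly one tip — terminates at a triangulation that still cones the bulk of $P_u$ from $p_u$, not at $\mathcal{T}'$. What is actually required is a global re-coning of $P_u$, which the paper achieves by sweeping a family of \emph{admissible separating paths} $\mathbf{S}$ across the base $\mathbf{B}$ (the faces adjacent to neither tip): each intermediate triangulation is of the form $A\mathbf{B}_L^{\mathbf{S}}\cup AB\mathbf{S}\cup B\mathbf{B}_R^{\mathbf{S}}$, and a partial order on such paths, together with hyperbolic-geometric lemmas guaranteeing that one can always descend by a single $2$-$3$, $3$-$2$, or $4$-$4$ move, drives the induction from $\mathbf{S}_{\max}$ (all of $P_u$ coned from $A$) to $\mathbf{S}_{\min}$ (all coned from $B$).

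A second, independent omission: you do not address the layered flat tetrahedra inserted between the two faces $F_1,F_2$ adjacent to $\overline{p_up_u'}$ and the polygons they are glued to. These insertions depend on the position of the cone points relative to $p_u$ and $p_u'$ and must be rearranged in lockstep with each re-coning step; the paper devotes a lengthy case analysis to this (including cases where an auxiliary edge belonging to neither polygon appears), and the verification of angle structures is nontrivial precisely there — that auxiliary edge can fail your ``at least one non-flat adjacent tetrahedron'' criterion, forcing a more delicate deformation of dihedral angles than the one provided by Proposition~\ref{DF}. Your remark that the $4$-$4$ move is needed to avoid an edge surrounded only by flat tetrahedra identifies a real phenomenon, but in the paper's proof the $4$-$4$ move is needed already at the level of the hyperideal tetrahedra (to exchange the edge $AS_i$ for $BE_i$ when the four relevant vertices are coplanar), not only for the flat layers.
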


Combining these ingredients, we can now prove Theorem \ref{WD3}.

\begin{proof}[Proof of Theorem \ref{WD3}]
By Theorem \ref{WD4}, if an ideal triangulation $\mathcal T$ admits angle structures, then the associated state integral $\mathrm{TV}_b(M,\mathcal T)$ converges absolutely for a sufficiently small $b$. If two ideal triangulations $\mathcal T_1$ and $\mathcal T_2$ both admit angle structure and differ from each other by a single $2$-$3$, $3$-$2$ or $4$-$4$ move, then Fubini's Theorem together with Proposition \ref{prop:32and44moves} implies that the two  state integrals $\mathrm{TV}_b(M,\mathcal T_1)=\mathrm{TV}_b(M,\mathcal T_2)$. 
Therefore, to prove Theorem \ref{WD3}, it suffices to prove that any two Kojima ideal triangulations can be connected by a sequence of  $2$-$3$, $3$-$2$ and $4$-$4$ moves such that each of the intermediate triangulations supports angle structures.\par

By Proposition \ref{prop:movepolygoncone}, any two Kojima ideal triangulations determined by the same set of tips can be connected by a sequence of $2$-$3$ and $3$-$2$ moves such that each of the intermediate ideal triangulations supports angle structure. Now for two Kojima ideal triangulation $\mathcal{T}(\{p_{i}\}, \{p_{ij}\})$ and $\mathcal{T}(\{p_{i}'\}, \{p_{ij}'\})$, one can find a sequence of Kojima ideal triangulations $\mathcal{T}(\{p_{i}^{(k)}\}, \{p_{ij}^{(k)}\})$ such that for each $k$ there exist a $u_{k}\in \mathrm I$ such that
\begin{enumerate}[(1)] 
    \item for each pair of consecutive Kojima ideal triangulations $\mathcal{T}(\{p_{i}^{(k)}\}, \{p_{ij}^{(k)}\})$ and $\mathcal{T}(\{p_{i}^{(k+1)}\}, \{p_{ij}^{(k+1)}\})$ in the sequence, the tips $p_{u_{k}}^{(k)}$ and $p_{u_{k}}^{(k+1)}$ are adjacent in the polyhedron $P_{u_{k}}$, and
    \item the tips
    $p_{i}^{(k)}=p_{i}^{(k+1)}$ in all the other polyhedra $P_i$'s.
\end{enumerate}
Then by Propositions \ref{prop:movepolygoncone} and \ref{prop:movepolyhedracone}, consecutive Kojima ideal triangulations in the sequence, and hence $\mathcal{T}(\{p_{i}\}, \{p_{ij}\})$ and $\mathcal{T}(\{p_{i}'\}, \{p_{ij}'\})$, can be connected by a sequence of $2$-$3$, $3$-$2$ and $4$-$4$ moves such that each of the intermediate ideal triangulations supports angle structures.
\end{proof}

\subsection{Positive representations and proof of Proposition~\ref{prop:32and44moves}}\label{sec:5.2}

For $b\in (0,1)$ such that $q=e^{\pi\mathbf{i}b^2}$ that is not a root of unity, the identities in Proposition~\ref{prop:32and44moves}  are established by Ponsot-Teschner~\cite{PT01} via the positive representations of the modular double of $\mathrm{U}_q \mathfrak{sl}(2; \mathbb{R})$.
In this section, we will show that both sides of the identities in Proposition~\ref{prop:32and44moves} depend continuously on $b$, and it suffices to prove these identities for those $b$ with $q=e^{\pi\mathbf{i}b^2}$ not a root of unity. We now briefly recall the definition and basic properties of $\mathrm{U}_{q\tilde q} \mathfrak{sl}(2; \mathbb{R})$, 
the modular double of $\mathrm{U}_q \mathfrak{sl}(2; \mathbb{R})$, and its positive representations. Recall that for $q\notin \{0,\pm1\}$ the quantum group \( \mathrm{U}_q \mathfrak{sl}(2; \mathbb{R}) \) is the \(*\)-Hopf algebra generated by \( E, F, K, K^{-1} \) subject to the relations:
\[
K E = q E K, \quad K F = q^{-1} F K, \quad [E, F] = \frac{K^2 - K^{-2}}{q - q^{-1} },
\]
and equipped with the following co-product:
\[
\Delta(E) = E \otimes K + K^{-1} \otimes E, \quad \Delta(F) = F \otimes K + K^{-1} \otimes F, \quad \Delta(K) = K \otimes K.
\]
The star structure is $E^*=E$, $F^*=F$ and $K^*=K$. Let $\tilde q=e^{\pi\mathbf i b^{-2}}$. Then the  \emph{modular double} of $\mathrm{U}_{q} \mathfrak{sl}(2; \mathbb{R})$ is defined as 
$$\mathrm{U}_{q\tilde q} \mathfrak{sl}(2; \mathbb{R})=\mathrm{U}_{q} \mathfrak{sl}(2; \mathbb{R})\otimes\mathrm{U}_{\tilde q} \mathfrak{sl}(2; \mathbb{R}).$$
We denote the corresponding generators of $\mathrm{U}_{\tilde{q}} \mathfrak{sl}(2; \mathbb{R}) $ by  $ \tilde{E}, \tilde{F}, \tilde{K}$ and $\tilde{K}^{-1} $.

For each $a=\frac{Q}{2}+\mathbf{i}\lambda$ with $\lambda>0$, there is an irreducible representation $P_a$ of $\mathrm{U}_{q\tilde q} \mathfrak{sl}(2; \mathbb{R})$, called the \emph{positive representation}. It is defined using unbounded essentially self-adjoint operators over the dense domain 
$$\mathcal{W}\doteq\Big\{\sum_{k=1}^Ne^{-{r_k}x^2+{s_k}x+c_k}x^{i_k}\Big|r_k>0,s_k,c_k\in\mathbb C,i_k\in\mathbb N,N\in\mathbb N\Big\}$$ in $L^2(\mathbb R,dx)$ (c.f \cite[Section 2.5]{Ip2013RepresentationOT} ). The action of the generators $E,F,K^{\pm1},\tilde E,\tilde F,\tilde K^{\pm1}$ of $\mathrm{U}_{q\tilde q}\mathfrak{s l}(2; \mathbb{R})$ under $P_a$ is given by 
\begin{enumerate}[(1)]
    \item $P_a(E)(u(x))=\frac{e^{2\pi bx}}{2\sin\pi b^2}[e^{\mathbf{i}\pi b(\frac{b}{2}-\mathbf{i}\lambda)}u(x+\frac{\mathbf{i}b}{2})+e^{\mathbf{i}\pi b(\mathbf{i}\lambda-\frac{b}{2})}u(x-\frac{\mathbf{i}b}{2})]$, 
    \item $P_a(F)(u(x))=\frac{e^{-2\pi bx}}{2\sin\pi b^2}[e^{\mathbf{i}\pi b(\mathbf{i}\lambda-\frac{b}{2})}u(x+\frac{\mathbf{i}b}{2})+e^{\mathbf{i}\pi b(\frac{b}{2}-\mathbf{i}\lambda)}u(x-\frac{\mathbf{i}b}{2})]$,
    \item $P_a(K)(u(x))=u(x+\frac{\mathbf{i}b}{2})$, and
    
    \item $\tilde E,\tilde F$ and $\tilde K$ are sent respectively to the operators given by replacing the $b$ in $E,$ $F$ and $K$ by $b^{-1}.$
\end{enumerate}

We first follow~\cite{PT01} and recall the definition of the Clebsch-Gordan maps, which are unitary intertwining maps decomposing a tensor product $P_{a_2} \otimes P_{a_1}$ into a direct integral $\int_{\mathbb{S}}^{\oplus}  P_{a}d \rho(a)$.

\begin{theorem}[\cite{PT01}, Theorem 2]
\label{theorem:unitary representation}
Equip $\mathbb{S}=\frac{Q}{2}+\mathbf{i}\mathbb R_{>0}$ with the \emph{Plancherel measure}
$$
d\rho(a) \doteq \left|S_b(2a)\right|^2 d\operatorname{Im} a.
$$
Then the tensor product of positive representations of  \( \mathrm{U}_{q\tilde q} \mathfrak{sl}(2; \mathbb{R}) \) decomposes into a direct integral of irreducible positive representations:
\[
P_{a_2} \otimes P_{a_1} \simeq \int_{\mathbb{S}}^{\oplus} P_{a} \, d\rho(a).
\]
Moreover, the isomorphism $\simeq$ is realized by a unitary intertwiner $ (\mathcal{C}_{2,1}^3)^{\rm PT}: L^2(\mathbb{R} \times \mathbb{R}) \to L^2\left(\mathbb{S} \times \mathbb{R}, d\rho(a_3) dx_3\right)$
defined by
\begin{align} \label{equ:unitary 3j}
  (\mathcal{C}_{2,1}^3)^{\rm PT}(f)(a_3,x_3)
  =
  \int_{\mathbb{R}^2}
  \left[\begin{array}{c}
  a_3 \\
  x_3
  \end{array}\Bigg|
  \begin{array}{cc}
  a_2 & a_1 \\
  x_2 & x_1
  \end{array}\right]^{\rm PT}
  f(x_2, x_1) \, dx_2 dx_1,
\end{align}
where the kernel is the \emph{Clebsch–Gordan coefficient}
\begin{align*}
\left[\begin{array}{c}
a_3 \\
x_3
\end{array}\Bigg|
\begin{array}{cc}
a_2 & a_1 \\
x_2 & x_1
\end{array}\right]^{\rm PT}
&\doteq
e^{-\frac{\pi \mathbf{i}}{2} \left( a_3(Q-a_3) - a_1(Q-a_1) - a_2(Q-a_2) \right)} 
 \times
\frac{S_b\left( \frac{Q}{2} - \mathbf{i}(x_2 - x_1) - \frac{a_1 + a_2 + 2a_3 - Q}{2} \right)}
     {S_b\left( \frac{Q}{2} - \mathbf{i}(x_2 - x_1) + \frac{a_1 + a_2 - Q}{2} \right)} \\
& \times
\frac{S_b\left( \frac{Q}{2} - \mathbf{i}(x_2 - x_3) - \frac{Q + a_2 - a_3}{2} \right)}
     {S_b\left( \frac{Q}{2} - \mathbf{i}(x_2 - x_3) + \frac{Q + a_2 - a_3 - 2a_1}{2} \right)} 
 \times
\frac{S_b\left( \frac{Q}{2} - \mathbf{i}(x_3 - x_1) - \frac{Q + a_1 - a_3}{2} \right)}
     {S_b\left( \frac{Q}{2} - \mathbf{i}(x_3 - x_1) + \frac{Q + a_1 - a_3 - 2a_2}{2} \right)}.
\end{align*}
\end{theorem}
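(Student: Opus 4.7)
The plan is to verify two things about the proposed map $(\mathcal C_{2,1}^3)^{\rm PT}$: (i) that it intertwines the coproduct action of $\mathrm U_{q\tilde q}\mathfrak{sl}(2;\mathbb R)$ on $P_{a_2}\otimes P_{a_1}$ with the action on $\int^\oplus_{\mathbb S}P_{a_3}d\rho(a_3)$, and (ii) that it is unitary with respect to the Plancherel measure $d\rho(a)=|S_b(2a)|^2 d\mathrm{Im}\,a$. The intertwining property is essentially algebraic. Since $P_a(K^{\pm 1})$ and $P_a(\tilde K^{\pm 1})$ act on $L^2(\mathbb R)$ by translating $x$ by $\pm \mathbf{i}b/2$ and $\pm\mathbf{i}b^{-1}/2$ respectively, applying $\Delta(K^{\pm 1})$, $\Delta(E)$, $\Delta(F)$ to the kernel on the one hand and $P_{a_3}(K^{\pm 1})$, $P_{a_3}(E)$, $P_{a_3}(F)$ acting on $x_3$ on the other turns the intertwining equations into finite-difference identities. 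These are then reduced to the functional equation $S_b(z+b^{\pm 1})=2\sin(\pi b^{\pm 1}z)S_b(z)$ of~\eqref{FE1}, and after cancellation become a trigonometric identity that can be verified by direct expansion. The tilde sector is handled by the same computation with $b\mapsto b^{-1}$, which works automatically thanks to the $b\leftrightarrow b^{-1}$ symmetry of $S_b$.

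For the unitarity I would establish the orthogonality–completeness relation
\begin{equation*}
\int_{\mathbb S}\!\int_{\mathbb R}\!\left[\begin{array}{c}a_3\\x_3\end{array}\bigg|\begin{array}{cc}a_2&a_1\\x_2&x_1\end{array}\right]^{\rm PT}\overline{\left[\begin{array}{c}a_3\\x_3\end{array}\bigg|\begin{array}{cc}a_2&a_1\\x_2'&x_1'\end{array}\right]^{\rm PT}}dx_3\,d\rho(a_3)=\delta(x_2-x_2')\delta(x_1-x_1').
\end{equation*}
After performing the $x_3$ integral, the remaining $a_3$-integral becomes a $b$-analogue of a Barnes-type integral in the variables $a_3,a_1,a_2$ and differences $x_2-x_2'$, $x_1-x_1'$. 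My approach would be to deform the $a_3$-contour and pick up the residues at the poles of $S_b$ listed after~\eqref{FE1}; the Plancherel density $|S_b(2a_3)|^2$ will emerge as the reciprocal of the residue weights, matched against the normalising prefactor in the Clebsch–Gordan kernel. The decay estimate $|S_b(z)|\leqslant Ce^{-\pi|\mathrm{Im}z|(Q/2-\mathrm{Re}z)}$ from~\eqref{Sbestimate} then justifies discarding the boundary arcs of the deformation.

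The hard part is controlling the $x_3$ and $a_3$ integrals simultaneously: $S_b$ decays only linearly in $|\mathrm{Im}z|$ inside its fundamental strip, so the iterated integrals are only conditionally convergent and the Fubini step requires care. A robust way forward is to introduce a small imaginary regulariser in the variables $x_j-x_j'$, establish the identity for nonzero regulator where absolute convergence is manifest, and take the limit in the distributional sense at the end. Once the Plancherel-type identity is in place, unitarity of $(\mathcal C_{2,1}^3)^{\rm PT}$ on $\mathcal W\otimes\mathcal W$ follows, and density of this domain extends the isomorphism to all of $L^2(\mathbb R^2)$.
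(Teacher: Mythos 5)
The paper does not prove this statement itself: it is quoted verbatim as \cite[Theorem 2]{PT01}, and the remark following it records that the known proofs go through the spectral theory of the Casimir operator acting on $P_{a_2}\otimes P_{a_1}$ --- the Clebsch--Gordan coefficients arise as its generalized eigenfunctions, and the orthogonality, completeness, and the identification of the Plancherel measure are obtained from the rigorous spectral analysis of the associated functional-difference (Kashaev length) operator in \cite{Takhtajan2015OnTS}, as reworked in \cite{derkachov20133jsymbol,nidaiev2013}. Your proposal is therefore a genuinely different route: a direct verification of the intertwining relations plus a direct computation of a Plancherel-type identity. The intertwining half is fine in outline --- it does reduce, via the shift action of $K,\tilde K$ and the functional equation \eqref{FE1}, to finite-difference identities for the kernel, and the $b\leftrightarrow b^{-1}$ symmetry handles the tilde sector --- though you should say on which dense domain (e.g.\ $\mathcal W\otimes\mathcal W$) the unbounded operators are being compared.

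The unitarity half has a genuine gap. First, the single relation you write,
\begin{equation*}
\int_{\mathbb S}\int_{\mathbb R}\left[\cdots\right]^{\rm PT}\overline{\left[\cdots\right]^{\rm PT}}\,dx_3\,d\rho(a_3)=\delta(x_2-x_2')\,\delta(x_1-x_1'),
\end{equation*}
only gives $(\mathcal C_{2,1}^3)^{*}\mathcal C_{2,1}^3=\mathrm{Id}$, i.e.\ that the map is an isometry; unitarity additionally requires surjectivity, equivalently the dual orthogonality relation $\int_{\mathbb R^2}[\cdots]\overline{[\cdots]}\,dx_1dx_2=|S_b(2a_3)|^{-2}\delta(a_3-a_3')\delta(x_3-x_3')$, which you never address. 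Second, and more seriously, the proposed mechanism --- deform the $a_3$-contour off the half-line $\mathbb S$, collect residues of $S_b$, and read off $|S_b(2a_3)|^2$ as ``the reciprocal of the residue weights'' --- is not a workable strategy for either relation. The delta functions must emerge from non-absolutely-convergent oscillatory integrals rather than from poles, and, crucially, surjectivity amounts to the statement that the Casimir operator on $P_{a_2}\otimes P_{a_1}$ has purely continuous spectrum exhausted by $\mathbb S$ with exactly the stated spectral measure; a formal contour manipulation cannot rule out discrete spectrum or a deficit in the continuous spectrum. This is precisely the hard analytic content supplied by the spectral theorem for the functional-difference operator in \cite{Takhtajan2015OnTS}, and your regularize-and-pass-to-the-limit scheme does not replace it. As written, the proposal would at best yield an isometric intertwiner, not the asserted unitary equivalence $P_{a_2}\otimes P_{a_1}\simeq\int_{\mathbb S}^{\oplus}P_a\,d\rho(a)$.
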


 \begin{remark}
The original proof in~\cite{PT01} was based on studying the eigenfunctions of the  Casimir operator acting on $P_{a_2}\otimes P_{a_1}$. The treatment of the unitarity of the Clebsch–Gordan map defined above was later simplified in  \cite{derkachov20133jsymbol,nidaiev2013}, using the connection between the Casimir operator and the Kashaev length operator introduced in \cite{Kashaev2001}. The orthogonality and completeness of the eigenfunctions of the Kashaev length operator were proven in \cite{Takhtajan2015OnTS}, and \cite{derkachov20133jsymbol,nidaiev2013} demonstrated how the Clebsch–Gordan coefficients - viewed as eigenfunctions of the Casimir operator - can be explicitly constructed from these eigenfunctions. Similar construction for the Fourier transform of the above representation was considered in~\cite{Ivan2021}.
\end{remark}

In~\cite{PT01}, a variant of the  $b$-$6j$ symbols in Definition~\ref{def：6j}  was obtained from the Clebsch–Gordan map  $(\mathcal{C}_{2,1}^3)^{\rm PT}$, which differs from our $b$-$6j$ symbols by a factor. To obtain our $b$-$6j$ symbols, as the convention taken in \cite{Teschner:2012em}, we need to renormalize the Clebsch–Gordan maps as 
\begin{align} \label{equ:C123}
  (\mathcal{C}_{2,1}^3)(f)(a_3,x_3)
\doteq 
  \int_{\mathbb{R}^2}
  \left[\begin{array}{c}
  a_3 \\
  x_3
  \end{array}\Bigg|
  \begin{array}{cc}
  a_2 & a_1 \\
  x_2 & x_1
  \end{array}\right]^{\rm TV}
  f(x_2, x_1) \, dx_2 dx_1,
\end{align}
where \[\left[\begin{array}{c}
  a_3 \\
  x_3
  \end{array}\Bigg|
  \begin{array}{cc}
  a_2 & a_1 \\
  x_2 & x_1
  \end{array}\right]^{\rm TV} \doteq  M(a_1, a_2, a_3)  \left[\begin{array}{c}
  a_3 \\
  x_3
  \end{array}\Bigg|
  \begin{array}{cc}
  a_2 & a_1 \\
  x_2 & x_1
  \end{array}\right]^{\mathrm {PT}},\] and 
\[
M(a_1, a_2, a_3) = \left[
S_b(2Q - a_1 - a_2 - a_3)
S_b(Q - a_1 - a_2 + a_3)
S_b(a_1 + a_3 - a_2)
S_b(a_2 + a_3 - a_1)
\right]^{-\frac{1}{2}}.
\]

Now we explain the relation between $\mathcal{C}_{2,1}^3$ and our $b$-$6j$ symbols. It is in analog to the case of finite dimensional representations of quantum groups with the summation replaced by an integral. See e.g.~\cite[Chapter VI]{Turaev:1994xb} for reference in that case.
We shall use subscripts with parentheses, such as \((t)\), to indicate that an integral operator acts on the component \(L^2(\mathbb{S}, d\rho(a_t))\), and subscripts without parentheses, such as \(t\), to indicate that it acts on \(L^2(\mathbb{R}, dx_t)\). The superscripts, such as $(s)$ and $s$, will indicate that the codomain is respectively \(L^2(\mathbb{S}, d\rho(a_s))\) and \(L^2(\mathbb{R}, dx_s)\). Consider the following compositions of the Clebsch–Gordan maps, both of which are unitary operators:
\begin{align*}
\mathcal{C}^{4}_{3,s} \circ (\mathrm{Id}_3 \otimes \mathcal{C}^{s}_{2,1}) &:
L^2(\mathbb{R}^3, dx_1 dx_2 dx_3)
\longrightarrow
L^2\left(\mathbb{S} \times \mathbb{S} \times \mathbb{R}, d\rho(a_4)\, d\rho(a_s)\, dx_4 \right), \\
\mathcal{C}^{4}_{t,1} \circ (\mathcal{C}^{t}_{3,2} \otimes \mathrm{Id}_1) &:
L^2(\mathbb{R}^3, dx_1 dx_2 dx_3)
\longrightarrow
L^2\left(\mathbb{S} \times \mathbb{S} \times \mathbb{R}, d\rho(a_4)\, d\rho(a_t)\, dx_4 \right).
\end{align*}
Given $a_1,a_2,a_3,a_4\in \mathbb S$, define  $\mathcal{F}_{(t)}^{(s)}: L^2(\mathbb{S},d\rho(a_t))\to L^2(\mathbb{S},d\rho(a_s))$ by  
$$(\mathcal{F}_{(t)}^{(s)}f)(a_s)= \int_{\mathbb{S}} 
\left\{ \begin{array}{ccc}
a_1 & a_2 & a_s \\
a_3 & a_4 & a_t
\end{array} \right\}_b f(a_t) d \rho(a_t).$$  
The map $\mathcal{F}_{(t)}^{(s)}$ is called the \emph{fusion transformation operator} and relates to the Clebsch–Gordan maps by 
\begin{equation}\label{eq:fusion}
    \mathcal{F}_{(t)}^{(s)} \otimes \mathrm{Id}_4=(\mathcal{C}^4_{3,s}\circ(\mathrm{Id}_3\otimes\mathcal{C}^s_{2,1} ))\circ (\mathcal{C}^4_{t,1} \circ(\mathcal{C}^t_{3,2}\otimes \mathrm{Id}_1))^{-1} \quad \textrm{on }L^2(\mathbb{S}^2\times \mathbb{R}, d\rho(a_4)d\rho(a_t)dx_4), 
\end{equation}  
as illustrated in the following commutative diagram:
\[\begin{tikzcd}
&L^2(\R^3,dx_1dx_2dx_3)\arrow{ld}[swap]{\mathcal{C}^4_{t,1} \circ(\mathcal{C}^t_{3,2}\otimes \mathrm{Id}_1)}\arrow{rd}{\mathcal{C}^4_{3,s}\circ(\mathrm{Id}_3\otimes\mathcal{C}^s_{2,1} )}&\\
L^2(\mathbb{S}^2\times \R, d\rho(a_4)d\rho(a_t)dx_4)\arrow{rr}{\mathcal{F}_{(t)}^{(s)}\otimes \mathrm{Id}_4}&&L^2(\mathbb{S}^2\times \R,d\rho(a_4)d\rho(a_s)dx_4).
\end{tikzcd}\]
The relation~\eqref{eq:fusion}  has a variant for $(\mathcal{C}_{2,1}^3)^{\rm PT}$ 
 which is established in~\cite{PT01}. Based on this, the relation~\eqref{eq:fusion} 
was obtained in \cite[Equations (2.23), (2.24), and (2.27)]{Teschner:2012em} after changing the normalization as in~\eqref{equ:C123}. The  finite dimensional counterpart of~\eqref{eq:fusion} between can be found in~\cite[Section 5]{Kirillov:191317}.

The following proposition gives the orthogonality of the $b$-$6j$ symbols.
\begin{prop}
    $\mathcal{F}_{(t)}^{(s)}:L^2(\mathbb{S},d\rho(a_t))\to L^2(\mathbb{S},d\rho(a_s))$ is a unitary operator.
\end{prop}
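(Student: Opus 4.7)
The plan is to deduce the unitarity of $\mathcal{F}_{(t)}^{(s)}$ directly from the relation~\eqref{eq:fusion}, which expresses $\mathcal{F}_{(t)}^{(s)}\otimes\mathrm{Id}_4$ as a composition of Clebsch--Gordan maps. The strategy breaks naturally into three steps: (a) verify that the renormalized Clebsch--Gordan map $\mathcal{C}_{2,1}^3$ is unitary, (b) conclude that $\mathcal{F}_{(t)}^{(s)}\otimes\mathrm{Id}_4$ is unitary, and (c) descend this to $\mathcal{F}_{(t)}^{(s)}$.

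For step (a), Theorem~\ref{theorem:unitary representation} asserts that the original Ponsot--Teschner map $(\mathcal{C}_{2,1}^3)^{\mathrm{PT}}$ is unitary from $L^2(\mathbb{R}^2,dx_2dx_1)$ to $L^2(\mathbb{S}\times\mathbb{R},d\rho(a_3)\,dx_3)$. Since $\mathcal{C}_{2,1}^3$ differs from $(\mathcal{C}_{2,1}^3)^{\mathrm{PT}}$ by the multiplicative factor $M(a_1,a_2,a_3)$ depending only on the output variable $a_3$, unitarity is preserved provided $|M(a_1,a_2,a_3)|=1$ for $a_1,a_2,a_3\in\mathbb{S}$. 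Writing $a_k=\frac{Q}{2}+\mathbf{i}\lambda_k$ with $\lambda_k>0$, each of the four arguments of $S_b$ appearing in the definition of $M$ has real part equal to $\frac{Q}{2}$; by the identity $|S_b(z)|=1$ for $\mathrm{Re}(z)=\frac{Q}{2}$ (recalled in the proof of Lemma~\ref{lem:contour}), every factor has absolute value one, hence so does $M$. Thus $\mathcal{C}_{2,1}^3$ is unitary.

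For step (b), the two compositions
\[
\mathcal{C}^{4}_{3,s}\circ(\mathrm{Id}_3\otimes\mathcal{C}^{s}_{2,1})\quad\text{and}\quad\mathcal{C}^{4}_{t,1}\circ(\mathcal{C}^{t}_{3,2}\otimes\mathrm{Id}_1)
\]
are each unitary maps from $L^2(\mathbb{R}^3,dx_1dx_2dx_3)$ onto $L^2\bigl(\mathbb{S}^2\times\mathbb{R},d\rho(a_4)\,d\rho(\cdot)\,dx_4\bigr)$, being compositions of tensor products of unitary operators and the unitary map $\mathcal{C}_{2,1}^3$ from step (a). Consequently, their composition
\[
\mathcal{F}_{(t)}^{(s)}\otimes\mathrm{Id}_4=\bigl(\mathcal{C}^{4}_{3,s}\circ(\mathrm{Id}_3\otimes\mathcal{C}^{s}_{2,1})\bigr)\circ\bigl(\mathcal{C}^{4}_{t,1}\circ(\mathcal{C}^{t}_{3,2}\otimes\mathrm{Id}_1)\bigr)^{-1}
\]
is unitary from $L^2(\mathbb{S}^2\times\mathbb{R},d\rho(a_4)\,d\rho(a_t)\,dx_4)$ to $L^2(\mathbb{S}^2\times\mathbb{R},d\rho(a_4)\,d\rho(a_s)\,dx_4)$.

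For step (c), note that if $A\otimes\mathrm{Id}$ is unitary on $H_1\otimes H_2$, then $(A^*A)\otimes\mathrm{Id}=\mathrm{Id}$ and $(AA^*)\otimes\mathrm{Id}=\mathrm{Id}$, forcing $A^*A=AA^*=\mathrm{Id}$; hence $A$ itself is unitary. Applying this with $A=\mathcal{F}_{(t)}^{(s)}$ acting on the $a_t$-factor and $\mathrm{Id}=\mathrm{Id}_4$ acting on the $L^2(\mathbb{R},dx_4)$ factor (the $\mathbb{S}$-factor with measure $d\rho(a_4)$ is a spectator common to both sides and can be absorbed into the ``identity'' component), we conclude that $\mathcal{F}_{(t)}^{(s)}$ is unitary. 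None of these steps is expected to be a substantive obstacle; the entire argument is essentially bookkeeping built on Theorem~\ref{theorem:unitary representation} and the relation~\eqref{eq:fusion}.
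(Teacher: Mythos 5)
Your proposal is correct and follows essentially the same route as the paper, whose entire proof is the observation that both compositions $\mathcal{C}^4_{3,s}\circ(\mathrm{Id}_3\otimes\mathcal{C}^s_{2,1})$ and $\mathcal{C}^4_{t,1}\circ(\mathcal{C}^t_{3,2}\otimes\mathrm{Id}_1)$ are unitary. Your additional steps — checking $|M(a_1,a_2,a_3)|=1$ via $|S_b(z)|=1$ on $\mathrm{Re}(z)=\frac{Q}{2}$, and the descent from $\mathcal{F}_{(t)}^{(s)}\otimes\mathrm{Id}_4$ to $\mathcal{F}_{(t)}^{(s)}$ — simply make explicit details the paper leaves implicit.
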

\begin{proof}
 This is because both $\mathcal{C}^4_{3,s}\circ(\mathrm{Id}_3\otimes\mathcal{C}^s_{2,1} )$ and $\mathcal{C}^4_{t,1} \circ(\mathcal{C}^t_{3,2}\otimes \mathrm{Id}_1)$ are unitary.
\end{proof}

We are ready to prove Proposition \ref{prop:32and44moves}. To simplify the presentation, we use the 
diagrammatic representation of the Clebsch–Gordan maps.  Each map can be represented using a binary tree with three univalent vertices.  The composition of two maps then corresponds to stacking such trees in different associative patterns. See Figure~\ref{fig:3j}. 
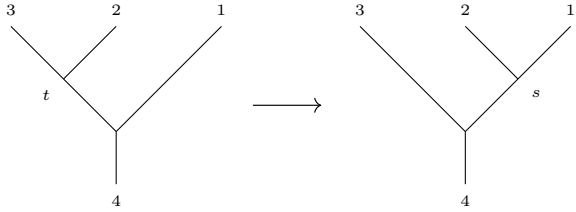
\begin{figure}[ht]
    \centering
    \begin{tikzcd}
        \vcenter{\hbox{
        \begin{tikzpicture}[scale=0.7]
            \draw (0, 0) -- (-2, 2) node[above]{\tiny{$3$}};
            \draw (0, 0) -- (2, 2) node[above]{\tiny{$1$}};
            \draw (0, 2) node[above]{\tiny{$2$}} -- (-1, 1) node[anchor=north east]{\tiny{$t$}};
            \draw (0, 0) -- (0, -1) node[below]{\tiny{$4$}};
        \end{tikzpicture}}}\ar[r]&
        \vcenter{\hbox{
        \begin{tikzpicture}[scale=0.7]
            \draw (0, 0) -- (-2, 2) node[above]{\tiny{$3$}};
            \draw (0, 0) -- (2, 2) node[above]{\tiny{$1$}};
            \draw (0, 2) node[above]{\tiny{$2$}} -- (1, 1) node[anchor=north west]{\tiny{$s$}};
            \draw (0, 0) -- (0, -1) node[below]{\tiny{$4$}};
        \end{tikzpicture}}}
    \end{tikzcd}
    \caption{
The tree on the left represents the map $\mathcal{C}^4_{t,1} \circ(\mathcal{C}^t_{3,2}\otimes \mathrm{Id}_1)$. The tree on the right represents $\mathcal{C}^4_{3,s}\circ(\mathrm{Id}_3\otimes\mathcal{C}^s_{2,1} )$. The arrow between them represents the fusion transformation operator $\mathcal{F}_{(t)}^{(s)}\otimes \mathrm{Id}_4$.}
    \label{fig:3j}
\end{figure}

\begin{proof}[Proof of Proposition~\ref{prop:32and44moves}]
Consider the following two unitary maps from $L^2(\mathbb{S}^3\times \R,d\rho(a_5)d\rho(a_9)d\rho(a_{10})dx_5) $ to $ L^2(\mathbb{S}^3\times \R,d\rho(a_5)d\rho(a_6)d\rho(a_{7})dx_5)$:

\begin{align*}
V_1:F(a_5,a_9,a_{10},x_5)\mapsto \int_{\mathbb{S}}\begin{Bmatrix} 
    a_5 & a_4 & a_{7}\\
    a_3 & a_6 & a_{10} 
   \end{Bmatrix}_b &
   \int_{\mathbb{S}}\begin{Bmatrix} 
    a_{10} & a_5 & a_6\\
    a_1 & a_2 & a_9 
\end{Bmatrix}_b\\
& F(a_5,a_9,a_{10},x_5)d\rho(a_9)d\rho(a_{10}),\\
V_2:F(a_5,a_9,a_{10},x_5) \mapsto \int_{\mathbb{S}}\begin{Bmatrix} 
      a_3 & a_7 & a_6\\
      a_1 & a_2 & a_8 
   \end{Bmatrix}_b &\int_{\mathbb{S}}
   \begin{Bmatrix} 
    a_4 & a_5 & a_7\\
    a_1 & a_8 & a_9 
   \end{Bmatrix}_b \int_{\mathbb{S}}
   \begin{Bmatrix} 
    a_4 & a_9 & a_8\\
    a_2 & a_3 & a_{10}
   \end{Bmatrix}_b\\
   &F(a_5,a_9,a_{10},x_5)d\rho(a_{10})d\rho(a_{9})d\rho(a_{8}).
   \end{align*}
As illustrated in Figure \ref{fig:pentagon}, by~\eqref{eq:fusion}, we have $V_1=V_2$.
Indeed, the first tree (labeled by (A))  represents the unitary map $U_1=\mathcal{C}_{9,1}^{5}\circ(\mathcal{C}_{10,2}^9\otimes\mathrm{Id}_1)\circ(\mathcal{C}_{4,3}^{10} \otimes\mathrm{Id}_2\otimes\mathrm{Id}_1)$ from $L^2( \mathbb R^4,dx_1dx_2dx_3dx_4)$ to $L^2(\mathbb{S}^3\times \mathbb R,d\rho(a_5)d\rho(a_9)d\rho(a_{10})dx_5)$. The tree labeled by (E)  is  the unitary map $U_2=\mathcal{C}_{4,7}^5\circ(\mathrm{Id}_4\otimes\mathcal{C}_{3,6}^7)\circ(\mathrm{Id}_4\otimes\mathrm{Id}_3\otimes\mathcal{C}_{2,1}^6)$ from  $ L^2( \mathbb R^4,dx_1dx_2dx_3dx_4)$ to $ L^2(\mathbb{S}^3\times \R,d\rho(a_5)d\rho(a_6)d\rho(a_{7})dx_5)$. 
From the path (A) $\to$ (D) $\to$ (E), we get $V_1=U_2 U_1^{-1}$; and from the path (A) $\to$ (B) $\to$ (C) $\to$ (E), we get $V_2=U_2 U_1^{-1}$. Therefore, $V_1=V_2$. This derivation is similar to the one for the finite-dimensional case given in~\cite[Chapter VI]{Turaev:1994xb}.

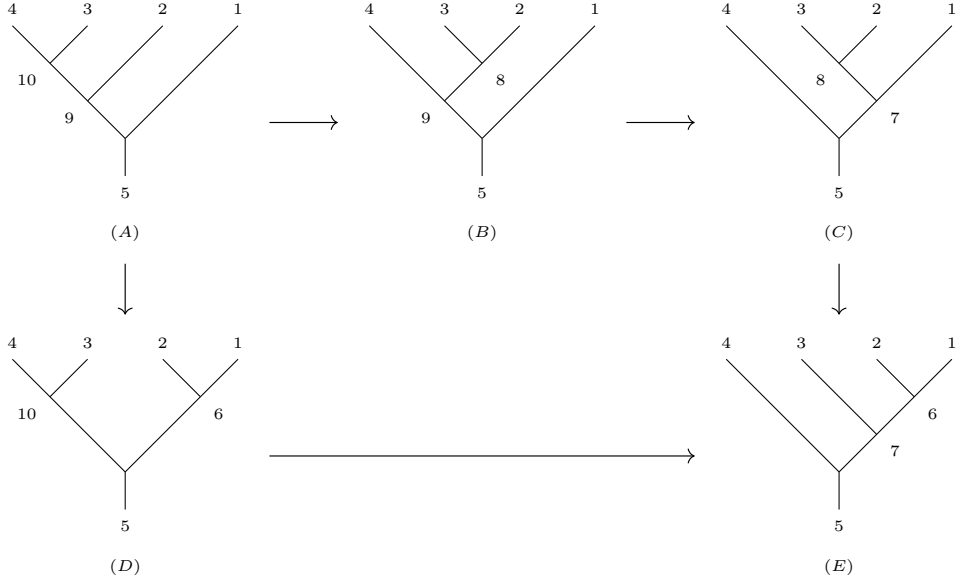
\begin{figure}
    \centering
    \begin{tikzcd}
\vcenter{\hbox{\begin{tikzpicture}[scale=0.5]
\draw (0, 0)--(-3, 3) node[above]{\tiny{$4$}};
\draw (0, 0)--(3, 3) node[above]{\tiny{$1$}};
\draw (-1, 3)node[above]{\tiny{$3$}}--(-2, 2)node[anchor=north east]{\tiny{$10$}};
\draw (1, 3)node[above]{\tiny{$2$}}--(-1, 1)node[anchor=north east]{\tiny{$9$}};
\draw (0, 0)--(0, -1) node[below]{\tiny{$5$}};
\draw (0, -2) node[below]{\tiny{$(A)$}};
\end{tikzpicture}}}\ar[r]\ar[d]
&
\vcenter{\hbox{\begin{tikzpicture}[scale=0.5]
\draw (0, 0)--(-3, 3) node[above]{\tiny{$4$}};
\draw (0, 0)--(3, 3) node[above]{\tiny{$1$}};
\draw (-1, 3)node[above]{\tiny{$3$}}--(0, 2)node[anchor=north west]{\tiny{$8$}};
\draw (1, 3)node[above]{\tiny{$2$}}--(-1, 1)node[anchor=north east]{\tiny{$9$}};
\draw (0, 0)--(0, -1) node[below]{\tiny{$5$}};
\draw (0, -2) node[below]{\tiny{$(B)$}};
\end{tikzpicture}}}\ar[r]
&
\vcenter{\hbox{\begin{tikzpicture}[scale=0.5]
\draw (0, 0)--(-3, 3) node[above]{\tiny{$4$}};
\draw (0, 0)--(3, 3) node[above]{\tiny{$1$}};
\draw (-1, 3)node[above]{\tiny{$3$}}--(1, 1)node[anchor=north west]{\tiny{$7$}};
\draw (1, 3)node[above]{\tiny{$2$}}--(0, 2)node[anchor=north east]{\tiny{$8$}};
\draw (0, 0)--(0, -1) node[below]{\tiny{$5$}};
\draw (0, -2) node[below]{\tiny{$(C)$}};
\end{tikzpicture}}}\ar[d]\\
\vcenter{\hbox{\begin{tikzpicture}[scale=0.5]
\draw (0, 0)--(-3, 3) node[above]{\tiny{$4$}};
\draw (0, 0)--(3, 3) node[above]{\tiny{$1$}};
\draw (-1, 3)node[above]{\tiny{$3$}}--(-2, 2)node[anchor=north east]{\tiny{$10$}};
\draw (1, 3)node[above]{\tiny{$2$}}--(2, 2)node[anchor=north west]{\tiny{$6$}};
\draw (0, 0)--(0, -1) node[below]{\tiny{$5$}};
\draw (0, -2) node[below]{\tiny{$(D)$}};
\end{tikzpicture}}}\ar[rr]
&&
\vcenter{\hbox{\begin{tikzpicture}[scale=0.5]
\draw (0, 0)--(-3, 3) node[above]{\tiny{$4$}};
\draw (0, 0)--(3, 3) node[above]{\tiny{$1$}};
\draw (-1, 3)node[above]{\tiny{$3$}}--(1, 1)node[anchor=north west]{\tiny{$7$}};
\draw (1, 3)node[above]{\tiny{$2$}}--(2, 2)node[anchor=north west]{\tiny{$6$}};
\draw (0, 0)--(0, -1) node[below]{\tiny{$5$}};
\draw (0, -2) node[below]{\tiny{$(E)$}};
\end{tikzpicture}}}
\end{tikzcd}
    \caption{Pentagon Identity}
    \label{fig:pentagon}
\end{figure}

For the $4$-$4$ move,  consider the unitary maps $W_1$ and $W_2$ from $L^2(\mathbb{S}^4\times \R,d\rho(a_7)d\rho(a_8)d\rho(a_{9})d\rho(a_{6})dx_6)$ to $L^2(\mathbb{S}^4\times \R,d\rho(a_{10})d\rho(a_{11})d\rho(a_{12})d\rho(a_{6})dx_6)$:
\begin{align*}
W_1:F(a_7,a_8,a_{9},a_{6},x_6)\mapsto &\int_{\mathbb{S}}
\begin{Bmatrix} 
    a_{5} & a_6 & a_{10}\\
    a_{1} & a_{11} & a_{13}  
   \end{Bmatrix}_b\int_{\mathbb{S}}
\begin{Bmatrix} 
    a_{13} & a_{5} & a_{11}\\
    a_4 & a_{12} & a_{8} 
   \end{Bmatrix}_b\int_{\mathbb{S}}
\begin{Bmatrix} 
    a_{8} & a_{13} & a_{12}\\
      a_2 & a_3 & a_9 
   \end{Bmatrix}_b\\
& \int_{\mathbb{S}}\begin{Bmatrix} 
    a_9 & a_2 & a_{13}\\
      a_1 & a_6 & a_{7} 
   \end{Bmatrix}_b
F(a_7,a_8,a_{9},a_{6},x_6)d\rho(a_7)d\rho(a_{9})d\rho(a_{8})d\rho(a_{13}),\\
W_2:F(a_7,a_8,a_{9},a_{6},x_6) \mapsto & \int_{\mathbb{S}}
\begin{Bmatrix} 
    a_{2} & a_{3} & a_{12}\\
      a_{4} & a_{11} & a_{14} 
   \end{Bmatrix}_b\int_{\mathbb{S}}
\begin{Bmatrix} 
    a_{1} & a_{10} & a_{11}\\
      a_{14} & a_2 & a_{7} 
   \end{Bmatrix}_b\int_{\mathbb{S}}
\begin{Bmatrix} 
    a_{7} & a_{14} & a_{10}\\
      a_5 & a_{6} & a_{9}
   \end{Bmatrix}_b\\
&\int_{\mathbb{S}}\begin{Bmatrix} 
    a_5 & a_9 & a_{14}\\
    a_3 & a_4 & a_{8} 
   \end{Bmatrix}_b
F(a_7,a_8,a_{9},a_{6},x_6)d\rho(a_8)d\rho(a_{9})d\rho(a_{7})d\rho(a_{14}).
   \end{align*}

As illustrated in Figure \ref{fig:4-4}, we have $W_1=W_2= \mathcal{C}_{5,10}^6\circ(\mathrm{Id}_5\otimes\mathcal{C}_{11,1}^{10})\circ(\mathrm{Id}_5\otimes\mathcal{C}_{4,12}^{11}\otimes\mathrm{Id}_1)\circ(\mathrm{Id}_5\otimes\mathrm{Id}_4\otimes\mathcal{C}_{3,2}^{12}\otimes\mathrm{Id}_1)\circ (\mathrm{Id}_5\otimes\mathrm{Id}_4\otimes\mathrm{Id}_3\otimes\mathcal{C}_{2,1}^{7} )^{-1}\circ(\mathcal{C}_{5,4}^{8} \otimes\mathrm{Id}_7)^{-1}\circ(\mathcal{C}_{8,3}^9\otimes\mathrm{Id}_7)^{-1}\circ(\mathcal{C}_{9,7}^{6})^{-1}$.

{
\begin{figure}
    \centering
\begin{tikzcd}[scale=0.1]
\vcenter{\hbox{\begin{tikzpicture}[scale=0.35]
\draw (0, 0)--(-4, 4) node[above]{\tiny{$5$}};
\draw (0, 0)--(4, 4) node[above]{\tiny{$1$}};
\draw (-2, 4)node[above]{\tiny{$4$}}--(-3, 3)node[anchor=north east]{\tiny{$8$}};
\draw (0, 4)node[above]{\tiny{$3$}}--(-2, 2)node[anchor=north east]{\tiny{$9$}};
\draw (2, 4)node[above]{\tiny{$2$}}--(3, 3)node[anchor=north west]{\tiny{$7$}};
\draw (0, 0)--(0, -1) node[below]{\tiny{$6$}};
\end{tikzpicture}}}\ar[r]\ar[d]
&
\vcenter{\hbox{\begin{tikzpicture}[scale=0.35]
\draw (0, 0)--(-4, 4) node[above]{\tiny{$5$}};
\draw (0, 0)--(4, 4) node[above]{\tiny{$1$}};
\draw (-2, 4)node[above]{\tiny{$4$}}--(-3, 3)node[anchor=north east]{\tiny{$8$}};
\draw (0, 4)node[above]{\tiny{$3$}}--(-2, 2)node[anchor=north east]{\tiny{$9$}};
\draw (2, 4)node[above]{\tiny{$2$}}--(-1, 1)node[anchor=north east]{\tiny{$13$}};
\draw (0, 0)--(0, -1) node[below]{\tiny{$6$}};
\end{tikzpicture}}}\ar[r]
&
\vcenter{\hbox{\begin{tikzpicture}[scale=0.35]
\draw (0, 0)--(-4, 4) node[above]{\tiny{$5$}};
\draw (0, 0)--(4, 4) node[above]{\tiny{$1$}};
\draw (-2, 4)node[above]{\tiny{$4$}}--(-3, 3)node[anchor=north east]{\tiny{$8$}};
\draw (0, 4)node[above]{\tiny{$3$}}--(1, 3)node[anchor=north west]{\tiny{$12$}};
\draw (2, 4)node[above]{\tiny{$2$}}--(-1, 1)node[anchor=north east]{\tiny{$13$}};
\draw (0, 0)--(0, -1) node[below]{\tiny{$6$}};
\end{tikzpicture}}}\ar[d]
\\
\vcenter{\hbox{\begin{tikzpicture}[scale=0.35]
\draw (0, 0)--(-4, 4) node[above]{\tiny{$5$}};
\draw (0, 0)--(4, 4) node[above]{\tiny{$1$}};
\draw (-2, 4)node[above]{\tiny{$4$}}--(-1, 3)node[anchor=north west]{\tiny{$14$}};
\draw (0, 4)node[above]{\tiny{$3$}}--(-2, 2)node[anchor=north east]{\tiny{$9$}};
\draw (2, 4)node[above]{\tiny{$2$}}--(3, 3)node[anchor=north west]{\tiny{$7$}};
\draw (0, 0)--(0, -1) node[below]{\tiny{$6$}};
\end{tikzpicture}}}\ar[d]
&&
\vcenter{\hbox{\begin{tikzpicture}[scale=0.35]
\draw (0, 0)--(-4, 4) node[above]{\tiny{$5$}};
\draw (0, 0)--(4, 4) node[above]{\tiny{$1$}};
\draw (-2, 4)node[above]{\tiny{$4$}}--(0, 2)node[anchor=north west]{\tiny{$11$}};
\draw (0, 4)node[above]{\tiny{$3$}}--(1, 3)node[anchor=north west]{\tiny{$12$}};
\draw (2, 4)node[above]{\tiny{$2$}}--(-1, 1)node[anchor=north east]{\tiny{$13$}};
\draw (0, 0)--(0, -1) node[below]{\tiny{$6$}};
\end{tikzpicture}}}\ar[d]
\\
\vcenter{\hbox{\begin{tikzpicture}[scale=0.35]
\draw (0, 0)--(-4, 4) node[above]{\tiny{$5$}};
\draw (0, 0)--(4, 4) node[above]{\tiny{$1$}};
\draw (-2, 4)node[above]{\tiny{$4$}}--(1, 1)node[anchor=north west]{\tiny{$10$}};
\draw (0, 4)node[above]{\tiny{$3$}}--(-1, 3)node[anchor=north east]{\tiny{$14$}};
\draw (2, 4)node[above]{\tiny{$2$}}--(3, 3)node[anchor=north west]{\tiny{$7$}};
\draw (0, 0)--(0, -1) node[below]{\tiny{$6$}};
\end{tikzpicture}}}\ar[r]
&
\vcenter{\hbox{\begin{tikzpicture}[scale=0.35]
\draw (0, 0)--(-4, 4) node[above]{\tiny{$5$}};
\draw (0, 0)--(4, 4) node[above]{\tiny{$1$}};
\draw (-2, 4)node[above]{\tiny{$4$}}--(1, 1)node[anchor=north west]{\tiny{$10$}};
\draw (0, 4)node[above]{\tiny{$3$}}--(-1, 3)node[anchor=north east]{\tiny{$14$}};
\draw (2, 4)node[above]{\tiny{$2$}}--(0, 2)node[anchor=north east]{\tiny{$11$}};
\draw (0, 0)--(0, -1) node[below]{\tiny{$6$}};
\end{tikzpicture}}}\ar[r]
&
\vcenter{\hbox{\begin{tikzpicture}[scale=0.35]
\draw (0, 0)--(-4, 4) node[above]{\tiny{$5$}};
\draw (0, 0)--(4, 4) node[above]{\tiny{$1$}};
\draw (-2, 4)node[above]{\tiny{$4$}}--(1, 1)node[anchor=north west]{\tiny{$10$}};
\draw (0, 4)node[above]{\tiny{$3$}}--(1, 3)node[anchor=north west]{\tiny{$12$}};
\draw (2, 4)node[above]{\tiny{$2$}}--(0, 2)node[anchor=north east]{\tiny{$11$}};
\draw (0, 0)--(0, -1) node[below]{\tiny{$6$}};
\end{tikzpicture}}}
\end{tikzcd}
    \caption{$4$-$4$ Move Identity}
    \label{fig:4-4}
\end{figure}
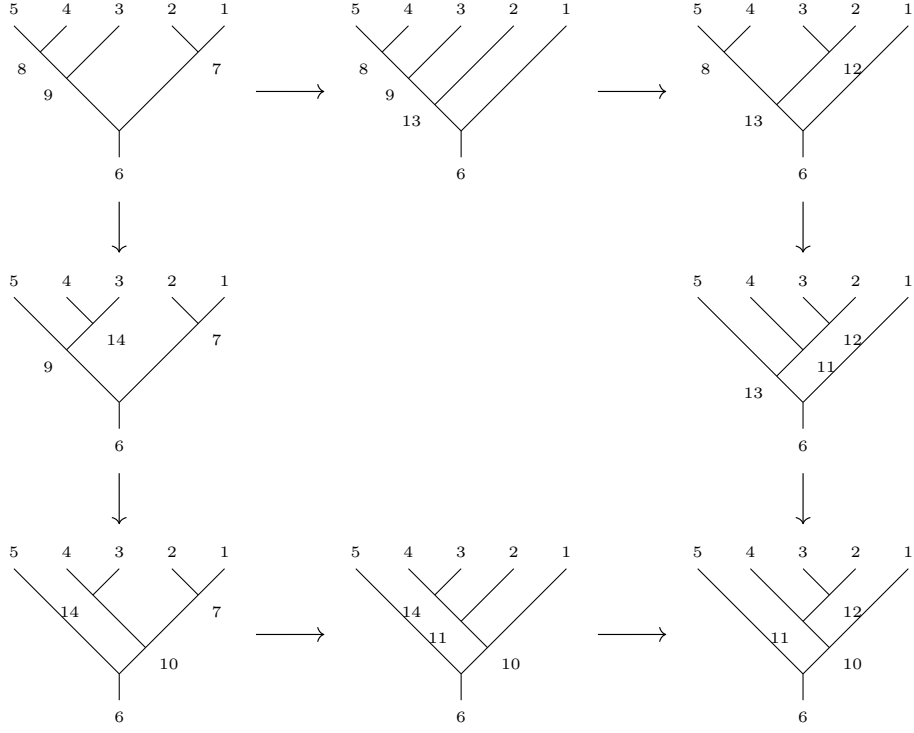
}

By Proposition~\ref{prop:6j-cont} for the continuity of the $b$-$6j$ symbols,
and Proposition~\ref{prop:conv15} (1) right below, the kernels of  $V_1$ and $V_2$ are both continuous functions in the variables $\alpha_i$'s. Therefore, the fact that $V_1=V_2$ as unitary maps from $L^2(\mathbb{S}^3\times \R,d\rho(a_5)d\rho(a_9)d\rho(a_{10})dx_5) $ to $ L^2(\mathbb{S}^3\times \R,d\rho(a_5)d\rho(a_6)d\rho(a_{7})dx_5)$ yields that their kernel agree as continuous functions. This gives the Pentagon Equation~\eqref{pentagon} for $b\in(0,1)$ with $q=e^{\pi\mathbf{i}b^2}$ not a root of unity.  By the continuity in $b$ of the relevant funcions given in Proposition~\ref{prop:conv15} (2), the  identity holds for all $b\in(0,1)$. The equation~\eqref{44moveidentity} for the 4-4 move follows from  $W_1=W_2$ in the same way.\qedhere
\end{proof}

\begin{prop}\label{prop:conv15}
\begin{enumerate}[(1)]
    \item The integrals 
\begin{align}\label{integral1}
&\int_\mathbb{S}|S_b(2a_8)|^2\begin{Bmatrix} 
    a_3 & a_7 & a_6\\
      a_1 & a_2 & a_8 
   \end{Bmatrix}_b
   \begin{Bmatrix} 
   a_4 & a_5 & a_7\\
      a_1 & a_8 & a_9 
   \end{Bmatrix}_b
   \begin{Bmatrix} 
   a_4 & a_9 & a_8\\
      a_2 & a_3 & a_{10}
   \end{Bmatrix}_b
   d\mathrm{Im}a_{8},
   \end{align}
   \begin{align}\label{integral2}
&\int_\mathbb{S} |S_b(2a_{13})|^2\begin{Bmatrix} 
    a_{13} & a_6 & a_1\\
      a_{10} & a_{11} & a_5 
   \end{Bmatrix}_b
\begin{Bmatrix} 
    a_{13} & a_{12} & a_8\\
      a_4 & a_5 & a_{11} 
   \end{Bmatrix}_b
\begin{Bmatrix} 
    a_{8} & a_{13} & a_{12}\\
      a_2 & a_3 & a_9 
   \end{Bmatrix}_b
\begin{Bmatrix} 
    a_9 & a_6 & a_{7}\\
      a_1 & a_2 & a_{13} 
   \end{Bmatrix}_bd\mathrm{Im}a_{13},
\end{align}
\begin{align}\label{integral3}
   &\int_\mathbb{S} |S_b(2a_{14})|^2
   \begin{Bmatrix} 
   a_{2} & a_{11} & a_{14}\\
      a_{4} & a_{3} & a_{12}
   \end{Bmatrix}_b
\begin{Bmatrix} 
    a_{7} & a_{10} & a_{14}\\
      a_{11} & a_2 & a_{1} 
   \end{Bmatrix}_b
\begin{Bmatrix} 
    a_{7} & a_{9} & a_{6}\\
      a_5 & a_{10} & a_{14} 
   \end{Bmatrix}_b
\begin{Bmatrix} 
    a_5 & a_9 & a_{14}\\
      a_3 & a_4 & a_{8} 
   \end{Bmatrix}_b
   d\mathrm{Im}a_{14}.
\end{align}
converge and provide the kernels of the compositions of the fusion transformation operators $V_2,W_1,W_2$, which are continuous in $\alpha_i$'s in $\mathbb{S}$.
\item The $b$-$6j$ symbols, as well as the integrals above, are continuous in $b\in(0,1)$.
\end{enumerate}   
\end{prop}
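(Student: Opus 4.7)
The strategy is to prove (1) and (2) together, based on a quantitative decay estimate for the $b$-$6j$ symbol as a function of each of its six arguments in $\mathbb{S}$. The plan is: first establish a decay bound sharp enough to make each integrand in \eqref{integral1}--\eqref{integral3} absolutely integrable; then use it together with Proposition~\ref{prop:6j-cont} (continuity in $\boldsymbol a$) and the continuity of $S_b$ in $b$ to deduce both absolute convergence and continuity via dominated convergence; and finally identify \eqref{integral1}--\eqref{integral3} with the kernels of $V_2$, $W_1$, $W_2$ by applying Fubini to the explicit factorizations of these operators along the arrows in Figures~\ref{fig:pentagon} and~\ref{fig:4-4}.

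The key estimate to establish is that, for each fixed index $k \in \{1,\dots,6\}$, there exist a constant $C=C(\boldsymbol a_{\ne k},b)>0$ and an exponent $\lambda > 2/3$ such that for $a_k = \frac{Q}{2} + \mathbf i \mu_k \in \mathbb{S}$ with $\mu_k$ large,
\[
\bigg|\bigg\{\begin{matrix}a_1 & a_2 & a_3 \\ a_4 & a_5 & a_6\end{matrix}\bigg\}_b\bigg| \leq C\, e^{-\lambda \pi Q \mu_k},
\]
uniformly for $\boldsymbol a_{\ne k}$ in a fixed compact subset of $\mathbb{S}^5$ and for $b$ in a compact subset of $(0,1)$. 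The approach is to analyze the defining contour integral \eqref{b-6j} directly: the prefactor $\bigl(\prod_{i,j} S_b(q_j-t_i)\bigr)^{-1/2}$ has modulus one because $\mathrm{Re}(q_j - t_i) = Q/2$, while the integrand $\prod_i S_b(u-t_i)\prod_j S_b(q_j-u)$ is bounded using the exponential decay estimate \eqref{Sbestimate} on $|S_b(z)|$, after a careful deformation of the contour $\Gamma$ whose position is chosen depending on $\mu_k$. With this bound in place, absolute convergence of each of \eqref{integral1}--\eqref{integral3} is immediate because the Plancherel factor $|S_b(2a)|^2 = 4\sinh(l)\sinh(l/b^2)$ grows like $e^{2\pi Q\, \mathrm{Im}(a)}$ but is outweighed by the product of three or four $b$-$6j$ symbols each contributing an exponential decay factor.

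Continuity of the kernels in the external parameters $\alpha_i$ and in $b$ then both follow from the dominated convergence theorem. For continuity in the $\alpha_i$'s, pointwise continuity of the integrand is Proposition~\ref{prop:6j-cont}; for continuity in $b$, one first proves that the $b$-$6j$ symbol itself depends continuously on $b$ by applying dominated convergence to \eqref{b-6j}, using the joint continuity of $S_b(z)$ in $(b,z)$ away from its poles and the uniform integrand bound from \eqref{Integrandestimate}. In both cases the dominating function is supplied by the decay estimate above with constants uniform on the relevant compact set. The identification of the three integrals as kernels of $V_2$, $W_1$, $W_2$ is obtained by writing each composition as a product of three or four successive Clebsch--Gordan/fusion integral operators, each with kernel a single $b$-$6j$ symbol, and applying Fubini --- justified by the absolute convergence just established --- to collapse the iterated integration into a single integral over the innermost fusion variable $a_8$, $a_{13}$, or $a_{14}$.

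The main obstacle will be making the decay estimate on the $b$-$6j$ symbol both sharp enough in $\mu_k$ (with $\lambda > 2/3$ required for integral \eqref{integral1}) and uniform for $b$ in compact subsets of $(0,1)$. Since the imaginary parts of those $t_i$ and $q_j$ containing $a_k$ grow linearly with $\mu_k$, the contour $\Gamma$ must be deformed in a way that keeps $u-t_i$ and $q_j-u$ in the region where \eqref{Sbestimate} gives genuine exponential decay, and one must verify that the resulting constants remain controlled as $b$ varies.
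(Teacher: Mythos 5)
Your proposal follows essentially the same route as the paper's proof: a decay estimate for the $b$-$6j$ symbol in each argument $a_k$ obtained from \eqref{Sbestimate} applied to the defining contour integral (the paper gets $(1+\mathrm{Im}a_k)e^{-(Q-2\epsilon)\pi\mathrm{Im}a_k}$, i.e.\ your $\lambda$ essentially equal to $1>2/3$, uniformly for $b$ in compact subsets of $(0,1)$), which dominates the Plancherel growth $e^{2\pi Q\,\mathrm{Im}a}$ and feeds into dominated convergence for continuity in the $a_i$'s and in $b$, with Fubini identifying the kernels. The only cosmetic difference is that the paper does not deform the contour with $\mu_k$; it keeps $\mathrm{Re}\,u=2Q-\epsilon$ fixed and extracts the decay by splitting the $\mathrm{Im}\,u$-integral according to the positions of the $\mathrm{Im}\,q_j$'s, two of which grow linearly in $\mathrm{Im}a_k$.
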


\begin{proof}
Recall the estimate \eqref{Integrandestimate}, which implies that for $\epsilon\in(0,\frac{Q}{2})$ and for $b$ in a compact interval $[b_{\min},b_{\max}]\subset (0,1)$, over the vertical line $\Gamma\doteq\{u\ |\ \mathrm{Re}u=2Q-\epsilon\}$ we have a uniform estimate:
\begin{align*}
    \Bigg|\prod_{i=1}^4S_b(u-t_i)\prod_{j=1}^4S_b(q_j-u)\Bigg|&\leqslant (C^8e^{-\pi \sum_{i}|\text{Im} u-\mathrm{Im}t_i|\epsilon}) e^{-\pi \sum_{j}|\text{Im} u-\mathrm{Im}q_j|(\frac{Q}{2}-\epsilon)}\\
    &\leqslant C(\epsilon,b_{\min},b_{\max})e^{-\pi \sum_{j}|\text{Im} u-\mathrm{Im}q_j|(\frac{b_{\min}}{2}-\epsilon)}.
\end{align*}
Here  $C=C(\epsilon,b)$ is as in \eqref{Integrandestimate}, which is a continuous function in $b$, and 
$$C(\epsilon,b_{\min},b_{\max})=\max_{b\in[b_{\min},b_{\max}]}C(\epsilon,b).$$
Thus for $\epsilon$ small enough, the Dominated Convergence Theorem gives that the $b$-$6j$ symbols are continuous in $b$.

Next, we show that for each $k\in\{1,\dots, 6\}$, the $b$-$6j$ symbol is dominated by a function that exponentially decays in $\mathrm{Im}a_k$. By the tetrahedral symmetry, it suffices to consider the case that $k=1.$  Let \begin{align*}
&\tilde q_1\doteq q_1-a_1=a_2+a_4+a_5\quad\text{and}\quad \tilde q_2\doteq q_2-a_2= a_3+a_4+a_6.
\end{align*}
Then over the vertical line $\Gamma$ the estimate \eqref{Sbestimate} results in the following:
\begin{align*}
&\Bigg|\int_\Gamma \prod_{i=1}^4S_b(u-t_i)\prod_{j=1}^4S_b(q_j-u)d\mathrm{Im}u\Bigg|
\leqslant C^8\int_{-\infty}^\infty  e^{-(\frac{Q}{2}-\epsilon)\pi(\sum_j|y-\mathrm{Im}q_j|)}dy\\
\leqslant & C^8\int_{-\infty}^0  e^{(\frac{Q}{2}-\epsilon)\pi(4y-\mathrm{Im}(q_3+\tilde q_2 +\tilde q_1)-2\mathrm{Im}a_1)}dy+ C^8\int^{\infty}_{2\mathrm{Im}a_1}  e^{-(\frac{Q}{2}-\epsilon)\pi(4y-\mathrm{Im}(q_3+\tilde q_2+\tilde q_1)-2\mathrm{Im}a_1)}dy\\
&+C^8(2\mathrm{Im}a_1+\mathrm{Im}(q_3+\tilde q_2+\tilde q_1))e^{-(\frac{Q}{2}-\epsilon)\pi(2\mathrm{Im}a_1-\mathrm{Im}(\tilde q_1+\tilde q_2+q_3))}\\
\leqslant  &C(Q,\epsilon,\mathrm{Im}\tilde q_1,\mathrm{Im}\tilde q_2,\mathrm{Im}q_3)(1+\mathrm{Im}a_1)e^{-(Q-2\epsilon)\pi \mathrm{Im}a_1}.
\end{align*}

Here $C(Q,\epsilon,\mathrm{Im}\tilde q_1,\mathrm{Im}\tilde q_2,\mathrm{Im}q_3)$ can be chosen as a continuous function in its inputs. By the estimate~\eqref{Sbestimate}, we have that over $\mathbb{S}$ the density of the Plancherel measure satisfies $|S_b(2a)|^2\leqslant e^{2 Q\pi \mathrm{Im}a}$. Therefore, for any compact interval $[b_{\min},b_{\max}]\subset (0,1)$, the integrand in \eqref{integral1}, which involves a product of three $b$-$6j$ symbols, is controlled by 
\begin{align*}
C_1(b_{\min},b_{\max},\epsilon,\{a_1,\dots,a_{10}\} \setminus\{a_8\})(1+\mathrm{Im}a_8)^3e^{-(b_{\min}-6\epsilon)\pi \mathrm{Im}a_8}
\end{align*}
for some continuous function $C_1(b_{\min},b_{\max},\epsilon,\{a_1,\dots,a_{10}\} \setminus\{a_8\})$ in its inputs; and the integrands \eqref{integral2}, \eqref{integral3}, which involves a product of four $b$-$6j$ symbols, are controlled by 
\begin{align*}
C_2(b_{\min},b_{\max},\epsilon,\{a_1,\dots,a_{12}\}) (1+\mathrm{Im}a_{13})^4e^{-(2b_{\min}-8\epsilon)\pi \mathrm{Im}a_{13}}.
\end{align*}
for some continuous function $C_2(b_{\min},b_{\max},\epsilon,\{a_1,\dots,a_{12}\}) $ in its inputs. Now for a sufficiently small $\epsilon$, these two bounds are integrable functions, respectively in $\mathrm{Im}a_8$ and $\mathrm{Im}a_{13}$. Then the Dominated Convergence Theorem ensures that these integrals continuously depend on the parameters $\{a_i\}$ and on $b$. 
\end{proof}

\subsection{Proof of Proposition \ref{prop:movepolygoncone} and Proposition \ref{prop:movepolyhedracone}} \label{proofofpolyhedracone}

\begin{proof}[Proof of Proposition \ref{prop:movepolygoncone}]
We orient the face $F_{uv}$ using the normal vector pointing inside the polyhedron $P_{u}$, and label its vertices in the cyclic order $B_{1}\ldots B_{n}$ such that $B_{1}=p_{uv}$ and $B_{t}$ be the cone point of the polygon glued to $P_u$ along $F_{uv}$. Notice that all the edges in the interior of $F_{uv}$ except $B_{1}B_{t}$ is adjacent to exactly $3$ tetrahedra, two being hyperideal and one being flat. 

If $B_{1}\ne B_{t}$, we subsequently perform $3$-$2$ moves with respect to the internal edges $B_{1}B_{t+1},$ $B_{1}B_{t+2},$ $\dots,$ $B_{1}B_{n-1}$ and $B_{1}B_{t-1},$ $B_{1}B_{t-2},\ldots B_{1}B_{3}$. Each of such $3$-$2$ moves involves two hyperideal tetrahedra and one flat tetrahedron. See Figure \ref{fig:3-2inprop5.2}. By doing so, all the flat tetrahedra are removed. We also observe that all the intermediate triangulations are deformable  since each edge is adjacent to at least one hyperideal tetrahedron, hence by Proposition \ref{DF} support angle structures. On the other hand, since the relative position of $B_{1}$ and $B_{t}$ is arbitrary, reversing this process by performing a sequence of $2$-$3$ moves as depicted in Figure \ref{fig:2-3inprop5.2} with $B_{1}=p_{uv}$   replaced by $p_{uv}'$ will result in $\mathcal{T}'$. By the same reason, all the intermediate triangulations support angle structures also. See Figure \ref{fig:basechange}.
\end{proof}

\begin{figure}
    \centering
    $
\vcenter{\hbox{
\begin{tikzpicture}
\begin{scope}[yscale=0.3, rotate=157.5]
\foreach \x in {1,...,6, 7, 8} \draw[green!50!black] (45*\x:3)--(45*\x+45:3);
\foreach \x in {1,...,6, 7, 8} \coordinate (a\x) at (-45*\x:3); 
\end{scope}
\coordinate (A) at (0, 4);
\draw (A) node[above]{$p_{u}$};
\draw (a1) node[above]{$p_{uv}$};
\draw (a5) node[below]{$B_{t}$};
\draw (a4) node[right]{$B_{4}$};
\draw (a3) node[right]{$p_{uv}'$};
\foreach \x in {3, 4, 5, 6, 7, 8} \draw[gray] (A) to (a\x);
\foreach \x in {1, 2} \draw[dashed, gray] (A) to (a\x);
\foreach \x in {2, 7, 8} \draw[dashed, blue!50!black] (a5) to (a\x);
\foreach \x in {3, 6, 7}
\draw[blue!50!black] (a1) to (a\x);
\draw[red] (a1) to (a5);
\draw[red] (a3) to (a5);
\path[fill=green, opacity=0.1](A)--(a1)--(a5)--(a3)--(A);
\end{tikzpicture}}}
\xrightarrow{3-2}
\vcenter{\hbox{
\begin{tikzpicture}
\begin{scope}[yscale=0.3, rotate=157.5]
\foreach \x in {1,...,6, 7, 8} \draw [green!50!black](45*\x:3)--(45*\x+45:3);
\foreach \x in {1,...,6, 7, 8} \coordinate (a\x) at (-45*\x:3); 
\end{scope}
\coordinate (A) at (0, 4);
\draw (A) node[above]{$p_{u}$};
\draw (a1) node[above]{$p_{uv}$};
\draw (a5) node[below]{$B_{t}$};
\draw (a4) node[right]{$B_{4}$};
\draw (a3) node[right]{$p_{uv}'$};
\foreach \x in {3, 4, 5, 6, 7, 8} \draw[gray] (A) to (a\x);
\foreach \x in {1, 2} \draw[dashed, gray] (A) to (a\x);
\foreach \x in {2, 3, 7, 8} \draw[dashed, blue!50!black] (a5) to (a\x);
\foreach \x in {6, 7}
\draw[blue!50!black] (a1) to (a\x);
\draw[red] (a1) to (a5);
\draw[red] (a3) to (a5);
\draw[red] (a2) to (a5);
\path[fill=green, opacity=0.1](A)--(a1)--(a5)--(a3)--(A);
\end{tikzpicture}}}
$
\caption{The $3$-$2$ move removing the edge $p_{uv}p_{uv}'$, turning the hyperideal tetrahedra $p_{u}p_{uv}p_{uv}'B_{2}$, $p_{u}p_{uv}p_{uv}'B_{t}$ and the  flat tetrahedron $p_{uv}p_{uv}'B_{2}B_{t}$ on the left to the hyperideal tetrahedra $p_{u}p_{uv}B_{2}B_{t}$ and $p_{u}p_{uv}'B_{2}B_{t}$ on the right.}
\label{fig:3-2inprop5.2}
\end{figure}

\begin{figure}
    \centering
$\vcenter{\hbox{\begin{tikzpicture}
\begin{scope}[yscale=0.3, rotate=157.5]
\foreach \x in {1,...,6, 7, 8} \draw[green!50!black] (45*\x:3)--(45*\x+45:3);
\foreach \x in {1,...,6, 7, 8} \coordinate (a\x) at (-45*\x:3); 
\end{scope}
\coordinate (A) at (0, 4);
\draw (A) node[above]{$p_{u}$};
\draw (a1) node[above]{$p_{uv}$};
\draw (a5) node[below]{$B_{t}$};
\draw (a2) node[above]{$B_{2}$};
\draw (a3) node[right]{$p_{uv}'$};
\foreach \x in {3, 4, 5, 6, 7, 8} \draw[gray] (A) to (a\x);
\foreach \x in {1, 2} \draw[dashed, gray] (A) to (a\x);
\foreach \x in {1, 2, 3, 7, 8} \draw[red] (a5) to (a\x);
\path[fill=green, opacity=0.1] (A)--(a1)--(a5)--(a3)--(A);
\end{tikzpicture}}}\xrightarrow{2-3}
\vcenter{\hbox{\begin{tikzpicture}
\begin{scope}[yscale=0.3, rotate=157.5]
\foreach \x in {1,...,6, 7, 8} \draw (45*\x:3)--(45*\x+45:3);
\foreach \x in {1,...,6, 7, 8} \coordinate (a\x) at (-45*\x:3); 
\end{scope}
\path[fill=green, opacity=0.1] (A)--(a1)--(a5)--(a3)--(A);
\coordinate (A) at (0, 4);
\draw (A) node[above]{$p_{u}$};
\draw (a1) node[above]{$p_{uv}$};
\draw (a5) node[below]{$B_{t}$};
\draw (a2) node[above]{$B_{2}$};
\draw (a3) node[right]{$p_{uv}'$};
\foreach \x in {3, 4, 5, 6, 7, 8} \draw[gray] (A) to (a\x);
\foreach \x in {1, 2} \draw[dashed, gray] (A) to (a\x);
\foreach \x in {1, 3, 7, 8} \draw[red] (a5) to (a\x);
\draw[blue!50!black] (a1) to (a3);
\draw[dashed, blue!50!black] (a2) to (a5);
\end{tikzpicture}}}$
    \caption{The $2$-$3$ move adding edge $p_{uv}p_{uv}'$, turning the hyperideal tetrahedra $p_{u}p_{uv}B_{t}B_{2}$ and $p_{u}p_{uv}'B_{t}B_{2}$ on the left to the  hyperideal tetrahedra $p_{u}p_{uv}p_{uv}'B_{2}$, $p_{u}p_{uv}p_{uv}'B_{t}$ and the flat tetrahedron $p_{uv}p_{uv}'B_{2}B_{t}$ on the right.}
    \label{fig:2-3inprop5.2}
\end{figure}

\begin{figure}
    \centering
$
\vcenter{\hbox{\begin{tikzpicture}[scale=0.9]
\coordinate (a1) at (0, 0);
\draw (a1)node[above]{$p_{uv}$};
\coordinate (a2) at (1.414, 0);
\draw (a2)node[above]{$B_{2}$};
\coordinate (a3) at (2.414, -1);
\draw (a3)node[right]{$p_{uv}'$};
\coordinate (a4) at (2.414, -2.414);
\draw (a4)node[right]{$B_{4}$};
\coordinate (a5) at (1.414, -3.414);
\draw (a5)node[below]{$B_{t}$};
\coordinate (a6) at (0, -3.414);
\draw (a6)node[below]{$B_{6}$};
\coordinate (a7) at (-1, -2.414);
\draw (a7)node[left]{$B_{7}$};
\coordinate (a8) at (-1, -1);
\draw (a8)node[left]{$B_{8}$};
\draw[green!50!black] (a1) to (a2) to (a3);
\draw[green!50!black] (a4) to (a5);
\draw[green!50!black] (a6) to (a7);
\draw[green!50!black] (a8) to (a1);
\draw[green!50!black] (a3) to (a4);
\draw[green!50!black] (a5) to (a6);
\draw[green!50!black] (a7) to (a8);
\foreach \x in {2, 3, 7, 8} \draw[dashed, blue!50!black] (a5) to (a\x);
\foreach \x in {3, 4, 6, 7}
\draw[blue!50!black] (a1) to (a\x);
\draw[red] (a1) to (a5);
\end{tikzpicture}}}
\xrightarrow{3-2}
\vcenter{\hbox{\begin{tikzpicture}[scale=0.9]
\coordinate (a1) at (0, 0);
\draw (a1)node[above]{$p_{uv}$};
\coordinate (a2) at (1.414, 0);
\draw (a2)node[above]{$B_{2}$};
\coordinate (a3) at (2.414, -1);
\draw (a3)node[right]{$p_{uv}'$};
\coordinate (a4) at (2.414, -2.414);
\draw (a4)node[right]{$B_{4}$};
\coordinate (a5) at (1.414, -3.414);
\draw (a5)node[below]{$B_{t}$};
\coordinate (a6) at (0, -3.414);
\draw (a6)node[below]{$B_{6}$};
\coordinate (a7) at (-1, -2.414);
\draw (a7)node[left]{$B_{7}$};
\coordinate (a8) at (-1, -1);
\draw (a8)node[left]{$B_{8}$};
\draw[green!50!black] (a1) to (a2) to (a3);
\draw[green!50!black] (a4) to (a5);
\draw[green!50!black] (a6) to (a7);
\draw[green!50!black] (a8) to (a1);
\draw[green!50!black] (a3) to (a4);
\draw[green!50!black] (a5) to (a6);
\draw[green!50!black] (a7) to (a8);
\foreach \x in {1, 2, 3, 7, 8} \draw[red] (a5) to (a\x);
\end{tikzpicture}}}
\xrightarrow{2-3}
\vcenter{\hbox{\begin{tikzpicture}[scale=0.9]
\coordinate (a1) at (0, 0);
\draw (a1)node[above]{$p_{uv}$};
\coordinate (a2) at (1.414, 0);
\draw (a2)node[above]{$B_{2}$};
\coordinate (a3) at (2.414, -1);
\draw (a3)node[right]{$p_{uv}'$};
\coordinate (a4) at (2.414, -2.414);
\draw (a4)node[right]{$B_{4}$};
\coordinate (a5) at (1.414, -3.414);
\draw (a5)node[below]{$B_{t}$};
\coordinate (a6) at (0, -3.414);
\draw (a6)node[below]{$B_{6}$};
\coordinate (a7) at (-1, -2.414);
\draw (a7)node[left]{$B_{7}$};
\coordinate (a8) at (-1, -1);
\draw (a8)node[left]{$B_{8}$};
\draw[green!50!black] (a1) to (a2) to (a3);
\draw[green!50!black] (a4) to (a5);
\draw[green!50!black] (a6) to (a7);
\draw[green!50!black] (a8) to (a1);
\draw[green!50!black] (a3) to (a4);
\draw[green!50!black] (a5) to (a6);
\draw[green!50!black] (a7) to (a8);
\foreach \x in {1, 2, 7, 8} \draw[dashed, blue!50!black] (a5) to (a\x);
\foreach \x in {1, 8, 6, 7}
\draw[blue!50!black] (a3) to (a\x);
\draw[red] (a3) to (a5);
\end{tikzpicture}}}$
\caption{Change of triangulations on $F_{uv}$: First apply $3$-$2$ moves to remove the edges $p_{uv}B_{4}, p_{uv}p_{uv}'$, $p_{uv}B_{6}$, and $p_{uv}B_{7}$ in order, removing all the layered flat tetrahedra in between $F_{uv}$ and the polygon been glued to it. Then apply $2$-$3$ moves to add the edges $p_{uv}'p_{uv}$ $p_{uv}'B_{8}, p_{uv}'B_{7}$, and $p_{uv}'B_{6}$ in order to obtain $\mathcal{T}'$, inserting the  layered flat tetrahedra.}
    \label{fig:basechange}
\end{figure}

In the rest of this subsection, we will prove Proposition \ref{prop:movepolyhedracone} by connecting $\mathcal{T}\doteq\mathcal{T}(\{p_{i}\}, \{p_{ij}\})$ and $\mathcal{T}'\doteq(\{p_{i}'\}, \{p_{ij}'\})$ described in Proposition \ref{prop:movepolyhedracone} by a sequence of $2$-$3$, $3$-$2$ and $4$-$4$ moves such that all the intermediate triangulations support angle structure. All these moves will take place among tetrahedra coming from subdividing $P_{u}$ and the flat tetrahedra inserted between faces of $P_{u}$ and the faces identified with them. We embed the convex hyperideal polyhedron $P_{u}$ into the hyperbolic space $\mathbb{H}^{3}$. To simplify the notation, we denote the adjacent vertices $p_{u}$ and $p_{u}'$ described in Proposition \ref{prop:movepolyhedracone} by $A$ and $B$ respectively in the rest of  this subsection, and further denote the two faces adjacent to $AB$ by $F_{1}$ and $F_{2}$. 
Let $\mathbf{B}$ be the subcomplex consisting of all faces of $P_{u}$ that are not adjacent to $A$ nor $B$ (see Figure~\ref{fig:polypu}), 
and equipped it with the triangulation inherited from $\mathcal{T}$.  By the convexity of $\mathbf{P}$, the base $\mathbf{B}$ is homeomorphic to a disk. Also observe that the restriction of the triangulations $\mathcal{T}$ and $\mathcal{T}'$ described in Proposition \ref{prop:movepolyhedracone}  are identical over $\mathbf{B}$. We remark that the triangulation on $\mathbf{B}$ will not be changed during the process.

Then we layout some standard terminology in topology, which will be used to describe the triangulations of $P_{u}$. Given two simplices $\Delta$ and $\Delta'$, the \textit{join} of $\Delta$ and $\Delta'$ is a simplex whose vertices are the union of vertices of $\Delta$ and $\Delta'$. The join of two simplicial complexes  $\mathbf{X}$ and $\mathbf{X}'$ is a simplicial complex consists of simplices equal to the join of a simplex in $\mathbf{X}$ with a simplex in $\mathbf{X}'$. For instance, the join of a vertex with a complex $\mathbf{X}$ is  the cone over $\mathbf X$.

Given a $2$-dimensional subcomplex $\mathbf{B}'$ of the base $\mathbf{B}$, we denote the join of $\{A\}$ (resp. $\{B\}$) and $\mathbf{B}'$ by $A\mathbf{B}'$ (resp. $B\mathbf{B}'$), which consists of tetrahedra spanned by the vertex $A$ and triangles in $\mathbf{B}'$. In the same manner, given a $1$-dimensional subcomplex $\mathbf{S}$ of $\mathbf{B}$, we denote the join of $\{A\}$ (resp. $\{B\}$) and $\mathbf{S}$ by $A\mathbf{S}$ (resp. $B\mathbf{S}$) and the join of $\{AB\}$ and $\mathbf{S}$ by $AB\mathbf{S}$. Given a complex $\mathbf{X}$ whose vertices belongs to the set of vertices of $P_{u}$, by the \textit{straightening} of $\textbf{X}$, we mean the complex isotopic to $\textbf{X}$ with all its edges geodesic segments, faces right-angled hexagons and tetrahedra either  hyperideal tetrahedra or flat tetrahedra. By the convexity of $P_u$, the straightening of $X$ lies in $P_u.$ In the rest of this subsection, for a complex, we will always automatically consider its straightening, unless stated otherwise.

We will be particularly interested in two types of $1$-dimensional subcomplexes $\mathbf{S}$ as defined in Definitions \ref{def:sep} and \ref{def:boundary}, whose joins with $\{AB\}$ will respectively contribute hyperideal tetrahedra and layered flat tetrahedra in the straightening.

A $1$-dimensional subcomplex $\mathbf{S}\subset \mathbf{B}$ is a \emph{separating path} if $\mathbf{S}\cap F_{1}$ is a singleton 
$\{S_{0}\}$, $\mathbf{S}\cap F_{2}$ is a singleton $\{S_{k}\}$, and  $\mathbf{S}$ is homeomorphic to an arc with $\partial S=\{S_{0}, S_{k}\}$. 
Observe that $\partial \mathbf{B}\setminus (\partial F_{1}\cup \partial F_{2})$ consists of two connected components, each of which satisfies the conditions of a separating path. We denote the one with its vertices connected to the vertex $A=p_{u}$ in $\partial P_{u}$ by $\mathbf{S}_{min}$, and deonote the other one whose vertices are connected to the vertex $B=p'_u$ in $\partial P_{u}$ by $\mathbf{S}_{max}$. See Figure \ref{fig:polypu}. A separating path $\mathbf{S}$ may separate $\mathbf{B}$ into disconnected components; and we denote the union of the connected components in between $\mathbf{S}$ and $\mathbf{S}_{min}$ by $\mathbf{B}_{L}^{\mathbf{S}}$, and denote the union of the connected components in between $\mathbf{S}$ and $\mathbf{S}_{max}$ by $\mathbf{B}_{R}^{\mathbf{S}}$.

\begin{figure}\label{fig:examplePu}
    \centering
$\vcenter{\hbox{
\begin{tikzpicture}
\coordinate (a1) at (-2, 1.5);
\coordinate (a2) at (1.9, 1.5);
\coordinate (a3) at (-2, -1.5);
\coordinate (a4) at (2, -1.5);
\coordinate (a15) at (0, 1.2);
\coordinate (a35) at (0, -1.8);
\draw[green!50!black, line width=1pt] (a1) to (a15) to (a2);
\draw[blue!70!black, line width=1pt] (a3) to (a35) to (a4);
\coordinate (b1) at (-2.6, -0.7);
\coordinate (b2) at (-2.8, 0.5);
\draw[purple, line width=1 pt] (a3)--(b1)--(b2) node[left]{$\mathbf{S}_{min}$}--(a1);
\coordinate (c1) at (3, -0.5);
\coordinate (c2) at (3, 0.5);
\draw[purple, line width=1 pt] (a4) --(c1)--(c2) node[right]{$\mathbf{S}_{max}$}--(a2);
\draw[red, line width=1 pt] (a35)node[below]{$S_{k}$}--(0.6, -1.1)--(-1, -0.5) node[below]{$\mathbf{S}$}--(0.8, 0.2)--(-0.8, 0.8)--(a15)node[below]{$S_{0}$};
\draw (-1, 0) node[left]{$\mathbf{B}_{L}^{S}$};
\draw (1, 0) node[right]{$\mathbf{B}_{R}^{S}$};
\coordinate (A) at (-1, 4);
\coordinate (B) at (1, 4);
\draw [line width=1 pt]  (A) node[above]{$A$}--(B) node[above]{$B$};
\draw [line width=0.6 pt] (A) to (b1);
\draw  [line width=0.6 pt] (A) to (b2);
\draw[dashed, green!50!black, line width=1pt] (A) to (a1);
\draw[blue!70!black, line width=1pt] (A) to (a3);
\draw[line width=0.6 pt, dashed] (A) to (a15);
\draw [line width=0.6 pt]  (A) to (a35);
\draw [line width=0.6 pt] (B) to (c1);
\draw [line width=0.6 pt] (B) to (c2);
\draw[dashed, green!50!black, line width=1pt] (B) to (a2);
\draw[blue!70!black, line width=1pt] (B) to (a4);
\draw[line width=0.6 pt, dashed] (B) to (a15);
\draw [line width=0.6 pt]  (B) to (a35);
\draw[green!50!black] (0,3) node{$F_1$};
\draw[blue!70!black] (0,2) node{$F_2$};
\end{tikzpicture}}}
$
\caption{The triangulation $\mathcal{T}_{\mathbf{S}}'$ of the polyhedron $P_{u}$: The red curve $\mathbf{S}$ separate base $\mathbf{B}$ into $\mathbf{B}_{L}^{\mathbf{S}}$ and $\mathbf{B}_{R}^{\mathbf{S}}$. The faces $F_{1}$ and $F_{2}$ are the two faces of $P_{u}$ adjacent to edge $AB\doteq p_{u}p_{u}'$, where $F_2$ is facing outside}.
\label{fig:polypu}
\end{figure}
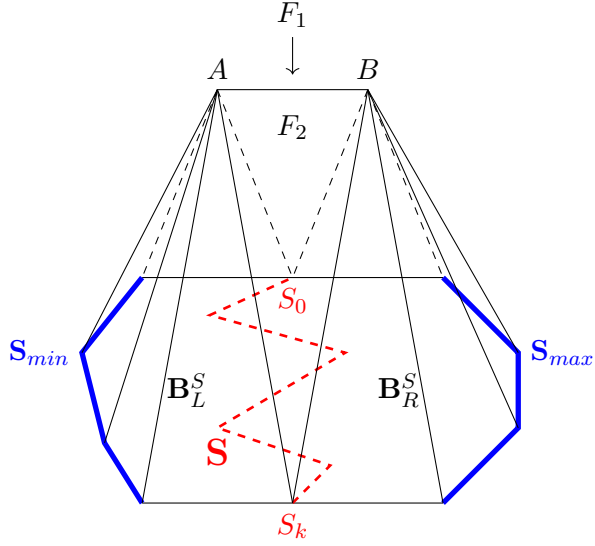

\begin{definition}\label{def:sep}
A separating path $\mathbf{S}\subset \mathbf{B}$ is \emph{admissible} if the straightening of 
$A\mathbf{B}_{L}^{\mathbf{S}}\cup AB\mathbf{S} \cup B\mathbf{B}_{R}^{\mathbf{S}}$ form a geometric triangulation of $P_{u}$ consisting of  only hyperideal tetrahedra.
\end{definition}

\begin{definition}\label{def:boundary}
    A $1$-dimensional subcomplex $\mathbf S\subset \mathbf B$ is a \emph{boundary path} if it consists of consecutive edges of either $F_{1}$ or $F_{2}$, in which case the join $AB\mathbf{S}$ consists of the layered flat tetrahedra.
\end{definition}

We will prove Proposition \ref{prop:movepolyhedracone} in the following  three steps. 
\begin{enumerate}[Step 1.]
    \item  To each admissible path $\mathbf{S}$, we associate it with one or two triangulations of $M$, respectively denoted by $\mathcal{T}_{\mathbf{S}}'$ and $\mathcal{T}_{\mathbf{S}}''$, which differ by a single $2$-$3$ or $3$-$2$ move. In addition, we have $\mathcal{T}_{\mathbf{S}_{max}}'=\mathcal T$ and $\mathcal{T}_{\mathbf{S}_{min}}'=\mathcal T'$ as described in Proposition \ref{prop:movepolyhedracone}.
    \item We show that  the triangulations $\mathcal{T}_{\mathbf{S}}'$ and $\mathcal{T}_{\mathbf{S}}''$ support  angle structures. See Proposition \ref{prop:anglestructure}.
    
    \item We define a partial order $\prec$ over the set of admissible paths such that $\mathbf{S}_{max}$ and $\mathbf{S}_{min}$ are respectively the maximum and the minimum, and show for each non-minimal admissible path $\mathbf{S}$, there exist another admissible path $\mathbf{S}^{*}\prec \mathbf{S}$ such that there is a pair of triangulations $\mathcal T_{\mathbf S}$ and $\mathcal T_{\mathbf S^*}$ each of which respecitvely belonging to one of the triangulations assigned to $\mathbf{S}$ and $\mathbf{S}^{*}$ in Step 1, and they differ by a single $2$-$3$, $3$-$2$ or $4$-$4$ move. See Proposition \ref{prop:order}. 
\end{enumerate}

\begin{proof}[Proof of Proposition \ref{prop:movepolyhedracone}]
The two triangulations $\mathcal{T}$ and $\mathcal{T}'$ in Proposition \ref{prop:movepolyhedracone} are respectively $\mathcal{T}_{\mathbf{S}_{max}}$ and $\mathcal{T}_{\mathbf{S}_{min}}$. Since the number of admissible paths is finite, by Proposition $\ref{prop:order}$, the triangulations $\mathcal{T}_{\mathbf{S}_{max}}$ and $\mathcal{T}_{\mathbf{S}_{min}}$ are connected by a sequence of triangulations of the form $\mathcal{T}_{\mathbf{S}}$  such that the adjacent ones differ by a single $2$-$3$, $3$-$2$ or $4$-$4$ move; and by Proposition \ref{prop:anglestructure}, each of the intermediate triangulations admits angle structures. This completes the proof.
\end{proof}

The three steps will respectively by carried out in Subsections~\ref{subsec:step1}, \ref{subsec:step2} and \ref{subsec:step3}.

\subsubsection{Step 1: Construction of the intermediate triangulations $\mathcal{T}_{\mathbf{S}}'$ and $\mathcal{T}_{\mathbf{S}}''$.}\label{subsec:step1}

Let $\mathbf{S}\subset \mathbf{B}$ be an admissible path with vertices $S_{0}\in F_{1}, S_{1}, \ldots, S_{k}\in F_{2}$ ordered following the path. We now define an ideal triangulation $\mathcal{T}_{\mathbf{S}}'$ as follows.

We first subdivide all the polyhedra $P_{i}\ne P_{u}$ in the Kojima polyhedral decomposition in the same way as $\mathcal{T}$ does, and subdivide $P_{u}$ into  $A\mathbf{B}_{L}^{\mathbf{S}}\cup AB\mathbf{S} \cup B\mathbf{B}_{R}^{\mathbf{S}}$. By doing so, the triangulations on the polygonal faces glued together might be different. If on each of the two polygonal faces that are glued together, the triangles of each polygon share a common vertex, then we insert layered flat tetrahedra in the same way as in the definition of Kojima triangulation  (see section \ref{sec:2.8}). Observe that all the polygonal faces except $F_{1}$ and $F_{2}$ in $P_{u}$ are triangulated in this way. It remains to describe how to insert flat tetrahedra between $F_{1}$ (resp. $F_{2}$) and the polygonal face glued to it. 

To this end, we first describe the triangulations of $F_{1}$ and $F_{2}$ inherited from the subdivision of $P_{u}$ givin by the admissible path $\mathbf{S}=\{S_{0},\ldots, S_{k}\}$. The triangulation over $F_{1}$ (resp. $F_{2}$) could be described as follow: First connect $A$ and $S_{0}$ (resp. $A$ and $S_{k}$) to separate $F_{1}$ (resp. $F_2$) into two components, connecting $B$ with all the other vertices in its component, and connecting all the other vertices with $A$. See Figure \ref{fig:triangulationf1}.

Now we are ready to describe $\mathcal{T}_{\mathbf{S}}'$.

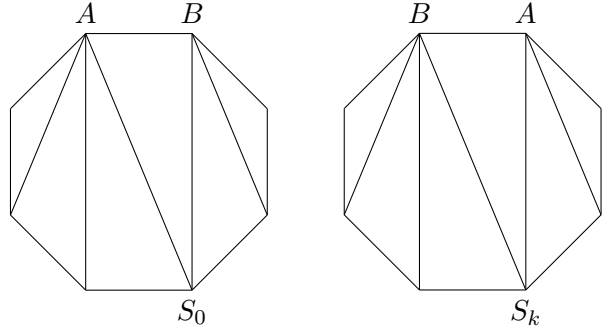
\begin{figure}
\centering
$\vcenter{\hbox{
\begin{tikzpicture}
\coordinate (a1) at (0, 0);
\draw (a1)node[above]{$A$};
\coordinate (a2) at (1.414, 0);
\draw (a2)node[above]{$B$};
\coordinate (a3) at (2.414, -1);
\draw (a3)node[right]{};
\coordinate (a4) at (2.414, -2.414);
\draw (a4)node[right]{};
\coordinate (a5) at (1.414, -3.414);
\draw (a5)node[below]{$S_{0}$};
\coordinate (a6) at (0, -3.414);
\draw (a6)node[below]{};
\coordinate (a7) at (-1, -2.414);
\draw (a7)node[left]{};
\coordinate (a8) at (-1, -1);
\draw (a8)node[left]{};
\draw [line width=0.8 pt] (a1) to (a2);
\draw [blue!70!black, line width=0.8 pt] (a2) to (a3);
\draw [blue!70!black, line width=0.8 pt](a4) to (a5);
\draw [blue!70!black, line width=0.8 pt](a6) to (a7);
\draw [blue!70!black, line width=0.8 pt](a8) to (a1);
\draw [blue!70!black, line width=0.8 pt](a3) to (a4);
\draw [blue!70!black, line width=0.8 pt](a5) to (a6);
\draw [blue!70!black, line width=0.8 pt](a7) to (a8);
\foreach \x in {5, 6, 7}
\draw (a1) to (a\x);
\draw (a2) to (a4);
\draw (a2) to (a5);
\end{tikzpicture}}} \hspace{4mm} \vcenter{\hbox{
\begin{tikzpicture}
\coordinate (a1) at (0, 0);
\draw (a1)node[above]{$B$};
\coordinate (a2) at (1.414, 0);
\draw (a2)node[above]{$A$};
\coordinate (a3) at (2.414, -1);
\draw (a3)node[right]{};
\coordinate (a4) at (2.414, -2.414);
\draw (a4)node[right]{};
\coordinate (a5) at (1.414, -3.414);
\draw (a5)node[below]{$S_{k}$};
\coordinate (a6) at (0, -3.414);
\draw (a6)node[below]{};
\coordinate (a7) at (-1, -2.414);
\draw (a7)node[left]{};
\coordinate (a8) at (-1, -1);
\draw (a8)node[left]{};
\draw [line width=0.8 pt](a1) to (a2);
\draw [green!50!black, line width=0.8 pt] (a2) to (a3);
\draw [green!50!black, line width=0.8 pt] (a4) to (a5);
\draw [green!50!black, line width=0.8 pt] (a6) to (a7);
\draw [green!50!black, line width=0.8 pt] (a8) to (a1);
\draw [green!50!black, line width=0.8 pt] (a3) to (a4);
\draw [green!50!black, line width=0.8 pt] (a5) to (a6);
\draw [green!50!black, line width=0.8 pt] (a7) to (a8);
\foreach \x in {5, 6, 7}
\draw (a1) to (a\x);
\draw (a2) to (a4);
\draw (a2) to (a5);
\end{tikzpicture}}}$
\caption{Triangulations over polygon $F_{1}$ and $F_{2}$, viewed from the inside of $P_{u}.$}
\label{fig:triangulationf1}
\end{figure}

\bigskip

\noindent Case 1: Suppose $F_{1}$ and $F_{2}$ are not identified with each other. In this case, we describe how to insert the flat tetrahedra between $F_{1}$ and the face $F$ glued to it. The construction between $F_2$ and the polygon glued to it follows verbatim  by replacing $S_{0}$ by $S_{k}$.

We label the vertices of $F_1$ in the cyclic order $B_{1}, B_{2}, \ldots, B_{n}$ such that $A=B_{1}, B=B_{2}, S_{0}=B_{s}$ and the cone point $\bar{A}$ of $F$ is identified with $B_{t}$. We call the face $F_{1}$ the \textit{front side} (because it is always facing us in the illustrative diagrams) and the other face the \textit{back side}. Now on the back side polygon, all triangles share the same vertex $\bar{A}=B_{t}$; and on the front side, $B=B_{2}$  is connected to $B_{i}$ for all $3\leqslant i\leqslant s$, and $A=B_{1}$  is connected to $B_{j}$ for all $s+1 \leqslant j \leqslant n$.  We have the following two subcases. 

\begin{enumerate}
\item[(1a)] $s\leqslant t$. Denote the boundary paths $\mathbf{S}_{1}=\{B_{3}, B_{4}\ldots B_{s}=S_{0}\}$, $\mathbf{S}_{2}=\{B_{s}, B_{s+2}\ldots B_{t-1}\}$ and $\mathbf{S}_{3}=\{B_{t+1}, B_{t+2}\ldots B_{n}\}$ which could be empty. We insert flat tetrahedra $B\bar{A}\mathbf{S}_{1}$, $ABS_{0}\bar{A}$, $A\bar{A}\mathbf{S}_{2}$, and $A\bar{A}\mathbf{S}_{3}$, layered in order from the bottom to the top.
\item[(1b)] $s>t$, Denote the boundary paths $\mathbf{S}_{1}=\{B_{3}, B_{4}\ldots B_{t-1}\}$, $\mathbf{S}_{2}=\{B_{t+1}, B_{s+2}\ldots B_{s}\}$ and $\mathbf{S}_{3}=\{B_{s}, B_{s+1}\ldots B_{n}\}$ which could be empty. We insert flat tetrahedra $A\bar{A}\mathbf{S}_{3}$, $ABS_{0}\bar{A}$, $B\bar{A}\mathbf{S}_{2}$ and $B\bar{A}\mathbf{S}_{1}$, layered in order from the bottom to the top.
\end{enumerate}
\begin{remark}
Notice that some tetrahedra in the list could be degenerate. For instance, if the vertex $S_{0}$ is identified with vertex $\bar{A}$, then the tetrahedra $ABS_{0}\bar{A}$ becomes degenerate, if this happen, we simply remove it from our list. We will follow this convention in the rest of the section, to reduce the number of situations without causing ambiguity.
\end{remark}
$$
\begin{array}{cc}
\begin{tikzpicture}[rotate=105]
    \foreach \x in {1,...,12} \draw (30*\x-30:3)--(30*\x:3);
    \foreach \x in {1,2,...,12} \coordinate (a\x) at (-30*\x+30:3);
    \foreach \x in {7, 8} \draw (a\x) node[below]{$B_{\x}$};
    \foreach \x in {9,12,11} \draw (a\x) node[left]{$B_{\x}$};
    \foreach \x in {5, 3, 4} \draw (a\x) node[right]{$B_{\x}$};
    \draw (a1) node[above]{$A$};
    \draw (a2) node[above]{$B$};
    \draw (a6) node[right]{$S_{0}$};
    \draw (a10) node[left]{$\bar{A}$};
    \foreach \x in {11, 6, 7, 8, 9, 10} \draw(a1) to (a\x);
    \foreach \x in {6, 4, 5} \draw(a2) to (a\x);
    \foreach \x in {7, 12, 8, 1, 2, 3, 4, 5, 6} \draw[dashed] (a10) to (a\x);
    \draw[red] (a1) to (a10);
\end{tikzpicture}&
\begin{tikzpicture}[rotate=105]
    \foreach \x in {0,1,...,11} \draw (30*\x:3)--(30*\x+30:3);
    \foreach \x in {1,2,...,12} \coordinate (a\x) at (-30*\x+30:3);
    \foreach \x in {8, 7} \draw (a\x) node[below]{$B_{\x}$};
    \foreach \x in {9,12,11} \draw (a\x) node[left]{$B_{\x}$};
    \foreach \x in {5, 3, 4} \draw (a\x) node[right]{$B_{\x}$};
    \draw (a1) node[above]{$A$};
    \draw (a2) node[above]{$B$};
    \draw (a6) node[right]{$\bar{A}$};
    \draw (a10) node[left]{$S_{0}$};
    \foreach \x in {11, 10} \draw(a1) to (a\x);
    \foreach \x in {10, 3, 4, 5, 6, 7, 8, 9} \draw(a2) to (a\x);
    \foreach \x in {4, 8, 9, 10, 11, 12, 1, 2, 3} \draw[dashed] (a6) to (a\x);
    \draw[red] (a2) to (a6);
\end{tikzpicture}\\
\text{Case (1a)}&\text{Case (1b)}\\
\end{array}
$$

Before moving to Case 2, we explain the convention of the colors in the illustrating figures for the layered tetrahedra inserted in between polygonal faces.
The solid edges come from the triangulations of the polygon on the front side and the dashed edges come from the triangulations of the polygon on the back side.
Here the front and back sides are relative to an observer inside the polygon $P_u$.
The red edges are shared by polygons on both side and the orange edges (only appear in Cases 2 and 3) do not belong to either of the polygons. 
\\

\noindent Case 2: Suppose $F_{1}$ and $F_{2}$ are glued together by an orientation reversing isometry. We will still call $F_{1}$ the front side, and call $F_{2}$ as the back side. To distinguish the vertices $A, B$ from $F_{1}$ and from $F_{2}$, we denote the latter  by $\bar{A}$ and $\bar{B}.$ We label the vertices of $F_{1}$ in the cyclic order $B_{1},\ldots B_{n}$, with $B_{1}=A$ and $B_{2}=B.$ The way that the layered flat tetrahedra are inserted depends on the cyclic order of the six distinguished vertices, namely, $S_{0}, S_{k}$, $A, B$ and $\bar A, \bar B.$ Suppose $\bar{A}=B_{t}, S_{0}=B_{s}$ and $S_{k}=B_{s'}$. Then the orientation-reversing assumption implies that  $\bar B=B_{t+1}$. We have the following six subcases. 

\begin{enumerate}
\item[(2a)] $2\leqslant s \leqslant t < s'$, i.e., $AB< S_{0}\leqslant \bar{A}\bar{B}< S_{k}$. Denote the boundary paths $\mathbf{S}_{1}=\{B_{3},\ldots, B_{s}\},  \mathbf{S}_{2}=\{B_{s},\ldots, B_{t-1}\}, \mathbf{S}_{3}=\{B_{t+2},\ldots, B_{s'}\}$, and $\mathbf{S}_{4}=\{B_{s'},\ldots, B_{n}\}$. We insert flat tetrahedra $\bar{A}B\mathbf{S}_{1},\bar{A}ABS_{0},  \bar{A}A\mathbf{S}_{2}, \bar{A}A\mathbf{S}_{4},$ $S_{k}A\bar{A}\bar{B}$, and
$\bar{B}A\mathbf{S}_{3}$, layered from the back to the front.

\item[(2b)] $2\leqslant s' \leqslant t < s$, i.e., $AB\leqslant S_{k}< \bar{A}\bar{B}\leqslant S_{0}$. Denote the boundary paths $\mathbf{S}_{1}=\{B_{3},\ldots, B_{s'}\},  \mathbf{S}_{2}=\{B_{s'},\ldots, B_{t-1}\}, \mathbf{S}_{3}=\{B_{t+2},\ldots, B_{s}\}$, and $\mathbf{S}_{4}=\{B_{s},\ldots, B_{n}\}$. We insert flat tetrahedra $\bar{B}B\mathbf{S}_{1}, S_{k}B\bar{A}\bar{B}, \bar{A}B\mathbf{S}_{2}, \bar{B}A\mathbf{S}_{4}, \bar{B}ABS_{0},$ and $\bar{B}B\mathbf{S}_{3}$, layered from the back to the front.
\end{enumerate}
$$\begin{array}{cc}\\
\begin{tikzpicture}[rotate=105]
    \foreach \x in {0,1,...,11} \draw (30*\x:3)--(30*\x+30:3);
    \foreach \x in {1,2,...,12} \coordinate (a\x) at (-30*\x+30:3);
    \foreach \x in {10,9,12} \draw (a\x) node[left]{$B_{\x}$};
    \foreach \x in {4, 3, 6} \draw (a\x) node[right]{$B_{\x}$};
    \draw (a1) node[above]{$A$};
    \draw (a2) node[above]{$B$};
    \draw (a7) node[below]{$\bar{A}$};
    \draw (a8) node[below]{$\bar{B}$};
    \draw (a5) node[right]{$S_{0}$};
    \draw (a11) node[left]{$S_{k}$};
    \foreach \x in {11, 5, 6, 7, 8, 9, 10} \draw (a1)--(a\x);
    \foreach \x in {5, 4} \draw (a2)--(a\x);
    \foreach \x in {11, 10} \draw[dashed] (a8)--(a\x);
    \foreach \x in {5, 1, 2, 3, 4, 12, 11} \draw[dashed] (a7)--(a\x);
    \draw[red] (a1)--(a7);
\end{tikzpicture}
&
\begin{tikzpicture}[xscale=-1, rotate=105]
    \foreach \x in {0,1,...,11} \draw (30*\x:3)--(30*\x+30:3);
    \foreach \x in {1,...,11,12} \coordinate (a\x) at (-30*\x:3);
    \foreach \x in {9,11,12} \draw (30*\x-60:3) node[left]{$B_{\x}$};
    \foreach \x in {3,5,6} \draw (30*\x-60:3) node[right]{$B_{\x}$};
    \draw (a12) node[above]{$B$};
    \draw (a1) node[above]{$A$};
    \draw (a6) node[below]{$\bar{B}$};
    \draw (a7) node[below]{$\bar{A}$};
    \draw (a4) node[left]{$S_{0}$};
    \draw (a10) node[right]{$S_{k}$};
    \foreach \x in {4, 5, 6, 7, 8, 9, 10} \draw (a12)--(a\x);
    \foreach \x in {3, 4} \draw (a1)--(a\x);
    \foreach \x in {9, 10} \draw[dashed] (a7)--(a\x);
    \foreach \x in {12, 1, 2, 3, 4, 10, 11} \draw[dashed] (a6)--(a\x);
    \draw[red] (a12)--(a6);
\end{tikzpicture}\\
\text{Case (2a)}&\text{Case (2b)}
\end{array}$$
\begin{enumerate}

\item[(2c)] $2\leqslant t < s' \leqslant s$, i.e., $AB\leqslant \bar{A}\bar{B}< S_{k}\leqslant S_{0}$. Denote the boundary paths $\mathbf{S}_{1}=\{B_{3},\ldots, B_{t-1}\},  \mathbf{S}_{2}=\{B_{t+2},\ldots, B_{s'}\}, \mathbf{S}_{3}=\{B_{s'},\ldots, B_{s}\}$, and $\mathbf{S}_{4}=\{B_{s},\ldots, B_{n}\}$. We insert flat tetrahedra $\bar{A}B\mathbf{S}_{1}, \bar{A}A\mathbf{S}_{4}, S_{0}AB\bar{A},$
$\bar{A}B\mathbf{S}_{3}, S_{k}B\bar{A}\bar{B},$ and 
$\bar{B}B\mathbf{S}_{2}$, layered from the back to the front.

\item[(2d)] $2\leqslant s \leqslant s' \leqslant t$, i.e., $AB\leqslant S_{0}\leqslant S_{k}< \bar{A}\bar{B}$. Denote the boundary  paths $\mathbf{S}_{1}=\{B_{3},\ldots, B_{s}\},   \mathbf{S}_{2}=\{B_{s},\ldots, B_{s'}\}, \mathbf{S}_{3}=\{B_{s'},\ldots, B_{t-1}\}$, and $\mathbf{S}_{4}=\{B_{t+2},\ldots, B_{n}\}$. We insert flat tetrahedra $\bar{A}B\mathbf{S}_{4}, \bar{A}A\mathbf{S}_{1}, S_{0}AB\bar{A}, \bar{A}B\mathbf{S}_{2}, S_{k}B\bar{A}\bar{B}$, and $\bar{B}B\mathbf{S}_{3}$, layered from the back to the front.
\end{enumerate}
$$
\begin{array}{cc}
   \begin{tikzpicture}[rotate=105]
    \foreach \x in {0,1,...,11} \draw (30*\x:3)--(30*\x+30:3);
    \foreach \x in {1,2,...,12} \coordinate (a\x) at (-30*\x+30:3);
    \foreach \x in {9,12,11} \draw (a\x) node[left]{$B_{\x}$};
    \foreach \x in {4, 3} \draw (a\x) node[right]{$B_{\x}$};
    \draw (a7) node[below]{$B_{7}$};
    \draw (a1) node[above]{$A$};
    \draw (a2) node[above]{$B$};
    \draw (a5) node[right]{$\bar{A}$};
    \draw (a6) node[right]{$\bar{B}$};
    \draw (a8) node[below]{$S_{k}$};
    \draw (a10) node[left]{$S_{0}$};
    \foreach \x in {10, 4, 5, 6, 7, 8, 9} \draw (a2)--(a\x);
    \foreach \x in {11, 10} \draw (a1)--(a\x);
    \foreach \x in {8} \draw[dashed] (a6)--(a\x);
    \foreach \x in {3, 1, 2, 12, 8, 9, 10, 11} \draw[dashed] (a5)--(a\x);
    \draw[red] (a2)--(a5);
\end{tikzpicture}  &  \begin{tikzpicture}[rotate=105]
    \foreach \x in {0,1,...,11} \draw (30*\x:3)--(30*\x+30:3);
    \foreach \x in {1,...,11,12} \coordinate (a\x) at (30-30*\x:3);
    \foreach \x in {11,12} \draw (a\x) node[left]{$B_{\x}$};
    \foreach \x in {3, 4, 6} \draw (a\x) node[right]{$B_{\x}$};
    \draw (a8) node[below]{$B_{8}$};
    \draw (a1) node[above]{$A$};
    \draw (a2) node[above]{$B$};
    \draw (a9) node[left]{$\bar{A}$};
    \draw (a10) node[left]{$\bar{B}$};
    \draw (a7) node[below]{$S_{k}$};
    \draw (a5) node[right]{$S_{0}$};
    \foreach \x in {11, 5, 6, 7, 8, 9, 10} \draw (a1)--(a\x);
    \foreach \x in {4, 5} \draw (a2)--(a\x);
    \foreach \x in {8, 7} \draw[dashed] (a9)--(a\x);
    \foreach \x in {7, 1, 2, 3, 4, 5, 6, 12} \draw[dashed] (a10)--(a\x);
    \draw[red] (a1)--(a10);
\end{tikzpicture}\\
   \text{Case (2c)}  & \text{Case (2d)} 
\end{array}
$$
\begin{enumerate}
    
\item[(2e)] $2\leqslant t \leqslant s< s'$, i.e., $AB\leqslant \bar{A}\bar{B}\leqslant S_{0}< S_{k}$. Denote the boundary paths $\mathbf{S}_{1}=\{B_{3},\ldots, B_{t-1}\}, \mathbf{S}_{2}=\{B_{t+2},\ldots, B_{s}\}, \mathbf{S}_{3}=\{B_{s},\ldots, B_{s'}\}$, and $\mathbf{S}_{4}=\{B_{s'},\ldots, B_{n}\}$. We add an extra edge connecting $A\bar{B}$ and insert flat tetrahedra 
$\bar{A}B\mathbf{S}_{1},$ $\bar{A}A\mathbf{S}_{4},$
$S_{k}A\bar{A}\bar{B},$
$\bar{B}A\mathbf{S}_{3},$
$ \bar{A}ABS_{0},$ 
$ AB\bar{A}\bar{B}$ and $\bar{B}B\mathbf{S}_{2}$, layered from the back to the front. We remark that in this case, if $S_{0}=\bar{B}$, then there is no flat tetrahedron on the top of the orange edge $A\bar{B}$. In this case, the edge $A\bar{B}$ belongs to $F_{1}$ and no extra edge is added. The remark also applies to the subcases (2f), (3e) and (3f) where the orange edge appears generically.

\item[(2f)] $2\leqslant s' < s \leqslant t$, i.e., $AB\leqslant S_{k}< S_{0}\leqslant \bar{A}\bar{B}$. Denote the boundary paths $\mathbf{S}_{1}=\{B_{3},\ldots, B_{s'}\},   \mathbf{S}_{2}=\{B_{s'},\ldots, B_{s}\}, \mathbf{S}_{3}=\{B_{s},\ldots, B_{t-1}\}$, and $\mathbf{S}_{4}=\{B_{t+2},\ldots, B_{n}\}$. We add an extra edge connecting $A\bar{B}$ and insert flat tetrahedra $\bar{B}A\mathbf{S}_{4},$
$ \bar{B}B\mathbf{S}_{1},$
$ \bar{A}ABS_{0},$
$ \bar{A}B\mathbf{S}_{2},$
$S_{k}B\bar{A}\bar{B},$ $AB\bar{A}\bar{B}$ and $\bar{A}A\mathbf{S}_{3}$, layered from the back to the front.
\end{enumerate}

$$
\begin{array}{cc}
\begin{tikzpicture}[rotate=105]
    \foreach \x in {0,1,...,11} \draw (30*\x:3)--(30*\x+30:3);
    \foreach \x in {1,...,11,12} \coordinate (a\x) at (30-30*\x:3);
    \foreach \x in {9,12,11} \draw (a\x) node[left]{$B_{\x}$};
    \foreach \x in {4, 3} \draw (a\x) node[right]{$B_{\x}$};
    \draw (a7) node[below]{$B_{7}$};
    \draw (a1) node[above]{$A$};
    \draw (a2) node[above]{$B$};
    \draw (a5) node[right]{$\bar{A}$};
    \draw (a6) node[right]{$\bar{B}$};
    \draw (a8) node[below]{$S_{0}$};
    \draw (a10) node[left]{$S_{k}$};
    \foreach \x in {8, 4, 5, 6, 7} \draw (a2)--(a\x);
    \foreach \x in {11, 8, 9, 10} \draw (a1)--(a\x);
    \foreach \x in {10, 8, 9} \draw[dashed] (a6)--(a\x);
    \foreach \x in {3, 1, 2, 12, 10, 11} \draw[dashed] (a5)--(a\x);
    \draw[red] (a2)--(a5);
    \draw[orange, densely dashed, line width=1pt] (a1)--(a6);
\end{tikzpicture}
&
\begin{tikzpicture}[rotate=105]
    \foreach \x in {0,1,...,11} \draw (30*\x:3)--(30*\x+30:3);
    \foreach \x in {1,...,11,12} \coordinate (a\x) at (30-30*\x:3);
    \foreach \x in {12,11} \draw (a\x) node[left]{$B_{\x}$};
    \foreach \x in {3, 4, 6} \draw (a\x) node[right]{$B_{\x}$};
    \draw (a8) node[below]{$B_{8}$};
    \draw (a1) node[above]{$A$};
    \draw (a2) node[above]{$B$};
    \draw (a9) node[left]{$\bar{A}$};
    \draw (a10) node[left]{$\bar{B}$};
    \draw (a7) node[below]{$S_{0}$};
    \draw (a5) node[right]{$S_{k}$};
    \foreach \x in {11, 7, 8, 9, 10} \draw (a1)--(a\x);
    \foreach \x in {7, 4, 5, 6} \draw (a2)--(a\x);
    \foreach \x in {8, 5, 6, 7} \draw[dashed] (a9)--(a\x);
    \foreach \x in {5, 1, 2, 3, 4, 12} \draw[dashed] (a10)--(a\x);
    \draw[red] (a1)--(a10);
    \draw[orange, densely dashed, line width=1pt] (a2)--(a9);
\end{tikzpicture}\\
\text{Case (2e)}&\text{Case (2f)}
\end{array}
$$
\bigskip

\noindent Case 3: Suppose the $F_{1}$ and $F_{2}$ are glued together by an orientation-preserving isometry. The way that the layered flat tetrahedra been inserted is almost identical to the previous case, simply with $\bar{A}$ and $\bar{B}$ interchanged in each of the sub-cases of Case (2). We still list the triangulations below for the readers' convenience. Adopting the same convention as in the Case (2), we label the vertices of $F_{1}$ in the cyclic order $B_{1},\ldots B_{n}$, and denote the vertices $A$ and $B$ from $F_{2}$ by $\bar{A}$ and $\bar{B}$ respectively. Set $A=B_{1}, \bar{A}=B_{t}, S_{0}=B_{s}$ and $S_{k}=B_{s'}$. 
Then the orientation-preserving assumption implies that  $\bar B=B_{t-1}$.

\begin{enumerate}
\item[(3a)] $2 < s < t < s'$, i.e., $AB< S_{0}\leqslant \bar{B}\bar{A}< S_{k}$. Denote the  boundary paths $\mathbf{S}_{1}=\{B_{3},\ldots, B_{s}\},  \mathbf{S}_{2}=\{B_{s},\ldots, B_{t-2}\}, \mathbf{S}_{3}=\{B_{t+1},\ldots, B_{s'}\}$, and $\mathbf{S}_{4}=\{B_{s'},\ldots, B_{n}\}$. We insert flat tetrahedra $\bar{B}B\mathbf{S}_{1},\bar{B}ABS_{0},  \bar{B}A\mathbf{S}_{2}, \bar{B}A\mathbf{S}_{4},$ $S_{k}A\bar{B}\bar{A}$, and
$\bar{A}A\mathbf{S}_{3}$, layered from the back to the front.

\item[(3b)] $2\leqslant s'< t\leqslant s$, i.e., $AB\leqslant S_{k}< \bar{B}\bar{A}\leqslant S_{0}$. Denote the boundary paths $\mathbf{S}_{1}=\{B_{3},\ldots, B_{s'}\},  \mathbf{S}_{2}=\{B_{s'},\ldots, B_{t-2}\}, \mathbf{S}_{3}=\{B_{t+1},\ldots, B_{s}\}$, and $\mathbf{S}_{4}=\{B_{s},\ldots, B_{n}\}$. We insert flat tetrahedra $\bar{A}B\mathbf{S}_{1}, S_{k}B\bar{B}\bar{A}, \bar{B}B\mathbf{S}_{2}, \bar{A}A\mathbf{S}_{4}, \bar{A}ABS_{0},$ and $\bar{A}B\mathbf{S}_{3}$, layered from the back to the front.
\end{enumerate}
$$\begin{array}{cc}\\
\begin{tikzpicture}[rotate=105]
    \foreach \x in {0,1,...,11} \draw (30*\x:3)--(30*\x+30:3);
    \foreach \x in {1,...,11,12} \coordinate (a\x) at (30-30*\x:3);
    \foreach \x in {10,9,12} \draw (a\x) node[left]{$B_{\x}$};
    \foreach \x in {3, 4, 6} \draw (a\x) node[right]{$B_{\x}$};
    \draw (a1) node[above]{$A$};
    \draw (a2) node[above]{$B$};
    \draw (a7) node[below]{$\bar{B}$};
    \draw (a8) node[below]{$\bar{A}$};
    \draw (a5) node[right]{$S_{0}$};
    \draw (a11) node[left]{$S_{k}$};
    \foreach \x in {11, 5, 6, 7, 8, 9, 10} \draw (a1)--(a\x);
    \foreach \x in {5, 4} \draw (a2)--(a\x);
    \foreach \x in {11, 10} \draw[dashed] (a8)--(a\x);
    \foreach \x in {5, 1, 2, 3, 4, 12, 11} \draw[dashed] (a7)--(a\x);
    \draw[red] (a1)--(a7);
\end{tikzpicture}
&
\begin{tikzpicture}[xscale=-1, rotate=105]
    \foreach \x in {0,1,...,11} \draw (30*\x:3)--(30*\x+30:3);
    \foreach \x in {1,...,11,12} \coordinate (a\x) at (30-30*\x:3);
    \foreach \x in {9,11,12} \draw (30*\x-60:3) node[left]{$B_{\x}$};
    \foreach \x in {3,6,5} \draw (30*\x-60:3) node[right]{$B_{\x}$};
    \draw (a1) node[above]{$B$};
    \draw (a2) node[above]{$A$};
    \draw (a7) node[below]{$\bar{A}$};
    \draw (a8) node[below]{$\bar{B}$};
    \draw (a5) node[left]{$S_{0}$};
    \draw (a11) node[right]{$S_{k}$};
    \foreach \x in {11, 5, 6, 7, 8, 9, 10} \draw (a1)--(a\x);
    \foreach \x in {5, 4} \draw (a2)--(a\x);
    \foreach \x in {11, 10} \draw[dashed] (a8)--(a\x);
    \foreach \x in {5, 1, 2, 3, 4, 12, 11} \draw[dashed] (a7)--(a\x);
    \draw[red] (a1)--(a7);
\end{tikzpicture}\\
\text{Case (3a)}&\text{Case (3b)}
\end{array}$$
\begin{enumerate}

\item[(3c)] $2<t<s'\leqslant s$, i.e., $AB\leqslant \bar{B}\bar{A}< S_{k}\leqslant S_{0}$. Denote the boundary paths $\mathbf{S}_{1}=\{B_{3},\ldots, B_{t-2}\},  \mathbf{S}_{2}=\{B_{t+1},\ldots, B_{s'}\}, \mathbf{S}_{3}=\{B_{s'},\ldots, B_{s}\}$, and $\mathbf{S}_{4}=\{B_{s},\ldots, B_{n}\}$. The inserted tetrahedra are $\bar{B}B\mathbf{S}_{1}, \bar{B}A\mathbf{S}_{4}, S_{0}AB\bar{B},$
$\bar{B}B\mathbf{S}_{3}, S_{k}B\bar{B}\bar{A},$ and 
$\bar{A}B\mathbf{S}_{2}$, layered from the back to the front.

\item[(3d)] $2<s\leqslant s'<t$, i.e., $AB\leqslant S_{0}\leqslant S_{k}< \bar{B}\bar{A}$. Denote the boundary  paths $\mathbf{S}_{1}=\{B_{3},\ldots, B_{s}\}, \mathbf{S}_{2}=\{B_{s},\ldots, B_{s'}\}, \mathbf{S}_{3}=\{B_{s'},\ldots, B_{t-2}\}$, and $\mathbf{S}_{4}=\{B_{t+1},\ldots, B_{n}\}$. The inserted tetrahedra are $\bar{B}B\mathbf{S}_{4}, \bar{B}A\mathbf{S}_{1}, S_{0}AB\bar{B}, \bar{B}B\mathbf{S}_{2}, S_{k}B\bar{B}\bar{A}$, and $\bar{A}B\mathbf{S}_{3}$, layered from the back to the front.
\end{enumerate}
$$
\begin{array}{cc}
   \begin{tikzpicture}[rotate=105]
    \foreach \x in {0,1,...,11} \draw (30*\x:3)--(30*\x+30:3);
    \foreach \x in {1,...,11,12} \coordinate (a\x) at (30-30*\x:3);
    \foreach \x in {9,12,11} \draw (a\x) node[left]{$B_{\x}$};
    \foreach \x in {4, 3} \draw (a\x) node[right]{$B_{\x}$};
    \draw (a7) node[below]{$B_{7}$};
    \draw (a1) node[above]{$A$};
    \draw (a2) node[above]{$B$};
    \draw (a5) node[right]{$\bar{B}$};
    \draw (a6) node[right]{$\bar{A}$};
    \draw (a8) node[below]{$S_{k}$};
    \draw (a10) node[left]{$S_{0}$};
    \foreach \x in {10, 4, 5, 6, 7, 8, 9} \draw (a2)--(a\x);
    \foreach \x in {11, 10} \draw (a1)--(a\x);
    \foreach \x in {8} \draw[dashed] (a6)--(a\x);
    \foreach \x in {3, 1, 2, 12, 8, 9, 10, 11} \draw[dashed] (a5)--(a\x);
    \draw[red] (a2)--(a5);
\end{tikzpicture}  &  \begin{tikzpicture}[rotate=105]
    \foreach \x in {0,1,...,11} \draw (30*\x:3)--(30*\x+30:3);
    \foreach \x in {1,...,11,12} \coordinate (a\x) at (30-30*\x:3);
    \foreach \x in {12,11} \draw (a\x) node[left]{$B_{\x}$};
    \foreach \x in {4, 3, 6} \draw (a\x) node[right]{$B_{\x}$};
    \draw (a8) node[below]{$B_{8}$};
    \draw (a1) node[above]{$A$};
    \draw (a2) node[above]{$B$};
    \draw (a9) node[left]{$\bar{B}$};
    \draw (a10) node[left]{$\bar{A}$};
    \draw (a7) node[below]{$S_{k}$};
    \draw (a5) node[right]{$S_{0}$};
    \foreach \x in {11, 5, 6, 7, 8, 9, 10} \draw (a1)--(a\x);
    \foreach \x in {5, 4} \draw (a2)--(a\x);
    \foreach \x in {8, 7} \draw[dashed] (a9)--(a\x);
    \foreach \x in {7, 1, 2, 3, 4, 5, 6, 12} \draw[dashed] (a10)--(a\x);
    \draw[red] (a1)--(a10);
\end{tikzpicture}\\
   \text{Case (3c)}  & \text{Case (3d)} 
\end{array}
$$
\begin{enumerate}
    
\item[(3e)] $2<t<s<s'$, i.e., $AB\leqslant \bar{B}\bar{A}\leqslant S_{0}< S_{k}$. Denote the boundary paths $\mathbf{S}_{1}=\{B_{3},\ldots,B_{t-2}\},$
$ \mathbf{S}_{2}=\{B_{t+1},\ldots, B_{s}\},$
$ \mathbf{S}_{3}=\{B_{s},\ldots, B_{s'}\}$, and $\mathbf{S}_{4}=\{B_{s'},\ldots, B_{n}\}$. We add an extra edge connecting $A\bar{A}$ and insert flat tetrahedra 
$\bar{B}B\mathbf{S}_{1},$
$ \bar{B}A\mathbf{S}_{4},$
$ S_{k}A\bar{B}\bar{A},$
$ \bar{A}A\mathbf{S}_{3},$
$ \bar{B}ABS_{0},$
$ AB\bar{B}\bar{A}$ and $\bar{A}B\mathbf{S}_{2}$, layered from the back to the front.

\item[(3f)] $2\leqslant s'<s<t$, i.e., $AB\leqslant S_{k}< S_{0}\leqslant \bar{B}\bar{A}$. Denote the boundary paths $\mathbf{S}_{1}=\{B_{3},\ldots, B_{s'}\},$   
$\mathbf{S}_{2}=\{B_{s'},\ldots, B_{s}\},$
$ \mathbf{S}_{3}=\{B_{s},\ldots, B_{t-2}\}$, and $\mathbf{S}_{4}=\{B_{t+1},\ldots, B_{n}\}$. we add an extra edge connecting $A\bar{A}$ and insert flat tetrahedra $\bar{A}A\mathbf{S}_{4},$
$ \bar{A}B\mathbf{S}_{1},$
$ \bar{B}ABS_{0},$
$ \bar{B}B\mathbf{S}_{2},$
$ S_{k}B\bar{B}\bar{A},$
$ AB\bar{B}\bar{A}$ and $\bar{B}A\mathbf{S}_{3}$, layered from the back to the front.
\end{enumerate}
$$
\begin{array}{cc}
\begin{tikzpicture}[rotate=105]
    \foreach \x in {0,1,...,11} \draw (30*\x:3)--(30*\x+30:3);
    \foreach \x in {1,...,11,12} \coordinate (a\x) at (30-30*\x:3);
    \foreach \x in {9,12,11} \draw (a\x) node[left]{$B_{\x}$};
    \foreach \x in {4, 3} \draw (a\x) node[right]{$B_{\x}$};
    \draw (a7) node[below]{$B_{7}$};
    \draw (a1) node[above]{$A$};
    \draw (a2) node[above]{$B$};
    \draw (a5) node[right]{$\bar{B}$};
    \draw (a6) node[right]{$\bar{A}$};
    \draw (a8) node[below]{$S_{0}$};
    \draw (a10) node[left]{$S_{k}$};
    \foreach \x in {8, 4, 5, 6, 7} \draw (a2)--(a\x);
    \foreach \x in {11, 8, 9, 10} \draw (a1)--(a\x);
    \foreach \x in {10, 8, 9} \draw[dashed] (a6)--(a\x);
    \foreach \x in {3, 1, 2, 12, 10, 11} \draw[dashed] (a5)--(a\x);
    \draw[red] (a2)--(a5);
    \draw[orange, densely dashed, line width=1pt] (a1)--(a6);
\end{tikzpicture}
&
\begin{tikzpicture}[rotate=105]
    \foreach \x in {0,1,...,11} \draw (30*\x:3)--(30*\x+30:3);
    \foreach \x in {1,...,11,12} \coordinate (a\x) at (30-30*\x:3);
    \foreach \x in {12,11} \draw (a\x) node[left]{$B_{\x}$};
    \foreach \x in {4, 3, 6} \draw (a\x) node[right]{$B_{\x}$};
    \draw (a8) node[below]{$B_{8}$};
    \draw (a1) node[above]{$A$};
    \draw (a2) node[above]{$B$};
    \draw (a9) node[left]{$\bar{B}$};
    \draw (a10) node[left]{$\bar{A}$};
    \draw (a7) node[below]{$S_{0}$};
    \draw (a5) node[right]{$S_{k}$};
    \foreach \x in {11, 7, 8, 9, 10} \draw (a1)--(a\x);
    \foreach \x in {7, 4, 5, 6} \draw (a1)--(a\x);
    \foreach \x in {8, 5, 6, 7} \draw[dashed] (a9)--(a\x);
    \foreach \x in {5, 1, 2, 3, 4, 12} \draw[dashed] (a10)--(a\x);
    \draw[red] (a1)--(a10);
    \draw[orange, densely dashed, line width=1pt] (a2)--(a9);
\end{tikzpicture}\\
\text{Case (3e)}&\text{Case (3f)}
\end{array}
$$

\begin{remark}\label{rem:edgecentral}
Observe that for each edge $B_{i}B_{i+1}$ of the polygon that is not adjacent to $AB$ and $\bar{A}\bar{B}$ (resp. $\bar{A}$ in Case 1) there is exactly one flat tetrahedra $C\bar{C}B_{i}B_{i+1}$ adjacent to it, where $C\in \{A, B\}$ and $\bar{C}\in \{\bar{A}, \bar{B}\}$ are chosen such that $CB_{i}B_{i+1}$ form a triangle in $F_{1}$ and $\bar{C}B_{i}B_{i+1}$ form a triangle in $F_{2}$. We call these flat tetrahedra \emph{sided} and the rest flat tetrahedra \emph{central}. Notice there are at most three central tetrahedra in all the subcases above.
\end{remark}

As in Case 2 and Case 3, suppose  $F_{1}$ and $F_{2}$ are glued together in the Kojima polyhedral decomposition. For the purpose of proving Proposition \ref{prop:movepolyhedracone}, we will need to construct another triangulation $\mathcal{T}_{\mathbf{S}}''$ of $M$ if the admissible path $\mathbf{S}$ is special in the following sense.

\begin{definition}\label{special}
    An admissible path $\mathbf S$ is \emph{special} if it satisfies one of the following conditions: 
\begin{enumerate}
    \item [(A)]  The cyclic order of the six distinguished vertices are as in the subcases (2c), (2d), (3c) or (3d). In addition, assume: (i) the edges $AB$ and $\bar{A}\bar{B}$ are disjoint, and (ii) $S_{0}=S_{k}$.
    
    \item [(B)] The cyclic order of the six distinguished vertices are as in the subcases (2e), (2f), (3e) or (3f). In addition, assume:  (i) $S_{0}$ and $S_{k}$ are adjacent, (ii) $AB$ and $\bar{A}\bar{B}$ are adjacent, and (iii) $S_{k}\notin AB$ and  $S_{0}\notin \bar{A}\bar{B}$.
\end{enumerate}
\end{definition}

When $\mathbf S$ is special, we define the ideal triangulation $\mathcal{T}_{\mathbf{S}}''$ of $M$ as follows. Under Condition (A),
the two central flat tetrahedra in $\mathcal{T}_{\mathbf{S}}'$ share a common triangle. We define $\mathcal{T}_{\mathbf{S}}''$ to be the triangulation obtained from $\mathcal{T}_{\mathbf{S}}'$ by performing the $2$-$3$ move with respect to the two central flat tetrahedra. By doing so, an extra edge is added, which is adjacent to exactly three flat tetrahedra, all central in the sense of Remark \ref{rem:edgecentral}. See Figure \ref{fig:alternativetrig1}. Under Condition (B), the orange edge (i.e., the edge belonging to neither $F_{1}$ nor $F_{2}$) is adjacent to exactly three flat tetrahedra, two of  which are central. We define $\mathcal{T}_{\mathbf{S}}''$ to be the triangulation obtained from $\mathcal{T}_{\mathbf{S}}'$ by performing the $3$-$2$ move with respect to the three tetrahedra. Notice that the two new tetrahedra are both sided in the sense of Remark \ref{rem:edgecentral}, and are both adjacent to $S_{0}S_{k}$. See Figure \ref{fig:alternativetrig2}.

\begin{figure}
    \centering
$
\begin{array}{ccc}
  \vcenter{\hbox{\begin{tikzpicture}[rotate=105]
    \foreach \x in {0,1,...,11} \draw (30*\x:3)--(30*\x+30:3);
    \foreach \x in {1,...,11,12} \coordinate (a\x) at (30-30*\x:3);
    \foreach \x in {12,10,11} \draw (a\x) node[left]{$B_{\x}$};
    \foreach \x in {4, 3} \draw (a\x) node[right]{$B_{\x}$};
    \draw (a7) node[below]{$B_{7}$};
    \draw (a8) node[below]{$B_{8}$};
    \draw (a1) node[above]{$A$};
    \draw (a2) node[above]{$B$};
    \draw (a5) node[right]{$\bar{A}$};
    \draw (a6) node[right]{$\bar{B}$};
    \draw (a9) node[left]{$S_{0}=S_{k}$};
    \foreach \x in {9, 4, 5, 6, 7, 8} \draw (a2)--(a\x);
    \foreach \x in {11, 9, 10} \draw (a1)--(a\x);
    \foreach \x in {9, 8} \draw[dashed] (a6)--(a\x);
    \foreach \x in {3, 1, 2, 12, 9, 10, 11} \draw[dashed] (a5)--(a\x);
    \draw[red] (a2)--(a5);
\end{tikzpicture}}}   &\rightarrow &\vcenter{\hbox{\begin{tikzpicture}[rotate=105]
    \foreach \x in {0,1,...,11} \draw (30*\x:3)--(30*\x+30:3);
    \foreach \x in {0,1,...,11} \coordinate (a\x) at (-30*\x:3);
    \foreach \x in {9,10,11} \draw (a\x) node[left]{$B_{\x}$};
    \foreach \x in {2, 3} \draw (a\x) node[right]{$B_{\x}$};
    \draw (a6) node[below]{$B_{6}$};
    \draw (a7) node[below]{$B_{7}$};
    \draw (a0) node[above]{$A$};
    \draw (a1) node[above]{$B$};
    \draw (a4) node[right]{$\bar{A}$};
    \draw (a5) node[right]{$\bar{B}$};
    \draw (a8) node[left]{$S_{0}=S_{k}$};
    \foreach \x in {3, 4, 5, 6, 7, 8} \draw (a1)--(a\x);
    \foreach \x in {8, 9, 10} \draw (a0)--(a\x);
    \foreach \x in {7, 8} \draw[dashed] (a5)--(a\x);
    \foreach \x in {0, 1, 2, 8, 9, 10, 11} \draw[dashed] (a4)--(a\x);
    \draw[red] (a1)--(a4);
    \draw[orange, densely dashed, line width=1pt] (a0)--(a5);
\end{tikzpicture}}} 
\end{array}
$
    \caption{The $2$-$3$ move from $\mathcal{T}_{\mathbf{S}}'$  to  $\mathcal{T}_{\mathbf{S}}''$ in subcase (2c): The two central flat tetrahedra in $\mathcal{T}_{\mathbf{S}}'$ are $AB\bar{A}S_{0}$ and $\bar{A}\bar{B}BS_{0}$ sharing  a common face $BS_{0}\bar{A}$. Performing a $2$-$3$ move to the two tetrahedra, one obtain three new tetrahedra $AB\bar{A}\bar{B}, A\bar{B}S_{0}A$ and $A\bar{B}\bar{A}S_{0}$, which are all central in the sense of Remark \ref{rem:edgecentral}.} 
    \label{fig:alternativetrig1}
\end{figure}

\begin{figure}
    \centering
$
\begin{array}{ccc}
    \vcenter{\hbox{\begin{tikzpicture}[rotate=105]
    \foreach \x in {0,1,...,11} \draw (30*\x:3)--(30*\x+30:3);
    \foreach \x in {1,...,11,12} \coordinate (a\x) at (30-30*\x:3);
    \foreach \x in {12,10,11} \draw (a\x) node[left]{$B_{\x}$};
    \foreach \x in {6, 4, 5} \draw (a\x) node[right]{$B_{\x}$};
    \draw (a7) node[below]{$B_{7}$};
    \draw (a1) node[above]{$A$};
    \draw (a2) node[above]{$B=\bar{A}$};
    \draw (a3) node[right]{$\bar{B}$};
    \draw (a8) node[below]{$S_{0}$};
    \draw (a9) node[left]{$S_{k}$};
    \foreach \x in {8, 4, 5, 6, 7} \draw (a2)--(a\x);
    \foreach \x in {11, 8, 9, 10} \draw (a1)--(a\x);
    \foreach \x in {9, 5, 6, 7, 8} \draw[dashed] (a3)--(a\x);
    \foreach \x in {8, 9, 10, 11} \draw[dashed] (a2)--(a\x);
    \draw[orange, densely dashed, line width=1pt] (a1)--(a3);
\end{tikzpicture}}} &\rightarrow & \vcenter{\hbox{\begin{tikzpicture}[rotate=105]
    \foreach \x in {0,1,...,11} \draw (30*\x:3)--(30*\x+30:3);
    \foreach \x in {1,...,11,12} \coordinate (a\x) at (30-30*\x:3);
    \foreach \x in {12,10,11} \draw (a\x) node[left]{$B_{\x}$};
    \foreach \x in {6, 4, 5} \draw (a\x) node[right]{$B_{\x}$};
    \draw (a7) node[below]{$B_{7}$};
    \draw (a1) node[above]{$A$};
    \draw (a2) node[above]{$B=\bar{A}$};
    \draw (a3) node[right]{$\bar{B}$};
    \draw (a8) node[below]{$S_{0}$};
    \draw (a9) node[left]{$S_{k}$};
    \foreach \x in {8, 4, 5, 6, 7} \draw (a2)--(a\x);
    \foreach \x in {11, 8, 9, 10} \draw (a1)--(a\x);
    \foreach \x in {9, 5, 6, 7, 8} \draw[dashed] (a3)--(a\x);
    \foreach \x in {12, 9, 10, 11} \draw[dashed] (a2)--(a\x);
\end{tikzpicture}}} 
\end{array}
$
    \caption{The $3$-$2$ move from  $\mathcal{T}_{\mathbf{S}}'$   to $\mathcal{T}_{\mathbf{S}}''$ in Case (2e): The three flat tetrahedra in $\mathcal{T}_{\mathbf{S}}'$  $A\bar{B}$ adjacent to $A\bar{B}$ are $A\bar{B}S_{0}B, A\bar{B}S_{k}B$ and $A\bar{B}S_{0}S_{k}$. Performing a $3$-$2$ move to the three tetrahedra, one obtains two new tetrahedra $ABS_{0}S_{k}$ and $B\bar{B}S_{0}S_{k}$, which are both sided in the sense of Remark \ref{rem:edgecentral}, and both are adjacent to $S_{0}S_{k}$.}
    \label{fig:alternativetrig2}
\end{figure}

\subsubsection{Step 2: $\mathcal{T}_{\mathbf{S}}'$ and $\mathcal{T}_{\mathbf{S}}''$ support angle structures}\label{subsec:step2}
\begin{proposition}\label{prop:anglestructure}For each admissible path $\mathbf{S} \subset \mathbf{B}$, the triangulations $\mathcal{T}_{\mathbf{S}}'$ and $\mathcal{T}_{\mathbf{S}}''$ support angle structures.
\end{proposition}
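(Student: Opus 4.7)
The plan is to verify the three conditions of deformability listed before Proposition~\ref{DF} for both $\mathcal{T}_{\mathbf{S}}$ and $\mathcal{T}_{\mathbf{S}}'$, and then invoke Proposition~\ref{DF} to conclude that an angle structure exists. Since every tetrahedron in $\mathcal{T}_{\mathbf{S}}$ and $\mathcal{T}_{\mathbf{S}}'$ is the straightening of a geometric tetrahedron (either hyperideal from the subdivision of some $P_i$, or flat from the layered insertion between identified faces of the Kojima decomposition), Condition~(1) is immediate. Condition~(2) also follows from the construction: the triangulations refine the Kojima polyhedral decomposition of the hyperbolic $3$-manifold $M$, so the dihedral angles add up around each edge to the cone angle $2\pi$ coming from the smooth hyperbolic structure on the interior.

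The bulk of the work is to verify Condition~(3): every edge of $\mathcal{T}_{\mathbf{S}}$ is adjacent to at least one non-flat (i.e., hyperideal) tetrahedron. I would classify the edges into several types and handle them separately. (i) Edges that appear in some hyperideal tetrahedron of $A\mathbf{B}_L^{\mathbf{S}} \cup AB\mathbf{S} \cup B\mathbf{B}_R^{\mathbf{S}}$, or of the corresponding subdivisions of polyhedra $P_i \neq P_u$, are automatically adjacent to a hyperideal tetrahedron. In particular, the edge $AB$ is adjacent to the hyperideal tetrahedra of $AB\mathbf{S}$ by admissibility of $\mathbf{S}$ (Definition~\ref{def:sep}). (ii) Edges lying in the interior of a polygonal face $F_{ij}$ of $P_i$ with $(i,j)\neq (u,1),(u,2)$ are inherited from the Kojima construction and are adjacent to hyperideal tetrahedra on both sides exactly as in the proof of Proposition~\ref{KA}. (iii) The new edges coming from the subdivisions of $F_1$ and $F_2$ (i.e., edges of the form $AS_k$, $BS_0$, $AB_i$, $BB_i$) are each adjacent to a hyperideal tetrahedron in the cone $A\mathbf{B}_L^{\mathbf{S}}\cup AB\mathbf{S}\cup B\mathbf{B}_R^{\mathbf{S}}$ on one side of the face. (iv) The edges added by inserting layered flat tetrahedra (central and side edges, in the sense of Section~\ref{sec:2.8}) are by construction shared between a stack of flat tetrahedra and a hyperideal tetrahedron in the interior of the adjacent polyhedron, exactly as in the standard Kojima construction.

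The only case that requires extra care is the orange edge $A\bar{B}$ (respectively $A\bar{A}$) introduced in Cases (2e), (2f), (3e), and (3f). This edge is not inherited from any polygonal face, but rather is created to resolve a combinatorial obstruction when the six distinguished vertices $A,B,\bar A,\bar B, S_0, S_k$ lie in an unfavourable cyclic order. Here I would observe that, by inspecting the layered diagrams in these four cases, the orange edge is adjacent to exactly three flat tetrahedra arranged as a $3$-$2$ configuration, and the third face of each of these three flats lies on $F_1$ or $F_2$; consequently, on the other side of $F_1\cup F_2$ the orange edge is adjacent to hyperideal tetrahedra from the $A\mathbf{B}_L^{\mathbf{S}}$, $AB\mathbf{S}$, or $B\mathbf{B}_R^{\mathbf{S}}$ cone. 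Thus Condition~(3) holds and $\mathcal{T}_{\mathbf{S}}$ is deformable.

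For $\mathcal{T}_{\mathbf{S}}'$, I would argue as follows. In the case (A) where $S_0=S_k$, the $2$-$3$ move replaces two central flat tetrahedra by three new flat tetrahedra sharing a new edge (the orange edge of the figure in the definition of $\mathcal{T}_{\mathbf{S}}'$); this new edge is adjacent to the hyperideal tetrahedra of $A\mathbf{B}_L^{\mathbf{S}}$ and $B\mathbf{B}_R^{\mathbf{S}}$ on the far side of $F_1$, so Condition~(3) is preserved. In the case (B) where the orange edge of $\mathcal{T}_{\mathbf{S}}$ is removed by a $3$-$2$ move, the two new tetrahedra are adjacent to $S_0S_k$, which in turn is adjacent to hyperideal tetrahedra in the interior of $P_u$, so Condition~(3) again holds. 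All other edges are unchanged by these moves, so their adjacency with hyperideal tetrahedra is inherited from $\mathcal{T}_{\mathbf{S}}$. I expect the main technical obstacle to lie in the careful bookkeeping of the degenerate sub-cases (e.g., when $S_0$ coincides with $\bar A$ or when the orange edge already belongs to $F_1$), where one must check that the degenerate tetrahedra removed from the list in the preceding remark do not destroy the required adjacency, rather than in any genuinely new geometric input.
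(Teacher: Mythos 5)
Your strategy of verifying deformability and invoking Proposition~\ref{DF} works for the cases without an orange edge (namely $\mathcal{T}_{\mathbf{S}}$ in Cases (1), (2a)--(2d), (3a)--(3d), and $\mathcal{T}_{\mathbf{S}}'$ obtained by a $3$-$2$ move under Condition~(B)), and this is exactly what the paper does there. However, there is a genuine gap in your treatment of the orange edge $A\bar{B}$ (resp.\ $A\bar{A}$) in Cases (2e), (2f), (3e), (3f). You assert that ``on the other side of $F_1\cup F_2$ the orange edge is adjacent to hyperideal tetrahedra from the $A\mathbf{B}_L^{\mathbf{S}}$, $AB\mathbf{S}$, or $B\mathbf{B}_R^{\mathbf{S}}$ cone.'' This is false: the orange edge by construction belongs to neither $F_1$ nor $F_2$ and lies entirely inside the stack of layered flat tetrahedra inserted between the two identified faces. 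Every hyperideal tetrahedron of the triangulation has all of its edges in the $1$-skeleton of the straightened cone decompositions of the polyhedra $P_i$, so none of them can have the orange edge as an edge. Consequently the orange edge is adjacent \emph{only} to flat tetrahedra, Condition~(3) of deformability fails, and Proposition~\ref{DF} cannot be applied to $\mathcal{T}_{\mathbf{S}}$ in these cases (the same objection applies to your argument for $\mathcal{T}_{\mathbf{S}}'$ under Condition~(A), where the $2$-$3$ move creates a new orange edge adjacent to three central flat tetrahedra only).

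The paper closes this gap not by finding an adjacent hyperideal tetrahedron but by an explicit angle deformation tailored to the orange edge: it splits the flat tetrahedra adjacent to $A\bar{B}$ into the set $\mathcal{T}^{\pi}_{A\bar{B}}$ (exactly two tetrahedra meeting $A\bar{B}$ along a $\pi$-angle) and the nonempty set $\mathcal{T}^{0}_{A\bar{B}}$ (those meeting it along a $0$-angle), perturbs the former by $\epsilon_1$ and compensates on the latter by $\frac{6\epsilon_1}{|\mathcal{T}^{0}_{A\bar{B}}|}$ so that the cone angle at $A\bar{B}$ stays exactly $2\pi$, and only then runs the generic redistribution of Proposition~\ref{KA} on the remaining edges. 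The essential input, which your proposal misses, is that the orange edge has at least one adjacent $0$-dihedral angle over which the compensation can be spread; without this extra argument the proof is incomplete precisely in the cases where the orange edge appears.
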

\begin{proof}
For the straightening of $\mathcal{T}_{\mathbf{S}}'$ in the subcases (1), (2a)-(2d), (3a)-(3d) and $\mathcal{T}_{\mathbf{S}}''$ in the subcases (2e), (2f), (3e) and (3f), all the edges are adjacent to at least one hyperideal tetrahedra, and hence are deformable. By Proposition \ref{DF}, these triangulations  support angle structures.

Next we show that the triangulation $\mathcal{T}_{\mathbf{S}}'$ in the subcase  (2e) supports angle structures; and the proof for the rest of the subcases goes identically, under the same crucial condition that the orange edge is the only edge that is adjacent to no hyperideal tetrahedra, and it is adjacent to at least one $0$-dihedral angle.

Let $T^{0}_{A\bar{B}}=\{A\bar{B}\mathbf{S}_{3}, A\bar{A}B\bar{B}\}$  be the set of flat tetrahedra in the straightening of  $\mathcal T_{\mathbf S}'$ adjacent to $A\bar{B}$ at an edge of dihedral angle $0,$ which is non-empty.
For an edge $e$ of $\mathcal T_{\mathbf S}'$, let $m(e)$, $n(e)$ and $k(e)$ respectively be the numbers of the $0$-dihedral angles around $e$, the $\pi$-dihedral angles around $e$, and the dihedral angles in $(0, \pi)$ around $e$, that do not belong to the tetrahedra in $T^{0}_{A\bar{B}}$; and let $m'(e)$ and $ n'(e)$ respectively be the numbers of the $0$-dihedral angles around $e$ and the $\pi$-dihedral angles around $e$ that belong to some tetrahedron in $T^{0}_{A\bar{B}}.$ 

In below the three steps,  we construct an angle structure on $(M,\mathcal T_{\mathbf S}')$ by deforming these dihedral angles into $(0, \pi)$ while keeping the cone angles unchanged. Let $\epsilon>0$ be sufficiently small.

\begin{enumerate}[Step (I)]

 \item  
    For each $\pi$-dihedral angle  belonging to the tetrahedra in $T^{0}_{A\bar{B}}$, we deform it to $\pi-\frac{13\epsilon}{|T^{0}_{A\bar{B}}|};$ and for each $0$-dihedral angle belonging to the tetrahedra in $T^{0}_{A\bar{B}}$, we deform it to $\frac{6\epsilon}{|T^{0}_{A\bar{B}}|}$.

    \item For the rest of the $\pi$-dihedral angles in $\mathcal T_{\mathbf S}'$, we deform them to $\pi-3\epsilon,$ and for the rest of the $0$-dihedral angles in $\mathcal T_{\mathbf S}'$, we deform them to $\epsilon$. 

    \item For each dihedral angle around $e$ with value $\alpha\in (0, \pi)$ not coming from the previous steps,  we deform it to 
    $$\alpha+\frac{\epsilon}{k(e)}\bigg(3 n(e)-m(e)+\frac{13n'(e)}{|\mathcal{T}_{A\bar{B}}^{0}|}-\frac{6m'(e)}{|\mathcal{T}_{A\bar{B}}^{0}|}\bigg).$$
\end{enumerate}
Then as $\epsilon$ is sufficiently small, after Step (I) all the tetrahedra in $T^0_{A\bar B}$ become hyperideal, after Step (II) all the other flat tetrahedra become hyperideal, and after Step (III) all the originally hyperideal tetrahedra remain hyperideal. Steps (I) and (II) are also designed to make sure that after the deformation the cone angle at the edge $A\bar B$ remains $2\pi,$ as there are exactly two $\pi$-dihedral angles adjacent to $A\bar B$ respectively coming from the tetrahedra $A\bar{B}BS_{0}$ and $A\bar{B}S_{k}\bar{A}$; and Step (III) is designed  to make sure that after the deformation the cone angles at all the other edges remain $2\pi.$ Therefore, $\mathcal{T}_{\mathbf{S}}'$ supports an angle structure.
\end{proof}

\subsubsection{Step 3: Connecting $\mathcal{T}$ and $\mathcal{T}'$ by intermediate triangulations }\label{subsec:step3}

We define a partial order $\prec$ on the set of all admissible paths.  For two admissible paths $\mathbf{S}^*$ and $\mathbf{S}$, we say that $\mathbf{S}^{*}\prec \mathbf{S}$ if
$\mathbf{B}_{L}^{\mathbf{S}^{*}}\subset
\mathbf{B}_{L}^{\mathbf{S}}$. Note that $\mathbf{S}_{max}'$ is the unique maximal element  and $\mathbf{S}_{min}'$ is the  unique minimal element in this order, and $\mathcal{T}_{\mathbf{S}_{max}}'=\mathcal{T}$ and $\mathcal{T}_{\mathbf{S}_{min}}'=\mathcal{T}'$. 

For an admissible path $\mathbf S,$ we let $\mathbf T_{\mathbf S}=\{\mathcal T_{\mathbf S}',\mathcal T_{\mathbf S}''\}$ if $\mathbf S$ is special, and let $\mathbf T_{\mathbf S}=\{\mathcal T_{\mathbf S}'\}$ is otherwise. 

\begin{proposition}\label{prop:order}
For an admissible path $\mathbf{S}$ such that $\mathbf{S}\ne \mathbf{S}_{min}$, there exists an $\mathbf{S}^{*}$ such that: 
\begin{enumerate}[(1)]
\item $\mathbf{S}^{*}\prec \mathbf{S}$.
\item One can choose a triangulation $\mathcal T_{\mathbf S}$ from $\mathbf T_{\mathbf S}$ and a triangulation $\mathcal T_{\mathbf S^*}$ from $\mathbf T_{\mathbf S^*}$ so that they differ by a single $2$-$3$, $3$-$2$ or $4$-$4$ move.
\end{enumerate}
\end{proposition}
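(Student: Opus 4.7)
The plan is to show that for any non-minimal admissible path $\mathbf{S}$, one can perform a local ``flip'' on $\mathbf{S}$ that transfers a single triangle from $\mathbf{B}_R^{\mathbf{S}}$ to $\mathbf{B}_L^{\mathbf{S}}$, producing a smaller admissible path $\mathbf{S}^*\prec\mathbf{S}$ whose associated triangulation differs from that of $\mathbf{S}$ by exactly one Pachner move. Write $\mathbf{S}=S_0S_1\cdots S_k$ with $S_0\in F_1$ and $S_k\in F_2$. Since $\mathbf{S}\neq\mathbf{S}_{min}$, the region $\mathbf{B}_R^{\mathbf{S}}$ is non-empty, and every edge $S_iS_{i+1}$ of $\mathbf{S}$ bounds a unique triangle $S_iS_{i+1}v_i$ in $\mathbf{B}_R^{\mathbf{S}}$. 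Because $\mathbf{S}$ is a simple path in the triangulated disk $\mathbf{B}$, each $v_i$ is either (a) an adjacent vertex $S_{i-1}$ or $S_{i+2}$ on $\mathbf{S}$, or (b) a vertex not on $\mathbf{S}$. I would define $\mathbf{S}^*$ by, respectively, contracting two consecutive edges of $\mathbf{S}$ into one (in case (a)) or inserting $v_i$ between $S_i$ and $S_{i+1}$ (in case (b)); in both cases $\mathbf{B}_L^{\mathbf{S}^*}=\mathbf{B}_L^{\mathbf{S}}\cup\{T\}$ for the flipped triangle $T$, so $\mathbf{S}^*\prec\mathbf{S}$, and admissibility of $\mathbf{S}^*$ follows from the convexity of $P_u$ together with admissibility of $\mathbf{S}$.

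Next I would verify that the transition of triangulations is a single Pachner move. In the \emph{interior case}, where the flip involves neither $S_0$ nor $S_k$, the triangulations of $F_1$ and $F_2$ are unchanged, and hence the layered flat tetrahedra attached to them remain intact. The only change occurs among the three hyperideal tetrahedra $ABS_{i-1}S_i$, $ABS_iS_{i+1}$, $BS_{i-1}S_iS_{i+1}$, which get replaced by the two hyperideal tetrahedra $ABS_{i-1}S_{i+1}$, $AS_{i-1}S_iS_{i+1}$ (in case (a), a $3$-$2$ move) or the reverse (in case (b), a $2$-$3$ move). In the \emph{boundary case}, where the flip moves $S_0\in F_1$ or $S_k\in F_2$, the triangulation of the corresponding polygon changes, triggering a rearrangement of the layered flat tetrahedra inserted between that polygon and its identified counterpart. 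I would perform a case analysis following the sub-cases (1a)--(3f) of Step 1, choosing in each instance a specific flip so that the composite effect on hyperideal and flat tetrahedra reduces to a single Pachner move.

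The main obstacle is the exhaustive boundary case analysis, in particular when $F_1$ and $F_2$ are identified with each other so that a single flip reconfigures flat tetrahedra on both sides simultaneously. This is precisely the situation handled by the alternative triangulations $\mathcal{T}'_{\mathbf{S}}$ introduced for special paths $\mathbf{S}$. The strategy is: when the flip transitions from a configuration of type (2c)--(3f) to another, one first applies the $2$-$3$ or $3$-$2$ move defining $\mathcal{T}'_{\mathbf{S}}$ to rearrange the two central flat tetrahedra, and then observes that $\mathcal{T}'_{\mathbf{S}}$ and (possibly a primed version of) $\mathcal{T}_{\mathbf{S}^*}$ differ exactly by the $4$-$4$ move shown in Figure~\ref{fig:4-4move}, applied to two pairs of flat tetrahedra sharing the edge $S_0S_k$ and the edge $\bar A\bar B$ (or $AB$ and $\bar A\bar B$, depending on the sub-case). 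In all remaining sub-cases, a single $2$-$3$ or $3$-$2$ move suffices. Together with Proposition~\ref{prop:anglestructure}, which guarantees that $\mathcal{T}_{\mathbf{S}}$, $\mathcal{T}_{\mathbf{S}^*}$, and their primed counterparts all admit angle structures, this establishes conditions (1) and (2) of the proposition and completes the proof.
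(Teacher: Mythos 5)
The central gap is your claim that ``admissibility of $\mathbf{S}^*$ follows from the convexity of $P_u$ together with admissibility of $\mathbf{S}$.'' That is exactly the hard point of the argument, and it is false as stated: not every triangle adjacent to $\mathbf{S}$ can be flipped across it while keeping the straightened complex $A\mathbf{B}_L^{\mathbf{S}^*}\cup AB\mathbf{S}^*\cup B\mathbf{B}_R^{\mathbf{S}^*}$ a genuine (non-degenerate, non-overlapping) hyperideal triangulation. In your insertion case the new tetrahedra arise from a $2$-$3$ move only if the geodesic $BE_i$ meets $A\mathbf{S}$ in the interior of the specific face $AS_iS_{i+1}$; convexity of $P_u$ only guarantees that $BE_i$ stays inside $P_u$, not where it crosses $A\mathbf{S}$. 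The paper devotes Lemmas \ref{lem:fixpoint}, \ref{lem:intersection} and \ref{lem:hypothesis} to locating an index $i$ where the flip is geometrically realizable, via a combinatorial fixed-point argument applied to the function recording which face of $A\mathbf{S}$ each geodesic $BE_j$ crosses. Your proposal also omits the configuration where $BE_i$ crosses $A\mathbf{S}$ exactly along the edge $AS_i$ (forcing $E_{i-1}=E_i$ coplanar with $A$, $B$, $S_i$); there the transition is a $4$-$4$ move among hyperideal tetrahedra (condition (3) of Lemma \ref{prop:induction}), which your interior case, listing only $2$-$3$ and $3$-$2$ moves, cannot produce.

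There are also two smaller but real errors. First, the orientation of the partial order: $\mathbf{S}^*\prec\mathbf{S}$ means $\mathbf{B}_L^{\mathbf{S}^*}\subset\mathbf{B}_L^{\mathbf{S}}$, so the flipped triangle must come from $\mathbf{B}_L^{\mathbf{S}}$ (the $A$-side); you take it from $\mathbf{B}_R^{\mathbf{S}}$ and write $\mathbf{B}_L^{\mathbf{S}^*}=\mathbf{B}_L^{\mathbf{S}}\cup\{T\}$, which yields $\mathbf{S}\prec\mathbf{S}^*$, the wrong direction for the descent. Relatedly, $\mathbf{S}\ne\mathbf{S}_{min}$ guarantees $\mathbf{B}_L^{\mathbf{S}}\ne\emptyset$, not $\mathbf{B}_R^{\mathbf{S}}\ne\emptyset$ (the latter fails for $\mathbf{S}=\mathbf{S}_{max}$, which is precisely where the induction must begin). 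Second, the boundary analysis---especially when $F_1$ and $F_2$ are identified, so that one must decide whether to compare against $\mathcal{T}_{\mathbf{S}}$ or $\mathcal{T}'_{\mathbf{S}}$ and likewise for $\mathbf{S}^*$---constitutes the bulk of the paper's proof; the selection criterion actually used (matching the number of inserted edges belonging to neither polygon) and the resulting sub-case-by-sub-case verification are not recoverable from the sketch you give.
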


To prove Proposition \ref{prop:order}, we need the following lemmas.

\begin{lemma}\label{lem:fixpoint}
Let $[n]\doteq\{0, 1, \ldots, n\}$ and $f:[n]\to [n]$ be a map such that there is no $i\in [n]$ satisfying $f(i)>i $ and $f(i+1)< i+1$. Then $f$ has a fixed point. Moreover, if in addition there is some $k\in[n]$ such that $f(k)<k$, then there is a fixed point $j$ of $f$ with $j<k$.
\end{lemma}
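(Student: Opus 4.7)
The plan is to reformulate the hypothesis in terms of the auxiliary function $g\colon[n]\to\mathbb Z$ defined by $g(i)=f(i)-i$, so that fixed points of $f$ correspond precisely to zeros of $g$. The hypothesis then reads: there is no $i$ with $g(i)\geqslant 1$ and $g(i+1)\leqslant -1$, whose contrapositive gives the key one-step descent property
\[
g(i+1)\leqslant -1 \;\Longrightarrow\; g(i)\leqslant 0.
\]

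First I would establish the existence of a fixed point. Since $f$ takes values in $[n]$, one has $g(0)=f(0)\geqslant 0$ and $g(n)=f(n)-n\leqslant 0$. If $g(0)=0$ or $g(n)=0$ we are done, so assume $g(0)\geqslant 1$ and $g(n)\leqslant -1$. Let $i^*$ be the largest index with $g(i^*)\geqslant 1$; then $i^*<n$ and $g(i^*+1)\leqslant 0$ by maximality. The hypothesis rules out $g(i^*+1)\leqslant -1$, forcing $g(i^*+1)=0$, which is a fixed point. This is essentially a discrete intermediate value argument adapted to the ``no large downward jump'' condition.

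For the ``moreover'' part, suppose $f(k)<k$, i.e., $g(k)\leqslant -1$. I would run the descent property iteratively: from $g(k)\leqslant -1$ I obtain $g(k-1)\leqslant 0$; if $g(k-1)=0$ then $j=k-1<k$ is the desired fixed point, and otherwise $g(k-1)\leqslant -1$, allowing the argument to repeat with $k-1$ in place of $k$. This descending chain cannot reach index $0$ without terminating, because $g(0)\geqslant 0$, so eventually some $g(j)=0$ with $0\leqslant j<k$ must appear.

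The main (very mild) obstacle is bookkeeping: one must be careful that the descent never ``skips over'' zero, which is exactly what the contrapositive of the hypothesis guarantees, and one must verify the boundary values $g(0)\geqslant 0$, $g(n)\leqslant 0$ are used at the correct moments. Otherwise the lemma is a straightforward monotone-crossing statement for an integer-valued function on a finite interval.
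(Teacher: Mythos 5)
Your proof is correct and follows essentially the same route as the paper: the "moreover" part is the same one-step descent (the paper phrases it as a proof by contradiction, deriving $f(j)<j$ for all $j\leqslant k$ and contradicting $f(0)\geqslant 0$, which is exactly the contrapositive of your iterated descent). The only cosmetic difference is that the paper deduces the existence statement from the "moreover" clause applied at $k=n$, whereas you give a short separate argument via the largest index with $g(i^*)\geqslant 1$; both are valid.
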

\begin{proof}
We first observe that the second statement implies the first statement as follows: If $f(n)=n$, then $n$ is a fixed point. If otherwise, then  the second statement guarantees a fixed point in $[0, n-1]$. 

Therefore, it suffices to prove the second statement; and we use contradiction. Assume that $f$ has no fixed point in between $0$ and $k-1$. Then $f(k)< k$ implies that $f(k-1) < k$ due to the assumption of the lemma with $i=k-1$. Since $f(k-1)\neq k-1$ by assumption, we have $f(k-1)<k-1$. Apply this reasoning inductively, one obtains $f(j)<j$ for all $0\leqslant j\leqslant k$. In particular, $f(0)<0$, which is a contradiction.
\end{proof}

\begin{lemma}\label{lem:intersection}
Let $\mathbf{S}=\{S_{m}, S_{m+1},\ldots, S_{n}\}$ be a path in $\mathbf{B}$ satisfying the following conditions:
\begin{enumerate}[(1)]
    \item  $A\mathbf{S}$ divides $P_{u}$ into two or more disconnected components, and 
    \item   $AB\mathbf{S}$ consists of hyperideal tetrahedra whose interior are disjoint from each other.
\end{enumerate}
Suppose $B'$ is a vertex of $P_u$ lying in $\mathbf{B}$ such that $B$ and $B'$ lie in the different components of $P_u\setminus A\mathbf{S}$. Then the geodesic $BB'$ intersects $A\mathbf{S}$ exactly once in $P_u$.
\end{lemma}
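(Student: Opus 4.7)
\textbf{Proof proposal for Lemma \ref{lem:intersection}.} The plan is to combine a convexity argument with a visibility (link) analysis from the vertex $B$. First, since $P_{u}$ is a convex hyperideal polyhedron in $\mathbb{H}^{3}$ and both $B,B'\in P_{u}$, the geodesic segment $BB'$ lies entirely in $P_{u}$. Condition (1) implies $A\mathbf{S}$ separates $P_{u}$, and by hypothesis $B,B'$ are in different components, so any continuous path in $P_u$ from $B$ to $B'$ must meet $A\mathbf{S}$; in particular the geodesic $BB'$ intersects $A\mathbf{S}$ at least once. The bulk of the proof is therefore to show uniqueness, i.e.\ that this intersection point is unique.

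For uniqueness I would study the visibility map
\[
\mathrm{vis}_{B}:A\mathbf{S}\longrightarrow S^{2}_{B},\qquad P\mapsto \text{unit tangent vector at $B$ pointing toward $P$},
\]
where $S^{2}_{B}$ is the unit tangent sphere at $B$. The link of $B$ in the 3-complex $AB\mathbf{S}=\bigcup_{i}T_{i}$ with $T_{i}=ABS_{i}S_{i+1}$ is naturally a 2-complex in $S^{2}_{B}$: each $T_{i}$ contributes a spherical triangle with vertices at the directions $BA,BS_{i},BS_{i+1}$. Condition (2) that the $T_{i}$ are hyperideal and pairwise non-overlapping is exactly the statement that these spherical triangles have pairwise disjoint interiors and so assemble into an embedded ``fan'' $\mathcal F\subset S^{2}_{B}$, sharing the common vertex $BA$. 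The image of $\mathrm{vis}_{B}$ is precisely $\mathcal F$, and on the open interior of each triangle $AS_{i}S_{i+1}$ the map $\mathrm{vis}_{B}$ is a diffeomorphism onto the interior of the corresponding spherical triangle, because $AS_{i}S_{i+1}$ spans a totally geodesic plane that does not contain $B$ (non-degeneracy of the hyperideal tetrahedron $T_{i}$).

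The heart of the argument is to promote this to the claim that $\mathrm{vis}_{B}:A\mathbf{S}\to\mathcal F$ is a bijection. Suppose $P_{1}\in AS_{i}S_{i+1}$ and $P_{2}\in AS_{j}S_{j+1}$ satisfy $\mathrm{vis}_{B}(P_{1})=\mathrm{vis}_{B}(P_{2})$; then $B,P_{1},P_{2}$ are collinear on a geodesic ray from $B$, with (WLOG) $|BP_{1}|\leqslant |BP_{2}|$. By convexity of the hyperbolic tetrahedra, $[B,P_{1}]\subset T_{i}$ and $[B,P_{1}]\subset [B,P_{2}]\subset T_{j}$, so $[B,P_{1}]\subset T_{i}\cap T_{j}$. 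In the case $i=j$, the geodesic ray from $B$ meets the totally geodesic plane of $AS_{i}S_{i+1}$ in a single point, forcing $P_{1}=P_{2}$. In the case $i\neq j$, the interior of the segment $[B,P_{1}]$ lies in the interior of $T_{i}$; by condition (2) it cannot also lie in the interior of $T_{j}$, so it must lie on a face of $T_{j}$ containing $B$, which in turn forces $P_{1}$ onto the 1-skeleton of $A\mathbf{S}$. These codimension-$1$ degenerate configurations are exactly the points where the interiors of two spherical triangles of $\mathcal F$ meet along a shared edge, and on such shared edges $\mathrm{vis}_{B}$ remains injective by continuity and the embeddedness of $\mathcal F$.

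Applying the bijectivity of $\mathrm{vis}_{B}:A\mathbf{S}\to\mathcal F$ to the direction $v\in S^{2}_{B}$ determined by the geodesic $BB'$ (which lies in $\mathcal F$ since we already know $BB'\cap A\mathbf{S}\neq\emptyset$) yields a unique preimage in $A\mathbf{S}$, giving the desired single intersection point. The main obstacle I foresee is the careful treatment of the codimension-$1$ degenerate configurations above, where $P_{1}$ or $P_{2}$ lands on an edge shared between two triangles of $A\mathbf{S}$; this requires using the non-degeneracy of the hyperideal tetrahedra $T_{i}$ together with the fact that $\mathcal F$ is embedded (not just immersed) to rule out coincidences beyond $P_{1}=P_{2}$.
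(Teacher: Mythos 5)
Your argument is correct and is essentially the paper's: existence follows from separation plus convexity of $P_u$, and your visibility-map analysis is a detailed unpacking of the paper's one-line observation that $BA\mathbf{S}$ is an (embedded) cone with apex $B$ and base $A\mathbf{S}$, so a geodesic through the apex meets the base at most once.
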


\begin{proof}
Since $B$ and $B'$ lie in different connected components of $P_u\setminus A\mathbf{S}$, the geodesic segment $BB'$ intersects $A\mathbf{S}$ due to the convexity of $P_u$. On the other hand, by (2), $AB\mathbf{S}$ can be considered as a cone with base $A\mathbf{S}$ and apex $B$, and a geodesic segment passing through the apex of a cone can only intersect the base at most once.
\end{proof}

\begin{lemma}\label{lem:hypothesis}
Let $\mathbf{S}=\{S_{0}, \ldots, S_{k}\}\subset \mathbf{B}$ be an admissible path, and fix an $i\in\{0,\dots, k\}.$ Let $F$ be a totally geodesic plane that passes through the vertices $A$, $B$ and $S_{i+1}.$ We call the component of $P_u \setminus F$ containing $S_{0}$ the back component, and call the other component of $P_u \setminus F$ the front component. (See Figure \ref{fig:lemma518}.) Suppose $E_{i}$ and $E_{i+1}$ are two vertices of $\mathbf{B}_{L}^{\mathbf{S}}$ such that $E_{i}S_{i}S_{i+1}$ and $E_{i+1}S_{i+1}S_{i+2}$ are two triangles of $\mathbf{B}$.   Then the following two scenarios cannot occur simultaneously:
\begin{enumerate}[(1)]
\item $BE_{i}$ intersects $A\mathbf{S}$ in the front component.
\item $BE_{i+1}$ intersects $A\mathbf{S}$ in the back component.
\end{enumerate}
\end{lemma}

\begin{proof}
By the admissibility of $\mathbf{S}$, the totally geodesic plane $F$ also divides $A\mathbf{S}$ into exactly two components, because otherwise there will be a geodesic ray starting from $B$ and lying in $F$ that passes through $A\mathbf{S}$ at least twice, which contradicts the fact that a geodesic ray starting from the apex $B$ of the cone $AB\mathbf{S}$ intersects the base $A\mathbf{S}$ at most once.
We also have that the  vertices $\{S_{0},\ldots, S_{i}\}$ lie in the back component and $\{S_{i+2},\ldots, S_{k}\}$ lie in the front component. 
For the sake of contradiction, assume otherwise that $BE_{i}$ intersects $A\mathbf{S}$ in the front component, and $BE_{i+1}$ intersects $A\mathbf{S}$ in the back component. Then by the convexity of the front and the back components, $E_i$ lies in the front component and $E_{i+1}$ lies in the back component. As a consequence,  $E_i$ and $S_{i+2}$ are in one component, and $E_{i+1}$ and $S_{i}$ are in the other component of $P_u\setminus F.$ Now consider the tetrahedra $AE_iS_iS_{i+1}$ and $AE_{i+1}S_{i+1}S_{i+2}$. Since $E_i$ and $S_i$ are on different sides of $F$, the interior of the geodesic segment $E_iS_i$ intersects $F$, and hence the interior of the tetrahedron 
$AE_iS_iS_{i+1}$ intersects $F$ at a triangle. Similarly, the interior of the tetrahedron $AE_{i+1}S_{i+1}S_{i+2}$ also intersects $F$ at a triangle the two triangles share a common side $AS_{i+1}$, and lie one the same side of $AS_{i+1}$ that is opposite to $B$. Then they intersect, contradicting that the tetrahdra $AE_iS_iS_{i+1}$ and $AE_{i+1}S_{i+1}S_{i+2}$ only intersect at the boundary. 
\end{proof}

\begin{figure}[ht] 
\centering
$\vcenter{\hbox{
\begin{tikzpicture}
\coordinate (A) at (-1, 3);
\coordinate (B) at (1.5, 3);
\draw (A) node[above]{$A$};
\draw (B) node[above]{$B$};
\draw[line width=0.8 pt] (A)--(B);
\draw[red, line width=0.8 pt] (0, -2)node[below]{$\mathbf{S}$}node[right]{\tiny$S_k$}--(0.5, -1.5)node[right]{\tiny$S_{i+2}$}--(-1, -0.5) node[left]{\tiny$S_{i+1}$}--(0.7, 0.5)node[right]{\tiny$S_{i}$}--(-1.2, 1.5)--(0, 2)node[right]{\tiny$S_0$};
\coordinate (ei) at (-1.5, 0.3);
\draw[blue, line width=0.8 pt] (0.7, 0.5)--(ei)node[left]{\tiny{$E_{i}$}}--(-1, -0.5);
\coordinate (ei1) at (-1.5, -1);
\draw[blue, line width= 0.8 pt] (0.5, -1.5)--(ei1)node[left]{\tiny{$E_{i+1}$}}--(-1, -0.5);
\path[fill=green, opacity=0.3](A)--(-1, -0.5)--(1.5,-0.5)--(B);
\draw (0.5, 2.6) node[green!50!black]{$F$};
\draw (-3, 0) node {$\mathbf{B}_{L}^{\mathbf{S}}$};
\draw (2, 0) node {$\mathbf{B}_{R}^{\mathbf{S}}$};
\end{tikzpicture}
}}$
\caption{Illustration of Lemma~\ref{lem:hypothesis}. The edges $S_{i}S_{i+1}$ and $ S_{i+1}S_{i+2}$ are respectively adjacent to the triangles $E_{i}S_{i}S_{i+1}$ and $E_{i+1}S_{i+1}S_{i+2}$ in $\mathbf{B}_{L}^{\mathbf{S}}$. The geodesic plane $F$ can be the one passing through the points $A, B$ and $S_{i+1}$.}
\label{fig:lemma518}
\end{figure}

\begin{lemma}\label{prop:induction}
Let $\mathbf{S}=\{S_{0},\ldots S_{k}\}$ be an admissible path such that $\mathbf{S}\ne \mathbf{S}_{min}$. Then there exists a admissible path $\mathbf{S}^{*}\prec \mathbf{S}$ satisfying one of the following conditions:
\begin{enumerate}[(1)]
    \item $\mathbf{S}^{*}=\{S_{0},\ldots, S_{i-1}, S_{i+1},\ldots, S_{k}\}$ for some $0<i<k$. In addition, $S_{i-1}S_{i}S_{i+1}$ form a triangle in $\mathbf{B}_{L}^{\mathbf{S}}$. (See Figure \ref{fig:lemma519} (a).)
    \item $\mathbf{S}^{*}=\{S_{0},\ldots, S_{i}, E_i, S_{i+1},\ldots, S_{k}\}$ for some $i>0$ and some vertex $E_i$. In addition, $S_{i}S_{i+1}E_i$ form a triangle in $\mathbf{B}_{L}^{\mathbf{S}}$ and $BE_{i}$ passes the interior of the triangle $AS_{i}S_{i+1}$. (See Figure \ref{fig:lemma519} (b).)
    \item $\mathbf{S}^{*}=\{S_{0},\ldots, S_{i-1}, E_{i},S_{i+1},\ldots, S_{k}\}$  for some $i>0$ and some vertex $E_i$. In addition, $S_{i-1}S_{i}E_{i}$ and $S_{i+1}S_{i}E_{i}$ are triangles in $\mathbf{B}_{L}^{\mathbf{S}}$, and the four vertices $\{S_{i}, E_i, A, B\}$ lie in the same totally geodesic plane. (See Figure \ref{fig:lemma519} (c).)
    \item $\mathbf{S}^{*}=\{E_0, S_{1},\dots, S_{k}\}$ for some vertex $E_0$. In addition, $E_0S_{0}S_{1}$ form a triangle in $\mathbf{B}_{L}^{\mathbf{S}}$  that is adjacent to $\partial \mathbf{B}\cap\partial F_1$. (See Figure \ref{fig:lemma519} (d).)
    \item $\mathbf{S}^{*}=\{S_{0},\dots,S_{k-1}, E_{k-1}\}$ for some vertex $E_{k-1}$. In addition, $E_{k-1}S_{k}S_{k-1}$ form a triangle in $\mathbf{B}_{L}^{\mathbf{S}}$ that is adjacent to $\partial \mathbf{B}\cap\partial F_2$. (See Figure \ref{fig:lemma519} (e).)
\end{enumerate}
\end{lemma}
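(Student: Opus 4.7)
The plan is to study, for each interior edge $S_iS_{i+1}$ of the path $\mathbf{S}$, the unique vertex $E_i\in \mathbf{B}_L^{\mathbf{S}}$ forming a triangle $E_iS_iS_{i+1}$ in $\mathbf{B}_L^{\mathbf{S}}$, and to track where the geodesic segment $BE_i$ crosses the cone $A\mathbf{S}$. Since $\mathbf{S}\neq \mathbf{S}_{min}$, the subcomplex $\mathbf{B}_L^{\mathbf{S}}$ is non-empty, so such an $E_i$ exists for every $i$ whose edge $S_iS_{i+1}$ lies in the interior of $\mathbf{B}$.

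I would first dispose of the easy cases. If $E_i=S_{i-1}$ or $E_i=S_{i+2}$ for some $i$, then three consecutive vertices of $\mathbf{S}$ bound a triangle in $\mathbf{B}_L^{\mathbf{S}}$, and removing the middle vertex yields an admissible path $\mathbf{S}^*$ fitting Case~(1). If instead the edge $S_0S_1$ (resp.\ $S_{k-1}S_k$) lies on $\partial \mathbf{B}$, the non-emptiness of $\mathbf{B}_L^{\mathbf{S}}$ together with the fact that $\mathbf{S}_{min}\subset \partial\mathbf{B}$ produces an adjacent vertex $E_0$ (resp.\ $E_{k-1}$) whose triangle with $S_0S_1$ (resp.\ $S_{k-1}S_k$) lies in $\mathbf{B}_L^{\mathbf{S}}$ and is adjacent to $\partial \mathbf{B}$; this gives Case~(4) (resp.\ Case~(5)).

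Assuming from now on that none of Cases~(1),~(4),~(5) applies, by Lemma~\ref{lem:intersection} each segment $BE_i$ meets $A\mathbf{S}$ in a unique point $X_i$, so I can define a function $f:\{0,\dots,k-1\}\to\{0,\dots,k-1\}$ by letting $f(i)=j$ whenever $X_i$ lies in the closed triangle $AS_jS_{j+1}$, with a fixed convention when $X_i$ lies on an interior edge $AS_j$. The content of Lemma~\ref{lem:hypothesis} is precisely that $f$ never satisfies both $f(i)\geqslant i+1$ and $f(i+1)\leqslant i$ simultaneously (cf.\ Figure~\ref{fig:lemma518b}). Provided $f$ is not the identity (in which case Case~(2) is already available at every index), Lemma~\ref{lem:fixpoint} produces a fixed point $i^*$ with $f(i^*)=i^*$.

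Finally, I will analyze the structure at the fixed point $i^*$. If $X_{i^*}$ lies in the relative interior of the triangle $AS_{i^*}S_{i^*+1}$, then $BE_{i^*}$ crosses this triangle transversally, and inserting $E_{i^*}$ between $S_{i^*}$ and $S_{i^*+1}$ yields an admissible path $\mathbf{S}^*\prec\mathbf{S}$ of Case~(2), with the corresponding change of triangulation $\mathcal{T}_{\mathbf{S}}\to\mathcal{T}_{\mathbf{S}^*}$ realized by a single $2$-$3$ move. Otherwise $X_{i^*}$ lies on an edge $AS_{i^*}$ or $AS_{i^*+1}$, which forces the four vertices $\{A,B,S_{i^*},E_{i^*}\}$ (or the analogous quadruple) to be coplanar; the exclusion of Case~(1) then forces both $S_{i^*-1}S_{i^*}E_{i^*}$ and $S_{i^*+1}S_{i^*}E_{i^*}$ to be triangles of $\mathbf{B}_L^{\mathbf{S}}$, placing us in Case~(3) via the swap $S_{i^*}\leftrightarrow E_{i^*}$, which geometrically corresponds to a $4$-$4$ configuration. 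The main technical difficulty I anticipate is verifying that each newly constructed $\mathbf{S}^*$ is admissible in the sense of Definition~\ref{def:sep}, i.e.\ that the straightened complex $A\mathbf{B}_L^{\mathbf{S}^*}\cup AB\mathbf{S}^*\cup B\mathbf{B}_R^{\mathbf{S}^*}$ assembles into a non-overlapping hyperideal triangulation of $P_u$; this will rest on re-applying Lemma~\ref{lem:intersection} together with the no-crossing conclusion of Lemma~\ref{lem:hypothesis} to the modified path.
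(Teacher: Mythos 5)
Your skeleton matches the paper's: associate to each path-edge $S_iS_{i+1}$ the third vertex $E_i$ of the adjacent triangle in $\mathbf{B}_L^{\mathbf{S}}$, track where $BE_i$ meets the cone $A\mathbf{S}$, and combine Lemma~\ref{lem:hypothesis} with the combinatorial fixed-point Lemma~\ref{lem:fixpoint}. However, there are three genuine gaps. First, your function $f$ is declared on all of $\{0,\dots,k-1\}$, but $E_j$ only exists when $S_jS_{j+1}$ is \emph{not} an edge of $\mathbf{S}_{min}$; the correct domain is $J=\{j: S_jS_{j+1}\notin\mathbf{S}_{min}\}$, which need not be an interval, so Lemma~\ref{lem:fixpoint} cannot be applied as stated. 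The paper decomposes $J$ into maximal intervals $J_s$ and uses Lemma~\ref{lem:intersection} to show $f(J_s)\subset J_s$ before invoking the fixed-point lemma on each piece; without that step your argument does not produce a fixed point.

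Second, and most seriously, your derivation of Case~(3) is not a proof. When $BE_{i^*}$ meets $A\mathbf{S}$ along the edge $AS_{i^*}$, coplanarity of $\{A,B,S_{i^*},E_{i^*}\}$ is immediate, but Case~(3) additionally requires that \emph{both} $S_{i^*-1}S_{i^*}E_{i^*}$ and $S_{i^*}S_{i^*+1}E_{i^*}$ be triangles of $\mathbf{B}_L^{\mathbf{S}}$ with the \emph{same} third vertex, i.e.\ $E_{i^*-1}=E_{i^*}$. This does not follow from ``the exclusion of Case~(1)''; it is the heart of the matter. The paper proves it by taking $i^*$ to be the \emph{smallest} fixed point (so $f(i^*-1)=i^*-1$ and $f(i^*-1)<i^*-1$ are both excluded, the latter via the second clause of Lemma~\ref{lem:fixpoint}), then running the no-crossing argument of Lemma~\ref{lem:hypothesis} against a plane $F$ perturbed off $AS_{i^*}$ toward $S_{i^*+1}$ to force $BE_{i^*-1}$ to also cross $AS_{i^*}$, whence $E_{i^*-1}$ and $E_{i^*}$ lie on the same side of $AS_{i^*}$ in the plane through $A,B,S_{i^*}$ and must coincide. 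Third, your up-front disposal of Cases~(4) and~(5) by the condition ``$S_0S_1\subset\partial\mathbf{B}$'' is not the right trigger: those cases arise from the same fixed-point analysis, namely when the fixed point sits at $i=0$ (resp.\ $i=k-1$) and $BE_0$ meets $AS_0$ (resp.\ $BE_{k-1}$ meets $AS_k$), which geometrically means $E_0$ is the neighbour of $S_0$ on $\partial F_1$. As written, your early-case split neither covers these configurations nor is disjoint from the fixed-point analysis that follows.
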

\begin{figure}[ht]
    \centering
\begin{subfigure}[b]{0.30\textwidth}
    \centering
    \scalebox{0.8}{$\vcenter{\hbox{
\begin{tikzpicture}
\draw[red, line width=2pt] (0, -2)node[right]{$\mathbf{S}$}--(0.5, -1.5)--(-1, -0.5) node[left]{\tiny$S_{i+1}$}--(0.7, 0.5)node[right]{\tiny$S_{i}$}--(-1.1, 1.5)node[left]{\tiny$S_{i-1}$}--(0, 2)node[above]{$\vdots$};
\draw[blue, line width=1pt] (0, -2)node[left]{$\mathbf{S}^{*}$}--(0.5, -1.5)--(-1, -0.5)--(-1.1, 1.5)--(0, 2);
\draw (-3, 0) node {$\mathbf{B}_{L}^{\mathbf{S}}$};
\draw (2, 0) node {$\mathbf{B}_{R}^{\mathbf{S}}$};
\end{tikzpicture}
}}$}
\caption{Condition (1)}
\label{fig:519case1}
\end{subfigure}
\hfill
\begin{subfigure}[b]{0.30\textwidth}
\centering
\scalebox{0.8}{$\vcenter{\hbox{
\begin{tikzpicture}
\draw[red, line width=2pt] (0, -2)node[right]{$\mathbf{S}$}--(0.5, -1.5)node[right]{\tiny$S_{i+2}$}--(-1, -0.5) node[left]{\tiny$S_{i+1}$}--(0.7, 0.5)node[right]{\tiny$S_{i}$}--(-1.1, 1.5)--(0, 2)node[above]{$\vdots$};
\coordinate (ei) at (-1.5, 0.3);
\draw(ei)node[left, blue]{\tiny{$E_{i}$}};
\coordinate (ei1) at (-1.5, -1);
\draw[blue, line width=1pt] (0, -2)node[left]{$\mathbf{S}^{*}$}--(0.5, -1.5)--(-1, -0.5)--(ei)--(0.7, 0.5)--(-1.1, 1.5)--(0, 2);
\draw (-3, 0) node {$\mathbf{B}_{L}^{\mathbf{S}}$};
\draw (2, 0) node {$\mathbf{B}_{R}^{\mathbf{S}}$};
\end{tikzpicture}
}}$}
        \caption{Condition (2)}
    \label{fig:519case2}
\end{subfigure}
\hfill
\begin{subfigure}[b]{0.30\textwidth}
\centering
    \scalebox{0.8}{$\vcenter{\hbox{
\begin{tikzpicture}
\draw[red, line width=2pt] (0, -2)node[right]{$\mathbf{S}$}--(0.5, -1.5)node[right]{\tiny$S_{i+2}$}--(-1, -0.5) node[left]{\tiny$S_{i+1}$}--(0.7, 0.5)node[right]{\tiny$S_{i}$}--(-1.1, 1.5)node[left]{\tiny$S_{i-1}$}--(0, 2)node[above]{$\vdots$};
\coordinate (ei) at (-1.5, 0.3);
\draw(ei)node[left, blue]{\tiny{$E_{i}$}};
\coordinate (ei1) at (-1.5, -1);
\draw[blue, line width=1pt] (0, -2)node[left]{$\mathbf{S}^{*}$}--(0.5, -1.5)--(-1, -0.5)--(ei)--(-1.1, 1.5)--(0, 2);
\draw (ei)--(0.7, 0.5);
\draw (-3, 0) node {$\mathbf{B}_{L}^{\mathbf{S}}$};
\draw (2, 0) node {$\mathbf{B}_{R}^{\mathbf{S}}$};
\end{tikzpicture}
}}$}
        \caption{Condition (3)}
    \label{fig:519case3}
\end{subfigure}
\begin{subfigure}[b]{0.48\textwidth}
\centering
\scalebox{0.8}{$\vcenter{\hbox{
\begin{tikzpicture}
\draw[red, line width=2pt] (0, -2)node[right]{$\mathbf{S}$}--(0.5, -1.5)--(-1, -0.5) node[left]{\tiny$S_{3}$}--(0.7, 0.5)node[right]{\tiny$S_{2}$}--(-1.1, 1.5)node[left]{\tiny$S_{1}$}--(0, 2)node[above]{\tiny$S_{0}$};
\draw[blue, line width=1pt] (0, -2)node[left]{$\mathbf{S}^{*}$}--(0.5, -1.5)--(-1, -0.5)--(0.7, 0.5)--(-1.1, 1.5)--(-1, 2)node[above]{\tiny$E_{0}$};
\draw[red] (0, -1.8)node[below]{$\vdots$};
\draw[green!60!black] (-3, 2)--(2, 2)node[right]{$\partial F_{1}$};
\draw (-3, 0) node {$\mathbf{B}_{L}^{\mathbf{S}}$};
\draw (2, 0) node {$\mathbf{B}_{R}^{\mathbf{S}}$};
\end{tikzpicture}
}}$
}
    \caption{Condition (4)}
    \label{fig:519case4} 
\end{subfigure}
\hfill
\begin{subfigure}[b]{0.48\textwidth}
\centering
\scalebox{0.8}{$\vcenter{\hbox{
\begin{tikzpicture}
\draw[red, line width=2pt] (0, -2)node[below]{\tiny$S_{k}$}--(0.5, -1.5)node[right]{\tiny$S_{k-1}$}--(-1, -0.5) node[left]{\tiny$S_{k-2}$}--(0.7, 0.5)node[right]{\tiny$S_{k-3}$}--(-1.1, 1.5)--(0, 2)node[right]{$\mathbf{S}$};
\draw[blue, line width=1pt] (-1, -2)node[below]{\tiny$E_{k-1}$}--(0.5, -1.5)--(-1, -0.5)--(0.7, 0.5)--(-1.1, 1.5)--(0, 2);
\draw[red] (0, 2)node[above]{$\vdots$};
\draw[green!60!black] (-3, -2)--(2, -2)node[right]{$\partial F_{2}$};
\draw (-3, 0) node {$\mathbf{B}_{L}^{\mathbf{S}}$};
\draw (2, 0) node {$\mathbf{B}_{R}^{\mathbf{S}}$};
\end{tikzpicture}
}}$}
    \caption{Condition (5)}
    \label{fig:519case5}
\end{subfigure}
    \caption{The relative positions of $\mathbf{S}$ and $\mathbf{S}^{*}$ described in Lemma \ref{prop:induction}: The red curve denotes the admissible path $\mathbf{S}$, the blue curve denotes the admissible path $\mathbf{S}^{*}$, and the green lines in (d) and (e) respectively denote the intersection $\partial \mathbf{B}\cap \partial F_{1}$ and $\partial \mathbf{B}\cap \partial F_{2}$.}
    \label{fig:lemma519} 
\end{figure}
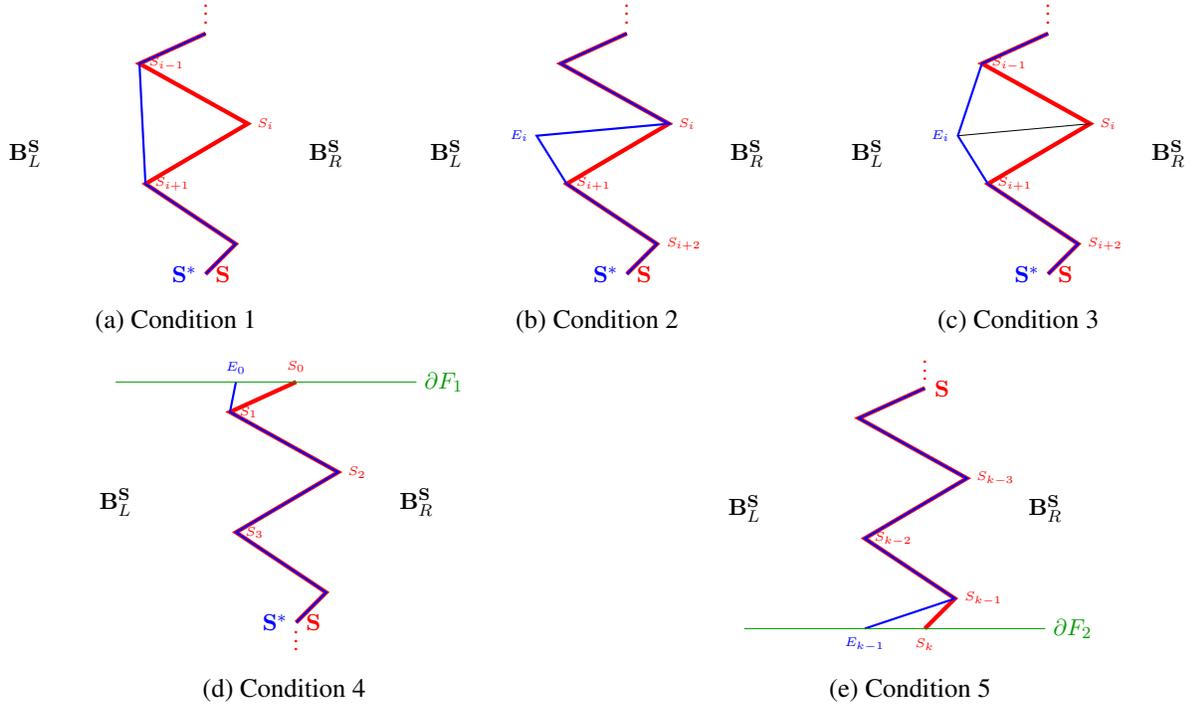
\begin{proof}

For a given $\mathbf S,$ we by a case-by-case discussion  provide an algorithm to find the path $\mathbf{S}^{*}$ satisfying  one of the conditions in the lemma, and prove that $\mathbf{S}^{*}$ is admissible. 

\begin{enumerate}
    \item  [(i)] If there exist three consecutive vertices $S_{i-1}, S_{i}, S_{i+1}$ in $\mathbf{S}$ that form a triangle in $\mathbf{B}_{L}^{\mathbf{S}}$, then the geodesic segment $AS_i$ must intersect $BS_{i-1}S_i,$ as $AS_i$ lies in the union of $AS_{i-1}S_iS_{i+1},$ $ABS_{i-1}S_i$ and $ABS_iS_{i+1}$. In this case, we set $\mathbf{S}^{*}\doteq\{S_{0},\ldots S_{i-1}, S_{i+1}, \ldots S_{k}\}$, which satisfies Condition (1). Since $A\mathbf{B}_{L}^{\mathbf{S}^{*}}\cup AB\mathbf{S}^{*} \cup B\mathbf{B}_{R}^{\mathbf{S}^{*}}$ 
is obtained from $A\mathbf{B}_{L}^{\mathbf{S}}\cup AB\mathbf{S} \cup B\mathbf{B}_{R}^{\mathbf{S}}$ by performing the $3$-$2$ move at above the union of the three tetrahedra around $AS_{i}$, the two new tetrahedra $ABS_{i-1}S_{i+1}$ and $BS_{i-1}S_iS_{i+1}$ created in this move are hyperideal. Therefore, the path $\mathbf{S}^*$ is admissible.
  \end{enumerate}

Now, suppose that there are no three consecutive vertices 
of $\mathbf{S}$ that form a triangle in $\mathbf{B}_{L}^{\mathbf{S}}$. 
Let $J=\{ 0\leqslant j< k\ |\ S_{j}S_{j+1} \textrm{ is not in } \mathbf{S}_{min} \}$.
Then for each $j\in J$, there exists a unique vertex $E_{j}\in \mathbf{B}_{L}^{\mathbf{S}}$ such that $E_{j}S_{j}S_{j+1}$ form  a triangle in $\partial P_{u}$.
We will define a function $f$ on $J$ so that Lemma \ref{lem:fixpoint} applies as follows: 
Write $J$ as the disjoint union $\sqcup_{1\leqslant s\leqslant r} J_s$ where each $J_s$ is a maximal subset of consecutive integers. Then each $J_s$ is of the form $\{m,\dots,n-1\}\cap J$ where $S_{m-1}S_{m} $ and $ S_{n}S_{n+1}$ belong to $\mathbf{S}_{min}$ while $S_{j}\notin \mathbf{S}_{min}$ for all $j$ with $m<j<n$. For $j\in J_s$, by applying Lemma \ref{lem:intersection} to the path $\mathbf S$ and $B'=E_j$, 
we see that  the geodesic segment $BE_{j}$ intersects $A\mathbf{S}$ exactly once.
If  $BE_j\cap A\mathbf{S}\in AS_{k}$, then we set $f(j)=k-1$. Otherwise, there exists a unique $k$ such that 
$BE_j\cap A\mathbf{S}\in AS_{k}S_{k+1}\setminus AS_{k+1}$; and in this case we set $f(j)=k$.
Now we claim that $f(j) \in J_s$, as a consequence of which we have  $f(J_s)\subset J_s$. Indeed, by applying Lemma \ref{lem:intersection} to $\{S_{m}, S_{m+1},\ldots, S_{n}\}$ and $B'=E_j$, we see that  $BE_j\cap A\mathbf{S}$ lies in $\cup_{m-1<j<n} AS_{j}S_{j+1}$.
Therefore $f(j)\in \{m,\dots,n\}$, and $f(j)=n$ if and only if $BE_j\cap A\mathbf{S}\in AS_{n}$. 
Since $AS_{n}$ lies on the boundary of $P_{u}$ and the interior of $BE_{j}$ lies in the interior of $P_{u}$, the edges $AS_{n}$ and $BE_{j}$ do  not intersect. Therefore $f(j)\in \{m,\dots,n-1\}=J_s$, and hence $f(J_s)\subset J_s$. Next, we show that on each $J_s$ the function $f$ satisfies the assumption in  Lemma \ref{lem:fixpoint}. 
Suppose $j$ and $j+1$ belong to $J_s$. 
Let $F$  be the geodesic plane in $\mathbb H^3$ passing  through the vertices $A, B$ and $S_{i+1}$. Then $F$ satisfies the condition in Lemma \ref{lem:hypothesis}. Then by applying Lemma \ref{lem:hypothesis} to $E_j$ and $E_{j+1}$, we have that the following two scenarios cannot occur simultaneously: (1)   $BE_{j}$ intersects $A\mathbf{S}$ at the front component,  and (2) $BE_{j+1}$ intersects $A\mathbf{S}$ at the back component. The first  scenario is equivalent to $f(j)>j$ and the second scenario is equivalent to $f(j+1)<j+1$. Therefore, $f$ satisfies the assumption in Lemma \ref{lem:fixpoint}.
Guaranteed to exist by Lemma \ref{lem:fixpoint}, we let $i$ be the smallest fixed point of $f.$

We are ready to construct the admissible path $\mathbf S^*$ in the following cases:
\begin{enumerate}
    \item  [(ii)]If $BE_{i}$ intersects $AS_{i}S_{i+1}$ in the interior, then we set $\mathbf{S}^{*}=\{S_{0},\ldots S_{i}, E_{i}, S_{i+1},\ldots S_{k}\}$, which satisfies Condition (2). 
   To show that $\mathbf{S}^{*}$ is admissible,  observe in this case that $A\mathbf{B}_{L}^{\mathbf{S}^{*}}\cup AB\mathbf{S}^{*} \cup B\mathbf{B}_{R}^{\mathbf{S}^{*}}$ is obtained from $A\mathbf{B}_{L}^{\mathbf{S}}\cup AB\mathbf{S} \cup B\mathbf{B}_{R}^{\mathbf{S}}$ by performing the  $2$-$3$ move to the union of $AE_iS_iS_{i+1}$ and $ABS_iS_{i+1}$ adding the edge $BE_{i}.$ Then the three new tetrahedra created in the move are hyperideal because $BE_{i}$ intersects in the interior of $AS_{i}S_{i+1}$. Therefore, the  path $\mathbf{S}^{*}$ is admissible.

  \item  [(iii)] If $0<i\leqslant k-1$ and the condition in (ii) above and the condition in (v) below do not occur, then $BE_{i}$ intersects $AS_{i}$.  We first prove that $BE_{i-1}$ also intersects $AS_{i}$. For the sake of contradiction, suppose that $BE_{i-1}$ intersects $A\mathbf{S}$ in $\cup_{0\leqslant j\leqslant i-1}AS_{j}S_{j+1} \setminus AS_{i}$, which means either $f(i-1)=i-1$ or $f(i-1)<i-1$. If $f(i-1)=i-1$, then it contradicts the hypothesis that $i$ is the first fixed point of $f$; and if $f(i-1)<i-1$, then by Lemma \ref{lem:fixpoint}, the function $f$ has  a fix point smaller than $i-1$, which contradicts the same hypothesis.
    For the sake of contradiction again,  suppose that $BE_{i-1}$ intersects $A\mathbf{S}$ in $\cup_{i\leqslant j\leqslant k}AS_{j}S_{j+1} \setminus AS_{i}$ at a point $X$. Then $X$ and $S_{i+1}$ lie in the same component of 
$A\mathbf{S}\setminus AS_i$. Let $F$ be the totally geodesic plane passing through the points $A, B$ and $S_{i}$. By the hypothesis, $F$ also passes the point $E_{i}$. As argued in the beginning of the proof of Lemma~\ref{lem:hypothesis}, $F$ divides $A\mathbf{S}$ into two components, with vertices 
    $\{S_{0},\dots, S_{i-1}\}$ lie in one component and $\{S_{i+1},\dots, S_{k}\}$ lie in the other component.
    As a consequence,  $S_{i-1}$ lies in one component of $P_u\setminus F,$ and $S_{i+1}$ and $X$ lie in the other component of  $P_u\setminus F.$ We call the component of $P_u\setminus F$ containing $S_{i+1}$ the front component, and call the other one the back component. Then the geodesic ray $BX\cap P_{u}$ lies in the front component, implying that $E_{i-1}\in BX$ also lies in the front component.
    Now, as $S_{i-1}$ lies on one side of $F$ (the back component) and $E_{i-1}$ lies on the other side of $F$ (the front component), the totally geodesic plane $F$ intersects the interior of the tetrahedron $AE_{i-1}S_{i-1}S_{i}.$ As both $E_i$ and the triangle $AE_{i-1}S_{i-1}$ lie in $A\mathbf{B}_{L}^{\mathbf{S}}$, the triangle $AE_iS_i\subset F$ and the intersection  $AE_{i-1}S_{i-1}S_{i}\cap F$ lie on the same side of $AS_i$ in $F,$ hence $AE_{i-1}S_{i-1}$ and the interior of $AE_{i-1}S_{i-1}S_{i}$ intersect. This further implies that the interior of the two tetrahedra $AE_{i-1}S_{i-1}S_{i}$ and $AE_{i}S_{i}S_{i+1}$ intersect, which is a contradiction. See Figure~\ref{fig:lemma518b}. Putting the two cases together, we have $BE_{i-1}$ intersects $A\mathbf{S}$ at $AS_{i}$ as claimed.
    
    \begin{figure}
\centering
$\vcenter{\hbox{
\begin{tikzpicture}
\coordinate (A) at (-1, 3);
\coordinate (B) at (1.5, 3);
\draw (A) node[above]{$A$};
\draw (B) node[above]{$B$};
\draw [line width=0.8 pt] (A)--(B);
\draw[red, line width=0.8 pt] (0, -2)node[below]{$\mathbf{S}$}node[right]{\tiny$S_k$}--(0.5, -1.5)node[right]{\tiny$S_{i+1}$}--(-1, -0.5) node[below]{\tiny$S_{i}$}--(0.7, 0.5)node[right]{\tiny$S_{i-1}$}--(-1.2, 1.5)--(0, 2)node[right]{\tiny$S_0$};
\coordinate (ei) at (-2, 0.5);
\draw[blue, line width=0.8 pt] (0.7, 0.5)--(ei)node[left]{\tiny{$E_{i-1}$}}--(-1, -0.5);
\draw[purple, line width=0.8 pt] (ei).. controls (-3, -2) and (1, -2) ..(B);
\draw[fill] (-0.15, -0.7) node[right]{\tiny$X$} circle(0.05);
\coordinate (ei1) at (-2, -0.5);
\draw[blue, line width=0.8 pt] (0.5, -1.5)--(ei1)node[left]{\tiny{$E_{i}$}}--(-1, -0.5);
\path[fill=green, opacity=0.3](A)--(ei1)--(-1, -0.5)--(1.5,-0.5)--(B);
\path[fill=purple, opacity=0.3](A)--(-1, -0.5)--(0.5, -1.5);
\draw (0.5, 2.6) node[green!50!black]{$F$};
\draw (-3, 0) node {$\mathbf{B}_{L}^{\mathbf{S}}$};
\draw (2, 0) node {$\mathbf{B}_{R}^{\mathbf{S}}$};
\end{tikzpicture}
}}$
\caption{Illustration of the proof of Lemma~\ref{prop:induction}, Case (iii).
The green face $F$ divides $P_{u}$ into two components, with $X=BE_{i-1}\cap A\mathbf{S}$ lying in the front component, and  $BE_{i}\cap AS_{i}$ lying in the back component. This contradicts Lemma~\ref{lem:hypothesis}, hence cannot occur.}
\label{fig:lemma518b}
\end{figure}

Next, since both  $BE_{i-1}$ and  $BE_{i}$ intersect $AS_{i}$, both $E_{i-1}$ and  $E_{i}$  lie on the totally geodesic plane passing through $A,$ $B$ and $S_{i}$; and within this plane, $E_{i-1}$ and  $E_{i}$ lie on the same side of the geodesic containing $AS_{i}$ as a segment.    
Therefore, this two faces  $AS_{i}E_{i-1}$ and $AS_{i}E_{i}$ coincide, with   $E_{i-1}=E_{i}$. We set $\mathbf{S}^{*}=\{S_{0},\ldots, S_{i-1},$ $ E_{i}, S_{i+1}, \ldots S_{k}\}$. 
   Since  $BE_i$ intersects $AS_{i}$ and $E_i=E_{i-1}$, we see that
    $\mathbf{S}^{*}$ satisfies Condition (3). Observe that $A\mathbf{B}_{L}^{\mathbf{S}^{*}}\cup AB\mathbf{S}^{*} \cup B\mathbf{B}_{R}^{\mathbf{S}^{*}}$ is obtained from $A\mathbf{B}_{L}^{\mathbf{S}}\cup AB\mathbf{S} \cup B\mathbf{B}_{R}^{\mathbf{S}}$ by performing the $4$-$4$ move  that removes the edge $AS_{i}$ and adds the edge $BE_{i}$. (See Figure~\ref{fig:lemma519} (c).) As $AS_{i}$ and $BE_{i}$ intersect in the interior of $P_u$, the $4$-$4$ move only involves hyperideal tetrahedra. Therefore, the path $\mathbf{S}^{*}$ is admissible.

\item [(iv)] If $i=0$ and the condition in (ii) does not occur,  then 
$BE_{0}$ intersects $AS_{0}$.  Hence $E_{0}$ is the vertex of $ F_{1}$ that is adjacent to $S_{0}$ and lies in $\mathbf{B}_{L}^{\mathbf{S}}$.  We set $\mathbf{S}^{*}\doteq\{E_{0}, S_{1}, \ldots, S_{k}\}$, which satisfies Condition (4).  Since $A\mathbf{B}_{L}^{\mathbf{S}^{*}}\cup AB\mathbf{S}^{*} \cup B\mathbf{B}_{R}^{\mathbf{S}^{*}}$ is obtained from $A\mathbf{B}_{L}^{\mathbf{S}}\cup AB\mathbf{S} \cup B\mathbf{B}_{R}^{\mathbf{S}}$  by first performing the $2$-$3$ move  that adds the edge $BE_{0},$ and then removing the only flat tetrahedron $ABE_{0}S_{0}$ in the three new tetrahedra created by the $2$-$3$ move, it is geometric. Therefore, the path $\mathbf{S}^{*}$ is admissible.

 \item [(v)] If $i=k-1$ and $BE_{k-1}$ intersects $AS_{k}$, then $E_{k-1}$ is the  vertex of $F_{2}$ that is adjacent to $S_{k}$ and lies in $\mathbf{B}_{L}^{\mathbf{S}}$.  We set $\mathbf{S}^{*}=\{S_{0},\ldots, S_{k-1},$
    $ E_{k-1}\}$, which satisfies Condition (5).  
    Since $A\mathbf{B}_{L}^{\mathbf{S}^{*}}\cup AB\mathbf{S}^{*} \cup B\mathbf{B}_{R}^{\mathbf{S}^{*}}$ is obtained from $A\mathbf{B}_{L}^{\mathbf{S}}\cup AB\mathbf{S} \cup B\mathbf{B}_{R}^{\mathbf{S}}$ by first performing the $2$-$3$ move   that adds the edge $BE_{k-1}$, and then removing the flat tetrahedron $ABE_{k-1}S_{k}$ in the three new tetrahedra created by the $2$-$3$ move,  it is geometric. Therefore, the path  $\mathbf{S}^{*}$ is admissible.\qedhere
\end{enumerate}
\end{proof}

\begin{proof}[Proof of Proposition \ref{prop:order}]
We first observe that in Cases (1), (2) and (3) of Lemma~\ref{prop:induction}, $\mathcal{T}_{\mathbf{S}^{*}}'$ and $\mathcal{T}_{\mathbf{S}}'$ 
share the same set of flat tetrahedra. Moreover,   $\mathcal{T}_{\mathbf{S}^{*}}'$ is obtained from $\mathcal{T}_{\mathbf{S}}'$ by a single $2$-$3$, $3$-$2$ or $4$-$4$ move with only hyperideal tetrahedra involved. Hence Proposition \ref{prop:order} holds for these Cases with $\mathcal{T}_{\mathbf{S}}=\mathcal{T}_{\mathbf{S}}'$ and $\mathcal{T}_{\mathbf{S}^{*}}=\mathcal{T}_{\mathbf{S}^{*}}'$.

We are left to consider the Cases (4) and (5) in Lemma \ref{prop:induction}.
Notice that these two cases are symmetric (see Figure~\ref{fig:lemma519} (d) and (e)); hence    without loss of generality, 
we can only focus on Case (4). 
In below Step I, we will choose the  triangulation $\mathcal T_{\mathbf S}$ from $\mathbf T_{\mathbf S}$; in Step II, we will according to the cases perform a $2$-$3$, $3$-$2$ or $4$-$4$ move to the triangulation $\mathcal T_{\mathbf S}$ to obtain a new triangulation $\mathcal T_{\mathbf S^*}$; and in Step III, we will show that the triangulation $\mathcal T_{\mathbf S^*}$ belongs to $\mathbf T_{\mathbf S^*}.$
\medskip

\noindent Step I. If $\mathbf{S}$ is not special, then we have no choice but let $\mathcal T_{\mathbf S}=\mathcal T_{\mathbf S}'.$

If $\mathbf{S}$ is special, then we first claim that  $\mathbf S^*$ is not special. Indeed, if $\mathbf S$ satisfies Condition (A) in Definition \ref{special}, then $S_{0}=S_{k}\ne E_{0}$, which implies that $\mathbf{S}^{*}$ does not satisfy condition  (A);  and $AB$ and $\bar{A}\bar{B}$ are not adjacent, which implies that $\mathbf{S}^{*}$ does not satisfy Condition (B). Also, if $\mathbf{S}$ satisfies Condition (B) in Definition \ref{special}, then $AB$ and $\bar{A}\bar{B}$ are adjacent, which implies that  $\mathbf{S}^{*}$ does not satisfy condition (A); and $S_{0}$ and $S_{k}$ are adjacent, in this case, $E_{0}$ can not be adjacent to $S_{k}$ as they both lie one $\partial F_1$ and are one different sides of $S_0$ (see Figure~\ref{fig:lemma519} (d)), which implies that  $\mathbf{S}^{*}$ does not satisfy Condition (B). Therefore, $\mathbf S^*$ is not special. In this case, there exists either zero or one edge in $\mathcal{T}_{\mathbf{S}^{*}}'$ that lies in between the polygons $F_{1}$ and $F_{2}$, and  belongs to neither $F_{1}$ nor $F_{2}.$ (Such edges are colored by orange in the illustrative diagrams in 
Subsection~\ref{subsec:step1}.)  By a case-by-case verification, we see that if $\mathbf{S}$ satisfies Condition (A), then $\mathcal{T}_{\mathbf{S}}'$ has one such orange edge and $\mathcal{T}_{\mathbf{S}}''$  has no orange edge, and if $\mathbf{S}$ satisfies Condition (B), then $\mathcal{T}_{\mathbf{S}}'$ has no orange edge and $\mathcal{T}_{\mathbf{S}}''$ has one orange edge.
In all scenarios, we choose from $\mathbf T_{\mathbf S}$ the unique triangulation $\mathcal T_{\mathbf S}$ that contains the same number of the orange edge as $\mathcal{T}_{\mathbf{S}^{*}}'$ does.
\medskip

\noindent  Step II.  We consider the following cases: 
\begin{enumerate}[(a)]
\item If the triangle $ABS_{0}$ is adjacent to two hyperideal tetrahedra, then we perform a $2$-$3$ move that adds the edge $BE_{0}$. See Figure \ref{fig:moveatboundary} (a).

\item If the triangle $ABS_{0}$ is adjacent to the tetrahedron $ABS_{0}E_{0}$, which is flat, then we perform a $3$-$2$ move that removes the edge $AS_{0}$. See Figure \ref{fig:moveatboundary} (b).

\item If the triangle $ABS_{0}$ is adjacent to a flat tetrahedron $ABS_{0}\bar{X}$ with $\bar{X}\in \{\bar{A}, \bar{B}\}$ and $\bar{X}\ne E_{0}$, then we perform a  $4$-$4$ move that removes the edge $AS_{0}$ and adds the edge  $BE_{0}$. See Figure \ref{fig:moveatboundary} (c).
\end{enumerate}

\begin{figure}[ht]
    \centering
\begin{subfigure}[b]{0.48\textwidth}
        \centering
\scalebox{0.7}{
$
\vcenter{\hbox{\begin{tikzpicture}[line width=0.8pt]
\coordinate (S') at (0, 0);
\coordinate (S) at (2, 0);
\coordinate (A) at (0, 4);
\coordinate (B) at (2, 4);
\coordinate (S1) at (-1.8, -1.4);
\draw (A)--(B);
\draw (S)--(B);
\draw[dashed] (A)--(S');
\draw[dashed] (S1)--(S');
\draw[dashed] (S')--(S);
\draw (S1)--(A);
\draw (S1)--(B);
\draw (S1)--(S);
\draw (S') node[left]{$E_0$};
\draw (S1) node[left]{$S_{1}$};
\draw (A) node[above]{$A$};
\draw (B) node[above]{$B$};
\draw (S) node[right]{$S_{0}$};
\draw[dashed] (A) to (S);
\end{tikzpicture}}}
\mathbf{\xrightarrow{2-3}}
\vcenter{\hbox{\begin{tikzpicture}[line width=0.8pt]
\coordinate (S') at (0, 0);
\coordinate (S) at (2, 0);
\coordinate (A) at (0, 4);
\coordinate (B) at (2, 4);
\coordinate (S1) at (-1.8, -1.4);
\draw (A)--(B);
\draw (S)--(B);
\draw[dashed] (A)--(S');
\draw[dashed] (S1)--(S');
\draw[dashed] (S')--(S);
\draw (S1)--(A);
\draw (S1)--(B);
\draw (S1)--(S);
\draw (S') node[left]{$E_0$};
\draw (S1) node[left]{$S_{1}$};
\draw (A) node[above]{$A$};
\draw (B) node[above]{$B$};
\draw (S) node[right]{$S_{0}$};
\draw[dashed](A) to (S);
\draw[dashed, purple, line width= 1.2pt] (B) to (S');
\end{tikzpicture}}}
$}
 \caption{{The} 2-3 move}
    \label{fig:23moveatboundary}
    \end{subfigure}
\hfill\quad
\begin{subfigure}[b]{0.48\textwidth}
        \centering
\scalebox{0.7}{$\vcenter{\hbox{\begin{tikzpicture}[line width=0.8pt]
\coordinate (S') at (0, 0);
\coordinate (S) at (2, 0);
\coordinate (A) at (0, 4);
\coordinate (B) at (2, 4);
\coordinate (S1) at (-1.8, -1.4);
\draw (A)--(B);
\draw (S)--(B);
\draw[dashed] (A)--(S');
\draw[dashed] (S1)--(S');
\draw[dashed] (S')--(S);
\draw (S1)--(A);
\draw (S1)--(B);
\draw (S1)--(S);
\draw (S') node[left]{$E_0$};
\draw (S1) node[left]{$S_{1}$};
\draw (A) node[above]{$A$};
\draw (B) node[above]{$B$};
\draw (S) node[right]{$S_{0}$};
\draw[dashed, purple, line width= 1.2pt] (A) to (S);
\draw[dashed] (B) to (S');
\end{tikzpicture}}}
\mathbf{\xrightarrow{3-2}}
\vcenter{\hbox{\begin{tikzpicture}[line width=0.8pt]
\coordinate (S') at (0, 0);
\coordinate (S) at (2, 0);
\coordinate (A) at (0, 4);
\coordinate (B) at (2, 4);
\coordinate (S1) at (-1.8, -1.4);
\draw (A)--(B);
\draw (S)--(B);
\draw[dashed] (A)--(S');
\draw[dashed] (S1)--(S');
\draw[dashed] (S')--(S);
\draw (S1)--(A);
\draw (S1)--(B);
\draw (S1)--(S);
\draw (S') node[left]{$E_0$};
\draw (S1) node[left]{$S_{1}$};
\draw (A) node[above]{$A$};
\draw (B) node[above]{$B$};
\draw (S) node[right]{$S_{0}$};
\draw[dashed] (B) to (S');
\end{tikzpicture}}}
$}
\caption{{The} 3-2 move}
\label{fig:32moveatboundary}
\end{subfigure}
\centering
\bigskip

\begin{subfigure}[b]{0.48\textwidth}
\hspace{-20mm}
\scalebox{0.7}{$
\vcenter{\hbox{\begin{tikzpicture}[line width=0.8pt]
\coordinate (S') at (0, 0);
\coordinate (S) at (2, 0);
\coordinate (A) at (0, 4);
\coordinate (B) at (2, 4);
\coordinate (S1) at (-1.8, -1.4);
\draw (A)--(B);
\draw (S)--(B);
\draw[dashed] (A)--(S');
\draw[dashed] (S1)--(S');
\draw[dashed] (S')--(S);
\draw (S1)--(A);
\draw (S1)--(B);
\draw (S1)--(S);
\draw (S') node[left]{$E_0$};
\draw (S1) node[left]{$S_{1}$};
\draw (A) node[above]{$A$};
\draw (B) node[above]{$B$};
\draw (S) node[right]{$S_{0}$};
\draw[dashed, purple, line width= 1.2 pt] (A) to (S);
\coordinate (X) at (3.5, 2);
\draw (X) node[right]{$\bar{X}=\bar A \text{ or }\bar B$};
\draw (X) to (S);
\draw[dashed] (X) to (A);
\draw[dashed] (X) to (S');
\draw (X) to (B);
\end{tikzpicture}}}
\mathbf{\xrightarrow{4-4}}
\vcenter{\hbox{\begin{tikzpicture}[line width=0.8pt]
\coordinate (S') at (0, 0);
\coordinate (S) at (2, 0);
\coordinate (A) at (0, 4);
\coordinate (B) at (2, 4);
\coordinate (S1) at (-1.8, -1.4);
\draw (A)--(B);
\draw (S)--(B);
\draw[dashed] (A)--(S');
\draw[dashed] (S1)--(S');
\draw[dashed] (S')--(S);
\draw (S1)--(A);
\draw (S1)--(B);
\draw (S1)--(S);
\draw (S') node[left]{$E_0$};
\draw (S1) node[left]{$S_{1}$};
\draw (A) node[above]{$A$};
\draw (B) node[above]{$B$};
\draw (S) node[right]{$S_{0}$};
\draw[dashed, purple, line width= 1.2 pt] (B) to (S');
\coordinate (X) at (3.5, 2);
\draw (X) node[right]{$\bar{X}=\bar A \text{ or } \bar B$};
\draw (X) to (S);
\draw[dashed] (X) to (A);
\draw[dashed] (X) to (S');
\draw  (X) to (B);
\end{tikzpicture}}}
$}
\caption{The $4$-$4$ move}
\label{fig:44moveatboundary}
\end{subfigure}
    \caption{In (a), the edge $BE_0$ is added and the flat tetrahedron $ABS_0E_0$ is added. In (b), the edge $AS_0$ is removed and the flat tetrahedron $ABS_0E_0$ is removed. In (c), the edge $AS_0$ is removed, the edge $BE_0$ is added, and the two flat tetrahedra $AB\bar XS_0$ and $A\bar XS_0E_0$ are replaced by the two flat tetrahedra $AB\bar XE_0$ and $B\bar XS_0E_0$.}
    \label{fig:moveatboundary}
\end{figure}
\medskip

\noindent  Step III.
By the construction of $\mathbf S^*$ in the proof of Lemma~\ref{prop:induction}, Case (4),  we see that $\mathcal{T}_{\mathbf{S}^{*}}'$ and $\mathcal{T}_{\mathbf{S}^{*}}''$ share the same set of hyperideal tetrahedra, which are obtained from the set of hyperideal tetrahedra in $\mathcal{T}_{\mathbf{S}}'$ by removing the tetrahedra $\{ABS_{0}S_{1}, AE_0S_{0}S_{1}\}$ and adding the tetrahedra $\{BE_0S_{0}S_{1}, ABE_0S_{1}\}$. Hence  the triangulation $\mathcal{T}_{\mathbf{S^*}}$ obtained in Step (II)  share the same set of hyperideal tetrahedra with $\mathcal{T}_{\mathbf{S}^{*}}'$ and $\mathcal{T}_{\mathbf{S}^{*}}''$. We also have that the triangulations $\mathcal{T}_{\mathbf{S^*}},$ $\mathcal{T}_{\mathbf{S}^{*}}'$ and $\mathcal{T}_{\mathbf{S}^{*}}''$ have the same set of flat tetrahedra that are not adjacent to the edge $S_{0}E_{0}$.

Therefore, it suffices to keep track of the change of at most four flat tetrahedra, including the central ones  adjacent to $S_{0}$, and the sided ones adjacent to $S_{0}E_{0}$. We will do a case-by-case verification in details for $\mathbf{S}$  in Case 1 and Case 2 of Step 1 in Subsection \ref{subsec:step1}; and the subcases of Case 3 can be verified in the  identical way as those of Case 2.

\begin{enumerate}
    \item [(1a)] The triangle $ABS_{0}$ is adjacent to the flat tetrahedron $ABS_{0}\bar{A}$ if $S_{0}\ne \bar{A}$, and is not adjacent to any flat tetrahedron if otherwise. 

    If $S_{0}=B_{t-1}$, then the $3$-$2$ move removing $AS_{0}$ is performed. Comparing $\mathcal{T}_{\mathbf{S}^{*}}$ with  $\mathcal{T}_{\mathbf{S}^{*}}'$ which belongs to the subcase (1a), we see that they agree.
        
         If $S_{0}=B_{t}$, then the $2$-$3$ move adding $BE_{0}$ is performed. 
        Comparing $\mathcal{T}_{\mathbf{S}^{*}}$ with  $\mathcal{T}_{\mathbf{S}^{*}}'$ which belongs to the subcase (1b), we see that they agree.
        
If otherwise, then the $4$-$4$ move removing $AS_{0}$ and adding $BE_{0}$ is performed. 
        Comparing $\mathcal{T}_{\mathbf{S}^{*}}$ with  $\mathcal{T}_{\mathbf{S}^{*}}'$ which belongs to the subcase (1a), we see that they agree.

   \item [(1b)] The triangle $ABS_{0}$ is adjacent to the flat tetrahedron $ABS_{0}\bar{A}$ and $\bar{A}\ne E_{0}$. Therefore,  the  $4$-$4$ move removing $AS_{0}$ and adding $BE_{0}$ is performed. 
   Comparing $\mathcal{T}_{\mathbf{S}^{*}}$ with  $\mathcal{T}_{\mathbf{S}^{*}}'$ which belongs to the subcase (1b), we see that they agree.
   
   \item [(2a)] The triangle $ABS_{0}$ is adjacent to the flat tetrahedron $ABS_{0}\bar{A}$ if $S_{0}\ne \bar{A}$, and is not adjacent to any flat tetrahedra if otherwise. 
   
   If $S_{0}=B_{t-1}$, then the $3$-$2$ move removing $AS_{0}$ is performed. 
        Comparing $\mathcal{T}_{\mathbf{S}^{*}}$ with  $\mathcal{T}_{\mathbf{S}^{*}}'$ which belongs to the subcase (2a), we see that they agree. 
        
   If $S_{0}=B_{t}$, then the $2$-$3$ move adding $BE_{0}$ if performed. 
        Comparing $\mathcal{T}_{\mathbf{S}^{*}}$ with  $\mathcal{T}_{\mathbf{S}^{*}}'$ which belongs to the subcase (2e), we see that they agree.

  If otherwise, then the $4$-$4$ move removing $AS_{0}$ and adding $BE_{0}$ is performed. 
        Comparing $\mathcal{T}_{\mathbf{S}^{*}}$ with  $\mathcal{T}_{\mathbf{S}^{*}}'$, which belongs to the subcase (2a), we see that they agree.

\item [(2b)] The triangle $ABS_{0}$ is adjacent to the flat tetrahedron $ABS_{0}\bar{B}$ if $S_{0}\ne \bar{B}$, and is not adjacent to any flat tetrahdra if otherwise. 
   
    If $S_{0}=\bar{B}$, then the $2$-$3$ move adding $BE_{0}$ is performed. By comparing  $\mathcal{T}_{\mathbf{S}^{*}}$ with $\mathcal{T}_{\mathbf{S}^{*}}'$ which still belongs to the subcase (2b), we see that they agree.
     
        If otherwise, then the $4$-$4$ move removing $AS_{0}$ and adding $BE_{0}$ is performed. 
        Comparing $\mathcal{T}_{\mathbf{S}^{*}}$ with  $\mathcal{T}_{\mathbf{S}^{*}}'$ which belongs to the subcase (2b). we see that they agree. 

\item [(2c)] If $S_{0}=S_{k}$ and $B\ne \bar{A}$, then $\mathbf{S}$ is special. 
In this case
$\mathbf{S}^{*}$ is always in the subcase (2c) and $\mathcal{T}_{\mathbf{S}}=\mathcal T_{\mathbf S}'$.
Note that the triangle $ABS_{0}$ is adjacent to the flat tetrahedron $ABS_{0}\bar{A}$ if $\bar{A}\ne B$, and is adjacent to $ABS_{0}B_{s+1}=ABS_{0}E_{0}$ if otherwise. 

If $\bar{A}=B$, then the $3$-$2$ move removing $AS_{0}$ is performed. Comparing $\mathcal{T}_{\mathbf{S}^{*}}$ with $\mathcal{T}_{\mathbf{S}^{*}}'$ which still belongs to the subcase (2c), we see that they agree.

If therwise, then the $4$-$4$ move removing $AS_{0}$ and adding $BE_{0}$ is performed. Comparing $\mathcal{T}_{\mathbf{S}^{*}}$ with $\mathcal{T}_{\mathbf{S}^{*}}'$ which belongs to the subcase (2c), we see that they agree.

\item [(2d)] If $S_{0}=S_{k}$ and $A\ne \bar{B}$, then $\mathbf{S}$ is special. In this case,  $\mathcal{T}_{\mathbf{S}^{*}}'$ belongs to the subcase (2f) and $\mathcal{T}_{\mathbf{S}}=\mathcal T''_{\mathbf S}$.
Since the triangle $ABS_{0}$ is adjacent to the flat tetrahedron $ABS_{0}\bar{A}$ in $\mathcal{T}_{\mathbf{S}}''$, 
the $4$-$4$ move removing $AS_{0}$ and adding $BE_{0}$ is performed. Comparing $\mathcal{T}_{\mathbf{S}^{*}}$ with 
$\mathcal{T}_{\mathbf{S}^{*}}'$, we see that they agree.

If $A\ne \bar{B}$ and $S_{0}\ne S_{k}$, then the triangle $ABS_{0}$ is adjacent to the flat tetrahedron $ABS_{0}\bar{B}$ and the $4$-$4$ move removing $AS_{0}$ and adding $BE_{0}$ is performed. Comparing $\mathcal{T}_{\mathbf{S}^{*}}$ with  $\mathcal{T}_{\mathbf{S}^{*}}'$which belongs to the subcase (2d), we see that they agree.

    If $A=\bar{B}$ and $S_{0}\ne B_{3}$, then the triangle $ABS_{0}$ is adjacent to the flat tetrahedron $ABS_{0}B_{s-1}$, and  the $4$-$4$ move removing $AS_{0}$ and adding $BE_{0}$ is performed. In this case, if $S_{0}\ne S_{k}$, then $\mathcal{T}_{\mathbf{S}^{*}}$ agrees with $\mathcal{T}_{\mathbf{S}^{*}}'$ which belongs to the subcase (2d); and if otherwise, then $\mathbf{S}^{*}$ is special, and $\mathcal{T}_{\mathbf{S}^{*}}$ agrees with   $\mathcal{T}_{\mathbf{S}^{*}}''$ which belongs to the subcase (2f). 

    If $A=\bar{B}$ and $S_{0}= B_{3}$, then the triangle $ABS_{0}$
is not adjacent to any flat tetrahedra, and 
the $2$-$3$ move adding $BE_{0}$ is performed. In this case if $S_{0}\ne S_{k}$, then if $\mathcal{T}_{\mathbf{S}^{*}}$ agrees with $\mathcal{T}_{\mathbf{S}^{*}}'$  which belongs to the subcase (2d); and if otherwise, then $\mathbf{S}^{*}$ is special, and $\mathcal{T}_{\mathbf{S}^{*}}$ agrees with  $\mathcal{T}_{\mathbf{S}^{*}}''$ which belongs to the subcase (2f).

\item [(2e)] If $\bar {A}=B$ and $s'=s+1$, then $\mathbf{S}$ is special. In this case, $\mathcal{T}_{\mathbf{S}^{*}}'$  belongs to the subcase (2c), $\mathcal{T}_{\mathbf{S}}=\mathcal T'_{\mathbf S}$, and the triangle $ABS_{0}$ is adjacent to the flat tetrahedron $ABS_{0}S_{k}$ (see Figure \ref{fig:alternativetrig2}). Then the $3$-$2$ move removing $AS_{0}$ is performed. Comparing $\mathcal{T}_{\mathbf{S}^{*}}$ with $\mathcal{T}_{\mathbf{S}^{*}}'$, we see that they agree.

If $\bar{A}= B$ and $S_{0}=\bar{B}$, then the triangle $ABS_{0}$ is not adjacent to any flat tetrahedra, and the $2$-$3$ move adding $BE_{0}$ is performed. In this case, $\mathcal{T}_{\mathbf{S}^{*}}$ agrees with 
$\mathcal{T}_{\mathbf{S}^{*}}'$ which belongs to the subcase (2e) if $s\ne s'-1$,  and belongs to the subcase (2c) if $s= s'-1$.

    If $\bar{A}\ne B$ and  $S_{0}=\bar{B}$, then  the triangle $ABS_{0}$ is adjacent to $ABS_{0}\bar{A}=AB\bar{A}\bar{B}$; and if otherwise, then the triangle $ABS_{0}$ is adjacent to $ABS_{0}\bar{B}$. In both of the cases, the $4$-$4$ move removing $AS_{0}$ and adding $BE_{0}$ is performed. 
    If $s\ne s'-1$, then $\mathcal{T}_{\mathbf{S}^{*}}$ agrees with  $\mathcal{T}_{\mathbf{S}^{*}}'$ which belongs to then subcase (2e); and if otherwise, the $\mathcal{T}_{\mathbf{S}^{*}}$ agrees with $\mathcal{T}_{\mathbf{S}^{*}}''$ which belongs to the subcase (2c).

\item [(2f)] If $s=s'+1\ne t$ and $\bar{B}=A$, then $\mathbf{S}$ is special. In this case, $\mathbf{S}^{*}$ belongs to the subcase (2f) and $\mathcal{T}_{\mathbf{S}}=\mathcal{T}_{\mathbf{S}}'$.
The triangle $ABS_{0}$ is adjacent to $ABS_{0}\bar{A}$ if $S_{0}\ne \bar{A}$, and is adjacent to $ABS_{0}\bar{B}$ if otherwise.

If $s\notin \{t-1, t\}$, then the $4$-$4$ move removing $AS_{0}$ and adding $BE_{0}$ is performed. Comparing $\mathcal{T}_{\mathbf{S}^{*}}$ with $\mathcal{T}_{\mathbf{S}^{*}}'$ which belongs to the subcase (2f), we see that they agree.

If $s=t-1$, then the $3$-$2$ move removing $AS_{0}$ is performed. Comparing $\mathcal{T}_{\mathbf{S}^{*}}$ with $\mathcal{T}_{\mathbf{S}^{*}}'$ which belongs to the subcase (2f), we see that they agree.

If $s=t$, then the $3$-$2$ move removing $AS_{0}$ is performed. Comparing $\mathcal{T}_{\mathbf{S}^{*}}$ with $\mathcal{T}_{\mathbf{S}^{*}}'$ which belongs to the subcase (2b), we see that they agree. \qedhere
\end{enumerate}
\end{proof}

\bibliographystyle{abbrv}
\bibliography{cibib}


\end{document}